\theoremstyle:=definition,remark,plain\do{%
 \expandafter\g@addto@macro\csname th@\theoremstyle\endcsname{%
 \addtolength\thm@preskip\parskip
 }%
 }
\declaretheorem[name=Theorem,numberwithin=section]{thm}
\declaretheorem[name=Proposition,numberlike=thm]{prop}
\declaretheorem[name=Lemma,numberlike=thm]{lemma} 
\declaretheorem[name=Corollary,numberlike=thm]{cor}
\declaretheorem[name=Definition,style=definition,qed=$\blacktriangle$,numberlike=thm]{defn}
\declaretheorem[name=Remark,style=definition,qed=$\blacktriangle$,numberlike=thm]{rmk}
\newcounter{noteCounter}
\newcommand{\dd}{\mathrm{d}}
\DeclareMathOperator\Div{div}
\DeclareMathOperator\curl{curl}
\DeclareMathOperator\vol{vol}
\DeclareMathOperator\tr{tr}
\DeclareMathOperator\Id{Id}
\newcommand{\wt}{\widetilde}
\newcommand{\wh}{\widehat}
\newcommand{\hT}{\wh{T}}
\newcommand\tRc{\mathrm{Rc}}
\newcommand{\tR}{\mathrm{R}}
\newcommand{\tRm}{\mathrm{Rm}}
\newcommand{\tW}{\mathrm{W}}
\newcommand{\operp}{\overset{\scriptscriptstyle{\perp}}{\oplus}}
\newcommand{\mb}[1]{\mathbf{#1}}
\newcommand{\sA}{\mathsf A}
\newcommand{\sB}{\mathsf B}
\newcommand{\sT}{\mathsf T}
\newcommand{\sI}{\mathsf I}
\newcommand{\sR}{\mathsf{R}}
\newcommand{\sW}{\mathsf{W}}
\newcommand{\cL}{\mathcal L}
\newcommand{\cS}{\mathcal S}
\newcommand{\G}{\mathrm{G}_2}
\newcommand{\Spin}[1]{\mathrm{Spin}(#1)}
\newcommand{\GL}[1]{\mathrm{GL}(#1)}
\newcommand{\U}[1]{\mathrm{U}(#1)}
\newcommand{\SU}[1]{\mathrm{SU}(#1)}
\newcommand{\SO}[1]{\mathrm{SO}(#1)}
\newcommand{\ol}{\overline}
\newcommand{\R}{\mathbb R}
\newcommand{\PR}{\mathbb P}
\newcommand{\Lam}{\Lambda}
\newcommand{\Sym}{\mathrm{S}}
\newcommand{\K}{\mathcal{K}}
\newcommand{\W}{\mathcal{W}}
\newcommand{\Span}{\mathrm{span}}
\newcommand{\ph}{\varphi}
\newcommand{\ps}{\psi}
\newcommand{\st}{\star}
\newcommand{\hk}{\mathbin{\! \hbox{\vrule height0.3pt width5pt depth 0.2pt \vrule height5pt width0.4pt depth 0.2pt}}}
\newcommand{\lieg}{\mathfrak {g}_2}
\newcommand{\w}{\wedge}
\newcommand{\nab}[1]{\nabla_{#1}}
\newcommand{\hnab}[1]{\wh{\nabla}_{#1}}
\newcommand{\tnab}[1]{\wt{\nabla}_{#1}}
\newcommand{\del}{\partial}
\newcommand{\delbar}{\overline{\partial}}
\newcommand{\dds}{\frac{d}{d s}}
\newcommand{\delt}{\partial_t}
\newcommand{\dx}[1]{d x^{#1}}
\newcommand{\eps}{\varepsilon}
\newcommand{\im}{\operatorname{im}}
\newcommand{\vf}{\mathfrak{X}}
\newcommand{\rest}[2]{ { \left. {#1} \right|}_{{#2} }}
\newcommand{\sym}[1]{\Sym^{#1} (T^* M)}
\newcommand{\cT}{\mathcal T}
\newcommand{\oct}{\circledcirc}
\newcommand{\Pop}{\mathsf P}
\newcommand{\Vop}{\mathsf V}
\newcommand{\symm}{\mathrm{sym}}
\renewcommand{\skew}{\mathrm{skew}}
\newcommand{\KK}[1]{{}_{#1} \! K}
\newcommand{\lot}{\ell ot}
\let\c@equation\c@thm
\numberwithin{equation}{section}
\begin{document}

\title{Flows of $\G$-structures, II: \\ Curvature, torsion, symbols, and functionals}

\author{Shubham Dwivedi \\ \emph{Institut f\"ur Mathematik, Humboldt Universit\"at zu Berlin} \\ \tt{dwivedis@hu-berlin.de} \and Panagiotis Gianniotis \\ \emph{National and Kapodistrian University of Athens} \\\tt{pgianniotis@math.uoa.gr} \and Spiro Karigiannis \\ \emph{Department of Pure Mathematics, University of Waterloo} \\ \tt{karigiannis@uwaterloo.ca}}

\maketitle

\begin{abstract}
We continue the investigation of general geometric flows of $\G$-structures initiated by the third author in ``Flows of $\G$-structures, I.'' Specifically, we determine the possible geometric flows (up to lower order terms) of $\G$-structures which are second-order quasilinear, by explicitly computing all independent second-order differential invariants of $\G$-structures which are $3$-forms. There are four symmetric $2$-tensors and two vector fields. We do this by deriving explicit computational descriptions of the decompositions of the curvature and the covariant derivative of the torsion into irreducible $\G$-representations, as well as the decomposition of the $\G$-Bianchi identity into independent relations. We also show that these six tensors arise as leading order contributions to the Euler-Lagrange equations for the energy functionals of the four independent torsion components, and we establish a $\G$-analogue of the classical block decomposition of the Riemann curvature operator on oriented $4$-dimensional Riemannian manifolds.

Finally, we present a large class of geometric flows of $\G$-structures which are directly amenable to a DeTurck type trick to establish short-time existence and uniqueness, with no initial assumption on the torsion, vastly generalizing an earlier result of Weiss--Witt for the negative gradient flow of the Dirichlet energy. This result is proved through a careful analysis of the principal symbols of the linearizations of these operators, establishing particular linear combinations for which one can prove that the failure of strict parabolicity is due precisely to the diffeomorphism invariance.

A detailed introductory section on various foundational results on $\G$-structures, several of which are not readily available in the literature, should be of wider interest and applicability.
\end{abstract}

\tableofcontents

\section{Introduction} \label{introsec}

The present paper is a direct sequel to the paper~\cite{K-flows} by the third author, which initiated the study of general flows of $\G$-structures. More precisely, given a smoothly varying family $\ph_t$ of $\G$-structures, in~\cite{K-flows} we determined the induced variations of various tensors induced from $\ph_t$, including the metric $g_t$, the dual $4$-form $\ps_t$, and the torsion $T_t$. One corollary of this analysis was the $\G$-Bianchi identity, the fundamental relation between torsion and curvature for $\G$-structures, which plays a crucial role throughout $\G$-geometry. It is reviewed in Section~\ref{sec:g2bianchi} of the present paper.

While prior familiarity with~\cite{K-flows} is helpful, it is not strictly necessary. The most important results of~\cite{K-flows} are reproved here using improved (streamlined) arguments. The reader willing to accept the fundamental contraction identities between $\ph$, $\ps$, and $g$ in equations~\eqref{eq:phph},~\eqref{eq:phps}, and~\eqref{eq:psps} will find the paper mostly self-contained. It is somewhat remarkable how essentially all the interesting features of $\G$-geometry can be traced back to the fundamental identity $\ph_{ijp} \ph_{klp} = g_{ik} g_{jl} - g_{il} g_{jk} - \ps_{ijkl}$ of~\eqref{eq:phph}, which is itself simply a manifestation of the non-associativity of octonion multiplication. See~\cite{K-intro} for more details.

In this section we first review some of the historical developments of flows of $\G$-structures, then motivate and summarize our main results, and finally discuss notation and preliminaries. Readers who are not already familiar with $\G$-structures should probably first read Sections~\ref{sec:preliminaries},~\ref{sec:review}, and~\ref{sec:evolution-g2} before coming back to Sections~\ref{sec:history} and~\ref{sec:summary-results}.

\subsection{A brief history of flows of $\G$-structures} \label{sec:history}

We give a brief and incomplete review of some of the important developments in the study of flows of $\G$-structures. An excellent and highly recommended survey of the state of the art as of 2018 is Lotay~\cite{Minischool-Lotay}. Throughout this section we assume $M$ is a \emph{compact} $7$-manifold admitting $\G$-structures.

\textbf{The Laplacian flow.} The first proposal for a natural flow of $\G$-structures was introduced by Bryant~\cite{Bryant}, and is now called the \emph{Laplacian flow}. Its formulation was motivated by a fundamental existence theorem of Joyce, which (somewhat imprecisely) is the following.

\begin{thm}[Joyce {\cite[Th.~11.6.1]{Joyce}}] \label{thm:Joyce}
Let $\ph$ be a closed $\G$-structure on $M$. Suppose $\ph$ is ``almost'' coclosed, in the sense of precise estimates on $C^0, L^2, L^{14}_1$ norms of the torsion. Then there exists a \emph{torsion-free} $\G$-structure $\tilde \ph$ in the cohomology class $[\ph] \in H^3 (M, \R)$. Moreover, $\tilde \ph$ is $C^0$-close to $\ph$, and it is (modulo diffeomorphisms isotopic to the identity) the unique torsion-free $\G$-structure in the class $[\ph]$ \emph{sufficiently close} to $\ph$.
\end{thm}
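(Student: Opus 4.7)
The plan is to seek the torsion-free $\G$-structure in the form $\tilde\ph = \ph + d\eta$ for a suitable $2$-form $\eta$ on $M$, so that $[\tilde\ph] = [\ph]$ in $H^3(M,\R)$ and $d\tilde\ph = 0$ automatically. The remaining requirement for $\tilde\ph$ to be torsion-free is $d *_{\tilde\ph} \tilde\ph = 0$, where $*_{\tilde\ph}$ is the Hodge star of the metric determined by $\tilde\ph$. Since $*_{\tilde\ph} \tilde\ph$ depends nonlinearly and pointwise on $\tilde\ph$, this produces a fully nonlinear second-order PDE for $\eta$ of the schematic form
$$d *_{\tilde\ph} \tilde\ph \;=\; d * \ph \;+\; L(d\eta) \;+\; Q(d\eta) \;=\; 0,$$
where $L$ is linear in $d\eta$ and $Q$ collects the terms of quadratic and higher order. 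The inhomogeneous piece $d * \ph$ is the torsion of $\ph$, which is small by hypothesis, while the composition of $d$ with $L$ is, up to lower-order terms, a Hodge-Laplacian type operator on $2$-forms. After fixing a gauge such as $d^* \eta = 0$ to remove the diffeomorphism freedom, the linearised problem is therefore elliptic.

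The technical core is an iterative construction. I would set $\eta_0 = 0$ and define $\eta_{n+1}$ by solving
$$\Delta \eta_{n+1} \;=\; -d * \ph \;-\; d\, Q(d\eta_n)$$
in the orthogonal complement of the harmonic $2$-forms, using Hodge theory on the compact manifold $M$. To close the iteration one must simultaneously propagate smallness through the nonlinear term $Q$ and control the $C^0$ size of $d\eta$, so that $\tilde\ph = \ph + d\eta$ remains a positive $3$-form and hence a genuine $\G$-structure. In dimension $7$ the Sobolev embedding $L^{14}_1 \hookrightarrow C^{0, 1/2}$ is the natural tool, and the exponent $14 = 2 \cdot 7$ is tuned to the quadratic nonlinearity through the H\"older estimate $\| Q(d\eta) \|_{L^7} \lesssim \| d\eta \|_{L^{14}}^2$, which elliptic regularity then upgrades to an $L^{14}_1$ bound on $\eta_{n+1}$. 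This is precisely why the hypothesis is phrased in terms of the $C^0$, $L^2$, and $L^{14}_1$ norms of the torsion, with explicit small constants.

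The main obstacle, and the technical heart of the argument, will be arranging these estimates with sufficiently sharp constants so that the iteration becomes a contraction in the right Banach space, and then bootstrapping elliptic regularity at the fixed point to obtain a smooth torsion-free limit $\tilde\ph$. Once this is achieved, the $C^0$-closeness of $\tilde\ph$ to $\ph$ follows immediately from the Sobolev control on $d\eta$. For uniqueness modulo small diffeomorphisms, I would argue via an inverse function theorem: after quotienting by the action of diffeomorphisms isotopic to the identity, the moduli space of torsion-free $\G$-structures near $\tilde\ph$ is a smooth manifold mapped locally injectively into $H^3(M,\R)$ by $\tilde\ph \mapsto [\tilde\ph]$, so any other torsion-free $\G$-structure in the class $[\ph]$ sufficiently close to $\ph$ must coincide with $\tilde\ph$ up to such a diffeomorphism.
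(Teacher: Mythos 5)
The paper does not prove this theorem; it is quoted verbatim (with the precise smallness conditions elided) from Joyce's book, where the argument occupies a substantial part of Chapters 10--11. There is therefore no ``paper's proof'' to compare against, but your sketch is a reasonable outline of Joyce's actual strategy: seek $\tilde\ph = \ph + d\eta$, reformulate $d*_{\tilde\ph}\tilde\ph = 0$ as a quasilinear second-order PDE for $\eta$, iterate in carefully chosen Sobolev spaces, and bootstrap to smoothness. Two technical points are glossed over and are central to making the sketch rigorous. First, the linearization of the nonlinear map $\ph \mapsto *_{\ph}\ph$ at $\ph$ is \emph{not} the Hodge star: it acts by different scalars on the three irreducible summands $\Omega^3_1 \oplus \Omega^3_7 \oplus \Omega^3_{27}$ (in particular, it reverses sign on $\Omega^3_{27}$), and Joyce's estimates are built around this signed projection; writing the linear term simply as ``$L(d\eta)$ which makes $d\circ L$ a Laplacian-type operator'' hides exactly the algebra that must be checked. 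Second, the gauge condition $d^*\eta = 0$ removes the redundancy of replacing $\eta$ by $\eta + d\alpha$ (which doesn't change $\tilde\ph$), not the diffeomorphism invariance; the ``unique modulo isotopy'' clause requires a separate argument via the slice/implicit-function theorem for the diffeomorphism action, as you correctly gesture at in your last paragraph. Your Sobolev bookkeeping ($L^{14}_1 \hookrightarrow C^{0,1/2}$ in dimension $7$, the quadratic map $L^{14}\times L^{14} \to L^7$, elliptic regularity $L^7 \to L^7_2 \hookrightarrow L^{14}_1$) is consistent with why Joyce works in $L^{14}_1$.
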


Joyce's theorem says that it is fruitful to start with a closed $\G$-structure $\ph$ and to deform it within its cohomology class $[\ph] \in H^3 (M, \R)$. This is similar in spirit to Yau's proof of the Calabi conjecture, which says that (on a compact K\"ahler manifold with vanishing first Chern class), there is a unique Ricci-flat K\"ahler metric in each K\"ahler class. Note however that in the $\G$ case, we still do not know if torsion-free $\G$-structures (if they even exist) are unique in a given cohomology class. We are very far from having a Calabi--Yau type theorem in $\G$-geometry.

Bryant's Laplacian flow is defined to be
\begin{equation} \label{eq:Lap-flow}
\tfrac{\partial}{\partial t} \ph_t = \Delta_{\dd} \ph_t, \qquad \dd \ph_0 = 0.
\end{equation}
That is, we start with a \emph{closed} $\G$-structure $\ph_0$ and evolve it in the direction of its Hodge Laplacian. Bryant--Xu~\cite{Bryant-Xu} established short-time existence and uniqueness for this flow, and showed that the closed condition is preserved along the flow.

\begin{rmk} \label{rmk:Lap-flow-STE}
Strictly speaking, Bryant--Xu actually showed the following. Suppose $\ph_t = \ph_0 + \dd \sigma_t$, with $\sigma_0 = 0$. They considered the flow~\eqref{eq:Lap-flow} for such $\ph_t$, which becomes
\begin{equation} \label{eq:Lap-flow-alt}
\tfrac{\partial}{\partial t} (d \sigma_t) = \dd \dd^* (\ph_0 + \dd \sigma_t).
\end{equation}
They showed that the flow~\eqref{eq:Lap-flow-alt} has short-time existence and uniqueness. In theory (although it is probably unlikely), this does not preclude the existence of other short-time solutions of~\eqref{eq:Lap-flow} which immediately fail to remain closed. It just proves uniqueness amongst solutions to~\eqref{eq:Lap-flow} which stay in the given cohomology class. This is somewhat different in spirit from other geometric flows which preserve some initial condition, such as Ricci flow preserving positive scalar curvature~\cite{Chow-Knopf}, or mean curvature flow preserving the Lagrangian condition~\cite{Smoczyk}, where short-time existence is established in general and then preservation of a given condition along the flow is established using the maximum principle. Bryant--Xu's approach is instead similar to Cao's approach to K\"ahler--Ricci flow~\cite{Cao}, where he similarly from the outset forces the solution to lie in a fixed K\"ahler class and proves short-time existence and uniqueness that way. In that case, we do indeed have general uniqueness, because Shi~\cite{Shi} proved in general that the K\"ahler condition is preserved under Ricci flow using the maximum principle.
\end{rmk}

Further motivation for the Laplacian flow comes from work of Hitchin~\cite{Hitchin}. Consider the functional $\ph \mapsto \int_M \vol_{\ph}$ on the space of $\G$-structures on $M$. Hitchin showed that, when restricted to a fixed cohomology class in $H^3 (M, \R)$, the critical points of this functional are the torsion-free $\G$-structures, and they are strict local maxima (transverse to the action of diffeomorphisms). In fact, the (positive) gradient flow of this functional, restricted to a cohomology class, is precisely the Laplacian flow~\eqref{eq:Lap-flow-alt}.

Several important foundational analytic results for the Laplacian flow were established by Lotay--Wei in a series of papers~\cite{LW1, LW2, LW3}. These include: characterization of the blow-up time, dynamical stability, and real analyticity. The Laplacian flow has also been studied with symmetry (various dimensionally reduced situations) in~\cite{FY, LL, PS}.

\textbf{The coflow and its modification.} One could equally take the Hodge dual $4$-form $\ps = \st_{\ph} \ph$ as the fundamental object, so it makes sense to consider the Hodge Laplacian flow of the $4$-form, namely:
\begin{equation} \label{eq:coflow}
\tfrac{\partial}{\partial t} \ps_t = \Delta_{\dd} \ps_t, \qquad \dd \ps_0 = 0.
\end{equation}
This flow was introduced (with the opposite sign) in Karigiannis--McKay--Tsui~\cite{KMT}, where it was called the \emph{coflow}. The approach of Bryant--Xu for short-time existence and uniqueness of~\eqref{eq:Lap-flow-alt} does not work for the coflow~\eqref{eq:coflow}. This issue was clarified by Grigorian~\cite{Grigorian-MCF}, who introduced the \emph{modified coflow}
$$ \tfrac{\partial}{\partial t} \ps_t = \Delta_{\dd} \ps_t + 2 \dd (C - (\tr_t T_t) \st_t \ps_t), \qquad \dd \ps_0 = 0, $$
where $T_t$ is the torsion of $\ps_t$ and $C$ is a constant. Grigorian proved short-time existence and uniqueness for the modified coflow, and that the coclosed condition is preserved, in the same particular sense as described in Remark~\ref{rmk:Lap-flow-STE}. However, the fixed points of the modified coflow are not well understood, and in particular include more than just torsion-free $\G$-structures. It remains an open question whether or not the coflow, whose fixed points are precisely the torsion-free $\G$-structures, has short-time existence and uniqueness. Additionally, it is worth remarking that the coflow has a variational interpretation as the gradient flow of the Hitchin volume functional restricted to the cohomology class $[\ps_0] \in H^4 (M, \R)$.

\begin{rmk} \label{rmk:coflow}
The fact that a $4$-form version of the Laplacian flow is less well-behaved seems to be closely related to the difficulty in proving a $4$-form version of Joyce's Existence Theorem~\ref{thm:Joyce}. That is, if we start with a \emph{closed positive $4$-form $\ps$}, which is sufficiently close to torsion-free in some precise sense, can we always deform to a nearby torsion-free $4$-form in the same cohomology class? This question is currently being investigated by Dwivedi--Karigiannis.
\end{rmk}

\textbf{The Dirichlet energy flow.} The Dirichlet energy functional for $\G$-structures is the map
$$ \ph \mapsto \int_M |T|^2 \vol, $$
which is the (square of the) $L^2$ norm of the torsion. Weiss--Witt~\cite{WW1, WW2} used a DeTurck trick to show that the negative gradient flow of the Dirichlet energy has short-time existence and uniqueness. Their result is now a special case of our main Theorem~\ref{thm:main-short-time}. (See Remark~\ref{rmk:Weiss-Witt-special-case}.)

On a fixed oriented spin Riemannian $7$-manifold, a $\G$-structure is equivalent (up to sign) to a unit spinor field. Using a spinorial approach, Ammann--Weis--Witt~\cite{AWW} also studied the negative gradient flow of the Dirichlet energy, thought of as a function of a unit spinor field. They proved general short-time existence and uniqueness. A characterization of the blow-up time for this flow was obtained by He--Wang~\cite{HW}.

\textbf{The isometric flow.} Another flow of $\G$-structures that has received some attention is the \emph{isometric flow} or $\Div T$ flow. The map that associates to a $\G$-structure $\ph$ its induced Riemannian metric $g_{\ph}$ is not injective. In fact, at a point in $M$, the space of $\G$-structures inducing a given metric is an $\R \PR^7$. The isometric flow is the negative gradient flow of the functional $\ph \mapsto \int_M |T|^2 \vol$, \emph{restricted} to the set of $\G$-structures inducing a fixed metric, and it takes the form
\begin{equation} \label{eq:isometric-flow}
\tfrac{\partial}{\partial t} \ph_t = (\Div T_t) \hk \ps_t.
\end{equation}
This flow was introduced by Grigorian~\cite{Grigorian-Oct}. Many analytic properties of this flow were established by Grigorian~\cite{Grigorian-IF} and Dwivedi--Gianniotis--Karigiannis~\cite{DGK}, including: derivative estimates, characterization of blow-up time, compactness of solution space, almost monotonicity of localized energy, $\eps$-regularity, long-time existence and convergence given small initial entropy, and structure of the singular set.

The isometric flow is easier to study because it is strictly parabolic. That is, one does not need a DeTurck trick to establish short-time existence and uniqueness. The reason this flow is parabolic is because the metric $g$ on $M$ is fixed, so there is no diffeomorphism invariance. However, as the metric does not change along the isometric flow, if one wanted to use this flow to evolve to a torsion-free $\G$-structure, one would have to start with a Ricc-flat metric to begin with, which is not practical. Nevertheless, this flow plays an important role in the present paper, as we prove in Theorem~\ref{thm:main-short-time} that a ``coupling'' of Ricci flow and the isometric flow has good short-time existence and uniqueness. (See also Remark~\ref{rmk:Ricci-isometric-coupled}.) A survey of results about the isometric flow was given by Grigorian~\cite{Grigorian-IF-survey}. The isometric flow of $\Spin{7}$-structures was also studied by Dwivedi--Loubeau--S\'a Earp~\cite{DLS}.

The isometric flow was extended to $n$-manifolds with general $G$-structures satisfying $G \subseteq \SO{n}$ by Loubeau--S\'a Earp in~\cite{LS}. This is the negative gradient flow of the $L^2$ norm of the intrinsic torsion of the $G$-structure, restricted to $G$-structures inducing the same metric. Critical points for such flows are called \emph{harmonic} $G$-structures. This work was continued by Fadel--Loubeau--Moreno--S\'a Earp in~\cite{FLMS}.

\subsection{Motivation and brief summary of main results} \label{sec:summary-results}

While the Laplacian flow of $\G$-structures has certainly had the most success amongst all geometric flows of $\G$-structures considered thus far, it remains unclear if this is the ``best'' way to evolve $\G$-structures towards torsion-free $\G$-structures. Here are two reasons for this:
\begin{itemize} \setlength\itemsep{-1mm}
\item The proof of short-time existence (STE) is somewhat unsatisfying, given that the preservation of the closed condition is built-in from the outset. An argument similar to preservation of conditions in other natural geometric flows, as discussed in Remark~\ref{rmk:Lap-flow-STE}, using a maximum principle, would be desirable. But such a result requires general STE and uniqueness for the flow $\frac{\partial}{\partial t} \ph_t = \Delta_d \ph_t$ without any assumption on the initial torsion. Such a result is not known and seems out of reach.
\item It is not clear if \emph{starting closed and preserving the cohomology class} is the right thing to do. We do not know if there is global uniqueness of torsion-free $\G$-structures in a given cohomology class. Moreover, at present it is unknown what are necessary and sufficient conditions for existence of a closed $\G$-structure on a compact $7$-manifold which admits $\G$-structures. [For example, it is an important open problem whether the standard smooth $S^7$ admits closed $\G$-structures.] In this sense the coflow is perhaps better, because Crowley--Nordstr\"om~\cite{CN} proved using Gromov's $h$-principle that any manifold which admits $\G$-structures admits \emph{coclosed} $\G$-structures. Recall, however, that short-time existence of the coflow starting from a coclosed $\G$-structure is still open.
\end{itemize}

Indeed, it is instructive to compare $\G$-geometry with K\"ahler/Calabi--Yau geometry. Yau's solution~\cite{Yau} of the Calabi conjecture says that (provided the first Chern class vanishes) we can start with a K\"ahler form $\omega$ and find a (\emph{globally unique}) Ricci-flat K\"ahler form $\tilde \omega$ in the same cohomology class $[\omega]$. The parabolic version, proved by Cao~\cite{Cao}, uses K\"ahler--Ricci flow. Both the elliptic and parabolic approaches rely heavily on the $\del \delbar$ lemma of K\"ahler geometry. There is no such analogous result in $\G$-geometry. The problem is that while the complex and symplectic geometry of a K\"ahler manifold are essentially independent, there is no such decoupling of $\G$-geometry into two independent geometries.

It thus makes sense to consider other ``reasonable'' geometric flows of $\G$-structures. A reasonable flow of $\G$-structures should of course have short-time existence and uniqueness. Ideally (in analogy with the Ricci flow), it should be amenable to a DeTurck trick yielding equivalence with a strictly parabolic (heat-like) flow. In particular, it should be of the form
$$ \frac{\partial}{\partial t} \ph_t = P(\ph_t), $$
where $\ph \mapsto P(\ph)$ is some second-order differential invariant of $\G$-structures. Thus we need to determine all the (independent) second-order differential invariants of $\G$-structures \emph{which are $3$-forms}.

\begin{rmk} \label{rmk:RB}
One could ask the same question for flows of Riemannian metrics. The only second-order differential invariant of a metric $g$ is the Riemann curvature tensor, which in general decomposes (see equation~\eqref{eq:curvature-n} for details) intro three independent pieces as irreducible $\SO{n}$-representations. Exactly two of these can be identified with symmetric $2$-tensors, namely $Rg$ and $\tRc^0$, where $R$ is the scalar curvature and $\tRc^0$ is the traceless Ricci tensor. Thus the most general geometric flow of Riemannian metrics is $\frac{\partial}{\partial t} g_t = a \tRc_t + b R_t g_t$ for some constants $a, b$. In fact, if we take $a = - 2$ (in which case $b=0$ corresponds to the Ricci flow), then we obtain the \emph{Ricci--Bourguignon flow}, which has good short-time existence for $|b|$ small. (See~\cite{CCDMM} for details.)
\end{rmk}

In this paper we determine all of the \emph{independent} second-order differential invariants of a $\G$-structure which correspond to $3$-forms, and thus classify (up to lower order terms) all possible geometric flows of $\G$-structures which could be heat-like. In the process, we derive several explicit formulas for the decompositions into irreducible $\G$-representations of certain $\G$-representations. We apply these results to the Riemann curvature tensor $\tRm$ of a $\G$-structure, reproducing some results of Cleyton--Ivanov~\cite{CI}, and to the covariant derivative $\nab{} T$ of the torsion. We also explicitly decompose the $\G$-Bianchi identity into independent relations. We make several applications of these identities.

First, in Section~\ref{sec:functionals-revisited} we derive explicit formulas for the first variation of the $L^2$ norms of the various components of the torsion of a $\G$-structure, giving (to leading order) a geometric interpretation of these six second-order differential invariants. See Corollary~\ref{cor:evolution-torsion-quantities-4} and Corollary~\ref{cor:evolution-torsion-functionals} for precise statements.

Next, in Section~\ref{sec:curvature-decomp}, we establish a $\G$-analogue of the classical block decomposition of the Riemann curvature operator on oriented $4$-dimensional Riemannian manifolds. In particular, our analysis provides a geometric interpretation for two classes of ``generalized Einstein'' $\G$-structures in the sense of Cleyton--Ivanov~\cite[Equation (4.23)]{CI}. (See Corollary~\ref{cor:main-curv-decomp} and Remark~\ref{rmk:CI-Einstein}.)

Then, in Theorem~\ref{thm:invariants} we determine that there are \emph{six independent second-order differential invariants of a $\G$-structure which are $3$-forms}. Four of these correspond to symmetric $2$-tensors, yielding elements of $\Omega^3_{1+27}$, and two correspond to vector fields, yielding elements of $\Omega^3_7$. Explicitly, these are:
$$ \boxed{\, \, \, \underbrace{Rg, \, \tRc, \, F, \, \mathcal{L}_{\Vop T} g,}_{\text{\small{symmetric $2$-tensors}}} \qquad \underbrace{\Div T, \, \Div T^t.}_{\text{\small{vector fields}}} \, \,} $$
Here the torsion $T$ of the $\G$-structure is a $2$-tensor with transpose $T^t$, where $\Vop T$ is the ``vector torsion'' corresponding to the $\Omega^2_7 \cong \Omega^1$ component, and $F$ is a symmetric $2$-tensor obtained from the Riemann curvature tensor $R_{ijkl}$ and the $\G$-structure by $F_{pq} = R_{ijkl} \ph_{ijp} \ph_{klq}$.

Thus, up to lower order terms which we denote by $\lot$ (and which by scaling arguments from Section~\ref{sec:scaling} must be quadratic in the torsion), we determine that all possible geometric flows of $\G$-structures which are second-order quasilinear must be of the form
$$ \frac{\partial}{\partial t} \ph(t) = (\mu \tRc + \nu R g + a \cL_{\Vop T} g + \lambda F) \diamond \ph + (b_1 \Div T+ b_2 \Div T^t) \hk \ps + \lot $$
for some constants $\mu, \nu, a, \lambda, b_1, b_2$. (The $\diamond$ operation on forms is defined in~\eqref{eq:diamond-coords}. It is essentially the induced Lie algebra action of $\GL{7, \R}$ on forms.) Our principal application of this result is the following theorem, where we take $\mu = -1$ and $\nu = 0$.

\textbf{Theorem~\ref{thm:main-short-time}.} \emph{Let $(M,\ph_0)$ be a compact $7$-manifold with a $\G$-structure $\ph_0$. Consider the flow}
\begin{equation} \label{eq:g2flow_general-intro}
\begin{aligned}
\frac{\partial}{\partial t} \ph(t) & = (- \tRc + a \cL_{\Vop T} g + \lambda F) \diamond \ph + (b_1 \Div T+ b_2 \Div T^t) \hk \ps, \\
\ph(0) & = \ph_0,
\end{aligned}
\end{equation}
\emph{and suppose that $0 \leq b_1 - a-1 < 4$, $b_1 + b_2 \geq 1$ and $|\lambda| < \frac{1}{4} c$, where $c = 1 - \frac{1}{4} (b_1 - a-1) > 0$.}

\emph{Then there exists $\eps > 0$ and a unique smooth one-parameter family of $\G$-structures $\ph(t)$ for $t \in [0,\eps)$, solving~\eqref{eq:g2flow_general-intro}.}

Theorem~\ref{thm:main-short-time} is proved precisely by showing that the flows of the form~\eqref{eq:g2flow_general-intro} are amenable to a DeTurck trick (with some modifications, see Remark~\ref{rmk:STE}). We do this in Section~\ref{sec:symbols-short-time} via a careful analysis of the principal symbols of the linearizations of the various operators involved,  including an important estimate on the symbol of the linearization of the curvature-type operator $F$ in~\eqref{eq:symbDFineq}.

We present in Section~\ref{sec:review} an extensive and detailed discussion of foundational results on $\G$-structures, including many explicit computations deriving formulas that are needed throughout the paper. We also include several applications of this material to topics in $\G$-geometry not directly related to the main results of this paper. We hope that collecting such material here will prove to be useful to early career researchers in the field.

Several of the ``representation-theoretic'' results, especially about the decomposition of curvature and torsion, have appeared before in some form, for example in Bryant~\cite{Bryant} and Cleyton--Ivanov~\cite{CI}. Those treatments tend to use the full machinery of abstract representation theory (such as roots and weights). Our approach throughout the paper, and especially in Section~\ref{sec:more-rep-theory}, is very concrete, relying entirely on explicit computation in a local orthonormal frame using the fundamental contraction identities of a $\G$-structure. While this point of view is less elegant, it is more accessible, and it gives the reader a good sense of how the local geometry of $\G$-structures essentially comes from the contraction identities.

\textbf{Acknowledgements.} The authors thank Albert Chau, Jingyi Chen, Jason Lotay, Sebastien Picard, Henrique S\'a Earp, and Caleb Suan for useful discussions. Some of this research was conducted in 2017 during the Special Thematic Program on Geometric Analysis at the Fields Institute. The research of the third author is supported by an NSERC Discovery Grant.

\subsection{Notation and preliminaries} \label{sec:preliminaries}

Throughout the paper, $M$ is a smooth manifold equipped with a Riemannian metric $g$ that is usually but not always induced from a $\G$-structure $\ph$. We use the metric to identify vector fields with $1$-forms. We express tensors with respect to a local frame $\{ e_1, \ldots, e_n \}$ that is orthonormal with respect to $g$, and therefore all our indices are \emph{subscripts}, and any repeated indices are summed over all possible values from $1$ to $\dim M$. One needs to be especially careful when doing this when we \emph{differentiate a contraction}. For example, if we are differentiating then we need to recall that $A_{ii}$ really means $A_{ij } g^{ij}$. This is always made clear when it is an issue. We write $\nab{k}$ for $\nab{e_k}$ and $\partial_k$ for $\frac{\partial}{\partial x^k}$, so $\delt$ denotes $\frac{\partial}{\partial t}$. Whenever an operator like $\nab{p}$ or $\delt$ appears, it acts \emph{only} on the term immediately following, unless there are parentheses. Thus, for example, $\nab{p} \ph_{ijk} \ps_{qijk}$ means $(\nab{p} \ph_{ijk}) \ps_{qijk}$, and \emph{not} $\nab{p} (\ph_{ijk} \ps_{qijk})$.

Given a tensor bundle $E$ over $M$, we use $\Gamma(E)$ to denote the space of smooth sections of $E$. These spaces are denoted in other ways in some particular cases:
\begin{itemize} \setlength\itemsep{-1mm}
\item $\Omega^k = \Gamma( \Lambda^k (T^* M))$ is the space of smooth $k$-forms on $M$
\item $\Omega^{\bullet} = \oplus_{k=0}^n \Omega^k$ is the space of all smooth forms on $M$, where $n = \dim M$
\item $\vf = \Gamma(T M)$ is the space of smooth vector fields on $M$
\item $\cT^k = \Gamma( \otimes^k (T^* M))$ is the space of smooth covariant $k$-tensors on $M$
\item $\cS^k = \Gamma (\sym{k})$ is the space of smooth \emph{symmetric} $k$-tensors on $M$
\item $\cS^2 (\Lambda^2) = \Gamma( \Sym^2 (\Lambda^2 T^* M) )$ is the space of smooth $4$-tensors which satisfy all the symmetries of the Riemann curvature tensor except possibly the first Bianchi identity, namely $U \in \cS^2 (\Lambda^2)$ iff $U_{ijkl} = - U_{jikl} = - U_{ijlk} = U_{klij}$.
\end{itemize}
We regard a $k$-form on $M$ as a totally skew-symmetric $k$-tensor on $M$. Thus all inner products of tensors (even those of $k$-forms) are inner products as tensors. That is, if $\alpha = \tfrac{1}{k!} \alpha_{i_1 \cdots i_k} \dx{i_1} \w \cdots \w \dx{i_k}$ and $\beta = \tfrac{1}{k!} \beta_{j_1 \cdots j_k} \dx{j_1} \w \cdots \w \dx{j_k}$ are two $k$-forms, their pointwise inner product \emph{as tensors} is given by
\begin{equation} \label{eq:tensorsinnerproduct}
\langle \alpha , \beta \rangle = \alpha_{i_1 \cdots i_k} \beta_{i_1 \cdots i_k}.
\end{equation}
[In particular, there is no factor of $\frac{1}{k!}$ in~\eqref{eq:tensorsinnerproduct} as there is in~\cite{K-flows} which considers their pointwise inner product \emph{as $k$-forms}.]

Let $A = A_{ij} \dx{i} \otimes \dx{j} \in \cT^2$. We define $A^t \in \cT^2$ by $(A^t)_{ij} = A_{ji}$, the \emph{transpose} of $A$. Then we set
\begin{equation*}
A_{\symm} = \tfrac{1}{2} (A + A^t), \qquad A_{\skew} = \tfrac{1}{2} (A - A^t).
\end{equation*}
Thus we have $T^* M \otimes T^* M = \sym{2} \oplus \Lambda^2 (T^* M)$, and we can write
\begin{equation*}
A = A_{\symm} + A_{\skew}
\end{equation*}
uniquely in terms of a symmetric tensor $A_{\symm}$ and a $2$-form $A_{\skew}$. The \emph{trace} of $A$ with respect to $g$ is $\tr A = A_{ii} = \tr A_{\symm}$. We can hence further decompose
\begin{equation} \label{eq:Asym}
A_{\symm} = \tfrac{1}{n} (\tr A) g + A_0
\end{equation}
where $A_0 = A_{\symm} - \tfrac{1}{n} (\tr A) g$ is the \emph{traceless part} of $A_{\symm}$ and $n = \dim M$.

On $\cT^2$ we have a composition operation, which we denote by juxtaposition. Specifically, if $A, B \in \cT^2$, we define
\begin{equation} \label{eq:product}
(AB)_{ij} = A_{ip} B_{pj}.
\end{equation}
Let $A, B \in \cT^2$, and recall that
\begin{equation} \label{eq:tensinnerproduct}
\langle A, B \rangle = A_{ij} B_{ij}.
\end{equation}
It is easy to see that in the decomposition
\begin{equation} \label{eq:Adecomp}
A = \tfrac{1}{n} (\tr A) g + A_0 + A_{\skew}
\end{equation}
all three summands are mutually (pointwise) orthogonal.

The following identities are trivial to check:
\begin{equation} \label{eq:tensinnerproductidentities}
\begin{aligned}
\langle A , g \rangle & = \tr A, \\
\langle A, B \rangle & = \tr(A^t B) = \tr(B A^t) = \tr(B^t A) = \tr(A B^t), \\
\langle A^t, B^t \rangle & = \langle A, B \rangle.
\end{aligned}
\end{equation}
Note that the last equation above says that the transpose operation is an \emph{isometry}. 

We use $\cS^2_0$ to denote those sections in $\cS^2$ that are traceless. Thus we have
\begin{equation} \label{eq:tens-decomp}
\cT^2 = \{ f g \mid f \in \Omega^0 \} \oplus \cS^2_0 \oplus \Omega^2 \cong \Omega^0 \oplus \cS^0 \oplus \Omega^2
\end{equation}
and the above splitting is pointwise orthogonal with respect to the inner product~\eqref{eq:tensinnerproduct}.

Let $S \in \Gamma(E)$ where $E = \otimes^k T^* M$, and let $W \in \vf$. We have
\begin{align*}
( \cL_W S ) (X_1, \ldots, X_k) & = W \big( S(X_1, \ldots, X_k) \big) - S(\cL_W X_1, \ldots, X_k) - \cdots - S(X_1, \ldots, \cL_W X_k) \\
& = (\nab{W} S) (X_1, \ldots, X_k) + S (\nab{W} X_1, \ldots, X_k) + \cdots + S(X_1, \ldots, \nab{W} X_k) \\
& \qquad {} - S(\cL_W X_1, \ldots, X_k) - \cdots - S(X_1, \ldots, \cL_W X_k).
\end{align*}
From $\nab{W} X - \cL_W X = \nab{W} X - (\nab{W} X - \nab{X} W) = \nab{X} W$, we obtain
$$ ( \cL_W S ) (X_1, \ldots, X_k) = (\nab{W} S) (X_1, \ldots, X_k) + S(\nab{X_1} W, \ldots, X_k) + \cdots + S(X_1, \ldots \nab{X_k} W). $$
In terms of a local orthonormal frame, in the cases where $k=2$ or $k=3$, the above becomes
\begin{equation} \label{eq:Lie-derivative-frame}
\begin{aligned}
(\cL_W S)_{ij} & = W_p \nab{p} S_{ij} + \nab{i} W_p S_{pj} + \nab{j} W_p S_{ip}, \\
(\cL_W S)_{ijk} & = W_p \nab{p} S_{ijk} + \nab{i} W_p S_{pjk} + \nab{j} W_p S_{ipk} + \nab{k} W_p S_{ijp}.
\end{aligned}
\end{equation}

If $X$ is a vector field, its \emph{divergence} $\Div X$ is the function $\nab{i} X_i$, and it equals $- \dd^* X$, where $X$ is identified with its metric dual $1$-form. In terms of a local orthonormal frame we have $\Div X = \nab{p} X_p$. The divergence theorem says that $\int_M (\Div X) \vol = 0$ if $X$ is compactly supported.

Given a linear map $P \colon \Gamma(E) \to \Gamma(F)$, where $E$ and $F$ are tensor bundles over $M$, its \emph{formal adjoint} $P^* \colon \Gamma(F) \to \Gamma(E)$ is the unique linear map such that $\int_M \langle P K, L \rangle \vol = \int_M \langle K, P^* L \rangle \vol$, whenever $K \in \Gamma(E)$ and $L \in \Gamma(F)$ are compactly supported. By the divergence theorem, this means that the difference $\langle P K, L \rangle - \langle K, P^* L \rangle$ is the divergence of a compactly supported vector field on $M$.

Let $S \in \Gamma(E)$ where $E = \otimes^k T^* M$. Then $\nabla S \in \Gamma(T^* M \otimes E) = \Gamma(\otimes^{k+1} T^* M)$, where
\begin{equation} \label{eq:nablaS}
(\nabla S) (X, \cdot) = (\nab{X} S) (\cdot) \in \Gamma(E).
\end{equation}
If $k \geq 1$, the \emph{divergence} of $S$, denoted $\Div S$, is the element of $\Gamma( \otimes^{k-1} T^* M)$ obtained by contracting $\nabla S$ on the first two indices using the metric. That is,
$$ (\Div S)_{i_1 \cdots i_{k-1}} = \nab{p} S_{p i_1 \cdots i_{k-1}}. $$
Note that, under the identification of $1$-forms with vector fields, when $k=1$ this agrees with the usual divergence of vector fields described above.

We write $\nabla^2_{X,Y} S$ for $( \nabla(\nabla S) ) (X, Y, \cdot) \in \Gamma(E)$. Using~\eqref{eq:nablaS}, we have
\begin{align*}
(\nabla^2_{X,Y} S) (Z_1, \ldots, Z_k) & = ( \nabla(\nabla S) ) (X, Y, Z_1, \ldots, Z_k) = ( \nab{X} (\nabla S) ) (Y, Z_1, \ldots, Z_k) \\
& = X \big( (\nabla S) (Y, Z_1, \ldots, Z_k) \big) - (\nabla S) (\nab{X} Y, Z_1, \ldots, Z_k) \\
& {} \qquad - \textstyle{\sum_{i=1}^k} (\nabla S) (Y, Z_1, \ldots, \nab{X} Z_i, \ldots, Z_k) \\
& = X \big( (\nab{Y} S) (Z_1, \ldots, Z_k) \big) - (\nab{\nab{X} Y} S) (Z_1, \ldots, Z_k) \\
& {} \qquad - \textstyle{\sum_{i=1}^k} (\nab{Y} S) (Z_1, \ldots, \nab{X} Z_i, \ldots, Z_k) \\
& = (\nab{X} (\nab{Y} S)) (Z_1, \ldots, Z_k) - (\nab{\nab{X}Y} S) (Z_1, \ldots, Z_k),
\end{align*}
and hence
\begin{equation*}
\nabla^2_{X,Y} S = \nab{X} (\nab{Y} S) - \nab{\nab{X}Y} S.
\end{equation*}
Thus in a local frame we write $\nab{i} \nab{j} S$ for $\nabla^2_{ij} S = \nab{i}(\nab{j} S) - \Gamma_{ij}^k \nab{k} S$. Consequently, using the symbol $\Delta$ to denote the \emph{analyst's Laplacian}, we have $\Delta = \nab{k} \nab{k}$. Note that $\Delta$ is the negative of the \emph{rough Laplacian} $\nabla^* \nabla$, where $\nabla$ is the Levi-Civita connection of $g$. The Hodge Laplacian on forms is $\Delta_{\dd} = \dd \dd^* + \dd^* \dd$.

Our convention for labelling the Riemann curvature tensor is
\begin{equation} \label{eq:Riemann-convention}
R_{ijkl} = g ( \nab{e_i} (\nab{e_j} e_k) - \nab{e_j} (\nab{e_i} e_k) - \nab{[e_i, e_j]} e_k, e_l )
\end{equation}
in terms of a local orthonormal frame. With this convention, the Ricci tensor is $R_{jk} = R_{ljkl}$, and the Ricci identity for a $k$-tensor is
\begin{equation} \label{eq:ricci-identity}
\nab{p} \nab{q} S_{i_1 \cdots i_k} - \nab{q} \nab{p} S_{i_1 \cdots i_k} = - \sum_{l=1}^k R_{pqi_l m} S_{i_1 \cdots i_{l-1} m i_{l+1} \cdots i_k}.
\end{equation}
We also have the Riemannian second Bianchi identity
\begin{equation} \label{eq:riem2B1}
\nab{i} R_{jkab} + \nab{j} R_{kiab} + \nab{k} R_{ijab} = 0,
\end{equation}
which when contracted on $i,a$ gives
\begin{equation} \label{eq:riem2B2}
\nab{i} R_{ibjk} = \nab{k} R_{jb} - \nab{j} R_{kb}.
\end{equation}
A further contraction on $j,b$ gives the contracted second Bianchi identity
\begin{equation} \label{eq:riem2B3}
\nab{i} R_{ik} = \tfrac{1}{2} \nab{k} R.
\end{equation}

\section{Foundational results on $\G$-structures} \label{sec:review}

Here we review and establish various general results about a $\G$-structure $\ph$ that are needed later in the paper. The include the decomposition of forms and other algebraic relations; discussion of the torsion and its covariant derivative; infinitesimal $\G$-symmetries; the $\G$-Bianchi identity; the rough and Hodge Laplacians of $\ph$; and scaling of $\G$-structures. We also present three applications of this material: Taylor expansion of $\ph$; the optimal $\ph$-compatible connection; and conformal change of $\G$-structures. These applications are not needed in the paper but are included for the convenience of the reader, making this section a fairly complete introduction to computational aspects of the geometry of $\G$-structures.

\textbf{Note.} There are two common conventions for the orientation induced by a $\G$-structure. See~\cite{K-notes} for a detailed explanation of orientations and sign conventions in $\G$-geometry. If the reader prefers the opposite orientation to ours, they can probably safely change the sign of $\ps$, $\vol$, and $\st$ throughout.

\subsection{$\G$-structures and contraction identities} \label{sec:contractions}

To fix notation, we begin with a brief review of $\G$-structures. Then we discuss various fundamental contraction identities that are used frequently throughout the paper. Good references for $\G$-structures are~\cite{Bryant, Joyce, K-intro}.

Let $M^7$ be a smooth orientable $7$-manifold. A \emph{$\G$-structure} is a smooth $3$-form $\ph$ that is ``nondegenerate'' or ``positive'' in the sense that it determines a Riemannian metric $g_{\ph}$ and a volume form $\vol_{\ph}$ in a nonlinear way, via the following identity:
\begin{equation} \label{eq:nondegenerate}
(X \hk \ph) \w (Y \hk \ph) \w \ph = - 6 g_{\ph}(X, Y) \vol_{\ph},
\end{equation}
for any vector fields $X$ and $Y$ on $M$, where $ \hk $ denotes the interior product. From~\eqref{eq:nondegenerate} it is possible to extract the metric $g_{\ph}$ and the volume form $\vol_{\ph}$ separately. Such a structure is called a $\G$-structure because the stabilizer in $\mathrm{GL}(7, \R)$ of $\ph$ at a point $p \in M$ is the exceptional Lie group $\G$.

Since $M$ is assumed to be orientable, one can show that such $\G$-structures exist if and only if $M$ is \emph{spinnable}, which is equivalent to the vanishing of the second Stiefel-Whitney class of $M$. When $M$ admits $\G$-structures, the space $\Omega^3_+$ of nondegenerate $3$-forms on $M$ is an \emph{open} subset of $\Omega^3$.

Let $\st_{\ph}$ denote the Hodge star operator associated to the metric $g_{\ph}$ and volume form $\vol_{\ph}$. We denote by $\ps$ the dual $4$-form $\ps = \st_{\ph} \ph$. We note also that for any $\G$-structure we have $g_{\ph} (\ph, \ph) = 7$, so the Riemannian volume form of $g_{\ph}$ is $\vol_{\ph} = \frac{1}{7} \ph \w \st \ph$.

We collect here several important contraction identities involving the $3$-form $\ph$ and the $4$-form $\ps$ of a $\G$-structure, and some of their useful consequences. The proofs of~\eqref{eq:phph},~\eqref{eq:phps}, and~\eqref{eq:psps} can be found in~\cite{K-flows}.

Contractions of $\ph$ with $\ph$:
\begin{equation} \label{eq:phph}
\begin{aligned}
\ph_{ijk} \ph_{abk} & = g_{ia} g_{jb} - g_{ib} g_{ja} - \ps_{ijab}, \\
\ph_{ijk} \ph_{ajk} & = 6 g_{ia}, \\
\ph_{ijk} \ph_{ijk} & = 42.
\end{aligned}
\end{equation}

Contractions of $\ph$ with $\ps$:
\begin{equation} \label{eq:phps}
\begin{aligned}
\ph_{ijk} \ps_{abck} & = g_{ia} \ph_{jbc} + g_{ib} \ph_{ajc} + g_{ic} \ph_{abj} - g_{ja} \ph_{ibc} - g_{jb} \ph_{aic} - g_{jc} \ph_{abi}, \\
\ph_{ijk} \ps_{abjk} & = - 4 \ph_{iab}, \\ 
\ph_{ijk} \ps_{aijk} & = 0.
\end{aligned}
\end{equation}

Contractions of $\ps$ with $\ps$:
\begin{equation} \label{eq:psps}
\begin{aligned}
\ps_{ijkl} \ps_{abcl} & = - \ph_{ajk} \ph_{ibc} - \ph_{iak} \ph_{jbc} - \ph_{ija} \ph_{kbc} \\
& \qquad {} + g_{ia} g_{jb} g_{kc} + g_{ib} g_{jc} g_{ka} + g_{ic} g_{ja} g_{kb} - g_{ia} g_{jc} g_{kb} - g_{ib} g_{ja} g_{kc} - g_{ic} g_{jb} g_{ka} \\
& \qquad {} - g_{ia} \ps_{jkbc} - g_{ja} \ps_{kibc} - g_{ka} \ps_{ijbc} + g_{ab} \ps_{ijkc} - g_{ac} \ps_{ijkb}, \\
\ps_{ijkl} \ps_{abkl} & = 4 g_{ia} g_{jb} - 4 g_{ib} g_{ja} - 2 \ps_{ijab}, \\ 
\ps_{ijkl} \ps_{ajkl} & = 24 g_{ia}, \\
\ps_{ijkl} \ps_{ijkl} & = 168.
\end{aligned}
\end{equation}

We use the contraction identities~\eqref{eq:phph},~\eqref{eq:phps}, and~\eqref{eq:psps} throughout without specific mention.

\subsection{The decomposition of forms} \label{sec:forms}

In this section, we review the decompositions of the spaces of differential forms on a manifold with $\G$-structure. Translating back and forth between various isomorphic representations of $\G$ is essential throughout this paper. Much of this material is well known. We present it here in a more elegant and computationally efficient manner than it appears in~\cite{K-flows}. Moreover, we give explicit formulas, in terms of local orthonormal frames, for the decompositions of $3$-forms and $4$-forms into their orthogonal components. These formulas are harder to find in the existing literature. Later, in Section~\ref{sec:more-rep-theory}, we consider other important decompositions of spaces of tensors into $\G$ representations, which are then used in Section~\ref{sec:curvature-torsion} to decompose the second-order differential invariants of a $\G$-structure into irreducible $\G$-representations, and to derive the independent relations between these components.

On a manifold $(M, \ph)$ with $\G$-structure, the space $\Omega^{k}$ decomposes into subspaces, when $k=2,3,4,5$. Explicitly, we have
\begin{align*}
\Omega^2 & = \Omega^2_7 \oplus \Omega^2_{14}, & \qquad \Omega^5 & = \Omega^5_7 \oplus \Omega^5_{14}, \\
\Omega^3 & = \Omega^3_1 \oplus \Omega^3_7 \oplus \Omega^3_{27}, & \qquad \Omega^4 & = \Omega^4_1 \oplus \Omega^4_7 \oplus \Omega^4_{27},
\end{align*}
where $\Omega^k_l$ has pointwise dimension $l$ and the decomposition is orthogonal with respect to $g$. Note that the Hodge star $\st$ is an isometry and $\Omega^k_l = \st (\Omega^{7-k}_l)$.

The spaces $\Omega^k_7$ are all isomorphic to $\Omega^1$ and to $\vf$. The spaces $\Omega^k_{27}$ are isomorphic to $\cS_0$, the \emph{traceless} (with respect to $g$) symmetric $2$-tensors $\cS^2$ on $M$. These isomorphisms are crucial, and are described explicitly in the rest of this section. The reader is directed to~\cite[Section 2.2]{K-flows} and~\cite[Section 4.3]{K-intro} for any details that we omit here.

\textbf{The space $\Omega^2$ of $2$-forms.} Consider the following linear operator on $\Omega^2$:
\begin{align*}
\Pop & \colon \Omega^2 \to \Omega^2, \\
& \quad \beta \mapsto \Pop \beta = 2 \st (\ph \w \beta).
\end{align*}
[We have put a factor of $2$ in the definition of $\Pop$ to avoid a factor of $\tfrac{1}{2}$ in equation~\eqref{eq:Pdefn} below.]

In terms of local coordinates, let $\beta = \frac{1}{2} \beta_{ij} \dx{i} \w \dx{j}$. Then we have $\Pop \beta = \frac{1}{2} (\Pop \beta)_{ij} \dx{i} \w \dx{j}$, where
\begin{equation} \label{eq:Pdefn}
(\Pop \beta)_{ab} = \beta_{ij} \ps_{ijab} = \ps_{abij} \beta_{ij}.
\end{equation}
It is easy to check that
\begin{equation} \label{eq:Pop-coords}
\langle \Pop \beta, \mu \rangle = \langle \beta, \Pop \mu \rangle = \ps_{ijab} \beta_{ij} \mu_{ab},
\end{equation}
and hence $\Pop$ is (pointwise) self-adjoint and thus orthogonally diagonalizable with real eigenvalues.

We compute
\begin{align*}
(\Pop^2 \beta)_{ab} & = \ps_{abij} (\Pop \beta)_{ij} = \ps_{abij} \ps_{ijpq} \beta_{pq} \\
& = ( 4 g_{ap} g_{bq} - 4 g_{aq} g_{bp} - 2 \ps_{abpq} ) \beta_{pq} \\
& = 4 \beta_{ab} - 4 \beta_{ba} - 2 \ps_{abpq} \beta_{pq} = 8 \beta_{ab} - 2 (\Pop \beta)_{ab}.
\end{align*}
Thus we deduce that
\begin{equation} \label{eq:Pop-squared}
\Pop^2 = 8 \sI - 2\Pop, \quad \text{where $\sI \colon \Omega^2 \to \Omega^2$ is the identity operator.}
\end{equation}
so $(\Pop + 4 \sI) (\Pop - 2 \sI) = 0$. Therefore the eigenvalues of $\Pop$ are $-4$ and $+2$. We can thus describe the decomposition of $\Omega^2$ as follows:
\begin{align*}
\Omega^2_7 & = \{ \beta \in \Omega^2 \mid \Pop \beta = -4 \beta \}, \\
\Omega^2_{14} & = \{ \beta \in \Omega^2 \mid \Pop \beta = 2 \beta \},
\end{align*}
and we have
\begin{equation} \label{eq:omega2}
\Omega^2 = \Omega^2_7 \oplus \Omega^2_{14}.
\end{equation}

There are alternate descriptions of $\Omega^2_7$ and $\Omega^2_{14}$ that are also very important. First, suppose that $\beta_{ij} = X_k \ph_{kij} \in \Omega^2$ for some vector field $X$. Then we have
\begin{equation*}
(\Pop \beta)_{ab} = \ps_{abij} X_k \ph_{kij} = - 4 X_k \ph_{kab} = - 4 \beta_{ab}.
\end{equation*}
Thus by dimension count we conclude that
\begin{equation} \label{eq:omega27desc}
\begin{aligned}
\beta \in \Omega^2_7 & \, \Longleftrightarrow \, \beta_{ij} \ps_{ijab} = -4 \beta_{ab} \\
& \, \Longleftrightarrow \, \beta_{ij} = X_k \ph_{kij} \quad \text{for some $X \in \vf$},
\end{aligned}
\end{equation}
Suppose that $\beta \in \Omega^2_{14}$. Then we have $\beta_{ab} = \tfrac{1}{2} \ps_{abij} \beta_{ij}$. Hence, we obtain
\begin{equation*}
\beta_{ab} \ph_{abk} = \tfrac{1}{2} \beta_{ij} \ps_{abij} \ph_{abk} = - 2 \beta_{ij} \ph_{ijk},
\end{equation*}
so $\beta_{ab} \ph_{abk} = 0$. Again by dimension count we conclude that
\begin{equation} \label{eq:omega214desc}
\begin{aligned}
\beta \in \Omega^2_{14} & \, \Longleftrightarrow \, \beta_{ij} \ps_{ijab} = 2 \beta_{ab} \\
& \, \Longleftrightarrow \, \beta_{ij} \ph_{ijk} = 0.
\end{aligned}
\end{equation}

The invariant way of writing~\eqref{eq:omega27desc} and~\eqref{eq:omega214desc} is
\begin{align*}
\Omega^2_7 & = \{ X \hk \ph \mid X \in \vf \} = \{ \beta \in \Omega^2 \mid \Pop \beta = -4 \beta \}, \\
\Omega^2_{14} & = \{ \beta \in \Omega^2 \mid \beta \w \ps = 0 \} = \{ \beta \in \Omega^2 \mid \Pop \beta = 2 \beta \}.
\end{align*}

Consider~\eqref{eq:omega27desc} with $X$ replaced by $\tfrac{1}{6} X$, so that $\beta = \tfrac{1}{6} X \hk \ph$. Then $X$ can be reconstructed from $\beta$ as follows. If $\beta_{ij} = \frac{1}{6} X_k \ph_{kij}$, then from~\eqref{eq:phph} we find that $\beta_{ij} \ph_{ijp} = X_p$. Thus we have
\begin{equation} \label{eq:omega27vf}
\beta_{ab} = \tfrac{1}{6} X_l \ph_{lab} \, \Longleftrightarrow \, X_k = \beta_{ab} \ph_{abk}.
\end{equation}
Note that we have
\begin{equation} \label{eq:1727metric}
(\tfrac{1}{6} X_k \ph_{kij}) (\tfrac{1}{6} Y_l \ph_{lij}) = \tfrac{1}{6} X_k Y_k,
\end{equation}
which can also be written invariantly as
\begin{equation} \label{eq:1727metric-b}
\langle X \hk \ph, Y \hk \ph \rangle = 6 \langle X, Y \rangle.
\end{equation}
Define a map $\Vop \colon \cT^2 \to \Omega^1$ as follows. For $A \in \cT^2$, we set
\begin{equation} \label{eq:vecA}
(\Vop A)_k = A_{ij} \ph_{ijk}.
\end{equation}

It is clear from~\eqref{eq:omega214desc} that $\ker \Vop = \cS^2 \oplus \Omega^2_{14}$, so only the $\Omega^2_7$ part of $A$ contributes to $\Vop A$, and we call it the \emph{vector part} of $A$. Then~\eqref{eq:omega27vf} can be rewritten as
\begin{equation} \label{eq:vec-transform}
A_7 = \tfrac{1}{6} (\Vop A) \hk \ph, \qquad \Vop (X \hk \ph) = 6 X,
\end{equation}
and~\eqref{eq:1727metric-b} becomes
\begin{equation} \label{eq:vec-metric}
\langle \Vop A, \Vop B \rangle = 6 \langle A_7, B_7 \rangle \quad \text{for $A, B \in \cT^2$}.
\end{equation}

Let $\pi_{7}$ and $\pi_{14}$ denote the orthogonal projections from $\Omega^2$ to $\Omega^2_7$ and $\Omega^2_{14}$, respectively. We write $\beta_7 = \pi_7 \beta$ and $\beta_{14} = \pi_{14} \beta$ for any $\beta \in \Omega^2$. Then we have
\begin{equation} \label{eq:Ponbeta}
\Pop \beta = -4 \beta_7 + 2 \beta_{14},
\end{equation}
from which it follows that
\begin{equation} \label{eq:omega2-proj}
\beta_7 = \tfrac{1}{6} (2 \beta - \Pop \beta), \qquad \beta_{14} = \tfrac{1}{6} (4 \beta + \Pop \beta).
\end{equation}

\textbf{The spaces $\Omega^3$ of $3$-forms and $\Omega^4$ of $4$-forms.} To describe the decomposition of the spaces $\Omega^3$ and $\Omega^4$, we follow the approach of~\cite[Section 2.2]{K-flows}, but with improved notation and simplified arguments that apply to both symmetric and to skew-symmetric tensors. 

Let $\sigma \in \Omega^k$. Given $A = A_{ij} \dx{i} \otimes \dx{j} \in \cT^2$, we define
\begin{equation} \label{eq:diamond-coords}
(A \diamond \sigma)_{i_1 i_2 \cdots i_k} = A_{i_1 p} \sigma_{p i_2 \cdots i_k} + A_{i_2 p} \sigma_{i_1 p i_3 \cdots i_k} + \cdots + A_{i_k p} \sigma_{i_1 i_2 \cdots i_{k-1} p} .
\end{equation}
Note from~\eqref{eq:diamond-coords} that if $A = g$ is the metric, we get
\begin{equation} \label{eq:gdiamond}
g \diamond \sigma = k \sigma, \qquad \text{for $\sigma \in \Omega^k$}.
\end{equation}

By the orthogonal decomposition~\eqref{eq:omega2} of $\Omega^2$, we can further decompose~\eqref{eq:tens-decomp} as
\begin{equation} \label{eq:tens-decomp2}
\cT^2 \cong \Omega^0 \oplus \cS^2_0 \oplus \Omega^2_7 \oplus \Omega^2_{14}.
\end{equation}
With respect to this splitting, we can write
\begin{equation} \label{eq:A-decomp2}
A = \tfrac{1}{7} (\tr A) g + A_{27} + A_7 + A_{14},
\end{equation}
where $A_{27}$ is the traceless symmetric part of $A$. We can extend the action of $\Pop$ in~\eqref{eq:Pop-coords} to all of $\cT^2$, by defining
\begin{equation} \label{eq:Pop-defn2}
(\Pop A)_{ab} = A_{ij} \ps_{ijab}.
\end{equation}
Then it is easy to see that $\ker \Pop = \cS$ and
\begin{equation} \label{eq:Pop-action}
\Pop A = \Pop( \tfrac{1}{7} (\tr A) g + A_{27} + A_7 + A_{14} ) = - 4 A_7 + 2 A_{14}.
\end{equation}

By~\eqref{eq:diamond-coords}, we have two \emph{linear maps} $\cT^2 \to \Omega^k$ for $k = 3, 4$ given by
\begin{align*}
A & \mapsto A \diamond \ph, \\
A & \mapsto A \diamond \ps,
\end{align*}
where explicitly
\begin{align} \label{eq:diamond3}
(A \diamond \ph)_{ijk} & = A_{ip} \ph_{pjk} + A_{jp} \ph_{ipk} + A_{kp} \ph_{ijp}, \\ \label{eq:diamond4}
(A \diamond \ps)_{ijkl} & = A_{ip} \ps_{pjkl} + A_{jp} \ps_{ipkl} + A_{kp} \ps_{ijpl} + A_{lp} \ps_{ijkp}.
\end{align}

\begin{prop} \label{prop:ABinnerproduct}
Let $A$ and $B$ be sections of $\cT^2$. Then with respect to the decompositions~\eqref{eq:A-decomp2} for $A$ and $B$, we have
\begin{align} \label{eq:ABph}
\langle A \diamond \ph, B \diamond \ph \rangle & = \tfrac{54}{7} (\tr A) (\tr B) + 12 \langle A_{27}, B_{27} \rangle + 36 \langle A_7, B_7 \rangle, \\ \label{eq:ABps}
\langle A \diamond \ps, B \diamond \ps \rangle & = \tfrac{384}{7} (\tr A) (\tr B) + 48 \langle A_{27}, B_{27} \rangle + 144 \langle A_7, B_7 \rangle.
\end{align}
\end{prop}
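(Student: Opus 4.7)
The plan is to derive a clean identity that expresses $\langle A\diamond\ph, B\diamond\ph\rangle$ (and similarly for $\ps$) directly in terms of $\tr$, the operator $\Pop$ of \eqref{eq:Pop-defn2}, and the usual inner product on $\cT^2$, then to specialize to the decomposition \eqref{eq:A-decomp2} using the $\Pop$-eigenvalue table $\Pop g = 0$, $\Pop A_{27} = 0$, $\Pop A_7 = -4A_7$, $\Pop A_{14} = 2A_{14}$. This approach naturally exhibits the cancellation that causes the $A_{14}$ coefficient to vanish in the final formulas.

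The first step is to compute the partial contraction $\Phi(\gamma)_{ab} := \gamma_{aij}\ph_{bij}$ for $\gamma = A\diamond\ph \in \Omega^3$. Expanding $\gamma_{aij}$ via \eqref{eq:diamond3} produces three summands; in each I would apply \eqref{eq:phph} to the resulting $\ph\ph$ product. The first summand gives $6A_{ab}$, and the other two each contribute a $g\otimes g$ part (producing $g_{ab}\tr A$ and $-A_{ab}$) plus a $\ps$-contraction of the form $A_{ip}\ps_{apbi}$ or $A_{jp}\ps_{pajb}$. The key observation is that both $\ps$-contractions coincide with $(\Pop A)_{ab}$ once one rearranges the $\ps$-antisymmetries (e.g., $\ps_{pajb} = \ps_{jpab}$ via two adjacent transpositions), producing the clean identity
\[
\gamma_{aij}\ph_{bij} = 4A_{ab} + 2(\tr A)g_{ab} - 2(\Pop A)_{ab}.
\]
Since $\gamma = A\diamond\ph$ is totally antisymmetric in $(i, j, k)$, the three cyclic summands of $B\diamond\ph$ contribute equally when contracted with $\gamma$ (a standard dummy-relabeling argument), giving $\langle A\diamond\ph, B\diamond\ph\rangle = 3 A_{ip}\Phi(B\diamond\ph)_{ip}$, and therefore
\[
\langle A\diamond\ph, B\diamond\ph\rangle = 12\langle A, B\rangle + 6(\tr A)(\tr B) - 6\langle A, \Pop B\rangle.
\]

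The parallel argument for \eqref{eq:ABps}, using \eqref{eq:phps} and \eqref{eq:psps} in place of \eqref{eq:phph} and contracting three indices instead of two, yields the analogue
\[
\gamma'_{aijk}\ps_{bijk} = 12A_{ab} + 12(\tr A)g_{ab} - 6(\Pop A)_{ab}
\]
for $\gamma' = A\diamond\ps \in \Omega^4$, from which the four cyclic contributions of $B\diamond\ps$ and total antisymmetry give
\[
\langle A\diamond\ps, B\diamond\ps\rangle = 48\langle A, B\rangle + 48(\tr A)(\tr B) - 24\langle A, \Pop B\rangle.
\]

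Finally, substitute the orthogonal decomposition \eqref{eq:A-decomp2} into both $\langle A, B\rangle$ and $\langle A, \Pop B\rangle = -4\langle A_7, B_7\rangle + 2\langle A_{14}, B_{14}\rangle$ (using the $\Pop$-eigenvalues above) and collect terms. In the $\ph$ case the coefficients work out to $\tfrac{12}{7} + 6 = \tfrac{54}{7}$ for $(\tr A)(\tr B)$, $12$ for $\langle A_{27}, B_{27}\rangle$, $12 + 24 = 36$ for $\langle A_7, B_7\rangle$, and crucially $12 - 12 = 0$ for $\langle A_{14}, B_{14}\rangle$; the $\ps$ case analogously yields $\tfrac{48}{7} + 48 = \tfrac{384}{7}$, $48$, $48 + 96 = 144$, and $48 - 48 = 0$. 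The main technical obstacle is recognizing the $\Pop$-structure in the two $\ps$-contractions arising in step 1, because the free index $b$ is interposed between the two dummies contracted with $A$, obscuring the resemblance to the defining formula \eqref{eq:Pop-defn2}; the essential trick is the $\ps$-rearrangement $\ps_{pajb} = \ps_{jpab}$.
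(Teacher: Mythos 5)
Your proposal is correct and essentially matches the paper's own proof: both reduce by skew-symmetry to a single $A$-term, expand the $\ph\ph$ (resp.\ $\ps\ps$) contractions to arrive at the same intermediate identity $\langle A \diamond \ph, B \diamond \ph \rangle = 12 \langle A, B \rangle + 6 (\tr A)(\tr B) - 6 \langle \Pop A, B \rangle$ (and its $\ps$-analogue), and then substitute the orthogonal decomposition~\eqref{eq:A-decomp2} together with $\Pop B = -4 B_7 + 2 B_{14}$. The only difference is organizational --- you isolate the partial contraction $\gamma_{aij}\ph_{bij}$ as a separate identity before pairing with $A$, whereas the paper carries out the same expansion inline after further using skew-symmetry to consolidate the $B$-terms; the underlying computation and cancellation of the $\langle A_{14}, B_{14}\rangle$ coefficient are identical.
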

\begin{proof}
We use~\eqref{eq:tensorsinnerproduct} and~\eqref{eq:diamond3} to compute
\begin{align*}
\langle A \diamond \ph, B \diamond \ph \rangle & = (A \diamond \ph)_{ijk} (B \diamond \ph)_{ijk} \\
& = ( A_{ip} \ph_{pjk} + A_{jp} \ph_{ipk} + A_{kp} \ph_{ijp} ) (B \diamond \ph)_{ijk} \\
& = 3 A_{ip} \ph_{pjk} (B \diamond \ph)_{ijk}
\end{align*}
using the fact that $(B \diamond \ph)_{ijk}$ is skew-symmetric in its indices. Continuing in the same fashion we find
\begin{align*}
\langle A \diamond \ph, B \diamond \ph \rangle & = 3 A_{ip} \ph_{pjk} ( B_{iq} \ph_{qjk} + B_{jq} \ph_{iqk} + B_{kq} \ph_{ijq} ) \\
& = 3 A_{ip} \ph_{pjk} ( B_{iq} \ph_{qjk} + 2 B_{jq} \ph_{iqk}).
\end{align*}
Now we expand the contractions of $\ph$ with itself, to obtain
\begin{align*}
\langle A \diamond \ph, B \diamond \ph \rangle & = 3 A_{ip} B_{iq} (6 g_{pq}) + 6 A_{ip} B_{jq} ( g_{pi} g_{jq} - g_{pq} g_{ji} - \ps_{pjiq}) \\
& = 18 A_{ip} B_{ip} + 6 A_{ii} B_{jj} - 6 A_{iq} B_{iq} - 6 A_{ip} B_{jq} \ps_{ipjq} \\
& = 12 \langle A, B \rangle + 6 (\tr A) (\tr B) - 6 \langle \Pop A, B \rangle
\end{align*}
using the linear map $\Pop$ from~\eqref{eq:Pop-defn2}. Applying~\eqref{eq:Pop-action} and the orthogonality of the decompositions~\eqref{eq:A-decomp2} for $A$ and $B$, we conclude that
\begin{align*}
\langle A \diamond \ph, B \diamond \ph \rangle & = 12 \big( \tfrac{1}{49} (\tr A) (\tr B) \langle g, g \rangle + \langle A_{27}, B_{27} \rangle + \langle A_7, B_7 \rangle + \langle A_{14}, B_{14} \rangle \big) \\
& \qquad {} + 6 (\tr A) (\tr B) - 6 \langle -4 A_7 + 2 A_{14}, B_7 + B_{14} \rangle \\
& = \tfrac{12}{7} (\tr A) (\tr B) + 12 \langle A_{27}, B_{27} \rangle + 12 \langle A_7, B_7 \rangle + 12 \langle A_{14}, B_{14} \rangle \\
& \qquad {} + 6 (\tr A) (\tr B) + 24 \langle A_7, B_7 \rangle - 12 \langle A_{14}, B_{14} \rangle \\
& = \tfrac{54}{7} (\tr A) (\tr B) + 12 \langle A_{27}, B_{27} \rangle + 36 \langle A_7, B_7 \rangle,
\end{align*}
which establishes~\eqref{eq:ABph}. Equation~\eqref{eq:ABps} is proved in an identical manner using the identities in~\eqref{eq:psps}.
\end{proof}

\begin{cor} \label{cor:ABinnerproduct}
The $2$-tensor $A$ lies in $\Omega^2_{14}$ if and only if $A \diamond \ph = 0$ or equivalently $A \diamond \ps = 0$. Moreover, when restricted to the subspace $\cS^2 \oplus \Omega^2_7$ of $\cT^2$, that is to the pointwise orthogonal complement of $\Omega^2_{14}$, the maps $A \mapsto A \diamond \ph$ and $A \mapsto A \diamond \ps$ are linear isomorphisms onto $\Omega^3$ and $\Omega^4$, respectively.
\end{cor}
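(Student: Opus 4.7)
The plan is to extract everything directly from Proposition~\ref{prop:ABinnerproduct} by specializing $B = A$, and then finish with a dimension count. First I would set $B = A$ in equations~\eqref{eq:ABph} and~\eqref{eq:ABps} to obtain
\begin{align*}
|A \diamond \ph|^2 & = \tfrac{54}{7} (\tr A)^2 + 12 |A_{27}|^2 + 36 |A_7|^2, \\
|A \diamond \ps|^2 & = \tfrac{384}{7} (\tr A)^2 + 48 |A_{27}|^2 + 144 |A_7|^2.
\end{align*}
Because all three summands on the right-hand side of each expression are nonnegative, each of $A \diamond \ph = 0$ and $A \diamond \ps = 0$ forces $\tr A = 0$, $A_{27} = 0$, and $A_7 = 0$. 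By the decomposition~\eqref{eq:A-decomp2}, this is precisely the condition $A \in \Omega^2_{14}$. Conversely, if $A \in \Omega^2_{14}$, then each of $\tr A$, $A_{27}$, $A_7$ vanishes, so both norms above vanish, giving $A \diamond \ph = 0$ and $A \diamond \ps = 0$. This proves the first assertion.

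For the isomorphism statement, the kernel computation above shows that $A \mapsto A \diamond \ph$ and $A \mapsto A \diamond \ps$ both have kernel exactly $\Omega^2_{14}$ inside $\cT^2$, so their restrictions to the pointwise orthogonal complement $\cS^2 \oplus \Omega^2_7$ are injective. The plan is then to finish with a dimension count: the pointwise dimension of $\cS^2 \oplus \Omega^2_7$ equals $28 + 7 = 35$, which matches the pointwise dimensions of both $\Omega^3$ and $\Omega^4$, each equal to $\binom{7}{3} = 35$. An injective linear map between vector spaces of the same finite dimension is an isomorphism, so we conclude in both cases.

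There is no real obstacle here; the work has already been done in Proposition~\ref{prop:ABinnerproduct}. The only thing to be mindful of is that the maps $A \mapsto A \diamond \ph$ and $A \mapsto A \diamond \ps$ do land in $\Omega^3$ and $\Omega^4$ respectively, which is transparent from the coordinate formulas~\eqref{eq:diamond3} and~\eqref{eq:diamond4}: the skew-symmetry of $\ph$ and $\ps$ together with the sum over cyclic insertions of the index $p$ ensures total skew-symmetry of the output.
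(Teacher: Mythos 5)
Your proposal is correct and follows essentially the same route as the paper's proof: setting $B = A$ in Proposition~\ref{prop:ABinnerproduct}, using positivity of the coefficients to identify the kernel as $\Omega^2_{14}$, and concluding by a dimension count ($35 = 35$). The extra remark about why $\diamond$ lands in $\Omega^3$ and $\Omega^4$ is a welcome clarification but is already implicit in the paper's setup.
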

\begin{proof}
Equation~\eqref{eq:ABph} with $A = B$ gives
$$ | A \diamond \ph |^2 = \tfrac{54}{7} (\tr A)^2 + 12 | A_{27} |^2 + 36 | A_7 |^2, $$
from which we get $A \diamond \ph = 0$ if and only if $A = A_{14}$, establishing the first claim. Moreover, if $A_{14} = 0$, then $A \diamond \ph = 0$ if and only if $A = 0$,
Hence, the map $A \mapsto A \diamond \ph$ is injective on the orthogonal complement of $\Omega^2_{14}$. By dimension count, both sides are (pointwise) $35$-dimensional, so the map is a linear isomorphism. The argument for $A \mapsto A \diamond \ps$ is identical, because all that matters is that the coefficients in~\eqref{eq:ABph} and~\eqref{eq:ABps} are all positive.
\end{proof}

Note that a consequence of Corollary~\ref{cor:ABinnerproduct} is that
\begin{equation} \label{eq:omega214-diamond}
\begin{aligned}
A_{ij} \in \Omega^2_{14} & \, \Longleftrightarrow \, A_{ip} \ph_{pjk} + A_{jp} \ph_{ipk} + A_{kp} \ph_{ijp} = 0 \\
& \, \Longleftrightarrow \, A_{ip} \ps_{pjkl} + A_{jp} \ps_{ipkl} + A_{kp} \ps_{ijpl} + A_{lp} \ps_{ijkp} = 0.
\end{aligned}
\end{equation}

We have thus established the following decompositions:
\begin{equation*}
\Omega^3 = \Omega^3_1 \oplus \Omega^3_7 \oplus \Omega^3_{27}, \qquad
\Omega^4 = \Omega^4_1 \oplus \Omega^4_7 \oplus \Omega^4_{27},
\end{equation*}
where the decompositions are orthogonal with respect to the pointwise inner product on forms induced from $g$. Explicitly, using~\eqref{eq:gdiamond}, we have
\begin{equation} \label{eq:omega34decomp}
\begin{aligned}
\Omega^3_1 & = \{ f \ph \mid f \in \Omega^0 \}, & 
\Omega^4_1 & = \{ f \ps \mid f \in \Omega^0 \}, \\
\Omega^3_7 & = \{ A \diamond \ph \mid A \in \Omega^2_7 \}, &
\Omega^4_7 & = \{ A \diamond \ps \mid A \in \Omega^2_7 \}, \\
\Omega^3_{27} & = \{ A \diamond \ph \mid A \in \cS^2_0 \}, &
\Omega^4_{27} & = \{ A \diamond \ps \mid A \in \cS^2_0 \}.
\end{aligned}
\end{equation}

Next we compute the inverse of the isomorphisms $\cS^2 \oplus \Omega^2_7 \overset{\cong}{\to} \Omega^k$ where $k = 3$ or $k=4$.

\begin{cor} \label{cor:diamondinverse}
Let $\gamma \in \Omega^3$ and let $\eta \in \Omega^4$. We know that $\gamma = A \diamond \ph$ and $\eta = B \diamond \ps$ for some unique smooth sections $A = \tfrac{1}{7} (\tr A) g + A_{27} + A_7$ and $B = \tfrac{1}{7} (\tr B) g + B_{27} + B_7$ in $\cS^2 \oplus \Omega^2_7$. Define $\gamma^{\ph}$ and $\eta^{\ps}$ in $\cT^2$ by
\begin{equation*}
\gamma^{\ph}_{ia} = \gamma_{ijk} \ph_{ajk}, \qquad \eta^{\ps}_{ia} = \eta_{ijkl} \ps_{ajkl}.
\end{equation*}
Then we have
\begin{equation} \label{eq:diamondinverse-3}
\tr A = \tfrac{1}{18} \tr \gamma^{\ph}, \qquad A_{27} = \tfrac{1}{4} \gamma^{\ph}_{27}, \qquad A_7 = \tfrac{1}{12} \gamma^{\ph}_7,
\end{equation}
and
\begin{equation} \label{eq:diamondinverse-4}
\tr B = \tfrac{1}{96} \tr \eta^{\ps}, \qquad B_{27} = \tfrac{1}{12} \eta^{\ps}_{27}, \qquad B_7 = \tfrac{1}{36} \eta^{\ps}_7.
\end{equation}
\end{cor}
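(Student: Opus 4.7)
My plan is to compute $\gamma^{\ph}_{ia}$ (and analogously $\eta^{\ps}_{ia}$) directly from the defining formulas~\eqref{eq:diamond3}--\eqref{eq:diamond4} and the contraction identities~\eqref{eq:phph}--\eqref{eq:psps}, express the result in terms of the decomposition pieces $\tr A$, $A_{27}$, $A_7$ (respectively for $B$), and then invert. Writing $\gamma_{ijk} = A_{ip}\ph_{pjk} + A_{jp}\ph_{ipk} + A_{kp}\ph_{ijp}$, contraction with $\ph_{ajk}$ splits into three pieces. The first, $A_{ip}\ph_{pjk}\ph_{ajk}$, collapses via the second identity of~\eqref{eq:phph} to $6A_{ia}$. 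The other two each involve a $\ph\ph$ contraction on a single index, producing (after using cyclic symmetry of $\ph$ to line up the free indices) summands of the form $A_{ia}$, $(\tr A)g_{ia}$, and a $\ps$-contraction.

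The crucial observation is that, after repeatedly using the total antisymmetry of $\ps$ to reorder indices, each of the two $\ps$-contractions that arises equals $(\Pop A)_{ia}$, where $\Pop$ is the operator of~\eqref{eq:Pop-defn2}. Assembling all three pieces yields the identity
\begin{equation*}
\gamma^{\ph} \;=\; 4A + 2(\tr A)\, g - 2\Pop A.
\end{equation*}
Substituting the orthogonal decomposition $A = \tfrac{1}{7}(\tr A)g + A_{27} + A_7$ (with $A_{14}=0$ by hypothesis) and using $\Pop A = -4A_7$ from~\eqref{eq:Pop-action}, this simplifies to $\gamma^{\ph} = \tfrac{18}{7}(\tr A)\,g + 4A_{27} + 12A_7$. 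The decomposition~\eqref{eq:tens-decomp2} is pointwise orthogonal, so taking the trace and projecting onto the $\cS^2_0$ and $\Omega^2_7$ summands extracts~\eqref{eq:diamondinverse-3}.

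The computation for $\eta^{\ps}$ follows exactly the same recipe: expand $(B\diamond\ps)_{ijkl}\ps_{ajkl}$ as a sum of four terms, reduce each of them using~\eqref{eq:phps} and~\eqref{eq:psps}, and again recognize each residual $\ps$-contraction as an instance of $\Pop B$ (up to the relevant constants coming from the $\ps\ps$ identities). Because the decomposition of $B$ and the projection operator $\Pop$ are the same as before, only the numerical coefficients change. Collecting terms and simplifying yields $\eta^{\ps} = \tfrac{96}{7}(\tr B)\, g + 12 B_{27} + 36 B_7$, from which~\eqref{eq:diamondinverse-4} follows by the same projection argument.

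The main obstacle is purely bookkeeping: systematically applying total antisymmetry of $\ps$ (together with the cyclic symmetry of $\ph$) to rewrite each nested contraction in the canonical form $A_{kl}\ps_{kl\cdot\cdot}$ so that $\Pop$ can be recognized, and tracking signs carefully. As a sanity check, the resulting formulas must be consistent with Proposition~\ref{prop:ABinnerproduct}: the antisymmetry of $\gamma$ and $\eta$ together with~\eqref{eq:diamond3}--\eqref{eq:diamond4} give $\langle\gamma,A\diamond\ph\rangle = 3\langle A,\gamma^{\ph}\rangle$ and $\langle\eta,B\diamond\ps\rangle = 4\langle B,\eta^{\ps}\rangle$, so substituting $\gamma = A\diamond\ph$ and $\eta = B\diamond\ps$ into the above identities for $\gamma^{\ph}$ and $\eta^{\ps}$ reproduces equations~\eqref{eq:ABph} and~\eqref{eq:ABps} exactly.
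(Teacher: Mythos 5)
Your proof is correct, but it takes a genuinely different route from the paper's. The paper reuses Proposition~\ref{prop:ABinnerproduct} via a duality argument: it pairs $\gamma$ against an arbitrary $C \diamond \ph$ with $C \in \cS^2 \oplus \Omega^2_7$, observes that $\langle \gamma, C \diamond \ph \rangle = 3\langle \gamma^{\ph}, C\rangle$, expands the right side using the orthogonal decomposition of $\gamma^{\ph}$, compares coefficient-by-coefficient with the already-known inner-product formula~\eqref{eq:ABph}, and concludes by nondegeneracy. You instead compute $\gamma^{\ph}$ directly by expanding $(A \diamond \ph)_{ijk}\ph_{ajk}$ and recognizing each residual $\ps$-contraction as $(\Pop A)_{ia}$, arriving at the closed-form identities $\gamma^{\ph} = 4A + 2(\tr A)g - 2\Pop A$ and $\eta^{\ps} = 12B + 12(\tr B)g - 6\Pop B$ (I verified both, including the $\eta^{\ps}$ coefficients you stated), which reduce to the claimed formulas once $\Pop A = -4A_7$ is substituted. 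Your approach demands more careful sign-tracking through the total antisymmetry of $\ps$, but it produces explicit tensor identities for $\gamma^{\ph}$ and $\eta^{\ps}$ that are of independent interest and are not recorded in the paper; the paper's duality argument avoids the index gymnastics entirely at the cost of not exhibiting such a formula. Your closing remark that the result must be consistent with Proposition~\ref{prop:ABinnerproduct} via $\langle\gamma, A\diamond\ph\rangle = 3\langle A,\gamma^{\ph}\rangle$ is exactly the mechanism the paper exploits in reverse, so the two proofs are in a precise sense dual to one another.
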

\begin{proof}
Let $C = \frac{1}{7} (\tr C) g + C_{27} + C_7 \in \cS^2 \oplus \Omega^2_7$ be arbitrary. From~\eqref{eq:ABph} we have
\begin{equation} \label{eq:diamond-inverse-temp}
\langle A \diamond \ph, C \diamond \ph \rangle = \tfrac{54}{7} (\tr A) (\tr C) + 12 \langle A_{27}, C_{27} \rangle + 36 \langle A_7, C_7 \rangle.
\end{equation}
We compute
\begin{align*}
\langle \gamma, C \diamond \ph \rangle & = \gamma_{ijk} (C_{ip} \ph_{pjk} + C_{jp} \ph_{ipk} + C_{kp} \ph_{ijp}) \\
& = 3 \gamma_{ijk} C_{ip} \ph_{pjk} = 3 \gamma^{\ph}_{ip} C_{ip} = 3 \langle \gamma^{\ph}, C \rangle \\
& = 3 \langle \tfrac{1}{7} (\tr \gamma^{\ph}) g + \gamma^{\ph}_{27} + \gamma^{\ph}_7, \tfrac{1}{7} (\tr C) g + C_{27} + C_7 \rangle \\
& = \tfrac{3}{7} (\tr \gamma^{\ph}) (\tr C) + 3 \langle \gamma^{\ph}_{27}, C_{27} \rangle + 3 \langle \gamma^{\ph}_7, C_7 \rangle.
\end{align*}
Comparing the above expression with~\eqref{eq:diamond-inverse-temp}, which also holds for all $C$, we deduce from nondegeneracy that
$$ 54 \tr A = 3 \tr \gamma^{\ph}, \qquad 12 A_{27} = 3 \gamma^{\ph}_{27}, \qquad 36 A_7 = 3 \gamma^{\ph}_7, $$ 
which is precisely~\eqref{eq:diamondinverse-3}. Equation~\eqref{eq:diamondinverse-4} is established in the same way using~\eqref{eq:ABps}.
\end{proof}

\begin{rmk} \label{rmk:omega34}
Corollary~\ref{cor:diamondinverse} essentially says the following. The components in $\Omega^0 \oplus \cS^2_0 \oplus \Omega^2_7$ of the element $A \in \cT^2$ such that $A \diamond \ph = \gamma \in \Omega^3$ correspond (up to some explicit constant factors) to the components of $\gamma^{\ph} \in \cT^2$. Similarly for $\eta \in \Omega^4$ with $\eta^{\ps} \in \cT^2$. It is not obvious but one can check using~\eqref{eq:omega27desc} that the elements $\gamma^{\ph}$ and $\eta^{\ps}$ of $\cT^2$ have no $\Omega^2_{14}$ component.
\end{rmk}

\begin{cor} \label{cor:sevenreps}
Let $X \in \vf$ be a smooth vector field on $M$. The $3$-form $\gamma = X \hk \ps$ can be written as $A \diamond \ph$ for $A = - \frac{1}{3} X \hk \ph \in \Omega^2_7$. This can also be written in the useful form $(X \hk \ph) \diamond \ph = - 3 X \hk \ps$.
\end{cor}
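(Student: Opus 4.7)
The two assertions are equivalent up to the scalar factor $-3$, so it suffices to verify the displayed identity $(X \hk \ph) \diamond \ph = -3 X \hk \ps$ directly in a local orthonormal frame. Setting $A_{ij} = (X \hk \ph)_{ij} = X_l \ph_{lij}$, formula~\eqref{eq:diamond3} gives
\[
(A \diamond \ph)_{ijk} = X_l \big( \ph_{lip} \ph_{pjk} + \ph_{ljp} \ph_{ipk} + \ph_{lkp} \ph_{ijp} \big).
\]
After shuffling indices using the total antisymmetry of $\ph$ so that the contracted index sits last in each factor, each of the three summands becomes an instance of the first contraction identity in~\eqref{eq:phph} and so expands as a sum of two $g g$ terms and one $\ps$ term. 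The plan is then to check that the six $g g$ terms cancel in pairs purely by the symmetry of $g$, while the three $\ps$ terms all reduce, via the antisymmetry of $\ps$, to multiples of $\ps_{lijk}$, summing to $-3 \ps_{lijk}$. Contracting with $X_l$ produces $-3 (X \hk \ps)_{ijk}$, which is the desired identity; dividing by $-3$ yields $A \diamond \ph = X \hk \ps$ for $A = -\tfrac{1}{3} X \hk \ph$, and $A \in \Omega^2_7$ is immediate from the characterization~\eqref{eq:omega27desc}.

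The only real bookkeeping step is the sign accounting in the reshuffling of $\ph$ indices and the comparison of the three $\ps$ tensors $\ps_{lijk}$, $\ps_{ljik}$, $\ps_{lkij}$; this is straightforward but must be done carefully. As an alternative (and essentially equivalent) approach, one can apply Corollary~\ref{cor:diamondinverse} to $\gamma = X \hk \ps$: compute $\gamma^{\ph}_{ia} = X_l \ps_{lijk} \ph_{ajk}$ and apply the contraction $\ph_{ijk} \ps_{abjk} = -4 \ph_{iab}$ from~\eqref{eq:phps} to obtain $\gamma^{\ph}_{ia} = -4 X_l \ph_{ial}$. This tensor is of the form $Y \hk \ph$, hence lies entirely in $\Omega^2_7$ by~\eqref{eq:omega27desc}, so $\tr \gamma^{\ph} = 0$ and $\gamma^{\ph}_{27} = 0$; then~\eqref{eq:diamondinverse-3} gives $A = A_7 = \tfrac{1}{12} \gamma^{\ph} = -\tfrac{1}{3} X \hk \ph$, as claimed.
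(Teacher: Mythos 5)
Your proof plan is correct, and you in fact present two valid routes.

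Your primary route — expanding $(A \diamond \ph)_{ijk}$ with $A_{ij} = X_l \ph_{lij}$ directly via the first contraction identity in~\eqref{eq:phph} — is genuinely different from the paper's proof and works as described: after bringing the contracted index to the last slot in each factor of $\ph$, the six $gg$ terms cancel in pairs by symmetry of $g$, and the three $\ps$ terms are $-\ps_{lijk} + \ps_{ljik} - \ps_{lkij} = -\ps_{lijk} - \ps_{lijk} - \ps_{lijk} = -3\ps_{lijk}$, giving $(X \hk \ph) \diamond \ph = -3 X \hk \ps$. This is more self-contained (it uses only the raw contraction identity), at the cost of slightly more index bookkeeping. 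Your second route — computing $\gamma^\ph_{ia}$, observing it is of the form $Y \hk \ph$ so lies in $\Omega^2_7$ by~\eqref{eq:omega27desc}, and then invoking Corollary~\ref{cor:diamondinverse} to read off $A = A_7 = \tfrac{1}{12}\gamma^\ph = -\tfrac{1}{3} X \hk \ph$ — is exactly the proof in the paper; the paper prefers it because it reuses the inversion machinery just established and avoids the index shuffle. Both are sound, and you correctly flag their equivalence.
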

\begin{proof}
We have $\gamma_{ijk} = X_m \ps_{mijk}$, and thus
\begin{equation*}
\gamma^{\ph}_{ia} = \gamma_{ijk} \ph_{ajk} = X_m \ps_{mijk} \ph_{ajk} = -4 X_m \ph_{mia}.
\end{equation*}
Hence by~\eqref{eq:omega27desc} we find that $\gamma^{\ph}_{ia} \in \Omega^2_7$, so Corollary~\ref{cor:diamondinverse} gives $\gamma = A \diamond \ph$ for $A = A_7 \in \Omega^2_7$ given by
\begin{equation*}
(A_7)_{ia} = \tfrac{1}{12} \gamma^{\ph}_{ia} = - \tfrac{1}{3} X_m \ph_{mia}
\end{equation*}
as claimed.
\end{proof}

We can now define an important first-order linear differential operator on $(M, \ph)$, called the \emph{curl}, which takes vector fields to vector fields.
\begin{defn} \label{defn:curl}
Let $W \in \vf$. The \emph{curl} of $W$, denoted $\curl W$, is the vector field given by
$$ (\curl W) \hk \ph = 6 ( \nabla W )_7. $$
By~\eqref{eq:omega27vf} we can write this as
\begin{equation} \label{eq:defn-curl}
(\curl W)_k = (\nab{i} W_j) \ph_{ijk} \quad \text{or equivalently as} \quad \big( (\nabla W)_7 \big){}_{ij} = \tfrac{1}{6} (\curl W)_p \ph_{pij}.
\end{equation}
Using Corollary~\ref{cor:sevenreps}, we have the useful relation
\begin{equation} \label{eq:curl-hk-ps}
(\curl W) \hk \ps = - \tfrac{1}{3} (\curl W \hk \ph) \diamond \ph = - 2 (\nabla W)_7 \diamond \ph.
\end{equation}
The curl operation plays an important role throughout the present paper. For example, it is needed to describe infinitesimal symmetries of $\ph$ in Corollary~\ref{cor:inf-symmetry}.
\end{defn}

\subsection{Further algebraic relations induced by a $\G$-structure} \label{sec:further-algebraic}

In this section, we discuss some further algebraic relations on a manifold $(M, \ph)$ with $\G$-structure, including the interaction of the linear operators $\Vop$ and $\Pop$, and an operation $A \oct A$ on a $2$-tensor $A$. These relations are important for understanding the decomposition of various quadratic expressions in the torsion of a $\G$-structure, in Section~\ref{sec:curvature-torsion}.

\begin{lemma} \label{lemma:14-part}
Let $A \in \cT^2 \cong \Omega^0 \oplus \cS^2_0 \oplus \Omega^2_7 \oplus \Omega^2_{14}$ decompose as
$$ A = A_{\symm} + A_{\skew} = \tfrac{1}{7} (\tr A) g + A_{27} + A_7 + A_{14}. $$
Then we have
\begin{equation} \label{eq:14-part}
\Vop (\Pop A) = - 4 \Vop A, \qquad A_7 = \tfrac{1}{6} (\Vop A) \hk \ph, \qquad A_{14} = \tfrac{1}{3} (\Vop A) \hk \ph + \tfrac{1}{2} \Pop A.
\end{equation}
\end{lemma}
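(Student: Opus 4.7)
My plan is to prove the three identities in order, noting that none of them are deep; they are all direct consequences of the contraction identities of Section~\ref{sec:contractions} together with the decomposition $\Pop A = -4 A_7 + 2 A_{14}$ from~\eqref{eq:Pop-action} and the isomorphism $\Omega^2_7 \cong \vf$ from~\eqref{eq:vec-transform}. I would organize the proof so that the first identity is done by direct index computation, and the remaining two are then essentially algebraic consequences.

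For the first identity $\Vop(\Pop A) = -4 \Vop A$, I would compute in local coordinates using the definitions:
\begin{equation*}
\big( \Vop(\Pop A) \big)_k = (\Pop A)_{ij} \ph_{ijk} = A_{ab} \ps_{abij} \ph_{ijk}.
\end{equation*}
Applying cyclic symmetry of $\ph$ to rewrite $\ph_{ijk} = \ph_{kij}$, and then using the contraction identity $\ph_{kij} \ps_{abij} = -4 \ph_{kab}$ from~\eqref{eq:phps} (in the form $\ph_{XYZ}\ps_{ABYZ} = -4\ph_{XAB}$), I get $-4 A_{ab} \ph_{kab} = -4 A_{ab} \ph_{abk} = -4 (\Vop A)_k$. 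There is no real obstacle here; one just has to track the symmetry carefully, which is the step most prone to sign errors in a $\G$ computation.

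For the second identity, I would observe that it is essentially a restatement of~\eqref{eq:vec-transform}. The point is that by the discussion right after~\eqref{eq:vecA}, $\ker \Vop = \cS^2 \oplus \Omega^2_{14}$, so the decomposition $A = \tfrac{1}{7}(\tr A) g + A_{27} + A_7 + A_{14}$ gives $\Vop A = \Vop A_7$. Since $A_7 \in \Omega^2_7$, equation~\eqref{eq:vec-transform} gives $A_7 = \tfrac{1}{6} (\Vop A_7) \hk \ph = \tfrac{1}{6} (\Vop A) \hk \ph$, as required.

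Finally, for the third identity, I would use~\eqref{eq:Pop-action} which says $\Pop A = -4 A_7 + 2 A_{14}$, so
\begin{equation*}
A_{14} = \tfrac{1}{2} \Pop A + 2 A_7 = \tfrac{1}{2} \Pop A + 2 \cdot \tfrac{1}{6} (\Vop A) \hk \ph = \tfrac{1}{2} \Pop A + \tfrac{1}{3} (\Vop A) \hk \ph,
\end{equation*}
where the middle step substitutes the second identity already proved. The entire argument is bookkeeping; the only possible obstacle is making sure the sign and relabelling in the $\ph\ps$-contraction of the first step is done correctly, since that is what makes the eigenvalues $-4$ and $+2$ of $\Pop$ compatible with the kernel description of $\Vop$.
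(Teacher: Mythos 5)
Your proof is correct. For the second and third identities you follow the same path as the paper, namely using $\Vop A = \Vop A_7$ and~\eqref{eq:vec-transform} for the second, and rearranging $\Pop A = -4A_7 + 2A_{14}$ from~\eqref{eq:Pop-action} together with the second identity for the third. The only place you diverge is the first identity: you prove $\Vop(\Pop A) = -4\Vop A$ directly from the contraction identity $\ph_{ijk}\ps_{abjk} = -4\ph_{iab}$, whereas the paper gets it for free from the already-established structural facts~\eqref{eq:Pop-action} and $\ker\Vop \supseteq \cS^2 \oplus \Omega^2_{14}$, so that $\Vop(\Pop A) = \Vop(-4A_7 + 2A_{14}) = -4\Vop A$. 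Your version unrolls that computation explicitly and is perfectly valid; the paper's is slightly more economical given what has already been set up. Both buy the same thing, and your remark that the sign in the $\ph\ps$-contraction is the only real pitfall is accurate.
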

\begin{proof}
The first equation is immediate from~\eqref{eq:Pop-action} and $\Vop B = \Vop B_7$ for any $B$. The same equation also gives $A_{14} = 2 A_7 + \frac{1}{2} \Pop A$. From~\eqref{eq:vec-transform} we have $A_7 = \frac{1}{6} (\Vop A) \hk \ph$. Combining these two expressions yields the remaining results.
\end{proof}

\begin{lemma} \label{lemma:V-of-PAA}
Let $A \in \cT^2$, so $\Pop A \in \Omega^2 \subseteq \cT^2$ and $(\Pop A)A \in \cT^2$. Then we have
\begin{equation} \label{eq:V-of-PAA}
\Vop ( (\Pop A) A ) = \Vop (A^2) - (\tr A) \Vop A + 2 A (\Vop A) - A^t (\Vop A).
\end{equation}
\end{lemma}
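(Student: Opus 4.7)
The plan is to prove this by a direct index computation, unpacking both sides via the definitions of $\Vop$ from~\eqref{eq:vecA}, $\Pop$ from~\eqref{eq:Pop-defn2}, and the product from~\eqref{eq:product}, and then applying the fundamental contraction identity
$\varphi_{ijk}\psi_{abck} = g_{ia}\varphi_{jbc} + g_{ib}\varphi_{ajc} + g_{ic}\varphi_{abj} - g_{ja}\varphi_{ibc} - g_{jb}\varphi_{aic} - g_{jc}\varphi_{abi}$ from~\eqref{eq:phps}.

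First I would write the LHS as
\begin{equation*}
\Vop\!\big((\Pop A)A\big)_k \;=\; \big((\Pop A)A\big)_{ij}\,\varphi_{ijk} \;=\; A_{ab}\,\psi_{abip}\,A_{pj}\,\varphi_{ijk}.
\end{equation*}
Next, I would bring the pair $(\varphi,\psi)$ into a form where the $\varphi\psi$-contraction identity applies. Using total antisymmetry of $\varphi$ to cyclically permute $\varphi_{ijk}=\varphi_{jki}$ and one index swap in $\psi$ to turn $\psi_{abip}$ into $-\psi_{abpi}$, the identity~\eqref{eq:phps} gives
\begin{equation*}
\psi_{abip}\,\varphi_{ijk} \;=\; -\big[\,g_{ja}\varphi_{kbp}+g_{jb}\varphi_{akp}+g_{jp}\varphi_{abk}-g_{ka}\varphi_{jbp}-g_{kb}\varphi_{ajp}-g_{kp}\varphi_{abj}\,\big].
\end{equation*}

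I would then multiply through by $A_{ab}A_{pj}$ and identify each of the six resulting pieces. A key auxiliary observation, which I would record first, is that $\Vop(A^t)=-\Vop A$ because $\varphi$ is totally skew, so $\Vop$ only sees the skew part of its argument. With that in hand, routine reindexing yields: the $g_{ja}$-term collapses to $\Vop(A^2)_k$ after recognizing $A_{jb}A_{pj}$ as a transposed composition and using cyclic symmetry of $\varphi$; the $g_{jb}$-term vanishes because $A_{ab}A_{pb}=(AA^t)_{ap}$ is symmetric in $a,p$ while $\varphi_{akp}$ is antisymmetric in $a,p$; the $g_{jp}$-term gives $-(\tr A)\Vop(A)_k$; the $g_{ka}$-term and the $g_{kp}$-term each contribute $(A(\Vop A))_k$, using the auxiliary identity above in the $g_{ka}$ case; and the $g_{kb}$-term contributes $-(A^t(\Vop A))_k$, again via $\Vop(A^t)=-\Vop A$. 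Summing the six contributions yields exactly the right-hand side of~\eqref{eq:V-of-PAA}.

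The only real obstacle is bookkeeping: keeping track of signs, and correctly matching contractions like $A_{pj}\varphi_{jbp}$ with their invariant meaning as $\Vop A$, $\Vop(A^t)$, $A(\Vop A)$, or $A^t(\Vop A)$. There is no conceptual difficulty beyond careful use of the three contraction identities~\eqref{eq:phph}--\eqref{eq:psps} and the antisymmetry of $\varphi$ and $\psi$.
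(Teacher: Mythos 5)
Your proposal is correct and follows essentially the same route as the paper's proof: write out $\Vop((\Pop A)A)_k$ in indices, reduce the $\ps\ph$ contraction to the first identity in~\eqref{eq:phps}, and then identify the six resulting terms as $\Vop(A^2)$, $0$, $-(\tr A)\Vop A$, $A(\Vop A)$, $-A^t(\Vop A)$, and $A(\Vop A)$. The only cosmetic difference is that the paper contracts in the form $-\ph_{jki}\ps_{pqmi}$ while you reach the equivalent $-\ph_{jki}\ps_{abpi}$; both manipulations are the same modulo relabelling.
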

\begin{proof}
Using~\eqref{eq:vecA} and~\eqref{eq:Pdefn}, we compute
\begin{align*}
\Vop \big( (\Pop A) A \big){}_k & = ( (\Pop A)A )_{ij} \ph_{ijk} = (\Pop A)_{im} A_{mj} \ph_{ijk} \\
& = A_{pq} \ps_{pqim} A_{mj} \ph_{ijk} = - A_{pq} A_{mj} (\ph_{jki} \ps_{pqmi}) \\
& = - A_{pq} A_{mj} ( g_{jp} \ph_{kqm} + g_{jq} \ph_{pkm} + g_{jm} \ph_{pqk} - g_{kp} \ph_{jqm} - g_{kq} \ph_{pjm} - g_{km} \ph_{pqj} ) \\
& = - A^2_{mq} \ph_{kqm} + 0 - (\tr A) A_{pq} \ph_{pqk} + A_{kq} (\Vop A)_q - A_{pk} (\Vop A)_p + A_{kj} (\Vop A)_j,
\end{align*}
which simplifies to~\eqref{eq:V-of-PAA}.
\end{proof}

There is a particular $2$-tensor $A \oct A$ that arises frequently which is a kind of ``square'' of a $2$-tensor $A$, which is \emph{not} the same as the usual square $(A^2)_{ij} = A_{ip} A_{pj}$ obtained from the identification of bilinear forms with operators given by the metric $g$, as it explicitly depends on the $\G$-structure $\ph$. This $2$-tensor $A \oct A$ is defined to be
\begin{equation} \label{eq:Aoct-defn}
(A \oct A)_{pq} = A_{im} A_{jn} \ph_{ijp} \ph_{mnq}.
\end{equation}

\begin{rmk} \label{rmk:octoproduct}
One way to think about $A \oct A$ is as follows. A $\G$-structure $\ph$ induces a \emph{cross product} $\times$ on sections of $TM$ by $\langle X \times Y, Z \rangle = \ph(X, Y, Z)$. This gives $(X \times Y)_k = X_p Y_q \ph_{pqk}$. Let $A \in \cT^2$, and write $A = A_{im} e_i \otimes e_m = A_{jn} e_j \otimes e_n$. Then
$$ A \oct A = (A \oct A)_{pq} e_p \otimes e_q = A_{im} A_{jn} (\ph_{ijp} e_p) \otimes (\ph_{mnq} e_q) = A_{im} A_{jn} (e_i \times e_j) \otimes (e_m \times e_n). $$
Thus $A \oct A$ can be thought of as the \emph{cross product} of $A$ with itself where the cross product $\times$ on sections of $TM$ induces a cross product $\oct = \times \otimes \times$ on the tensor product $TM \otimes TM$. In fact we can consider $A \oct B$ for any $2$-tensors $A, B$ on $M$. (Note that $\oct$ is \emph{not} skew-symmetric in general.) This operation $\oct$ plays a role in the study of the curvature of the moduli space of compact torsion-free $\G$-structures. See Karigiannis--Loftin~\cite{KLL} for more details.
\end{rmk}

\begin{prop} \label{prop:Aoct}
Let $A \oct A$ be as in~\eqref{eq:Aoct-defn}. The following identities hold:
\begin{equation} \label{eq:Aoct-identities}
\begin{aligned}
\tr (A \oct A) & = (\tr A)^2 - \langle A, A^t \rangle + \langle A, \Pop A \rangle, \\
\Pop (A \oct A) & = 4 (\tr A) A_{\skew} - 4 (A^2)_{\skew} - 4 ( (\Pop A) A )_{\skew} - 2 ( A^t (\Vop A) ) \hk \ph, \\
\Vop(A \oct A) & = 2 A (\Vop A) + 2 A^t (\Vop A) - 2 (\tr A) \Vop A + 2 \Vop (A^2).
\end{aligned}
\end{equation}
\end{prop}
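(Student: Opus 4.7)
The plan is to prove all three identities by direct unfolding of the definition
$(A \oct A)_{pq} = A_{im} A_{jn} \ph_{ijp} \ph_{mnq}$,
then contracting with $g_{pq}$, $\ph_{pqk}$, or $\ps_{pqkl}$ as appropriate, and repeatedly applying the fundamental contraction identities~\eqref{eq:phph} and~\eqref{eq:phps} to reduce each result. At every stage I would try to recognize the emerging contractions as one of the standard building blocks $A_{ab}\ph_{abc} = (\Vop A)_c$, $A_{ab}\ps_{abcd} = (\Pop A)_{cd}$, $A_{ap}A_{pb} = (A^2)_{ab}$, or $A_{aa} = \tr A$, keeping in mind the useful symmetries $\Vop(A^t) = -\Vop A$ and $(\Pop A)^t = -\Pop A$.

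For the trace, I would contract with $g_{pq}$ and substitute $\ph_{ijp}\ph_{mnp} = g_{im}g_{jn} - g_{in}g_{jm} - \ps_{ijmn}$ directly. The $g\cdot g$ terms immediately yield $(\tr A)^2 - \langle A, A^t \rangle$. For the $\ps$ term, I would use the total antisymmetry of $\ps$ together with the (odd) permutation $(i,j,m,n) \leftrightarrow (m,n,i,j)$ to relabel $A_{im}A_{jn}\ps_{ijmn}$ until it matches $\langle A, \Pop A\rangle$ (the requisite sign producing the $+\langle A, \Pop A\rangle$ in the statement).

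For $\Vop$, I would contract with $\ph_{pqk}$, cycle $\ph_{pqk} = \ph_{qkp}$ so that the dummy $q$ sits at position three in both factors, and apply~\eqref{eq:phph} to $\ph_{mnq}\ph_{qkp}$. The two Kronecker pieces collapse (after another cyclic rearrangement identifying $A_{ab}\ph_{ab\cdot}$ as $\Vop A$) into $2(A^t \Vop A)_k$, while the remaining $\ps$ piece $A_{im}A_{jn}\ph_{ijp}\ps_{mnkp}$ is expanded via~\eqref{eq:phps} into six summands which, after routine index bookkeeping, rearrange into $2\Vop(A^2) - 2(\tr A)\Vop A + 2A(\Vop A)$, completing the proof of the third identity.

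For $\Pop$, I start with $A_{im}A_{jn}\ph_{ijp}\ph_{mnq}\ps_{pqkl}$, cyclically move $p$ from position one to position four of $\ps$ (picking up a sign $-1$), and apply~\eqref{eq:phps} to $\ph_{ijp}\ps_{qklp}$; this produces six terms. The two $g_{iq}$ and $g_{jq}$ summands absorb the $g$-contraction into $\ph_{mnq}$ and, once $A_{ab}\ph_{ab\cdot} = (\Vop A)_\cdot$ is recognized, collapse to the $(A^t\Vop A)\hk\ph$ contribution. The other four $g_{ik}, g_{il}, g_{jk}, g_{jl}$ summands produce $\ph_{mnq}\ph_{q\cdots}$ sub-products expanded via~\eqref{eq:phph}; the resulting Kronecker pieces yield $(\tr A)A$ and $A^2$ contributions (whose symmetric parts cancel when the result is antisymmetrized in $k,l$, as it must be since $\Pop(A\oct A)\in\Omega^2$), while the resulting $\ps$ pieces give matrix products involving $\Pop A$. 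The main obstacle is the sign bookkeeping across roughly a dozen intermediate contractions; to bring the final answer precisely into the stated form I would then invoke the reorganizing identity $\big[(\Pop A)A - A(\Pop A)\big]_{\skew} = A_{\skew}(\Vop A)\hk\ph$, which follows by comparing Lemma~\ref{lemma:V-of-PAA} with the analogous computation for $A^t$ and allows one to exchange $(A(\Pop A))_{\skew}$ for $((\Pop A)A)_{\skew}$ at the cost of adjusting the coefficient of the $(A^t\Vop A)\hk\ph$ term.
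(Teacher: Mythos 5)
Your overall strategy---unfold the definition, contract with $g$, $\ph$, or $\ps$, and reduce via~\eqref{eq:phph} and~\eqref{eq:phps}---is the same as the paper's, and it works. For the trace you do exactly what the paper does. For $\Vop$ and $\Pop$, however, you make the opposite choice of which $\ph$-factor in the definition to contract first (you contract $\ph_{mnq}\ph_{pqk}$ for $\Vop$ and $\ph_{ijp}\ps_{pqkl}$ for $\Pop$, whereas the paper contracts $\ph_{ijp}\ph_{pql}$ and $\ph_{mnq}\ps_{klpq}$). For $\Vop$ this is immaterial. For $\Pop$, carrying your route through carefully actually produces $4(\tr A)A_{\skew} - 4(A^2)_{\skew} - 4(A(\Pop A))_{\skew} - 2(A(\Vop A))\hk\ph$, while the paper's route lands directly on the stated form $4(\tr A)A_{\skew} - 4(A^2)_{\skew} - 4((\Pop A)A)_{\skew} - 2(A^t(\Vop A))\hk\ph$. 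Your anticipation that a reorganizing step is needed is correct: one checks that the difference between these two expressions equals $4\bigl[(\Pop A)A - A(\Pop A)\bigr]_{\skew} - 4\bigl(A_{\skew}(\Vop A)\bigr)\hk\ph$, and your identity $\bigl[(\Pop A)A - A(\Pop A)\bigr]_{\skew} = A_{\skew}(\Vop A)\hk\ph$ is indeed true (it is equivalent to $[\Pop A, A_{\skew}] = 6[A_{14},A_7]$ together with $[B, X\hk\ph] = (BX)\hk\ph$ for $B\in\Omega^2_{14}$, the latter being a restatement of~\eqref{eq:omega214-diamond}). So the approach is legitimate, just less efficient than the paper's, which sidesteps the reorganizing identity entirely by choosing the other contraction.

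Two concrete issues. First, your claim that the $g_{iq}$ and $g_{jq}$ summands collapse to the $(A^t(\Vop A))\hk\ph$ contribution is wrong: with the sign from $\ps_{pqkl}=-\ps_{qklp}$, those two terms give $-2(A(\Vop A))\hk\ph$, not $-2(A^t(\Vop A))\hk\ph$. (This is exactly why the reorganizing identity is needed at the end.) Second, your proposed derivation of the reorganizing identity---comparing Lemma~\ref{lemma:V-of-PAA} for $A$ with the analogous identity for $A^t$---only establishes that $\Vop$ applied to both sides agree. Since $\Vop$ annihilates the $\Omega^2_{14}$ component, this does not prove the identity; you must also argue that $\bigl[(\Pop A)A - A(\Pop A)\bigr]_{\skew} = [\Pop A, A_{\skew}]$ has vanishing $\Omega^2_{14}$ part. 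This can be seen either by the direct computation $[\Pop A, A_7+A_{14}] = 6[A_{14},A_7] = (A_{14}(\Vop A))\hk\ph$, or by noting that the bracket $[\mathfrak{g}_2, \mathfrak{m}]\subseteq\mathfrak{m}$ where $\mathfrak{m}\cong\Omega^2_7$ is the orthogonal complement of $\mathfrak{g}_2$ in $\mathfrak{so}(7)$; neither of these is contained in what you cite.
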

\begin{proof}
From~\eqref{eq:Aoct-defn} we have
\begin{align*}
\tr (A \oct A) & = (A \oct A)_{pp} = A_{im} A_{jn} \ph_{ijp} \ph_{mnp} = A_{im} A_{jn} (g_{im} g_{jn} - g_{in} g_{jm} - \ps_{ijmn}) \\
& = (\tr A)^2 - \langle A, A^t \rangle + \langle A, \Pop A \rangle.
\end{align*}
We also have
\begin{align*}
( \Pop (A \oct A) )_{kl} & = (A \oct A)_{pq} \ps_{klpq} = ( A_{im} A_{jn} \ph_{ijp} \ph_{mnq} ) \ps_{klpq} \\
& = A_{im} A_{jn} \ph_{ijp} ( g_{mk} \ph_{nlp} + g_{ml} \ph_{knp} + g_{mp} \ph_{kln} - (m \leftrightarrow n) ).
\end{align*}
Since the factor $A_{im} A_{jn} \ph_{ijp}$ above is skew in $m, n$, we obtain
\begin{align*}
( \Pop (A \oct A) )_{kl} & = 2 A_{im} A_{jn} \ph_{ijp} ( g_{mk} \ph_{nlp} + g_{ml} \ph_{knp} + g_{mp} \ph_{kln} ) \\
& = 2 A_{ik} A_{jn} (g_{in} g_{jl} - g_{il} g_{jn} - \ps_{ijnl}) \\
& \qquad {} + 2 A_{il} A_{jn} (g_{ik} g_{jn} - g_{in} g_{jk} - \ps_{ijkn}) + 2 A_{ip} A_{jn} \ph_{ijp} \ph_{kln}
\end{align*}
which simplifies further to
\begin{align*}
( \Pop (A \oct A) )_{kl} & = 2 A_{ik} A_{li} -2 (\tr A) A_{lk} - 2 A_{ik} (\Pop A)_{il} \\
& \qquad + 2 (\tr A) A_{kl} - 2 A_{il} A_{ki} + 2 A_{il} (\Pop A)_{ik} - 2 (\Vop A)_j A_{jn} \ph_{nkl},
\end{align*}
which is equivalent to the second equation in~\eqref{eq:Aoct-identities}.

Similarly, we compute
\begin{align*}
( \Vop(A \oct A) )_l & = (A \oct A)_{pq} \ph_{pql} = ( A_{im} A_{jn} \ph_{ijp} \ph_{mnq} ) \ph_{pql} \\
& = A_{im} A_{jn} \ph_{mnq} ( g_{iq} g_{jl} - g_{il} g_{jq} - \ps_{ijql} ) \\
& = A_{im} A_{ln} \ph_{mni} - A_{lm} A_{qn} \ph_{mnq} + A_{im} A_{jn} \ph_{mnq} \ps_{ijlq} \\
& = 2 ( A(\Vop A) )_l + A_{im} A_{jn} \ph_{mnq} \ps_{ijlq}.
\end{align*}
This becomes
\begin{align*}
( \Vop(A \oct A) )_l & = 2 ( A(\Vop A) )_l + A_{im} A_{jn} ( g_{mi} \ph_{njl} + g_{mj} \ph_{inl} + g_{ml} \ph_{ijn} ) \\
& \qquad {} - A_{im} A_{jn} ( g_{ni} \ph_{mjl} + g_{nj} \ph_{iml} + g_{nl} \ph_{ijm} ) \\
& = 2 ( A(\Vop A) )_l - (\tr A) (\Vop A)_l + ( \Vop (A^2) )_l + ( A^t (\Vop A) )_l \\
& \qquad {} + ( \Vop (A^2) )_l - (\tr A) (\Vop A)_l + ( A^t (\Vop A) )_l
\end{align*}
which is equivalent to the third equation in~\eqref{eq:Aoct-identities}.
\end{proof}

\begin{cor} \label{cor:Aoct-7-14}
Let $A \oct A$ be as in~\eqref{eq:Aoct-defn}. Then we have
\begin{equation} \label{eq:Aoct-7-14}
\begin{aligned}
(A \oct A)_7 & = \big( \tfrac{1}{3} A (\Vop A) + \tfrac{1}{3} A^t (\Vop A) - \tfrac{1}{3} (\tr A) \Vop A + \tfrac{1}{3} \Vop(A^2) \big) \hk \ph, \\
(A \oct A)_{14} & = 2 (\tr A) A_{14} - 2 (A^2)_{14} - 2 ( (\Pop A) A )_{14}.
\end{aligned}
\end{equation}
\end{cor}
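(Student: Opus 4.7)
The corollary is essentially a bookkeeping consequence of Lemma~\ref{lemma:14-part} and Proposition~\ref{prop:Aoct}, so the plan is to plug one into the other and collect terms.

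The first formula is immediate. I would apply the middle identity in~\eqref{eq:14-part} to the tensor $B = A \oct A \in \cT^2$, giving $(A \oct A)_7 = \tfrac{1}{6} \bigl( \Vop (A \oct A) \bigr) \hk \ph$, and then substitute the expression for $\Vop(A \oct A)$ from the third line of~\eqref{eq:Aoct-identities}. The factor of $\tfrac{1}{6}$ against the $2$ in that formula yields the claimed $\tfrac{1}{3}$ coefficients immediately.

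For the second formula, the plan is to use the last identity of~\eqref{eq:14-part} to write
\begin{equation*}
(A \oct A)_{14} = \tfrac{1}{3} \bigl( \Vop (A \oct A) \bigr) \hk \ph + \tfrac{1}{2} \Pop (A \oct A),
\end{equation*}
and substitute both expressions from~\eqref{eq:Aoct-identities}. This produces a linear combination of $(\tr A) A_{\skew}$, $(A^2)_{\skew}$, $((\Pop A) A)_{\skew}$, together with $\hk \ph$ terms involving $\Vop A$, $A(\Vop A)$, $A^t(\Vop A)$, and $\Vop(A^2)$. To recast this into the claimed form I would then work on the target side: for each of $B \in \{A, A^2, (\Pop A) A\}$, use the identity $B_{14} = B_{\skew} - B_7 = B_{\skew} - \tfrac{1}{6} (\Vop B) \hk \ph$ (from Lemma~\ref{lemma:14-part}, noting $\Vop B = \Vop B_{\skew}$ since $\Vop$ kills symmetric tensors) to expand
\begin{equation*}
2 (\tr A) A_{14} - 2 (A^2)_{14} - 2 ( (\Pop A) A )_{14}
\end{equation*}
into the analogous skew part plus $\hk \ph$ terms.

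The slightly subtle ingredient is handling $\Vop((\Pop A)A)$, which is what Lemma~\ref{lemma:V-of-PAA} exists for: applying~\eqref{eq:V-of-PAA} converts $\Vop((\Pop A) A)$ into $\Vop(A^2) - (\tr A)\Vop A + 2 A(\Vop A) - A^t(\Vop A)$. After this substitution the $\hk \ph$ coefficients on both sides should match literally (in particular, the coefficient of $A^t(\Vop A)$ becomes $-\tfrac{1}{3}$ from the combination $-1 + \tfrac{2}{3}$ on one side and $\tfrac{2}{3} - 1$ on the other, which is where I expect the only real potential for an arithmetic slip). This bookkeeping is the entire content of the proof; no further identities are needed.
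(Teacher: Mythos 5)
Your proposal is correct and uses exactly the same ingredients as the paper's proof---Lemma~\ref{lemma:14-part}, Proposition~\ref{prop:Aoct}, and Lemma~\ref{lemma:V-of-PAA}---and performs the same cancellation; the only cosmetic difference is that the paper expands $A_{\skew} = \frac{1}{6}(\Vop A)\hk\ph + A_{14}$ on the source side while you expand $B_{14} = B_{\skew} - \frac{1}{6}(\Vop B)\hk\ph$ on the target side, which is the same computation run in the opposite direction. The coefficients do match as you predict, so this is essentially the paper's argument.
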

\begin{proof}
We use the identities in~\eqref{eq:Aoct-identities} and~\eqref{eq:14-part}. The expression for $(A \oct A)_7$ is immediate. For $(A \oct A)_{14}$, we compute
\begin{align*}
(A \oct A)_{14} & = \tfrac{1}{3} (\Vop (A \oct A)) \hk \ph + \tfrac{1}{2} \Pop (A \oct A) \\
& = \tfrac{1}{3} (2 A (\Vop A) + 2 A^t (\Vop A) - 2 (\tr A) \Vop A + 2 \Vop (A^2)) \hk \ph \\
& \qquad {} + \tfrac{1}{2} (4 (\tr A) A_{\skew} - 4 (A^2)_{\skew} - 4 ( (\Pop A) A )_{\skew} - 2 ( A^t (\Vop A) ) \hk \ph) \\
& = ( \tfrac{2}{3} A(\Vop A) - \tfrac{1}{3} A^t (\Vop A) - \tfrac{2}{3} (\tr A) \Vop A + \tfrac{2}{3} \Vop (A^2) ) \hk \ph \\
& \qquad {} + 2 (\tr A) A_{\skew} - 2 (A^2)_{\skew} - 2 ( (\Pop A) A )_{\skew}.
\end{align*}
Using~\eqref{eq:14-part} again to write $A_{\skew} = A_7 + A_{14} = \frac{1}{6} (\Vop A) \hk \ph + A_{14}$, the above becomes
\begin{align*}
(A \oct A)_{14} & = ( \tfrac{2}{3} A(\Vop A) - \tfrac{1}{3} A^t (\Vop A) - \tfrac{2}{3} (\tr A) \Vop A + \tfrac{2}{3} \Vop (A^2) ) \hk \ph \\
& \qquad {} + \tfrac{1}{6} \big( 2 (\tr A) \Vop A - 2 \Vop (A^2) - 2 \Vop( (\Pop A) A ) \big) \hk \ph \\
& \qquad {} + 2 (\tr A) A_{14} - 2 (A^2)_{14} - 2 ( (\Pop A) A )_{14} \\
& = \big( \tfrac{2}{3} A(\Vop A) - \tfrac{1}{3} A^t (\Vop A) - \tfrac{1}{3} (\tr A) \Vop A + \tfrac{1}{3} \Vop (A^2) - \tfrac{1}{3} \Vop( (\Pop A) A ) \big) \hk \ph \\
& \qquad {} + 2 (\tr A) A_{14} - 2 (A^2)_{14} - 2 ( (\Pop A) A )_{14}.
\end{align*}
The first line above vanishes, as expected, by~\eqref{eq:V-of-PAA}, yielding the result.
\end{proof}

\begin{rmk} \label{rmk:Aoct-parts}
The expressions for $\tr (A \oct A)$ in~\eqref{eq:Aoct-identities} and for $(A \oct A)_7$ and $(A \oct A)_{14}$ in~\eqref{eq:Aoct-7-14} show that the $\Omega^0 \oplus \Omega^2_7 \oplus \Omega^2_{14}$ components of $A \oct A \in \cT^2$ can all be expressed in terms of the simpler operations associated to a $\G$-structure, namely the operators $\Vop$ and $\Pop$, and the usual operations on $\cT^2$ available on any Riemannian manifold. Only the component $(A \oct A)_{27} \in \cS^2_0$ cannot be so expressed.
\end{rmk}

\subsection{The torsion of a $\G$-structure} \label{sec:torsion}

The \emph{torsion} of a $\G$-structure $\ph$ is a tensor that measures the failure of the metric $g_{\ph}$ to have holonomy contained in $\G$. By the holonomy principle, the torsion should be $\nabla \ph$. However, it is more convenient to ``package'' the torsion in a couple of alternative forms, which we now describe.

\begin{lemma} \label{lemma:first-torsion}
For any vector field $X$ on $M$, the $3$-form $\nab{X} \ph$ lies in $\Omega^3_7$.
\end{lemma}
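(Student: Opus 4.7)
My plan is to use the isomorphism $\cS^2 \oplus \Omega^2_7 \xrightarrow{\cong} \Omega^3$ given by $A \mapsto A \diamond \ph$ (Corollary~\ref{cor:ABinnerproduct}) together with the inversion formulas of Corollary~\ref{cor:diamondinverse}. Write $\nab{X}\ph = A \diamond \ph$ uniquely with $A = \tfrac{1}{7}(\tr A) g + A_{27} + A_7 \in \cS^2 \oplus \Omega^2_7$. By the description of $\Omega^3_7$ in~\eqref{eq:omega34decomp}, the conclusion $\nab{X}\ph \in \Omega^3_7$ is equivalent to $\tr A = 0$ and $A_{27} = 0$. By the formulas~\eqref{eq:diamondinverse-3}, both vanish once the $2$-tensor $S \in \cT^2$ defined by $S_{ia} := (\nab{X}\ph)_{ijk}\ph_{ajk}$ is shown to be antisymmetric in its two indices.

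The key computation is to differentiate the contraction identity $\ph_{ijk}\ph_{ajk} = 6g_{ia}$ of~\eqref{eq:phph}. Since $g$ is parallel under the Levi-Civita connection,
\begin{equation*}
0 \;=\; \nab{X}(\ph_{ijk}\ph_{ajk}) \;=\; (\nab{X}\ph)_{ijk}\ph_{ajk} + \ph_{ijk}(\nab{X}\ph)_{ajk} \;=\; S_{ia} + S_{ai},
\end{equation*}
so $S$ is antisymmetric, and in particular $S_{\symm} = 0$. Via~\eqref{eq:diamondinverse-3} this gives $\tr A = 0$ and $A_{27} = 0$, so $A = A_7 \in \Omega^2_7$, and therefore $\nab{X}\ph = A_7 \diamond \ph \in \Omega^3_7$ as required.

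There is no serious obstacle here: the proof is essentially a bookkeeping consequence of the contraction identity combined with the metric compatibility of $\nabla$. The conceptual work has already been done in Corollary~\ref{cor:diamondinverse}, which reduces $\Omega^3_7$-membership of a $3$-form $\gamma$ to a symmetry property of its associated $2$-tensor $\gamma^\ph$. One could alternatively proceed by orthogonality: $\nab{X}\ph \perp \Omega^3_1$ since $|\ph|^2 = 42$ is pointwise constant (yielding $\langle \ph, \nab{X}\ph\rangle = 0$), and $\nab{X}\ph \perp \Omega^3_{27}$ because, as a short computation using skew-symmetry of $\nab{X}\ph$ and symmetry of $B \in \cS^2_0$ shows, $\langle \nab{X}\ph, B \diamond \ph \rangle = 3 \langle (\nab{X}\ph)^\ph, B \rangle$ vanishes by the antisymmetry of $(\nab{X}\ph)^\ph = S$.
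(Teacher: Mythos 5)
Your proof is correct and matches the paper's argument essentially exactly: both differentiate the contraction identity $\ph_{ijk}\ph_{ajk} = 6g_{ia}$ to show that $S_{ia} = (\nab{X}\ph)_{ijk}\ph_{ajk}$ is antisymmetric, then invoke Corollary~\ref{cor:diamondinverse} to conclude that the $\Omega^3_1$ and $\Omega^3_{27}$ components vanish. The alternative orthogonality argument you sketch at the end is also sound, but it is really the same computation read in a slightly different order.
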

\begin{proof}
A proof was given in~\cite[Lemma 2.24]{K-flows}. Nevertheless, we give a quick demonstration here using Corollary~\ref{cor:diamondinverse}. To establish the claim, for $\gamma = \nab{m} \ph$, we need to show that $\gamma^{\ph}_{ia} = \gamma_{ijk} \ph_{ajk}$ is skew-symmetric. But using~\eqref{eq:phph} we have
\begin{align*}
\gamma^{\ph}_{ia} & = \nab{m} \ph_{ijk} \ph_{ajk} = \nab{m} (\ph_{ijk} \ph_{ajk}) - \ph_{ijk} \nab{m} \ph_{ajk} \\
& = \nab{m} (6 g_{ia}) - \ph_{ijk} \gamma_{ajk} = 0 - \gamma^{\ph}_{ai}. \qedhere
\end{align*}
\end{proof}

It follows from Corollary~\ref{cor:sevenreps} that there exists a $2$-tensor $T$ such that
\begin{equation} \label{eq:nablaph}
\nab{m} \ph_{ijk} = T_{mp} \ps_{pijk}.
\end{equation}
We call $T$ the \emph{torsion} of the $\G$-structure. It follows immediately from~\eqref{eq:psps} that
\begin{equation} \label{eq:Tfromph}
T_{pq} = \tfrac{1}{24} \nab{p} \ph_{jkl} \ps_{qjkl},
\end{equation}
confirming that $T = 0$ if and only if $\nab{} \ph = 0$. If we differentiate the first identity in~\eqref{eq:phph}, and use~\eqref{eq:nablaph} and the first identity in~\eqref{eq:phps}, one obtains
\begin{equation} \label{eq:nablaps}
\nab{p} \ps_{ijkl} = - T_{pi} \ph_{jkl} + T_{pj} \ph_{ikl} - T_{pk} \ph_{ijl} + T_{pl} \ph_{ijk}
\end{equation}
expressing $\nab{} \ps$ in terms of $T$.

\begin{rmk} \label{rmk:curl-adjoint}
As an application of the definition of $T$, which is useful for Section~\ref{sec:Lie-derivative}, we compute the formal adjoint $\curl^* \colon \vf \to \vf$ of the curl operator introduced in Definition~\ref{defn:curl}, as follows. Let $W, V \in \vf$. Then using~\eqref{eq:Pdefn} we have
\begin{align*}
\langle \curl W, V \rangle & = (\curl W)_k V_k = (\nab{i} W_j \ph_{ijk}) V_k \\
& = \nab{i} (W_j \ph_{ijk} V_k) - W_j \nab{i} \ph_{ijk} V_k - W_j \ph_{ijk} \nab{i} V_k \\
& = \Div(\cdot) - W_j V_k T_{ip} \ps_{pijk} + W_j (\nab{i} V_k \ph_{ikj}) \\
& = \Div(\cdot) + W_j V_k (\Pop T)_{jk} + W_j (\curl V)_j \\
& = \Div(\cdot) + \langle W, (\Pop T) (V) \rangle + \langle W, \curl V \rangle.
\end{align*}
Thus, integrating both sides over $M$ and using the divergence theorem, we find that $\curl^* \colon \vf \to \vf$ is given by
\begin{equation} \label{eq:curl-adjoint}
\curl^* = \curl + \Pop T.
\end{equation}
Note that the second term $\Pop T$ in~\eqref{eq:curl-adjoint} is a $2$-form, and so is a (pointwise) skew-adjoint endomorphism. Moreover, if $T_{\skew} = T_7 + T_{14} = 0$, then $\curl$ is formally self-adjoint.
\end{rmk}

Because the torsion lies in $\cT^2$, we can use the decomposition~\eqref{eq:tens-decomp2} to write
\begin{equation} \label{eq:torsion-decomp}
T = T_1 + T_{27} + T_7 + T_{14} \qquad \text{where $T_1 = \tfrac{1}{7} (\tr T) g$},
\end{equation}
as in~\eqref{eq:A-decomp2}. We also have
\begin{equation} \label{eq:Tt}
T^t = T_1 + T_{27} - T_7 - T_{14}
\end{equation}
 and from~\eqref{eq:Ponbeta} we get
 \begin{equation} \label{eq:PT}
\Pop T = - 4 T_7 + 2 T_{14}.
\end{equation}
From these we obtain
\begin{equation} \label{eq:torsion-formulas}
\begin{aligned}
|T|^2 & = |T_1|^2 + |T_{27}|^2 + |T_7|^2 + |T_{14}|^2, \\
\langle T, T^t \rangle & = |T_1|^2 + |T_{27}|^2 - |T_7|^2 - |T_{14}|^2, \\
\langle T, \Pop T \rangle & = - 4 |T_7|^2 + 2 |T_{14}|^2, \\
(\tr T)^2 & = 7 |T_1|^2,
\end{aligned}
\end{equation}
which are equivalent to
\begin{equation} \label{eq:torsion-formulas-b}
\begin{aligned}
|T_1|^2 & = \tfrac{1}{7} (\tr T)^2, \\
|T_{27}|^2 & = \tfrac{1}{2} |T|^2 + \tfrac{1}{2} \langle T, T^t \rangle - \tfrac{1}{7} (\tr T)^2, \\
|T_7|^2 & = \tfrac{1}{6} |T|^2 - \tfrac{1}{6} \langle T, T^t \rangle - \tfrac{1}{6} \langle T, \Pop T \rangle, \\
|T_{14}|^2 & = \tfrac{1}{3} |T|^2 - \tfrac{1}{3} \langle T, T^t \rangle + \tfrac{1}{6} \langle T, \Pop T \rangle.
\end{aligned}
\end{equation}
The relations~\eqref{eq:torsion-formulas} and~\eqref{eq:torsion-formulas-b} express the four pointwise torsion energies $|T_k|^2$ for $k=1,27,7,14$ in terms of the four functions $|T|^2$, $\langle T, T^t \rangle$, $\langle T, \Pop T \rangle$, and $(\tr T)^2$ and conversely. These relations are used often in the sequel, particularly in Section~\ref{sec:functionals-revisited} to compute the Euler--Lagrange equations for various torsion functionals. We also remark that from~\eqref{eq:vecA} and~\eqref{eq:vec-transform} we can write
\begin{equation} \label{eq:VT}
T_7 = \tfrac{1}{6} (\Vop T) \hk \ph, \qquad \text{where $(\Vop T)_k = T_{ij} \ph_{ijk}$}.
\end{equation}

\begin{prop} \label{prop:FG}
The forms $\dd \ph \in \Omega^3$ and $\dd^* \ph \in \Omega^2$ are related to the components of the torsion via
$$ \dd \ph = ( T_1 + T_{27} + T_7 ) \diamond \ps, \qquad \dd^* \ph = - 4 T_7 + 2 T_{14}. $$
Consequently, we recover the classical theorem of Fern\'andez--Gray~\cite{FG}, which says that $\ph$ is torsion-free if and only if $\dd \ph = 0$ and $\dd^* \ph = 0$. 
\end{prop}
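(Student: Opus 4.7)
\textbf{Proof proposal for Proposition~\ref{prop:FG}.}

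The plan is to compute $\dd \ph$ and $\dd^* \ph$ directly in a local orthonormal frame, using the fundamental identity $\nab{m} \ph_{ijk} = T_{mp} \ps_{pijk}$ from~\eqref{eq:nablaph}, and then to recognize the resulting expressions via the decompositions established in Section~\ref{sec:forms}.

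First I would tackle $\dd \ph$. Since $\ph$ is a $3$-form, the standard alternation formula gives
\[
(\dd \ph)_{mijk} = \nab{m} \ph_{ijk} - \nab{i} \ph_{mjk} + \nab{j} \ph_{mik} - \nab{k} \ph_{mij}.
\]
Substituting~\eqref{eq:nablaph} into each term and using the total skew-symmetry of $\ps$ to move the contracted index $p$ into the position corresponding to the ``slot'' being acted on, I expect to obtain, after straightforward sign bookkeeping,
\[
(\dd \ph)_{mijk} = T_{mp} \ps_{pijk} + T_{ip} \ps_{mpjk} + T_{jp} \ps_{mipk} + T_{kp} \ps_{mijp} = (T \diamond \ps)_{mijk},
\]
which is precisely the definition~\eqref{eq:diamond4}. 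Thus $\dd \ph = T \diamond \ps$. Now I invoke Corollary~\ref{cor:ABinnerproduct} (equivalently~\eqref{eq:omega214-diamond}), which asserts that $A \diamond \ps = 0$ iff $A \in \Omega^2_{14}$, so $T_{14}$ contributes nothing. Decomposing $T = T_1 + T_{27} + T_7 + T_{14}$ as in~\eqref{eq:torsion-decomp} yields $\dd \ph = (T_1 + T_{27} + T_7) \diamond \ps$.

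Next I would compute $\dd^* \ph$ using the orthonormal-frame formula $(\dd^* \ph)_{jk} = - \nab{p} \ph_{pjk}$. Substituting~\eqref{eq:nablaph} gives
\[
(\dd^* \ph)_{jk} = - T_{pq} \ps_{qpjk} = T_{pq} \ps_{pqjk} = (\Pop T)_{jk},
\]
using the definition~\eqref{eq:Pop-defn2} of $\Pop$. The identity~\eqref{eq:PT} then gives $\dd^* \ph = \Pop T = -4 T_7 + 2 T_{14}$, as required. The sign bookkeeping in the $\dd \ph$ computation is the only mildly delicate step; everything else is algebraic substitution.

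Finally, the Fernández--Gray corollary is immediate from the uniqueness of the decompositions. If $\dd^* \ph = 0$, then since $\Omega^2_7$ and $\Omega^2_{14}$ are linearly independent subspaces of $\Omega^2$, both $T_7 = 0$ and $T_{14} = 0$. Combined with $\dd \ph = 0$, this gives $(T_1 + T_{27}) \diamond \ps = 0$; since $T_1 + T_{27} \in \cS^2$ lies in the orthogonal complement of $\Omega^2_{14}$, the injectivity statement in Corollary~\ref{cor:ABinnerproduct} forces $T_1 = T_{27} = 0$. Conversely, $T = 0$ trivially gives $\dd \ph = \dd^* \ph = 0$. Hence $\ph$ is torsion-free iff $\dd \ph = 0$ and $\dd^* \ph = 0$.
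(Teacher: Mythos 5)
Your proof is correct and follows essentially the same route as the paper: substitute $\nab{m}\ph_{ijk}=T_{mp}\ps_{pijk}$ into the alternation formula for $\dd\ph$ and the codifferential formula for $\dd^*\ph$, recognize $T\diamond\ps$ and $\Pop T$, and apply Corollary~\ref{cor:ABinnerproduct} and~\eqref{eq:PT}. The extra sentences you give spelling out the Fern\`andez--Gray corollary are a harmless elaboration of what the paper leaves implicit.
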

\begin{proof}
Using~\eqref{eq:nablaph}, we compute
\begin{align*}
(d \ph)_{ijkl} & = \nab{i} \ph_{jkl} - \nab{j} \ph_{ikl} + \nab{k} \ph_{ijl} - \nab{l} \ph_{ijk} \\
& = T_{ip} \ps_{pjkl} - T_{jp} \ps_{pikl} + T_{kp} \ps_{pijl} - T_{lp} \ps_{pijk} \\
& = T_{ip} \ps_{pjkl} + T_{jp} \ps_{ipkl} + T_{kp} \ps_{ijpl} + T_{lp} \ps_{ijkp} \\
& = (T \diamond \ps)_{ijkl}.
\end{align*}
The first equation now follows from~\eqref{eq:torsion-decomp} and the fact that $T_{14} \diamond \ps = 0$ from Corollary~\ref{cor:ABinnerproduct}.

Similarly, using~\eqref{eq:nablaph} and~\eqref{eq:PT} we compute
$$ (d^* \ph)_{jk} = - \nab{i} \ph_{ijk} = - T_{im} \ps_{mijk} = T_{im} \ps_{imjk} = (\Pop T)_{jk} = - 4 T_7 + 2 T_{14}, $$
as claimed.
\end{proof}

\textbf{Alternative description of torsion.} There is another way of packaging the torsion of a $\G$-structure, using the isomorphism $\Omega^1 \cong \Omega^2_7$ encapsulated in~\eqref{eq:omega27vf}. Explicitly, define $\hT \in \Gamma(T^* M \otimes \Lambda^2_7 (T^* M))$ by
\begin{equation} \label{eq:hatTdefn}
\hT_{pij} = T_{pq} \ph_{qij}, \qquad T_{pq} = \tfrac{1}{6} \hT_{pij} \ph_{qij}.
\end{equation}
For fixed $p$, we have $\hT_{pij}$ lies in $\Omega^2_7$ in $i,j$. Thus by~\eqref{eq:omega27desc} we have
\begin{equation} \label{eq:PonhT}
\hT_{pij} \ps_{ijkl} = - 4 \hT_{pkl}.
\end{equation}
We can think of $\hT$ as a $1$-form on $M$ with values in $\Lambda^2_7 (T^* M)$, via the pairing $( \hT(X) )_{ij} = X_p \hT_{pij}$.

\begin{rmk} \label{rmk:intrinsic-torsion}
This description of the torsion of a $G$-structure on a Riemannian manifold $(M^n, g)$ as a $1$-form taking values at each point in the orthogonal complement $\mathfrak{g}^{\perp}$ of the Lie algebra $\mathfrak{g} \subset \mathfrak{so}(n) \cong \Lambda^2$ of $G$ is usually called the \emph{intrinsic torsion} of the $G$-structure.
\end{rmk}

\begin{lemma} \label{lemma:nablaph-hT}
Fix $p \in \{ 1, \ldots, 7 \}$. At the point $x \in M$, we can write $\hT_p = \hT_{pij} e_i \otimes e_j$ as an element of $\Lambda^2_7(T_x^* M)$. Then we have
\begin{equation} \label{eq:nablaph-hT}
\nab{p} \ph_{abc} = - \tfrac{1}{3} (\hT_p \diamond \ph)_{abc}, \qquad \nab{p} \ps_{abcd} = - \tfrac{1}{3} (\hT_p \diamond \ps)_{abcd}.
\end{equation}
\end{lemma}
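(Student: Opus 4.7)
Both identities can be obtained from the definition $\hT_{pij} = T_{pq}\ph_{qij}$ together with the known formulas~\eqref{eq:nablaph} and~\eqref{eq:nablaps}, using the contraction identities~\eqref{eq:phph} and~\eqref{eq:phps}.

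The first identity is essentially immediate. The key observation is that, for each fixed $p$, the definition $\hT_{pij} = T_{pq}\ph_{qij}$ says $\hT_p = X_p \hk \ph$, where $X_p \in \vf$ has components $(X_p)_q = T_{pq}$. In particular $\hT_p \in \Omega^2_7$, and applying Corollary~\ref{cor:sevenreps} to $X_p$ gives $(\hT_p \diamond \ph)_{abc} = -3(X_p \hk \ps)_{abc} = -3 T_{pq}\ps_{qabc}$. Comparing with~\eqref{eq:nablaph}, which reads $\nab{p}\ph_{abc} = T_{pq}\ps_{qabc}$, yields $\nab{p}\ph = -\tfrac{1}{3}\hT_p \diamond \ph$ at once.

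For the second identity, the plan is a direct computation. Expand $(\hT_p \diamond \ps)_{abcd}$ using~\eqref{eq:diamond4} into four contributions of the form $\hT_{pim}\ps_{\cdots m\cdots}$ for $i \in \{a,b,c,d\}$; in each, first move the summed index into the last slot of $\ps$ (picking up a sign from the four-index antisymmetry), substitute $\hT_{pim} = T_{pq}\ph_{qim}$, and apply the first $\ph\ps$ contraction identity in~\eqref{eq:phps}. Each of the four contributions then breaks into six terms: three of type $T_{p\cdot}\ph_{\cdot\cdot\cdot}$ and three of type $g_{\cdot\cdot}\hT_{p\cdot\cdot}$. Summing the four contributions, cyclic permutations of $3$-form indices and the antisymmetry of $\hT$ in its last two indices cause the twelve $g\hT$-terms to cancel in six pairs, leaving exactly $3\bigl(T_{pa}\ph_{bcd} - T_{pb}\ph_{acd} + T_{pc}\ph_{abd} - T_{pd}\ph_{abc}\bigr)$, which by~\eqref{eq:nablaps} is $-3\,\nab{p}\ps_{abcd}$.

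The main obstacle is bookkeeping: the direct proof of the second identity reduces to an identity among $24$ small contributions, so careful tracking of signs from antisymmetrizations of $\ps$, from cyclic shifts of $3$-form indices, and from the skew-symmetry of $\hT_p \in \Omega^2_7$ is essential to see the $g\hT$-terms pair up cleanly. A slicker alternative is to apply $\st$ to the first identity, use $\st\nab{p} = \nab{p}\st$ and $\ps = \st\ph$, and reduce to the algebraic relation $\st(\hT_p \diamond \ph) = \hT_p \diamond \ps$; via $\hT_p = X_p \hk \ph$ and Corollary~\ref{cor:sevenreps}, this in turn reduces to the standard $7$-dimensional identity relating $\st(X \hk \ps)$ to $X \wedge \ph$, so the difficulty merely migrates to pinning down the sign in that Hodge-star relation.
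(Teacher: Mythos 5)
Your proof is correct. For the first identity you take a slightly cleaner route than the paper: the paper substitutes $T_{pq} = \tfrac{1}{6}\hT_{pij}\ph_{ijq}$ into $\nab{p}\ph_{abc} = T_{pq}\ps_{qabc}$ and expands the $\ph\ps$ contraction directly, whereas you simply observe $\hT_p = X_p \hk \ph$ and invoke Corollary~\ref{cor:sevenreps}, $(X \hk \ph) \diamond \ph = -3 X \hk \ps$. These are equivalent (the corollary itself rests on the same contraction), but yours short-circuits the algebra.

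For the second identity the routes genuinely diverge. The paper differentiates the fundamental contraction $\ph_{ijk}\ph_{abk} = g_{ia}g_{jb} - g_{ib}g_{ja} - \ps_{ijab}$ and feeds the already-proved first identity back in, letting the $\ph\ph$ identity do the bookkeeping. You instead expand $(\hT_p \diamond \ps)_{abcd}$ directly via~\eqref{eq:diamond4}, substitute $\hT_{pim}=T_{pq}\ph_{qim}$, and use the first $\ph\ps$ identity in each of the four contributions. I checked the cancellation: the twelve $g\hT$ terms do pair off and vanish, and the remaining $T\ph$ terms each appear three times, giving $3(T_{pa}\ph_{bcd}-T_{pb}\ph_{acd}+T_{pc}\ph_{abd}-T_{pd}\ph_{abc}) = -3\nab{p}\ps_{abcd}$ as you claim. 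Your route is more symmetric in the four indices but involves more raw contractions; the paper's route is shorter once the first identity is in hand, at the cost of reusing~\eqref{eq:phph}. Both are perfectly valid.

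One caution on your "slicker alternative" via $\st$: the reduction is to $\st(\hT_p \diamond \ph) = \hT_p \diamond \ps$, and you correctly flag the sign as the remaining issue. But note that the Hodge star is an isometry only for the $k$-form inner product, not the tensor inner product used in Proposition~\ref{prop:ABinnerproduct}, so a naive norm comparison gives $|\hT_p\diamond\ps|^2 = 4|\hT_p\diamond\ph|^2$ (with tensor norms), which is exactly accounted for by the $3! \to 4!$ normalization. The coefficient really is $\pm 1$, not $\pm\tfrac12$; a reader who forgets this distinction could wrongly conclude the alternative route is off by a factor of two, so if you pursue it, say explicitly which inner product convention the isometry statement refers to.
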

\begin{proof}
Using~\eqref{eq:nablaph} and~\eqref{eq:hatTdefn}, we compute
\begin{align*}
\nab{p} \ph_{abc} & = T_{pq} \ps_{qabc} = - \tfrac{1}{6} \hT_{pij} \ph_{ijq} \ps_{abcq} \\
& = - \tfrac{1}{6} \hT_{pij} ( g_{ia} \ph_{jbc} + g_{ib} \ph_{ajc} + g_{ic} \ph_{abj} - g_{ja} \ph_{ibc} - g_{jb} \ph_{aic} - g_{jc} \ph_{abi} ).
\end{align*}
Since $\hT_{pij}$ is skew in $i,j$ this becomes
\begin{align*}
\nab{p} \ph_{abc} & = - \tfrac{1}{3} \hT_{pij} ( g_{ia} \ph_{jbc} + g_{ib} \ph_{ajc} + g_{ic} \ph_{abj} ) \\
& = - \tfrac{1}{3} ( \hT_{paj} \ph_{jbc} + \hT_{pbj} \ph_{ajc} + \hT_{pcj} \ph_{abj} ) = - \tfrac{1}{3} (\hT_p \diamond \ph)_{abc}
\end{align*}
as claimed. The formula for $\nab{p} \ps_{abcd}$ in~\eqref{eq:nablaph-hT} can be derived by differentiating the first identity in~\eqref{eq:phph} and then using the formula for $\nab{p} \ph_{abc}$ from~\eqref{eq:nablaph-hT} and the first identity in~\eqref{eq:phph}.
\end{proof}

\subsection{The covariant derivative of the torsion} \label{sec:nabT}

Let $\nab{} T$ denote the covariant derivative of the torsion, which is a $3$-tensor with components $\nab{i} T_{jk}$. Various tensors constructed from $\nab{}T$ play an important role.

Recall that $\tr T = T_{kk}$ is a function. Thus, its gradient $\nab{} (\tr T)$ is the vector field
$$ \nab{i} (\tr T) = \nab{i} T_{kk}. $$
There are three different kinds of divergences of the torsion that arise often. There are the two \emph{vector fields} $\Div T$ and $\Div T^t$, which are given by
$$ (\Div T)_k = \nab{i} T_{ik}, \qquad (\Div T^t)_k = \nab{i} T_{ki}. $$
Recall also that $\Vop T$ is the vector field $(\Vop T)_k = T_{pq} \ph_{pqk}$. Hence, its divergence $\Div (\Vop T)$ is the \emph{function} given by
$$ \Div (\Vop T) = \nab{k} (\Vop T)_k. $$

There are three distinct $2$-tensors we can extract from the covariant derivative $\nab{} T$ of the torsion. These appear in the evolution of various torsion functionals in Sections~\ref{sec:functionals} and~\ref{sec:functionals-revisited}, and play a crucial role in Section~\ref{sec:curvature-torsion} to understand the decomposition of $\nab{}T$ into independent components and their relations to the Riemann curvature $\tRm$.

\begin{defn} \label{defn:K}
We can contract the $3$-tensor $\nab{} T$ with the $3$-form $\ph$ on two of the three corresponding pairs of indices to obtain a $2$-tensor. We denote these by $\KK{a}$ for $a=1,2,3$ where $a$ refers to the index of $\nab{}T$ that is \emph{not} contracted. That is,
\begin{equation} \label{eq:K-defn}
\KK{1}_{ab} = \nab{a} T_{pq} \ph_{bpq}, \qquad \KK{2}_{ab} = \nab{p} T_{aq} \ph_{pbq}, \qquad \KK{3}_{ab} = \nab{p} T_{qa} \ph_{pqb}.
\end{equation}
Note that $\tr \KK{1} = \tr \KK{2} = \tr \KK{3} = \nab{i} T_{jk} \ph_{ijk}$.
\end{defn}

For $a=1$, we can simplify $\KK{1}_{ab}$ as follows. Using~\eqref{eq:nablaph},~\eqref{eq:Pdefn}, and~\eqref{eq:vecA}, we have
\begin{align*}
\KK{1}_{ab} & = \nab{a} T_{pq} \ph_{bpq} = \nab{a} (T_{pq} \ph_{bpq}) - T_{pq} \nab{a} \ph_{bpq} \\
& = \nab{a} (\Vop T)_b - T_{pq} T_{am} \ps_{mbpq} = \nab{a} (\Vop T)_b - (T (\Pop T))_{ab}.
\end{align*}
Thus we obtain the useful relations
\begin{equation} \label{eq:KK1-simp}
\KK{1} + T (\Pop T) = \nab{} (\Vop T), \qquad \KK{1}^t - (\Pop T) T^t = (\nab{} (\Vop T))^t.
\end{equation}
Moreover, using~\eqref{eq:vecA} and~\eqref{eq:Pop-defn2}, we obtain the useful relation
\begin{align} \nonumber
\tr \KK{a} = \nab{i} T_{jk} \ph_{ijk} & = \nab{i} (T_{jk} \ph_{ijk}) - T_{jk} \nab{i} \ph_{ijk} \\ \nonumber
& = \nab{i} (\Vop T)_i - T_{jk} T_{ip} \ps_{pijk} \\ \label{eq:divVT}
& = \Div (\Vop T) + \langle T, \Pop T \rangle.
\end{align}

\begin{rmk} \label{rmk:KK-curv}
The symmetric parts of $\KK{2}$ and $\KK{3}$ are identified later in Section~\ref{sec:g2bianchi-revisited} with simpler expressions obtained from the Riemann curvature tensor, quadratic expressions in the torsion, and $\cL_{\Vop T} g$. Specifically, these identifications are given in equations~\eqref{eq:KK3symm} and~\eqref{eq:KK2symm}. Note that~\eqref{eq:KK1-simp} shows that the symmetric part of $\KK{1}$ is $\frac{1}{2} \cL_{\Vop T} g$, up to lower order terms.
\end{rmk}

We can also define a vector field $\langle \nab{}T, \ps \rangle$ by
\begin{equation} \label{eq:nabTps}
\langle \nab{}T, \ps \rangle_m = \nab{i} T_{jk} \ps_{ijkm}.
\end{equation}

Finally, we can consider the curl of $\Vop T$, which is another vector field.

\begin{lemma} \label{lemma:curlVT}
The vector field $\curl (\Vop T)$ is related to the vector fields $\Div T$, $\Div T^t$, and $\langle \nab{}T, \ps \rangle$ by
\begin{equation} \label{eq:curlVT}
\curl (\Vop T) = \Div T^t - \Div T + \langle \nab{} T, \ps \rangle + 2 T^t (\Vop T) - T (\Vop T) - (\tr T) \Vop T + \Vop (T^2).
\end{equation}
\end{lemma}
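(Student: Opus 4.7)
The plan is to compute $\curl(\Vop T)$ directly from its definition in~\eqref{eq:defn-curl}, expanding everything in a local orthonormal frame and applying the Leibniz rule together with the contraction identities~\eqref{eq:phph}--\eqref{eq:phps} and the defining relation~\eqref{eq:nablaph} for $T$.

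Starting from $(\curl(\Vop T))_k = \nab{i}(\Vop T)_j \, \ph_{ijk}$ with $(\Vop T)_j = T_{pq}\ph_{pqj}$, the product rule produces two summands. For the first summand $\nab{i}T_{pq}\,\ph_{pqj}\ph_{ijk}$, I would rewrite $\ph_{ijk}=\ph_{kij}$ and apply~\eqref{eq:phph} to get $\ph_{pqj}\ph_{kij} = g_{pk}g_{qi} - g_{pi}g_{qk} - \ps_{pqki}$. Substituting and using the cyclic skew-symmetry $\ps_{pqki} = -\ps_{ipqk}$ together with the definition~\eqref{eq:nabTps} of $\langle\nab{}T,\ps\rangle$, the three resulting terms simplify to $(\Div T^t)_k - (\Div T)_k + \langle\nab{}T,\ps\rangle_k$.

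For the second summand, I would substitute $\nab{i}\ph_{pqj} = T_{im}\ps_{mpqj}$ from~\eqref{eq:nablaph} and then reduce $\ps_{mpqj}\ph_{ijk}$ via~\eqref{eq:phps} (after rewriting $\ph_{ijk}=\ph_{kij}$ so that the contracted $j$ is last in both tensors). This produces six $g \otimes \ph$ terms. Each one collapses using the definition $(\Vop A)_k = A_{ij}\ph_{ijk}$ from~\eqref{eq:vecA} and the pointwise identity $\ph_{mpq}=-\ph_{pmq}$, yielding, in order: $T^t(\Vop T)$, $-T(\Vop T)$, $T^t(\Vop T)$, $-(\tr T)\Vop T$, $-\Vop(T^t T)$, and $\Vop(T^2)$. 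The crucial observation is that $T^tT$ is symmetric, so $\Vop(T^tT)=0$ because $\ph$ is totally skew. Adding this to the first summand gives exactly the claimed formula~\eqref{eq:curlVT}.

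The computation is entirely mechanical once the expansions are set up, so there is no conceptual obstacle; the real work is bookkeeping: each of the six terms coming from~\eqref{eq:phps} has to be put into one of the standard forms $T(\Vop T)$, $T^t(\Vop T)$, $\Vop(T^2)$, $\Vop(T^tT)$, or $(\tr T)\Vop T$, which requires repeated use of cyclic and skew-symmetric rearrangements of the three free indices of $\ph$. The one substantive ingredient beyond pure index gymnastics is the vanishing of $\Vop$ on symmetric $2$-tensors, which is what makes the seemingly extra $\Vop(T^tT)$ piece drop out and the asymmetry $2T^t(\Vop T) - T(\Vop T)$ appear.
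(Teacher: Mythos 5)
Your proposal is correct and follows the same computational route as the paper: expand $(\curl(\Vop T))_k = \nab{i}(\Vop T)_j\ph_{ijk}$ by the Leibniz rule, reduce the $\nabla T$ part via the $\ph\ph$ contraction identity to $(\Div T^t - \Div T + \langle\nab{}T,\ps\rangle)_k$, and reduce the $T\nabla\ph$ part via $\nab{}\ph = T\ps$ and the $\ph\ps$ identity to the six quadratic-in-$T$ terms, with $\Vop(T^tT)=0$ by symmetry. The bookkeeping you describe (including the identification of each of the six $g\otimes\ph$ terms) matches the paper's proof step for step, up to relabeling of dummy indices.
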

\begin{proof}
Using~\eqref{eq:defn-curl} and~\eqref{eq:nablaph}, we compute
\begin{align*}
(\curl (\Vop T))_k & = \nab{a} (\Vop T)_b \ph_{abk} = \nab{a} (T_{pq} \ph_{pqb}) \ph_{abk} \\
& = \nab{a} T_{pq} \ph_{pqb} \ph_{kab} + T_{pq} \nab{a} \ph_{pqb} \ph_{kab} \\
& = \nab{a} T_{pq} (g_{pk} g_{qa} - g_{pa} g_{qk} - \ps_{pqka}) + T_{pq} T_{am} \ps_{mpqb} \ph_{kab} \\
& = \nab{q} T_{kq} - \nab{p} T_{pk} + \nab{a} T_{pq} \ps_{apqk} \\
& \qquad {} + T_{pq} T_{am} (g_{km} \ph_{apq} + g_{kp} \ph_{maq} + g_{kq} \ph_{mpa} - g_{am} \ph_{kpq} - g_{ap} \ph_{mkq} - g_{aq} \ph_{mpk}),
\end{align*}
which simplifies further to
\begin{align*}
(\curl (\Vop T))_k & = (\Div T^t)_k - (\Div T)_k + \langle \nab{} T, \ps \rangle_k + (T^t (\Vop T))_k - (T (\Vop T))_k + (T^t (\Vop T))_k \\
& \qquad {} - (\tr T) (\Vop T)_k - (\Vop (T^t T))_k + (\Vop (T^2))_k.
\end{align*}
Since $T^t T$ is symmetric, $\Vop (T^t T) = 0$, and we obtain~\eqref{eq:curlVT}.
\end{proof}

\begin{rmk} \label{rmk:curlVT}
We simplify the expression~\eqref{eq:curlVT} for $\curl (\Vop T)$ considerably in Corollary~\ref{cor:curlVT-revisited} after we obtain an identity for $\langle \nab{} T, \ps \rangle$ in Section~\ref{sec:g2bianchi-revisited}.
\end{rmk}

We require the following identities for $\Vop(\KK{a})$ to simplify both the decomposition of the $\G$-Bianchi identity in Section~\ref{sec:g2bianchi-revisited} and the evolution equations for certain torsion functionals in Section~\ref{sec:functionals-revisited}.
\begin{lemma} \label{lemma:Vop-KK}
The expressions $\Vop( \KK{a} ) \in \Omega^1_7$ for each $a = 1, 2, 3$ are given by
\begin{equation} \label{eq:Vop-KK}
\begin{aligned}
\Vop( \KK{1} ) & = \Div T^t - \Div T + \langle \nab{}T, \ps \rangle, \\
\Vop( \KK{2} ) & = \Div T - \nab{} (\tr T) + \langle \nab{} T, \ps \rangle, \\
\Vop( \KK{3} ) & = \nab{} (\tr T) - \Div T^t + \langle \nab{} T, \ps \rangle.
\end{aligned}
\end{equation}
\end{lemma}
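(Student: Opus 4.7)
The plan is to prove all three identities by a direct coordinate computation, contracting the defining formula for each $\KK{a}$ with $\ph_{abk}$ and then applying the fundamental $\ph\ph$ contraction identity from~\eqref{eq:phph}.

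Concretely, for each $a\in\{1,2,3\}$, I would start from $\Vop(\KK{a})_k = \KK{a}_{ab}\,\ph_{abk}$ using~\eqref{eq:vecA}, substitute the defining expression~\eqref{eq:K-defn}, and then rearrange the indices of the two $\ph$-factors (using total skew-symmetry of $\ph$, keeping careful track of signs) so that they share a common index and I may apply
\[
\ph_{ijk}\ph_{abk} \;=\; g_{ia}g_{jb} - g_{ib}g_{ja} - \ps_{ijab}.
\]
For example, for $a=1$, $\Vop(\KK{1})_k = \nab{a}T_{pq}\,\ph_{bpq}\ph_{abk}$, and after the cyclic rewrites $\ph_{bpq}=\ph_{pqb}$ and $\ph_{abk}=\ph_{kab}$ the shared index is $b$, so the identity yields $g_{pk}g_{qa}-g_{pa}g_{qk}-\ps_{pqka}$. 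Contracting against $\nab{a}T_{pq}$ produces exactly $(\Div T^t)_k - (\Div T)_k - \nab{a}T_{pq}\ps_{pqka}$, and a last sign check (three transpositions to bring $a$ to the front of $\ps$) identifies the remaining term with $+\langle \nab{}T,\ps\rangle_k$ via~\eqref{eq:nabTps}. The computation for $\KK{2}$ and $\KK{3}$ is strictly analogous; the only difference is which pair of indices of $\ph_{bpq}$ (for $\KK{2}$, $\ph_{pbq}$; for $\KK{3}$, $\ph_{pqb}$) is contracted with $\ph_{abk}$, which changes which of the two terms $\Div T$, $\Div T^t$ survives and (in the case $\KK{2}$ or $\KK{3}$) which contraction produces $\nab{}(\tr T)$ instead.

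The main (and only) obstacle is bookkeeping: one must be meticulous about the signs arising from the skew-symmetry of $\ph$ and $\ps$, since the three cases differ only by sign patterns and a small number of index transpositions. A convenient organizational device is to fix, in each case, a canonical cyclic ordering of the indices of both $\ph$-factors before invoking~\eqref{eq:phph}, and to then compare the single $\ps$-term against the definition~\eqref{eq:nabTps} of $\langle \nab{}T,\ps\rangle$ by counting transpositions once and for all. The coefficient of this $\ps$-term turns out to be $+1$ in all three cases, which is the reason $+\langle \nab{}T,\ps\rangle$ appears with the same sign in each of~\eqref{eq:Vop-KK}.

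No other input is needed beyond the contraction identity~\eqref{eq:phph} and the definitions of $\Div T$, $\Div T^t$, $\nab{}(\tr T)$, and $\langle \nab{}T,\ps\rangle$; in particular, since $\nab{}\ph$ is not differentiated here, the torsion $T$ enters only through the tensor $\nab{}T$ itself, with no quadratic-in-$T$ correction terms appearing in the final formulas.
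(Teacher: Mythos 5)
Your proposal is correct and follows essentially the same route as the paper: contract $\KK{a}_{ab}$ with $\ph_{abk}$, rewrite both $\ph$-factors so they share a common index, apply the $\ph\ph$ contraction identity~\eqref{eq:phph}, and then match the surviving $\ps$-term with $\langle\nab{}T,\ps\rangle$ after a sign check. Your observations that the coefficient of the $\ps$-term is $+1$ in each case and that no quadratic-in-$T$ correction terms arise (since no $\ph$ is differentiated) are both accurate and consistent with the paper's computation.
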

\begin{proof}
Using~\eqref{eq:vecA}, we find
\begin{align*}
(\Vop (\KK{1}))_k & = (\nab{a} T_{pq} \ph_{bpq}) \ph_{kab} = \nab{a} T_{pq} (\ph_{pqb} \ph_{kab}) \\
& = \nab{a} T_{pq} (g_{pk} g_{qa} - g_{pa} g_{qk} - \ps_{pqka}) \\
& = \nab{q} T_{kq} - \nab{p} T_{pk} + \nab{a} T_{pq} \ps_{apqk}.
\end{align*}
Similarly we have
\begin{align*}
(\Vop (\KK{2}))_k & = (\nab{p} T_{aq} \ph_{pbq}) \ph_{kab} = - \nab{p} T_{aq} (\ph_{pqb} \ph_{kab}) \\
& = - \nab{p} T_{aq} (g_{pk} g_{qa} - g_{pa} g_{qk} - \ps_{pqka}) \\
& = - \nab{k} (\tr T) + \nab{p} T_{pk} + \nab{p} T_{aq} \ps_{paqk},
\end{align*}
and
\begin{align*}
(\Vop (\KK{3}))_k & = (\nab{p} T_{qa} \ph_{pqb}) \ph_{kab} = \nab{p} T_{qa} (\ph_{pqb} \ph_{kab}) \\
& = \nab{p} T_{qa} (g_{pk} g_{qa} - g_{pa} g_{qk} - \ps_{pqka}) \\
& = \nab{k} (\tr T) - \nab{p} T_{kp} + \nab{p} T_{qa} \ps_{pqak},
\end{align*}
yielding the three expressions in~\eqref{eq:Vop-KK}.
\end{proof}

\subsection{Application: Taylor series expansion of $\ph$} \label{sec:taylor-series}

[In this section only we break from our convention of using a local orthonormal frame, and instead use Riemannian normal coordinates, suitably adapted to $\G$-structures.]
 
Recall the usual Taylor series expansion for the Riemannian metric $g$ with respect to Riemannian normal coordinates, which demonstrates that the Riemann curvature is the second-order obstruction to $(M^n, g)$ being locally isomorphic to the canonical flat model $(\R^n, g_0)$.

In this section we establish a Taylor series expansion of a $\G$-structure $\ph$ with respect to \emph{$\G$-adapted Riemannian normal coordinates}. This yields an explicit demonstration that the torsion $T$ is the first-order obstruction to $(M, \ph)$ being locally isomorphic to the canonical flat model $(\R^7, \ph_0)$, and gives a geometric interpretation for a particular combination of curvature and $\nabla T$, as the second-order obstruction.

We begin by briefly reviewing the well-known classical story for general Riemannian metrics, in order to both fix notation and obtain formulas we need for the $\G$ case. Let $(M^n,g)$ be Riemannian manifold and fix $x \in M$. The \emph{exponential map} $\exp_x \colon U \to M$ at $x$ is defined on some open neighbourhood $U$ of the origin in $T_x M$, and is given by $\exp_x (v) = \gamma_{v}(1)$ where $\gamma_{v}$ is the unique Riemannian geodesic with $\gamma_{v}(0) = x$ and $\gamma'_{v} (0) = v \in T_x M$. The map $\exp_x$ is a diffeomorphism from $U$ onto some open neighbourhood $\exp_x (U)$ of $x$ in $M$. Choosing an orthonormal basis $\{ e_1, \ldots, e_n \}$ of $T_x M$ gives a linear isomorphism $T_x M \cong \R^n$, and combining this with $\exp_x^{-1}$ yields a coordinate chart for $M$ centred at $x$, in which the geodesics emanating from $x$ correspond to rays through the origin in $\R^n$. That is, in such coordinates, $\gamma_v (t) = (c^1 t, \ldots, c^n t)$ where $v = c^i e_i \in T_x M$. Substituting $\gamma^i_v (t) = c^i t$ into the geodesic equation
\begin{equation*}
\frac{d^2 \gamma_v^l}{d t^2} + \Gamma^l_{ij} (\gamma_v(t)) \frac{d \gamma^i_v}{dt} \frac{d \gamma^j_v}{dt} = 0
\end{equation*}
we get
\begin{equation} \label{eq:geodesic}
\Gamma^l_{ij} (\gamma_v (t)) c^i c^j = 0.
\end{equation}
Evaluating~\eqref{eq:geodesic} at $t=0$, we obtain $\Gamma^l_{ij}(x) c^i c^j = 0$. Since $\Gamma^l_{ij}$ is symmetric in $i,j$ in a coordinate frame, we deduce that $\Gamma^l_{ij}$ vanishes at $x$. Moreover, differentiating~\eqref{eq:geodesic} with respect to $t$ and using the chain rule gives
\begin{equation*}
(\partial_k \Gamma^l_{ij}) (\gamma_v(t)) c^i c^j c^k = 0.
\end{equation*}
Evaluating the above at $t=0$, we obtain $(\partial_k \Gamma^l_{ij})(x) c^i c^j c^k = 0$. We deduce that the symmetrization of $\partial_k \Gamma^l_{ij}$ in $i,j,k$ vanishes at $x$. In summary we have
\begin{equation} \label{eq:normal-coords}
g_{ij} = \delta_{ij}, \qquad \Gamma^l_{ij} = 0, \qquad \partial_k \Gamma^l_{ij} + \partial_i \Gamma^l_{jk} + \partial_j \Gamma^l_{ik} = 0, \quad \text{at the point $x$}.
\end{equation}
It follows from the formula for $\Gamma^l_{ij}$ in local coordinates that
\begin{equation} \label{eq:partial-gij}
\partial_p g_{ij} = g_{il} \Gamma^l_{jp} + g_{jl} \Gamma^l_{ip}.
\end{equation}
Since the Christoffel symbols vanish at $x$, we deduce from~\eqref{eq:partial-gij} that
\begin{equation} \label{eq:partial-gij-2}
\partial_p g_{ij} = 0, \quad \text{at the point $x$}.
\end{equation}
Moreover, the formula for the Riemann curvature tensor in local coordinates gives
\begin{equation*}
R^l_{ijk} = \partial_i \Gamma^l_{jk} - \partial_j \Gamma^l_{ik} \quad \text{at the point $x$}.
\end{equation*}
Combining the above with the third equation in~\eqref{eq:normal-coords}, we compute
\begin{equation*}
R^l_{ijk} + R^l_{ikj} = \partial_i \Gamma^l_{jk} - \partial_j \Gamma^l_{ik} + \partial_i \Gamma^l_{kj} - \partial_k \Gamma^l_{ij} = 3 \, \partial_i \Gamma^l_{jk}.
\end{equation*}
We deduce that
\begin{equation} \label{eq:normal-coords-2}
\partial_i \Gamma^l_{jk} = \tfrac{1}{3} (R^l_{ijk} + R^l_{ikj}), \quad \text{at the point $x$}.
\end{equation}
Taking the partial derivative of~\eqref{eq:partial-gij} we obtain
\begin{equation*}
\partial_q \partial_p g_{ij} = (\partial_q g_{il}) \Gamma^l_{jp} + (\partial_q g_{jl}) \Gamma^l_{ip} + g_{il} (\partial_q \Gamma^l_{jp}) + g_{jl} (\partial_q \Gamma^l_{ip}).
\end{equation*}
Evaluating at $x$, the first two terms vanish by~\eqref{eq:normal-coords}, and by~\eqref{eq:normal-coords-2} and the symmetries of the curvature tensor, we get
\begin{align} \nonumber
\partial_q \partial_p g_{ij} & = \tfrac{1}{3} g_{il} (R^l_{qjp} + R^l_{qpj}) + \tfrac{1}{3} g_{jl} (R^l_{qip} + R^l_{qpi}) \\ \nonumber
& = \tfrac{1}{3} (R_{qjpi} + R_{qpji} + R_{qipj} + R_{qpij}) \\ \label{eq:partial2-gij}
& = \tfrac{1}{3} (R_{qjpi} + R_{qipj}), \quad \text{at the point $x$}.
\end{align}

\begin{prop} \label{prop:taylor-metric}
Let $(x^1, \ldots, x^n)$ be Riemannian normal coordinates centred at $x \in M$. The components $g_{ij}$ of the metric tensor have a Taylor expansion about $0$, which is the point in $\R^n$ corresponding to $x \in M$, given by
\begin{equation*}
g_{ij} (x^1, \ldots, x^n) = \delta_{ij} + \prescript{g}{}{\! \mathcal Q_{pq \, ij}} x^p x^q + O(\|x\|^3),
\end{equation*}
where
\begin{equation} \label{eq:taylor-metric-Q}
\prescript{g}{}{\! \mathcal Q_{pq \, ij}} = \tfrac{1}{6} (R_{piqj} + R_{pjqi})
\end{equation}
and $R_{piqj}$ and $R_{pjqi}$ are both evaluated at $0$.
\end{prop}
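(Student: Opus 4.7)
The proposition is an immediate consequence of the preceding computations, so the plan is essentially to assemble the ingredients already established in the narrative leading up to it and perform a symmetrization.

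First I would write out the multivariable Taylor expansion of $g_{ij}$ about $0$ to second order, namely
\begin{equation*}
g_{ij}(x^1,\ldots,x^n) = g_{ij}(0) + (\partial_p g_{ij})(0)\, x^p + \tfrac{1}{2} (\partial_p \partial_q g_{ij})(0)\, x^p x^q + O(\|x\|^3).
\end{equation*}
The zeroth order term equals $\delta_{ij}$ because the frame $\{e_1,\ldots,e_n\}$ is orthonormal at $x$, the first order term vanishes by~\eqref{eq:partial-gij-2}, and the coefficient of $x^p x^q$ is computed from~\eqref{eq:partial2-gij}, which gives $(\partial_q \partial_p g_{ij})(0) = \tfrac{1}{3}(R_{qjpi} + R_{qipj})$.

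Substituting yields
\begin{equation*}
g_{ij}(x) = \delta_{ij} + \tfrac{1}{6} (R_{qjpi} + R_{qipj})\, x^p x^q + O(\|x\|^3).
\end{equation*}
The last step is to bring this into the claimed form. Using the pair-swap symmetry $R_{abcd} = R_{cdab}$ of the Riemann tensor, we have $R_{qjpi} = R_{piqj}$ and $R_{qipj} = R_{pjqi}$, so the coefficient of $x^p x^q$ is $\tfrac{1}{6}(R_{piqj} + R_{pjqi})$, matching~\eqref{eq:taylor-metric-Q}.

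There is no real obstacle here: once the second order derivatives of $g_{ij}$ in normal coordinates have been identified with curvature components, as was done in~\eqref{eq:partial2-gij}, the Taylor expansion is mechanical, and the only subtlety is to relabel indices using the standard symmetries of $R_{abcd}$ so that the coefficient $\prescript{g}{}{\!\mathcal{Q}_{pq\,ij}}$ has the manifestly symmetric form in $(p,q)$ and in $(i,j)$ shown in the statement.
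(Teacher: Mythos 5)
Your proof is correct and follows the same route as the paper: assemble the Taylor expansion, kill the constant and linear terms via equations~\eqref{eq:normal-coords} and~\eqref{eq:partial-gij-2}, substitute~\eqref{eq:partial2-gij} for the second derivatives, and relabel indices using $R_{abcd}=R_{cdab}$ to obtain the symmetric form~\eqref{eq:taylor-metric-Q}.
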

\begin{proof}
The constant term is $\delta_{ij}$ by the first equation in~\eqref{eq:normal-coords} and the linear term vanishes by~\eqref{eq:partial-gij-2}. Using~\eqref{eq:partial2-gij} and the symmetries of the Riemann tensor, the quadratic term can be written as
\begin{equation*}
\tfrac{1}{2} (\partial_q \partial_p g_{ij}) (0) x^p x^q = \tfrac{1}{6} (R_{qjpi} + R_{qipj})x^p x^q = \tfrac{1}{6} (R_{piqj} + R_{pjqi}) x^p x^q. \qedhere
\end{equation*}
\end{proof}

\begin{rmk} \label{rmk:taylor-metric}
Proposition~\ref{prop:taylor-metric} shows that, in Riemannian normal coordinates $x^1, \ldots, x^n$ centred at $x \in M$, the metric $g$ agrees with the Euclidean metric $g_{ij} = \delta_{ij}$ up to second-order, if and only if the Riemann curvature tensor $\tRm$ vanishes at $x$. Sufficiency is obvious. To see necessity, let $i=p$ and $j=q$ in~\eqref{eq:taylor-metric-Q}. We get the vanishing of $R_{ppqq} + R_{pqqp} = 0 + R_{pqqp}$, which says that all sectional curvatures vanish, and thus $\tRm$ vanishes as is well-known.
\end{rmk}

Now let $(M^7, \ph)$ be a manifold with $\G$-structure. We have assembled all we need to establish the analogous second-order Taylor expansion of $\ph$. We can choose our local orthonormal frame $\{ e_1, \ldots, e_7 \}$ of $T_x M$ to be \emph{$\G$-adapted}, meaning that \emph{at the point $x$}, the components $\ph_{ijk}$ of $\ph$ agree with those of the standard flat model on $\R^7$.

\begin{thm} \label{thm:taylor-ph}
Let $(x^1, \ldots, x^7)$ be $\G$-adapted Riemannian normal coordinates centred at $x \in M$. The components $\ph_{ijk}$ of $\ph$ have Taylor expansions about $0$, which is the point in $\R^7$ corresponding to $x \in M$, given by
\begin{equation} \label{eq:taylor-ph}
\ph_{ijk} (x^1, \ldots, x^7) = \ph_{ijk} + (T_{qm} \ps_{mijk}) x^q + \prescript{\ph}{}{\! \mathcal Q_{pq \, ijk}} x^p x^q + O(\|x\|^3),
\end{equation}
where
\begin{align} \nonumber
\prescript{\ph}{}{\! \mathcal Q_{pq \, ijk}} & = \tfrac{1}{2} \nab{p} T_{qm} \ps_{mijk} - \tfrac{1}{2} (TT^t)_{pq} \ph_{ijk} \\ \nonumber
& \qquad {} + \tfrac{1}{2} T_{pm} (T_{qi} \ph_{mjk} + T_{qj} \ph_{mki} + T_{qk} \ph_{mij}) \\ \label{eq:taylor-ph-B}
& \qquad {} + \tfrac{1}{6} (R_{piqm} \ph_{mjk} + R_{pjqm} \ph_{mki} + R_{pkqm} \ph_{mij}).
\end{align}
Here all coefficient tensors on the right-hand side are evaluated at $0$.
\end{thm}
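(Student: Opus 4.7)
The plan is to Taylor expand $\ph_{ijk}$ as a function of $\G$-adapted Riemannian normal coordinates, using the fact that the Christoffel symbols and the first derivatives of $g$ vanish at the origin $0$, and that the first derivatives of $\Gamma^l_{ij}$ at $0$ are controlled by the Riemann curvature via~\eqref{eq:normal-coords-2}. The constant term is immediate from the definition of a $\G$-adapted frame. For the linear term, since $\Gamma^l_{ij}(0) = 0$, the coordinate derivative coincides at $0$ with the covariant derivative, so $(\partial_q \ph_{ijk})(0) = (\nab{q} \ph_{ijk})(0) = T_{qm} \ps_{mijk}$ by~\eqref{eq:nablaph}.

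For the quadratic term, the plan is to convert the coordinate second derivative at $0$ into an iterated covariant derivative, picking up corrective terms in $\partial_p \Gamma^l_{q\cdot}$. Starting from
\begin{equation*}
\nab{q} \ph_{ijk} = \partial_q \ph_{ijk} - \Gamma^l_{qi} \ph_{ljk} - \Gamma^l_{qj} \ph_{ilk} - \Gamma^l_{qk} \ph_{ijl},
\end{equation*}
applying $\partial_p$ and evaluating at $0$ (where undifferentiated Christoffels all vanish, so also $\nab{p}$ reduces to $\partial_p$ on the left-hand side) yields
\begin{equation*}
(\partial_p \partial_q \ph_{ijk})(0) = (\nab{p} \nab{q} \ph_{ijk})(0) + (\partial_p \Gamma^l_{qi}) \ph_{ljk} + (\partial_p \Gamma^l_{qj}) \ph_{ilk} + (\partial_p \Gamma^l_{qk}) \ph_{ijl}.
\end{equation*}
Substituting~\eqref{eq:normal-coords-2} in the form $(\partial_p \Gamma^l_{qi})(0) = \tfrac{1}{3}(R_{pqil} + R_{piql})$ and discarding the $R_{pqil}$ piece (which is antisymmetric in $p,q$ and dies against the symmetric tensor $x^p x^q$ in the Taylor sum), the Riemann contribution to the quadratic term is precisely $\tfrac{1}{6}(R_{piql} \ph_{ljk} + R_{pjql} \ph_{ilk} + R_{pkql} \ph_{ijl})$, which matches the last line of~\eqref{eq:taylor-ph-B} after the cyclic rewrites $\ph_{ilk} = \ph_{mki}$ and $\ph_{ijl} = \ph_{mij}$ (with $l \to m$).

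It remains to evaluate $\nab{p} \nab{q} \ph_{ijk}$ at $0$. The plan is to apply Leibniz to $\nab{q} \ph_{ijk} = T_{qm} \ps_{mijk}$, yielding $(\nab{p} T_{qm}) \ps_{mijk} + T_{qm} \nab{p} \ps_{mijk}$, and then expand $\nab{p} \ps_{mijk}$ via~\eqref{eq:nablaps}. The $-T_{pm}\ph_{ijk}$ piece of~\eqref{eq:nablaps} contracts with $T_{qm}$ to give $-(TT^t)_{qp}\ph_{ijk}$, while the remaining three pieces produce a sum of three $T\cdot T\cdot \ph$ terms. Multiplying by the Taylor prefactor $\tfrac{1}{2}$ and using that only the $(p,q)$-symmetric part of the coefficient survives contraction with $x^p x^q$ — which allows the swap of $p \leftrightarrow q$ to rewrite $T_{qm}T_{p\cdot}$ as $T_{pm}T_{q\cdot}$ — I would then read off exactly the stated expression for $\prescript{\ph}{}{\! \mathcal Q_{pq \, ijk}}$, using the cyclic identities $\ph_{mik} = -\ph_{mki}$ and similar to reconcile sign conventions.

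The main obstacle is purely bookkeeping: keeping track of the antisymmetries and cyclic relabelings of $\ph$ and $\ps$, the antisymmetry of $R_{pqil}$ in $p,q$, and the freedom to symmetrize in $(p,q)$ that comes for free from contraction with $x^p x^q$. No genuinely new identity beyond~\eqref{eq:nablaph},~\eqref{eq:nablaps}, and~\eqref{eq:normal-coords-2} is required.
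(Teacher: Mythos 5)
Your proposal is correct and follows essentially the same route as the paper's own proof: convert coordinate second derivatives into iterated covariant derivatives plus $\partial\Gamma$ corrections, substitute~\eqref{eq:nablaph},~\eqref{eq:nablaps}, and~\eqref{eq:normal-coords-2}, and discard the $p,q$-antisymmetric curvature piece. The only point you make more explicit than the paper is that the final $T\cdot T\cdot\ph$ term is reached by using the $p\leftrightarrow q$ symmetrization freedom of the Taylor coefficient (the paper performs this swap silently when passing from its computation of $(\partial_p\partial_q\ph_{ijk})|_0$ to the displayed form of $\prescript{\ph}{}{\!\mathcal Q_{pq\,ijk}}$), which is a fair clarification rather than a deviation.
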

\begin{proof}
The constant term in~\eqref{eq:taylor-ph} is due to our choice of a $\G$-adapted orthonormal frame at $x$. The local coordinate formula for the covariant derivative gives
\begin{equation} \label{eq:taylor-temp}
\partial_q \ph_{ijk} = \nab{q} \ph_{ijk} + \Gamma^m_{qi} \ph_{mjk} + \Gamma^m_{qj} \ph_{imk} + \Gamma^m_{qk} \ph_{ijm}.
\end{equation}
Evaluating~\eqref{eq:taylor-temp} at $x$ and using~\eqref{eq:normal-coords} and~\eqref{eq:nablaph} gives
\begin{equation*}
\partial_q \ph_{ijk} = T_{qm} \ps_{mijk}, \quad \text{at the point $x$},
\end{equation*}
yielding the linear term in~\eqref{eq:taylor-ph}. Taking the partial derivative of~\eqref{eq:taylor-temp} we obtain
\begin{equation*}
\partial_p \partial_q \ph_{ijk} = \partial_p \nab{q} \ph_{ijk} + (\partial_p \Gamma^m_{qi}) \ph_{mjk} + (\partial_p \Gamma^m_{qj}) \ph_{imk} + (\partial_p \Gamma^m_{qk}) \ph_{ijm} + (\text{terms with $\Gamma$'s}).
\end{equation*}
As in~\eqref{eq:taylor-temp}, the first term on the right-hand side above can be written as
\begin{equation*}
\partial_p \nab{q} \ph_{ijk} = \nab{p} \nab{q} \ph_{ijk} + (\text{terms with $\Gamma$'s}).
\end{equation*}
Evaluating both of the above expressions at $x$, the terms with $\Gamma$'s vanish, and we are left with
\begin{equation*}
(\partial_p \partial_q \ph_{ijk}) = (\nab{p} \nab{q} \ph_{ijk}) + \sum_{i,j,k \text{ cyclic}} (\partial_p \Gamma^m_{qi}) \ph_{mjk}, \quad \text{at the point $x$}.
\end{equation*}
Using~\eqref{eq:normal-coords-2} and equations~\eqref{eq:nablaph} and~\eqref{eq:nablaps}, at the point $x$ the above expression is
\begin{align*}
(\partial_p \partial_q \ph_{ijk}) |_0 & = \nab{p} (T_{qm} \ps_{mijk}) + \sum_{i,j,k \text{ cyclic}} \tfrac{1}{3} (R^m_{pqi} + R^m_{piq}) \ph_{mjk} \\
& = \nab{p} T_{qm} \ps_{mijk} - T_{qm} T_{pm} \ph_{ijk} + T_{qm} \sum_{i,j,k \text{ cyclic}} T_{pi} \ph_{mjk} + \sum_{i,j,k \text{ cyclic}} \tfrac{1}{3} (R^m_{pqi} + R^m_{piq}) \ph_{mjk} \\
& = \nab{p} T_{qm} \ps_{mijk} - (T T^t)_{pq} \ph_{ijk} + \sum_{i,j,k \text{ cyclic}} (T_{qm} T_{pi} + \tfrac{1}{3} R_{pqim} + \tfrac{1}{3} R_{piqm}) \ph_{mjk}.
\end{align*}
We multiply by $\frac{1}{2} x^p x^q$, and sum over $p,q$. The first curvature term drops out, leaving us with
\begin{equation*}
\tfrac{1}{2} (\partial_p \partial_q \ph_{ijk}) |_0 x^p x^q = \prescript{\ph}{}{\! \mathcal Q_{pq \, ijk}} x^p x^q,
\end{equation*}
where $\prescript{\ph}{}{\! \mathcal Q_{pq \, ijk}}$ is given by~\eqref{eq:taylor-ph-B}.
\end{proof}

\begin{rmk} \label{rmk:taylor-ph}
Theorem~\ref{thm:taylor-ph} shows that, in $\G$-adapted Riemannian normal coordinates $x^1, \ldots, x^7$ centred at $x \in M$, the $3$-form $\ph$ agrees with the standard $3$-form $\ph_0$ on $\R^7$ up to first-order, if and only if the torsion $T$ vanishes at $x$. In this case, agreement up to second-order is then given by the additional vanishing of the symmetrization $\prescript{\ph}{}{\! \mathcal Q_{pq \, ijk}} + \prescript{\ph}{}{\! \mathcal Q_{qp \, ijk}}$. Since $T = 0$ already, the vanishing of the quadratic terms involves only the curvature, and it is easy to show by contracting with $\ph_{njk}$ that this is equivalent to flatness at $x$. This is not surprising, as we show in Section~\ref{sec:curvature-torsion} that when $T = 0$, the Riemann curvature tensor has only one potentially nonzero component in terms of irreducible $\G$-representations.
\end{rmk}

\begin{rmk} \label{rmk:taylor-others}
Using similar methods, one could also establish such Taylor series expansions for other geometric structures such as $\U{m}$, $\SU{m}$, or $\Spin{7}$-structures.
\end{rmk}

\subsection{Infinitesimal $\G$-symmetries} \label{sec:Lie-derivative}

In this section, we use Lemma~\ref{lemma:nablaph-hT} to derive a general formula for the \emph{Lie derivative} $\cL_W \ph$ of a $\G$-structure $\ph$ with respect to a vector field $W$. We also determine the formal adjoint $\delta \colon \Omega^3 \to \vf$ of the map $\delta^* \colon \vf \to \Omega^3$ given by $\delta^* W = \cL_W \ph$. These results are used crucially throughout Section~\ref{sec:symbols-short-time} to analyze a large class of flows of $\G$-structures and to prove that their failure to be strictly parabolic is due precisely to diffeomorphism invariance, thus admitting a DeTurck trick argument.

Applying~\eqref{eq:Lie-derivative-frame} to $S = \ph$, we have
\begin{equation} \label{eq:Lie-derivative-framea}
(\cL_W \ph)_{ijk} = W_p \nab{p} \ph_{ijk} + \nab{i} W_p \ph_{pjk} + \nab{j} W_p \ph_{ipk} + \nab{k} W_p \ph_{ijp}.
\end{equation}
Using~\eqref{eq:nablaph-hT} and~\eqref{eq:diamond3}, we can rewrite equation~\eqref{eq:Lie-derivative-framea} as
\begin{equation} \label{eq:Lie-derivative-frameb}
\cL_W \ph = - \tfrac{1}{3} W_p \hT_p \diamond \ph + (\nabla W) \diamond \ph = (\nabla W - \tfrac{1}{3} \hT(W)) \diamond \ph.
\end{equation}
Write $\nabla W = (\nabla W)_{\symm} + (\nabla W)_7 + (\nabla W)_{14}$, where
$$ \big( (\nabla W)_{\symm} \big){}_{ij} = \tfrac{1}{2} (\nab{i} W_j + \nab{j} W_i) = \tfrac{1}{2} (\cL_W g)_{ij}. $$
By Corollary~\ref{cor:ABinnerproduct}, we have $(\nabla W)_{14} \diamond \ph = 0$. Hence equation~\eqref{eq:Lie-derivative-frameb} becomes
\begin{equation} \label{eq:Lie-derivative-framec}
\cL_W \ph = \big( \tfrac{1}{2} \cL_W g - \tfrac{1}{3} \hT(W) + (\nabla W)_7 \big) \diamond \ph.
\end{equation}
From Definition~\ref{defn:curl} we have $(\nabla W)_7 = \tfrac{1}{6} \curl W \hk \ph$. Moreover, by~\eqref{eq:hatTdefn} we have
$$ \hT(W)_{ij} = W_p \hT_{pij} = W_p T_{pq} \ph_{qij} = (T^t_{qp} W_p) \ph_{qij} = \big( (T^t W) \hk \ph \big){}_{ij}. $$
Using these two observations, equation~\eqref{eq:Lie-derivative-framec} finally becomes
\begin{equation} \label{eq:Lie-derivative-framed}
\cL_W \ph = \big( \tfrac{1}{2} \cL_W g + (- \tfrac{1}{3} T^t W + \tfrac{1}{6} \curl W ) \hk \ph \big) \diamond \ph.
\end{equation}
Using Corollary~\ref{cor:sevenreps}, the above can also be written in the useful form
\begin{equation} \label{eq:Lie-derivative-alternate}
\cL_W \ph = \tfrac{1}{2} (\cL_W g) \diamond \ph + (T^t W - \tfrac{1}{2} \curl W ) \hk \ps.
\end{equation}

\begin{defn} \label{defn:inf-symmetry}
A vector field $W$ on $(M, \ph)$ is called an \emph{infinitesimal $\G$-symmetry} if $\cL_W \ph = 0$. Note that this means that the \emph{flow} of $W$ preserves $\ph$.
\end{defn}

\begin{cor} \label{cor:inf-symmetry}
Let $W \in \vf$. Then $W$ is an infinitesimal $\G$-symmetry if and only if $W$ is a Killing field of $g$ and the \emph{curl} of $W$ is $\tfrac{1}{3} T^t W$. That is,
$$ \cL_W \ph = 0 \quad \iff \quad \cL_W g = 0 \, \, \, \text{and} \, \, \, \curl W = 2 \, T^t W. $$
\end{cor}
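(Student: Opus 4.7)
The plan is to read off the result directly from equation~\eqref{eq:Lie-derivative-framed} together with the injectivity statement in Corollary~\ref{cor:ABinnerproduct}.

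First I would recall that~\eqref{eq:Lie-derivative-framed} expresses
\[
\cL_W \ph = A \diamond \ph, \qquad A := \tfrac{1}{2} \cL_W g + \bigl(-\tfrac{1}{3} T^t W + \tfrac{1}{6} \curl W\bigr) \hk \ph,
\]
and observe that $A$ is already written as its decomposition into $\cS^2 \oplus \Omega^2_7 \subset \cT^2$: the first summand $\tfrac{1}{2} \cL_W g$ is symmetric by definition, while the second summand has the form $X \hk \ph$ with $X = -\tfrac{1}{3} T^t W + \tfrac{1}{6} \curl W$, which by~\eqref{eq:omega27desc} lies in $\Omega^2_7$. In particular, $A$ has no $\Omega^2_{14}$ component.

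Next, by Corollary~\ref{cor:ABinnerproduct}, the linear map $A \mapsto A \diamond \ph$ is injective on $\cS^2 \oplus \Omega^2_7$ (indeed, an isomorphism onto $\Omega^3$). Hence $\cL_W \ph = 0$ if and only if $A = 0$, and since the two summands of $A$ lie in pointwise orthogonal subspaces $\cS^2$ and $\Omega^2_7$, this is equivalent to the vanishing of each summand separately. The symmetric part gives $\cL_W g = 0$, i.e.\ $W$ is a Killing field of $g$. For the $\Omega^2_7$ part, since $X \hk \ph = 0$ forces $X = 0$ (for instance by contracting with $\ph$ and using~\eqref{eq:phph}, or directly from~\eqref{eq:omega27vf}), we obtain $-\tfrac{1}{3} T^t W + \tfrac{1}{6} \curl W = 0$, equivalently $\curl W = 2\, T^t W$.

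There is no real obstacle here; the entire content is packaged into~\eqref{eq:Lie-derivative-framed} and Corollary~\ref{cor:ABinnerproduct}. The only care needed is to notice that the displayed expression for $\cL_W \ph$ is presented in precisely the $\cS^2 \oplus \Omega^2_7$ form required to apply the injectivity statement, so that the two conditions really do decouple orthogonally rather than mix.
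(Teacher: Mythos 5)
Your proof is correct and is essentially the paper's own argument: the paper's proof is simply the one-line remark that the result is immediate from equation~\eqref{eq:Lie-derivative-framed} and Corollary~\ref{cor:ABinnerproduct}, and you have filled in precisely those details (that $A$ is already presented in $\cS^2 \oplus \Omega^2_7$ form, that the map $A \mapsto A \diamond \ph$ is injective there, and that the orthogonality of $\cS^2$ and $\Omega^2_7$ decouples the two conditions).
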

\begin{proof}
The proof is immediate from~\eqref{eq:Lie-derivative-framed} and Corollary~\ref{cor:ABinnerproduct}.
\end{proof}

\begin{rmk} \label{rmk:inf-symmetry}
Since $\ph$ determines the metric $g$, we expect that $\cL_W \ph = 0$ implies $\cL_W g = 0$, as we have seen above. The content of Corollary~\ref{cor:inf-symmetry} is that the infinitesimal $\G$-symmetries $W$ are precisely those Killing fields which satisfy the additional condition that $\curl W = 2 \, T^t W$. Note in the torsion-free case, this says that $W$ must be \emph{curl-free}. Corollary~\ref{cor:inf-symmetry} has appeared before in various guises. For example, it is implicit in~\cite[equation (2.28)]{DGK}. The torsion-free case appears in~\cite[Proposition 2.15]{KL}. The closed case appears in~\cite[Lemma 9.3]{LW1}. In the nearly parallel case, it is implicit in~\cite[Section 4.1]{DS}.
\end{rmk}

\subsection{The $\G$-Bianchi identity} \label{sec:g2bianchi}

In this section we discuss the \emph{$\G$-Bianchi identity}, and derive its simplest consequence. The $\G$-Bianchi identity is an identity for any $\G$-structure $\ph$, relating the Riemann curvature $\tRm$ of $g_{\ph}$ with the torsion $T$ of $\ph$ and its covariant derivative $\nab{}T$. It was originally derived in~\cite[Theorem 4.2]{K-flows} by analyzing the diffeomorphism invariance of the torsion tensor $T$. A much simpler proof can be obtained using the Ricci identity~\eqref{eq:ricci-identity} and the fundamental contraction identities in Section~\ref{sec:contractions}. Such a proof appeared in~\cite[Lemma 2.1]{LW1}. We review it here for completeness.

\begin{prop} \label{lemma:g2bianchi}
For any $\G$-structure $\ph$, the following identity holds:
\begin{equation} \label{eq:g2bianchi}
\nab{i} T_{jk} - \nab{j} T_{ik} = T_{ip} T_{jq} \ph_{pqk} + \tfrac{1}{2} R_{ijpq} \ph_{pqk}.
\end{equation}
The identity~\eqref{eq:g2bianchi} is often referred to as the \emph{$\G$-Bianchi identity}.
\end{prop}
\begin{proof}
We take the covariant derivative of~\eqref{eq:nablaph} and substitute~\eqref{eq:nablaps}. This gives
\begin{align*}
\nab{m} \nab{p} \ph_{ijk} & = \nab{m} T_{pq} \ps_{qijk} + T_{pq} \nab{m} \ps_{qijk} \\
& = \nab{m} T_{pq} \ps_{qijk} +T_{pq} ( -T_{mq} \ph_{ijk} + T_{mi} \ph_{qjk} - T_{mj} \ph_{qik} + T_{mk} \ph_{qij}).
\end{align*}
Interchange the roles of $p$ and $m$ and take the difference, and use the fact that $T_{pq} T_{mq}$ is symmetric in $p,m$. We get
\begin{align*}
\nab{m} \nab{p} \ph_{ijk} - \nab{p} \nab{m} \ph_{ijk} & = (\nab{m} T_{pq} - \nab{p} T_{mq}) \ps_{qijk} \\
& \qquad {} + T_{pq} (T_{mi} \ph_{qjk} - T_{mj} \ph_{qik} + T_{mk} \ph_{qij}) \\
& \qquad {} - T_{mq} (T_{pi} \ph_{qjk} - T_{pj} \ph_{qik} + T_{pk} \ph_{qij}).
\end{align*}
Apply the Ricci identity~\eqref{eq:ricci-identity} to the left-hand sides gives
\begin{align*}
-R_{mpiq} \ph_{qjk} - R_{mpjq} \ph_{iqk} - R_{mpkq} \ph_{ijq} & = (\nab{m} T_{pq} - \nab{p} T_{mq}) \ps_{qijk} \\
& \qquad {} + T_{pq} (T_{mi} \ph_{qjk} - T_{mj} \ph_{qik} + T_{mk} \ph_{qij}) \\
& \qquad {} - T_{mq} (T_{pi} \ph_{qjk} - T_{pj} \ph_{qik} + T_{pk} \ph_{qij}).
\end{align*}
Contract both sides of the above expression with $\ps_{lijk}$ and use the fact that the left-hand side and each of the three terms on the right-hand side are totally skew in $i,j,k$. We obtain
\begin{equation*}
-3 R_{mpiq} \ph_{qjk} \ps_{lijk} = (\nab{m} T_{pq} - \nab{p} T_{mq}) \ps_{qijk} \ps_{lijk} + 3 T_{pq} T_{mi} \ph_{qjk} \ps_{lijk} - 3 T_{mq} T_{pi} \ph_{qjk} \ps_{lijk}.
\end{equation*}
Apply the contraction identities to rewrite the above as
\begin{align*}
12 R_{mpiq} \ph_{qli} = 24 (\nab{m} T_{pl} - \nab{p} T_{ml}) - 12 T_{pq} T_{mi} \ph_{qli} + 12 T_{mq} T_{pi} \ph_{qli}.
\end{align*}
The above expression can be rearranged and reindexed to give precisely~\eqref{eq:g2bianchi}.
\end{proof}

The simplest consequence of the $\G$-Bianchi identity is an expression for the scalar curvature in terms of the torsion.

\begin{cor} \label{cor:scalar-curvature}
The scalar curvature $R = R_{ijji}$ of a $\G$-structure $\ph$ can be expressed entirely in terms of the torsion as
\begin{equation} \label{eq:scalar-curvature}
R = (\tr T)^2 - \langle T, T^t \rangle + \langle T, \Pop T \rangle - 2 \nab{i} T_{jk} \ph_{ijk}.
\end{equation}
It can equivalently be expressed as
\begin{equation} \label{eq:scalar-curvature-b}
R = (\tr T)^2 - \langle T, T^t \rangle - \langle T, \Pop T \rangle - 2 \Div (\Vop T)
\end{equation}
where $(\Vop T)_k = T_{ij} \ph_{ihk}$ as in~\eqref{eq:vecA}, or alternatively as
\begin{equation} \label{eq:scalar-curvature-c}
R = 6|T_1|^2 - |T_{27}|^2 + 5 |T_7|^2 - |T_{14}|^2 - 2 \Div (\Vop T).
\end{equation}
\end{cor}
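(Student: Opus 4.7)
The plan is to derive~\eqref{eq:scalar-curvature} by contracting the $\G$-Bianchi identity~\eqref{eq:g2bianchi} with $\ph_{ijk}$ over all three indices. For the left hand side, the two terms $\nab{i} T_{jk} \ph_{ijk}$ and $-\nab{j} T_{ik} \ph_{ijk}$ coincide after relabeling $i \leftrightarrow j$ and using the skew-symmetry of $\ph$, giving $2 \nab{i} T_{jk} \ph_{ijk}$.

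For the right hand side, both terms require expanding the double contraction $\ph_{pqk} \ph_{ijk} = g_{ip} g_{jq} - g_{iq} g_{jp} - \ps_{ijpq}$ from~\eqref{eq:phph}. In the curvature term $\tfrac{1}{2} R_{ijpq} \ph_{pqk} \ph_{ijk}$, the $g$-contractions contribute $\tfrac{1}{2} R_{ijij} - \tfrac{1}{2} R_{ijji}$; using $R_{ijij} = -R_{ijji}$ and the convention $R = R_{ijji}$, these combine to $-R$, while the $\ps$-contraction $\tfrac{1}{2} R_{ijpq} \ps_{ijpq}$ vanishes by the first Bianchi identity since $\ps$ is totally skew. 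The torsion term $T_{ip} T_{jq} \ph_{pqk} \ph_{ijk}$ produces $(\tr T)^2 - \langle T, T^t \rangle - T_{ip} T_{jq} \ps_{ijpq}$ from the $g$-contractions, and the remaining $\ps$-contraction must then be identified with $\pm \langle T, \Pop T \rangle$.

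The main technical point, which I expect to be the only subtle step, is sign-tracking in this last identification: the indices $(i,p)$ and $(j,q)$ of the two $T$-factors interlace with the skew index pairs of $\ps$, so one must relabel $(i,p,j,q) \to (k,l,i,j)$ and apply one transposition of adjacent indices of $\ps_{klij}$ to recognize $T_{ip} T_{jq} \ps_{ijpq} = -T_{ij} T_{kl} \ps_{klij} = -\langle T, \Pop T \rangle$. Once this is in hand, the torsion term contributes $(\tr T)^2 - \langle T, T^t \rangle + \langle T, \Pop T \rangle$, and rearranging yields~\eqref{eq:scalar-curvature}.

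Finally, to obtain~\eqref{eq:scalar-curvature-b} one simply substitutes the identity~\eqref{eq:divVT}, which reads $\nab{i} T_{jk} \ph_{ijk} = \Div(\Vop T) + \langle T, \Pop T \rangle$, into~\eqref{eq:scalar-curvature} to eliminate the derivative term; the two copies of $\langle T, \Pop T \rangle$ then combine to flip the sign. To obtain~\eqref{eq:scalar-curvature-c}, one substitutes the expressions for $(\tr T)^2$, $\langle T, T^t \rangle$, and $\langle T, \Pop T \rangle$ from~\eqref{eq:torsion-formulas} in terms of $|T_1|^2, |T_{27}|^2, |T_7|^2, |T_{14}|^2$ and collects coefficients.
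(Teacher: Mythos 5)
Your proposal is correct and follows essentially the same route as the paper: contract the $\G$-Bianchi identity with $\ph_{ijk}$, use the first identity in~\eqref{eq:phph} to expand the double $\ph$-contraction, kill the $R_{ijpq}\ps_{ijpq}$ term via the first Bianchi identity, identify the remaining $\ps$-contraction of the quadratic torsion term with $\langle T,\Pop T\rangle$ (your sign bookkeeping here is correct), and then pass to~\eqref{eq:scalar-curvature-b} via~\eqref{eq:divVT} and to~\eqref{eq:scalar-curvature-c} via~\eqref{eq:torsion-formulas}.
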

\begin{proof}
Contracting both sides of~\eqref{eq:g2bianchi} with $\ph_{ijk}$ gives
\begin{align*}
2 \nab{i} T_{jk} \ph_{ijk} & = (T_{ip} T_{jq} + \tfrac{1}{2} R_{ijpq}) (g_{ip} g_{jq} - g_{iq} g_{jp} - \ps_{ijpq}) \\
& = T_{ii} T_{jj} - T_{ij} T_{ji} +T_{ip} (T_{jq} \ps_{jqip}) + \tfrac{1}{2} R_{ijij} - \tfrac{1}{2} R_{ijji} - \tfrac{1}{2} R_{ijpq} \ps_{ijpq}.
\end{align*}
The last term above vanishes by the skew-symmetry of $\ps_{ijpq}$ and the first Riemannian Bianchi identity $R_{ijpq} + R_{jpiq} + R_{pijq} = 0$. (See also Corollary~\ref{cor:U-IP-diamond-ps}.) Using~\eqref{eq:Pdefn}, the above becomes
\begin{equation} \label{eq:scalar-curv-temp}
2 \nab{i} T_{jk} \ph_{ijk} = (\tr T)^2 - \langle T, T^t \rangle + \langle T, \Pop T \rangle - R,
\end{equation}
which is equation~\eqref{eq:scalar-curvature}. Using~\eqref{eq:divVT}, the left-hand side of~\eqref{eq:scalar-curv-temp} is $2 \Div (\Vop T) + 2 \langle T, \Pop T \rangle$, so~\eqref{eq:scalar-curv-temp} becomes
$$ R = (\tr T)^2 - \langle T, T^t \rangle - \langle T, \Pop T \rangle - 2 \Div (\Vop T), $$
which is~\eqref{eq:scalar-curvature-b}. Finally, the expression~\eqref{eq:scalar-curvature-c} follows from substitution of~\eqref{eq:torsion-formulas}.
\end{proof}

In Section~\ref{sec:g2bianchi-revisited} we derive several independent relations from the $\G$-Bianchi identity. The relation in Corollary~\ref{cor:scalar-curvature} is one of these, and it is the only scalar relation.

\subsection{The rough and Hodge Laplacians of $\G$-structures} \label{sec:Laplacians}

In this section we derive formulas for the rough Laplacian $\nabla^* \nabla \ph$ and the Hodge Laplacian $\Delta_{\dd} \ph$ of a $\G$-structure $\ph$. In the process we also introduce the ``$\ph$-Ricci tensor'' $F_{pq}$ of $\ph$, which is a symmetric $2$-tensor that plays an important role throughout the paper. The next result also appeared independently in~\cite[Example 1.23]{FLMS}.

\begin{prop} \label{prop:rough-Lap}
Let $\ph$ be a $\G$-structure on $M$. Its rough Laplacian $\nabla^* \nabla \ph$ is
\begin{equation} \label{eq:rough-Lap-ph}
\nabla^* \nabla \ph = \big( \tfrac{1}{3} (\Div T) \hk \ph + \tfrac{1}{3} |T|^2 g - T^t T \big) \diamond \ph.
\end{equation}
More precisely, we have $\nabla^* \nabla \ph = A \diamond \ph$, where $A = A_1 + A_{27} + A_7 \in \cS^2 \oplus \Omega^2_7$, such that
\begin{equation} \label{eq:rough-Lap-ph-b}
A_1 = \tfrac{4}{21} |T|^2 g, \qquad A_{27} = \tfrac{1}{7} |T|^2 g - T^t T, \qquad A_7 = \tfrac{1}{3} (\Div T) \hk \ph.
\end{equation}
\end{prop}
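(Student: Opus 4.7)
The plan is to compute $\nabla^*\nabla \ph = -\nab{k}\nab{k}\ph$ directly in indices using the torsion formulas~\eqref{eq:nablaph} and~\eqref{eq:nablaps}, then repackage the answer using the $\diamond$ operation of Section~\ref{sec:forms}, and finally read off the irreducible components using the results on $\cT^2 \to \Omega^3$.

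First I would apply $\nab{k}$ to~\eqref{eq:nablaph} with free indices $i,j,l$:
\begin{equation*}
\nab{k} \nab{k} \ph_{ijl} = (\nab{k} T_{km}) \ps_{mijl} + T_{km} \nab{k} \ps_{mijl}.
\end{equation*}
The first term on the right is $(\Div T)_m \ps_{mijl} = ((\Div T) \hk \ps)_{ijl}$, which by Corollary~\ref{cor:sevenreps} equals $-\tfrac{1}{3} \big( ((\Div T) \hk \ph) \diamond \ph \big)_{ijl}$. For the second term I would substitute~\eqref{eq:nablaps} and collect:
\begin{equation*}
T_{km} \nab{k} \ps_{mijl} = - |T|^2 \ph_{ijl} + T_{km} T_{ki} \ph_{mjl} - T_{km} T_{kj} \ph_{mil} + T_{km} T_{kl} \ph_{mij},
\end{equation*}
since $T_{km} T_{km} = |T|^2$. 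Recognizing $T_{km} T_{ki} = (T^t T)_{mi}$ and using the antisymmetry/cyclic symmetry of $\ph$, together with~\eqref{eq:diamond3}, the last three terms assemble precisely into $(T^t T \diamond \ph)_{ijl}$, while $-|T|^2 \ph_{ijl} = -\tfrac{1}{3} |T|^2 (g \diamond \ph)_{ijl}$ by~\eqref{eq:gdiamond}. Combining and inserting the minus sign gives~\eqref{eq:rough-Lap-ph}.

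To obtain~\eqref{eq:rough-Lap-ph-b}, I would decompose $A = \tfrac{1}{3} (\Div T) \hk \ph + \tfrac{1}{3} |T|^2 g - T^t T$ according to~\eqref{eq:A-decomp2}. Since $T^t T$ is symmetric, $A_{\skew} = \tfrac{1}{3}(\Div T) \hk \ph$, which already lies in $\Omega^2_7$ by~\eqref{eq:omega27desc}, so $A_{14} = 0$ and $A_7 = \tfrac{1}{3}(\Div T) \hk \ph$. For the symmetric part, $\tr(T^t T) = |T|^2$, hence $\tr A_{\symm} = \tfrac{7}{3} |T|^2 - |T|^2 = \tfrac{4}{3} |T|^2$, giving $A_1 = \tfrac{1}{7} (\tr A) g = \tfrac{4}{21} |T|^2 g$ and $A_{27} = A_{\symm} - A_1 = \tfrac{1}{7} |T|^2 g - T^t T$.

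The only genuinely delicate step is recognizing that the three ``mixed'' terms coming from~\eqref{eq:nablaps} actually reconstitute $T^t T \diamond \ph$; this relies on using the total antisymmetry of $\ph$ to reindex each term into the pattern of~\eqref{eq:diamond3} with the correct signs, and on the symmetry of $T^t T$ to keep the coefficient of $\diamond$ uniform. Everything else is bookkeeping, and the identification of irreducible pieces is immediate from the decomposition theory developed earlier.
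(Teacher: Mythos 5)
Your proposal is correct and follows essentially the same computation as the paper: differentiate $\nab{k}\ph_{ijl} = T_{km}\ps_{mijl}$ once more, substitute~\eqref{eq:nablaps}, repackage the result using $\diamond$, $g \diamond \ph = 3\ph$, and Corollary~\ref{cor:sevenreps}, and then read off the $\Omega^0 \oplus \cS^2_0 \oplus \Omega^2_7$ pieces exactly as you do. The index bookkeeping for the three mixed $T^tT$ terms, which you flag as the delicate step, is handled in the paper in precisely the same way.
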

\begin{proof}
We compute using~\eqref{eq:nablaph} and~\eqref{eq:nablaps} that
\begin{align*}
(\nabla^* \nabla \ph)_{ijk} & = - \nab{p} \nab{p} \ph_{ijk} = - \nab{p} ( T_{pq} \ps_{qijk} ) \\
& = - \nab{p} T_{pq} \ps_{qijk} - T_{pq} \nab{p} \ps_{qijk} \\
& = - (\Div T)_q \ps_{qijk} - T_{pq} ( - T_{pq} \ph_{ijk} + T_{pi} \ph_{qjk} - T_{pj} \ph_{qik} + T_{pk} \ph_{qij} ) \\
& = - (\Div T)_q \ps_{qijk} + |T|^2 \ph_{ijk} - (T^t T)_{iq} \ph_{qjk} - (T^t T)_{jq} \ph_{iqk} - (T^t T)_{kq} \ph_{ijq}.
\end{align*}
Using~\eqref{eq:diamond-coords},~\eqref{eq:gdiamond}, and Corollary~\ref{cor:sevenreps}, the above can be written
$$ (\nabla^* \nabla \ph) = \tfrac{1}{3} \big( (\Div T) \hk \ph \big) \diamond \ph + \big( \tfrac{1}{3} |T|^2 g - T^t T \big) \diamond \ph, $$
establishing~\eqref{eq:rough-Lap-ph}. Since $\tr (T^t T ) = |T|^2$, we have $\tr A = \frac{7}{3} |T|^2 - |T|^2 = \frac{4}{3} |T|^2$. From $A_1 = \frac{1}{7} (\tr A) g$ and $A_{27} = A_{1+27} - A_1$, we get $A_1 = \frac{4}{21} |T|^2 g$ and $A_{27} = \frac{1}{7} |T|^2 g - T^t T$, establishing~\eqref{eq:rough-Lap-ph-b}.
\end{proof}

Proposition~\ref{prop:rough-Lap} has the following interesting corollary, which does not seem to be well-known.

\begin{cor} \label{cor:rough-Lap}
Let $\ph$ be a $\G$-structure on $M$. Then $\nabla \ph = 0$ if and only if $\nabla^* \nabla \ph = 0$. That is, the torsion-free $\G$-structures are precisely those whose rough Laplacian vanishes. (Note that this is immediate from integration by parts if $M$ is compact, but we \emph{do not} assume that $M$ is compact here.)
\end{cor}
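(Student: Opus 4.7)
The forward implication is trivial: if $\nabla \ph = 0$ then certainly $\nabla^* \nabla \ph = 0$. So the content is in the converse, and the plan is to read it off directly from the explicit formula just derived, together with the orthogonal decomposition of $\cT^2$, without ever invoking an integration by parts (which is exactly why the compactness hypothesis is not needed).

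First I would apply Proposition~\ref{prop:rough-Lap} to write $\nabla^* \nabla \ph = A \diamond \ph$ with $A = A_1 + A_{27} + A_7 \in \cS^2 \oplus \Omega^2_7$, where by~\eqref{eq:rough-Lap-ph-b},
\begin{equation*}
A_1 = \tfrac{4}{21} |T|^2 g, \qquad A_{27} = \tfrac{1}{7} |T|^2 g - T^t T, \qquad A_7 = \tfrac{1}{3} (\Div T) \hk \ph.
\end{equation*}
Assuming $\nabla^* \nabla \ph = 0$, this means $A \diamond \ph = 0$ pointwise. By Corollary~\ref{cor:ABinnerproduct}, $A \diamond \ph = 0$ forces $A \in \Omega^2_{14}$; but $A$ already has zero $\Omega^2_{14}$-component by construction, so $A = 0$ pointwise in $\cT^2$, and by orthogonality of the decomposition~\eqref{eq:tens-decomp2} we conclude $A_1 = 0$, $A_{27} = 0$, and $A_7 = 0$ separately at every point.

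The key step is now immediate: $A_1 = \tfrac{4}{21} |T|^2 g = 0$ at every point forces $|T|^2 \equiv 0$, hence $T \equiv 0$. By the defining relation~\eqref{eq:nablaph} of the torsion, $\nab{m} \ph_{ijk} = T_{mp} \ps_{pijk}$, this yields $\nabla \ph = 0$, completing the proof.

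The main (and only) obstacle is really conceptual rather than technical: one has to notice that the trace part $A_1$ of the rough Laplacian already sees the full pointwise norm $|T|^2$, so the conclusion is pointwise and does not need a global integration. The other two components $A_{27}$ and $A_7$ give no extra information beyond what the trace already yields, and in fact the identities $A_{27} = 0$ and $A_7 = 0$ become automatic consequences of $T = 0$ via Proposition~\ref{prop:rough-Lap}, providing a sanity check on the argument.
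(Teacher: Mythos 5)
Your proof is correct and follows exactly the same route as the paper's: apply Proposition~\ref{prop:rough-Lap} to write $\nabla^* \nabla \ph = A \diamond \ph$ with $A \in \cS^2 \oplus \Omega^2_7$, conclude from $A \diamond \ph = 0$ that $A = 0$ (the paper does this implicitly, you spell out the appeal to Corollary~\ref{cor:ABinnerproduct}), and then read $T = 0$ off the trace component $A_1 = \tfrac{4}{21}|T|^2 g$. The only difference is that you make the intermediate step slightly more explicit; the mathematical content is identical.
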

\begin{proof}
One direction is trivial. Conversely, $\nabla^* \nabla \ph = A \diamond \ph = 0$ if and only if $A_1 = A_{27} = A_7 = 0$. But Proposition~\ref{prop:rough-Lap} shows that $A_1 = \frac{4}{21} |T|^2 g = 0$ already forces $T = 0$.
\end{proof}

Next we consider the Hodge Laplacian $\Delta_{\dd} \ph$. For completeness, and to avoid uncertainty on the part of the reader regarding notation and conventions, we quickly derive the classical Weitzenb\"ock formula for the particular case of $3$-forms. If $\gamma \in \Omega^3$, then the Weitzenb\"ock formula says $\Delta_{\dd} \gamma = \nabla^* \nabla \gamma + \tRc \cdot \gamma + \tRm \cdot \gamma$ where the final two terms are some particular contractions of the Ricci curvature $\tRc$ and the Riemann curvature $\tRm$ with $\gamma$, respectively.

We have
\begin{align*}
(\dd \dd^* \gamma)_{ijk} & = \nab{i} (\dd^* \gamma)_{jk} + \nab{j} (\dd^* \gamma)_{ki} + \nab{k} (\dd^* \gamma)_{ij} \\
& = - \nab{i} \nab{p} \gamma_{pjk} - \nab{j} \nab{p} \gamma_{pki} - \nab{k} \nab{p} \gamma_{pij},
\end{align*}
and
\begin{align*}
(\dd^* \dd \gamma)_{ijk} & = - \nab{p} (\dd \gamma)_{pijk} = - \nab{p} (\nab{p} \gamma_{ijk} - \nab{i} \gamma_{pjk} + \nab{j} \gamma_{pik} - \nab{k} \gamma_{pij}) \\
& = (\nabla^* \nabla \gamma)_{ijk} + \nab{p} \nab{i} \gamma_{pjk} + \nab{p} \nab{j} \gamma_{pki} + \nab{p} \nab{k} \gamma_{pij}.
\end{align*}
Adding the above expressions, we obtain
\begin{align*}
(\Delta_{\dd} \gamma)_{ijk} & = (\dd \dd^* \gamma)_{ijk} + (\dd^* \dd \gamma)_{ijk} \\
& = (\nabla^* \nabla \gamma)_{ijk} + (\nab{p} \nab{i} - \nab{i} \nab{p}) \gamma_{pjk} + (\nab{p} \nab{j} - \nab{j} \nab{p}) \gamma_{pki} + (\nab{p} \nab{k} - \nab{k} \nab{p}) \gamma_{pij}. 
\end{align*}
Using the Ricci identity~\eqref{eq:ricci-identity} and the symmetries of the Riemann curvature tensor, we can write
\begin{align*}
(\nab{p} \nab{i} - \nab{i} \nab{p}) \gamma_{pjk} & = - R_{pipm} \gamma_{mjk} - R_{pijm} \gamma_{pmk} - R_{pikm} \gamma_{pjm} \\
& = R_{im} \gamma_{mjk} + R_{pimj} \gamma_{pmk} + R_{mkpi} \gamma_{mpj}.
\end{align*}
Interchanging the roles of $m$ and $p$ in the last term, and cyclically permuting the above expression in $i,j,k$, the expression for $\Delta_{\dd} \gamma$ becomes
\begin{align*}
(\Delta_{\dd} \gamma)_{ijk} & = (\nabla^* \nabla \gamma)_{ijk} + R_{im} \gamma_{mjk} + R_{jm} \gamma_{mki} + R_{km} \gamma_{mij} \\
& \qquad {} + 2 R_{pimj} \gamma_{pmk} + 2 R_{pjmk} \gamma_{pmi} + 2 R_{pkmi} \gamma_{pmj}.
\end{align*}
Note that by the Riemannian first Bianchi identity, we have
$$ R_{pimj} \gamma_{pmk} = - R_{pmji} \gamma_{pmk} - R_{pjim} \gamma_{pmk} = R_{pmij} \gamma_{pmk} - R_{mipj} \gamma_{mpk}, $$
which can be rearranged to yield
$$ 2 R_{pimj} \gamma_{pmk} = R_{pmij} \gamma_{pmk}. $$
We conclude that the Weitzenb\"ock formula on $3$-forms is
\begin{equation} \label{eq:Weitzenbock-3}
\begin{aligned}
(\Delta_{\dd} \gamma)_{ijk} & = (\nabla^* \nabla \gamma)_{ijk} + R_{im} \gamma_{mjk} + R_{jm} \gamma_{mki} + R_{km} \gamma_{mij} \\
& \qquad {} + R_{pmjk} \gamma_{pmi} + R_{pmki} \gamma_{pmj} + R_{pmij} \gamma_{pmk}.
\end{aligned}
\end{equation}

Before we can describe $\Delta_{\dd} \ph$, we need to introduce a symmetric $2$-tensor $F_{pq}$, called the $\ph$-Ricci tensor, which seems to have first appeared in Cleyton--Ivanov~\cite[Definition 3.1]{CI}.

\begin{defn} \label{defn:F}
The $\ph$-Ricci tensor $F_{pq}$ is the smooth $2$-tensor given in terms of a local frame by
\begin{equation} \label{eq:F-defn}
F_{pq} = R_{ijkl} \ph_{ijp} \ph_{klq},
\end{equation}
where $R_{ijkl}$ is the Riemann curvature tensor of $g$. It is clear that $F_{pq}$ is \emph{symmetric}. Because the curvature tensor lies in $\cS^2(\Lam^2) = \Gamma( \Sym^2(\Lam^2_7 \oplus \Lambda^2_{14}))$, we see from~\eqref{eq:omega27vf} that $F_{pq}$ is essentially the part of the curvature tensor which lies in $\Gamma(\Sym^2(\Lambda^2_7)) \cong \Gamma(\Sym^2 (T^* M)) = \cS^2$.

[Cleyton--Ivanov~\cite{CI} write $\rho^*$ for the $\ph$-Ricci tensor. We use $F$ to avoid the proliferation of too much notation. We chose $F$ as it often denotes a curvature, and because $\ph$ is the Greek version of $F$.]
\end{defn}

\begin{lemma} \label{lemma:trace-F}
The trace of $F_{ij}$ is $F_{pp} = - 2 R$, where $R = R_{pp} = R_{qppq}$ is the scalar curvature of $g$.
\end{lemma}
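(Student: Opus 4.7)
The plan is to contract the definition~\eqref{eq:F-defn} directly by setting $q=p$ and summing, then use the fundamental contraction identity~\eqref{eq:phph} to eliminate the two copies of $\ph$. This reduces $F_{pp}$ to a sum of three pieces involving only the Riemann curvature tensor and a contraction of $\tRm$ with $\ps$.

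Concretely, I would write
\begin{equation*}
F_{pp} = R_{ijkl} \ph_{ijp} \ph_{klp} = R_{ijkl} (g_{ik} g_{jl} - g_{il} g_{jk} - \ps_{ijkl}) = R_{ijij} - R_{ijji} - R_{ijkl} \ps_{ijkl},
\end{equation*}
and then evaluate each term. Using the paper's Ricci convention $R_{jk} = R_{ljkl}$, summing twice yields $R = \sum_{i,j} R_{ijji}$, and the skew-symmetry $R_{ijkl} = -R_{ijlk}$ then gives $\sum_{i,j} R_{ijij} = -R$. Hence the first two pieces contribute $-R - R = -2R$.

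The final piece $R_{ijkl} \ps_{ijkl}$ vanishes, because $\ps$ is totally skew in its four indices while the full antisymmetrization $R_{[ijkl]}$ of the Riemann tensor vanishes (a standard consequence of the first Bianchi identity $R_{ijkl}+R_{jkil}+R_{kijl}=0$ combined with the pair-swap and pair-skew symmetries). This same cancellation is already invoked in the proof of Corollary~\ref{cor:scalar-curvature}, so I would simply refer to it. Combining the three contributions gives $F_{pp} = -2R$, as claimed. There is no substantial obstacle here: the result is essentially a one-line contraction, and the only thing to be careful about is tracking the sign conventions for the Ricci tensor as fixed in~\eqref{eq:Riemann-convention}.
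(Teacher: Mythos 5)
Your proposal is correct and follows essentially the same route as the paper: contract $F_{pp}$ with the identity $\ph_{ijp}\ph_{klp} = g_{ik}g_{jl} - g_{il}g_{jk} - \ps_{ijkl}$, identify the first two pieces as $-2R$, and observe that $R_{ijkl}\ps_{ijkl} = 0$ by the first Bianchi identity (the paper also points to Corollary~\ref{cor:U-IP-diamond-ps} for this last vanishing).
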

\begin{proof}
We compute
\begin{align*}
F_{pp} & = R_{ijkl} \ph_{ijp} \ph_{klp} = R_{ijkl} (g_{ik} g_{jl} - g_{il} g_{jk} - \ps_{ijkl}) \\
& = R_{ijij} - R_{ijji} - R_{ijkl} \ps_{ijkl} = - 2 R - R_{ijkl} \ps_{ijkl}.
\end{align*}
The last term vanishes by the skew-symmetry of $\ps$ and the first Bianchi identity, yielding the result. (See also Corollary~\ref{cor:U-IP-diamond-ps}.)
\end{proof}

\begin{rmk} \label{rmk:F}
In Section~\ref{sec:curvature-torsion} we examine in detail how the $\ph$-Ricci tensor $F_{pq}$ is related to the usual decomposition of Riemann curvature into scalar curvature $R$, traceless Ricci curvature $R^0_{ij}$, and Weyl curvature $W_{ijkl}$. We show that $F_{pq}$ is a particular linear combination $F_{pq} = a R g_{pq} + b R^0_{pq} + c (W_{27})_{pq}$, where $W_{27}$ is a traceless symmetric $2$-tensor $W_{27}$ extracted from the Weyl tensor. Lemma~\ref{lemma:trace-F} says that $a = - \frac{2}{7}$. These results appeared first in Cleyton--Ivanov~\cite{CI}.
\end{rmk}

\begin{lemma} \label{lemma:Hodge-F}
Let $\gamma \in \Omega^3$ be given by
$$ \gamma_{ijk} = R_{pmjk} \ph_{pmi} + R_{pmki} \ph_{pmj} + R_{pmij} \ph_{pmk}. $$
Then $\gamma = A \diamond \ph$, where $A_{ij} = \frac{1}{6} R g_{ij} + \frac{1}{4} F_{ij} - R_{ij}$.
\end{lemma}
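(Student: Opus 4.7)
The approach is to apply Corollary~\ref{cor:diamondinverse} to the $3$-form $\gamma$. That is, I form the auxiliary $2$-tensor $\gamma^{\ph}_{ia} = \gamma_{ijk}\ph_{ajk}$ and read off $A$ from the $\cS^2 \oplus \Omega^2_7$ components of the result. Since the claimed $A$ is symmetric, I must verify that $\gamma^{\ph}$ is symmetric (so $\gamma^{\ph}_7 = 0$ in the sense of Remark~\ref{rmk:omega34}) and that its trace and traceless-symmetric parts match.

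Expanding $\gamma^{\ph}_{ia}$ via the cyclic sum defining $\gamma$ produces three terms. The first, $R_{pmjk}\ph_{pmi}\ph_{ajk}$, is immediately $F_{ia}$ by Definition~\ref{defn:F} together with the cyclic identity $\ph_{ajk}=\ph_{jka}$. For the other two terms, I contract the two factors of $\ph$ against each other using~\eqref{eq:phph}: for instance, $\ph_{pmk}\ph_{ajk} = g_{pa}g_{mj}-g_{pj}g_{ma}-\ps_{pmaj}$. The Kronecker pieces yield contractions of Riemann of the form $R_{ajij}$ and $R_{jaij}$ summed over $j$, which via the first Bianchi identity and the Ricci convention $R_{jk}=R_{ljkl}$ reduce to $\mp R_{ia}$. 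A careful sign count shows Terms 2 and 3 together contribute $-4R_{ia}$ from these $g$-parts.

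The main obstacle is the residual curvature-$\ps$ contraction $X_{ia} := R_{pmki}\ps_{pmak}$ (each of Terms 2 and 3 contributes $+X_{ia}$ once dummy indices are aligned via the total skew-symmetry of $\ps$, for a total of $+2X_{ia}$). I will show $X_{ia}=0$ by contracting the first Bianchi identity $R_{pmki}+R_{mkpi}+R_{kpmi}=0$ against $\ps_{pmak}$: relabeling the dummy triple $(p,m,k)$ in each summand and exploiting the total skew-symmetry of $\ps$, each of the three contracted terms collapses back to $X_{ia}$, so $3X_{ia}=0$. This is in the same spirit as the vanishing of $R_{ijkl}\ps_{ijkl}$ used in the proof of Corollary~\ref{cor:scalar-curvature}.

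Combining yields $\gamma^{\ph}_{ia} = F_{ia} - 4R_{ia}$, which is symmetric in $i,a$ as needed. By Lemma~\ref{lemma:trace-F}, $\tr\gamma^{\ph} = \tr F - 4R = -6R$, so Corollary~\ref{cor:diamondinverse} gives $\tr A = -\tfrac{1}{3}R$ and $A_{27} = \tfrac{1}{4}\gamma^{\ph}_{27} = \tfrac{1}{4}F_{ij} - R_{ij} + \tfrac{3}{14}Rg_{ij}$. Adding back $A_1 = \tfrac{1}{7}(\tr A)g_{ij} = -\tfrac{1}{21}Rg_{ij}$ and simplifying the coefficient of $Rg_{ij}$ via $-\tfrac{1}{21}+\tfrac{3}{14} = \tfrac{1}{6}$ produces $A_{ij} = \tfrac{1}{6}Rg_{ij} + \tfrac{1}{4}F_{ij} - R_{ij}$, as claimed.
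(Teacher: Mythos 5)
Your proof is correct and follows essentially the same route as the paper: compute $\gamma^{\ph}_{ia} = \gamma_{ijk}\ph_{ajk}$, recognize the $F_{ia}$ term, reduce the $g$-parts to $-4R_{ia}$, observe the $\ps$-curvature contractions vanish by the first Bianchi identity, and then invoke Corollary~\ref{cor:diamondinverse}. The only cosmetic differences are that the paper combines the second and third terms of the cyclic sum via a dummy relabeling before contracting (whereas you treat all three separately), and the paper leaves the vanishing of $R_{pmki}\ps_{pmka}$ implicit with a bare $-0$ while you spell out the Bianchi argument, which is a welcome clarification.
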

\begin{proof}
In the notation of Corollary~\ref{cor:diamondinverse}, we have $\gamma^{\ph}_{ia} = \gamma_{ijk} \ph_{ajk}$. Using~\eqref{eq:F-defn} we compute
\begin{align*}
\gamma^{\ph}_{ia} & = (R_{pmjk} \ph_{pmi} + R_{pmki} \ph_{pmj} + R_{pmij} \ph_{pmk}) \ph_{ajk} \\
& = R_{pmjk} \ph_{pmi} \ph_{jka} + 2 R_{pmki} \ph_{pmj} \ph_{kaj} \\
& = F_{ia} + 2 R_{pmki} (g_{pk} g_{ma} - g_{pa} g_{mk} - \ps_{pmka}) \\
& = F_{ia} + 2 R_{kaki} - 2 R_{akki} - 0 \\
& = F_{ia} - 2 R_{ia} - 2 R_{ia},
\end{align*}
so $\gamma^{\ph}_{ia} = F_{ia} - 4 R_{ia}$ is symmetric. Lemma~\ref{lemma:trace-F} gives $\tr \gamma^{\ph} = -2 R - 4 R = - 6 R$, so the trace-free part is $(\gamma^{\ph}_{27})_{ij} = \gamma^{\ph}_{ij} - \frac{1}{7} (-6 R) g_{ij} = F_{ij} - 4 R_{ij} + \frac{6}{7} R g_{ij}$. Then Corollary~\ref{cor:diamondinverse} says that $\gamma = A \diamond \ph$ where
$$ \tr A = \tfrac{1}{18} \tr \gamma^{\ph} = - \tfrac{1}{3} R, \qquad (A_{27}) = \tfrac{1}{4} (\gamma^{\ph}_{27})_{ij} = \tfrac{1}{4} F_{ij} - R_{ij} + \tfrac{3}{14} R g_{ij}. $$
Thus we deduce that $A_{ij} = \frac{1}{7} (\tr A) g_{ij} + (A_{27})_{ij} = \frac{1}{6} R g_{ij} + \frac{1}{4} F_{ij} - R_{ij}$.
\end{proof}

\begin{prop} \label{prop:Hodge-Lap}
Let $\ph$ be a $\G$-structure on $M$. Its Hodge Laplacian $\Delta_{\dd} \ph$ is
\begin{equation} \label{eq:Hodge-Lap-ph}
\Delta_{\dd} \ph = \big( \tfrac{1}{3} (\Div T) \hk \ph + \tfrac{1}{3} |T|^2 g - T^t T + \tfrac{1}{6} R g + \tfrac{1}{4} F \big) \diamond \ph.
\end{equation}
More precisely, we have $\Delta_{\dd} \ph = A \diamond \ph$, where $A = A_1 + A_{27} + A_7 \in \cS^2 \oplus \Omega^2_7$, such that
\begin{equation} \label{eq:Hodge-Lap-ph-b}
A_1 = (\tfrac{4}{21} |T|^2 + \tfrac{2}{21} R) g, \qquad A_{27} = \tfrac{1}{7} |T|^2 g - T^t T + \tfrac{1}{4} F_{27}, \qquad A_7 = \tfrac{1}{3} (\Div T) \hk \ph,
\end{equation}
where $F_{27}$ is the trace-free part of $F$.
\end{prop}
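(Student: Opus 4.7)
The plan is to apply the Weitzenböck formula~\eqref{eq:Weitzenbock-3} derived just above to $\gamma = \ph$, then substitute the previously established expressions for each piece. Schematically, we have
\[
\Delta_{\dd} \ph = \nabla^* \nabla \ph + (\mathrm{Ricci\text{-}part}) + (\mathrm{Riemann\text{-}part}),
\]
and the three summands on the right have each essentially already been identified as of the form $A \diamond \ph$ for some $A \in \cS^2 \oplus \Omega^2_7$. So the proof will be essentially an assembly job.

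First, Proposition~\ref{prop:rough-Lap} handles $\nabla^* \nabla \ph$ directly. Next, the Ricci part of~\eqref{eq:Weitzenbock-3}, namely $R_{im}\ph_{mjk} + R_{jm}\ph_{imk} + R_{km}\ph_{ijm}$, is by~\eqref{eq:diamond3} exactly $(\tRc \diamond \ph)_{ijk}$. Finally, the Riemann part of~\eqref{eq:Weitzenbock-3} is $R_{pmjk}\ph_{pmi} + R_{pmki}\ph_{pmj} + R_{pmij}\ph_{pmk}$, which is precisely the $3$-form appearing in Lemma~\ref{lemma:Hodge-F}, hence equals $(\tfrac{1}{6} R g + \tfrac{1}{4} F - \tRc) \diamond \ph$. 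Summing the Ricci and Riemann contributions, the $\tRc$ terms cancel and we obtain
\[
\Delta_{\dd} \ph = \nabla^* \nabla \ph + \big( \tfrac{1}{6} R g + \tfrac{1}{4} F \big) \diamond \ph,
\]
and inserting~\eqref{eq:rough-Lap-ph} yields~\eqref{eq:Hodge-Lap-ph}.

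It then remains to decompose the total $A = \tfrac{1}{3}(\Div T) \hk \ph + \tfrac{1}{3} |T|^2 g - T^t T + \tfrac{1}{6} R g + \tfrac{1}{4} F$ according to $\cS^2 \oplus \Omega^2_7$. The skew part is simply $A_7 = \tfrac{1}{3}(\Div T) \hk \ph$, which lies in $\Omega^2_7$ by~\eqref{eq:omega27desc}. The symmetric part has trace
\[
\tr A_{\symm} = \tfrac{7}{3}|T|^2 - \tr(T^t T) + \tfrac{7}{6} R + \tfrac{1}{4} \tr F = \tfrac{4}{3}|T|^2 + \tfrac{2}{3} R,
\]
using $\tr(T^t T) = |T|^2$ and Lemma~\ref{lemma:trace-F}. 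Thus $A_1 = \tfrac{1}{7}(\tr A) g = (\tfrac{4}{21}|T|^2 + \tfrac{2}{21} R) g$. Subtracting and using $F_{27} = F + \tfrac{2}{7} R g$ gives
\[
A_{27} = A_{\symm} - A_1 = \tfrac{1}{7}|T|^2 g - T^t T + \tfrac{1}{14} R g + \tfrac{1}{4} F = \tfrac{1}{7}|T|^2 g - T^t T + \tfrac{1}{4} F_{27},
\]
establishing~\eqref{eq:Hodge-Lap-ph-b}.

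The only real subtlety is arithmetic bookkeeping when recombining the trace contributions from $F$ and from $-T^t T$; in particular one must keep track that $F$ already absorbs a $-2R$ trace so that the coefficient of $R g$ in $A_{27}$ is $\tfrac{1}{14}$ rather than $\tfrac{1}{6} - \tfrac{2}{21}$ written in unreduced form. No new geometric content is needed beyond Proposition~\ref{prop:rough-Lap}, Lemma~\ref{lemma:Hodge-F}, Lemma~\ref{lemma:trace-F}, and the Weitzenböck formula~\eqref{eq:Weitzenbock-3}.
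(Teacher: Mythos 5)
Your proof is correct and follows essentially the same route as the paper's: apply the Weitzenböck formula~\eqref{eq:Weitzenbock-3} to $\ph$, recognize the Ricci terms as $\tRc \diamond \ph$, invoke Lemma~\ref{lemma:Hodge-F} for the Riemann terms, cancel the $\tRc$ contributions, and insert~\eqref{eq:rough-Lap-ph}. The only cosmetic difference is in the final decomposition: the paper reuses the already-split $A_1, A_{27}$ from Proposition~\ref{prop:rough-Lap} and simply splits $\tfrac{1}{6} R g + \tfrac{1}{4} F$ into trace and trace-free parts, whereas you recompute $\tr A_{\symm}$ of the full tensor from scratch; both lead to the same arithmetic and the same answer.
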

\begin{proof}
Letting $\gamma = \ph$ in the Weitzenb\"ock formula~\eqref{eq:Weitzenbock-3}, we have
\begin{equation} \label{eq:Hodge-Lap-temp}
\begin{aligned}
(\Delta_{\dd} \ph)_{ijk} & = (\nabla^* \nabla \ph)_{ijk} + R_{im} \ph_{mjk} + R_{jm} \ph_{mki} + R_{km} \ph_{mij} \\
& \qquad {} + R_{pmjk} \ph_{pmi} + R_{pmki} \ph_{pmj} + R_{pmij} \ph_{pmk}.
\end{aligned}
\end{equation}
The Ricci curvature terms in~\eqref{eq:Hodge-Lap-temp} are precisely $(\tRc \diamond \ph)_{ijk}$, and Lemma~\ref{lemma:Hodge-F} shows that the Riemann curvature terms are $((\frac{1}{6} R g + \frac{1}{4} F - \tRc) \diamond \ph)_{ijk}$. Adding these together, the Ricci terms cancel, leaving us with
$$ (\Delta_{\dd} \ph)_{ijk} = (\nabla^* \nabla \ph)_{ijk} + ((\tfrac{1}{6} R g + \tfrac{1}{4} F) \diamond \ph)_{ijk}. $$
Comparing the above with~\eqref{eq:rough-Lap-ph} gives~\eqref{eq:Hodge-Lap-ph}. From Lemma~\ref{lemma:trace-F}, we have
$$ \tfrac{1}{4} F = \tfrac{1}{4} (\tfrac{1}{7} (\tr F) g + F_{27}) = - \tfrac{1}{14} R g + \tfrac{1}{4} F_{27}, $$
which, since $\frac{1}{6} - \frac{1}{14} = \frac{2}{21}$, yields~\eqref{eq:Hodge-Lap-ph-b}.
\end{proof}
(See Remark~\ref{rmk:Hodge-Lap-revisited} for a different form of~\eqref{eq:Hodge-Lap-ph} once we have shown that $F$ can be expressed in terms of scalar curvature, Ricci curvature, and another object $\varpi$ obtained from the Weyl curvature.)

Proposition~\ref{prop:Hodge-Lap} also has an interesting corollary which does not seem to be well-known.

\begin{cor} \label{cor:Hodge-Lap}
Let $\ph$ be a $\G$-structure on $M$. If $T = 0$, then $\Delta_{\dd} \ph = 0$. Conversely, suppose $\Delta_{\dd} \ph = 0$. If in addition we have $\Div (\Vop T) = 0$, then $\ph$ is necessarily torsion free. (Note that $\Delta_{\dd} \ph = 0$ implies $T = 0$ is immediate from integration by parts if $M$ is compact, but we \emph{do not} assume that $M$ is compact here.)
\end{cor}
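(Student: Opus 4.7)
The proof is essentially extraction of information from the decomposition in Proposition~\ref{prop:Hodge-Lap} combined with the scalar curvature formula from Corollary~\ref{cor:scalar-curvature}, so I will just outline how the pieces fit together.

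The forward direction is immediate: if $T=0$, then by the defining equation~\eqref{eq:nablaph} of the torsion we have $\nabla\ph = 0$, whence $\dd\ph = 0$ and $\dd^*\ph = 0$, so $\Delta_{\dd}\ph = 0$. (Alternatively, one can plug $T=0$ directly into~\eqref{eq:Hodge-Lap-ph-b}: this makes $A_7$ and $A_{27}$ vanish, and the $A_1$ term vanishes using that $g_{\ph}$ is Ricci-flat when $\ph$ is torsion-free, hence $R=0$.)

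For the converse, assume $\Delta_{\dd}\ph = 0$. By Proposition~\ref{prop:Hodge-Lap}, $\Delta_{\dd}\ph = A\diamond\ph$ with $A\in\cS^2\oplus\Omega^2_7$, and by Corollary~\ref{cor:ABinnerproduct} the map $A\mapsto A\diamond\ph$ is injective on this subspace, so $A = 0$. In particular each irreducible piece of $A$ vanishes, and I would focus on the trace piece:
\begin{equation*}
A_1 = \bigl(\tfrac{4}{21}|T|^2 + \tfrac{2}{21}R\bigr) g = 0, \qquad \text{i.e.,} \qquad 2|T|^2 + R = 0.
\end{equation*}
(The pieces $A_7 = 0$ and $A_{27} = 0$ will not be needed.)

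The main step is then to plug in formula~\eqref{eq:scalar-curvature-c} for $R$, namely $R = 6|T_1|^2 - |T_{27}|^2 + 5|T_7|^2 - |T_{14}|^2 - 2\Div(\Vop T)$, and use the orthogonality $|T|^2 = |T_1|^2 + |T_{27}|^2 + |T_7|^2 + |T_{14}|^2$ from~\eqref{eq:torsion-formulas}. Adding gives
\begin{equation*}
0 \;=\; 2|T|^2 + R \;=\; 8|T_1|^2 + |T_{27}|^2 + 7|T_7|^2 + |T_{14}|^2 - 2\Div(\Vop T).
\end{equation*}
Under the additional hypothesis $\Div(\Vop T) = 0$, the right-hand side is a sum of nonnegative terms, so each $|T_k|^2 = 0$, forcing $T = 0$.

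No step here is a serious obstacle; the only conceptual point to flag is that pointwise $A_1 = 0$ alone already gives a pointwise identity $2|T|^2 + R = 0$, so the argument needs no integration and therefore requires no compactness of $M$. The role of the extra hypothesis $\Div(\Vop T) = 0$ is precisely to turn this pointwise identity from an underdetermined relation (a sum of squares minus a divergence) into a genuine sum of squares.
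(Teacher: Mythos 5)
Your proof is correct and follows essentially the same route as the paper's: extract the $\Omega^3_1$ component $A_1 = 0$ from Proposition~\ref{prop:Hodge-Lap}, substitute the scalar curvature formula~\eqref{eq:scalar-curvature-c}, and observe that with $\Div(\Vop T) = 0$ the resulting pointwise identity $8|T_1|^2 + |T_{27}|^2 + 7|T_7|^2 + |T_{14}|^2 = 0$ forces $T = 0$. Your closing remark about why no compactness is needed correctly identifies the point of the argument.
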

\begin{proof}
Since $T = 0$ is equivalent to $\nabla \ph = 0$, any torsion-free $\G$-structure $\ph$ is always Hodge-harmonic. Conversely, suppose that $\Delta_{\dd} \ph = 0$. Then Proposition~\ref{prop:Hodge-Lap} gives $A_1 = 0$ where $\Delta_{\dd} \ph = A \diamond \ph$, so $\frac{4}{21} |T|^2 + \frac{2}{21} R = 0$. Substituting~\eqref{eq:scalar-curvature-c} we have
\begin{align*}
0 & = \tfrac{4}{21} \big( |T_1|^2 + |T_{27}|^2 + |T_7|^2 + |T_{14}|^2 \big) + \tfrac{2}{21} \big( 6|T_1|^2 - |T_{27}|^2 + 5 |T_7|^2 - |T_{14}|^2 - 2 \Div (\Vop T) \big) \\
& = \tfrac{1}{21} \big( 16 |T_1|^2 + 2 |T_{27}|^2 + 14 |T_7|^2 + 2 |T_{14}|^2 \big) - \tfrac{4}{21} \Div (\Vop T).
\end{align*}
Thus if $\Div (\Vop T) = 0$ then we must have $T = 0$.
\end{proof}

\begin{rmk} \label{rmk:Hodge-Lap}
Corollary~\ref{cor:Hodge-Lap} says the following. If $\Div (\Vop T) = 0$, then $\Delta_{\dd} \ph = 0$ if and only if $T = 0$. In particular, if $\ph$ is closed or coclosed, then $\Vop T = 0$, so closed or coclosed $\G$-structures are torsion-free if and only if they are Hodge-harmonic, \emph{irrespective of the compactness of $M$}. The closed case is implicit in~\cite[equation (2.20)]{LW1}. The coclosed case does not appear to have been observed before.
\end{rmk}

It is unknown whether $\Delta_{\dd} \ph = 0$ implies $T = 0$ in general (without assuming $M$ is compact). Note that in the proof of Corollary~\ref{cor:Hodge-Lap}, we only used that $\pi_1 (\Delta_{\dd} \ph) = 0$. It is in principle possible that also using $\pi_7 (\Delta_{\dd} \ph) = 0$ and $\pi_{27} (\Delta_{\dd} \ph) = 0$ may allow one to prove that $T = 0$, but if we do not allow integration by parts then this seems unlikely.

\subsection{Application: The optimal $\ph$-connection of a $\G$-structure $\ph$} \label{sec:connections}

In this section we use the torsion of $\ph$ to define the \emph{optimal $\ph$-connection} $\hnab{}$ of a $\G$-structure and then use this connection to give a new geometric interpretation of the $\G$-Bianchi identity. What we call the optimal $\ph$-connection is by many authors called the ``canonical connection'' and by other authors is sometimes called the ``natural connection''. However, there are other connection in $\G$-geometry that are sometimes called ``canonical''. (See Remark~\ref{rmk:skew-torsion-stuff}.) For this reason, and because of the inequality that appears in Definition~\ref{defn:optimal-connection} below, we prefer to use the term \emph{optimal $\ph$-connection}.

Some of the results in this section are well-known to experts working on metric-compatible connections with torsion. We include a detailed treatment here using the general computational machinery developed earlier in this section, for completeness.

We begin by recalling some basic facts, to fix notation. As usual, let $\nab{}$ denote the Levi-Civita connection of a metric $g$. Then any other connection $\tnab{}$ on the tangent bundle can be written as $\tnab{} = \nab{} + \sA$, where $\sA \in \cT^3$. Explicitly, in a local orthonormal frame we have
\begin{equation*}
\tnab{i} e_j = \nab{i} e_j + \sA_{ijk} e_k.
\end{equation*}
It follows that
\begin{equation} \label{eq:tnab-general}
\tnab{p} \alpha_{i_1 \cdots i_k} = \nab{p} \alpha_{i_1 \cdots i_k} - \sA_{pi_1m} \alpha_{mi_2 \cdots i_k} - \cdots - \sA_{pi_km} \alpha_{i_1 \cdots i_{k-1} m}
\end{equation}
for a $k$-tensor $\alpha_{i_1 \cdots i_k}$. Applying~\eqref{eq:tnab-general} to the metric, we get
\begin{equation*}
\tnab{p} g_{ij} = \nab{p} g_{ij} - \sA_{pim} g_{mj} - \sA_{pjm} g_{im} = 0 - \sA_{pij} - \sA_{pji},
\end{equation*}
and thus
\begin{equation} \label{eq:tnab-metric}
\text{$\tnab{}$ is metric compatible if and only if $\sA_{pij} = - \sA_{pji}$}.
\end{equation}
This of course just says that $\tnab{} = \nab{} + \sA$ is metric compatible if and only if the Lie algebra valued $1$-form $\sA = \sA_p e_p$ takes values in $\mathfrak{so}(n)$ where $n= \dim M$.

The \emph{torsion} of the connection $\tnab{}$ is a vector-valued $2$-form $\sT = \frac{1}{2} \sT_{pij} e_p \otimes (e_i \w e_j)$ given by
\begin{align*}
\sT_{pij} e_p & = \tnab{i} e_j - \tnab{j} e_i - [e_i, e_j] \\
& = \nab{i} e_j + \sA_{ijp} e_p - \nab{j} e_i - \sA_{jip} e_p - [e_i, e_j] \\
& = (A_{ijp} - A_{jip}) e_p.
\end{align*}
Thus we can express the torsion $\sT$ of $\tnab{} = \nab{} + \sA$ in terms of $\sA$ as
\begin{equation} \label{eq:sT}
\sT_{pij} = \sA_{ijp} - \sA_{jip}.
\end{equation}

\begin{lemma} \label{lemma:tnab-metric-comp}
Let $\tnab{} = \nab{} + \sA$ be a metric compatible connection. Then $\sA$ is completely determined by the torsion $\sT$ of $\tnab{}$ via
\begin{equation} \label{eq:sA-from-sT}
\sA_{ijk} = \tfrac{1}{2} ( \sT_{jki} + \sT_{kij} - \sT_{ijk} ).
\end{equation}
\end{lemma}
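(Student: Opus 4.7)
The plan is the classical Koszul-type argument adapted to this setting. I have two inputs: the metric compatibility condition~\eqref{eq:tnab-metric}, which forces $\sA_{pij} = -\sA_{pji}$ (skew-symmetry in the last two indices), and the formula~\eqref{eq:sT}, $\sT_{pij} = \sA_{ijp} - \sA_{jip}$, expressing the torsion in terms of $\sA$. The target identity is linear in $\sA$ and $\sT$, so the whole proof reduces to algebra on the three index slots.

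First I would rewrite the three cyclic components of the torsion claimed on the right-hand side using~\eqref{eq:sT}:
\begin{align*}
\sT_{jki} &= \sA_{kij} - \sA_{ikj}, \\
\sT_{kij} &= \sA_{ijk} - \sA_{jik}, \\
\sT_{ijk} &= \sA_{jki} - \sA_{kji}.
\end{align*}
Then I would add the first two and subtract the third, obtaining
\begin{equation*}
\sT_{jki} + \sT_{kij} - \sT_{ijk} = \sA_{ijk} + \sA_{kij} + \sA_{kji} - \sA_{ikj} - \sA_{jik} - \sA_{jki}.
\end{equation*}

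At this point the key step is to invoke metric compatibility in the form $\sA_{pij} = -\sA_{pji}$. Applied to the three ``unwanted'' terms on the right, it gives $\sA_{kij} = -\sA_{kji}$, $\sA_{ikj} = -\sA_{ijk}$, $\sA_{jik} = -\sA_{jki}$. Substituting these in causes four of the six $\sA$-terms to cancel in pairs, leaving $2\sA_{ijk}$, which after dividing by $2$ is precisely the stated formula~\eqref{eq:sA-from-sT}. This simultaneously proves existence of the representation and uniqueness (since both conditions together determine $\sA$ pointwise as a linear function of $\sT$).

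There is no real obstacle: both hypotheses are already packaged in the excerpt, and the argument is a one-line cyclic sum identity. The only mildly delicate point is bookkeeping the index positions carefully, since $\sA$ is skew in its \emph{last two} indices (not the first two), while $\sT$ is skew in its \emph{last two} as a vector-valued $2$-form; keeping these conventions straight is the sole place where one could slip.
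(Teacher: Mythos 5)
Your proposal is correct and uses essentially the same approach as the paper: both write the three cyclic permutations of $\sT$ via~\eqref{eq:sT}, form the combination $\sT_{jki} + \sT_{kij} - \sT_{ijk}$, and apply metric compatibility $\sA_{pij} = -\sA_{pji}$ to collapse the sum to $2\sA_{ijk}$. The only cosmetic difference is that the paper applies the skew-symmetry to each torsion equation before summing, whereas you sum first and then apply it to cancel terms.
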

\begin{proof}
Using both~\eqref{eq:sT} and~\eqref{eq:tnab-metric}, we have
\begin{align*}
\sT_{ijk} & = \sA_{jki} - \sA_{kji} = \sA_{jki} + \sA_{kij}, \\
\sT_{kij} & = \sA_{ijk} - \sA_{jik} = \sA_{ijk} + \sA_{jki}, \\
\sT_{jki} & = \sA_{kij} - \sA_{ikj} = \sA_{kij} + \sA_{ijk}.
\end{align*}
Adding the last two equations and subtracting the first gives precisely~\eqref{eq:sA-from-sT}.
\end{proof}

\begin{rmk} \label{rmk:LC}
Equations~\eqref{eq:sA-from-sT} and~\eqref{eq:sT} show that, in the metric compatible case, $\sA = 0$ if and only if $\sT = 0$. That is, a metric compatible connection $\tnab{} = \nab{} + \sA$ is torsion-free if and only if $\sA = 0$, so $\tnab{} = \nab{}$ is the Levi-Civita connection. Of course this is just the well-known ``fundamental theorem of Riemannian geometry''. Compare with Definition~\ref{defn:optimal-connection}.
\end{rmk}

\begin{cor} \label{cor:skew-torsion}
A metric compatible connection $\tnab{} = \nab{} + \sA$ has totally skew-symmetric torsion if and only if $\sA = \frac{1}{2} \sT$.
\end{cor}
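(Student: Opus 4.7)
The plan is to reduce everything to Lemma~\ref{lemma:tnab-metric-comp}, which already provides the formula
\[
\sA_{ijk} = \tfrac{1}{2} ( \sT_{jki} + \sT_{kij} - \sT_{ijk} )
\]
valid for any metric compatible $\tnab{} = \nab{} + \sA$, together with~\eqref{eq:sT} and the skew-symmetry $\sA_{ijk} = - \sA_{ikj}$ coming from~\eqref{eq:tnab-metric}.

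For the forward direction, suppose $\sT$ is totally skew-symmetric. Then $\sT$ is in particular invariant under cyclic permutations of its three indices, so $\sT_{jki} = \sT_{ijk}$ and $\sT_{kij} = \sT_{ijk}$. Substituting into the displayed formula above collapses it to $\sA_{ijk} = \tfrac12 \sT_{ijk}$, i.e.\ $\sA = \tfrac12 \sT$.

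For the converse, suppose $\sA = \tfrac12 \sT$. Plugging this into~\eqref{eq:sT} gives
\[
\sT_{pij} = \tfrac{1}{2} \sT_{ijp} - \tfrac{1}{2} \sT_{jip}.
\]
Since $\sA$ is skew in its last two indices by~\eqref{eq:tnab-metric}, so is $\sT = 2\sA$, giving $\sT_{jip} = - \sT_{ijp}$. Thus $\sT_{pij} = \sT_{ijp}$, i.e.\ $\sT$ is invariant under cyclic permutations of its indices. Combined with the skew-symmetry $\sT_{pij} = - \sT_{pji}$ in the last two indices, cyclic invariance forces skew-symmetry in every adjacent pair: $\sT_{ipj} = \sT_{pji} = - \sT_{pij}$, and so $\sT$ is totally skew-symmetric.

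The argument is essentially bookkeeping on indices; the only thing to be careful about is that $\sT$ is skew in its last two indices by construction (it is a vector-valued $2$-form), so one should not confuse this built-in skew-symmetry with the extra total skew-symmetry being asserted, which is an honest hypothesis/conclusion about the first index relative to the other two.
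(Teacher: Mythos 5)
Your forward direction is correct and matches the paper's one-line argument. The converse, however, has a gap in the middle. You assert that because $\sT = 2\sA$ is skew in its last two indices, it follows that $\sT_{jip} = -\sT_{ijp}$. But that is \emph{not} what skew-symmetry in the last two indices says: it gives $\sT_{jip} = -\sT_{jpi}$ and $\sT_{ijp} = -\sT_{ipj}$, whereas your claimed identity $\sT_{jip} = -\sT_{ijp}$ is skew-symmetry in the \emph{first} two indices. Combined with the built-in skew-symmetry in the last two (which you explicitly distinguish in your final remark), skew-symmetry in the first two is already equivalent to total skew-symmetry, so the step effectively asserts the conclusion rather than deriving it. Ironically, your closing caveat correctly identifies exactly the confusion you fell into.

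The fix is short and stays entirely within your framework. From $\sA = \tfrac12 \sT$ and~\eqref{eq:sT} you already have $2\sT_{pij} = \sT_{ijp} - \sT_{jip}$. Now cyclically relabel $p \to i$, $i \to j$, $j \to p$ in that same identity to get $2\sT_{ijp} = \sT_{jpi} - \sT_{pji}$, and use the \emph{genuine} skew-symmetry in the last two indices to rewrite the right side as $-\sT_{jip} + \sT_{pij}$. Subtracting the two displayed relations gives $3\sT_{pij} = 3\sT_{ijp}$, hence the cyclic invariance $\sT_{pij} = \sT_{ijp}$, after which your final sentence (cyclic invariance plus skew in the last two forces total skew-symmetry) is correct. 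Equivalently, one can substitute $\sA = \tfrac12 \sT$ directly into~\eqref{eq:sA-from-sT}, rearrange to get $2\sT_{ijk} = \sT_{jki} + \sT_{kij}$, and subtract its cyclic permutations to obtain $\sT_{ijk} = \sT_{jki}$. Either way, cyclic invariance must be \emph{derived} from the relation between $\sA$ and $\sT$; it cannot be read off from the vector-valued-$2$-form skew-symmetry alone.
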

\begin{proof}
Suppose $\sT_{ijk}$ is totally skew-symmetric. Then~\eqref{eq:sA-from-sT} show that $\sA_{ijk} = \frac{1}{2} (\sT_{ijk} + \sT_{ijk} - \sT_{ijk}) = \frac{1}{2} \sT_{ijk}$. Conversely, since $\sA_{ijk}$ is skew in $j,k$ and $\sT_{ijk}$ is skew in $i,j$ it follows that if $\sA = \frac{1}{2} \sT$ then $\sT$ is totally skew-symmetric.
\end{proof}

\begin{defn} \label{defn:ph-connection}
Let $\ph$ be a $\G$-structure on $M$, and let $\nab{}$ be the Levi-Civita connection of $g = g_{\ph}$. We say that a connection $\tnab{} = \nab{} + \sA$ is compatible with $\ph$ if $\tnab{} \ph = 0$. This means that parallel transport with respect to $\tnab{}$ preserves $\ph$, or equivalently that the \emph{restricted holonomy} of $\tnab{}$ is contained in $\G$. We also say that such a connection is a \emph{$\ph$-connection}.
\end{defn}

\begin{prop} \label{prop:ph-connection}
The connection $\tnab{} = \nab{} + \sA$ is compatible with $\ph$ if and only if $\sA = - \frac{1}{3} \hT + \sB$ where $\hT$ is the alternative characterization of the torsion of the $\G$-structure $\ph$ given in~\eqref{eq:hatTdefn} and $\mathsf{B}$ is a smooth section of $T^* M \otimes \Lambda^2_{14} (T^* M)$. That is, by~\eqref{eq:omega214desc}, we have $\sB_{pij} \ph_{ijk} = 0$.
\end{prop}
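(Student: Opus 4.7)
The plan is to express the compatibility condition $\tnab{}\ph = 0$ in coordinates and reduce it to an identity of the form $C \diamond \ph = 0$, which by Corollary~\ref{cor:ABinnerproduct} is controlled precisely by $\Omega^2_{14}$. First I would observe that metric-compatibility of $\sA$ is automatic: the contraction identity $\ph_{ijk}\ph_{ajk} = 6g_{ia}$ from~\eqref{eq:phph} expresses $g$ algebraically in terms of $\ph$, so the Leibniz rule together with $\tnab{}\ph = 0$ forces $\tnab{} g = 0$. By the derivation leading to~\eqref{eq:tnab-metric}, this yields $\sA_{pij} = -\sA_{pji}$, so for each fixed index $p$ the tensor $\sA_p := \sA_{pij}\, e_i \otimes e_j$ is a $2$-form on $M$.

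Next I would substitute $\alpha = \ph$ into~\eqref{eq:tnab-general}, which rewrites the condition $\tnab{p}\ph_{ijk}=0$ as $\nab{p}\ph_{ijk} = \sA_{pim}\ph_{mjk} + \sA_{pjm}\ph_{imk} + \sA_{pkm}\ph_{ijm}$. The right-hand side is exactly $(\sA_p \diamond \ph)_{ijk}$ by~\eqref{eq:diamond3}. On the other hand, Lemma~\ref{lemma:nablaph-hT} supplies $\nab{p}\ph = -\tfrac{1}{3}(\hT_p \diamond \ph)$. Rearranging, the condition $\tnab{}\ph = 0$ is equivalent to the pointwise identity $\big( \sA_p + \tfrac{1}{3}\hT_p \big) \diamond \ph = 0$ for every $p$.

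Finally, since both $\sA_p$ and $\hT_p$ lie in $\Omega^2$, the sum $\sA_p + \tfrac{1}{3}\hT_p$ also lies in $\Omega^2 \subset \cT^2$. By Corollary~\ref{cor:ABinnerproduct}, an element of $\cT^2$ satisfies $C \diamond \ph = 0$ precisely when it lies in $\Omega^2_{14}$. Setting $\sB_p := \sA_p + \tfrac{1}{3}\hT_p$ therefore yields the claimed decomposition $\sA = -\tfrac{1}{3}\hT + \sB$ with $\sB$ a section of $T^*M \otimes \Lambda^2_{14}(T^*M)$, and the characterization of $\Omega^2_{14}$ given in~\eqref{eq:omega214desc} then supplies the pointwise relation $\sB_{pij}\ph_{ijk} = 0$. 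The converse direction follows by running the same chain of equivalences in reverse. I do not expect a real obstacle here, as every step is a direct application of identities already collected in Sections~\ref{sec:forms} and~\ref{sec:torsion}; the substantive content has been absorbed into Lemma~\ref{lemma:nablaph-hT} and Corollary~\ref{cor:ABinnerproduct}.
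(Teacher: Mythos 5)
Your core argument is the same as the paper's: substitute $\alpha = \ph$ into~\eqref{eq:tnab-general}, recognize the resulting expression as $(\sA_p \diamond \ph)_{ijk}$, invoke Lemma~\ref{lemma:nablaph-hT} for $\nab{p}\ph = -\tfrac{1}{3}\hT_p \diamond \ph$, and conclude via Corollary~\ref{cor:ABinnerproduct} that $\tnab{}\ph = 0$ iff $\sB_p := \sA_p + \tfrac{1}{3}\hT_p$ lies in $\Omega^2_{14}$ for each $p$. That is correct and matches the paper.

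Two small remarks. First, your preliminary step establishing that $\sA_p$ is automatically a $2$-form is not strictly needed: Corollary~\ref{cor:ABinnerproduct} applies to arbitrary $C \in \cT^2$, not just $C \in \Omega^2$, and its conclusion is that $\ker(\,\cdot \diamond \ph) = \Omega^2_{14}$. So once you know $\sB_p \diamond \ph = 0$ for a general $2$-tensor $\sB_p$, the corollary already forces $\sB_p \in \Omega^2_{14} \subset \Omega^2$, and then $\sA_p = -\tfrac{1}{3}\hT_p + \sB_p \in \Omega^2$ follows for free. Second, your justification for the preliminary step is slightly circular as written: the contraction $\ph_{ijk}\ph_{ajk} = 6g_{ia}$ already uses $g$ to raise the repeated indices, so differentiating it with $\tnab{}$ produces $\tnab{}g^{-1}$ terms on the right and does not obviously isolate $\tnab{}g$. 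The clean way to get that aside, if you want it, is the holonomy-principle observation: $g_\ph$ is a pointwise-smooth function of $\ph$, so $\tnab{}$-parallel transport preserving $\ph$ automatically preserves $g_\ph$, whence $\tnab{}g = 0$. But again, you don't need this for the proposition.
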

\begin{proof}
Fix $p \in \{ 1, \ldots, 7 \}$. Writing $\sA_p = \sA_{pij} e_i \otimes e_j$ and $\hT_p = \hT_{pij} e_i \otimes e_j$, equations~\eqref{eq:nablaph-hT} and~\eqref{eq:tnab-general} give
\begin{equation*}
\tnab{p} \ph_{abc} = - \tfrac{1}{3} (\hT_p \diamond \ph)_{abc} - (\sA_p \diamond \ph)_{abc}.
\end{equation*}
Letting $\sB = \frac{1}{3} \hT + \sA$ and $\sB_p = \sB_{pij} e_i \otimes e_j$, the above expression shows that $\tnab{} \ph = 0$ if and only if $\sB_p \diamond \ph = 0$ for all $p$. The claim now follows by Corollary~\ref{cor:ABinnerproduct}.
\end{proof}

\begin{thm} \label{thm:optimal-connection}
Let $\ph$ be a $\G$-structure, with induced metric $g$ and Levi-Civita connection $\nab{}$. There exists a \emph{unique} $\ph$-connection $\hnab{} = \nab{} + \sA$ such that $\hnab{} \ph = 0$ and $\sA \in \Gamma( T^* M \otimes \Lambda^2_7 (T^* M) )$, given by $\sA = - \frac{1}{3} \hT$.
\end{thm}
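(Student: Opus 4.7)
The plan is to combine Proposition~\ref{prop:ph-connection} with the orthogonal decomposition $\Lambda^2 = \Lambda^2_7 \oplus \Lambda^2_{14}$ fibrewise. First I would recall that by Proposition~\ref{prop:ph-connection}, any $\ph$-connection $\tnab{} = \nab{} + \sA$ is characterized by $\sA = -\tfrac{1}{3}\hT + \sB$ with $\sB \in \Gamma(T^*M \otimes \Lambda^2_{14}(T^*M))$. Moreover, by the definition of $\hT$ in~\eqref{eq:hatTdefn}, for each fixed $p$ the $2$-tensor $\hT_p = \hT_{pij}e_i \otimes e_j$ lies in $\Lambda^2_7$, since $\hT_{pij} = T_{pq}\ph_{qij}$ is of the form $X \hk \ph$ with $X = T_{pq}e_q$.

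Next I would impose the constraint that $\sA \in \Gamma(T^*M \otimes \Lambda^2_7(T^*M))$. Writing $\sA_p = -\tfrac{1}{3}\hT_p + \sB_p$, where $\hT_p \in \Lambda^2_7$ and $\sB_p \in \Lambda^2_{14}$, the requirement $\sA_p \in \Lambda^2_7$ forces $\sB_p$ to lie in $\Lambda^2_7 \cap \Lambda^2_{14}$. Since the decomposition~\eqref{eq:omega2} is a direct sum, this intersection is trivial, so $\sB_p = 0$ for every $p$. Hence $\sA = -\tfrac{1}{3}\hT$ is the unique choice, proving both existence and uniqueness.

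The main conceptual content has already been done in Proposition~\ref{prop:ph-connection}; the only remaining observation is the elementary one that $\hT$ takes values in $\Lambda^2_7$, which is immediate from its definition via $\ph$. There is essentially no obstacle, and the proof amounts to a one-line direct-sum argument after citing the earlier proposition.
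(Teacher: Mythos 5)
Your proof is correct and follows essentially the same route as the paper: both derive the result immediately from Proposition~\ref{prop:ph-connection} and the fact that $\Lambda^2_7 \cap \Lambda^2_{14} = \{0\}$. You simply spell out the direct-sum step that the paper leaves implicit.
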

\begin{proof}
This follows immediately from Proposition~\ref{prop:ph-connection}, since $\sB \in \Gamma( T^* M \otimes \Lambda^2_{14} (T^* M) )$.
\end{proof}

\begin{defn} \label{defn:optimal-connection}
We call the $\ph$-connection $\hnab{}$ given by Theorem~\ref{thm:optimal-connection} the \emph{optimal $\ph$-connection} of the $\G$-structure $\ph$. Note that, for $\tnab{} = \nab{} + \sA = - \tfrac{1}{3} \hT + \sB$ as in Proposition~\ref{prop:ph-connection}, we have
$$ |\tnab{} - \nab{}|^2 = |\sA|^2 = \tfrac{1}{9} |\hT|^2 + |\sB|^2, $$
since $\Lambda^2_7$ and $\Lambda^2_{14}$ are orthogonal summands in $\Lambda^2 (T^* M)$. Thus we have
$$ |\tnab{} - \nab{}|^2 \geq |\hnab{} - \nab{}|^2 \quad \text{for all $\ph$-connections $\tnab{}$, with equality if and only if $\tnab{} = \hnab{}$.} $$
This is why we call $\hnab{}$ the \emph{optimal} $\ph$-connection. From Remark~\ref{rmk:LC} and the fact that $\sA = - \tfrac{1}{3} \hT$ for $\hnab{}$, we deduce that
\begin{align*}
\hnab{} = \nab{} & \iff \text{the $\G$-structure $\ph$ is torsion-free} \\
& \iff \text{the connection $\hnab{}$ is torsion-free.}
\end{align*}
The above characterization justifies the use of the word ``torsion'' for two different things: the torsion $\hT$ of the $\G$-structure $\ph$, and the torsion of the optimal $\ph$-connection $\hnab{}$.
\end{defn}

\begin{rmk} \label{rmk:skew-torsion-stuff}
Friedrich--Ivanov~\cite[Theorem 4.7]{FI} prove that there exists a unique $\ph$-connection \emph{with skew torsion} if and only if $T_{14} = 0$. Comparing with our Proposition~\ref{prop:ph-connection} shows that in this case $\sB$ must be uniquely determined by $\hT$. There is an extensive literature on metric compatible tangent bundle connections with totally skew torsion. An excellent survey is Agricola~\cite{Agricola}. There is also a brief abstract discussion of ``canonical'' connections for $\G$-structures in Bryant~\cite[Remark 7]{Bryant}.
\end{rmk}

In Section~\ref{sec:future}, we discuss how the curvature of $\hnab{}$ can be used to define some flows of $\G$-structures.

\begin{rmk} \label{rmk:g2bianchi-optimal}
It is easy to compute that the curvature tensor $\wh{R}_{ijkl}$ of $\hnab{} = \nab{} + \sA$ is
$$ \wh{R}_{ijkl} = R_{ijkl} + \nab{i} \sA_{jkl} - \nab{j} \sA_{ikl} + \sA_{iml} \sA_{jkm} - \sA_{jml} \sA_{ikm}. $$
Substituting from Theorem~\ref{thm:optimal-connection} that $\sA_{ijk} = - \frac{1}{3} \hT_{ijk} = - \frac{1}{3} T_{ip} \ph_{pjk}$, a computation reveals that the $\G$-Bianchi identity~\eqref{eq:g2bianchi} is precisely equivalent to the fact that $\wh{R}_{ijkl}$ is in $\Omega^2_{14}$, thought of as a $2$-form in the skew-symmetric indices $k, l$. That is, the $\G$-Bianchi identity is precisely the statement that, as an $\mathfrak{so}(7)$-valued $2$-form on $M$, the curvature of $\hnab{}$ actually lies in the subalgebra of $\lieg$-valued $2$-forms. This is of course expected by the Ambrose--Singer holonomy theorem, since $\hnab{} \ph = 0$ and $\G$ is the group preserving $\ph$.
\end{rmk}

\subsection{Scaling of $\G$-structures} \label{sec:scaling}

In this section we carefully discuss the effect of \emph{scaling} on tensors induced from a $\G$-structure. The motivation for this is that we seek to understand which tensors scale the right way to be considered as the right-hand side for a geometric flow
\begin{equation} \label{eq:flow}
\delt \ph = \gamma_{\ph}
\end{equation}
of $\G$-structures. The right-hand side of~\eqref{eq:flow} is some $3$-form $\gamma_{\ph}$ which should depend on second-order derivatives of $\ph$. In this section we need to sometimes be careful about our subscript/superscript abuse of notation.

Let $\lambda \in \R$ be positive. Then $\wt{\ph} = \lambda^3 \ph$ is a $\G$-structure. From~\eqref{eq:nondegenerate}, it follows easily that
\begin{equation} \label{eq:scaling-general}
\wt{g} = \lambda^2 g, \qquad \wt{g}^{-1} = \lambda^{-2} g^{-1}, \qquad \wt{\ps} = \lambda^4 \ps, \qquad \wt{\vol} = \lambda^7 \vol.
\end{equation}
The fact that $\wt{g} = \lambda^2 g$ says that we are scaling each ``space'' coordinate by $\wt{x} = \lambda x$. Parabolic theory says that the ``time'' coordinate should scale by $\wt{t} = \lambda^2 t$. Thus a flow of $\G$-structures scales by
\begin{equation} \label{eq:time-scale}
\frac{\partial}{\partial \wt{t}} \, \wt{\ph} = \frac{\lambda^3}{\lambda^2} \delt \ph = \lambda \delt \ph.
\end{equation}
Thus, to agree with the scaling of~\eqref{eq:time-scale} we need
\begin{equation} \label{eq:gamma-ph-scaling}
\gamma_{\wt{\ph}} = \lambda \gamma_{\ph} \quad \text{when} \quad \wt{\ph} = \lambda^3 \ph.
\end{equation}
We know by Section~\ref{sec:forms} that $\gamma = A \diamond \ph$ for some $A \in \cT^2$. Suppose that $\wt{A} = \lambda^s A$. Thus we have
\begin{align*}
(\wt{A} \,\, \wt{\diamond} \,\, \wt{\ph})_{ijk} & = \wt{A}_{ip} \, \wt{g}^{pq} \, \wt{\ph}_{qjk} + \wt{A}_{jp} \, \wt{g}^{pq} \, \wt{\ph}_{iqk} + \wt{A}_{kp} \, \wt{g}^{pq} \, \wt{\ph}_{ijq} \\
& = \lambda^s \lambda^{-2} \lambda^3 (A_{ip} g^{pq} \ph_{qjk} + A_{jp} g^{pq} \ph_{iqk} + A_{kp} g^{pq} \ph_{ijq}) \\
& = \lambda^{1+s} (A \diamond \ph)_{ijk}.
\end{align*}
To agree with the scaling of~\eqref{eq:gamma-ph-scaling}, we need $1+s=1$, so $s=0$. That is, the elements $A \in \cT^2$ that can be taken to give the right-hand side $A \diamond \ph$ of a geometric flow of $\G$-structures must depend on $\ph$ in such a way that
\begin{equation} \label{eq:A-ph-scaling}
\wt{A} = A \quad \text{when} \quad \wt{\ph} = \lambda^3 \ph.
\end{equation}

We now study how the various tensors determined by $\ph$ transform under $\wt{\ph} = \lambda^3 \ph$. Because the Christoffel symbols $\Gamma^k_{ij}$ are invariant under constant scaling of the metric, we have $\wt{\nabla} = \nabla$. It is easy to check that the Riemann, Ricci, scalar curvature, and the tensor $F$ of~\eqref{eq:F-defn} for $\wt{\ph} = \lambda^3 \ph$ are
\begin{equation} \label{eq:scaling-curvature}
\wt{R}_{ijkl} = \lambda^2 R_{ijkl}, \qquad \wt{R}_{jk} = R_{jk}, \qquad \wt{R} = \lambda^{-2} R, \qquad \wt{F}_{jk} = F_{jk}.
\end{equation}
In particular, the tensors $\tRc$, $R g$, and $F$ satisfy~\eqref{eq:A-ph-scaling} so that they have the correct scaling to be the right-hand side $\gamma_{\ph} = A \diamond \ph$ for a geometric flow~\eqref{eq:flow} of $\G$-structures.

Next consider the torsion $T$. From~\eqref{eq:Tfromph} we have
\begin{align*}
\wt{T}_{pq} & = \tfrac{1}{24} \wt{\nabla}_p \wt{\ph}_{jkl} \, \wt{\ps}_{qabc} \, \wt{g}^{ja} \, \wt{g}^{kb} \, \wt{g}^{lc} \\
& = \tfrac{1}{24} \nab{p} (\lambda^3 \ph_{jkl}) (\lambda^4 \ps_{qabc}) (\lambda^{-2} g^{ja}) (\lambda^{-2} g^{kb}) (\lambda^{-2} g^{lc}) \\
& = \lambda \tfrac{1}{24} \nab{p} \ph_{jkl} \ps_{qabc} g^{ja} g^{kb} g^{lc}.
\end{align*}
Thus we find that
\begin{equation} \label{eq:scaling-torsion}
\wt{T}_{pq} = \lambda T_{pq}.
\end{equation}
Note that if we decompose $T = T_1 + T_{27} + T_7 + T_{14}$, then we have
\begin{equation} \label{eq:scaling-torsion-comps}
\wt{T}_1 = \lambda T_1, \qquad \wt{T}_{27} = \lambda T_{27}, \qquad \wt{T}_7 = \lambda T_7, \qquad \wt{T}_{14} = \lambda T_{14}.
\end{equation}

Let $A, B \in \cT^2$ and consider the composition product $AB \in \cT^2$ where $(AB)_{ij} = A_{ip} g^{pq} A_{qj}$. Then
\begin{equation*}
(\wt{AB})_{ij} = A_{ip} \wt{g}^{pq} B_{qj} = \lambda^{-2} \wt{A}_{ip} g^{pq} \wt{B}_{qj} = \lambda^{-2} (\wt{A} \wt{B})_{ij}.
\end{equation*}
Hence, for example,
\begin{equation} \label{eq:scaling-AB}
\text{if $\wt{A} = \lambda A$ and $\wt{B} = \lambda B$, then $\wt{AB} = AB$}.
\end{equation}
In particular, we see from~\eqref{eq:scaling-curvature},~\eqref{eq:scaling-torsion-comps}, and~\eqref{eq:scaling-AB} that the tensors $T_k T_{k'}$ for $k, k' \in \{ 1, 27, 7, 14 \}$ satisfy~\eqref{eq:A-ph-scaling} so that they have the correct scaling to be the right-hand side $\gamma_{\ph} = A \diamond \ph$ for a geometric flow~\eqref{eq:flow} of $\G$-structures.

Finally, consider $\nabla T$, the covariant derivative of the torsion. We have
\begin{equation} \label{eq:scaling-nabla-T}
\wt{\nabla}_i \wt{T}_{pq} = \lambda \nab{i} T_{pq}.
\end{equation}
It then follows from~\eqref{eq:scaling-nabla-T} and~\eqref{eq:K-defn} that
\begin{equation} \label{eq:scaling-KKa}
\wt{\KK{1}}_{ab} = \KK{1}_{ab}, \qquad \wt{\KK{2}}_{ab} = \KK{2}_{ab}, \qquad \wt{\KK{3}}_{ab} = \KK{3}_{ab}.
\end{equation}
In particular, we see from~\eqref{eq:scaling-KKa} that the tensors $\KK{1}$, $\KK{2}$, and $\KK{3}$ satisfy~\eqref{eq:A-ph-scaling} so that they have the correct scaling to be the right-hand side $\gamma_{\ph} = A \diamond \ph$ for a geometric flow~\eqref{eq:flow} of $\G$-structures.

Finally, we note the effect of scaling on the norms of various quantities. It follows from~\eqref{eq:scaling-curvature} that
\begin{equation} \label{eq:scaling-curvature-norms}
|\wt{\tRm}|^2_{\wt{g}} = \lambda^{-4} |\tRm|^2_{g}, \qquad |\wt{\mathrm{Rc}}|^2_{\wt{g}} = \lambda^{-4} |\mathrm{Rc}|^2_{g}, \qquad \wt{R}^2 = \lambda^{-4} R^2, \qquad |\wt{F}|^2_{\wt{g}} = \lambda^{-4} |F|^2_{g},
\end{equation}
and it follows from~\eqref{eq:scaling-torsion} and~\eqref{eq:scaling-torsion-comps} that
\begin{equation} \label{eq:scaling-torsion-norms}
|\wt{T}|^2_{\wt{g}} = \lambda^{-2} |T|^2_{g}, \qquad |\wt{T}_k|^2_{\wt{g}} = \lambda^{-2} |T_k|^2_{g}, \quad \text{ for $k \in \{1, 27, 7, 14 \}$}.
\end{equation}
Finally it follows from~\eqref{eq:scaling-nabla-T} and~\eqref{eq:scaling-KKa} that
\begin{equation} \label{eq:scaling-nabla-T-norm}
|\wt{\nabla} \wt{T}|^2_{\wt{g}} = \lambda^{-4} |\nabla T|^2_{g}, \qquad |\wt{\KK{a}}|^2_{\wt{g}} = \lambda^{-4} |\KK{a}|^2_g, \quad \text{for $a = 1,2,3$.}
\end{equation}
Comparing~\eqref{eq:scaling-curvature-norms},~\eqref{eq:scaling-torsion-norms}, and~\eqref{eq:scaling-nabla-T-norm} shows that the \emph{norms} of tensors derived from the curvature $\tRm$, and the norms of tensors derived from the covariant derivative of the torsion $\nabla T$, scale in the same way. And by~\eqref{eq:scaling-torsion-norms}, these also scale in the same way as the norms of ``squares'' of the torsion $T$. Thus it makes sense to consider all such quantities on an equal footing.

For example, Lotay--Wei~\cite{LW1} define a quantity $\Lambda = (| \tRm |^2 + | \nab{} T|^2)^{\frac{1}{2}}$ which controls the existence of the Laplacian flow, in the sense that $\Lambda$ blows up at the singular time. Control of $\Lambda$ gives control of all possible components of $\tRm$ and of $\nabla T$. For a general geometric flow of $\G$-structures, one might need to instead consider the more general expression $(| \tRm |^2 + | \nab{} T|^2 + |T|^4)^{\frac{1}{2}}$ due to the scaling
considerations described above. This is done by Chen in~\cite{Chen}.

\subsection{Application: Conformal change of $\G$-structures} \label{sec:conformal}

In this section we examine the effect of conformal change of $\G$-structures on torsion. This is treated in a coordinate-free manner in terms of the \emph{torsion forms} $\tau_1$, $\tau_{27}$, $\tau_7$, and $\tau_{14}$ in~\cite[Section 3.1]{K1}. (See also Fern\'andez--Gray~\cite[Section 6]{FG}.) Here we derive the same results in terms of a local orthonormal frame, as an application of the methods of computation introduced in Sections~\ref{sec:forms} and~\ref{sec:torsion}. In this section again we are careful about our subscripts and superscripts.

\begin{defn} \label{defn:conformal}
Let $\ph$ be a $\G$-structure on $M$. Let $f \in C^{\infty} (M)$ and define $\wt{\ph} = e^{3f} \ph$. Then $\wt{\ph}$ is a $\G$-structure \emph{conformal} to $\ph$.
\end{defn}

It follows from~\eqref{eq:scaling-general}, since $g$, $\ps$, and $\vol$ depend only pointwise on $\ph$, that
\begin{equation} \label{eq:conformal-general}
\wt{g} = e^{2f} g, \qquad \wt{g}^{-1} = e^{-2 f} g^{-1}, \qquad \wt{\ps} = e^{4f} \ps, \qquad \wt{\vol} = e^{7f} \vol.
\end{equation}
It is well-known (and can be verified easily) that if $\wt{g} = e^{2f} g$, then the Christoffel symbols $\wt{\Gamma}^k_{ij}$ of $\wt{g}$ are related to the Christoffel symbols $\Gamma^k_{ij}$ of $g$ by
\begin{equation} \label{eq:conformal-Christoffel}
\wt{\Gamma}^k_{ij} = \Gamma^k_{ij} + \delta^k_i \nab{j} f + \delta^k_j \nab{i} f - \nab{l} f g^{lk} g_{ij}.
\end{equation}

\begin{prop} \label{prop:conformal-torsion}
Let $\wt{T}$ be the torsion tensor of $\wt{\ph} = e^{3f} \ph$, and let $T$ be the torsion tensor of $\ph$. Then we have
\begin{equation} \label{eq:conformal-torsion}
\wt{T}_{pq} = e^f (T_{pq} + \nab{m} f \, \ph_{mpq} ).
\end{equation}
Consequently, the components $\wt{T}_k$ of $\wt{T}$ are
\begin{equation} \label{eq:conformal-torsion-comps}
\wt{T}_{1} = e^f T_1, \qquad \wt{T}_{27} = e^f T_{27}, \qquad \wt{T}_7 = e^f (T_7 + \nabla f \hk \ph), \qquad \wt{T}_{14} = e^f T_{14}.
\end{equation}
\end{prop}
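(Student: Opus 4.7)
\emph{Proof proposal.} The plan is a direct computation using the defining formula \eqref{eq:Tfromph} for the torsion, the conformal scalings \eqref{eq:conformal-general}, and the formula \eqref{eq:conformal-Christoffel} for the change of Christoffel symbols. First I would expand $\wt{\nabla}_p \wt{\ph}_{jkl}$. By Leibniz, $\partial_p \wt{\ph}_{jkl} = e^{3f} \partial_p \ph_{jkl} + 3(\partial_p f) e^{3f} \ph_{jkl}$, and subtracting the three Christoffel corrections $\wt{\Gamma}^m_{p\cdot} \wt{\ph}_{m\cdots}$ using \eqref{eq:conformal-Christoffel} produces three copies of $-(\nab{p} f) e^{3f} \ph_{jkl}$ from the $\delta^m_j \nab{p} f$--type pieces (one per position of $\ph$). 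These exactly cancel the $3(\nab{p} f) e^{3f} \ph_{jkl}$ Leibniz term. What remains, after factoring $e^{3f}$, is $\nab{p} \ph_{jkl}$ plus six correction terms: three of the form $-(\nab{\cdot} f) \ph_{\cdots}$ coming from the $\delta^m_p \nab{\cdot} f$ pieces, and three of the form $(\nab{s} f) \delta_{p\cdot} \ph_{s \cdots}$ coming from the $\nab{s} f \, g^{sm} g_{p \cdot}$ pieces.

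Next I would apply \eqref{eq:Tfromph} for $\wt{T}$. The three factors of $\wt{g}^{-1}$ and the one factor of $\wt{\ps}$ produce an overall scaling $e^{-6f} \cdot e^{4f} = e^{-2f}$, so $\wt{T}_{pq} = \tfrac{e^{-2f}}{24} \wt{\nabla}_p \wt{\ph}_{jkl} \ps_{qjkl}$ in the original $g$-orthonormal frame. The leading term yields $e^f T_{pq}$ by \eqref{eq:Tfromph} itself. Each of the six correction terms, after permuting indices appropriately, reduces via the identity $\ph_{i\alpha\beta} \ps_{ab\alpha\beta} = -4\ph_{iab}$ of \eqref{eq:phps} to $4 \nab{m} f \, \ph_{mpq}$; summing the six contributions gives $24 e^{3f} \nab{m} f \, \ph_{mpq}$, and the prefactor $\tfrac{e^{-2f}}{24}$ turns this into the extra term $e^f \nab{m} f \, \ph_{mpq}$. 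This establishes \eqref{eq:conformal-torsion}. The main obstacle is just the careful bookkeeping of signs and index permutations needed to bring each of the six distinct correction terms into the standard form of the $\ph\ps$ contraction identity.

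For the component decomposition \eqref{eq:conformal-torsion-comps}, I would observe that $\nab{m} f \, \ph_{mpq} = (\nabla f \hk \ph)_{pq}$ lies in $\Omega^2_7(\ph)$ by \eqref{eq:omega27desc}; in particular it is skew-symmetric and orthogonal to $\Omega^2_{14}$. Since $\wt{\ph}$ and $\ph$ are pointwise proportional, the subspaces $\Omega^2_7$ and $\Omega^2_{14}$ agree whether computed with $\ph$ or $\wt{\ph}$, and the direct sum $\cT^2 \cong \R g \oplus \cS^2_0 \oplus \Omega^2_7 \oplus \Omega^2_{14}$ matches up under the rescaling $g \mapsto \wt{g} = e^{2f} g$ once one checks that $\wt{\tr}\,\wt{T} = e^{-2f} \wt{T}_{pp} = e^{-f} \tr T$ so that the tensor $\tfrac{1}{7}(\wt{\tr}\,\wt{T}) \wt{g}$ equals $e^f \cdot \tfrac{1}{7}(\tr T) g$. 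It follows that the symmetric pieces $T_1$ and $T_{27}$ and the $\Omega^2_{14}$-piece $T_{14}$ are simply rescaled by $e^f$, while the $\Omega^2_7$-piece picks up the extra $\nabla f \hk \ph$ term, yielding \eqref{eq:conformal-torsion-comps}.
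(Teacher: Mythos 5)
Your proposal is correct and follows essentially the same route as the paper's proof: both start from the defining formula~\eqref{eq:Tfromph}, use the conformal scalings~\eqref{eq:conformal-general} to reduce to a contraction of $\wt{\nabla}_p \ph$ with $\ps_{qjkl}$, and then substitute the Christoffel change~\eqref{eq:conformal-Christoffel} and apply the $\ph\ps$-contraction identities to land on~\eqref{eq:conformal-torsion}. The only organizational difference is how the ``Leibniz'' term $3\nab{p} f\, e^{3f}\ph_{jkl}$ is discharged: you cancel it against the $\delta^m_j\nab{p} f$ pieces of the Christoffel correction \emph{before} contracting, whereas the paper drops it immediately upon contracting with $\wt{\ps}$ via the orthogonality $\ph_{ijk}\ps_{qijk}=0$ from~\eqref{eq:phps} (and the $\delta^m_j\nab{p} f$ pieces then die because $\ph_{iqi}=0$) --- the same underlying cancellation in a different order.
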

\begin{proof}
Let $\wt{\nabla}$ be the Levi-Civita connection of $\wt{g}$. We first observe since $\wt{\nabla}_p f = \nab{p} f$ that
\begin{align*}
\wt{\nabla}_p \wt{\ph}_{ijk} & = \wt{\nabla}_p (e^{3f} \ph_{ijk}) = 3 e^{3f} \wt{\nabla}_p f \ph_{ijk} + e^{3f} \wt{\nabla}_p \ph_{ijk} \\
& = 3 \nab{p} f \wt{\ph}_{ijk} + e^{3f} \wt{\nabla}_p \ph_{ijk}.
\end{align*}
Using the above expression together with~\eqref{eq:Tfromph} for $\wt{\ph}$, we compute
\begin{align*}
24 \, \wt{T}_{pq} & = \wt{\nabla}_p \wt{\ph}_{ijk} \wt{\ps}_{qabc} \wt{g}^{ia} \wt{g}^{jb} \wt{g}^{kc} \\
& = (3 \nab{p} f \wt{\ph}_{ijk} + e^{3f} \wt{\nabla}_p \ph_{ijk}) \wt{\ps}_{qabc} \wt{g}^{ia} \wt{g}^{jb} \wt{g}^{kc}.
\end{align*}
The first term vanishes by~\eqref{eq:phps} applied to $\wt{\ph}$. Now we use~\eqref{eq:conformal-general} and revert to our usual abuse of notation to obtain
\begin{equation} \label{eq:conformal-temp}
\begin{aligned}
24 \, \wt{T}_{pq} & = e^{3f} \wt{\nabla}_p \ph_{ijk} e^{4f} \ps_{qabc} e^{-2f} g^{ia} e^{-2f} g^{jb} e^{-2f} g^{kc} \\
& = e^f \wt{\nabla}_p \ph_{ijk} \ps_{qijk}.
\end{aligned}
\end{equation}
We observe that
\begin{equation*}
\wt{\nabla}_p \ph_{ijk} = \nab{p} \ph_{ijk} - (\wt{\Gamma}^l_{pi} - \Gamma^l_{pi}) \ph_{ljk} - (\wt{\Gamma}^l_{pj} - \Gamma^l_{pj}) \ph_{ilk} - (\wt{\Gamma}^l_{pk} - \Gamma^l_{pk}) \ph_{ijl}.
\end{equation*}
The last three terms above combined are skew-symmetric in $i,j,k$. Therefore substituting into~\eqref{eq:conformal-temp} yields
\begin{equation*}
24 \, \wt{T}_{pq} = e^f ( \nab{p} \ph_{ijk} - 3 (\wt{\Gamma}^l_{pi} - \Gamma^l_{pi}) \ph_{ljk} ) \ps_{qijk}.
\end{equation*}
Using~\eqref{eq:Tfromph} for $\ph$, together with~\eqref{eq:conformal-Christoffel}, we compute
\begin{align*}
24 \, \wt{T}_{pq} & = e^f \big( 24 \, T_{pq} - 3 ( \delta^l_p \nab{i} f + \delta^l_i \nab{p} f - \nab{m} f g^{ml} g_{pi} ) (-4 \ph_{lqi}) \big) \\
& = 24 e^f T_{pq} + 12 e^f ( \nab{i} f \ph_{pqi} + \nab{p} f \ph_{i q i} - \nab{m} f \ph_{mqp}) \\
& = 24 e^f T_{pq} + 12 e^f \nab{m} f \ph_{mpq} + 0 + 12 e^f \nab{m} f \ph_{mpq},
\end{align*}
which establishes~\eqref{eq:conformal-torsion}. The equations in~\eqref{eq:conformal-torsion-comps} now follow immediately from~\eqref{eq:omega27desc}.
\end{proof}

\begin{rmk} \label{rmk:conformal}
Proposition~\ref{prop:conformal-torsion} shows that $T_1$, $T_{27}$, and $T_{14}$ change by $e^f$ when $\ph$ is changed by $e^{3f}$. Thus the vanishing or nonvanishing of these three components of the torsion depends only on the \emph{conformal class} of the $\G$-structure $\ph$. By contrast, the component $T_7$ of the torsion transforms under conformal change of $\ph$ in a slightly more complicated way. See~\cite[Theorem 2.32 and its succeeding paragraph]{K-flows} and~\cite[Section 3.1]{K1} for more discussion.
\end{rmk}

We can similarly compute the behaviour of $\nab{} T$ under a conformal change.
\begin{prop} \label{prop:conformal-nabT}
Let $\wt{T}$ be the torsion tensor of $\wt{\ph} = e^{3f} \ph$, and let $\tnab{}$ be the Levi-Civita connection of the induced metric $\wt{g} = e^{2f} g$, with $T$, $\nab{}$ the analogous objects from $\ph$. Then we have
\begin{equation} \label{eq:conformal-nabT}
\begin{aligned}
e^{-f} \tnab{i} \wt{T}_{pq} & = \nab{i} T_{pq} - \nab{i} f \, T_{pq} - \nab{p} f \, T_{iq} - \nab{q} f \, T_{pi} \\
& \qquad {} + g_{ip} \nab{m} f \, T_{mq} + g_{iq} \nab{m} f \, T_{pm} + \nab{m} f \, T_{ik} \ps_{kmpq} \\
& \qquad {} - \nab{i} f \nab{m} f \, \ph_{mpq} - \nab{p} f \, \nab{k} f \, \ph_{kiq} - \nab{q} f \nab{k} f \, \ph_{kpi} + \nab{i} \nab{m} f \, \ph_{mpq}.
\end{aligned}
\end{equation}
\end{prop}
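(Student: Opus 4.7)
The plan is to apply the conformal connection $\tnab{i}$ directly to the formula for $\wt{T}_{pq}$ provided by Proposition~\ref{prop:conformal-torsion}, namely
$$ \wt{T}_{pq} = e^f \big( T_{pq} + \nab{m} f \, \ph_{mpq} \big), $$
and then convert all resulting $\tnab{i}$'s into $\nab{i}$'s using the Christoffel symbol difference~\eqref{eq:conformal-Christoffel}. Writing
$$ \wt{\Gamma}^l_{ij} - \Gamma^l_{ij} = \delta^l_i \nab{j} f + \delta^l_j \nab{i} f - g_{ij} g^{lk} \nab{k} f, $$
this difference will contribute two Christoffel correction terms per downstairs index each time it acts on a tensor, producing (after using $\tnab{i} f = \nab{i} f$) an explicit formula entirely in terms of $\nab{}$.

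First I would handle the ``torsion part.'' Applying $\tnab{i}$ to $e^f T_{pq}$ and using~\eqref{eq:conformal-Christoffel}, a short computation gives
$$ e^{-f} \tnab{i} (e^f T_{pq}) = \nab{i} T_{pq} - \nab{i} f \, T_{pq} - \nab{p} f \, T_{iq} - \nab{q} f \, T_{pi} + g_{ip} \nab{m} f \, T_{mq} + g_{iq} \nab{m} f \, T_{pm}, $$
which accounts for the entire first line on the right-hand side of~\eqref{eq:conformal-nabT}.

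Next I would handle the ``gradient-of-$f$ part.'' Setting $\beta_{pq} = \nab{m} f \, \ph_{mpq}$, I would apply $\tnab{i}$ to $e^f \beta_{pq}$. The key observation here is that $\nab{i} \beta_{pq} = \nab{i} \nab{m} f \, \ph_{mpq} + \nab{m} f \, \nab{i} \ph_{mpq}$, and the second term equals $\nab{m} f \, T_{ik} \ps_{kmpq}$ by~\eqref{eq:nablaph}, producing the $\ps$-contraction term in~\eqref{eq:conformal-nabT}. The Christoffel corrections from~\eqref{eq:conformal-Christoffel} then produce terms of the form $\nab{p} f \, \nab{m} f \, \ph_{miq}$, $\nab{q} f \, \nab{m} f \, \ph_{mpi}$, as well as $\nab{i} f \, \nab{m} f \, \ph_{mpq}$ contributions; the terms involving $g_{ip} g^{lk}$ and $g_{iq} g^{lk}$ contract to $\nab{k} f \, \nab{m} f \, \ph_{mkq}$ and $\nab{k} f \, \nab{m} f \, \ph_{mpk}$, which vanish by the skew-symmetry of $\ph$ against the symmetric tensor $\nab{k} f \, \nab{m} f$. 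Combining with the $e^f \nab{i} f \, \beta_{pq}$ term from differentiating $e^f$, a cancellation yields exactly the remaining terms $\nab{i} \nab{m} f \, \ph_{mpq} - \nab{p} f \, \nab{k} f \, \ph_{kiq} - \nab{q} f \, \nab{k} f \, \ph_{kpi} - \nab{i} f \, \nab{m} f \, \ph_{mpq}$ appearing in~\eqref{eq:conformal-nabT}.

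The proof is essentially a bookkeeping exercise with no real obstacle; the main thing to watch is the partial cancellation in the ``gradient-of-$f$ part'' where the coefficient of $\nab{i} f \, \nab{m} f \, \ph_{mpq}$ must reduce from $+1 - 2$ to the final $-1$, and to correctly identify the two Christoffel correction terms that vanish by the symmetry/antisymmetry pairing. Summing the two parts then gives exactly~\eqref{eq:conformal-nabT}.
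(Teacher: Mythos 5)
Your proof is correct and follows essentially the same approach as the paper: apply the conformal Christoffel formula~\eqref{eq:conformal-Christoffel} to convert $\tnab{}$ to $\nab{}$, differentiate the conformal torsion formula from Proposition~\ref{prop:conformal-torsion}, and track the resulting terms (including the vanishing of the two symmetric-against-skew pairings). The only difference is organizational — you split $\wt{T}_{pq}$ into the $e^f T_{pq}$ and $e^f\nab{m}f\,\ph_{mpq}$ pieces and apply $\tnab{i}$ to each separately by linearity, whereas the paper applies the Christoffel correction once to the full $\wt{T}$ before substituting; the computation is the same either way.
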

\begin{proof}
Using~\eqref{eq:conformal-Christoffel}, we compute
\begin{align} \nonumber
\tnab{i} \wt{T}_{pq} & = \nab{i} \wt{T}_{pq} - (\wt{\Gamma}^m_{ip} - \Gamma^m_{ip}) \wt{T}_{mq} - (\wt{\Gamma}^m_{iq} - \Gamma^m_{iq}) \wt{T}_{pm} \\ \nonumber
& = \nab{i} \wt{T}_{pq} - (\delta^m_i \nab{p} f + \delta^m_p \nab{i} f - \nab{l} f g^{lm} g_{ip}) \wt{T}_{mq} - (\delta^m_i \nab{q} f + \delta^m_q \nab{i} f - \nab{l} f g^{lm} g_{iq}) \wt{T}_{pm} \\ \label{eq:conformal-nabT-temp}
& = \nab{i} \wt{T}_{pq} - \nab{p} f \, \wt{T}_{iq} - \nab{q} f \, \wt{T}_{pi} - 2 \nab{i} f \, \wt{T}_{pq} + g_{ip} \nab{m} f \, \wt{T}_{mq} + g_{iq} \nab{m} f \, \wt{T}_{pm}.
\end{align}
Differentiating~\eqref{eq:conformal-torsion} and using~\eqref{eq:nablaph}, we obtain
\begin{align*}
\nab{i} \wt{T}_{pq} & = e^f (\nab{i} f) (T_{pq} + \nab{m} f \, \ph_{mpq}) + e^f (\nab{i} T_{pq} + \nab{i} \nab{m} f \, \ph_{mpq} + \nab{m} f \nab{i} \ph_{mpq}) \\
& = e^f ( \nab{i} T_{pq} + \nab{i} f \, T_{pq} + \nab{m} f \, T_{ik} \ps_{kmpq} + \nab{i} f \nab{m} f \, \ph_{mpq} + \nab{i} \nab{m} f \, \ph_{mpq}).
\end{align*}
Substituting the above and~\eqref{eq:conformal-torsion} into~\eqref{eq:conformal-nabT-temp}, we have
\begin{align*}
e^{-f} \tnab{i} \wt{T}_{pq} & = \nab{i} T_{pq} + \nab{i} f \, T_{pq} + \nab{m} f \, T_{ik} \ps_{kmpq} + \nab{i} f \nab{m} f \, \ph_{mpq} + \nab{i} \nab{m} f \, \ph_{mpq} \\
& \qquad {} - \nab{p} f (T_{iq} + \nab{k} f \, \ph_{kiq}) - \nab{q} f (T_{pi} + \nab{k} f \, \ph_{kpi}) - 2 \nab{i} f (T_{pq} + \nab{k} f \, \ph_{kpq}) \\
& \qquad {} + g_{ip} \nab{m} f (T_{mq} + \nab{k} f \, \ph_{kmq}) + g_{iq} \nab{m} f (T_{pm} + \nab{k} f \, \ph_{kpm}).
\end{align*}
The second and fourth terms in the last line above vanish by skew-symmetry of $\ph$. Combining terms and rearranging, we obtain~\eqref{eq:conformal-nabT}.
\end{proof}

\section{Evolution of $\G$-structures} \label{sec:evolution-g2}

In this section, we consider the evolution of $\G$-structures. We also study the evolution of certain natural functionals which are quadratic in the torsion. This investigation serves to motivate the necessity of a detailed analysis of the decompositions of $\tRm$ and $\nab{} T$ into irreducible $\G$-representations, which we undertake in Sections~\ref{sec:more-rep-theory} and~\ref{sec:curvature-torsion}. We then revisit these quadratic torsion functionals in Section~\ref{sec:functionals-revisited}.

\subsection{Basic evolution equations for a flow of $\G$-structures} \label{sec:basic-flow}

In~\cite{K-flows}, the third author initiated the study of general flows of $\G$-structures. Explicitly, a general flow of $\G$-structures can be written in the form
\begin{equation} \label{eq:general-flow}
\delt \ph = h \diamond \ph + X \hk \ps
\end{equation}
for some time-dependent symmetric $2$-tensor $h$ and vector field $X$. (Note that the $\diamond$ operation defined in~\eqref{eq:diamond-coords} depends on the metric and hence on the $\G$-structure $\ph$.) Given $h$ and $X$, the evolutions of the metric $g$, the $4$-form $\ps$, the torsion $T$, and the independent components of the torsion, were computed in~\cite{K-flows}. In this section we give a much more efficient derivation of all these formulas. 

The key point is that it is more convenient to package the data of $h$ and $X$ together as follows. We can write
\begin{equation} \label{eq:general-flow-A}
\delt \ph = A \diamond \ph
\end{equation}
for a unique $A \in \cS \oplus \Omega^2_7$, where the symmetric part is $A_{1+27} = h$, and by~\eqref{eq:general-flow} and Corollary~\ref{cor:sevenreps}, the $\Omega^2_7$ part is $A_7 = - \frac{1}{3} X \hk \ph$.

\begin{rmk} \label{rmk:advantage-A-formulation}
There are two advantages of the $\delt \ph = A \diamond \ph$ formulation of a general flow as opposed to the original form~\eqref{eq:general-flow}. The first, and most direct advantage for our purposes, is that the derivation of the evolution equations of the metric, the $4$-form, the torsion, and the components of the torsion are much more efficient and the resulting formulas are significantly simpler. See Remarks~\ref{rmk:flows1-evolution-4-form} and~\ref{rmk:flows1-evolution-torsion}. Another advantage is that the $\delt \ph = A \diamond \ph$ approach carries over directly to flows of $\Spin{7}$-structures, while the original formulation~\eqref{eq:general-flow} does not. (See~\cite{K-flows-SP} for more about flows of $\Spin{7}$-structures.) In fact, this approach is amenable to the study of flows of a very broad class of geometric structures. (For example, see~\cite{D-Spin7, DLS, FLMS, LS}.)

There is, however, one advantage of the original formulation~\eqref{eq:general-flow}, in that a symmetric $2$-tensor $h$ and a vector field $X$ make sense independently of any $\G$-structure, whereas encoding the flow by a section $A \in \cS \oplus \Omega^2_7$ depends on a $\G$-structure $\ph$. However, we could consider $A$ to be a general $2$-tensor $A \in \cT^2$, because for any $\ph$, the component of $A$ in $\Omega^2_{14}$ does not contribute to $A \diamond \ph$, by Corollary~\ref{cor:ABinnerproduct}. It is only if we want $\delt \ph$ to determine $A$ uniquely that we need to project the skew-symmetric part onto $\Omega^2_7$.
\end{rmk}

\begin{lemma} \label{lemma:evolution-1}
Let $\ph$ be a time-dependent family of $\G$-structures evolving by the flow~\eqref{eq:general-flow-A}. Then the metric $g$, the $4$-form $\ps$, and the volume form $\vol$ evolve by
\begin{equation} \label{eq:evolution-1}
\delt g = (A + A^t) = 2 h, \qquad \delt \ps = A \diamond \ps, \qquad \delt \vol = (\tr A) \vol.
\end{equation}
\end{lemma}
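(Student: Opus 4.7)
The plan is to exploit the pointwise $GL(T_xM)$-equivariance of the construction $\ph \mapsto (g_\ph, \ps_\ph, \vol_\ph)$. Indeed, the nondegeneracy formula~\eqref{eq:nondegenerate} exhibits $g_\ph$ (and hence $\ps_\ph = \st_\ph \ph$ and $\vol_\ph$) as a $GL$-equivariant algebraic function of $\ph$ at each point, so that $g_{P^*\ph} = P^* g_\ph$, $\ps_{P^*\ph} = P^* \ps_\ph$, and $\vol_{P^*\ph} = P^* \vol_\ph$ for any $P \in GL(T_xM)$. The key observation is that the operator $A \diamond$ of~\eqref{eq:diamond-coords}, extended naturally to any covariant tensor, is precisely the infinitesimal pullback: $\frac{d}{ds}\big|_{s=0} (I + sA)^* S = A \diamond S$ for any covariant tensor $S$. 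Thus $\delt \ph = A \diamond \ph$ is pointwise the infinitesimal $GL$-action of $A$ on $\ph$, and equivariance immediately propagates to give $\delt g = A \diamond g$, $\delt \ps = A \diamond \ps$, and $\delt \vol = A \diamond \vol$.

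The three formulas then follow by a direct algebraic unwinding: $(A \diamond g)_{ij} = A_{ip}g_{pj} + A_{jp}g_{ip} = A_{ij} + A_{ji}$, which equals $2h$; the expression $A \diamond \ps$ is already in the stated form; and applying $\diamond$ to a top-dimensional form contracts to the trace, giving $A \diamond \vol = (\tr A)\vol$. Alternatively, one can prove the lemma by direct computation, differentiating the contraction identities of~\eqref{eq:phph} with respect to $t$ and substituting $\delt \ph = A \diamond \ph$: starting from $\ph_{ijp}\ph_{ajp} = 6 g_{ia}$ to extract $\delt g$, then from $\ph_{ijp}\ph_{klp} = g_{ik}g_{jl} - g_{il}g_{jk} - \ps_{ijkl}$ with the now-known $\delt g$ to isolate $\delt \ps$, and finally using the standard $\delt \vol = \tfrac{1}{2}\tr_g(\delt g)\vol$ for the volume form.

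The main obstacle, and essentially the only nontrivial point, is verifying well-definedness: $A$ is only uniquely determined in $\cS^2 \oplus \Omega^2_7$ by the equation $\delt \ph = A \diamond \ph$, with ambiguity in $\Omega^2_{14} = \ker(\cdot \diamond \ph)$ coming from Corollary~\ref{cor:ABinnerproduct}. One must check that each of $A + A^t$, $A \diamond \ps$, and $\tr A$ is insensitive to adding a $B \in \Omega^2_{14}$. This is immediate, since $B$ is skew-symmetric and traceless and satisfies $B \diamond \ps = 0$ by the same corollary, so all three right-hand sides descend to the quotient $\cT^2 / \Omega^2_{14} \cong \cS^2 \oplus \Omega^2_7$ as required.
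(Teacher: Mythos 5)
Your approach is essentially the same as the paper's: both exploit the $\mathrm{GL}(7,\R)$-equivariance of the pointwise map $\ph \mapsto (g_\ph, \ps_\ph, \vol_\ph)$, realizing $A \diamond$ as the infinitesimal $\mathrm{GL}$-action (the paper writes $\ph_s = (e^{sA})^*\ph$, you use $(I+sA)^*$, with the same $s$-derivative at $s=0$). Your well-definedness check against adding $B \in \Omega^2_{14}$ is a pleasant extra observation but not strictly necessary, since the formulation~\eqref{eq:general-flow-A} already fixes $A$ uniquely in $\cS \oplus \Omega^2_7$.
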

\begin{proof}
Since $g$ and $\ps$ are nonlinear functions of $\ph$ and $\delt \ph = A \diamond \ph$, we have
$$ \delt g = g_* (A \diamond \ph) \qquad \text{and} \qquad \delt \ps = \ps_* (A \diamond \ph), $$
where $g_*$ and $\ps_*$ are the pushforwards (differentials) of the smooth maps that take a $\G$-structure to its metric and $4$-form, respectively. Thus we seek the \emph{first variation} of $g$ and $\ps$ as nonlinear functions of $\ph$. This is purely a pointwise calculation.

Fix a $\G$-structure $\ph$. Let $\ph_s$ be any $1$-parameter family of $\G$-structures such that $\ph_0 = \ph$ and $\rest{\dds}{s=0} \ph_s = A \diamond \ph$. Let $g_s$ and $\ps_s$ be the induced metric and $4$-form of $\ph_s$, respectively. We need to compute $\rest{\dds}{s=0} g_s$ and $\rest{\dds}{s=0} \ps_s$. We can choose $\ph_s = (e^{sA})^* \ph$. That is, if $\ph = \frac{1}{6} \ph_{ijk} e_i \w e_j \w e_k$, then for small $s$, the $3$-form
\begin{equation} \label{eq:evolution-temp}
\ph_s = (e^{sA})^* \ph = \frac{1}{6} \ph_{ijk} (e^{sA} e_i) \w (e^{sA} e_j) \w (e^{sA} e_k)
\end{equation}
is a $\G$-structure, with $\ph_0 = \ph$ and
\begin{align*}
\rest{\dds}{s=0} \ph_s & = \tfrac{1}{6} \ph_{ijk} ( A_{pi} e_p \w e_j \w e_k + A_{pj} e_i \w e_p \w e_k + A_{pk} e_i \w e_j \w e_p ) \\
& = \tfrac{1}{6} (A_{ip} \ph_{pjk} + A_{jp} \ph_{ipk} + A_{kp} \ph_{ijp}) e_i \w e_j \w e_k \\
& = A \diamond \ph.
\end{align*}
It then follows immediately from~\eqref{eq:evolution-temp} that
\begin{equation} \label{eq:evolution-temp2}
g_s = (e^{sA})^* g, \qquad \ps_s = (e^{sA})^* \ps.
\end{equation}
The second equation in~\eqref{eq:evolution-temp2} yields $\rest{\dds}{s=0} \ps_s = A \diamond \ps$ exactly as in the case of $\ph_s$. The first equation in~\eqref{eq:evolution-temp2} says
$$ g_s = g_{ij} (e^{sA} e_i) (e^{sA} e_j), $$
and thus
$$ \rest{\dds}{s=0} g_s = g_{ij} (A_{pi} e_p e_j + A_{pj} e_i e_p) = (A_{ip} g_{pj} + A_{jp} g_{ip}) e_i e_j. $$
Since we are using an orthonormal frame with respect to $g$, the right-hand side above is $A_{ij} + A_{ji} = 2 h_{ij}$, as claimed. [This argument for the evolution of the metric is greatly simplified from the original argument in~\cite{K-flows}. It is the same as the argument in~\cite[Proposition 3.1]{K-flows-SP} for flows of $\Spin{7}$-structures.]

The fact that $\delt g = 2 h$ implies $\delt \vol = (\tr h) \vol$ is standard, but $\tr h = \tr A$.
\end{proof}

\begin{rmk} \label{rmk:flows1-evolution-4-form}
In~\cite[Theorem 3.5]{K-flows}, the evolution of the $4$-form is given as
$$ \delt \ps = h \diamond \ps - X \w \ph. $$
In our notation, we have $X_k = \frac{1}{6} X_{ij} \ph_{ijk}$ where $X_{ij} = X_m \ph_{mij} = - 3 (A_7)_{ij}$. It is easy to verify using~\eqref{eq:phps} that the above is indeed equivalent to our equation $\delt \ps = A \diamond \ps$ from~\eqref{eq:evolution-1}.
\end{rmk}

From $\delt g = 2 h$, it is a standard result to compute the flow of the covariant derivative $\nabla$. As we always work with orthonormal frames, we have $\nab{i} e_j = \Gamma_{ijk} e_k$ for some $\Gamma_{ijk}$. By taking the time derivative of the Koszul formula one obtains
$$ g \big( ( \delt \nab{i} ) e_j, e_k \big) = \nab{i} h_{jk} + \nab{j} h_{ik} - \nab{k} h_{ij}. $$
The above says
\begin{equation} \label{eq:evolution-nabla}
\delt \Gamma_{ijk} = \nab{i} h_{jk} + \nab{j} h_{ik} - \nab{k} h_{ij}.
\end{equation}

\begin{prop} \label{prop:evolution-torsion}
Let $\ph$ be a time-dependent family of $\G$-structures evolving by the flow~\eqref{eq:general-flow-A}. Then the torsion $T$ evolves by
\begin{equation} \label{eq:evolution-torsion}
\delt T_{pq} = \tfrac{1}{2} ( \nab{i} A_{pj} + \nab{i} A_{jp} - \nab{p} A_{ij} ) \ph_{ijq} + T_{pk} A_{qk}.
\end{equation}
\end{prop}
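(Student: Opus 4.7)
The strategy is to differentiate in $t$ the defining relation of the torsion, $\nab{m}\ph_{ijk} = T_{mp}\ps_{pijk}$ from~\eqref{eq:nablaph} (where the contracted index $p$ is implicitly raised by the metric $g^{-1}$), and then contract the resulting identity with $\ps_{qijk}$ to isolate $\delt T_{pq}$, using $\ps_{pijk}\ps_{qijk} = 24\, g_{pq}$ from~\eqref{eq:psps}. All of the auxiliary evolution formulas needed are already in hand: from Lemma~\ref{lemma:evolution-1} we have $\delt g = A + A^t$, $\delt g^{-1} = -(A+A^t)$, and $\delt\ps = A\diamond\ps$; from~\eqref{eq:evolution-nabla}, $\delt\Gamma^l_{mi} = \nab{m}h_{il} + \nab{i}h_{ml} - \nab{l}h_{mi}$ with $h = A_\symm$.

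I would then execute this in three steps. First, compute the left side: $\delt\nab{m}\ph_{ijk} = \nab{m}(\delt\ph_{ijk}) - \delt\Gamma^l_{mi}\ph_{ljk} - \delt\Gamma^l_{mj}\ph_{ilk} - \delt\Gamma^l_{mk}\ph_{ijl}$; substitute $\delt\ph = A\diamond\ph$ and expand $\nab{m}(A\diamond\ph)$ by Leibniz, immediately replacing each emerging factor $\nab{m}\ph$ by $T\cdot\ps$ via~\eqref{eq:nablaph}. Second, differentiate the right side, which produces $(\delt T_{mq}) g^{qp}\ps_{pijk}$ -- the term we want -- together with a $\delt g^{-1}$ contribution (re-contracted with $T\ps$) and the term $T_{mp}(A\diamond\ps)_{pijk}$ coming from $\delt\ps$. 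Third, contract the full identity with $\tfrac{1}{24}\ps_{qijk}$; the terms containing $\nabla A$ or $\nabla h$ collapse via~\eqref{eq:phps} and~\eqref{eq:psps} into the combination $\tfrac{1}{2}(\nab{i}A_{pj} + \nab{i}A_{jp} - \nab{p}A_{ij})\ph_{ijq}$, where the symmetric piece $\nab{i}h_{pj} = \tfrac{1}{2}(\nab{i}A_{pj}+\nab{i}A_{jp})$ arises from $\delt\Gamma$ and the antisymmetric piece $-\nab{p}(A_\skew)_{ij}\ph_{ijq}$ emerges from the unmatched $\nab{m}(A_\skew\diamond\ph)$ contribution on the left.

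The main obstacle will be the bookkeeping of the remaining quadratic-in-$(T,A)$ contributions -- from $\delt g^{-1}$, from $T_{mp}(A\diamond\ps)_{pijk}$, and from the repeated substitutions $\nab{m}\ph\mapsto T\cdot\ps$ -- which must reorganize under the $\ps\cdot\ps$ and $\ph\cdot\ps$ contractions into precisely the single clean term $T_{pk}A_{qk}$, with all $\tr A$ traces and potential skew remainders cancelling. This systematic use of the contraction identities~\eqref{eq:phph}, \eqref{eq:phps}, \eqref{eq:psps} is routine but needs care; it is the kind of simplification that motivated the paper's change to the ``$A\diamond\ph$'' formulation of a general flow, in contrast with the $h$-and-$X$ formulation of~\cite{K-flows}.
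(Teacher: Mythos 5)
Your proposal is essentially the paper's own argument, reorganized slightly: the paper differentiates the contracted formula $24\, T_{pq} = \nab{p}\ph_{ijk}\,\ps_{qabc}\,g^{ia}g^{jb}g^{kc}$ directly, while you propose to differentiate the uncontracted relation $\nab{m}\ph_{ijk}=T_{mp}\ps_{pijk}$ and then contract with $\tfrac{1}{24}\ps_{qijk}$; these are the same computation with the contraction applied before versus after the time derivative. You correctly identify all the needed ingredients (Lemma~\ref{lemma:evolution-1}, equation~\eqref{eq:evolution-nabla}, the contraction identities) and the final structure of the calculation, so the plan is sound.
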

\begin{proof}
Recall from~\eqref{eq:Tfromph} that $24 \, T_{pq} = \nab{p} \ph_{ijk} \ps_{qijk} = \nab{p} \ph_{ijk} \ps_{qabc} g^{ia} g^{jb} g^{kc}$. (Here we have to be careful to note that \emph{there are contractions with the inverse metric}, because we need to differentiate this equation.) From~\eqref{eq:evolution-1} we get $\delt g^{ij} = - 2 h^{ij}$, and thus
\begin{align*}
24 \delt T_{pq} & = \delt \nab{p} \ph_{ijk} \ps_{qabc} g^{ia} g^{jb} g^{kc} + \nab{p} \ph_{ijk} \delt \ps_{qabc} g^{ia} g^{jb} g^{kc} + \nab{p} \ph_{ijk} \ps_{qabc} \delt g^{ia} g^{jb} g^{kc} \\
& \qquad {} + \nab{p} \ph_{ijk} \ps_{qabc} g^{ia} \delt g^{jb} g^{kc} + \nab{p} \ph_{ijk} \ps_{qabc} g^{ia} g^{jb} \delt g^{kc} \\
& = \delt \nab{p} \ph_{ijk} \ps_{qijk} + \nab{p} \ph_{ijk} \delt \ps_{qijk} - 2 \nab{p} \ph_{ijk} \ps_{qajk} h_{ia} \\
& \qquad {} - 2 \nab{p} \ph_{ijk} \ps_{qibk} h_{jb} - 2 \nab{p} \ph_{ijk} \ps_{qijc} h_{kc}.
\end{align*}
The last three terms above are identical, so we have
\begin{equation} \label{eq:evolution-torsion-t0}
24 \delt T_{pq} = \delt \nab{p} \ph_{ijk} \ps_{qijk} + \nab{p} \ph_{ijk} \delt \ps_{qijk} - 6 \, \nab{p} \ph_{ijk} \ps_{qajk} h_{ia}.
\end{equation}
We thus need to compute the three terms on the right-hand side of~\eqref{eq:evolution-torsion-t0}.

Recall that for any $3$-tensor $\gamma_{ijk}$ we have
$$ \nab{p} \gamma_{ijk} = \partial_p \gamma_{ijk} - \Gamma_{pim} \gamma_{mjk} - \Gamma_{pjm} \gamma_{imk} - \Gamma_{pkm} \gamma_{ijm}. $$
Note that in this case, \emph{there are no contractions above with the inverse metric}, because the Christoffel symbols really should be written as $\Gamma^k_{ij}$. Hence, using this and~\eqref{eq:evolution-nabla}, we compute
\begin{align*}
\delt \nab{p} \ph_{ijk} & = \partial_p \delt \ph_{ijk} - \Gamma_{pim} \delt \ph_{mjk} - \Gamma_{pjm} \delt \ph_{imk} - \Gamma_{pkm} \delt \ph_{ijm} \\
& \qquad {} - \delt \Gamma_{pim} \, \ph_{mjk} - \delt \Gamma_{pjm} \, \ph_{imk} - \delt \Gamma_{pkm} \, \ph_{ijm} \\
& = \nab{p} \delt \ph_{ijk} - (\nab{p} h_{im} + \nab{i} h_{pm} - \nab{m} h_{pi}) \ph_{mjk} \\
& \qquad {} - (\nab{p} h_{jm} + \nab{j} h_{pm} - \nab{m} h_{pj}) \ph_{imk} - (\nab{p} h_{km} + \nab{k} h_{pm} - \nab{m} h_{pk}) \ph_{ijm}.
\end{align*}
Contracting the above with $\ps_{qijk}$ and using skew-symmetry of $\ph$, $\ps$, symmetry of $h$, and~\eqref{eq:phps}, we have
\begin{align*}
\delt \nab{p} \ph_{ijk} \ps_{qijk} & = \nab{p} ( A_{im} \ph_{mjk} + A_{jm} \ph_{imk} + A_{km} \ph_{ijm}) \ps_{qijk} \\
& \qquad {} - 3 (\nab{p} h_{im} + \nab{i} h_{pm} - \nab{m} h_{pi}) \ph_{mjk} \ps_{qijk} \\
& = 3 \nab{p} (A_{im} \ph_{mjk}) \ps_{qijk} - 3 (\nab{p} h_{im} + \nab{i} h_{pm} - \nab{m} h_{pi}) (-4 \ph_{mqi}) \\
& = 3\nab{p} A_{im} \ph_{mjk} \ps_{qijk} + 3 A_{im} \nab{p} \ph_{mjk} \ps_{qijk} + 12( 0 + 2 \nab{i} h_{pm}) \ph_{qim} \\
& = -12 \nab{p} A_{im} \ph_{qim} + 3 A_{im} \nab{p} \ph_{mjk} \ps_{qijk} + 24 \nab{i} h_{pm} \ph_{qim}.
\end{align*}
Recalling that $2 h_{ij} = A_{ij} + A_{ji}$, and using~\eqref{eq:nablaph} and~\eqref{eq:Pop-defn2} the above becomes
\begin{align} \nonumber
\delt \nab{p} \ph_{ijk} \ps_{qijk} & = 12( \nab{i} A_{pm} + \nab{i} A_{mp} - \nab{p} A_{im} ) \ph_{qim} + 3 A_{im} T_{pl} \ps_{lmjk} \ps_{qijk} \\ \nonumber
& = 12( \nab{i} A_{pm} + \nab{i} A_{mp} - \nab{p} A_{im} ) \ph_{qim} + 3 A_{im} T_{pl} (4 g_{lq} g_{mi} - 4 g_{li} g_{mq} - 2 \ps_{lmqi}) \\ \nonumber
& = 12( \nab{i} A_{pm} + \nab{i} A_{mp} - \nab{p} A_{im} ) \ph_{qim} \\ \label{eq:evolution-torsion-t1}
& \qquad {} + 12 (\tr A) T_{pq} - 12 T_{pl} A_{lq} - 6 T_{pl} (\Pop A)_{lq}.
\end{align}

Using~\eqref{eq:nablaph} and~\eqref{eq:evolution-1}, we have
\begin{align} \nonumber
\nab{p} \ph_{ijk} \delt \ps_{qijk} & = T_{pl} \ps_{lijk} (A \diamond \ps)_{qijk} \\ \nonumber
& = T_{pl} \ps_{lijk} (A_{qm} \ps_{mijk} + A_{im} \ps_{qmjk} + A_{jm} \ps_{qimk} + A_{km} \ps_{qijm}) \\ \nonumber
& = T_{pl} A_{qm} (\ps_{lijk} \ps_{mijk}) + 3 T_{pl} A_{im} (\ps_{lijk} \ps_{qmjk}) \\ \nonumber
& = T_{pl} A_{qm} (24 g_{lm}) + 3 T_{pl} A_{im} (4 g_{lq} g_{im} - 4 g_{lm} g_{iq} - 2 \ps_{liqm}) \\ \label{eq:evolution-torsion-t2}
& = 12 T_{pl} A_{ql} + 12 (\tr A) T_{pq} + 6 T_{pl} (\Pop A)_{lq}.
\end{align}
Similarly we compute
\begin{align} \nonumber
\nab{p} \ph_{ijk} \ps_{qajk} h_{ia} & = T_{pl} \ps_{lijk} \ps_{qajk} h_{ia} \\ \nonumber
& = T_{pl} h_{ia} (4 g_{lq} g_{ia} - 4 g_{la} g_{iq} - 2 \ps_{liqa}) \\ \nonumber
& = 4 (\tr h) T_{pq} - 4 T_{pl} h_{lq} - 0 \\ \label{eq:evolution-torsion-t3}
& = 4 (\tr A) T_{pq} - 2 T_{pl} A_{lq} - 2 T_{pl} A_{ql}.
\end{align}
Substituting~\eqref{eq:evolution-torsion-t1},~\eqref{eq:evolution-torsion-t2}, and~\eqref{eq:evolution-torsion-t3} into~\eqref{eq:evolution-torsion-t0} yields
\begin{align*}
24 \delt T_{pq} & = 12( \nab{i} A_{pm} + \nab{i} A_{mp} - \nab{p} A_{im} ) \ph_{qim} + 12 (\tr A) T_{pq} - 12 T_{pl} A_{lq} - 6 T_{pl} (\Pop A)_{lq} \\
& \qquad {} + 12 T_{pl} A_{ql} + 12 (\tr A) T_{pq} + 6 T_{pl} (\Pop A)_{lq} - 6 (4 (\tr A) T_{pq} - 2 T_{pl} A_{lq} - 2 T_{pl} A_{ql}) \\
& = 12( \nab{i} A_{pm} + \nab{i} A_{mp} - \nab{p} A_{im} ) \ph_{qim} + 24 T_{pl} A_{ql},
\end{align*}
which, after relabelling some indices, is equation~\eqref{eq:evolution-torsion}.
\end{proof}

\begin{rmk} \label{rmk:flows1-evolution-torsion}
In~\cite[Theorem 3.8]{K-flows}, the evolution of the torsion is given as
$$ \delt T_{pq} = T_{pl} h_{lq} + T_{pl} X_{lq} + (\nab{k} h_{ip}) \ph_{kiq} + \nab{p} X_q, $$
where $X_{ij} = X_m \ph_{mij} = - 3 (A_7)_{ij}$, so $X_k = \frac{1}{6} X_{ij} \ph_{ijk} = - \frac{1}{2} A_{ij} \ph_{ijk}$. It is easy to verify using~\eqref{eq:omega27desc} that the above is equivalent to our equation~\eqref{eq:evolution-torsion}.
\end{rmk}

\subsection{Diffeomorphism invariance and the $\G$-Bianchi identity} \label{sec:diffeo-inv}

In Section~\ref{sec:symbols-short-time} we discuss a class of geometric flows of $\G$-structures which are amenable to a DeTurck trick to establish short-time existence and uniqueness. This works if and only if the failure of the flow to be strictly parabolic is due solely do diffeomorphism invariance. It was shown in~\cite[Thm 4.2]{K-flows} that (infinitesimal) diffeomorphism invariance of the torsion tensor $T$ is equivalent to the $\G$-Bianchi identity~\eqref{eq:g2bianchi}. We give a more direct proof in this section for completeness.

\begin{prop} \label{prop:diffeo-inv}
The infinitesimal diffeomorphism invariance of the torsion tensor is equivalent to the $\G$-Bianchi identity.
\end{prop}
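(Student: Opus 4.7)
The plan is to exploit that the map $\ph \mapsto T$ is constructed geometrically from $\ph$ alone (via~\eqref{eq:Tfromph}) and hence is equivariant under diffeomorphisms. This means that for any vector field $W$ on $M$, if we run the flow $\delt \ph = \cL_W \ph$ coming from the one-parameter group of diffeomorphisms generated by $W$, then necessarily $\delt T = \cL_W T$. The idea is to compute $\delt T$ in two different ways and show that the two agree if and only if the $\G$-Bianchi identity holds.

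First I would use equation~\eqref{eq:Lie-derivative-frameb} to write $\cL_W \ph = A \diamond \ph$, where $A = \nabla W - \tfrac{1}{3}\hT(W) \in \cT^2$. Plugging this $A$ into Proposition~\ref{prop:evolution-torsion}, I would compute $\delt T_{pq}$ explicitly. On the other hand, equation~\eqref{eq:Lie-derivative-frame} gives directly
\[
(\cL_W T)_{pq} = W_k \nab{k} T_{pq} + \nab{p} W_k \, T_{kq} + \nab{q} W_k \, T_{pk}.
\]
The equivalence to be proved is that $\delt T = \cL_W T$ for every $W$, and the goal is to show that the difference of these two expressions, after all simplifications, is the contraction of $W_k$ with exactly the $\G$-Bianchi expression.

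The main computational step is to simplify $\delt T_{pq} - (\cL_W T)_{pq}$. This produces three kinds of terms: second derivatives of $W$ contracted with $\ph$, linear terms in $W$ times first derivatives of $T$, and quadratic torsion terms $W \cdot T \cdot T$ coming from $\hT(W)$ and from the $T_{pk} A_{qk}$ piece. The second-derivative terms are handled by the Ricci identity~\eqref{eq:ricci-identity}: after antisymmetrizing against $\ph_{ijq}$ and invoking the first Riemannian Bianchi identity, they collapse into a single curvature term $-\tfrac{1}{2} R_{ijpm} W_m \ph_{ijq}$, which up to the symmetries of $R$ is $\tfrac{1}{2} R_{kpij} W_k \ph_{ijq}$. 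The linear-in-$T$ first-derivative terms reorganize as $W_k (\nab{p} T_{kq} - \nab{k} T_{pq})$. Finally, the two quadratic-in-$T$ contributions, one coming from differentiating $\hT(W)$ (which after applying $\ph_{lij}\ps_{mijq} = -4\ph_{lmq}$ from~\eqref{eq:phps} gives a coefficient $\tfrac{2}{3}$) and one coming from $T_{pk} A_{qk}$ (with coefficient $\tfrac{1}{3}$ after reindexing), combine to produce the single term $W_k T_{kl} T_{pm} \ph_{lmq}$.

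The upshot is the clean identity
\[
\delt T_{pq} - (\cL_W T)_{pq} = W_k \Big\{ \nab{p} T_{kq} - \nab{k} T_{pq} + T_{kl} T_{pm} \ph_{lmq} + \tfrac{1}{2} R_{kpij} \ph_{ijq} \Big\}.
\]
The expression in braces is, up to an overall sign and relabeling of indices, precisely the $\G$-Bianchi identity~\eqref{eq:g2bianchi}. One direction of the equivalence is then immediate: if the $\G$-Bianchi identity holds, the braces vanish and $\delt T = \cL_W T$, which is just diffeomorphism invariance. Conversely, if $\delt T = \cL_W T$ holds for every vector field $W$, then the braces must vanish at every point as a tensor in $k$ (by choosing $W_k$ to be any value at a given point via bump-function arguments), which is exactly the $\G$-Bianchi identity. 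The main obstacle in this plan is purely computational bookkeeping: tracking signs, using the correct cyclic orderings of $\ph$ and $\ps$, and cleanly separating the curvature terms from the quadratic-in-$T$ terms produced by the $-\tfrac{1}{3}\hT(W)$ piece of $A$. All the requisite manipulations are captured by the Ricci identity~\eqref{eq:ricci-identity}, the first Riemannian Bianchi identity, and the fundamental contractions~\eqref{eq:phph} and~\eqref{eq:phps}.
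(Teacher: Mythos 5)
Your proposal is correct and follows essentially the same route as the paper: express $\cL_W \ph = A \diamond \ph$ (you start from~\eqref{eq:Lie-derivative-frameb} with $A = \nabla W - \tfrac{1}{3}\hT(W)$, the paper from the equivalent~\eqref{eq:Lie-derivative-framed}), feed $A$ into Proposition~\ref{prop:evolution-torsion}, subtract $\cL_W T$ from~\eqref{eq:Lie-derivative-frame}, and use the Ricci identity together with the first Riemannian Bianchi identity and the contraction identities to collapse the difference to $W_k$ contracted with exactly the $\G$-Bianchi expression. Your final displayed identity agrees with the paper's (equation following~\eqref{eq:Lie-derivative-frameT}) after reindexing and using the antisymmetries of $R$ and $\ph$.
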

\begin{proof}
The torsion $T$ is a tensor determined entirely by the $\G$-structure $\ph$. Infinitesimal diffeomorphism invariance says that $T_{\Theta_s^* \ph} = \Theta_s^* (T_{\ph})$ for any $1$-parameter family of diffeomorphisms $\Theta_t$ generated by a vector field $W$. Taking $\rest{\dds}{s=0}$ of $T_{\Theta_s^* \ph} = \Theta_s^* (T_{\ph})$ gives
\begin{equation} \label{eq:diffeo-inv-1}
T_* (\cL_W \ph) = \cL_W T
\end{equation}
where $T = T_{\ph}$ is the torsion of $\ph$ and $T_*$ is the pushforward (differential) of the map $\ph \mapsto T_{\ph}$. We need to prove that~\eqref{eq:diffeo-inv-1} is equivalent to the $\G$-Bianchi identity~\eqref{eq:g2bianchi}.

By Proposition~\ref{prop:evolution-torsion}, for any $A \diamond \ph \in \Omega^3$ we have
\begin{equation} \label{eq:diffeo-inv-2}
(T_* (A \diamond \ph))_{pq} = \tfrac{1}{2} ( \nab{i} A_{pj} + \nab{i} A_{jp} - \nab{p} A_{ij} ) \ph_{ijq} + T_{pl} A_{ql}.
\end{equation}
Taking $A = \cL_W \ph$ as in~\eqref{eq:diffeo-inv-1}, we recall from~\eqref{eq:Lie-derivative-framed} that
$$ A = (- \tfrac{1}{3} T^t W + \tfrac{1}{6} \curl W ) \hk \ph + \tfrac{1}{2} \cL_W g, $$
In terms of a local frame, using~\eqref{eq:Lie-derivative-frame} to express $\cL_W g$, we can write the above as
\begin{align} \nonumber
6 A_{pq} & = - 2 T_{km} W_k \ph_{mpq} + (\nab{i} W_j) \ph_{ijm} \ph_{mpq} + 3 (\nab{p} W_q + \nab{q} W_p) \\ \nonumber
& = - 2 T_{km} W_k \ph_{mpq} + (\nab{i} W_j) (g_{ip} g_{jq} - g_{iq} g_{jp} - \ps_{ijpq}) + 3 (\nab{p} W_q + \nab{q} W_p) \\ \label{eq:diffeo-inv-A}
& = - 2 T_{km} W_k \ph_{mpq} + 4 \nab{p} W_q + 2 \nab{q} W_p - \nab{a} W_b \ps_{abpq}.
\end{align}
Covariantly differentiating~\eqref{eq:diffeo-inv-A} and using~\eqref{eq:nablaph} and~\eqref{eq:nablaps} gives
\begin{align*}
6 \, \nab{i} A_{pj} & = \nab{i} (- 2 T_{km} W_k \ph_{mpj} + 4 \nab{p} W_j + 2 \nab{j} W_p - \nab{a} W_b \ps_{abpj}) \\
& = - 2 \nab{i} T_{km} W_k \ph_{mpj} - 2 T_{km} \nab{i} W_k \ph_{mpj} - 2 T_{km} W_k T_{il} \ps_{lmpj} \\
& \qquad {} + 4 \nab{i} \nab{p} W_j + 2 \nab{i} \nab{j} W_p - \nab{i} \nab{a} W_b \ps_{abpj} \\
& \qquad {} - \nab{a} W_b (- T_{ia} \ph_{bpj} + T_{ib} \ph_{apj} - T_{ip} \ph_{abj} + T_{ij} \ph_{abp}).
\end{align*}
From the above and the skew-symmetry of $\ph$, $\ps$, we obtain
\begin{align*}
6 (\nab{i} A_{pj} + \nab{i} A_{jp} - \nab{p} A_{ij}) & = 2 \nab{p} T_{km} W_k \ph_{mij} + 2 T_{km} \nab{p} W_k \ph_{mij} + 2 T_{km} W_k T_{pl} \ps_{lmij} \\
& \qquad {} + 6 \nab{i} \nab{p} W_j + 6 \nab{i} \nab{j} W_p - 4 \nab{p} \nab{i} W_j - 2 \nab{p} \nab{j} W_i + \nab{p} \nab{a} W_b \ps_{abij} \\
& \qquad {} - (\nab{a} W_b) (T_{pa} \ph_{bij} - T_{pb} \ph_{aij} + T_{pi} \ph_{abj} - T_{pj} \ph_{abi}).
\end{align*}
Contracting the above with $\ph_{ijq}$, we have
\begin{align*}
6 (\nab{i} A_{pj} + \nab{i} A_{jp} - \nab{p} A_{ij}) \ph_{ijq} & = 12 \nab{p} T_{kq} W_k + 12 T_{kq} \nab{p} W_k - 8 T_{km} W_k T_{pl} \ph_{lmq} \\
& \qquad {} + (6 \nab{i} \nab{p} W_j + 6 \nab{i} \nab{j} W_p - 4 \nab{p} \nab{i} W_j - 2 \nab{p} \nab{j} W_i) \ph_{ijq} \\
& \qquad {} - 4 \nab{p} \nab{a} W_b \ph_{abq} - (\nab{a} W_b) ( 6 T_{pa} g_{bq} - 6 T_{pb} g_{aq} + 2 T_{pi} \ph_{abj} \ph_{qij}).
\end{align*}
The above further simplifies to
\begin{align*}
6 (\nab{i} A_{pj} + \nab{i} A_{jp} - \nab{p} A_{ij}) \ph_{ijq} & = 12 \nab{p} T_{kq} W_k + 12 T_{kq} \nab{p} W_k - 8 T_{km} W_k T_{pl} \ph_{lmq} \\
& \qquad {} + 6 \nab{i} \nab{p} W_j \ph_{ijq} + 3 (\nab{i} \nab{j} W_p - \nab{j} \nab{i} W_p) \ph_{ijq} - 2 \nab{p} \nab{i} W_j \ph_{ijq} \\
& \qquad {} - 4 \nab{p} \nab{a} W_b \ph_{abq} - 6 \nab{a} W_q T_{pa} + 6 \nab{q} W_b T_{pb} \\
& \qquad {} - 2 \nab{a} W_b T_{pi} (g_{aq} g_{bi} - g_{ai} g_{bq} - \ps_{abqi}).
\end{align*}
Several terms above combine, leaving us with
\begin{align*}
6 (\nab{i} A_{pj} + \nab{i} A_{jp} - \nab{p} A_{ij}) \ph_{ijq} & = 12 \nab{p} T_{kq} W_k + 12 T_{kq} \nab{p} W_k - 8 T_{km} W_k T_{pl} \ph_{lmq} \\
& \qquad {} + 6 \nab{i} \nab{p} W_j \ph_{ijq} + 3 (\nab{i} \nab{j} W_p - \nab{j} \nab{i} W_p) \ph_{ijq} - 6 \nab{p} \nab{i} W_j \ph_{ijq} \\
& \qquad {} - 4 \nab{a} W_q T_{pa} + 4 \nab{q} W_b T_{pb} - 2 \nab{a} W_b T_{pi} \ps_{abiq}.
\end{align*}
The Ricci identity~\eqref{eq:ricci-identity} gives $(\nab{i} \nab{j} W_p - \nab{j} \nab{i} W_p) = - R_{ijpm} W_m$, so the above expression can finally be written as
\begin{align*}
6 (\nab{i} A_{pj} + \nab{i} A_{jp} - \nab{p} A_{ij}) \ph_{ijq} & = 12 \nab{p} T_{kq} W_k + 12 T_{kq} \nab{p} W_k - 8 T_{km} W_k T_{pl} \ph_{lmq} \\
& \qquad {} + 6 \nab{i} \nab{p} W_j \ph_{ijq} - 3 R_{ijpm} W_m \ph_{ijq} - 6 \nab{p} \nab{i} W_j \ph_{ijq} \\
& \qquad {} - 4 \nab{a} W_q T_{pa} + 4 \nab{q} W_b T_{pb} - 2 \nab{a} W_b T_{pi} \ps_{abiq}.
\end{align*}
From~\eqref{eq:diffeo-inv-A} we also have
\begin{align*}
12 T_{pl} A_{ql} & = 2 T_{pl} (- 2 T_{km} W_k \ph_{mql} + 4 \nab{q} W_l + 2 \nab{l} W_q - \nab{a} W_b \ps_{abql}) \\
& = - 4 T_{pl} T_{km} W_k \ph_{lmq} + 8 \nab{q} W_l T_{pl} + 4 \nab{l} W_q T_{pl} + 2 \nab{a} W_b T_{pl} \ps_{ablq}.
\end{align*}
Adding the above two expressions, some cancellation, relabelling, and rearrangement yields
\begin{align} \nonumber
6 (\nab{i} A_{pj} + \nab{i} A_{jp} - \nab{p} A_{ij}) \ph_{ijq} + 12 T_{pl} A_{ql} & = 12 W_k \nab{p} T_{kq} - 3 W_k R_{ijpk} \ph_{ijq} - 12 W_k T_{pl} T_{km} \ph_{lmq} \\ \nonumber
& \qquad {} + 12 \nab{p} W_k T_{kq} + 12 \nab{q} W_k T_{pk} \\ \label{eq:diffeo-inv-3}
& \qquad {} + 6 \nab{i} \nab{p} W_j \ph_{ijq} - 6 \nab{p} \nab{i} W_j \ph_{ijq}.
\end{align}
We can now apply the Ricci identity again, to the last two terms above, as well as the Riemannian first Bianchi identity, giving
\begin{align*}
6 \nab{i} \nab{p} W_j \ph_{ijq} - 6 \nab{p} \nab{i} W_j \ph_{ijq} & = - 6 R_{ipjm} W_m \ph_{ijq} \\
& = - 3 R_{ipjm} W_m \ph_{ijq} - 3 R_{jpim} W_m \ph_{jiq} \\
& = 3 R_{pijm} W_m \ph_{ijq} + 3 R_{jpim} W_m \ph_{ijq} \\
& = - 3 R_{ijpm} W_m \ph_{ijq}.
\end{align*}
Substituting the above into~\eqref{eq:diffeo-inv-3}, the two curvature terms combine, leaving us with
\begin{align} \nonumber
6 (\nab{i} A_{pj} + \nab{i} A_{jp} - \nab{p} A_{ij}) \ph_{ijq} + 12 T_{pl} A_{ql} & = 12 W_k \nab{p} T_{kq} - 6 W_k R_{ijpk} \ph_{ijq} - 12 W_k T_{pl} T_{km} \ph_{lmq} \\ \label{eq:diffeo-inv-4}
& \qquad {} + 12 \nab{p} W_k T_{kq} + 12 \nab{q} W_k T_{pk}.
\end{align}

Applying~\eqref{eq:Lie-derivative-frame} to $S = T$, we have
\begin{equation} \label{eq:Lie-derivative-frameT}
12 (\cL_W T)_{pq} = 12 W_k \nab{k} T_{pq} + 12 \nab{p} W_k T_{kq} + 12 \nab{q} W_k T_{pk}.
\end{equation}
Taking the difference of equations~\eqref{eq:diffeo-inv-4} and~\eqref{eq:Lie-derivative-frameT} and using~\eqref{eq:diffeo-inv-1} and~\eqref{eq:diffeo-inv-2}, we deduce that
\begin{align*}
12( T_* (\cL_W \ph) - \cL_W T ) & = 12 W_k (\nab{p} T_{kq} - \nab{k} T_{pq} - T_{pa} T_{kb} \ph_{abq} - \tfrac{1}{2} R_{pkab} \ph_{abq}).
\end{align*}
Infinitesimal diffeomorphism invariance of the torsion is equivalent by~\eqref{eq:diffeo-inv-1} to the left-hand side above vanishing for all $W$. But the right-hand side above vanishing for all $W$ is equivalent to the $\G$-Bianchi identity~\eqref{eq:g2bianchi}.
\end{proof}

\subsection{Evolution of quadratic quantities associated to torsion} \label{sec:torsion-quantities}

In this section we compute the evolution of certain quadratic quantities associated to the torsion $T$ of an evolving $\G$-structure $\ph$. These are used in Section~\ref{sec:functionals} to compute the evolution equations for several natural torsion functionals.

\begin{prop} \label{prop:evolution-torsion-scalars}
Let $\ph$ be a time-dependent family of $\G$-structures evolving by the flow~\eqref{eq:general-flow-A}. We have the following evolution equations for various quadratic scalar quantities obtained from the torsion:
\begin{equation} \label{eq:evolution-torsion-scalars}
\begin{aligned}
\delt (\tr T)^2 & = - (\tr T) \nab{p} A_{ij} \ph_{pij} - 2 (\tr T) \langle T^t, A \rangle, \\
\delt |T|^2 & = (\nab{i} A_{pj} + \nab{i} A_{jp} - \nab{p} A_{ij}) \ph_{ijq} T_{pq} - 2 \langle T T^t, A \rangle, \\
\delt \langle T, T^t \rangle & = (\nab{i} A_{qj} + \nab{i} A_{jq} - \nab{q} A_{ij}) \ph_{ijp} T_{pq} - 2 \langle (T^t)^2, A \rangle, \\
\delt \langle T, \Pop T \rangle & = 2 \nab{i} A_{ji} \ph_{pqj} T_{pq} - 2 \nab{j} A_{ii} \ph_{pqj} T_{pq} + 2 \nab{i} A_{jp} \ph_{ijq} T_{pq} \\
& \qquad {} - 2 \nab{i} A_{jq} \ph_{ijp} T_{pq} - 2 \langle (\Pop T) T^t, A \rangle.
\end{aligned}
\end{equation}
\end{prop}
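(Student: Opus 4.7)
The proof is essentially a direct application of the Leibniz rule combined with the evolution equations already established. The three inputs are: the evolution of the torsion from Proposition~\ref{prop:evolution-torsion}, namely $\delt T_{pq} = \tfrac{1}{2}(\nab{i}A_{pj}+\nab{i}A_{jp}-\nab{p}A_{ij})\ph_{ijq}+T_{pk}A_{qk}$; the evolution of the inverse metric $\delt g^{ij} = -(A_{ij}+A_{ji})$ deduced from Lemma~\ref{lemma:evolution-1}; and the evolution of the $4$-form $\delt \ps = A \diamond \ps$. The plan is to rewrite each quadratic scalar with its metric (and, where needed, $\ps$) contractions made explicit, apply Leibniz, substitute these three evolutions, and then simplify using only the contraction identities of Section~\ref{sec:contractions}.

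For $\delt(\tr T)^2 = 2(\tr T)\delt(\tr T)$, I would write $\tr T = T_{pq}g^{pq}$ and compute $\delt(\tr T) = (\delt T_{pp}) + T_{pq}\delt g^{pq}$. The metric variation produces $-T_{pq}(A_{pq}+A_{qp}) = -\langle T,A\rangle - \langle T^t, A\rangle$. The zeroth-order term $T_{pk}A_{pk} = \langle T, A\rangle$ from $\delt T$ cancels one of these, leaving $-\langle T^t, A\rangle$. For the first-order-in-$A$ contribution, a relabelling shows that $(\nab{i}A_{pj})\ph_{ijp}$ and $(\nab{i}A_{jp})\ph_{ijp}$ differ only by a swap of $j \leftrightarrow p$ in the second factor, and the skew-symmetry of $\ph$ makes them cancel. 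What remains is $-\tfrac{1}{2}\nab{p}A_{ij}\ph_{ijp}$. Multiplying by $2(\tr T)$ yields the first line of~\eqref{eq:evolution-torsion-scalars}.

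The cases $\delt|T|^2$ and $\delt\langle T, T^t\rangle$ follow the same template: after Leibniz on $T_{ij}T_{ij}$ (respectively $T_{ij}T_{ji}$), there are two identical $\delt T$ contributions and two metric-variation contributions. The metric variations yield combinations of $\langle TT^t, A\rangle$ and $\langle T^t T, A\rangle$ (or the analogues with $(T^t)^2$), using that both $TT^t$ and $T^t T$ are symmetric so $\langle \cdot, A\rangle = \langle \cdot, A^t\rangle$. Exactly one of these is cancelled by the zeroth-order piece $T_{pk}A_{qk}$ of $\delt T$ once it is contracted against the other factor of $T$, leaving the single term $-2\langle TT^t, A\rangle$ or $-2\langle (T^t)^2, A\rangle$ stated in the proposition. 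The derivative-in-$A$ terms transfer directly from $\delt T$ with an overall factor of $2 \cdot \tfrac{1}{2} = 1$.

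The main obstacle is the last identity, $\delt\langle T, \Pop T\rangle$, where $\langle T, \Pop T\rangle = T_{pq}T_{kl}\ps_{klpq}$ involves a $\ps$ factor and so picks up an additional contribution from $\delt \ps = A \diamond \ps$. After Leibniz we get: (i) two equal contributions from $\delt T$, which, using the formula for $\delt T$, produce terms of the form $\nab{}A \cdot \ph \cdot T \cdot \ps$ plus $T\cdot T \cdot A \cdot \ps$; (ii) four contributions from $\delt g^{-1}$ acting on the four metric contractions implicit in the expression; (iii) one contribution from $\delt \ps$, namely $T_{pq}T_{kl}(A\diamond\ps)_{klpq}$, which by~\eqref{eq:diamond4} expands into four terms. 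To finish, I must show that all quadratic-in-$T$ pieces coming from (i)--(iii) collapse to the single expression $-2\langle(\Pop T)T^t, A\rangle$, and that the derivative-in-$A$ pieces from (i) reduce using $\ph_{ijq}\ps_{klpq} = \ph\cdot\ps$ contractions (second identity of~\eqref{eq:phps}) to the four $\nab{}A$-terms displayed in~\eqref{eq:evolution-torsion-scalars}. The hard part is the bookkeeping of the $\delta\ps$ contribution: one has to apply the triple identity $\ps_{ijkl}\ps_{abkl} = 4g_{ia}g_{jb}-4g_{ib}g_{ja}-2\ps_{ijab}$ and the second identity of~\eqref{eq:phps} several times and verify that the many arising $T\cdot T\cdot A$ terms reorganise, via the definition $(\Pop T)_{ab} = T_{ij}\ps_{ijab}$, into $-2\langle(\Pop T)T^t, A\rangle$ with no residual terms. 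Once this cancellation is confirmed, all four formulas in~\eqref{eq:evolution-torsion-scalars} are established simultaneously.
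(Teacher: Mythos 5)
Your proposal matches the paper's proof exactly: the same three inputs (the evolution of $T$, $\delt g^{ij} = -2h_{ij}$, and $\delt\ps = A\diamond\ps$), the same observation that the quadratic-in-$T$ piece of $\delt T_{pq}$ cancels one metric-variation term and leaves a single remaining one, and for the fourth identity the same split into contributions from $\delt T$, $\delt g^{-1}$, and $\delt\ps$. The verification you leave for the last case (that the many $T\cdot T\cdot A$ terms collapse to $-2\langle(\Pop T)T^t, A\rangle$ and that the $\nab{}A$-terms rewrite into the four displayed) is indeed the bulk of the computation in the paper, but your stated plan is correct; one small slip is that rewriting $\ph_{ijq}(\Pop T)_{pq}$ uses the first (singly-contracted) identity of~\eqref{eq:phps}, not the second.
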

\begin{proof}
As in the proof of Proposition~\ref{prop:evolution-torsion}, we have to be careful to note that our quadratic scalar quantities involve \emph{contractions with the inverse metric}, which also need to be differentiated. Recall that from~\eqref{eq:evolution-1} we have $\delt g^{ij} = - 2 h^{ij}$, where $2 h = A + A^t$. We use the evolution equation~\eqref{eq:evolution-torsion} of the torsion throughout this proof. First we have
\begin{align*}
\delt (\tr T) & = \delt (T_{pq} g^{pq}) = \delt T_{pq} g^{pq} + T_{pq} \delt g^{pq} \\
& = \delt T_{pp} - 2 T_{pq} h_{pq} \\
& = \tfrac{1}{2} ( \nab{i} A_{pj} + \nab{i} A_{jp} - \nab{p} A_{ij} ) \ph_{ijp} + T_{pk} A_{pk} - T_{pq} (A_{pq} + A_{qp}) \\
& = - \tfrac{1}{2} \nab{p} A_{ij} \ph_{pij} - \langle T^t, A \rangle.
\end{align*}
The first equation in~\eqref{eq:evolution-torsion-scalars} now follows from $\delt (\tr T)^2 = 2 (\tr T) \delt (\tr T)$.

For the second equation, we compute
\begin{align*}
\delt |T|^2 & = \delt (T_{pq} T_{ab} g^{pa} g^{qb}) = 2 \delt T_{pq} T_{ab} g^{pa} g^{qb} + T_{pq} T_{ab} \delt g^{pa} g^{qb} + T_{pq} T_{ab} g^{pa} \delt g^{qb} \\
& = 2 \delt T_{pq} T_{pq} - 2 T_{pq} T_{aq} h_{pa} - 2 T_{pq} T_{pb} h_{qb} \\
& = 2 \big( \tfrac{1}{2} ( \nab{i} A_{pj} + \nab{i} A_{jp} - \nab{p} A_{ij} ) \ph_{ijq} + T_{pk} A_{qk} \big) T_{pq} \\
& \qquad {} - T_{pq} T_{aq} (A_{pa} + A_{ap}) - T_{pq} T_{pb} (A_{qb} + A_{bq}) \\
& = ( \nab{i} A_{pj} + \nab{i} A_{jp} - \nab{p} A_{ij} ) \ph_{ijq} T_{pq} + 2 \langle T^t T, A \rangle - 2 \langle T T^t, A \rangle - 2 \langle T^t T, A \rangle.
\end{align*}

For the third equation, we compute
\begin{align*}
\delt \langle T, T^t \rangle & = \delt (T_{pq} T_{ba} g^{pa} g^{qb}) = 2 \delt T_{pq} T_{ba} g^{pa} g^{qb} + T_{pq} T_{ba} \delt g^{pa} g^{qb} + T_{pq} T_{ba} g^{pa} \delt g^{qb} \\
& = 2 \delt T_{pq} T_{qp} - 2 T_{pq} T_{qa} h_{pa} - 2 T_{pq} T_{bp} h_{qb} \\
& = 2 \big( \tfrac{1}{2} ( \nab{i} A_{pj} + \nab{i} A_{jp} - \nab{p} A_{ij} ) \ph_{ijq} + T_{pk} A_{qk} \big) T_{qp} \\
& \qquad {} - T_{pq} T_{qa} (A_{pa} + A_{ap}) - T_{pq} T_{bp} (A_{qb} + A_{bq}) \\
& = ( \nab{i} A_{pj} + \nab{i} A_{jp} - \nab{p} A_{ij} ) \ph_{ijq} T_{qp} + 2 \langle T^2, A \rangle \\
& \qquad {} - \langle T^2, A \rangle - \langle (T^t)^2, A \rangle - \langle (T^t)^2, A \rangle - \langle T^2, A \rangle,
\end{align*}
and finally interchange the dummy indices $p,q$.

For the fourth equation, we proceed as before. Omitting some steps, we compute
\begin{align*}
\delt \langle T, \Pop T \rangle & = \delt (T_{pq} (\Pop T)_{ab} g^{pa} g^{qb} ) = \delt (T_{pq} \ps_{abij} T_{kl} g^{ik} g^{jl} g^{pa} g^{qb} ) \\
& = \delt T_{pq} \ps_{pqij} T_{ij} + T_{pq} \delt \ps_{pqij} T_{ij} + T_{pq} \ps_{pqij} \delt T_{ij} \\
& \qquad {} - 2 T_{pq} \ps_{pqij} T_{kj} h_{ik} - 2 T_{pq} \ps_{pqij} T_{il} h_{jl} - 2 T_{pq} \ps_{aqij} T_{ij} h_{pa} - 2 T_{pq} \ps_{pbij} T_{ij} h_{qb} \\
& = 2 \delt T_{pq} \ps_{pqij} T_{ij} + T_{pq} T_{ij} \delt \ps_{pqij} - 2 (\Pop T)_{ij} T_{kj} h_{ik} - 2 (\Pop T)_{ij} T_{il} h_{jl} \\
& \qquad {} - 2 T_{pq} (\Pop T)_{aq} h_{pa} - 2 T_{pq} (\Pop T)_{pb} h_{qb}.
\end{align*}
Using that $h$ is symmetric, $\Pop T$ is skew-symmetric, and equation~\eqref{eq:evolution-1}, the above becomes
\begin{align*}
\delt \langle T, \Pop T \rangle & = 2 \delt T_{pq} (\Pop T)_{pq} + T_{pq} T_{ij} (A \diamond \ps)_{pqij} + 2 \langle T (\Pop T), h \rangle + 2 \langle (\Pop T) T, h \rangle \\
& \qquad {} + 2 \langle T (\Pop T), h \rangle + 2 \langle (\Pop T) T, h \rangle \\
& = 2 \big( \tfrac{1}{2} ( \nab{i} A_{pj} + \nab{i} A_{jp} - \nab{p} A_{ij} ) \ph_{ijq} + T_{pk} A_{qk} \big) (\Pop T)_{pq} \\
& \qquad {} + T_{pq} T_{ij} (A_{pm} \ps_{mqij} + A_{qm} \ps_{pmij} + A_{im} \ps_{pqmj} + A_{jm} \ps_{pqim}) \\
& \qquad {} + 4 \langle T (\Pop T), h \rangle + 4 \langle (\Pop T) T, h \rangle,
\end{align*}
which, recalling that $2h = A + A^t$, then further simplifies to
\begin{align*}
\delt \langle T, \Pop T \rangle & = ( \nab{i} A_{pj} + \nab{i} A_{jp} - \nab{p} A_{ij} ) \ph_{ijq} (\Pop T)_{pq} - 2 \langle (\Pop T) T, A \rangle \\
& \qquad {} + T_{pq} (\Pop T)_{mq} A_{pm} + T_{pq} (\Pop T)_{pm} A_{qm} + (\Pop T)_{mj} T_{ij} A_{im} + (\Pop T)_{im} T_{ij} A_{jm} \\
& \qquad {} + 4 \langle T (\Pop T), h \rangle + 4 \langle (\Pop T) T, h \rangle \\
& = ( \nab{i} A_{pj} + \nab{i} A_{jp} - \nab{p} A_{ij} ) \ph_{ijq} (\Pop T)_{pq} - 2 \langle (\Pop T) T, A \rangle - \langle T (\Pop T), A \rangle + \langle T^t (\Pop T), A \rangle \\
& \qquad {} - \langle T (\Pop T), A \rangle + \langle T^t (\Pop T), A \rangle + 2 \langle T (\Pop T), A + A^t \rangle + 2 \langle (\Pop T) T, A + A^t \rangle.
\end{align*}
Collecting terms and applying the third equation in~\eqref{eq:tensinnerproductidentities} yields
\begin{align} \nonumber
\delt \langle T, \Pop T \rangle & = ( \nab{i} A_{pj} + \nab{i} A_{jp} - \nab{p} A_{ij} ) \ph_{ijq} (\Pop T)_{pq} + 2 \langle T^t (\Pop T), A \rangle \\ \nonumber
& \qquad {} + 2 \langle T (\Pop T), A^t \rangle + 2 \langle (\Pop T) T, A^t \rangle \\ \nonumber
& = ( \nab{i} A_{pj} + \nab{i} A_{jp} - \nab{p} A_{ij} ) \ph_{ijq} (\Pop T)_{pq} + 2 \langle T^t (\Pop T), A \rangle \\ \nonumber
& \qquad {} - 2 \langle (\Pop T) T^t, A \rangle - 2 \langle T^t (\Pop T), A \rangle \\ \label{eq:TPTev-temp}
& = ( \nab{i} A_{pj} + \nab{i} A_{jp} - \nab{p} A_{ij} ) \ph_{ijq} (\Pop T)_{pq} - 2 \langle (\Pop T) T^t, A \rangle.
\end{align}
Now consider the expression $\ph_{ijq} (\Pop T)_{pq}$. We rewrite this as
\begin{align*}
\ph_{ijq} (\Pop T)_{pq} & = \ph_{ijq} \ps_{abpq} T_{ab} \\
& = (g_{ia} \ph_{jbp} + g_{ib} \ph_{ajp} + g_{ip} \ph_{abj} - g_{ja} \ph_{ibp} - g_{jb} \ph_{aip} - g_{jp} \ph_{abi}) T_{ab}.
\end{align*}
The first two terms together of $( \nab{i} A_{pj} + \nab{i} A_{jp} - \nab{p} A_{ij} )$ are symmetric in $p, j$, and the first and third terms together are skew-symmetric in $i, p$. With these observations, from the above we obtain
\begin{align*}
& \qquad ( \nab{i} A_{pj} + \nab{i} A_{jp} - \nab{p} A_{ij} ) \ph_{ijq} (\Pop T)_{pq} \\
& = T_{ab} \big( g_{ia} \ph_{jbp} (- \nab{p} A_{ij}) + g_{ib} \ph_{ajp} (- \nab{p} A_{ij}) + g_{ip} \ph_{abj} (\nab{i} A_{jp}) \big) \\
& \qquad {} - T_{ab} \big( g_{ja} \ph_{ibp} (2 \nab{i} A_{pj} + \nab{i} A_{jp}) + g_{jb} \ph_{aip} (2 \nab{i} A_{pj} + \nab{i} A_{jp}) + g_{jp} \ph_{abi} (2 \nab{i} A_{pj} - \nab{p} A_{ij}) \big) \\
& = T_{ab} \big( - \nab{p} A_{aj} \ph_{jbp} - \nab{p} A_{bj} \ph_{ajp} + \nab{p} A_{jp} \ph_{abj} - 2 \nab{i} A_{pa} \ph_{ibp} - \nab{i} A_{ap} \ph_{ibp} \big) \\
& \qquad {} + T_{ab} \big( - 2 \nab{i} A_{pb} \ph_{aip} - \nab{i} A_{bp} \ph_{aip} - 2 \nab{i} A_{pp} \ph_{abi} + \nab{p} A_{ip} \ph_{abi} \big).
\end{align*}
Relabelling some indices and collecting terms, several terms cancel and the above becomes
\begin{align*}
& \qquad ( \nab{i} A_{pj} + \nab{i} A_{jp} - \nab{p} A_{ij} ) \ph_{ijq} (\Pop T)_{pq} \\
& = T_{ab} ( 2 \nab{p} A_{qp} \ph_{abq} + 2 \nab{p} A_{qa} \ph_{pqb} - 2 \nab{p} A_{qb} \ph_{pqa} - 2 \nab{q} A_{pp} \ph_{qab} ).
\end{align*}
Substituting the above into~\eqref{eq:TPTev-temp} and relabelling again yields the fourth equation in~\eqref{eq:evolution-torsion-scalars}.
\end{proof}

\subsection{Evolutions of torsion functionals} \label{sec:functionals}

In this section we consider several natural \emph{functionals} defined using the torsion of a $\G$-structure, and compute their associated Euler--Lagrange equations. These Euler--Lagrange equations yield a geometric interpretation for various irreducible components of $\tRm$ and $\nab{} T$, and motivates the detailed study of the explicit decompositions of $\tRm$ and $\nab{} T$ that is undertaken in Sections~\ref{sec:more-rep-theory} and~\ref{sec:curvature-torsion}. Whenever we integrate in this section, we assume that $M$ is \emph{compact} so that all integrals are defined.

\begin{lemma} \label{lemma:density-ev}
Let $Q$ be a scalar function evolving under a general flow~\eqref{eq:general-flow-A} of $\G$-structures. We have
\begin{equation} \label{eq:density-ev}
\delt \Big( \int_M Q \vol \Big) = \int_M \big( \delt Q + \langle Q g, A \rangle \big) \vol.
\end{equation}
\end{lemma}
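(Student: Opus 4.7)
The plan is to differentiate under the integral sign and use the evolution equation for the Riemannian volume form already established in Lemma~\ref{lemma:evolution-1}. Since $M$ is assumed compact, we may freely interchange the time derivative with the integral, obtaining
\[
\delt \Big( \int_M Q \vol \Big) = \int_M \big( (\delt Q) \vol + Q \, \delt \vol \big).
\]

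Next I would substitute the formula $\delt \vol = (\tr A) \vol$ from~\eqref{eq:evolution-1}, which gives
\[
\delt \Big( \int_M Q \vol \Big) = \int_M \big( \delt Q + Q \tr A \big) \vol.
\]
Finally, I would rewrite the scalar factor $Q \tr A$ as an inner product on $\cT^2$. By the first identity in~\eqref{eq:tensinnerproductidentities} we have $\langle g, A \rangle = \tr A$, and therefore $\langle Q g, A \rangle = Q \langle g, A \rangle = Q \tr A$. Plugging this into the previous display yields~\eqref{eq:density-ev}.

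There is no real obstacle here: the computation is a direct application of the Leibniz rule combined with Lemma~\ref{lemma:evolution-1} and the elementary identity $\langle g, A \rangle = \tr A$. The only thing worth being careful about is that the definition of $\langle \cdot, \cdot \rangle$ used on $\cT^2$ is the one from~\eqref{eq:tensinnerproduct}, so that no spurious factor of $\tfrac{1}{k!}$ appears.
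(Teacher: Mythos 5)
Your proof is correct and matches the paper's argument step for step: differentiate the density $Q\vol$, substitute $\delt \vol = (\tr A)\vol$ from Lemma~\ref{lemma:evolution-1}, and identify $Q\tr A = \langle Qg, A\rangle$ via $\langle g, A\rangle = \tr A$.
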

\begin{proof}
Using~\eqref{eq:evolution-1} for the evolution of the volume form, we have
$$ \delt (Q \vol) = (\delt Q) \vol + Q (\tr A) \vol = (\delt Q + Q \langle g, A \rangle) \vol, $$
which yields~\eqref{eq:density-ev}.
\end{proof}

\begin{cor} \label{cor:evolution-torsion-quantities}
Let $\ph$ be a time-dependent family of $\G$-structures evolving by the flow~\eqref{eq:general-flow-A}. We have the following evolution equations for various quadratic integral quantities obtained from the torsion:
\begin{align*}
\delt \Big( \int_M (\tr T)^2 \vol \Big) & = \int_M \Big[ - (\tr T) \nab{p} A_{ij} \ph_{pij} + \big\langle (\tr T)^2 g - 2 (\tr T) T^t, A \big\rangle \Big] \vol, \\
\delt \Big( \int_M |T|^2 \vol \Big) & = \int_M \Big[ (\nab{i} A_{pj} + \nab{i} A_{jp} - \nab{p} A_{ij}) \ph_{ijq} T_{pq} + \big\langle |T|^2 g - 2 T T^t, A \big\rangle \Big] \vol, \\
\delt \Big( \int_M \langle T, T^t \rangle \vol \Big) & = \int_M \Big[ (\nab{i} A_{qj} + \nab{i} A_{jq} - \nab{q} A_{ij}) \ph_{ijp} T_{pq} + \big\langle \langle T, T^t \rangle g - 2 (T^t)^2, A \big\rangle \Big] \vol, \\
\delt \Big( \int_M \langle T, \Pop T \rangle \vol \Big) & = \int_M \Big[ 2 \nab{i} A_{ji} \ph_{pqj} T_{pq} - 2 \nab{j} A_{ii} \ph_{pqj} T_{pq} + 2 \nab{i} A_{jp} \ph_{ijq} T_{pq} \\
& \qquad \qquad {} - 2 \nab{i} A_{jq} \ph_{ijp} T_{pq} + \big\langle \langle T, \Pop T \rangle g - 2 (\Pop T) T^t, A \big\rangle \Big] \vol.
\end{align*}
\end{cor}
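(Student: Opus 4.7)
The plan is to apply Lemma~\ref{lemma:density-ev} to each of the four integrands in turn and then substitute the pointwise evolution formulas established in Proposition~\ref{prop:evolution-torsion-scalars}. No additional geometric input is required; the work lies entirely in organizing the resulting terms into the form stated.

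Specifically, for a scalar $Q$ evolving under $\delt \ph = A \diamond \ph$, Lemma~\ref{lemma:density-ev} gives
\begin{equation*}
\delt \Big( \int_M Q \vol \Big) = \int_M \big( \delt Q + Q \, (\tr A) \big) \vol = \int_M \big( \delt Q + \langle Q g, A \rangle \big) \vol,
\end{equation*}
using $\tr A = \langle g, A \rangle$. Thus the $Q \langle g, A \rangle$ contribution from the varying volume form will combine with the $\langle \, \cdot \, , A \rangle$ terms already present in the four formulas of Proposition~\ref{prop:evolution-torsion-scalars}.

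Taking in turn $Q = (\tr T)^2$, $Q = |T|^2$, $Q = \langle T, T^t \rangle$, and $Q = \langle T, \Pop T \rangle$, we substitute the corresponding expression for $\delt Q$ from~\eqref{eq:evolution-torsion-scalars} and then collect the pointwise inner products with $A$. For example, in the first case Proposition~\ref{prop:evolution-torsion-scalars} gives $\delt (\tr T)^2 = -(\tr T) \nab{p} A_{ij} \ph_{pij} - 2 (\tr T) \langle T^t, A \rangle$, and adding $\langle (\tr T)^2 g, A \rangle$ from Lemma~\ref{lemma:density-ev} immediately produces the stated formula. The remaining three cases are handled identically: the derivative terms of $A$ multiplied by $\ph$ and $T$ are left as they appear, while each $Q \langle g, A \rangle$ is absorbed into the existing $A$-pairing to produce the bracketed expression of the form $\langle Q g - (\text{quadratic in } T), A \rangle$.

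There is no genuine obstacle: the deepest step is the fourth formula in Proposition~\ref{prop:evolution-torsion-scalars}, which was already established there by careful manipulation of contractions involving $\Pop T$. Once this is in hand, the present corollary is obtained by a single application of Lemma~\ref{lemma:density-ev} followed by grouping of terms according to their structure (derivative of $A$ versus pointwise pairing with $A$). In particular, no integration by parts is performed at this stage; the stated formulas retain the $\nab{} A$ terms explicitly, which is useful later for extracting Euler--Lagrange equations by integrating those terms by parts against suitable $A$.
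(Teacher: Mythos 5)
Your proof is correct and is essentially identical to the paper's (which simply cites Proposition~\ref{prop:evolution-torsion-scalars} and Lemma~\ref{lemma:density-ev}). Applying Lemma~\ref{lemma:density-ev} with $Q$ equal to each of the four quadratic torsion scalars and absorbing the $Q\langle g, A\rangle$ term into the existing $A$-pairing is exactly the intended one-step argument.
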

\begin{proof}
This is immediate from Proposition~\ref{prop:evolution-torsion-scalars} and Lemma~\ref{lemma:density-ev}.
\end{proof}

We want to integrate by parts on terms involving $\nabla A$ so that we can write the evolution equations~\eqref{eq:evolution-torsion-scalars} in the form $\int_M \big\langle \, \cdot \, , A \big\rangle \vol$. We need to make use of the notation $\KK{a}$ for various contractions of $\nab{} T$ with $\ph$ introduced in Definition~\ref{defn:K}.

\begin{lemma} \label{lemma:functionals-IBP}
There are nine distinct $\nabla A$ terms in~\eqref{eq:evolution-torsion-scalars}. Using integration by parts, they are:
\begin{align*}
(\tr T) \nab{p} A_{ij} \ph_{pij} & = \Div(\cdot) + \langle - (\nab{} \tr T) \hk \ph + (\tr T) \Pop T, A \rangle, \\
\nab{i} A_{pj} \ph_{ijq} T_{pq} & = \Div(\cdot) + \langle - T (\Pop T) - \KK{2}, A \rangle, \\
\nab{i} A_{jp} \ph_{ijq} T_{pq} & = \Div(\cdot) + \langle (\Pop T) T^t - \KK{2}^t, A \rangle, \\
\nab{p} A_{ij} \ph_{ijq} T_{pq} & = \Div(\cdot) + \langle - (\Div T) \hk \ph, A \rangle, \\
\nab{i} A_{qj} \ph_{ijp} T_{pq} & = \Div(\cdot) + \langle -T^t (\Pop T) + \KK{3}, A \rangle, \\
\nab{i} A_{jq} \ph_{ijp} T_{pq} & = \Div(\cdot) + \langle (\Pop T) T + \KK{3}^t, A \rangle, \\
\nab{q} A_{ij} \ph_{ijp} T_{pq} & = \Div(\cdot) + \langle \Pop (T^2) - (\Div T^t) \hk \ph, A \rangle, \\
\nab{i} A_{ji} \ph_{pqj} T_{pq} & = \Div(\cdot) + \langle (\Pop T) T^t - \KK{1}^t, A \rangle, \\
\nab{j} A_{ii} \ph_{pqj} T_{pq} & = \Div(\cdot) + \big\langle \langle T, \Pop T \rangle g - \langle \nab{} T, \ph \rangle g, A \big\rangle.
\end{align*}
\end{lemma}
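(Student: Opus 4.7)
The plan is to prove each of the nine identities by the same uniform strategy: apply the Leibniz rule to shift the covariant derivative off of $A$ onto the remaining factors, producing a total divergence (the desired $\Div(\cdot)$ term) plus two types of residual contributions, one in which $\nabla$ lands on $\ph$ and one in which $\nabla$ lands on $T$. Each residual can then be identified with one of the named geometric quantities attached to the $\G$-structure.

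First, for each identity $\nab{i}A_{jk}\,M_{ijk}$, where $M_{ijk}$ is some tensor quadratic in $\ph$ and $T$, I would write
\[
\nab{i}A_{jk}\,M_{ijk} \,=\, \nab{i}(A_{jk}M_{ijk}) - A_{jk}\,\nab{i}M_{ijk},
\]
so that the first term is $\Div(\cdot)$ and the second distributes across the product $M_{ijk}$ by the product rule. Whenever $\nabla$ hits a factor of $\ph$ (or $\tr T$), I would invoke~\eqref{eq:nablaph} to write $\nab{m}\ph_{ijk}=T_{mp}\ps_{pijk}$, and then collapse the resulting $T\cdot T\cdot\ps$ expression using the $\ph$-$\ps$ and $\ps$-$\ps$ contraction identities~\eqref{eq:phps} and~\eqref{eq:psps}. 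This produces combinations of $T(\Pop T)$, $(\Pop T)T$, $T^t(\Pop T)$, $(\Pop T)T^t$, and $\Pop(T^2)$, recognized via definition~\eqref{eq:Pop-defn2}. As a shortcut for the frequently occurring contraction $\nab{i}\ph_{ijk}$, Proposition~\ref{prop:FG} gives directly $\nab{i}\ph_{ijk}=-(d^*\ph)_{jk}=-(\Pop T)_{jk}$.

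Whenever $\nabla$ hits a factor of $T$, I would identify the resulting contraction as one of the $2$-tensors $\KK{a}$ (or its transpose) from Definition~\ref{defn:K}, or as $(\Div T)\hk\ph$, $(\Div T^t)\hk\ph$, or $\nab{}(\tr T)\hk\ph$, whichever matches the index pattern. For instance, $\nab{i}T_{pq}\,\ph_{ijq}=\KK{2}_{pj}$, $\nab{i}T_{pq}\,\ph_{ijp}=-\KK{3}_{qj}$ (so contributes $\KK{3}$ or $\KK{3}^t$ depending on the outer index alignment), $\nab{p}T_{ij}\,\ph_{ijq}=(\nab{p}(\Vop T))_q$ up to torsion-squared terms by~\eqref{eq:KK1-simp}, and $\nab{i}T_{ij}\,\ph_{pqj}=(\Div T)_j\,\ph_{pqj}=((\Div T)\hk\ph)_{pq}$. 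The totally contracted case $\ph_{jpq}\nab{j}T_{pq}$ is rewritten via~\eqref{eq:divVT} as $\Div(\Vop T)+\langle T,\Pop T\rangle$, which is how the last of the nine identities produces both $\langle T,\Pop T\rangle\,g$ and $\langle\nab{}T,\ph\rangle\,g$ terms after using $\langle fg,A\rangle = f\tr A$.

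Assembling the pieces, each identity then has the asserted form $\Div(\cdot)+\langle X,A\rangle$, where $X$ is the sum (with correct signs) of the pieces described above. The only real obstacle is careful index bookkeeping: the skew-symmetry of $\ph$ in any two slots and of $\ps$ in any two slots means that sign errors propagate easily, and one must distinguish between the six distinct products $T(\Pop T)$, $(\Pop T)T$, $T^t(\Pop T)$, $(\Pop T)T^t$, $T^2$, $(T^t)^2$ (and likewise between $\KK{a}$ and $\KK{a}^t$ for $a=1,2,3$). Aside from this bookkeeping, the arguments are routine applications of~\eqref{eq:nablaph}, Proposition~\ref{prop:FG}, Definition~\ref{defn:K}, and the fundamental contraction identities of Section~\ref{sec:contractions}.
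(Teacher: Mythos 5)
Your proposal is correct and is essentially the paper's own proof: for each of the nine terms, apply the Leibniz rule to peel off a total divergence, replace $\nabla\ph$ by $T\cdot\ps$ via~\eqref{eq:nablaph}, and identify the $\nabla T$ contractions with $\KK{a}$, $\Div T$, $\Div T^t$, or $\nab{}(\tr T)$ according to which slot of $\nab{}T$ is left uncontracted. The shortcut $\nab{i}\ph_{ijk}=-(\Pop T)_{jk}$ from Proposition~\ref{prop:FG} is a valid streamlining of what the paper computes each time from~\eqref{eq:nablaph}, but it changes nothing essential.
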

\begin{proof}
Recall $\nab{i} \ph_{jkl} = T_{ip} \ps_{pjkl}$ from~\eqref{eq:nablaph}, which we use repeatedly. We compute
\begin{align*}
(\tr T) \nab{p} A_{ij} \ph_{pij} & = \nab{p} \big( (\tr T) A_{ij} \ph_{pij} \big) - (\nab{p} \tr T) A_{ij} \ph_{pij} - (\tr T) A_{ij} \nab{p} \ph_{pij} \\
& = \Div(\cdot) - \big( (\nabla \tr T) \hk \ph \big){}_{ij} A_{ij} - (\tr T) A_{ij} T_{pm} \ps_{mpij} \\
& = \Div(\cdot) - \langle (\nabla \tr T) \hk \ph, A \rangle + \langle (\tr T) \Pop T, A \rangle,
\end{align*}
yielding the first equation.

Similarly we have
\begin{align*}
\nab{i} A_{pj} \ph_{ijq} T_{pq} & = \Div(\cdot) - A_{pj} T_{im} \ps_{mijq} T_{pq} - A_{pj} \ph_{ijq} \nab{i} T_{pq} \\
& = \Div(\cdot) - \langle T (\Pop T), A \rangle - \KK{2}_{pj} A_{pj},
\end{align*}
which is the second equation. The third equation follows by replacing $A_{pj}$ with $A_{jp}$ in the above.

Observing that $T_{pm} T_{pq}$ is symmetric in $m,q$, we have
\begin{align*}
\nab{p} A_{ij} \ph_{ijq} T_{pq} & = \Div(\cdot) - A_{ij} T_{pm} \ps_{mijq} T_{pq} - A_{ij} \ph_{ijq} \nab{p} T_{pq} \\
& = \Div(\cdot) - 0 - (\Div T)_q \ph_{qij} A_{ij},
\end{align*}
yielding the fourth equation. The fifth and sixth equations follow from the second and third, respectively, by replacing $T_{pq}$ by $T_{qp}$ in the computations.

Continuing in the same way, we have
\begin{align*}
\nab{q} A_{ij} \ph_{ijp} T_{pq} & = \Div(\cdot) - A_{ij} T_{qm} \ps_{mijp} T_{pq} - A_{ij} \ph_{ijp} \nab{q} T_{pq} \\
& = \Div(\cdot) + (\Pop T^2)_{ij} A_{ij} - (\Div T^t)_p \ph_{pij} A_{ij},
\end{align*}
yielding the seventh equation. The eighth equation is obtained similarly.

Finally, we have
\begin{align*}
\nab{j} A_{ii} \ph_{pqj} T_{pq} & = \Div(\cdot) - A_{ii} T_{jm} \ps_{mpqj} T_{pq} - A_{ii} \ph_{pqj} \nab{j} T_{pq} \\
& = \Div(\cdot) + A_{ii} \langle T, \Pop T \rangle - \nab{j} T_{pq} \ph_{jpq} A_{ii},
\end{align*}
which simplifies to the ninth equation.
\end{proof}

\begin{prop} \label{prop:evolution-torsion-quantities-2}
Let $\ph$ be a time-dependent family of $\G$-structures evolving by the flow~\eqref{eq:general-flow-A}. We have the following evolution equations for various quadratic integral quantities obtained from the torsion:
\begin{align*}
\delt \Big( \int_M (\tr T)^2 \vol \Big) & = \int_M \big\langle (\nab{} \tr T) \hk \ph + (\tr T)^2 g - 2 (\tr T) T^t - (\tr T) \Pop T, A \big\rangle \vol, \\
\delt \Big( \int_M |T|^2 \vol \Big) & = \int_M \big\langle (\Div T) \hk \ph - 2 (\KK{2})_{\symm} + |T|^2 g - 2 T T^t - T (\Pop T) + (\Pop T) T^t, A \big\rangle \vol, \\
\delt \Big( \int_M \langle T, T^t \rangle \vol \Big) & = \int_M \Big[ \big\langle (\Div T^t) \hk \ph + 2 (\KK{3})_{\symm} + \langle T, T^t \rangle g - 2 (T^t)^2 \\
& \qquad \qquad {} + (\Pop T) T - T^t (\Pop T) - \Pop(T^2), A \big\rangle \Big] \vol, \\
\delt \Big( \int_M \langle T, \Pop T \rangle \vol \Big) & = \int_M \Big[ \big\langle - 2 \, \KK{1}^t - 2 \, \KK{2}^t - 2 \, \KK{3}^t + 2 (\tr \KK{a}) g - \langle T, \Pop T \rangle g \\
& \qquad \qquad {} + 2 (\Pop T) T^t - 2 (\Pop T) T, A \big\rangle \Big] \vol.
\end{align*}
\end{prop}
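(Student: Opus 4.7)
The strategy is direct: combine Corollary~\ref{cor:evolution-torsion-quantities} (which expresses each $\delt \int_M Q \vol$ as an integral involving $\nabla A$ terms plus pointwise pairings with $A$) with Lemma~\ref{lemma:functionals-IBP} (which gives integration-by-parts formulas for exactly the nine $\nabla A$ expressions that appear). Since $M$ is compact, all divergence terms integrate to zero, and the remaining terms can be collected into the claimed pairings $\langle \,\cdot\,, A \rangle$.

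Concretely, I will process each of the four integral functionals separately. For $\int_M (\tr T)^2 \vol$, the only derivative term is $-(\tr T) \nab{p} A_{ij} \ph_{pij}$, so I substitute the first identity of Lemma~\ref{lemma:functionals-IBP} and combine with the existing $\langle (\tr T)^2 g - 2(\tr T) T^t, A \rangle$ term. For $\int_M |T|^2 \vol$, the three $\nabla A$ terms in~\eqref{eq:evolution-torsion-scalars} are $\nab{i}A_{pj}\ph_{ijq}T_{pq}$, $\nab{i}A_{jp}\ph_{ijq}T_{pq}$, and $-\nab{p}A_{ij}\ph_{ijq}T_{pq}$; substituting the second, third, and fourth identities of Lemma~\ref{lemma:functionals-IBP}, the $T(\Pop T)$ and $(\Pop T)T^t$ contributions combine with $\KK{2}+\KK{2}^t = 2(\KK{2})_{\symm}$, and $(\Div T)\hk\ph$ emerges with the correct sign. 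The computation for $\int_M \langle T, T^t \rangle \vol$ is analogous, using the fifth, sixth, and seventh identities of Lemma~\ref{lemma:functionals-IBP} and producing $2(\KK{3})_{\symm}$ along with the $\Pop(T^2)$ and $(\Div T^t)\hk\ph$ contributions. Finally, for $\int_M \langle T, \Pop T \rangle \vol$, four $\nabla A$ terms appear; I apply the eighth, ninth, second, and fifth identities of Lemma~\ref{lemma:functionals-IBP}, and use the identity $\tr \KK{1} = \tr \KK{2} = \tr \KK{3}$ (from Definition~\ref{defn:K}) along with equation~\eqref{eq:divVT} to rewrite the scalar contributions so that the $\langle T, \Pop T \rangle g$ term appears with the correct coefficient.

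The main bookkeeping obstacle will be the fourth equation: there are four $\nabla A$ terms with various index placements, and the resulting contributions include $\KK{1}^t$, $\KK{2}^t$, $\KK{3}^t$, and trace terms that must telescope correctly. In particular, I need to verify that the scalar terms $(\tr T)\langle\nab{}T, \ph\rangle$-type contributions from integration by parts, together with the $\langle T, \Pop T\rangle g$ coming from the Lemma~\ref{lemma:density-ev} volume term, reassemble into $-\langle T, \Pop T\rangle g + 2(\tr \KK{a}) g$ as claimed. Once this is checked, each of the four equations follows by direct substitution and collection of terms, and the proof concludes with an application of the divergence theorem on the compact manifold $M$ to discard the $\Div(\cdot)$ contributions from Lemma~\ref{lemma:functionals-IBP}.
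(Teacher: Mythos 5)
Your approach is exactly the paper's: apply Corollary~\ref{cor:evolution-torsion-quantities}, substitute the nine integration-by-parts identities of Lemma~\ref{lemma:functionals-IBP}, discard the divergence terms via the divergence theorem, and for the fourth equation invoke $\langle\nab{}T,\ph\rangle = \tr\KK{a}$ from~\eqref{eq:divVT}. One small slip: for the last functional the relevant identities are the \emph{eighth}, \emph{ninth}, \emph{third}, and \emph{sixth} of Lemma~\ref{lemma:functionals-IBP} (not the second and fifth), which is consistent with the fact that your claimed contributions involve $\KK{2}^t$ and $\KK{3}^t$ rather than $\KK{2}$ and $\KK{3}$.
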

\begin{proof}
This follows from Corollary~\ref{cor:evolution-torsion-quantities}, Lemma~\ref{lemma:functionals-IBP}, and the divergence theorem. For the last equation we also use $\langle \nab{}T, \ph \rangle = \tr \KK{a}$ from~\eqref{eq:divVT}.
\end{proof}

The evolution equations in Proposition~\ref{prop:evolution-torsion-quantities-2} can be simplified further, because the $2$-tensors $\KK{2}$ and $\KK{3}$ obtained from $\nab{} T$ in Definition~\ref{defn:K} can actually be expressed in terms of curvature and lower order terms which are quadratic in torsion. We derive these relations in Section~\ref{sec:g2bianchi-revisited} by decomposing the $\G$-Bianchi identity into independent components, and revisit these torsion functionals in Section~\ref{sec:functionals-revisited}. Before we can do any of this, we first need a better understanding of the representation theory of $\G$, in a very concrete and computationally explicit way, which we do in the next section.

\section{More $\G$-representation theory} \label{sec:more-rep-theory}

In this section we investigate more deeply the representation theory of $\G$. In particular, we derive explicit formulas for the orthogonal projections onto the irreducible summands of various $\G$-representations. These results are used in Section~\ref{sec:curvature-torsion} to describe the decompositions of the Riemann curvature tensor $\tRm$ and the covariant derivative $\nab{} T$ of the torsion into irreducible components, to determine the Euler--Lagrange equations of certain quadratic torsion functionals, and to classify the independent second-order differential invariants of a $\G$-structure for the purposes of identifying all possible quasilinear second-order heat-like flows of $\G$-structures.

\subsection{The basic tool for describing tensor product decompositions} \label{sec:basic-tool}

The basic tool we employ repeatedly is the following elementary result.

\begin{lemma} \label{lemma:basic-tool}
Let $V$ and $W$ be finite-dimensional real vector spaces equipped with positive definite inner products, and suppose that
\begin{equation*}
V = V_1 \operp{} \cdots \operp V_m
\end{equation*}
is an \emph{orthogonal} direct sum of subspaces. Let $\iota \colon V \to W$ and $\rho \colon W \to V$ be linear maps. Suppose that for every $1 \leq k \leq m$, there exist $b_k, c_k$ \emph{both nonzero}, such that for all $v_k \in V_k$ and $w \in W$, we have
\begin{equation} \label{eq:basic-condition}
\mathrm{(i)} \,\, \rho \iota v_k = b_k v_k \quad \text{ and } \quad \mathrm{(ii)} \,\, \langle \rho w, v_k \rangle = c_k \langle w, \iota v_k \rangle.
\end{equation}
Then in fact we have an isomorphism of $W$ with an \emph{orthogonal} direct sum
\begin{equation} \label{eq:basic-tool-result}
W \cong (\ker \rho) \operp V = (\ker \rho) \operp V_1 \operp \cdots \operp V_k.
\end{equation}
\end{lemma}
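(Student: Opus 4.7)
The plan is to exploit condition (i) to produce a splitting of $W$ as $\ker\rho \oplus \iota(V)$, and then to use condition (ii) to promote this splitting to an orthogonal one that respects the decomposition of $V$.

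First I would note that condition (i) forces $\rho\iota$ to be an isomorphism of $V$: on each subspace $V_k$ it acts as multiplication by the nonzero scalar $b_k$, so with respect to the direct sum $V = V_1 \operp \cdots \operp V_m$ the operator $\rho\iota$ is diagonalizable with all nonzero eigenvalues. In particular $\iota$ is injective and $\rho$ is surjective. This lets me define, for each $w \in W$, the element
\[
w^{\perp} = w - \iota\bigl( (\rho\iota)^{-1} \rho w \bigr),
\]
and a direct check gives $\rho(w^{\perp}) = \rho w - \rho\iota(\rho\iota)^{-1}\rho w = 0$, so $w^\perp \in \ker\rho$. Conversely if some $\iota(v) \in \ker\rho$ then $\rho\iota v = 0$ forces $v = 0$, so the sum $W = \ker\rho + \iota(V)$ is direct.

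Next I would use condition (ii) to upgrade this to an orthogonal direct sum. For $w \in \ker\rho$ and $v_k \in V_k$, the identity in (ii) gives
\[
c_k \langle w, \iota v_k \rangle = \langle \rho w, v_k \rangle = 0,
\]
and since $c_k \neq 0$ this forces $\iota(V) \perp \ker\rho$. For mutual orthogonality of the $\iota(V_k)$, take $v_k \in V_k$ and $v_j \in V_j$ with $j \neq k$; applying (ii) with $w = \iota v_j$ and then (i) yields
\[
c_k \langle \iota v_j, \iota v_k \rangle = \langle \rho\iota v_j, v_k \rangle = b_j \langle v_j, v_k \rangle = 0,
\]
so $\iota(V_j) \perp \iota(V_k)$. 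Combined with the direct sum decomposition, this produces the orthogonal splitting
\[
W = (\ker\rho) \operp \iota(V_1) \operp \cdots \operp \iota(V_m),
\]
and since $\iota$ is injective on each $V_k$, the map $\iota|_{V_k}$ is a linear isomorphism $V_k \cong \iota(V_k)$, establishing \eqref{eq:basic-tool-result}.

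There is no real obstacle here; the argument is essentially bookkeeping, and the only subtlety is ensuring that the scalars $b_k$ and $c_k$ are nonzero so that (i) gives invertibility of $\rho\iota$ and (ii) can be divided through by $c_k$ in the orthogonality step. The nonvanishing hypothesis on both scalars is used exactly once each, and without it either the direct sum decomposition or the perpendicularity of the summands could fail.
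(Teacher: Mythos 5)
Your proof is correct and follows essentially the same route as the paper: use (i) to invert $\rho\iota$ and write $W = \ker\rho \oplus \im\iota$, then use (ii) to establish that this direct sum is orthogonal and that the images $\iota(V_k)$ are mutually orthogonal. The only cosmetic difference is that you invoke $(\rho\iota)^{-1}$ abstractly where the paper writes the inverse out explicitly in terms of the $1/b_k$, and the paper also records the dimension count $\ker\rho = (\im\iota)^\perp$, which your direct orthogonality argument combined with the direct-sum decomposition already yields.
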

\begin{proof}
Observe first that the two conditions in~\eqref{eq:basic-condition} can be expressed as
\begin{equation*}
\rho \iota = \oplus_{k=1}^m b_k \Id_{V_k} \quad \text{ and } \quad \rho^*|_{V_k} = c_k \iota|_{V_k}
\end{equation*}
where $\Id_{V_k}$ is the identity operator on $V_k$. It is clear that the first condition implies that $\iota$ is injective and $\rho$ is surjective. Let $w \in W$, and write
\begin{equation*}
w = \big( w - \iota \big( \textstyle{\sum_{k=1}^m} \tfrac{1}{b_k} (\rho w)_k \big) \big) + \iota \big( \textstyle{\sum_{k=1}^m} \tfrac{1}{b_k} (\rho w)_k \big)
\end{equation*}
where $(\rho w)_k$ denotes the component of $\rho w$ in $V_k$. The second term is in $\im \iota$ and, since $\rho \iota = \oplus_{k=1}^m b_k \Id_{V_k}$, the first term is in $\ker \rho$. Thus $W = (\ker \rho) + (\im \iota)$. If $w \in (\ker \rho) \cap (\im \iota)$, then $w = \iota v$ and $\rho w = \rho \iota v = \sum_{k=1}^m b_k v_k = 0$, so $v = 0$ and thus $w = 0$. Hence $(\ker \rho) \cap (\im \iota) = \{ 0 \}$, and $W = (\ker \rho) \oplus (\im \iota)$.

Consider the second condition in~\eqref{eq:basic-condition}. For $w \in \ker \rho$, it says $\langle w, \iota v_k \rangle = 0$ for all $v_k \in V_k$, and thus $w \in (\im \iota)^{\perp}$. Comparing dimensions, we have $(\ker \rho) = (\im \iota)^{\perp}$. For $w = \iota \tilde v_l$ with $\tilde v_l \in V_l$, we have
\begin{equation} \label{eq:iota-conf-isom}
\langle \iota \tilde v_l, \iota v_k \rangle = \tfrac{1}{c_k} \langle \rho \iota \tilde v_l, v_k \rangle = \tfrac{b_l}{c_k} \langle \tilde v_l, v_k \rangle.
\end{equation}
Thus $(\iota V_l) \perp (\iota V_k)$ for $l \neq k$, and hence
\begin{equation*}
\im \iota = (\iota V_1) \operp \cdots \operp (\iota V_n) \cong V_1 \operp \cdots \operp V_n. \qedhere
\end{equation*}
\end{proof}

\begin{rmk} \label{rmk:iota-conf-isom}
We note from~\eqref{eq:iota-conf-isom} with $k = l$ that $\rest{\iota}{V_k} \colon V_k \to \iota(V_k)$ is an isometry, up to a positive constant. In particular, it is always the case that $b_k$ and $c_k$ have the same sign.
\end{rmk}

Of course, Lemma~\ref{lemma:basic-tool} can be applied fibrewise to smooth tensors on a Riemannian manifold $(M, g)$. We use Lemma~\ref{lemma:basic-tool} several times in the rest of Section~\ref{sec:more-rep-theory} to describe the decompositions of various tensor products of $\G$ representations, which we then use to decompose the curvature, torsion, and the covariant derivative of torsion in Section~\ref{sec:curvature-torsion}.

As an example, we show here how to use Lemma~\ref{lemma:basic-tool} to quickly recover the well-known decomposition of Riemann curvature into $\mathrm{O}(n)$ representations. Assume that $n = \dim M \geq 3$. Recall that the space $\K$ of \emph{curvature tensors} on $(M, g)$ is the subspace $\K$ of $\cS^2 (\Lambda^2) = \Gamma(\mathrm{S}^2 (\Lambda^2 T^* M))$ of elements satisfying the first Bianchi identity. That is, if $U_{ijkl}$ is a curvature tensor, then
\begin{equation} \label{eq:curv-symmetries}
U_{ijkl} = - U_{jikl} = - U_{ijlk} = U_{klij}, \quad \text{and} \quad U_{ijkl} + U_{jkil} + U_{kijl} = 0.
\end{equation}
The space $\Omega^4$ of $4$-forms on $M$ is a subspace of $\cS^2 (\Lambda^2)$, and it is easy to see that the first Bianchi identity for $U$ is equivalent to saying that $U$ is (pointwise) orthogonal to $\Omega^4$. That is,
\begin{equation} \label{eq:curvature-tensors-decomp}
\cS^2 (\Lambda^2) = \Omega^4 \operp \K.
\end{equation}

Define a linear map $\iota_g \colon \cS^2 \to \K$ by
\begin{equation} \label{eq:iota-g}
(\iota_g h)_{ijkl} = g_{il} h_{jk} + g_{jk} h_{il} - g_{ik} h_{jl} - g_{jl} h_{ik}.
\end{equation}
It is easy to check that $U = \iota_g h$ satisfies the conditions~\eqref{eq:curv-symmetries}, so $\iota_g$ does indeed map into $\K$. [The tensor $\iota_g h$ is usually written $g \owedge h$, and is called the \emph{Kulkarni--Nomizu} product of $g$ with $h$.]

Define a linear map $\rho_g \colon \K \to \cS^2$ by
\begin{equation} \label{eq:rho-g}
(\rho_g U)_{jk} = U_{ijkl} g_{il} = U_{ljkl}.
\end{equation}
To verify that $\rho_g U$ is indeed symmetric, we use~\eqref{eq:curv-symmetries} to compute
$$ (\rho_g U)_{kj} = U_{lkjl} = U_{jllk} = U_{ljkl} = (\rho_g U)_{jk} $$
as claimed. We call $\rho_g U$ the \emph{Ricci contraction} of $U$ with respect to $g$, because it yields the Ricci curvature when applied to the Riemann curvature tensor of $g$.

Composing these two maps, we obtain
\begin{align} \nonumber
(\rho_g \iota_g h)_{jk} & = (\iota_g h)_{ljkl} \\ \nonumber
& = g_{ll} h_{jk} + g_{jk} h_{ll} - g_{lk} h_{jl} - g_{jl} h_{lk} \\ \nonumber
& = n h_{jk} + (\tr h) g_{jk} - h_{jk} - h_{jk} \\ \label{eq:curvature-ex-1}
& = (n - 2) h_{jk} + (\tr h) g_{jk}.
\end{align}
Recall that we have an orthogonal decomposition
$$ \underbrace{\cS^2}_V = \underbrace{\Omega^0 g}_{V_1} \operp \underbrace{\cS^2_0}_{V_2}. $$
It follows from~\eqref{eq:curvature-ex-1} that $\rho_g \iota_g g = (2n-2) g$, and that $\rho_g \iota_g h = (n-2) h$ for $h \in \cS^2_0$. Thus condition $\mathrm{(i)}$ of~\eqref{eq:basic-condition} is satisfied with $b_{1} = 2n-2$ and $b_{2} = n-2$.

Moreover, using the symmetries~\eqref{eq:curv-symmetries}, we have
\begin{align*}
\langle U, \iota_g h \rangle & = U_{ijkl} (\iota_g h)_{ijkl} \\
& = U_{ijkl} ( g_{il} h_{jk} + g_{jk} h_{il} - g_{ik} h_{jl} - g_{jl} h_{ik} ) \\
& = (U_{ijkl} g_{il}) h_{jk} + (U_{jilk} g_{jk}) h_{il} + (U_{ijlk} g_{ik}) h_{jl} + (U_{jikl} g_{jl}) h_{ik} \\
& = (\rho_g U)_{jk} h_{jk} + (\rho_g U)_{il} h_{il} + (\rho_g U)_{jl} h_{jl} + (\rho_g U)_{ik} h_{ik} \\
& = 4 \langle \rho_g U, h \rangle.
\end{align*}
Thus condition $\mathrm{(ii)}$ of~\eqref{eq:basic-condition} is satisfied with $c_{1} = c_{2} = \frac{1}{4}$.

We can therefore invoke Lemma~\ref{lemma:basic-tool} to conclude that we have a pointwise orthogonal decomposition
\begin{equation} \label{eq:curvature-classical-decomp}
\K = ( \ker \rho_g ) \operp \iota ( \Omega^0 g ) \operp \iota ( \cS^2_0 ).
\end{equation}
We also get from~\eqref{eq:iota-conf-isom} that if $h = \frac{1}{n} (\tr h) g + h_0$ and $f = \frac{1}{n} (\tr f) g + f_0$, with $h_0, f_0 \in \cS^2_0$, then we have
\begin{align*}
\langle \iota_g h, \iota_g f \rangle & = 4(2n-2) \langle \tfrac{1}{n} (\tr h) g, \tfrac{1}{n} (\tr f) g \rangle + 4(n-2) \langle h_0, f_0 \rangle \\
& = \tfrac{8(n-1)}{n} (\tr h) (\tr f) + 4(n-2) \langle h_0, f_0 \rangle.
\end{align*}
The space $\W = \ker \rho_g$ is called the space of \emph{Weyl tensors} on $(M, g)$. [These are the curvature tensors with vanishing Ricci curvature.]

Explicitly, for any $U \in \K$, we can write $U = U_{\W} + U_1 + U_0$ with
\begin{equation} \label{eq:curvature-ex-decomp}
\begin{aligned}
U_1 & = \tfrac{1}{2n-2} \iota_g (\rho_g U)_{1}, \\
U_0 & = \tfrac{1}{n-2} \iota_g (\rho_g U)_{0}, \\
U_{\W} & = U - U_1 - U_0,
\end{aligned}
\end{equation}
where $(\rho_g U)_1 = \frac{1}{n} (\tr \rho_g U) g \in \Omega^0 g$ is the pure trace part of $\rho_g U$ and $(\rho_g U)_0 = \rho_g U - (\rho_g U)_1 \in \cS^2_0$ is the trace-free part of $\rho_g U$.

Applied to the Riemann curvature tensor $U = \tRm$ of $g$, these components correspond, respectively, to the scalar curvature, the traceless Ricci curvature, and the Weyl curvature of $g$. In particular, writing $\tRc = \rho_g (\tRm)$ for the Ricci curvature, $R = \tr (\tRc)$ for the scalar curvature, and $\tW = \tRm_{\W}$ for the Weyl curvature, we have
\begin{equation} \label{eq:curvature-n}
\tRm = \tfrac{1}{2n(n-1)} R \, \iota_g g + \tfrac{1}{(n-2)} \, \iota_g (\tRc^0) + \tW.
\end{equation}

Some of these formulas (specific to $n=7$) are needed in Sections~\ref{sec:identities-Pop} and~\ref{sec:curvature-torsion}, specifically: 
\begin{equation} \label{eq:curvature-7}
\tRm = \tfrac{1}{84} R \, \iota_g g + \tfrac{1}{5} \, \iota_g (\tRc^0) + \tW,
\end{equation}
and
\begin{equation} \label{eq:iota-g-rho-g-7}
\rho_g \iota_g h = 5 h + (\tr h) g, \qquad \langle U, \iota_g h \rangle = 4 \langle \rho_g U, h \rangle.
\end{equation}

\subsection{Basic facts about representations of $\G$} \label{sec:rep-theory-basics}

In this section we review without proof some facts about finite-dimensional \emph{irreducible} representations of $\G$, and the decomposition of tensor products of such representations into irreducible summands. These can be verified using the \emph{LiE package}, available online~\cite{Lie}. (See Fulton--Harris~\cite{FH} for an introduction to representation theory.)

In the remaining parts of Section~\ref{sec:more-rep-theory} we give \emph{explicit concrete descriptions} of these decompositions. The only ingredient missing is the demonstration that the decompositions are not further reducible. The reader willing to accept this need only glance at equations~\eqref{eq:7-7},~\eqref{eq:7-14},~\eqref{eq:7-27}, and~\eqref{eq:sym14} in this section and move on to their explicit descriptions.

As the rank of $\G$ is $2$, the irreducible representations of $\G$ are indexed by their highest weight, which is an ordered pair $(p,q)$ with $p,q$ nonnegative integers. The first few such irreducible representations with their dimensions are given in Table~\ref{reps-table}.

\begin{table}[h!]
\begin{center}
\begin{tabular}{|c||c|c|c|c|c|c|c|c|c|}
\hline
Highest weight & (0,0) & (1,0) & (0,1) & (2,0) & (1,1) & (0,2) & (3,0) & (2,1) & $\cdots$ \\ \hline
Dimension/label & $\mb{1}$ & $\mb{7}$ & $\mb{14}$ & $\mb{27}$ & $\mb{64}$ & $\mb{77}$ & $\mb{77^*}$ & $\mb{189}$ & $\cdots$ \\
\hline
\end{tabular}
\end{center}
\caption{Dimensions of some irreducible representations of $\G$ by highest weight} \label{reps-table}
\end{table}

We make several remarks concerning Table~\ref{reps-table}:
\begin{itemize} \setlength\itemsep{-1mm}
\item The $1$-dimensional representation $\mb{1}$ is the \emph{trivial} representation.
\item The $7$-dimensional representation $\mb{7}$ is the \emph{standard} representation of $\G$ on $\R^7$ when $\G$ is viewed as a subgroup of $\SO{7} \subset \GL{7, \R}$.
\item The $14$-dimensional representation $\mb{14}$ is the \emph{adjoint} representation of $\G$ on its Lie algebra $\mathfrak{g}_2$.
\item The $27$-dimensional representation $\mb{27}$ is isomorphic to the \emph{traceless symmetric $2$-tensors} on $\R^7$ with the Euclidean inner product.
\item The $64$-dimensional representation $\mb{64}$ is described concretely in two different ways in Sections~\ref{sec:7-14} and~\ref{sec:7-27}, which are related in Section~\ref{sec:equiv-64}. It arises in the decompositions of both $\tRm$ and $\nabla T$.
\item There are two \emph{nonisomorphic} $77$-dimensional representations of $\G$, which we label by $\mb{77}$ for highest weight $(0,2)$ and $\mb{77^*}$ for highest weight $(3,0)$. These \emph{both arise} in practice, with $\mb{77}$ arising in the decomposition of $\tRm$ and $\mb{77^*}$ arising in the decomposition of $\nabla T$. The representations $\mb{77^*}$ and $\mb{77}$ are described concretely, in Sections~\ref{sec:7-27} and~\ref{sec:S2-14}, respectively. In particular, the space $\mb{77^*}$ is an irreducible summand in $\cS^3$, the space of fully symmetric cubics.
\end{itemize}

Using the \emph{LiE package}, we get the following decompositions of the tensor products of irreducible $\G$ representations into irreducible summands. We have
\begin{align} \label{eq:7-7}
\underbrace{\mb{7} \otimes \mb{7}}_{49} & = \mb{1} \oplus \mb{27} \oplus \mb{7} \oplus \mb{14}, \\ \label{eq:7-14}
\underbrace{\mb{7} \otimes \mb{14}}_{98} & = \mb{64} \oplus \mb{27} \oplus \mb{7}, \\ \label{eq:7-27}
\underbrace{\mb{7} \otimes \mb{27}}_{189} & = (\mb{77^*} \oplus \mb{7}) \oplus (\mb{64} \oplus \mb{27} \oplus \mb{14}), \\ \label{eq:sym14}
\underbrace{\Sym^2 (\mb{14})}_{105} & = \mb{77} \oplus \mb{1} \oplus \mb{27}.
\end{align}
The decomposition~\eqref{eq:7-7} was described concretely in Section~\ref{sec:forms}. We describe the three remaining decompositions~\eqref{eq:7-14},~\eqref{eq:7-27}, and~\eqref{eq:sym14} concretely in the rest of Section~\ref{sec:more-rep-theory}. The reason that the right-hand side of~\eqref{eq:7-27} is grouped the way it is becomes evident in Section~\ref{sec:7-27}.

\begin{rmk} \label{rmk:Lambda2-of-14}
The \emph{LiE package} also shows that
$$ \underbrace{\Lambda^2 (\mb{14})}_{91} = \mb{77^*} \oplus \mb{14}. $$
Similar methods can be applied to understand the splitting concretely, but we do not require this here.
\end{rmk}

\begin{rmk} \label{rmk:Weyl-decomp}
We also have a decomposition
$$ \Sym^2(\Lambda^2) = \Sym^2(\mb{7} \oplus \mb{14}) = \Sym^2(\mb{7}) \oplus (\mb{7} \otimes \mb{14}) \oplus \Sym^2(\mb{14}). $$
It follows from~\eqref{eq:7-7},~\eqref{eq:7-14}, and~\eqref{eq:sym14} that the above becomes
\begin{equation*}
(\mb{1} \oplus \mb{27}) \oplus (\mb{64} \oplus \mb{7} \oplus \mb{27}) \oplus (\mb{77} \oplus \mb{1} \oplus \mb{27}).
\end{equation*}
Recall from~\eqref{eq:curvature-tensors-decomp} that the Riemannian first Bianchi identity identifies the space $\K$ of curvature tensors as the orthogonal complement of $\Lambda^4 = \mb{1} \oplus \mb{7} \oplus \mb{27}$ in $\Sym^2 (\Lambda^2)$. Thus, we must have
\begin{equation*}
\K \cong \underbrace{\mb{1} \oplus \mb{27}}_{\text{Ricci}} \oplus \underbrace{\mb{27} \oplus \mb{64} \oplus \mb{77}}_{\text{Weyl}}.
\end{equation*}
That is, in the presence of a $\G$-structure, the Weyl tensor $W$ decomposes into three independent components $W_{27} + W_{64} + W_{77}$. We describe $W_{27}$ concretely at the end of Section~\ref{sec:identities-Pop}, and discuss $W_{64}$ and $W_{77}$ at the end of Section~\ref{sec:curvature-decomp}.
\end{rmk}

In the remainder of Section~\ref{sec:more-rep-theory} we derive explicit concrete descriptions of the decompositions~\eqref{eq:7-14},~\eqref{eq:7-27}, and~\eqref{eq:sym14}. We also establish an explicit isomorphism between two distinct concrete realizations of the 64-dimensional representation of $\G$ in Proposition~\ref{prop:two-64-desc}, and we develop many useful identities for elements of $\cS^2 (\Lambda^2)$ in Section~\ref{sec:identities-Pop} that are needed to understand the decomposition of curvature.

\subsection{The decomposition $\mb{7} \otimes \mb{14} = \mb{64} \oplus \mb{27} \oplus \mb{7}$} \label{sec:7-14}

Let $V = \Omega^3_7 \oplus \Omega^3_{27} \cong \mb{7} \oplus \mb{27}$, and let $W = \Omega^1_7 \otimes \Omega^2_{14}$. An element $\beta \in W$ can be expressed as $\beta_{ijk} e_i \otimes e_j \otimes e_k$, where
\begin{equation} \label{eq:7-14-conditions}
\beta_{ijk} = - \beta_{ikj}, \quad \text{ and } \quad \beta_{ijk} \ph_{ajk} = 0.
\end{equation}
Define a map $\rho \colon W \to \Omega^3$ by skew-symmetrization. That is,
\begin{equation} \label{eq:7-14-rho}
(\rho \beta)_{ijk} = \beta_{ijk} + \beta_{jki} + \beta_{kij}. 
\end{equation}
It is clear that $\rho \beta \in \Omega^3$, since $\beta_{ijk}$ is already skew in $j,k$. We claim that in fact $\rho \beta \in V = \Omega^3_7 \oplus \Omega^3_{27}$. To establish this claim, by Corollary~\ref{cor:diamondinverse} it suffices to show that $(\rho \beta)_{ijk} \ph_{ijk} = 0$. But we have
\begin{equation*}
(\rho \beta)_{ijk} \ph_{ijk} = 3 \beta_{ijk} \ph_{ijk} = 0
\end{equation*}
since $\beta_{ijk} \ph_{ajk} = 0$. Thus indeed $\rho$ maps $W$ into $V$.

Define a map $\iota \colon V \to W$ by
\begin{equation} \label{eq:7-14-iota}
(\iota \gamma)_{ijk} = 4 \gamma_{ijk} + \gamma_{ipq} \ps_{pqjk}.
\end{equation}
Note that by~\eqref{eq:omega2-proj}, up to a factor of $6$, the map $\iota$ is just the projection of the skew $j,k$ indices of $\gamma_{ijk}$ onto their $\Omega^2_{14}$ component. Thus by construction $\iota$ maps into $W$.

Now we consider $\rho \iota \colon V \to V$. First we note that $\gamma = A \diamond \ph$ for some unique $A = A_{27} + A_{7} \in \cS^2_0 \oplus \Omega^2_7$. This means $\gamma_{ijk} = A_{im} \ph_{mjk} + A_{jm} \ph_{imk} + A_{km} \ph_{ijm}$. Then using~\eqref{eq:vecA} we compute
\begin{align*}
\gamma_{ipq} \ps_{pqjk} & = (A_{im} \ph_{mpq} + A_{pm} \ph_{imq} + A_{qm} \ph_{ipm}) \ps_{pqjk} \\
& = - 4 A_{im} \ph_{mjk} + 2 A_{pm} ( g_{ip} \ph_{mjk} + g_{ij} \ph_{pmk} + g_{ik} \ph_{pjm} ) \\
& \qquad {} - 2 A_{pm} ( g_{mp} \ph_{ijk} + g_{mj} \ph_{pik} + g_{mk} \ph_{pji} ) \\
& = - 4 A_{im} \ph_{mjk} + 2 A_{im} \ph_{mjk} + 2 (\Vop A)_k g_{ij} - 2 (\Vop A)_j g_{ik} \\
& \qquad {} - 2 (\tr A) \ph_{ijk} - 2 A_{pj} \ph_{pik} - 2 A_{pk} \ph_{pji}.
\end{align*}
The first two terms combine, and $\tr A = 0$ in this case, so we have
\begin{equation} \label{eq:7-14-temp}
\gamma_{ipq} \ps_{pqjk} = - 2 A_{im} \ph_{mjk} + 2 A_{pj} \ph_{ipk} + 2 A_{pk} \ph_{ijp} + 2 (\Vop A)_k g_{ij} - 2 (\Vop A)_j g_{ik}.
\end{equation}
Cyclically permuting $i,j,k$ and adding, the terms with $\Vop A$ cancel in pairs, and we obtain
\begin{equation*}
\gamma_{ipq} \ps_{pqjk} + \gamma_{jpq} \ps_{pqki} + \gamma_{kpq} \ps_{pqij} = -2 (A \diamond \ph)_{ijk} + 4 (A^t \diamond \ph)_{ijk}.
\end{equation*}
Using the above expression, we have
\begin{align*}
(\rho \iota \gamma)_{ijk} & = (\iota \gamma)_{ijk} + (\iota \gamma)_{jki} + (\iota \gamma)_{kij} \\
& = 4 \gamma_{ijk} + \gamma_{ipq} \ps_{pqjk} + 4 \gamma_{jki} + \gamma_{jpq} \ps_{pqki} + 4 \gamma_{kij} + \gamma_{kpq} \ps_{pqij} \\
& = 12 \gamma_{ijk} - 2 (A \diamond \ph)_{ijk} + 4 (A^t \diamond \ph)_{ijk} \\
& = 14 (A_{27} \diamond \ph)_{ijk} + 6 (A_{7} \diamond \ph)_{ijk}.
\end{align*}
Thus condition $\mathrm{(i)}$ of~\eqref{eq:basic-condition} is satisfied with $b_{27} = 14$ and $b_{7} = 6$.

Similarly we compute
\begin{equation*}
\langle \rho \beta, \gamma \rangle = (\rho \beta)_{ijk} \gamma_{ijk} = (\beta_{ijk} + \beta_{jki} + \beta_{kij}) \gamma_{ijk} = 3 \beta_{ijk} \gamma_{ijk},
\end{equation*}
and using~\eqref{eq:omega214desc} we have
\begin{align*}
\langle \beta, \iota \gamma \rangle & = \beta_{ijk} (\iota \gamma)_{ijk} \\
& = \beta_{ijk} ( 4 \gamma_{ijk} + \gamma_{ipq} \ps_{pqjk} ) \\
& = 4 \beta_{ijk} \gamma_{ijk} + 2 \beta_{ipq} \gamma_{ipq} = 6 \beta_{ijk} \gamma_{ijk}.
\end{align*}
Thus we have $\langle \rho \beta, \gamma \rangle = \frac{1}{2} \langle \beta, \iota \gamma \rangle$, so condition $\mathrm{(ii)}$ of~\eqref{eq:basic-condition} is satisfied with $c_{27} = c_{7} = \frac{1}{2}$.

We can therefore invoke Lemma~\ref{lemma:basic-tool} to conclude that
\begin{equation*}
\mb{7} \otimes \mb{14} \cong \mb{64} \oplus (\mb{27} \oplus \mb{7}),
\end{equation*}
where explicitly we have $\beta = \beta_{64} + \beta_{27} + \beta_{7}$ with
\begin{equation} \label{eq:7-14-decomp}
\begin{aligned}
\beta_{27} & = \tfrac{1}{14} \iota (\rho \beta)_{27}, \\
\beta_7 & = \tfrac{1}{6} \iota (\rho \beta)_{7}, \\
\beta_{64} & = \beta - \beta_{27} - \beta_{7},
\end{aligned}
\end{equation}
where $(\rho \beta)_{27}$ and $(\rho \beta)_{7}$ are given by Corollary~\ref{cor:diamondinverse}. In particular, the $64$-dimensional representation corresponds to $\ker \rho$, and thus concretely we have
\begin{equation} \label{eq:64-desc-1}
\beta_{ijk} \in \mb{64} \, \iff \, \begin{cases} \beta_{ijk} = - \beta_{ikj}, \\ \beta_{ijk} \ph_{ajk} = 0, \\ \beta_{ijk} + \beta_{jki} + \beta_{kij} = 0. \end{cases}
\end{equation}

\subsection{The decomposition $\mb{7} \otimes \mb{27} = (\mb{77^*} \oplus \mb{7}) \oplus (\mb{64} \oplus \mb{27} \oplus \mb{14})$} \label{sec:7-27}

In order to understand concretely the decomposition $\mb{7} \otimes \mb{27} = (\mb{77^*} \oplus \mb{7}) \oplus (\mb{64} \oplus \mb{27} \oplus \mb{14})$ we apply Lemma~\ref{lemma:basic-tool} three times, so there are three different sets of $(V, W, \rho, \iota)$ in this section. This should not cause confusion.

\textbf{Step One.} We first consider the decomposition $\cS^3$, the space of fully symmetric cubic tensors. An element $h \in \cS^3$ can be written as $h = h_{ijk} e_i \otimes e_j \otimes e_k$ where $h_{ijk}$ is fully symmetric. Define a map $\rho \colon \cS^3 \to \Omega^1_7$ by $(\rho h)_k = h_{iik}$, which can be thought of as the ``trace'' of a symmetric cubic, yielding a $1$-form. Define a map $\iota \colon \Omega^1_7 \to \cS^3$ by $(\iota X)_{ijk} = X_i g_{jk} + X_j g_{ki} + X_k g_{ij}$. We compute
\begin{equation*}
(\rho \iota X)_k = (\iota X)_{iik} = X_i g_{ik} + X_i g_{ki} + X_k g_{ii} = 9 X_k.
\end{equation*}
We also have
\begin{equation*}
\langle \rho h, X \rangle = (\rho h)_k X_k = h_{iik} X_k
\end{equation*}
and
\begin{equation*}
\langle h, \iota X \rangle = h_{ijk} (\iota X)_{ijk} = h_{ijk} (X_i g_{jk} + X_j g_{ki} + X_k g_{ij}) = 3 h_{iik} X_k.
\end{equation*}
Thus Lemma~\ref{lemma:basic-tool} applies with $b = 9$ and $c = \frac{1}{3}$, so we deduce that $\cS^3 \cong \cS^3_0 \oplus \Omega^1_7$ where $\cS^3_0 = (\ker \rho) \cong \mb{77^*}$. Explicitly if $h \in \cS^3$ we write
\begin{equation} \label{eq:cS3-decomp}
h_{ijk} = \big( h_{ijk} - \tfrac{1}{9} (h_{ppi} g_{jk} + h_{ppj} g_{ki} + h_{ppk} g_{ij}) \big) + \tfrac{1}{9} (h_{ppi} g_{jk} + h_{ppj} g_{ki} + h_{ppk} g_{ij}),
\end{equation}
where the first term lies in $\cS^3_0 \cong \mb{77^*}$ and the second term lies in $\iota(\Omega^1_7) \cong \mb{7}$, which is the orthogonal complement of $\cS^3_0$ in $\cS^3$. We have shown that
\begin{equation} \label{eq:cS3}
\cS^3 \cong \mb{77^*} \oplus \mb{7}.
\end{equation}

\textbf{Step Two.} Let $V = \cS^3 \cong \mb{77^*} \oplus \mb{7}$, and let $W = \Omega^1_7 \otimes \cS^2_0$. An element $h \in W$ can be expressed as $h_{ijk} e_i \otimes e_j \otimes e_k$, where
\begin{equation*}
h_{ijk} = h_{ikj}, \quad \text{ and } \quad h_{ikk} = 0.
\end{equation*}
Define a map $\rho \colon W \to V$ by symmetrization. That is,
\begin{equation*}
(\rho h)_{ijk} = h_{ijk} + h_{jki} + h_{kij}. 
\end{equation*}
It is clear that $\rho h \in \cS^3$, since $h_{ijk}$ is already symmetric in $j,k$. Define a map $\iota \colon V \to W$ by
$$ (\iota f)_{ijk} = f_{ijk} - \tfrac{1}{7} f_{ipp} g_{jk}, $$
which is just the inclusion of $\cS^3$ into $\Omega^1_7 \otimes \cS^2$ followed by the orthogonal projection onto $\Omega^1_7 \otimes \cS^2_0$.

If $f \in \cS^3_0$, then $\iota f = f$, and hence $\rho \iota f = 3 f$. If $f$ lies in the orthogonal complement of $\cS^3_0$ in $\cS^3$, then by Step One we have $f_{ijk} = X_i g_{jk} + X_j g_{ki} + X_k g_{ij}$ for some unique $1$-form $X$. Hence $f_{ipp} = 9 X_i$, so
$$ (\iota f)_{ijk} = X_i g_{jk} + X_j g_{ki} + X_k g_{ij} - \tfrac{1}{7} (9 X_i) g_{jk} = - \tfrac{2}{7} X_i g_{jk} + X_j g_{ki} + X_k g_{ij}. $$
It follows in this case that
$$ (\rho \iota f)_{ijk} = (\iota f)_{ijk} + (\iota f)_{jki} + (\iota f)_{kij} = \tfrac{12}{7} (X_i g_{jk} + X_j g_{ki} + X_k g_{ij}) = \tfrac{12}{7} f_{ijk}. $$
Thus condition $\mathrm{(i)}$ of~\eqref{eq:basic-condition} is satisfied with $b_{77} = 3$ and $b_{7} = \frac{12}{7}$.

Moreover, for any $f \in V = \cS^3$ and $h \in W = \Omega^1_7 \otimes \cS^2_0$, we have
$$ \langle \rho h, f \rangle = (\rho h)_{ijk} f_{ijk} = (h_{ijk} + h_{jki} + h_{kij}) f_{ijk} = 3 h_{ijk} f_{ijk}, $$
and
$$ \langle h, \iota f \rangle = h_{ijk} f_{ijk} = h_{ijk} ( f_{ijk} - \tfrac{1}{7} f_{ipp} g_{jk} ) = h_{ijk} f_{ijk} - 0, $$
and hence $\langle \rho h, f \rangle = 3 \langle h, \iota f \rangle$. Thus condition $\mathrm{(ii)}$ of~\eqref{eq:basic-condition} is satisfied with $c_{77} = c_{7} = 3$.

We can therefore invoke Lemma~\ref{lemma:basic-tool} to deduce that
\begin{equation*}
\Omega^1_7 \otimes \cS^2_0 \cong (\ker \rho) \oplus \cS^3 \cong \underbrace{(\ker \rho)}_{105} \oplus (\mb{77^*} \oplus \mb{7}).
\end{equation*}

Explicitly if $h \in \Omega^1_7 \otimes \cS^2_0$ we can write $h = h_{105} + h_{77} + h_7$, where
\begin{equation} \label{eq:7-27-step2-decomp}
\begin{aligned}
h_{77} & = \tfrac{1}{3} \iota (\rho h)_{77}, \\
h_{7} & = \tfrac{7}{12} \iota (\rho h)_{7}, \\
h_{105} & = h - h_{77} - h_{7},
\end{aligned}
\end{equation}
where $(\rho h)_{77}$ and $(\rho h)_{7}$ are the trace-free and pure trace parts of $\rho h \in \cS^3 = \cS^3_0 \oplus \Omega^1_7$. The first term $h_{105}$ lies in $\ker \rho$, which is the orthogonal complement of $\cS^3$ in $\Omega^1_7 \otimes \cS^2_0$.

\textbf{Step Three.} The $105$-dimensional space from Step Two that is the orthogonal complement of $\cS^3$ in $\Omega^1_7 \otimes \cS^2_0$ can be decomposed further. Denote this space by $W$ and let $V = \cS^2_0 \oplus \Omega^2_{14} \cong \mb{27} \oplus \mb{14}$. Explicitly, $W$ is characterized by
\begin{equation} \label{eq:7-27-temp-h}
h_{ijk} \in W \, \iff \, \begin{cases} h_{ijk} = h_{ikj}, \\ h_{ikk} = 0, \\ h_{ijk} + h_{jki} + h_{kij} = 0. \end{cases} \end{equation}
Let $h \in W$. Observe from~\eqref{eq:7-27-temp-h} that
\begin{equation*}
0 = h_{iik} + h_{iki} + h_{kii} = 2 h_{iik} + 0,
\end{equation*}
and thus
\begin{equation} \label{eq:7-27-temp-h2}
h_{iik} = 0 \quad \text{ for all $h \in W$}.
\end{equation}

Define a map $\rho \colon V \to \cT^2$ by $(\rho h)_{ja} = h_{ijk} \ph_{iak}$. We claim that in fact $\rho h \in V = \cS^2_0 \oplus \Omega^2_{14}$. To establish this claim, we need to verify that the trace of $\rho h$ and the $\Omega^2_7$ part of $\rho h$ both vanish. We compute using~\eqref{eq:7-27-temp-h} and~\eqref{eq:7-27-temp-h2} that
\begin{equation*}
(\rho h)_{jj} = h_{ijk} \ph_{ijk} = 0,
\end{equation*}
and
\begin{align*}
(\rho h)_{ja} \ph_{jam} & = h_{ijk} \ph_{iak} \ph_{jam} \\
& = h_{ijk} (g_{ij} g_{km} - g_{im} g_{kj} - \ps_{ikjm} ) \\
& = h_{iim} - h_{mkk} - 0 = 0.
\end{align*}
Thus indeed $\rho$ maps $W$ into $V$.

Define a map $\iota \colon V \to \Omega^1_7 \otimes \cS^2$ by $(\iota A)_{ijk} = A_{jp} \ph_{pik} + A_{kp} \ph_{pij}$. We claim that $\iota A \in W$. To establish this claim, we need to verify that the last two conditions in~\eqref{eq:7-27-temp-h} are satisfied. Because $A = A_{27} + A_{14}$, by~\eqref{eq:omega214desc} we have
\begin{equation*}
(\iota A)_{ikk} = A_{kp} \ph_{pik} + A_{kp} \ph_{pik} = 0.
\end{equation*}
We also have
\begin{align*}
(\iota A)_{ijk} + (\iota A)_{jki} + (\iota A)_{kij} & = A_{jp} \ph_{pik} + A_{kp} \ph_{pij} + A_{kp} \ph_{pji} \\
& \qquad {} + A_{ip} \ph_{pjk} + A_{ip} \ph_{pkj} + A_{jp} \ph_{pki} \\
& = 0
\end{align*}
as the terms on the right-hand side cancel in pairs. Thus indeed $\iota$ maps $V$ into $W$.

Now we consider $\rho \iota \colon V \to V$. Using~\eqref{eq:Pop-action} and the fact that $A = A_{27} + A_{14}$ we compute
\begin{align*}
(\rho \iota A)_{ja} & = (\iota A)_{ijk} \ph_{iak} \\
& = (A_{jp} \ph_{pik} + A_{kp} \ph_{pij}) \ph_{iak} \\
& = A_{jp} (- 6 g_{pa}) + A_{kp} (g_{ja} g_{pk} - g_{jk} g_{pa} - \ps_{jpak} ) \\
& = - 6 A_{ja} + (\tr A) g_{ja} -A_{ja} - (\Pop A)_{ja} \\
& = - 7 (A_{27})_{ja} - 9 (A_{14})_{ja}.
\end{align*}
Thus condition $\mathrm{(i)}$ of~\eqref{eq:basic-condition} is satisfied with $b_{27} = -7$ and $b_{14} = -9$.

Similarly we compute
\begin{equation*}
\langle \rho h, A \rangle = (\rho h)_{ja} A_{ja} = h_{ijk} \ph_{iak} A_{ja},
\end{equation*}
and using~\eqref{eq:7-27-temp-h} and relabelling indices, we have
\begin{align*}
\langle h, \iota A \rangle & = h_{ijk} (\iota A)_{ijk} \\
& = h_{ijk} ( A_{jp} \ph_{pik} + A_{kp} \ph_{pij} ) \\
& = - h_{ijk} \ph_{ipk} A_{jp} - h_{ikj} \ph_{ipj} A_{kp} = -2 h_{ijk} \ph_{iak} A_{ja}.
\end{align*}
Thus we have $\langle \rho h, A \rangle = - \frac{1}{2} \langle h, \iota A \rangle$, so condition $\mathrm{(ii)}$ of~\eqref{eq:basic-condition} is satisfied with $c_{27} = c_{14} = - \frac{1}{2}$.

We can therefore invoke Lemma~\ref{lemma:basic-tool} to conclude that
\begin{equation*}
W \cong \mb{64} \oplus (\mb{27} \oplus \mb{14}),
\end{equation*}
where explicitly we have $h = h_{64} + h_{27} + h_{14}$ with
\begin{equation} \label{eq:7-21-decomp}
\begin{aligned}
h_{27} & = - \tfrac{1}{7} \iota (\rho h)_{27}, \\
h_{14} & = - \tfrac{1}{9} \iota (\rho h)_{14}, \\
h_{64} & = h - h^{27} - h^{14},
\end{aligned}
\end{equation}
where $(\rho h)_{27}$ and $(\rho h)_{14}$ are the components of $\rho h$ in $V = \cS^2_0 \oplus \Omega^2_{14}$. In particular, the $64$-dimensional representation corresponds to the kernel of $\rho \colon W \to V$, and thus by the definition of $\rho$ and~\eqref{eq:7-27-temp-h}, concretely we have
\begin{equation} \label{eq:64-desc-2}
h_{ijk} \in \mb{64} \, \iff \, \begin{cases} h_{ijk} = h_{ikj}, \\ h_{ikk} = 0, \\ h_{ijk} + h_{jki} + h_{kij} = 0, \\ h_{ijk} \ph_{iak} = 0. \end{cases}
\end{equation}

\textbf{Summary.} Combining steps one, two, and three above, we have described the decomposition
\begin{equation*}
\mb{7} \otimes \mb{27} \cong \underbrace{(\mb{77^*} \oplus \mb{7})}_{\cS^3} \oplus \underbrace{(\mb{64} \oplus \mb{27} \oplus \mb{14})}_{\text{$\perp$ of $\cS^3$ in $\Omega^1_7 \otimes \cS^2_0$}}
\end{equation*}

\subsection{Equivalence of two different descriptions of $\mb{64}$} \label{sec:equiv-64}

In the process of describing the splittings $\mb{7} \otimes \mb{14} \cong \mb{64} \oplus \mb{27} \oplus \mb{7}$ and $\mb{7} \otimes \mb{27} \cong (\mb{77^*} \oplus \mb{7}) \oplus (\mb{64} \oplus \mb{27} \oplus \mb{14})$, we determined two different explicit descriptions of $\mb{64}$, namely those given in~\eqref{eq:64-desc-1} and~\eqref{eq:64-desc-2}. In this section we construct an explicit isomorphism between these two descriptions.

\begin{prop} \label{prop:two-64-desc}
Consider the two explicit descriptions~\eqref{eq:64-desc-1} and~\eqref{eq:64-desc-2} of the $\mb{64}$ dimensional representation of $\G$, and denote them by $V$ and $W$, respectively. The linear maps $K \colon V \to W$ and $L \colon W \to V$ given by
$$ \beta_{ijk} \quad \overset{K}{\mapsto} \quad h_{ijk} = \frac{1}{\sqrt{3}} (\beta_{jik} + \beta_{kij}) $$
and
$$ h_{ijk} \quad \overset{L}{\mapsto} \quad \beta_{ijk} = \frac{1}{\sqrt{3}} (h_{jik} - h_{kij}) $$
are isometric isomorphisms of $V$ with $W$.
\end{prop}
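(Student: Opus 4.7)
The plan is to verify in sequence: that $K$ maps $V$ into $W$ and $L$ maps $W$ into $V$; that $L\circ K = \mathrm{Id}_V$ and $K\circ L = \mathrm{Id}_W$; and that $K$ is an isometry (whence so is $L$). Most of the required identities reduce to direct expansion combined with a single defining property of the source. For $K(\beta)$ the symmetry in $(j,k)$ and the vanishing cyclic sum are immediate from the skew-symmetry of $\beta$; the $\ph$-contraction $K(\beta)_{ijk}\ph_{iak}=0$ reduces, after rewriting $\ph_{iak}=-\ph_{aik}$ and cyclically permuting $\ph$ on the other term, to the $\Omega^2_{14}$ condition on $\beta$ from~\eqref{eq:64-desc-1}. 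The mirror checks for $L(h)$ use symmetry of $h$ in its last two indices together with the $\ph$-contraction property of~\eqref{eq:64-desc-2} and cyclic permutations of $\ph$.

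The genuinely non-trivial well-definedness step is the trace $\sum_k K(\beta)_{ikk} = \tfrac{2}{\sqrt 3}\sum_k \beta_{kik}$, which must vanish. I plan to first prove the auxiliary lemma $\beta_{[ijk]} = 0$ for all $\beta\in V$; this follows from the skew-symmetry and cyclic identity by pairing the six permutations into $2(\beta_{ijk}+\beta_{jki}+\beta_{kij})=0$. Then contracting the cyclic identity with $\ps_{jkpq}$, using the equivalent form $\beta_{ijk}\ps_{jkpq}=2\beta_{ipq}$ of the $\Omega^2_{14}$ condition and swapping dummies $j\leftrightarrow k$ in the third term, I will derive $\beta_{jik}\ps_{jkpq}=\beta_{ipq}$. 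Contracting $p$ with $i$ and summing, the left-hand side becomes a contraction of $\beta_{jik}$ against the totally skew tensor $\ps_{jkiq}$, which vanishes precisely because $\beta_{[jik]}=0$; hence $\sum_i\beta_{iiq}=0$, and the desired trace vanishes by skew-symmetry.

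For the inverse identities, a direct expansion gives
\[
L(K(\beta))_{ijk} = \tfrac{1}{3}\bigl(2\beta_{ijk} - \beta_{jki} - \beta_{kij}\bigr),
\]
which collapses to $\beta_{ijk}$ upon applying the cyclic identity $\beta_{jki}+\beta_{kij}=-\beta_{ijk}$; the mirror calculation for $K\circ L$ uses the cyclic identity for $h$ together with its symmetry in the last two indices. For the isometry, expanding $|K(\beta)|^2$ isolates the cross term $Q' = \sum\beta_{jik}\beta_{kij}$. Setting $Q = \sum\beta_{abc}\beta_{bca}$, a cyclic relabelling of dummies shows $Q = \sum\beta_{cab}\beta_{abc}$, and substituting $\beta_{cab} = -\beta_{abc}-\beta_{bca}$ from the cyclic identity yields the self-referential equation $Q = -|\beta|^2 - Q$, so $Q = -\tfrac{1}{2}|\beta|^2$. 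Then $Q' = -Q$ via skew-symmetry, and $|K(\beta)|^2 = \tfrac{1}{3}(2|\beta|^2+2Q') = |\beta|^2$, which also explains the normalization $\tfrac{1}{\sqrt 3}$. The single real obstacle is the trace verification, the only step requiring a non-trivial combination of all three defining properties of $V$.
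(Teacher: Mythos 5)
Your overall architecture (check well-definedness, then inverses, then isometry) mirrors the paper's proof, and your trace argument and isometry computation are both correct, though organized slightly differently: the paper derives $\beta_{iim}=0$ by contracting the $\Omega^2_{14}$ condition $\beta_{ijk}\ph_{ajk}=0$ against $\ph_{aim}$ and using the fundamental $\ph\ph$ identity, while you contract the cyclic relation against $\ps_{jkpq}$ to get $\beta_{ipq}=\beta_{jik}\ps_{jkpq}$ and then trace; both ultimately rest on $\beta_{[ijk]}=0$. Your computation $Q=\sum\beta_{abc}\beta_{bca}=-\tfrac12|\beta|^2$ via the self-referential substitution, together with $Q'=-Q$, is equivalent to the paper's identity $\beta_{ijk}\gamma_{kji}=\tfrac12\beta_{ijk}\gamma_{ijk}$ (specialized to $\gamma=\beta$, which suffices by polarization).

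However, there is a genuine gap in your well-definedness check for $K$. You claim the $\ph$-contraction
\[
K(\beta)_{ijk}\ph_{iak} = \tfrac{1}{\sqrt 3}\bigl(\beta_{jik}\ph_{iak}+\beta_{kij}\ph_{iak}\bigr)
\]
``reduces, after rewriting $\ph_{iak}=-\ph_{aik}$ and cyclically permuting $\ph$ on the other term, to the $\Omega^2_{14}$ condition on $\beta$ from~\eqref{eq:64-desc-1}.'' This is correct for the first term: $\beta_{jik}\ph_{iak}=-\beta_{jik}\ph_{aik}=0$ contracts the \emph{last} two indices of $\beta$ against $\ph$, which is exactly the defining $\Omega^2_{14}$ condition. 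But the second term $\beta_{kij}\ph_{iak}$ does \emph{not}: after a cyclic permutation one has $\beta_{kij}\ph_{kia}$, which contracts the \emph{first} two indices of $\beta$ against $\ph$. The vanishing of this quantity, namely $\beta_{pqj}\ph_{pqa}=0$, is a \emph{derived} fact, not one of the defining conditions in~\eqref{eq:64-desc-1}, and it requires both the cyclic identity and the $\Omega^2_{14}$ condition to establish (for instance, contract the cyclic relation $\beta_{ijk}+\beta_{jki}+\beta_{kij}=0$ with $\ph_{ijm}$ and simplify using skew-symmetry of $\beta$ and $\ph$ and the defining $\Omega^2_{14}$ condition on the third term). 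The paper records this auxiliary identity $\beta_{ijk}\ph_{ija}=0$ alongside $\beta_{iim}=0$ precisely because both are needed before $K$ can be shown to land in $W$. You correctly identified the trace as needing a real argument but missed that the $\ph$-contraction needs one of the same character; the fix is short, but your proposal as written does not close this step.
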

\begin{proof}
First we show that the linear maps $K$ and $L$ actually do map $V \to W$ and $W \to V$, respectively.

Let $\beta \in V$, so the three conditions of~\eqref{eq:64-desc-1} are satisfied. Contracting the second condition with $\ph_{aim}$ and using the first and third conditions gives
\begin{align*}
0 & = \beta_{ijk} \ph_{ajk} \ph_{aim} = \beta_{ijk} (g_{ji} g_{km} - g_{jm} g_{ki} - \ps_{jkim}) \\
& = \beta_{iim} - \beta_{imi} - \beta_{ijk} \ps_{imjk} = 2 \beta_{iim} - \frac{1}{3} (\beta_{ijk} + \beta_{jki} + \beta_{kij}) \ps_{ijkm} \\
& = 2 \beta_{iim} + 0.
\end{align*}
Similarly, we have
\begin{align*}
\beta_{ijk} \ph_{ija} & = (- \beta_{jki} - \beta_{kij} ) \ph_{ija} = \beta_{jik} \ph_{ija} - \beta_{kij} \ph_{aij} \\
& = - \beta_{jik} \ph_{jia} - 0 = - \beta_{ijk} \ph_{ija}.
\end{align*}
Hence we have shown that for $\beta \in V$, we always have
\begin{equation} \label{eq:64-1-temp}
\beta_{iim} = 0 \qquad \text{and} \qquad \beta_{ijk} \ph_{ija} = 0.
\end{equation}
Define $h_{ijk}$ by $\sqrt{3} h_{ijk} = \beta_{jik} + \beta_{kij}$. We need to show that $h_{ijk}$ satisfies the four conditions of~\eqref{eq:64-desc-2}. The first condition, symmetry in $j,k$, is immediate by construction. For the second condition, we compute
$$ \sqrt{3} h_{ikk} = \beta_{kik} + \beta_{kik} = - 2 \beta_{kki} = 0 $$
by~\eqref{eq:64-1-temp}. For the third condition, we have
$$ \sqrt{3} (h_{ijk} + h_{jki} + h_{kij}) = (\beta_{jik} + \beta_{kij}) + (\beta_{kji} + \beta_{ijk}) + (\beta_{ikj} + \beta_{jki}) = 0 $$
using the skew-symmetry of $\beta_{ijk}$ in $j,k$. Finally, for the fourth condition, using~\eqref{eq:64-1-temp} and~\eqref{eq:64-desc-1} we have
$$ \sqrt{3} h_{ijk} \ph_{iak} = (\beta_{jik} + \beta_{kij}) \ph_{iak} = 0 + 0. $$
Hence, $K$ indeed maps $V$ into $W$.

Let $h \in W$, so the four conditions of~\eqref{eq:64-desc-2} are satisfied. Define $\beta_{ijk}$ by $\sqrt{3} \beta_{ijk} = h_{jik} - h_{kij}$. We need to show that $\beta_{ijk}$ satisfies the three conditions of~\eqref{eq:64-desc-1}. The first condition, skew-symmetry in $j,k$, is immediate by construction. For the second condition, we compute using the conditions in~\eqref{eq:64-desc-2} that
\begin{align*}
\sqrt{3} \beta_{ijk} \ph_{ajk} & = (h_{jik} - h_{kij}) \ph_{ajk} \\
& = - h_{jik} \ph_{jak} + (h_{ijk} + h_{jki}) \ph_{ajk} \\
& = 0 + 0 - h_{jik} \ph_{jak} = 0.
\end{align*}
Finally, for the third condition, using~\eqref{eq:64-desc-2} we have
\begin{align*}
\sqrt{3} (\beta_{ijk} + \beta_{jki} + \beta_{kij}) & = (h_{jik} - h_{kij}) + (h_{kji} - h_{ijk}) + (h_{ikj} - h_{jki}) = 0
\end{align*}
using the symmetry of $h_{ijk}$ in $j,k$. Hence, $L$ indeed maps $W$ into $V$.

To see that $L$ and $K$ are inverses of each other, we compute using~\eqref{eq:64-desc-1} that
\begin{align*}
(L K \beta)_{ijk} & = \frac{1}{\sqrt{3}} \big( (K \beta)_{jik} - (K \beta)_{kij} \big) \\
& = \frac{1}{3} \big( (\beta_{ijk} + \beta_{kji}) - (\beta_{ikj} + \beta_{jki}) \big) \\
& = \frac{1}{3} \big( 2 \beta_{ijk} + (- \beta_{kij} - \beta_{jki}) \big) \\
& = \beta_{ijk}.
\end{align*}
Since $L K = \Id_V$, we have $L = K^{-1}$, although one can similarly show directly that $K L = \Id_W$.

It remains to show that $K$, and thus $L$, is an isometry. Let $\beta, \gamma \in V$. First observe that
$$ \beta_{ijk} \gamma_{kji} = \beta_{ijk} (- \gamma_{jik} - \gamma_{ikj}) = - \beta_{ikj} \gamma_{jki} + \beta_{ijk} \gamma_{ijk}, $$
from which we deduce that
$$ \beta_{ijk} \gamma_{kji} = \tfrac{1}{2} \beta_{ijk} \gamma_{ijk}. $$
Using the above, we compute
\begin{align*}
\langle K \beta, K \gamma \rangle & = (K \beta)_{ijk} (K \gamma)_{ijk} = \frac{1}{3} (\beta_{jik} + \beta_{kij}) (\gamma_{jik} + \gamma_{kij}) \\
& = \frac{1}{3} ( \beta_{jik} \gamma_{jik} + \beta_{kij} \gamma_{jik} + \beta_{jik} \gamma_{kij} + \beta_{kij} \gamma_{kij} ) \\
& = \frac{1}{3} (2 \beta_{ijk} \gamma_{ijk} + 2 \beta_{ijk} \gamma_{kji}) = \beta_{ijk} \gamma_{ijk} = \langle \beta, \gamma \rangle,
\end{align*}
so $K$ is indeed an isometry.
\end{proof}

\subsection{Identities for elements of $\cS^2 (\Lambda^2)$} \label{sec:identities-Pop}

Before we can consider the remaining decomposition, namely that of $\Sym^2 (\mb{14})$ in Section~\ref{sec:S2-14}, we need to collect some important identities for elements of $\cS^2 (\Lambda^2) = \Gamma( \Sym^2 (\Lambda^2 T^* M) )$. These identities depend on the $\G$-structure, and involve the operator $\Pop$ on $\Omega^2$ introduced in~\eqref{eq:Pdefn}, as well as two linear maps $\iota_{\ph} \colon \cS \to \cS^2 (\Lambda^2)$ and $\rho_{\ph} \colon \cS^2 (\Lambda^2) \to \cS^2$ defined below in~\eqref{eq:rho-ph}. These identities are also used in Section~\ref{sec:curvature-torsion} to study the decomposition of the curvature tensor.

An element $U \in \cS^2 (\Lambda^2)$ satisfies
$$ U_{ijkl} = - U_{jikl} = - U_{ijlk} = U_{klij} $$
and corresponds to a self-adjoint operator on $\Omega^2$ via
\begin{equation} \label{eq:U-cS2Lambda2}
(U \beta)_{ij} = U_{ijkl} \beta_{kl} \qquad \text{for $\beta \in \Omega^2$}.
\end{equation}

\begin{rmk} \label{rmk:curv-operator-convention}
If $U = \tRm \in \cS^2 (\Lambda^2)$ is the Riemann curvature tensor of a Riemannian metric $g$, then the ``Riemann curvature operator'' $\wh{\sR}$ is the self-adjoint operator on $\Omega^2$ given by~\eqref{eq:U-cS2Lambda2} with an additional minus sign. That is, $(\wh{\sR} \beta)_{ij} = - R_{ijkl} \beta_{kl} = R_{ijlk} \beta_{kl}$. This is done so that $\wh{\sR}$ being a positive operator implies positive sectional curvature. Since we are not concerned with positivity of the operators $U \in \cS^2 (\Lambda^2)$, we use the definition~\eqref{eq:U-cS2Lambda2} which looks more natural. This issue would go away if we had defined the Riemann curvature tensor $R_{ijkl}$ in such a way that the Ricci tensor would be given by contraction on the first and third indices, rather than the first and fourth. This can be done by either defining $\tRm$ to be the negative of~\eqref{eq:Riemann-convention}, which some authors do but is nonstandard, or, what is better, by defining $R_{ijkl} = g_{km} R^m_{ijl}$. That is, by using the metric to identify a skew-symmetric bilinear form with a skew-adjoint operator by raising the \emph{first} index rather than the second. See also Remark~\ref{rmk:iota-g-sign}. 
\end{rmk}

Let $\sI \colon \Omega^2 \to \Omega^2$ denote the identity operator. Since $\Pop$ is self-adjoint by~\eqref{eq:Pop-coords}, it corresponds to an element of $\cS^2 (\Lambda^2)$. Indeed,~\eqref{eq:Pdefn} shows that $\Pop$ corresponds to the section $\ps \in \Omega^4 \subset \cS^2 (\Lambda^2)$. Moreover, from~\eqref{eq:Pop-squared} we have
\begin{equation} \label{eq:Pop-squared-v2}
\Pop^2 = 8 \sI - 2 \Pop.
\end{equation}
Recall from~\eqref{eq:curvature-tensors-decomp} that
\begin{equation} \label{eq:cS2Lambda2-decomp}
\cS^2 (\Lambda^2) = \Omega^4 \operp \K,
\end{equation}
where $\K$ is the space of curvature tensors. Further recall from Section~\ref{sec:forms} that any element of $\Omega^4$ can be written as $A \diamond \ps$ for some unique $A \in \cS^2 \oplus \Omega^2_7$, and thus each such $A \diamond \ps$ is an element of $\cS^2 (\Lambda^2)$. The particular case $\Pop = \ps$ corresponds to $A = \frac{1}{4} g$, so
\begin{equation} \label{eq:g-diamond-ps}
g \diamond \ps = 4 \Pop.
\end{equation}

Given an element $U \in \cS^2 (\Lambda^2)$, we can precompose or postcompose with $\Pop$ to obtain the linear operators $U \Pop$, $\Pop U$, and $\Pop U \Pop$ on $\Omega^2$. In terms of a local orthonormal frame, we have
\begin{equation} \label{eq:PU-relations}
(U \Pop)_{ijkl} = U_{ijpq} \ps_{pqkl}, \quad (\Pop U)_{ijkl} = \ps_{ijpq} U_{pqkl}, \quad (\Pop U \Pop)_{ijkl} = \ps_{ijpq} U_{pqab} \ps_{abkl}.
\end{equation}
Note that $\Pop U \Pop$ is again self-adjoint, so it corresponds to an element of $\cS^2 (\Lambda^2)$. However, $U \Pop$ and $\Pop U$ are not in general self-adjoint. In fact it is clear that
\begin{equation} \label{eq:PU-relations-b}
(U \Pop)_{ijkl} = (\Pop U)_{klij},
\end{equation}
and thus the sum $U \Pop + \Pop U$ lies in $\cS^2 (\Lambda^2)$. Moreover, although both $U \Pop + \Pop U$ and $\Pop U \Pop$ lie in $\cS^2 (\Lambda^2)$, they do not in general lie in the subspace $\K$ of curvature tensors. Rather, they also have components in the $\Omega^4$ factor of the decomposition~\eqref{eq:cS2Lambda2-decomp}. One of the goals of this section is to precisely describe the
$$ \cS^2 (\Lambda^2) = \Omega^4 \operp \iota_g (\cS^2) \operp \W $$
decompositions of both $U \Pop + \Pop U$ and $\Pop U \Pop$, especially for certain special types of $U \in \cS^2 (\Lambda^2)$.

The map $\iota_g \colon \cS^2 \to \K$ introduced in~\eqref{eq:iota-g} given by
\begin{equation} \label{eq:iota-g-v2}
(\iota_g h)_{ijkl} = g_{il} h_{jk} + g_{jk} h_{il} - g_{ik} h_{jl} - g_{jl} h_{ik}.
\end{equation}
may be regarded as a map $\iota_g \colon \cS^2 \to \cS^2 (\Lambda^2)$. Similarly, the map $\rho_g \colon \K \to \cS^2$ introduced in~\eqref{eq:rho-g} given by
\begin{equation} \label{eq:rho-g-v2}
(\rho_g U)_{jk} = U_{ljkl}.
\end{equation}
may be extended by zero to a map $\rho_g \colon \cS^2 (\Lambda^2) \to \cS^2$, since for $\eta \in \Omega^4$, we have $(\rho_g \eta)_{jk} = \eta_{ljkl} = 0$. Note that for $h = g$, equation~\eqref{eq:iota-g-v2} gives
$$ ((\iota_g g) \beta)_{ij} = ((\iota_g g) \beta)_{ijkl} \beta_{kl} = 2(g_{il} g_{jk} - g_{ik} g_{jl}) \beta_{kl} = - 4 \beta_{ij}. $$
We deduce that, with our convention for the action~\eqref{eq:U-cS2Lambda2} of $U \in \cS^2 (\Lambda^2)$ on $\Omega^2$, we have
\begin{equation} \label{eq:iota-g-identity}
\iota_g g = - 4 \sI.
\end{equation}

\begin{rmk} \label{rmk:iota-g-sign}
The factor of $4$ in~\eqref{eq:iota-g-identity} arises because we use the inner product on (skew-symmetric) \emph{tensors}, rather than the inner product on $2$-forms, which differs by a factor of $\frac{1}{2}$. The minus sign arises because of our conventions for the Riemann curvature tensor. (See Remark~\ref{rmk:curv-operator-convention}.) With the appropriate choices of inner product and curvature conventions, one can arrange that $\iota_g g = \sI$. Note that another way to ``fix'' the sign would be to define $\iota_g h = g \owedge h$ in~\eqref{eq:iota-g} to be the negative of what we chose, but that would introduce unpleasant minus signs in~\eqref{eq:curvature-n}, unless we also changed the curvature convention.
\end{rmk}

If $U, V \in \cS^2 (\Lambda^2)$, we have $\langle \Pop U, V \rangle = \ps_{ijab} U_{abkl} V_{ijkl}$. Using the symmetries of $U, V, \ps$, it is easy to see from this expression in indices that
\begin{equation} \label{eq:Pop-inner-products}
\langle \Pop U, V \rangle = \langle U \Pop, V \rangle = \langle U, \Pop V \rangle = \langle U, V \Pop \rangle.
\end{equation}

Let $A \in \cT^2$, so $A \diamond \ps \in \Omega^4$. For $U \in \cS^2 (\Lambda^2)$, we have
\begin{align*}
\langle U, A \diamond \ps \rangle & = U_{ijkl} (A \diamond \ps)_{ijkl} \\
& = U_{ijkl} (A_{ip} \ps_{pjkl} + A_{jp} \ps_{ipkl} + A_{kp} \ps_{ijpl} + A_{lp} \ps_{ijkp}).
\end{align*}
Using the symmetries of $U$, the four terms above are the same, and hence
\begin{equation} \label{eq:U-IP-A-diamond-ps-1}
\langle U, A \diamond \ps \rangle = 4 U_{ijkl} A_{ip} \ps_{pjkl}.
\end{equation}

\begin{cor} \label{cor:U-IP-diamond-ps}
Let $U \in \cS^2 (\Lambda^2)$. Then $U$ is a curvature tensor (that is, $U$ is orthogonal to $\Omega^4$) if and only if $U_{ijkl} \ps_{pjkl} = 0$. More generally, $U$ is orthogonal to just $\Omega^4_7$ if and only if $U_{ijkl} \ph_{jkl} = 0$.
\end{cor}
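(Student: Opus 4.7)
The plan is to exploit the key identity~\eqref{eq:U-IP-A-diamond-ps-1}, namely $\langle U, A \diamond \ps \rangle = 4 \langle A, B \rangle$ where $B_{ip} := U_{ijkl}\ps_{pjkl}$, and to recognize $B \in \cT^2$ as the tensor whose $\G$-irreducible components encode the projections of $U$ onto the summands of $\Omega^4 = \Omega^4_1 \oplus \Omega^4_7 \oplus \Omega^4_{27}$.

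For the first statement, Corollary~\ref{cor:ABinnerproduct} says $A \diamond \ps = 0$ whenever $A \in \Omega^2_{14}$, so~\eqref{eq:U-IP-A-diamond-ps-1} immediately forces $B \perp \Omega^2_{14}$; thus $B$ automatically lies in $\cS^2 \oplus \Omega^2_7$. Since the map $A \mapsto A \diamond \ps$ is surjective onto $\Omega^4$ as $A$ ranges over $\cT^2$, the condition $U \perp \Omega^4$ is equivalent to $\langle A, B \rangle = 0$ for all $A \in \cT^2$, which in view of the preceding observation forces $B = 0$, i.e.\ $U_{ijkl}\ps_{pjkl} = 0$.

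For the second statement, using the description $\Omega^4_7 = \{ A \diamond \ps \mid A \in \Omega^2_7 \}$ from~\eqref{eq:omega34decomp}, the condition $U \perp \Omega^4_7$ translates to $B_7 = 0$, which by~\eqref{eq:omega27vf} (or Lemma~\ref{lemma:14-part}) is equivalent to $\Vop B = 0$. It then suffices to show that $(\Vop B)_m = B_{ip} \ph_{ipm}$ is a nonzero multiple of $C_m := U_{mjkl} \ph_{jkl}$. The main computation is to expand $\ph_{ipm}\ps_{pjkl}$ via the first identity of~\eqref{eq:phps} (after moving the common index $p$ to the last slot in both $\ph$ and $\ps$) into six terms, and to contract each with $U_{ijkl}$. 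Three of these reduce to $C_m$ each after repeated application of the pair symmetry $U_{ijkl} = U_{klij}$, the skew-symmetries of $U$ in the first and last pairs of indices, and the cyclic symmetry of $\ph$. The remaining three terms vanish: one by the skew-symmetry of $U$ in its first two indices (it contains the factor $U_{iikl}$), and two because $U_{ijil}$ and the Ricci-type contraction $(\rho_g U)_{jk}$ are symmetric in their two free indices while the companion factor $\ph_{jml}$ or $\ph_{jkm}$ is skew in precisely those indices. Assembling the pieces yields $(\Vop B)_m = 3 C_m$, completing the proof.

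The only real obstacle is the bookkeeping of the index manipulations in the second part. There is no conceptual difficulty, but one must carefully combine the three symmetries of $U \in \cS^2(\Lambda^2)$ with the cyclic symmetries of $\ph$ to collapse each of the six expanded terms to either $C_m$ or $0$ without sign errors.
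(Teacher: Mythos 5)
Your proof is correct and follows essentially the same route as the paper: the first part invokes~\eqref{eq:U-IP-A-diamond-ps-1} together with nondegeneracy of the pairing, and the second part reduces to the same expansion of $\ph \cdot \ps$ via~\eqref{eq:phps} and the same collapsing of the six terms using the symmetries of $U \in \cS^2(\Lambda^2)$ and of $\rho_g U$. Introducing the auxiliary tensor $B_{ip} = U_{ijkl}\ps_{pjkl}$ and reformulating the second part as $\Vop B = 0$ is a harmless repackaging of the paper's direct parametrization $A = X \hk \ph$; the resulting computation, including the coefficient $3$ and the vanishing of the trace and Ricci-contraction terms, matches the paper's.
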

\begin{proof}
From~\eqref{eq:U-IP-A-diamond-ps-1}, we find that $U$ is orthogonal to $\Omega^4$ if and only if $U_{ijkl} A_{ip} \ps_{pjkl} = 0$ for all $A \in \cT^2$, which is clearly equivalent to $U_{ijkl} \ps_{pjkl} = 0$. More generally, if we only ask for orthogonality to $\Omega^4_7$, then we must have $U_{ijkl} A_{ip} \ps_{pskl} = 0$ for all $A \in \Omega^2_7$, since it is for such $A$ that we have $A \diamond \ps \in \Omega^4_7$. Hence we can take $A_{ip} = X_m \ph_{mip}$, and thus $U_{ijkl} X_m \ph_{mip} \ps_{pjkl} = 0$ for all $X \in \Omega^1$. This is equivalent to $U_{ijkl} \ph_{mip} \ps_{pjkl} = 0$, which using the symmetries of $U$ becomes
\begin{align*}
0 & = U_{ijkl} \ph_{mip} \ps_{jklp} \\
& = U_{ijkl} ( g_{mj} \ph_{ikl} + g_{mk} \ph_{jil} + g_{ml} \ph_{jki} - g_{ij} \ph_{mkl} - g_{ik} \ph_{jml} - g_{il} \ph_{jkm} ) \\
& = - U_{mikl} \ph_{ikl} - U_{mlij} \ph_{lij} - U_{mkij} \ph_{kij} - 0 + (\rho_g U)_{jl} \ph_{jml} - (\rho_g U)_{jk} \ph_{jkm}.
\end{align*}
Using the symmetry of $\rho_g U$, the last two terms vanish, and we are left with $- 3 U_{mjkl} \ph_{jkl} = 0$.
\end{proof}

Using the $\G$-structure $\ph$, we get another pair of linear maps $\iota_{\ph}$, $\rho_{\ph}$ as follows. Let $\iota_{\ph} \colon \cS^2 \to \cS^2 (\Lambda^2)$ be given by
\begin{equation} \label{eq:iota-ph}
(\iota_{\ph} h)_{ijkl} = h_{pq} \ph_{pij} \ph_{qkl}.
\end{equation}
It is clear that $\iota_{\ph} h \in \cS^2 (\Lambda^2)$. However, we show below in~\eqref{eq:iota-ph-of-g} that the image of $\iota_{\ph}$ is \emph{not} contained in the space $\K$ of curvature tensors. Let $\rho_{\ph} \colon \cS^2 (\Lambda^2) \to \cS^2$ be given by
\begin{equation} \label{eq:rho-ph}
(\rho_{\ph} U)_{pq} = U_{ijkl} \ph_{ijp} \ph_{klq}.
\end{equation}
It is easy to see that $\rho_{\ph} U$ is indeed symmetric. We also have
\begin{equation} \label{eq:rho-ph-adj}
\langle \rho_{\ph} U, h \rangle = U_{ijkl} \ph_{ijp} \ph_{klq} h_{pq} = \langle U, \iota_{\ph} h \rangle.
\end{equation}
Note that the symmetric $2$-tensor $F$ of Definition~\ref{defn:F} is precisely
\begin{equation} \label{eq:F-rho-ph}
F = \rho_{\ph} (\sR),
\end{equation}
where $\sR$ is the Riemann curvature tensor $\tRm$ thought of as a self-adjoint operator on $\Omega^2$. Note also that from
\begin{align*}
(\iota_{\ph} g)_{ijkl} & = g_{pq} \ph_{pij} \ph_{qkl} = \ph_{ijp} \ph_{klp} \\
& = g_{ik} g_{jl} - g_{il} g_{jk} - \ps_{ijkl} = - \tfrac{1}{2} (\iota_g g)_{ijkl} - \ps_{ijkl},
\end{align*}
and~\eqref{eq:iota-g-identity}, we obtain
\begin{equation} \label{eq:iota-ph-of-g}
\iota_{\ph} g = 2 \sI - \Pop.
\end{equation}

\begin{rmk} \label{rmk:rho-ph}
The maps $\iota_{\ph}, \rho_{\ph}$ were first discussed in Cleyton--Ivanov~\cite{CI}. Their map $c^{\phi}$ is the same as our map $\rho_{\ph}$, up to a constant. Their map $r_{\ph}$ is, again up to a constant, our map $\iota_{\ph}$ followed by the orthogonal projection $\cS^2 (\Lambda^2) \to \K$. A small number of the formulas we derive in this section are either explicit or at least implicit in~\cite{CI}.
\end{rmk}

It is easy to check that the maps $\iota_{\ph}, \rho_{\ph}$ satisfy the requirements of Lemma~\ref{lemma:basic-tool} to give an orthogonal decomposition
$$ \K \operp \Omega^4 = \cS^2 (\Lambda^2) = (\ker \rho_{\ph}) \operp \iota_{\ph} (\Omega^0 g) \operp \iota_{\ph} (\cS^2_0) $$
that is \emph{different} from the decomposition~\eqref{eq:curvature-classical-decomp}. We do not directly use this decomposition, although it is implicit in much of what follows. We do, however, use modifications of $\iota_{\ph}$ and $\rho_{\ph}$ to partially decompose the space $\W$ of Weyl tensors at the end of this section.

The next result gives the values of $\rho_g$ and $\rho_{\ph}$ on elements of the form $\iota_g h$, $\iota_{\ph} h$, and $h \diamond \ph$.
\begin{prop} \label{prop:ph-rep-maps}
Let $h \in \cS^2$. Then we have
\begin{equation} \label{eq:ph-rep-maps}
\begin{aligned}
\rho_g (\iota_g h) & = 5 h + (\tr h) g, & \rho_g (\iota_{\ph} h) & = h - (\tr h) g, & \rho_g (h \diamond \ps) & = 0, \\
\rho_{\ph} (\iota_g h) & = 4 h - 4 (\tr h) g, & \rho_{\ph} (\iota_{\ph} h) & = 36 h, & \rho_{\ph} (h \diamond \ps) & = 16 h - 16 (\tr h) g.
\end{aligned}
\end{equation}
\end{prop}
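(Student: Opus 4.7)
The plan is to verify each of the six identities by direct computation in a local orthonormal frame, using the fundamental contraction identities of Section~\ref{sec:contractions}. Three of the six are essentially immediate, and the other three follow a common pattern that I outline below.

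The easy cases are as follows. The first formula $\rho_g(\iota_g h) = 5h + (\tr h) g$ is just the $n=7$ specialization of~\eqref{eq:iota-g-rho-g-7}, already derived in Section~\ref{sec:basic-tool}. The third formula $\rho_g(h \diamond \ps) = 0$ is free: $h \diamond \ps \in \Omega^4$ by~\eqref{eq:omega34decomp}, and $\rho_g$ was defined to vanish on $\Omega^4$ (see the paragraph containing~\eqref{eq:rho-g-v2}). The fifth formula $\rho_{\ph}(\iota_{\ph} h) = 36 h$ follows in one line from the second identity in~\eqref{eq:phph}:
\[
(\rho_{\ph} \iota_{\ph} h)_{pq} = h_{ab} \ph_{aij} \ph_{bkl} \ph_{ijp} \ph_{klq} = h_{ab}(\ph_{aij}\ph_{pij})(\ph_{bkl}\ph_{qkl}) = h_{ab}(6g_{ap})(6g_{bq}) = 36 h_{pq}.
\]

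For the remaining three identities, I would carry out the contractions using the first identity of~\eqref{eq:phph}. For $\rho_g(\iota_{\ph} h)$, I expand
\[
(\rho_g \iota_{\ph} h)_{jk} = h_{pq} \ph_{plj} \ph_{qkl},
\]
rewrite $\ph_{plj}\ph_{qkl} = -\ph_{pjl}\ph_{qkl}$ and apply $\ph_{pjl}\ph_{qkl} = g_{pq}g_{jk} - g_{pk}g_{jq} - \ps_{pjqk}$ from~\eqref{eq:phph}. The symmetry of $h$ in $p,q$ combined with the antisymmetry of $\ps_{pjqk}$ under the swap $p \leftrightarrow q$ kills the $\ps$-term, leaving $h_{jk} - (\tr h) g_{jk}$. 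The same strategy works for $\rho_{\ph}(\iota_g h)$: after expanding the four summands of $(\iota_g h)_{ijkl}$ and contracting each against $\ph_{ijp}\ph_{klq}$, each of the four terms reduces (via~\eqref{eq:phph}) to a combination of $g$'s and a $\ps$-term that dies by the same symmetric-times-antisymmetric argument, and each contributes $h_{pq} - (\tr h) g_{pq}$, so the total is $4h - 4(\tr h) g$.

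For the sixth identity $\rho_{\ph}(h \diamond \ps) = 16 h - 16(\tr h) g$, I would first exploit the total skew-symmetry of $h \diamond \ps$: the four terms in $(h \diamond \ps)_{ijkl} = h_{im}\ps_{mjkl} + h_{jm}\ps_{imkl} + h_{km}\ps_{ijml} + h_{lm}\ps_{ijkm}$, when contracted with $\ph_{ijp} \ph_{klq}$, give equal contributions (the first two by antisymmetry of $\ph_{ijp}$ in $i,j$; the last two by antisymmetry of $\ph_{klq}$ in $k,l$; the first pair versus the second pair by the symmetry $\{i,j\} \leftrightarrow \{k,l\}$ paired with $p \leftrightarrow q$). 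So one computes just the first term: apply $\ph_{klq}\ps_{ajkl} = -4\ph_{qaj}$ from the second identity of~\eqref{eq:phps}, then the contraction $\ph_{ijp}\ph_{qaj}$ via~\eqref{eq:phph}, discarding the $\ps$-term by the usual symmetric-antisymmetric cancellation; the first term yields $4h_{pq} - 4(\tr h)g_{pq}$, and multiplying by four produces $16h - 16(\tr h)g$.

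There is no substantive obstacle; the main bookkeeping point, which recurs throughout, is the simple observation that $h_{ab}\,\ps_{\cdots a \cdots b \cdots} = 0$ whenever $h$ is symmetric in $a,b$ and the positions occupied by $a,b$ in $\ps$ have odd parity under transposition (which is automatic for any two distinct positions of a $4$-form, unless they are equal). This is what reduces all cross-terms so that only the scalar-and-identity pieces survive.
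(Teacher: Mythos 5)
Your proof is correct and follows the same strategy as the paper's: direct computation in an orthonormal frame via the contraction identities~\eqref{eq:phph} and~\eqref{eq:phps}, with the symmetric-times-skew cancellation of $\ps$-terms doing the bookkeeping. One small note on the sixth identity: the $\{i,j\}\leftrightarrow\{k,l\}$ plus $p\leftrightarrow q$ relabeling only shows that the second pair of terms equals the $p\leftrightarrow q$-transpose of the first pair, so ``multiply by four'' implicitly uses that a single term's contribution is already symmetric in $p,q$ --- this is true (your explicit evaluation of the first term gives a manifestly symmetric tensor), but the logical order is slightly reversed; the paper avoids the issue by grouping the four terms into two pairs and evaluating both.
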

\begin{proof}
The first equation is from~\eqref{eq:iota-g-rho-g-7}. Using the definitions~\eqref{eq:iota-ph} of $\iota_{\ph}$ and~\eqref{eq:rho-g-v2} of $\rho_g$, and the symmetry of $h$, we compute
\begin{align*}
(\rho_g (\iota_{\ph} h))_{jk} & = (\iota_{\ph} h)_{ljkl} = h_{pq} \ph_{plj} \ph_{qkl}, \\
& = h_{pq} (g_{jq} g_{pk} - g_{jk} g_{pq} - \ps_{jpqk}) \\
& = h_{kj} - (\tr h) g_{jk} - 0,
\end{align*}
which is the second equation. The third equation is immediate since $\rho_g$ is zero on $\Omega^4$.

Using the definitions~\eqref{eq:iota-g-v2} of $\iota_g$ and~\eqref{eq:rho-ph} of $\rho_{\ph}$, and the skew-symmetry of $\ph$, we compute
\begin{align*}
(\rho_{\ph} (\iota_g h))_{pq} & = (\iota_g h)_{ijkl} \ph_{ijp} \ph_{klq} = (g_{il} h_{jk} + g_{jk} h_{il} - g_{ik} h_{jl} - g_{jl} h_{ik}) \ph_{ijp} \ph_{klq} \\
& = 4 g_{il} h_{jk} \ph_{ijp} \ph_{klq} = 4 h_{jk} \ph_{ljp} \ph_{klq} \\
& = 4 h_{jk} (g_{jq} g_{pk} - g_{jk} g_{pq} - \ps_{jpqk}) \\
& = 4 h_{qp} - 4 (\tr h) g_{pq} - 0,
\end{align*}
which gives the fourth equation. Similarly, we compute
$$ (\rho_{\ph} (\iota_{\ph} h))_{pq} = (\iota_{\ph} h)_{ijkl} \ph_{ijp} \ph_{klq} = h_{ab} \ph_{aij} \ph_{bkl} \ph_{ijp} \ph_{klq} = 36 h_{ab} g_{ap} g_{bq}, $$
yielding the fifth equation. Finally, we have
\begin{align*}
(\rho_{\ph} (h \diamond \ps))_{pq} & = (h \diamond \ps)_{ijkl} \ph_{ijp} \ph_{klq} \\
& = (h_{im} \ps_{mjkl} + h_{jm} \ps_{imkl} + h_{km} \ps_{ijml} + h_{lm} \ps_{ijkm}) \ph_{ijp} \ph_{klq} \\
& = (2 h_{im} \ps_{mjkl} + 2 h_{km} \ps_{ijml}) \ph_{ijp} \ph_{klq} \\
& = - 8 h_{im} \ph_{ijp} \ph_{mjq} - 8 h_{km} \ph_{klq} \ph_{mlp}.
\end{align*}
This becomes
\begin{align*}
(\rho_{\ph} (h \diamond \ps))_{pq} & = - 8 h_{im} (g_{im} g_{pq} - g_{iq} g_{pm} - \ps_{ipmq}) - 8 h_{km} (g_{km} g_{qp} - g_{kp} g_{qm} - \ps_{kqmp}) \\
& = - 8 (\tr h) g_{pq} + 8 h_{qp} - 0 - 8 (\tr h) g_{qp} + 8 h_{pq} + 0,
\end{align*}
which simplifies to the sixth equation.
\end{proof}

So far, we have identified \emph{three special classes} of elements in $\cS^2 (\Lambda^2)$, namely:
\begin{equation} \label{eq:special-cS2}
\begin{aligned}
A \diamond \ps & \in \Omega^4 \subset \cS^2 (\Lambda^2), \quad \text{for $A \in \cS^2 \oplus \Omega^2_7$}, \\
\iota_g h & \in \K \subset \cS^2 (\Lambda^2), \quad \text{for $h \in \cS^2$}, \\
\iota_{\ph} h & \in \cS^2 (\Lambda^2), \quad \text{for $h \in \cS^2$},
\end{aligned}
\end{equation}
Moreover, equations~\eqref{eq:g-diamond-ps},~\eqref{eq:iota-g-identity},~\eqref{eq:iota-ph-of-g}, and~\eqref{eq:Pop-squared-v2} show that
\begin{equation} \label{eq:special-g-maps}
g \diamond \ps = 4 \Pop, \qquad \iota_g g = - 4 \sI, \qquad \iota_{\ph} g = 2 \sI - \Pop, \qquad \Pop^2 = 8 \sI - 2 \Pop,
\end{equation}
so the subalgebra of $\cS^2 (\Lambda^2)$ generated by $\{ g \diamond \ps, \iota_g g, \iota_{\ph} g \}$ equals the subalgebra generated by $\{ \sI, \Pop \}$. We can thus restrict attention to the case where $h \in \cS^2_0$, so $\tr h = 0$. In particular, we can then decompose $\iota_{\ph} h$ into terms of the first two types in~\eqref{eq:special-cS2} plus a term in the space $\W$ of Weyl tensors.

\begin{prop} \label{prop:iotaph-decomp}
Let $h \in \cS^2_0$. Then
$$ \iota_{\ph} h \in \cS^2 (\Lambda^2) = \Omega^4 \operp \K = \Omega^4 \operp \iota_g (\cS^2) \operp \W $$
decomposes as
\begin{equation} \label{eq:iotaph-decomp}
\iota_{\ph} h = \tfrac{1}{3} h \diamond \ps + \tfrac{1}{5} \iota_g h + (\iota_{\ph} h)_{\W}.
\end{equation}
\end{prop}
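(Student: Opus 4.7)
The plan is to determine the three components of $\iota_{\ph} h$ under the orthogonal decomposition $\cS^2(\Lambda^2) = \Omega^4 \operp \iota_g(\cS^2) \operp \W$ (recalling from~\eqref{eq:curvature-classical-decomp} and the surrounding discussion that $\K = \iota_g(\cS^2) \operp \W$), by working each summand out separately using Proposition~\ref{prop:ph-rep-maps} and Corollary~\ref{cor:U-IP-diamond-ps}.

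First I would compute the $\Omega^4$ component. By Corollary~\ref{cor:U-IP-diamond-ps}, an element $U \in \cS^2(\Lambda^2)$ is orthogonal to $\Omega^4$ (i.e.\ is a curvature tensor) if and only if $U_{ijkl}\ps_{pjkl} = 0$. So I would compute $(\iota_{\ph} h)_{ijkl}\ps_{pjkl}$: plugging in $(\iota_{\ph} h)_{ijkl} = h_{ab}\ph_{aij}\ph_{bkl}$, the identity $\ph_{bkl}\ps_{pjkl} = -4\ph_{bpj}$ from~\eqref{eq:phps} converts this into $-4 h_{ab}\ph_{aij}\ph_{bpj}$, and then the contraction $\ph_{aij}\ph_{bpj} = g_{ab}g_{ip} - g_{ap}g_{ib} - \ps_{aibp}$ from~\eqref{eq:phph}, together with $\tr h = 0$ and the symmetry of $h$ killing the $\ps$-term, yields $4 h_{ip}$. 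Next, $(h\diamond\ps)_{ijkl}\ps_{pjkl}$ is a four-term sum coming from~\eqref{eq:diamond-coords}; the first term is handled by $\ps_{mjkl}\ps_{pjkl} = 24 g_{mp}$, and the other three are handled by $\ps_{abkl}\ps_{cdkl} = 4g_{ac}g_{bd}-4g_{ad}g_{bc}-2\ps_{abcd}$ after appropriate even cyclic reordering of indices, with the $\ps$-terms again killed by the symmetry of $h$ and $\tr h = 0$. The net result is $12 h_{ip}$. Hence $\bigl(\iota_{\ph} h - \tfrac{1}{3} h\diamond\ps\bigr){}_{ijkl}\ps_{pjkl} = 0$, so $\iota_{\ph} h - \tfrac{1}{3}h\diamond\ps \in \K$, which pins down the $\Omega^4$-component as $\tfrac{1}{3}h\diamond\ps$.

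Next I would determine the $\iota_g(\cS^2)$ component. Since $\rho_g$ vanishes on $\Omega^4$ (it is extended by zero there) and on $\W = \ker\rho_g$, applying $\rho_g$ to the decomposition~\eqref{eq:iotaph-decomp} leaves only the $\iota_g$ term. By Proposition~\ref{prop:ph-rep-maps} and $\tr h = 0$, we have $\rho_g(\iota_{\ph} h) = h$ and $\rho_g(\iota_g h) = 5 h$, so the coefficient of $\iota_g h$ must be $\tfrac{1}{5}$. The Weyl component $(\iota_{\ph} h)_{\W}$ is then defined as the remainder $\iota_{\ph} h - \tfrac{1}{3} h \diamond \ps - \tfrac{1}{5}\iota_g h$, which by construction lies in $\K$ and has vanishing $\rho_g$, hence lies in $\W$.

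The only mildly delicate step is the contraction $(h \diamond \ps)_{ijkl}\ps_{pjkl} = 12 h_{ip}$, because one must carefully reorder indices in $\ps$ to match the shape of~\eqref{eq:psps} and check that each residual $\ps$-term vanishes by the symmetry of $h$; everything else is routine given Proposition~\ref{prop:ph-rep-maps} and Corollary~\ref{cor:U-IP-diamond-ps}.
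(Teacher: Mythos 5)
Your proof is correct and follows essentially the same strategy as the paper: contract with $\ps$ on three indices and use Corollary~\ref{cor:U-IP-diamond-ps} to isolate the $\Omega^4$ part, then apply $\rho_g$ together with Proposition~\ref{prop:ph-rep-maps} to isolate the $\iota_g(\cS^2)$ part, with the Weyl part as the remainder. The one small difference is how the coefficient $\tfrac{1}{3}$ is pinned down: you verify directly that $(h \diamond \ps)_{ijkl}\ps_{pjkl} = 12\, h_{ip}$, whereas the paper gets this for free by invoking Corollary~\ref{cor:diamondinverse} (which already records that $B_{27} = \tfrac{1}{12}\eta^{\ps}_{27}$ for $\eta = B \diamond \ps$), so your version is a bit more self-contained but re-derives a special case of a formula already in hand.
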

\begin{proof}
We know that $\iota_{\ph} h$ decomposes orthogonally as
\begin{equation} \label{eq:iotaph-decomp-temp}
\iota_{\ph} h = B \diamond \ps + \iota_g \wt h + (\iota_{\ph} h)_{\W},
\end{equation}
for some unique $B \in \cS^2 \oplus \Omega^2_7$ and unique $\wt h \in \cS^2$, where $(\iota_{\ph} h)_{\W} \in \W$. Contracting both sides of~\eqref{eq:iotaph-decomp-temp} with $\ps$ on three indices, and using Corollary~\ref{cor:U-IP-diamond-ps}, we have
\begin{align*}
(B \diamond \ps)_{ijkl} \ps_{ajkl} & = (\iota_{\ph} h)_{ijkl} \ps_{ajkl} = (h_{pq} \ph_{pij} \ph_{qkl}) \ps_{ajkl} \\
& = - 4 h_{pq} \ph_{pij} \ph_{qaj} = - 4 h_{pq} (g_{pq} g_{ia} - g_{pa} g_{iq} - \ps_{piqa}) \\
& = 0 + 4 h_{ai} - 0,
\end{align*}
so $(B \diamond \ps)_{ijkl} \ps_{ajkl} = 4 h_{ia}$. But then, since $h \in \cS^2_0$, Corollary~\ref{cor:diamondinverse} gives
$$ B_{ia} = \tfrac{1}{12} (B \diamond \ps)_{ijkl} \ps_{ajkl} = \tfrac{1}{3} h_{ia}. $$
Applying the map $\rho_g$ to both sides of~\eqref{eq:iotaph-decomp-temp} and using Proposition~\ref{prop:ph-rep-maps} gives
$$
h = \rho_g (\iota_{\ph} h) = \rho_g ( B \diamond \ps + \iota_g \wt h + (\iota_{\ph} h)_{\W} ) = 0 + \big( 5 \wt h + (\tr \wt h) g \big) + 0. $$
Taking traces gives $0 = 12 \tr \wt h$, and thus $\wt h = \frac{1}{5} h$.
\end{proof}
The next three propositions and corollary give explicit formulas for $\Pop U + U \Pop$ and $\Pop U \Pop$ in the special cases where $U \in \cS^2 (\Lambda^2)$ is of the form $\iota_g h$, $h \diamond \ps$, $\iota_{\ph} h$, or $(\iota_{\ph} h)_{\W}$ for $h \in \cS^2_0$.

\begin{prop} \label{prop:iota-g-Pop}
Let $h \in \cS^2_0$, so $\iota_g h \in \K \subset \cS^2(\Lambda^2)$. Then we have
\begin{equation} \label{eq:iota-g-Pop}
\begin{aligned}
\Pop (\iota_g h) + (\iota_g h) \Pop & = - 2 h \diamond \ps, \\
\Pop (\iota_g h) \Pop & = - 4 h \diamond \ps - 4 \iota_g h + 4 \iota_{\ph} h.
\end{aligned}
\end{equation}
\end{prop}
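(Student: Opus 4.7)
The plan is to prove both identities by direct calculation in a local orthonormal frame, using the definitions~\eqref{eq:Pdefn},~\eqref{eq:iota-g-v2}, and~\eqref{eq:PU-relations}, the fundamental contraction identities~\eqref{eq:psps}, the formula~\eqref{eq:diamond4} for $h \diamond \ps$, and the fact that $h \in \cS^2_0$ is symmetric and traceless. The reduction to a concrete bookkeeping problem is immediate once all these definitions are unpacked; the content of the proposition is in how the pieces recombine.

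For the first identity, I would substitute $(\iota_g h)_{pqkl} = g_{pl}h_{qk} + g_{qk}h_{pl} - g_{pk}h_{ql} - g_{ql}h_{pk}$ into $(\Pop(\iota_g h))_{ijkl} = \ps_{ijpq}(\iota_g h)_{pqkl}$ and similarly expand $((\iota_g h)\Pop)_{ijkl} = (\iota_g h)_{ijpq}\ps_{pqkl}$. Each expansion produces four terms, and using symmetry of $h$ together with skew-symmetry of $\ps$ in a dummy index pair, they collapse to
\begin{equation*}
(\Pop(\iota_g h))_{ijkl} = -2 h_{km}\ps_{ijml} - 2 h_{lm}\ps_{ijkm}, \qquad ((\iota_g h)\Pop)_{ijkl} = -2 h_{im}\ps_{mjkl} - 2 h_{jm}\ps_{imkl}.
\end{equation*}
Adding these and comparing with the expansion~\eqref{eq:diamond4} of $h \diamond \ps$ will directly produce $-2(h\diamond\ps)_{ijkl}$.

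For the second identity, I would first compute $(\Pop(\iota_g h)\Pop)_{ijkl} = (\Pop(\iota_g h))_{ijab}\ps_{abkl}$ by inserting the expression from Step~1. A dummy-index swap $a\leftrightarrow b$ combined with the skew-symmetry of $\ps$ shows the two resulting terms are equal, reducing everything to
\begin{equation*}
(\Pop(\iota_g h)\Pop)_{ijkl} = -4 h_{am}\ps_{ijmb}\ps_{aklb}.
\end{equation*}
Then I would apply the first identity in~\eqref{eq:psps} to $\ps_{ijmb}\ps_{aklb}$, which expands into three $\ph\ph$ terms, six $g$-triple terms, and five $g\ps$ terms, and contract the whole thing with $-4 h_{am}$. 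Two of the three $\ph\ph$ contributions vanish since they produce $h_{am}\ph_{\bullet am}$ (symmetric contracted with skew), and the surviving one reassembles into $4(\iota_{\ph} h)_{ijkl}$; two of the six $ggg$ terms are pure traces and vanish by $\tr h = 0$, while the remaining four combine to $-4(\iota_g h)_{ijkl}$; similarly one of the five $g\ps$ terms vanishes by tracelessness, and after using skew-symmetry of $\ps$ to bring the remaining four into standard position they produce exactly $-4(h \diamond \ps)_{ijkl}$.

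The main obstacle is purely the bookkeeping in the second identity: fourteen subterms appear in the $\ps\ps$ contraction, and correctly pairing indices against $h_{am}$ while consistently tracking the signs introduced by skew-symmetries of $\ps$ and $\ph$ requires care. However, the symmetries of $h$ eliminate enough of these terms (and merge others) that the three stated summands $-4 h\diamond\ps$, $-4\iota_g h$, and $+4\iota_{\ph} h$ emerge naturally from the three structural types ($g\ps$, $ggg$, $\ph\ph$) of terms in the contraction identity.
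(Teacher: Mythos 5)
Your proposal is correct and follows essentially the same route as the paper's proof: direct expansion of $\Pop(\iota_g h)$ using skew-symmetry of $\ps$ to merge the Kulkarni--Nomizu terms, the relation $(U\Pop)_{ijkl} = (\Pop U)_{klij}$ (or equivalent re-expansion) for the first identity, and then the reduction of $\Pop(\iota_g h)\Pop$ to $-4h_{am}\ps_{ijmb}\ps_{aklb}$ followed by the $\ps\ps$ contraction identity, with the bookkeeping of the $\ph\ph$, $ggg$, and $g\ps$ blocks (two $\ph\ph$ terms and two trace terms vanishing, one $g\ps$ term vanishing) exactly as you describe. Your term counts and the way you identify which subterms vanish by symmetry of $h$ versus tracelessness match the paper's calculation.
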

\begin{proof}
We compute
\begin{align*}
[ \Pop (\iota_g h) ]_{ijkl} & = \ps_{ijab} (\iota_g h)_{abkl} \\
& = \ps_{ijab} (g_{al} h_{bk} + g_{bk} h_{al} - g_{ak} h_{bl} - g_{bl} h_{ak}) \\
& = \ps_{ijab} (2 g_{al} h_{bk} + 2 g_{bk} h_{al}) \\
& = 2 h_{bk} \ps_{ijlb} + 2 h_{al} \ps_{ijak},
\end{align*}
which we can rewrite as
\begin{equation} \label{eq:iota-g-Pop-temp}
[ \Pop (\iota_g h) ]_{ijkl} = - 2 h_{km} \ps_{ijml} - 2 h_{lm} \ps_{ijkm}.
\end{equation}
Using~\eqref{eq:PU-relations-b} gives
\begin{align} \nonumber
[ (\iota_g h) \Pop ]_{ijkl} = [\Pop (\iota_g h)]_{klij} & = - 2 h_{im} \ps_{klmj} - 2 h_{jm} \ps_{klim} \\ \label{eq:iota-g-Pop-temp2}
& = - 2 h_{im} \ps_{mjkl} - 2 h_{jm} \ps_{imkl}.
\end{align}
Adding the above two equations gives $\Pop (\iota_g h) + (\iota_g h) \Pop = - 2 h \diamond \ps$.

Using~\eqref{eq:iota-g-Pop-temp}, we compute
\begin{align*}
[\Pop (\iota_g h) \Pop]_{ijkl} & = [\Pop (\iota_g h)]_{ijab} \ps_{abkl} \\
& = (- 2 h_{am} \ps_{ijmb} - 2 h_{bm} \ps_{ijam}) \ps_{abkl} \\
& = - 4 h_{am} \ps_{ijmb} \ps_{aklb} \\
& = - 4 h_{am} \Big[ - \ph_{ajm} \ph_{ikl} - \ph_{iam} \ph_{jkl} - \ph_{ija} \ph_{mkl} \\
& \qquad \qquad \quad {} + g_{ia} g_{jk} g_{ml} + g_{ik} g_{jl} g_{ma} + g_{il} g_{ja} g_{mk} - g_{ia} g_{jl} g_{mk} - g_{ik} g_{ja} g_{ml} - g_{il} g_{jk} g_{ma} \\
& \qquad \qquad \quad {} - g_{ia} \ps_{jmkl} - g_{ja} \ps_{mikl} - g_{ma} \ps_{ijkl} + g_{ak} \ps_{ijml} - g_{al} \ps_{ijmk} \Big].
\end{align*}
Using the symmetry and tracelessness of $h$, the above simplifies to
\begin{align*}
[\Pop (\iota_g h) \Pop]_{ijkl} & = 0 + 0 + 4 h_{am} \ph_{aij} \ph_{mkl} - 4 h_{il} g_{jk} - 0 - 4 h_{jk} g_{il} \\
& \qquad {} + 4 h_{ik} g_{jl} + 4 h_{jl} g_{ik} + 0 + 4 h_{im} \ps_{jmkl} + 4 h_{jm} \ps_{mikl} + 0 \\
& \qquad {} - 4 h_{km} \ps_{ijml} + 4 h_{lm} \ps_{ijmk},
\end{align*}
which then becomes
$$ [\Pop (\iota_g h) \Pop]_{ijkl} = - 4 (h \diamond \ps)_{ijkl} - 4 (\iota_g h)_{ijkl} + 4 (\iota_{\ph} h)_{ijkl} $$
as claimed.
\end{proof}

\begin{prop} \label{prop:h-diamond-ps-Pop}
Let $h \in \cS^2_0$, so $h \diamond \ps \in \Omega^4_{27} \subset \cS^2(\Lambda^2)$. Then we have
\begin{equation} \label{eq:h-diamond-ps-Pop}
\begin{aligned}
\Pop (h \diamond \ps) + (h \diamond \ps) \Pop & = 2 h \diamond \ps - 4 \iota_g h - 4 \iota_{\ph} h, \\
\Pop (h \diamond \ps) \Pop & = - 8 \iota_g h + 8 \iota_{\ph} h.
\end{aligned}
\end{equation}
\end{prop}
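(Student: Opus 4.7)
The proof is a direct coordinate computation, structurally parallel to that of Proposition~\ref{prop:iota-g-Pop}. For the first identity I expand
$$[\Pop(h\diamond\ps)]_{ijkl} = \ps_{ijab}(h\diamond\ps)_{abkl}$$
using~\eqref{eq:diamond4}, yielding four contractions of a pair of $\ps$'s weighted by components of $h$. Two of them, after a relabeling of the dummy indices $a,b$ using the skew-symmetry of $\ps_{ijab}$, combine into a single expression of the form $2\,h_{ap}\ps_{ijab}\ps_{pbkl}$. The remaining two are of ``outer'' form $\ps_{ijab}\ps_{abpq}$ (weighted by $h_{kp}$ and $h_{lp}$), which are handled directly by the middle identity in~\eqref{eq:psps}. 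For the mixed contraction $\ps_{ijab}\ps_{pbkl}$ I move the summed index of the second factor to the last slot via total antisymmetry of the $4$-form ($\ps_{pbkl} = \ps_{pklb}$) and then apply the first identity in~\eqref{eq:psps}. Many of the fourteen terms produced then collapse by virtue of the symmetry and tracelessness of $h$ together with the skew-symmetry of $\ph$ and $\ps$.

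After collecting the surviving terms I match them against the explicit coordinate formulas~\eqref{eq:iota-g-v2} for $\iota_g h$, \eqref{eq:iota-ph} for $\iota_\ph h$, and~\eqref{eq:diamond4} for $h\diamond\ps$. The companion expression $[(h\diamond\ps)\Pop]_{ijkl}$ is then obtained for free from~\eqref{eq:PU-relations-b} as $[\Pop(h\diamond\ps)]_{klij}$, which is the same expression with the pair $(i,j)$ swapped with $(k,l)$. Summing produces the first claimed identity
$$\Pop(h\diamond\ps) + (h\diamond\ps)\Pop = 2(h\diamond\ps) - 4\iota_g h - 4\iota_\ph h.$$

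For the second identity, the most direct route is to apply $\Pop$ once more to the explicit formula for $\Pop(h\diamond\ps)$ obtained above, computing $[\Pop(h\diamond\ps)\Pop]_{ijkl} = [\Pop(h\diamond\ps)]_{ijpq}\ps_{pqkl}$ by the same $\ps\ps$ contraction technology. A shorter alternative is to exploit the auxiliary identities
$$\Pop\iota_\ph h = \iota_\ph h \Pop = -4\iota_\ph h, \qquad \Pop\iota_\ph h\Pop = 16\iota_\ph h,$$
which follow immediately from $\ps_{ijab}\ph_{pab} = -4\ph_{pij}$ (a direct instance of~\eqref{eq:phps}) and the definition~\eqref{eq:iota-ph}. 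Combined with Proposition~\ref{prop:iota-g-Pop}, the identity $\Pop^2 = 8\sI - 2\Pop$ from~\eqref{eq:Pop-squared-v2}, and the first part of the present proposition, these give a linear system from which $\Pop(h\diamond\ps)\Pop$ is extracted.

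The main obstacle is purely one of bookkeeping: the $\ps\ps$ contraction identity~\eqref{eq:psps} introduces fourteen terms, and one must carefully verify which survive after invoking the symmetry and tracelessness of $h$ and the skew-symmetry of $\ph$. No genuinely new algebraic ingredient beyond those already exploited in Proposition~\ref{prop:iota-g-Pop} is required.
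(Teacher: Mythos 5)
Your computation for the first identity is essentially the paper's own: expand $[\Pop(h\diamond\ps)]_{ijkl}=\ps_{ijab}(h\diamond\ps)_{abkl}$ via~\eqref{eq:diamond4}, fold the first two terms into $2h_{am}\ps_{ijab}\ps_{mbkl}$ by relabelling $a\leftrightarrow b$, dispatch the outer contractions $\ps_{ijab}\ps_{abml}$ with the middle identity of~\eqref{eq:psps}, expand the mixed contraction with the full first identity of~\eqref{eq:psps}, and then kill terms using the symmetry and tracelessness of $h$; finally $(h\diamond\ps)\Pop$ is $[\Pop(h\diamond\ps)]_{klij}$ by~\eqref{eq:PU-relations-b}. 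For the second identity the paper follows what you call the most direct route: it multiplies the explicit index formula for $\Pop(h\diamond\ps)$ by $\ps_{pqkl}$, using $(\iota_\ph h)_{ijpq}\ps_{pqkl}=-4(\iota_\ph h)_{ijkl}$ and the $(\iota_g h)_{ijpq}\ps_{pqkl}$ formula already obtained in the proof of Proposition~\ref{prop:iota-g-Pop}. Your proposed shortcut is a genuine (and tidier) alternative: the auxiliary relations $\Pop\iota_\ph h=\iota_\ph h\Pop=-4\iota_\ph h$ and $\Pop\iota_\ph h\Pop=16\iota_\ph h$ are exactly Proposition~\ref{prop:iota-ph-Pop} in the paper (whose one-line proof is independent, so there is no circularity), and with $S(U)=\Pop U+U\Pop$, $T(U)=\Pop U\Pop$ one has $2T(U)=S(S(U))+2S(U)-16U$ from $\Pop^2=8\sI-2\Pop$; feeding in the first part of the proposition, Proposition~\ref{prop:iota-g-Pop}, and the $\iota_\ph$ relations yields $T(h\diamond\ps)=-8\iota_g h+8\iota_\ph h$ by pure linear algebra. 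You did not write down the operator identity $2T=S\circ S+2S-16\,\mathrm{id}$ explicitly, which is the one detail needed to make your ``linear system'' remark precise, but the idea is sound and would save the last round of index manipulation.
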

\begin{proof}
We compute
\begin{align*}
[ \Pop (h \diamond \ps) ]_{ijkl} & = \ps_{ijab} (h \diamond \ps)_{abkl} \\
& = \ps_{ijab} (h_{am} \ps_{mbkl} + h_{bm} \ps_{amkl} + h_{km} \ps_{abml} + h_{lm} \ps_{abkm}) \\
& = 2 \ps_{ijab} h_{am} \ps_{mbkl} + h_{km} (4 g_{im} g_{jl} - 4 g_{il} g_{jm} - 2 \ps_{ijml}) \\
& \qquad {} + h_{lm} (4 g_{ik} g_{jm} - 4 g_{im} g_{jk} - 2 \ps_{ijkm}) \\
& = 2 h_{am} \ps_{ijab} \ps_{mklb} + 4 h_{ik} g_{jl} - 4 g_{il} h_{jk} - 2 h_{km} \ps_{ijml} \\
& \qquad {} + 4 g_{ik} h_{jl} - 4 h_{il} g_{jk} - 2 h_{lm} \ps_{ijkm},
\end{align*}
which simplifies to
\begin{equation} \label{eq:h-diamond-ps-temp}
[ \Pop (h \diamond \ps) ]_{ijkl} = 2 h_{am} \ps_{ijab} \ps_{mklb} - 4 (\iota_g h)_{ijkl} - 2 h_{km} \ps_{ijml} - 2 h_{lm} \ps_{ijkm}.
\end{equation}
Expanding the first term using the symmetry and tracelessness of $h$, we have
\begin{align*}
2 h_{am} \ps_{ijab} \ps_{mklb} & = 2 h_{am} \Big[ - \ph_{mja} \ph_{ikl} - \ph_{ima} \ph_{jkl} - \ph_{ijm} \ph_{akl} \\
& \qquad \qquad \quad {} + g_{im} g_{jk} g_{al} + g_{ik} g_{jl} g_{am} + g_{il} g_{jm} g_{ak} - g_{im} g_{jl} g_{ak} - g_{ik} g_{jm} g_{al} - g_{il} g_{jk} g_{am} \\
& \qquad \qquad \quad {} - g_{im} \ps_{jakl} - g_{jm} \ps_{aikl} - g_{am} \ps_{ijkl} + g_{mk} \ps_{ijal} - g_{ml} \ps_{ijak} \Big] \\
& = 0 + 0 - 2 h_{am} \ph_{mij} \ph_{akl} + 2 h_{il} g_{jk} + 0 + 2 h_{jk} g_{il} \\
& \qquad {} - 2 h_{ik} g_{jl} - 2 h_{jl} g_{ik} - 0 + 2 h_{ia} \ps_{ajkl} + 2 h_{ja} \ps_{iakl} - 0 \\
& \qquad {} + 2 h_{ka} \ps_{ijal} + 2 h_{la} \ps_{ijka}, 
\end{align*}
which simplifies to
$$ 2 h_{am} \ps_{ijab} \ps_{mklb} = - 2 (\iota_{\ph} h)_{ijkl} + 2 (\iota_g h)_{ijkl} + 2 (h \diamond \ps)_{ijkl}. $$
Substituting the above into~\eqref{eq:h-diamond-ps-temp} gives
\begin{equation} \label{eq:h-diamond-ps-Pop-temp}
[ \Pop (h \diamond \ps) ]_{ijkl} = - 2 (\iota_{\ph} h)_{ijkl} - 2 (\iota_g h)_{ijkl} + 2 h_{im} \ps_{mjkl} + 2 h_{jm} \ps_{imkl}.
\end{equation}
We use~\eqref{eq:PU-relations-b} to write
$$ [ \Pop (h \diamond \ps) + (h \diamond \ps) \Pop ]_{ijkl} = [ \Pop (h \diamond \ps) ]_{ijkl} + [ \Pop (h \diamond \ps) ]_{klij}, $$
which indeed yields
$$ \Pop (h \diamond \ps) + (h \diamond \ps) \Pop = - 4 \iota_{\ph} h - 4 \iota_g h + 2 h \diamond \ps. $$

For the second equation, first observe that
\begin{equation} \label{eq:h-diamond-ps-Pop-temp2}
(\iota_{\ph} h)_{ijpq} \ps_{pqkl} = h_{ab} \ph_{aij} \ph_{bpq} \ps_{pqkl} = - 4 h_{ab} \ph_{aij} \ph_{bkl} = - 4 (\iota_{\ph})_{ijkl}.
\end{equation}
Now using~\eqref{eq:h-diamond-ps-Pop-temp},~\eqref{eq:h-diamond-ps-Pop-temp2}, and~\eqref{eq:iota-g-Pop-temp2}, we have
\begin{align*}
[ \Pop (h \diamond \ps) \Pop ]_{ijkl} & = [ \Pop (h \diamond \ps) ]_{ijpq} \ps_{pqkl} \\
& = (- 2 (\iota_{\ph} h)_{ijpq} - 2 (\iota_g h)_{ijpq} + 2 h_{im} \ps_{mjpq} + 2 h_{jm} \ps_{impq}) \ps_{pqkl} \\
& = -2 (-4 (\iota_{\ph} h)_{ijkl}) - 2 (-2 h_{im} \ps_{mjkl} - 2 h_{jm} \ps_{imkl}) \\
& \qquad {} + 2 h_{im} (4 g_{mk} g_{jl} - 4 g_{ml} g_{jk} - 2 \ps_{mjkl}) + 2 h_{jm} (4 g_{ik} g_{ml} - 4 g_{il} g_{mk} - 2 \ps_{imkl}),
\end{align*}
which simplifies to $\Pop (h \diamond \ps) \Pop = 8 \iota_{\ph} h - 8 \iota_g h$.
\end{proof}

\begin{prop} \label{prop:iota-ph-Pop}
Let $h \in \cS^2_0$, so $\iota_{\ph} h \in \cS^2(\Lambda^2)$. Then we have
\begin{equation} \label{eq:iota-ph-Pop}
\begin{aligned}
\Pop (\iota_{\ph} h) + (\iota_{\ph} h) \Pop & = - 8 \iota_{\ph} h, \\
\Pop (\iota_{\ph} h) \Pop & = 16 \iota_{\ph} h.
\end{aligned}
\end{equation}
\end{prop}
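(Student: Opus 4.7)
The plan is to view both $\Pop\iota_{\ph}h + \iota_{\ph}h\Pop$ and $\Pop\iota_{\ph}h\Pop$ as self-adjoint endomorphisms of $\Omega^2$ and to analyze $\iota_{\ph}h$ directly in terms of the orthogonal splitting $\Omega^2 = \Omega^2_7 \operp \Omega^2_{14}$. Recall from Section~\ref{sec:forms} that $\Pop$ acts as the scalar $-4$ on $\Omega^2_7$ and as the scalar $+2$ on $\Omega^2_{14}$, and from~\eqref{eq:PU-relations} that for $U \in \cS^2(\Lambda^2)$ the tensor composition $\Pop U$ corresponds to the operator postcomposition $\Pop \circ U$ on $\Omega^2$, while $U \Pop$ corresponds to precomposition $U \circ \Pop$.

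The key structural observation is that, for any $h \in \cS^2$, the operator $\iota_{\ph}h$ annihilates $\Omega^2_{14}$ and has image contained in $\Omega^2_7$. Indeed, for $\beta \in \Omega^2$ one has
$$ ((\iota_{\ph}h)\beta)_{ij} = h_{pq}\ph_{pij}\ph_{qkl}\beta_{kl}. $$
If $\beta \in \Omega^2_{14}$, then $\ph_{qkl}\beta_{kl} = 0$ by~\eqref{eq:omega214desc}, so $(\iota_{\ph}h)\beta = 0$; if instead $\beta = \tfrac{1}{6}Y_m\ph_{mkl} \in \Omega^2_7$, then $\ph_{qkl}\beta_{kl} = Y_q$ and hence $((\iota_{\ph}h)\beta)_{ij} = (hY)_p\ph_{pij}$, which is visibly in $\Omega^2_7$. (Since $\iota_{\ph}h$ is self-adjoint, vanishing on $\Omega^2_{14}$ is in fact equivalent to having image in $\Omega^2_7$, so either verification would suffice.)

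The identities now follow immediately. Because the image of $\iota_{\ph}h$ lies in $\Omega^2_7$ where $\Pop$ acts as $-4\sI$, we obtain $\Pop \circ (\iota_{\ph}h) = -4\,\iota_{\ph}h$. Because $\Pop$ preserves both summands and $\iota_{\ph}h$ vanishes on $\Omega^2_{14}$ while $\Pop = -4$ on $\Omega^2_7$, we also obtain $(\iota_{\ph}h) \circ \Pop = -4\,\iota_{\ph}h$. Adding these yields $\Pop\iota_{\ph}h + \iota_{\ph}h\Pop = -8\,\iota_{\ph}h$, and composing once more on the left with $\Pop$ gives $\Pop\iota_{\ph}h\Pop = -4(-4)\iota_{\ph}h = 16\,\iota_{\ph}h$. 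There is essentially no obstacle; the main point is simply identifying the image and kernel of $\iota_{\ph}h$ using~\eqref{eq:omega214desc} and~\eqref{eq:omega27vf}. The hypothesis $h \in \cS^2_0$ is in fact not used in the argument, which is consistent with the case $h = g$ verifiable directly from $\iota_{\ph}g = 2\sI - \Pop$ in~\eqref{eq:iota-ph-of-g} together with $\Pop^2 = 8\sI - 2\Pop$ in~\eqref{eq:Pop-squared-v2}.
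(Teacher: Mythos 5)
Your proof is correct and takes a genuinely different, more structural route than the paper's. The paper simply applies the contraction identity $\ph_{ijk}\ps_{abjk}=-4\ph_{iab}$ directly in indices: $(\Pop(\iota_{\ph}h))_{ijkl}=\ps_{ijab}h_{pq}\ph_{pab}\ph_{qkl}=-4h_{pq}\ph_{pij}\ph_{qkl}=-4(\iota_{\ph}h)_{ijkl}$, then argues symmetrically for the other two products. You instead reinterpret $\iota_{\ph}h$ operator-theoretically: it is the endomorphism of $\Omega^2$ that vanishes on $\Omega^2_{14}$ and, on $\Omega^2_7\cong\Omega^1$, acts as $h$ itself (via the identification $Y\mapsto Y\hk\ph$ up to normalization), so its image lies in $\Omega^2_7$ where $\Pop=-4\sI$. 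The key contraction identity $\ps_{ijab}\ph_{pab}=-4\ph_{pij}$ is of course the same fact, packaged as ``$\ph_{p\cdot\cdot}$ lies in the $-4$-eigenspace of $\Pop$'', so the computational content is identical, but your framing makes the result obvious once $\ker(\iota_{\ph}h)\supseteq\Omega^2_{14}$ and $\im(\iota_{\ph}h)\subseteq\Omega^2_7$ are established, and it exposes the conceptual structure (that $\iota_{\ph}h$ is essentially the operator on $\Omega^2_7\cong\mb{7}$ induced by $h\in\cS^2$, extended by zero). You also correctly note that $h\in\cS^2_0$ is never used, and your sanity check against~\eqref{eq:iota-ph-of-g} and~\eqref{eq:Pop-squared-v2} for $h=g$ is valid. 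The only thing you elide is the (routine) verification that the operator-composition conventions match the tensor conventions of~\eqref{eq:PU-relations}, which you flag but don't spell out; it does hold, so no gap.
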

\begin{proof}
We have
\begin{align*}
(\Pop (\iota_{\ph} h))_{ijkl} & = \ps_{ijab} (h_{pq} \ph_{pab} \ph_{qkl}) \\
& = - 4 h_{pq} \ph_{pij} \ph_{qkl} = - 4 (\iota_{\ph} h)_{ijkl}.
\end{align*}
Similarly one computes that $(\iota_{\ph} h) \Pop = - 4 \iota_{\ph} h$ and $\Pop (\iota_{\ph} h) \Pop = 16 \iota_{\ph} h$.
\end{proof}

\begin{cor} \label{cor:iota-phW-Pop}
Let $h \in \cS^2_0$, so $(\iota_{\ph} h)_{\W} \in \W \subset \cS^2(\Lambda^2)$. Then we have
\begin{equation} \label{eq:iota-phW-Pop}
\begin{aligned}
(\iota_{\ph} h)_{\W} & = - \tfrac{1}{3} h \diamond \ps - \tfrac{1}{5} \iota_g h + \iota_{\ph} h, \\
\Pop (\iota_{\ph} h)_{\W} + (\iota_{\ph} h)_{\W} \Pop & = - \tfrac{4}{15} h \diamond \ps + \tfrac{4}{3} \iota_g h - \tfrac{20}{3} \iota_{\ph} h, \\
\Pop (\iota_{\ph} h)_{\W} \Pop & = \tfrac{4}{5} h \diamond \ps + \tfrac{52}{15} \iota_g h + \tfrac{188}{15} \iota_{\ph} h.
\end{aligned}
\end{equation}
\end{cor}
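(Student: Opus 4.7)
The plan is to derive all three identities of the Corollary as direct algebraic consequences of Proposition~\ref{prop:iotaph-decomp} combined with Propositions~\ref{prop:iota-g-Pop},~\ref{prop:h-diamond-ps-Pop}, and~\ref{prop:iota-ph-Pop}. No new tensor calculations in local frames are needed; the work is purely manipulation of the three previously established identities.

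First I would observe that the first equation is obtained by simply rearranging the orthogonal decomposition
\[
\iota_{\ph} h = \tfrac{1}{3} h \diamond \ps + \tfrac{1}{5} \iota_g h + (\iota_{\ph} h)_{\W}
\]
of Proposition~\ref{prop:iotaph-decomp} to isolate $(\iota_{\ph} h)_{\W}$. This is valid for any $h \in \cS^2_0$ because the decomposition holds pointwise.

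Next I would apply the linear operations $U \mapsto \Pop U + U \Pop$ and $U \mapsto \Pop U \Pop$ on $\cS^2(\Lambda^2)$ to the identity
\[
(\iota_{\ph} h)_{\W} = \iota_{\ph} h - \tfrac{1}{3} h \diamond \ps - \tfrac{1}{5} \iota_g h.
\]
Both operations are linear in $U$, so the problem reduces to substituting the corresponding formulas for $U = \iota_{\ph} h$, $U = h \diamond \ps$, and $U = \iota_g h$ from the three preceding propositions, and then collecting the coefficients of $\iota_{\ph} h$, $\iota_g h$, and $h \diamond \ps$.

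For the second identity, the ingredients are $\Pop(\iota_{\ph} h) + (\iota_{\ph} h)\Pop = -8 \iota_{\ph} h$, $\Pop(h \diamond \ps) + (h \diamond \ps)\Pop = 2 h \diamond \ps - 4 \iota_g h - 4 \iota_{\ph} h$, and $\Pop(\iota_g h) + (\iota_g h)\Pop = -2 h \diamond \ps$. Assembling these with coefficients $1$, $-\tfrac{1}{3}$, $-\tfrac{1}{5}$ produces the asserted $-\tfrac{4}{15} h \diamond \ps + \tfrac{4}{3}\iota_g h - \tfrac{20}{3}\iota_{\ph} h$. For the third identity, the analogous ingredients $\Pop(\iota_{\ph} h)\Pop = 16 \iota_{\ph} h$, $\Pop(h \diamond \ps)\Pop = -8 \iota_g h + 8 \iota_{\ph} h$, and $\Pop(\iota_g h)\Pop = -4 h \diamond \ps - 4 \iota_g h + 4 \iota_{\ph} h$ combine with the same coefficients to give $\tfrac{4}{5} h \diamond \ps + \tfrac{52}{15}\iota_g h + \tfrac{188}{15}\iota_{\ph} h$. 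There is no real obstacle here; the only thing to watch is the elementary arithmetic with the fractions over a common denominator of $15$. Since each of the building blocks has already been verified by explicit index calculation, the proof is essentially bookkeeping.
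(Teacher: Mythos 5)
Your proposal is correct and follows exactly the same route as the paper: rearrange Proposition~\ref{prop:iotaph-decomp} for the first line, then apply the linear operations $U \mapsto \Pop U + U\Pop$ and $U \mapsto \Pop U \Pop$ term by term using Propositions~\ref{prop:iota-g-Pop},~\ref{prop:h-diamond-ps-Pop}, and~\ref{prop:iota-ph-Pop}. The arithmetic you indicate checks out in all three lines.
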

\begin{proof}
The first equation is a rearrangement of~\eqref{eq:iotaph-decomp}. The others follow from this by straightforward computation using Propositions~\ref{prop:iota-g-Pop},~\ref{prop:h-diamond-ps-Pop},~\ref{prop:iota-ph-Pop}.
\end{proof}

The appearance of the $(\iota_{\ph} h)_{\W}$ term in $\W$ for $h \in \cS^2_0$ in Proposition~\ref{prop:iotaph-decomp} motivates us to consider the further decomposition of the space $\W$ using appropriate modifications of the maps $\iota_{\ph}$ and $\rho_{\ph}$. (Recall from Remark~\ref{rmk:Weyl-decomp} that a Weyl tensor $U$ should decompose into $U = U_{27} + U_{64} + U_{77}$.)

First note from~\eqref{eq:iota-ph-of-g} that $\iota_{\ph} g$ is orthogonal to $\W$. Also, if $U \in \W = \ker \rho_g$, then using Corollary~\ref{cor:U-IP-diamond-ps} we have
\begin{align*}
\tr (\rho_{\ph} U) & = U_{ijkl} \ph_{ijp} \ph_{klp} = U_{ijkl} (g_{ik} g_{jl} - g_{il} g_{jk} - \ps_{ijkl}) \\
& = 2 U_{klkl} - U_{ijkl} \ps_{ijkl} = - 2 \tr (\rho_g U) - 0 = 0,
\end{align*}
so $\rho_{\ph}$ maps $\W$ into $\cS^2_0$. We are therefore led to consider the linear maps
\begin{equation} \label{eq:iota-rho-W27}
\begin{aligned}
\ol{\iota}_{\ph} \colon \cS^2_0 & \to \W, & \qquad \ol{\iota}_{\ph} & = \pi_{\W} \circ \rest{\iota_{\ph}}{\cS^2_0}, \\
\ol{\rho}_{\ph} \colon \W & \to \cS^2_0, & \qquad \ol{\rho}_{\ph} & = \rest{\rho_{\ph}}{\W}.
\end{aligned}
\end{equation}
Let $h \in \cS^2_0$. Using Propositions~\ref{prop:iotaph-decomp} and~\ref{prop:ph-rep-maps}, we compute
\begin{align*}
\ol{\rho}_{\ph} (\ol{\iota}_{\ph} h) & = \rho_{\ph} \big( (\iota_{\ph} h)_{\W} \big) \\
& = \rho_{\ph} ( \iota_{\ph} h - \tfrac{1}{3} h \diamond \ps - \tfrac{1}{5} \iota_g h ) \\
& = 36 h - \tfrac{1}{3} (16 h) - \tfrac{1}{5} (4 h) = \tfrac{448}{15} h.
\end{align*}
Thus condition $\mathrm{(i)}$ of~\eqref{eq:basic-condition} is satisfied with $b = \frac{448}{15}$. Moreover, from~\eqref{eq:rho-ph-adj}, for $h \in \cS^2_0$ and $U \in \W$ we have
$$ \langle \ol{\rho}_{\ph} U, h \rangle = \langle \rho_{\ph} U, h \rangle = \langle U, \iota_{\ph} h \rangle = \langle U, \ol{\iota}_{\ph} h \rangle, $$
so condition $\mathrm{(ii)}$ of~\eqref{eq:basic-condition} is satisfied with $c = 1$. We can thus invoke Lemma~\ref{lemma:basic-tool} to conclude that
\begin{equation*}
\W \cong (\ker \ol{\rho}_{\ph}) \oplus \cS^2_0
\end{equation*}
where explicitly we have $U = U_{64 + 77} + U_{27}$ with
\begin{equation} \label{eq:W-decomp}
\begin{aligned}
U_{27} & = \tfrac{15}{448} \ol{\iota}_{\ph} (\ol{\rho}_{\ph} U) , \\
U_{64 + 77} & = U - U_{27}.
\end{aligned}
\end{equation}
We explain how to decompose $U_{64 + 77}$ into $U_{64} + U_{77}$ in Section~\ref{sec:curvature-decomp}.

\subsection{The decomposition $\cS^2 (\mb{14}) = \mb{77} \oplus \mb{1} \oplus \mb{27}$} \label{sec:S2-14}

To describe the decomposition $\cS^2 (\mb{14}) = \mb{77} \oplus \mb{1} \oplus \mb{27}$ we again use Lemma~\ref{lemma:basic-tool}. Let $V = \cS^2 = \mb{1} \oplus \mb{27}$ and let $W = \cS^2 (\mb{14})$. The map $\iota_g$ from~\eqref{eq:iota-g-v2} maps $\cS^2$ into the subspace $\K$ of $\cS^2 (\Lambda^2) = \cS^2 (\mb{7} \oplus \mb{14})$. Thus, given $h \in \cS^2$, we have $\iota_g h \in \cS^2 (\mb{7} \oplus \mb{14})$, so it can be regarded as a self-adjoint operator on $\mb{7} \oplus \mb{14}$. We can pre-compose and post-compose $\iota_g h$ by the orthogonal projection $\pi_{14}$ to get a self-adjoint operator on $\mb{14}$. That is, an element of $\cS^2 (\mb{14})$.

Explicitly, by~\eqref{eq:omega2-proj} we have $\pi_{14} = \frac{1}{6} (4 \sI + \Pop)$, and thus we obtain a linear map $\check{\iota} \colon \cS^2 \to \cS^2 (\mb{14})$
by
\begin{align*}
\check{\iota} \, h & = \pi_{14} (\iota_g h) \pi_{14} = \tfrac{1}{36} (4 \sI + \Pop) (\iota_g h) (4 \sI + \Pop) \\
& = \tfrac{1}{36} \big(16 \iota_g h + 4 (\Pop (\iota_g h) + (\iota_g h) \Pop) + (\Pop (\iota_g h) \Pop) \big).
\end{align*}
We also have a linear map $\check{\rho} \colon \cS^2 (\mb{14}) \to \cS^2$ by restricting $\rho_g$ from~\eqref{eq:rho-g-v2} to the subspace $\cS^2 (\mb{14})$, so
$$ \check{\rho} \, U = \rho_g U \qquad \text{for $U \in \cS^2(\mb{14})$.} $$
We want to apply Lemma~\ref{lemma:basic-tool} to the pair $\check{\iota}, \check{\rho}$.

If $h = g$, then from~\eqref{eq:special-g-maps} we have $\iota_g g = - 4 \sI$ and $\Pop^2 = 8 \sI - 2 \Pop$, and thus
\begin{align*}
\check{\iota} \, g & = \tfrac{1}{36} (16 (- 4 \sI) + 4( \Pop (-4 \sI) + (-4 \sI) \Pop ) + (\Pop (-4 \sI) \Pop) \\
& = \tfrac{1}{36} (- 64 \sI - 16 \Pop - 16 \Pop - 4 (8 \sI - 2 \Pop)) = \tfrac{1}{36} (- 96 \sI -24 \Pop) \\
& = \tfrac{1}{36} (24 (\iota_g g) - 6 g \diamond \ps).
\end{align*}
Applying $\check{\rho} = \rho_g$ to this, and using Proposition~\ref{prop:ph-rep-maps}, we get
$$ \check{\rho} (\check \iota \, g) = \tfrac{1}{36} (24 (5 g + 7 g) + 0) = 8 g. $$
If $h \in \cS^2_0$, so $\tr h = 0$, then Propositions~\ref{prop:iota-g-Pop} and~\ref{prop:ph-rep-maps} give
\begin{align*}
\check{\rho} (\check{\iota} \, h) & = \tfrac{1}{36} \rho_g \big(16 \iota_g h + 4 (\Pop (\iota_g h) + (\iota_g h) \Pop) + (\Pop (\iota_g h) \Pop) \big) \\
& = \tfrac{1}{36} \big( 16 \rho_g (\iota_g h) + 4 \rho_g (- 2 h \diamond \ps) + \rho_g (- \tfrac{8}{3} h \diamond \ps - \tfrac{16}{5} \iota_g h + 4 (\iota_{\ph} h)_{\W}) \big) \\
& = \tfrac{1}{36} (16 (5h) + 4 (0) + (0 - \tfrac{16}{5} 5h + 0)) = \tfrac{16}{9} h.
\end{align*}

We have therefore shown that
$$ \check{\rho} (\check{\iota} (\lambda g + h^0)) = 8 \lambda g + \tfrac{16}{9} h^0, $$
so condition $\mathrm{(i)}$ of~\eqref{eq:basic-condition} is satisfied with $b_1 = 8$ and $b_{27} = \frac{16}{9}$. 
Moreover, from the second equation in~\eqref{eq:iota-g-rho-g-7}, for $h \in \cS^2$ and $U \in \cS^2(\mb{14})$ we have
$$ \langle \check{\rho} \, U, h \rangle = \langle \rho_g U, h \rangle = \tfrac{1}{4} \langle U, \iota_g h \rangle = \tfrac{1}{4} \langle U, \check{\iota} \, h \rangle, $$
so condition $\mathrm{(ii)}$ of~\eqref{eq:basic-condition} is satisfied with $c_1 = c_{27} = \frac{1}{4}$. We can thus invoke Lemma~\ref{lemma:basic-tool} to conclude that
We can therefore invoke Lemma~\ref{lemma:basic-tool} to conclude that
\begin{equation*}
\cS^2 (\mb{14}) = \mb{77} \oplus \mb{1} \oplus \mb{27},
\end{equation*}
where explicitly we have $U = U_{77} + U_1 + U_{27}$ with
\begin{equation} \label{eq:S2-14-decomp}
\begin{aligned}
U_1 & = \tfrac{1}{8} \check{\iota} (\check{\rho} \, U)_1, \\
U_{27} & = \tfrac{9}{16} \check{\iota} (\check{\rho} U)_{27}, \\
U_{77} & = U - U_1 - U_{27}.
\end{aligned}
\end{equation}
In particular, the $77$-dimensional representation corresponds to $\ker \check{\rho}$, and thus concretely we have
\begin{equation} \label{eq:77-desc}
U_{ijkl} \in \mb{77} \, \iff \, \begin{cases} U_{ijkl} = - U_{jikl} = - U_{ijlk} = U_{klij}, \\ U_{ijkl} \ph_{klm} = 0, \\ U_{ljkl} = 0. \end{cases}
\end{equation}

\section{Curvature, torsion, and functionals} \label{sec:curvature-torsion}

In this section we apply the results of Section~\ref{sec:more-rep-theory}. We determine several independent relations between $\tRm$ and $\nab{} T$ obtained by decomposing the $\G$-Bianchi identity~\eqref{eq:g2bianchi} into components. We then use these results to simplify the evolution equations for torsion functionals that were derived in Section~\ref{sec:functionals}, and to determine their associated Euler--Lagrange equations. Next, we consider the decompositions of $R_{ijkl}$ and $\nab{m} T_{pq}$ into independent components corresponding to irreducible $\G$-representations, and identify those which are related by the $\G$-Bianchi identity, and those which can be made into $3$-forms, for the purpose of classifying all possible quasilinear second-order geometric flows of $\G$-structures.

\subsection{Decomposition of the $\G$-Bianchi identity into independent relations} \label{sec:g2bianchi-revisited}

In this section we use the representation-theoretic results that we established in Section~\ref{sec:more-rep-theory} to extract from the $\G$-Bianchi identity several independent relations between the Riemann curvature $\tRm$ and the covariant derivative $\nab{}T$ of the torsion.

Let us rewrite the $\G$-Bianchi identity~\eqref{eq:g2bianchi} in the form
\begin{equation} \label{eq:g2bianchi-decomp}
G_{pij} = \nab{i} T_{jp} - \nab{j} T_{ip} - T_{ia} T_{jb} \ph_{abp} - \tfrac{1}{2} R_{ijab} \ph_{abp} = 0.
\end{equation}
Here $G_{pij}$ are the components of a tensor $G \in \Gamma(T^* M \otimes \Lambda^2 (T^*M))$, because $G_{pij}$ is skew in $i,j$. This can therefore be decomposed into two components $G^7 + G^{14}$, where $G^k \in \Gamma(T^*M \otimes \Lambda^2_k (T^* M))$ for $k = 7, 14$. Using the decompositions
$$ \mb{7} \otimes \mb{7} = \mb{1} \oplus \mb{27} \oplus \mb{7} \oplus \mb{14} \qquad \text{and} \qquad \mb{7} \otimes \mb{14} = \mb{64} \oplus \mb{27} \oplus \mb{7}, $$
we can therefore extract from~\eqref{eq:g2bianchi-decomp} several independent relations. Specifically, we extract six relations: one in $\mb{1}$, one in $\mb{14}$, two in $\mb{7}$, and two in $\mb{27}$. In the present paper, we do not make explicit use of the $\mb{64}$ relation. (See also the discussion following Remark~\ref{rmk:Hodge-Lap-revisited} regarding explicit computation of the $\mb{64}$ component of the curvature in terms of torsion.)

\begin{lemma} \label{lemma:g2bianchi-contractions}
Let $G \in \Gamma(T^* M \otimes \Lambda^2 (T^*M))$ be as given in~\eqref{eq:g2bianchi-decomp}. The following contractions of $G$ with $\ph$, $\ps$, and the metric hold:
\begin{align}
G_{iim} & = (\Div T^t)_m - \nab{m} (\tr T) - (T(\Vop T))_m, \label{eq:Giim} \\
G_{ijp} \ph_{ijq} & = \KK{2}_{pq} + \nab{p} (\Vop T)_q - (\tr T) T_{pq} + T^2_{pq} + R_{pq}, \label{eq:Gijpphijq} \\
G_{pij} \ph_{ijq} & = 2 \, \KK{3}_{pq} - (T \oct T)_{qp} - \tfrac{1}{2} F_{pq}, \label{eq:Gpijphijq} \\
G_{ijk} \ph_{ijk} & = 2 \Div(\Vop T) - (\tr T)^2 + \langle T, T^t \rangle + \langle T, \Pop T \rangle + R, \label{eq:Gijkphijk} \\
G_{ijk} \ps_{ijkm} & = 2 \langle \nab{} T, \ps \rangle_m - 2 (\tr T) (\Vop T)_m + 2 (\Vop (T^2))_m + 2 (T^t (\Vop T))_m. \label{eq:Gijkpsijkm}
\end{align}
\end{lemma}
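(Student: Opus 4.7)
The plan is to establish each of the five identities by a direct computation: substitute the definition of $G_{pij}$ from~\eqref{eq:g2bianchi-decomp} into the indicated contraction, and then simplify each of the four resulting pieces separately using the fundamental contraction identities~\eqref{eq:phph}, \eqref{eq:phps}, \eqref{eq:psps} of Section~\ref{sec:contractions}. The left-hand side of each equation arises as a sum of four subcomputations corresponding to the four terms of $G_{pij}$, and the goal is to recognize the result in terms of the objects introduced in Sections~\ref{sec:torsion} and~\ref{sec:nabT}.

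The two $\nab{}T$ contributions produce the torsion derivative tensors: contractions with the metric yield $\Div T$, $\Div T^t$, and $\nab{}(\tr T)$; contractions with $\ph$ (possibly after cyclic rearrangement of its indices) yield the appropriate $\KK{a}_{pq}$ of~\eqref{eq:K-defn}, and also $\nab{p} (\Vop T)_q$ via~\eqref{eq:VT}; contractions with $\ps$ yield $\langle \nab{} T, \ps\rangle$ as in~\eqref{eq:nabTps}. The quadratic $TT$ terms are expanded by applying~\eqref{eq:phph} to the double $\ph\cdot\ph$ contractions and~\eqref{eq:phps} to the $\ph\cdot\ps$ contractions, and the resulting summands are identified as $(\tr T) T$, $T^2$, $T T^t$, $\Pop (T^2)$, $T(\Vop T)$, $T^t(\Vop T)$, $\Vop(T^2)$, $T \oct T$ (the last via~\eqref{eq:Aoct-defn}), or as the scalar invariants $(\tr T)^2$, $\langle T,T^t\rangle$, $\langle T,\Pop T\rangle$ as appropriate.

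The curvature terms are handled as follows. Since $\tRm$ is a curvature tensor, Corollary~\ref{cor:U-IP-diamond-ps} immediately gives $R_{ijkl}\ps_{ajkl}=0$ and $R_{ijkl}\ph_{jkl}=0$. The first makes the curvature contribution to~\eqref{eq:Giim} and~\eqref{eq:Gijkpsijkm} vanish outright, while the second simplifies the double-$\ph$ contractions in~\eqref{eq:Gijpphijq}, \eqref{eq:Gpijphijq}, and~\eqref{eq:Gijkphijk}. After applying~\eqref{eq:phph} to expand the remaining $\ph\cdot\ph$ factor, one recognizes the Ricci tensor $R_{pq}$, the scalar curvature $R$, and the $\ph$-Ricci tensor $F_{pq}$ of~\eqref{eq:F-defn}; in particular, the double contraction $R_{ijkl}\ph_{ijp}\ph_{klq}$ that appears in~\eqref{eq:Gpijphijq} is precisely $F_{pq}$. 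Equation~\eqref{eq:Gijkphijk} is in fact a rearrangement of Corollary~\ref{cor:scalar-curvature} (combined with~\eqref{eq:divVT} to convert $\nab{i}T_{jk}\ph_{ijk}$ into $\Div(\Vop T)+\langle T,\Pop T\rangle$), which provides a useful consistency check.

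The main obstacle is careful bookkeeping: in~\eqref{eq:Gijpphijq} and~\eqref{eq:Gpijphijq}, both the quadratic $TT$ piece and the curvature piece produce residual $\ps$-contracted terms when one collapses the double-$\ph$ factor via~\eqref{eq:phph}, and one must verify that these residuals reassemble into the stated $T\oct T$ and Ricci contributions using pair symmetry of $\tRm$ and, occasionally, the first Bianchi identity. Equally important is choosing the correct cyclic permutation of the three skew indices of $\ph$ or $\ps$ at each step, so that the $\nab{}T$ pieces match the precise index conventions of $\KK{a}_{pq}$ in~\eqref{eq:K-defn}. No genuinely new ideas are required beyond the material developed in Sections~\ref{sec:contractions}--\ref{sec:nabT}.
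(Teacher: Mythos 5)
Your proposal is correct and follows essentially the same route as the paper's proof: direct substitution of the definition of $G_{pij}$ into each contraction, term-by-term expansion via the fundamental contraction identities, and recognition of the resulting pieces as the objects introduced in Sections~\ref{sec:torsion}--\ref{sec:nabT}. Two small misattributions worth flagging before you carry out the computation: the curvature contribution to~\eqref{eq:Giim} is $\tfrac12 R_{ikab}\ph_{abi}$, a pure $\ph$-contraction killed by the first Bianchi identity (equivalently, the $\ph$-contraction part of Corollary~\ref{cor:U-IP-diamond-ps} after using pair symmetry of $\tRm$), not the $\ps$-contraction identity; conversely, the residual $\ps$-term that appears in~\eqref{eq:Gijpphijq} after applying~\eqref{eq:phph} is precisely what the $\ps$-contraction of Corollary~\ref{cor:U-IP-diamond-ps} annihilates, the curvature term in~\eqref{eq:Gpijphijq} is $-\tfrac12 F_{pq}$ outright with nothing further to prove, and in~\eqref{eq:Gijkpsijkm} the curvature term must first be expanded via~\eqref{eq:phps} and then killed piece-by-piece using Ricci symmetry and first Bianchi. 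Also note that $\nab{p}(\Vop T)_q$ in~\eqref{eq:Gijpphijq} does not come from the $\nab{}T$ contributions alone: it is assembled from $\KK{1}_{pq}$ (a $\nab{}T$ piece) together with the quadratic $(T(\Pop T))_{pq}$ term via~\eqref{eq:KK1-simp}.
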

\begin{proof}
Contracting~\eqref{eq:g2bianchi-decomp}, we obtain
\begin{align*}
G_{iik} & = \nab{i} T_{ki} - \nab{k} T_{ii} - T_{ia} T_{kb} \ph_{abi} - \tfrac{1}{2} R_{ikab} \ph_{abi} \\
& = (\Div T^t)_k - \nab{k} (\tr T) - (T(\Vop T))_k - 0,
\end{align*}
which is~\eqref{eq:Giim}. Using~\eqref{eq:K-defn}, and~\eqref{eq:KK1-simp}, we compute
\begin{align*}
G_{ijp} \ph_{ijq} & = (\nab{j} T_{pi} - \nab{p} T_{ji} - T_{ja} T_{pb} \ph_{abi} - \tfrac{1}{2} R_{jpab} \ph_{abi}) \ph_{ijq} \\
& = \nab{j} T_{pi} \ph_{jqi} + \nab{p} T_{ji} \ph_{qji} - (T_{ja} T_{pb} + \tfrac{1}{2} R_{jpab}) (\ph_{abi} \ph_{jqi}) \\
& = \KK{2}_{pq} + \KK{1}_{pq} - (T_{ja} T_{pb} + \tfrac{1}{2} R_{jpab}) (g_{aj} g_{bq} - g_{aq} g_{bj} - \ps_{abjq}) \\
& = \KK{2}_{pq} + \KK{1}_{pq} - (\tr T) T_{pq} + T^2_{pq} + (T (\Pop T))_{pq} + \tfrac{1}{2} R_{pq} + \tfrac{1}{2} R_{pq} + 0 \\
& = \KK{2}_{pq} + \nab{p} (\Vop T)_q - (\tr T) T_{pq} + T^2_{pq} + R_{pq},
\end{align*}
which is~\eqref{eq:Gijpphijq}. Using~\eqref{eq:Aoct-defn},~\eqref{eq:F-defn}, and~\eqref{eq:K-defn}, we obtain
\begin{align*}
G_{pij} \ph_{ijq} & = ( \nab{i} T_{jp} - \nab{j} T_{ip} - T_{ia} T_{jb} \ph_{abp} - \tfrac{1}{2} R_{ijab} \ph_{abp} ) \ph_{ijq} \\
& = 2 \nab{i} T_{jp} \ph_{ijq} - T_{ia} T_{jb} \ph_{abp} \ph_{ijq} - \tfrac{1}{2} F_{pq} \\
& = 2 \, \KK{3}_{pq} - (T \oct T)_{qp} - \tfrac{1}{2} F_{pq},
\end{align*}
which is~\eqref{eq:Gpijphijq}. Contracting~\eqref{eq:Gpijphijq} on $p,q$ and using Definition~\ref{defn:K}, Lemma~\ref{lemma:trace-F},~\eqref{eq:Aoct-identities}, and~\eqref{eq:divVT} yields
\begin{align*}
G_{pij} \ph_{pij} & = 2 \nab{p} T_{ij} \ph_{pij} - \big( (\tr T)^2 - \langle T, T^t \rangle + \langle T, \Pop T \rangle \big) - \tfrac{1}{2} (-2R) \\
& = 2 \Div(\Vop T) - (\tr T)^2 + \langle T, T^t \rangle + \langle T, \Pop T \rangle + R,
\end{align*}
which is~\eqref{eq:Gijkphijk}. This can clearly also be obtained by contracting~\eqref{eq:Gijpphijq} and using~\eqref{eq:divVT}.

Finally, we have
\begin{align*}
G_{ijk} \ps_{ijkm} & = (\nab{j} T_{ki} - \nab{k} T_{ji} - T_{ia} T_{jb} \ph_{abk} - \tfrac{1}{2} R_{ijab} \ph_{abk}) \ps_{ijkm} \\
& = 2 \nab{j} T_{ki} \ps_{ijkm} + (T_{ia} T_{jb} + \tfrac{1}{2} R_{ijab}) \ph_{abk} \ps_{ijmk} \\
& = 2 \langle \nab{} T, \ps \rangle_m \\
& \qquad {} + (T_{ia} T_{jb} + \tfrac{1}{2} R_{ijab}) (g_{ai} \ph_{bjm} + g_{aj} \ph_{ibm} + g_{am} \ph_{ijb} - g_{bi} \ph_{ajm} - g_{bj} \ph_{iam} - g_{bm} \ph_{ija}) \\
& = 2 \langle \nab{} T, \ps \rangle_m - 2 (\tr T) (\Vop T)_m + 2 (\Vop (T^2))_m + 2 (T^t (\Vop T))_m + 0,
\end{align*}
where all the curvature terms above vanish either because $\tRc$ is symmetric and $\ph$ is skew, or by the Riemannian first Bianchi identity. We thus have~\eqref{eq:Gijkpsijkm}.
\end{proof}

\begin{thm} \label{thm:g2bianchi-decomp}
From the $\G$-Bianchi identity~\eqref{eq:g2bianchi-decomp} we can extract several independent relations between $\tRm$ and $\nabla T$. These are:
\begin{equation*}
\begin{aligned}
& (\mb{G1}) & & \quad R = (\tr T)^2 - \langle T, T^t \rangle - \langle T, \Pop T \rangle - 2 \Div (\Vop T), \\
& (\mb{G7_a}) & & \quad \Div T^t - \nab{} (\tr T) - T (\Vop T) = 0, \\
& (\mb{G7_b}) & & \quad \langle \nab{} T, \ps \rangle - (\tr T) \Vop T + \Vop (T^2) + T^t (\Vop T) = 0, \\
& (\mb{G14}) & & \quad \pi_{14}(\KK{3}) = - (\tr T) T_{14} + (T^2)_{14} + ( (\Pop T) T)_{14}, \\
& (\mb{G27_a}) & & \quad (\KK{3})_{27} = \tfrac{1}{2} (T \oct T)_{27} + \tfrac{1}{4} F_{27}, \\
& (\mb{G27_b}) & & \quad (\KK{2})_{27} = - \tfrac{1}{2} (\cL_{\Vop T} g)_{27} + (\tr T) T_{27} - T^2_{27} - \tRc_{27}.
\end{aligned}
\end{equation*}
\end{thm}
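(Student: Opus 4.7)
The plan is to derive the six relations by taking suitable contractions of the identity $G \equiv 0$ and then projecting the resulting tensor equations onto irreducible $\G$-summands. All the preliminary work has been done in Lemma~\ref{lemma:g2bianchi-contractions}, which produces one scalar, two vector, and two $2$-tensor contractions of $G$. Since $G$ vanishes identically, each of~\eqref{eq:Giim}--\eqref{eq:Gijkpsijkm} gives a tensor identity. Three of the six claims are now immediate: $(\mb{G1})$ is a rearrangement of $G_{ijk}\ph_{ijk} = 0$ (equivalently, Corollary~\ref{cor:scalar-curvature}); $(\mb{G7_a})$ is $G_{iim} = 0$; and $(\mb{G7_b})$ is $\tfrac{1}{2}(G_{ijk}\ps_{ijkm} = 0)$.

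The remaining three relations are obtained by decomposing the two $2$-tensor equations from Lemma~\ref{lemma:g2bianchi-contractions} according to $\cT^2 \cong \Omega^0 \oplus \cS^2_0 \oplus \Omega^2_7 \oplus \Omega^2_{14}$. First, I rewrite~\eqref{eq:Gpijphijq} as $2 \KK{3}_{pq} = (T \oct T)_{qp} + \tfrac{1}{2} F_{pq}$. Since $F$ is symmetric and the symmetric part of $(T\oct T)_{qp}$ agrees with that of $(T \oct T)_{pq}$, applying the projection onto $\cS^2_0$ immediately yields
\begin{equation*}
(\KK{3})_{27} = \tfrac{1}{2} (T \oct T)_{27} + \tfrac{1}{4} F_{27},
\end{equation*}
which is $(\mb{G27_a})$. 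Applying instead the projection $\pi_{14}$ to the same equation, the $F$ term drops out (since $F$ is symmetric, $F_{14} = 0$), and transposition negates the $\Omega^2_{14}$ part, so $\pi_{14}((T\oct T)_{qp}) = -(T\oct T)_{14}$. Substituting the explicit formula for $(T\oct T)_{14}$ from Corollary~\ref{cor:Aoct-7-14} then produces $(\mb{G14})$.

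Finally, for $(\mb{G27_b})$, I rearrange~\eqref{eq:Gijpphijq} to isolate $\KK{2}$ and take the symmetric-traceless part. The symmetric part of $\nab{p}(\Vop T)_q$ is $\tfrac{1}{2}(\cL_{\Vop T} g)_{pq}$ by~\eqref{eq:Lie-derivative-frame}, the Ricci tensor is already symmetric with $(R_{pq})_{27} = \tRc_{27}$, and the terms $(\tr T) T_{pq}$ and $T^2_{pq}$ simply contribute their $\cS^2_0$ components, since scalar multiplication commutes with projection onto $\cS^2_0$. This produces exactly $(\mb{G27_b})$.

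The expected main obstacle is bookkeeping rather than conceptual: the tensor $T \oct T$ is not symmetric in general, so one must track carefully how transposition affects its $\cS^2_0$ and $\Omega^2_{14}$ components when passing from the raw contraction $G_{pij}\ph_{ijq}$ (which naturally produces $(T\oct T)_{qp}$ rather than $(T\oct T)_{pq}$) to the final relations. Once Corollary~\ref{cor:Aoct-7-14} is in hand, however, this is entirely mechanical. Note that the decomposition $\mb{7} \otimes (\mb{7} \oplus \mb{14}) = \mb{1} \oplus 2\,\mb{7} \oplus 2\,\mb{27} \oplus \mb{14} \oplus \mb{64}$ from~\eqref{eq:7-7} and~\eqref{eq:7-14} predicts exactly seven independent scalar/vector/tensor relations in $G = 0$; the six listed here correspond to all components except the $\mb{64}$-piece, which we do not need for the applications in subsequent sections.
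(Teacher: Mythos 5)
Your proof is correct and follows essentially the same route as the paper: both work from the five contractions of Lemma~\ref{lemma:g2bianchi-contractions}, project $H_{pq}=G_{pij}\ph_{ijq}$ and $\wt H_{pq}=G_{ijp}\ph_{ijq}$ onto the $\cT^2 \cong \Omega^0 \oplus \cS^2_0 \oplus \Omega^2_7 \oplus \Omega^2_{14}$ decomposition, and use Corollary~\ref{cor:Aoct-7-14} together with the transpose sign $\pi_{14}(A^t) = -A_{14}$ to handle the $\mb{14}$ piece. The one streamlining is that you read $(\mb{G7_a})$ and $(\mb{G7_b})$ directly from $G_{iim}=0$ and $G_{ijk}\ps_{ijkm}=0$, whereas the paper decomposes $G=G^7+G^{14}$ and recovers these two by solving the $2\times 2$ linear system given by $\Vop H = -2G_{iim}+G_{ijk}\ps_{ijkm}=0$ and $\Vop\wt H = G_{iim}+G_{ijk}\ps_{ijkm}=0$; this is a cosmetic difference, and your closing remark on the decomposition $\mb 7\otimes(\mb 7\oplus\mb{14})$ serves the same purpose as the paper's $G^7/G^{14}$ bookkeeping, namely confirming that the six listed relations account for all irreducible components of $G=0$ other than $\mb{64}$.
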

\begin{proof}
For convenience of notation throughout this proof, define $2$-tensors $H_{pq}$ and $\wt H_{pq}$ by
\begin{equation} \label{eq:H-temps}
H_{pq} := G_{pij} \ph_{ijq}, \qquad \wt H_{pq} := G_{ijp} \ph_{ijq}.
\end{equation}
We have the decomposition
$$ G_{pij} = G^7_{pij} + G^{14}_{pij}. $$
where $G^7 \in \Gamma(T^* M \otimes \Lambda^2_7 (T^* M))$ and $G^{14} \in \Gamma(T^* M \otimes \Lambda^2_{14} (T^* M))$. From~\eqref{eq:omega27vf}, we have that $G^7_{pij} = 0$ if and only if $H_{pq} = G_{pij} \ph_{ijq} = 0$. Thus, the $\mb{1} \oplus \mb{27} \oplus \mb{7} \oplus \mb{14}$ components of the identity $G^7_{pij} = 0$ correspond to $H_{pq} = 0$, which by~\eqref{eq:Gpijphijq} give
\begin{equation} \label{eq:H-full-temp}
H_{pq} = 2 \, \KK{3}_{pq} - (T \oct T)_{qp} - \tfrac{1}{2} F_{pq} = 0.
\end{equation}
The vanishing of the $\mb{1}$ part of $H_{pq}$ corresponds to $\tr H = H_{pp} = \ph_{pij} \ph_{ijp} = 0$, which by~\eqref{eq:Gijkphijk} yields the condition $(\mb{G1})$. This was of course expected because Corollary~\ref{cor:scalar-curvature} was obtained precisely by taking an appropriate trace of the $\G$-Bianchi identity. The vanishing of the $\mb{7}$ part of $H_{pq}$ corresponds to the vanishing of
\begin{align*}
(\Vop H)_m & = H_{pq} \ph_{pqm} = G_{pij} \ph_{ijq} \ph_{mpq} \\
& = G_{pij} (g_{im} g_{jp} - g_{ip} g_{jm} - \ps_{ijmp}) \\
& = G_{imi} - G_{iim} + G_{pij} \ps_{pijm},
\end{align*}
which simplifies to
\begin{equation} \label{eq:H7-temp7}
- 2 G_{iim} + G_{ijk} \ps_{ijkm} = 0.
\end{equation}
Below, after we find the additional information determined by the $\mb{7}$ part of $G^{14}$ in equation~\eqref{eq:H14-temp7}, we then deduce equations $(\mb{G7_a})$ and $(\mb{G7_b})$.

Applying $\pi_{14}$ to~\eqref{eq:H-full-temp} and using~\eqref{eq:Aoct-7-14} and the symmetry of $F_{pq}$ gives
\begin{align*}
0 & = (\pi_{14} H)_{qp} = 2 (\pi_{14} (\KK{3}))_{qp} - (\pi_{14} (T \oct T))_{pq} - 0 \\
& = - 2 \pi_{14} (\KK{3})_{pq} - \big(2 (\tr T) T_{14} - 2 (T^2)_{14} - 2 ( (\Pop T) T )_{14} \big){}_{pq}.
\end{align*}
Thus the vanishing of the $\mb{14}$ part of $H_{pq}$ is equivalent to the condition $(\mb{G14})$.

Taking the symmetric part of~\eqref{eq:H-full-temp} gives
\begin{equation} \label{eq:KK3symm}
(\KK{3})_{\symm} = \tfrac{1}{2} (T \oct T)_{\symm} + \tfrac{1}{4} F.
\end{equation}
Thus the vanishing of the $\mb{27}$ part of $H_{pq}$ is equivalent to the condition $(\mb{G27_a})$.

Now consider the vanishing of the component $G^{14}_{pij}$. Using~\eqref{eq:omega2-proj}, we get
\begin{align} \nonumber
6 G^{14}_{pij} & = 4 G_{pij} + G_{pab} \ps_{abij} \\ \nonumber
& = 4 G_{pij} + G_{pab} (- \ph_{abm} \ph_{ijm} + g_{ai} g_{bj} - g_{aj} g_{bi} ) \\ \nonumber
& = 4 G_{pij} - H_{pm} \ph_{mij} + G_{pij} - G_{pji} \\ \label{eq:G14-proj-temp}
& = 6 G_{pij} - H_{pm} \ph_{mij}.
\end{align}
We seek to extract the \emph{additional} information encoded in $G^{14}_{pij} = 0$ not already implied by $G^7_{pij} = 0$, which we saw was equivalent to $H_{pq} = 0$ in~\eqref{eq:H-full-temp}. Thus, the computation~\eqref{eq:G14-proj-temp} merely confirms that, if we already assume that $G^7_{pij} = 0$, then the new information given by the vanishing of $G^{14}_{pij}$ is equivalent to just setting $G_{pij} = 0$ and using $H_{pq} = 0$. That is, we can now assume that $G_{pij} \in \Gamma(T^* M \otimes \Lambda^2(T^* M))$.

Following Section~\ref{sec:7-14}, if we define
$$ \gamma_{pij} = (\rho G)_{pij} = G_{pij} + G_{ijp} + G_{jpi}, $$
then $\gamma = \rho G \in \Omega^3_7 \oplus \Omega^3_{27}$, encodes the $\mb{27} \oplus \mb{7}$ part of $G^{14} \in \mb{64} \oplus \mb{27} \oplus \mb{7}$. By Corollary~\ref{cor:diamondinverse}, the vanishing of $\gamma_7$ and $\gamma_{27}$ are equivalent, respectively, to the vanishing of the $\mb{7}$ and $\mb{27}$ parts of $\gamma^{\ph}_{pq} = \gamma_{pij} \ph_{qij}$. We have
\begin{align*}
\gamma^{\ph}_{pq} & = \gamma_{pij} \ph_{qij} = (G_{pij} + G_{ijp} + G_{jpi}) \ph_{qij} \\
& = G_{pij} \ph_{ijq} + 2 G_{ijp} \ph_{ijq} = H_{pq} + 2 \wt H_{pq}.
\end{align*}
Again, since we are assuming that $H_{pq} = 0$, the \emph{new} information given by the $\mb{27} \oplus \mb{7}$ parts of $G^{14} = 0$ are encoded in $\wt H_{pq} = 0$. Note that $\wt H_{pq}$ is a $2$-tensor which \emph{does} have $\mb{1} \oplus \mb{14}$ components, but these are just constant multiples of the $\mb{1} \oplus \mb{14}$ components of $H_{pq}$. That is, only the $\mb{7} \oplus \mb{27}$ components of $\wt H_{pq}$ contain any \emph{new} information.

By~\eqref{eq:Gijpphijq} we have
\begin{equation} \label{eq:wtH-full-temp}
\wt H_{pq} = \KK{2}_{pq} + \nab{p} (\Vop T)_q - (\tr T) T_{pq} + T^2_{pq} + R_{pq} = 0.
\end{equation}
The vanishing of the $\mb{7}$ part of $\wt H_{pq}$ corresponds to the vanishing of
\begin{align*}
(\Vop \wt H)_m & = \wt H_{pq} \ph_{pqm} = G_{ijp} \ph_{ijq} \ph_{mpq} \\
& = G_{ijp} (g_{im} g_{jp} - g_{ip} g_{jm} - \ps_{ijmp}) \\
& = 0 - G_{imi} + G_{ijp} \ps_{ijpm},
\end{align*}
which simplifies to
\begin{equation} \label{eq:H14-temp7}
G_{iim} + G_{ijk} \ps_{ijkm} = 0.
\end{equation}
Comparing~\eqref{eq:H7-temp7} and~\eqref{eq:H14-temp7}, we deduce that
\begin{equation} \label{eq:vecG-temp}
G_{iim} = 0, \qquad G_{ijk} \ps_{ijkm} = 0.
\end{equation}
Comparing with~\eqref{eq:Giim} and~\eqref{eq:Gijkpsijkm}, we conclude that the vanishing of the two $\mb{7}$ parts of the $\G$-Bianchi identity are equivalent to the conditions $(\mb{G7_a})$ and $(\mb{G7_b})$.

Taking the symmetric part of~\eqref{eq:wtH-full-temp} gives
\begin{equation} \label{eq:KK2symm}
\KK{2}_{\symm} = - \tfrac{1}{2} \cL_{\Vop T} g + (\tr T) T_{\symm} - T^2_{\symm} - \tRc.
\end{equation}
Thus the vanishing of the $\mb{27}$ part of $\wt H_{pq}$ is equivalent to the condition $(\mb{G27_b})$.
\end{proof}

For our purposes, it is most useful to repackage the independent relations in Theorem~\ref{thm:g2bianchi-decomp} as follows.

\begin{cor} \label{cor:g2bianchi-repackaged}
The following five tensors constructed from $\nab{}T$:
$$ \Div (\Vop T) \in \Omega^0, \qquad \nab{}(\tr T), \langle \nab{} T, \ps \rangle \in \Omega^1, \qquad (\KK{2})_{\symm}, (\KK{3})_{\symm} \in \cS^2, $$
can be expressed in terms of $\Div T^t$, $\cL_{\Vop T} g$, the curvature tensors $R$, $\tRc$, $F$, and lower order terms which are quadratic in the torsion. Explicitly, we have:
\begin{align*}
\Div (\Vop T) & = - \tfrac{1}{2} R + \tfrac{1}{2} (\tr T)^2 - \tfrac{1}{2} \langle T, T^t \rangle - \tfrac{1}{2} \langle T, \Pop T \rangle, \\
\nab{} (\tr T) & = \Div T^t - T (\Vop T), \\
\langle \nab{}T, \ps \rangle & = (\tr T) \Vop T - \Vop (T^2) - T^t (\Vop T), \\
(\KK{2})_{\symm} & = - \tfrac{1}{2} \cL_{\Vop T} g + (\tr T) T_{\symm} - T^2_{\symm} - \tRc, \\
(\KK{3})_{\symm} & = \tfrac{1}{2} (T \oct T)_{\symm} + \tfrac{1}{4} F.
\end{align*}
\end{cor}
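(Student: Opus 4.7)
The plan is to observe that Corollary~\ref{cor:g2bianchi-repackaged} is essentially a direct rearrangement of the six independent relations already established in Theorem~\ref{thm:g2bianchi-decomp}, together with the symmetric-part identities~\eqref{eq:KK3symm} and~\eqref{eq:KK2symm} that appeared inside its proof. So the ``proof'' is really just a bookkeeping exercise: for each of the five claimed identities, I would identify which component of the $\G$-Bianchi identity it comes from and solve for the relevant object.

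Concretely, I would proceed as follows. The first identity is just relation $(\mb{G1})$ solved for $\Div (\Vop T)$. The second is relation $(\mb{G7_a})$ solved for $\nab{} (\tr T)$. The third is relation $(\mb{G7_b})$ solved for $\langle \nab{} T, \ps \rangle$. For the fourth and fifth, I would recall that in the course of proving Theorem~\ref{thm:g2bianchi-decomp}, the symmetric parts of the identities $\wt H_{pq} = 0$ and $H_{pq} = 0$ (where $\wt H$ and $H$ were defined in~\eqref{eq:H-temps}) were extracted as equations~\eqref{eq:KK2symm} and~\eqref{eq:KK3symm}, namely
\begin{equation*}
\KK{2}_{\symm} = - \tfrac{1}{2} \cL_{\Vop T} g + (\tr T) T_{\symm} - T^2_{\symm} - \tRc, \qquad (\KK{3})_{\symm} = \tfrac{1}{2} (T \oct T)_{\symm} + \tfrac{1}{4} F.
\end{equation*}
These are precisely the last two assertions of the corollary. (Note that $F$ is symmetric, as is $T \oct T$ up to the fact that we have used $(T\oct T)_{qp}$ in~\eqref{eq:H-full-temp}; since $(T \oct T)_{\symm}$ is what appears, the transpose index ordering is irrelevant after symmetrization.)

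There is no real obstacle here; the content of the corollary has already been proved inside Theorem~\ref{thm:g2bianchi-decomp}. The only thing to double-check is the algebraic sign conventions when solving each of $(\mb{G1})$, $(\mb{G7_a})$, $(\mb{G7_b})$ for the quantity on the left-hand side, and that the symmetric-part extraction in~\eqref{eq:KK2symm} and~\eqref{eq:KK3symm} indeed gives exactly what is claimed. Thus the proof can simply read ``This is a restatement of Theorem~\ref{thm:g2bianchi-decomp}, together with the intermediate identities~\eqref{eq:KK3symm} and~\eqref{eq:KK2symm} that were extracted in its proof.''
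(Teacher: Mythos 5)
Your proof is correct and follows exactly the paper's argument: the paper's proof of this corollary is precisely the one-line observation that the first three equations are rearrangements of $(\mb{G1})$, $(\mb{G7_a})$, $(\mb{G7_b})$ and the last two are~\eqref{eq:KK2symm} and~\eqref{eq:KK3symm}.
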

\begin{proof}
The first three equations are rearrangements of $(\mb{G1})$, $(\mb{G7_a})$, and $(\mb{G7_b})$. The last two equations are~\eqref{eq:KK2symm} and~\eqref{eq:KK3symm}, respectively.
\end{proof}

\begin{rmk} \label{rmk:curvature-torsion-free}
Corollary~\ref{cor:g2bianchi-repackaged} yields the following useful expressions for the Ricci tensor $R_{pq}$ and the symmetric $2$-tensor $F_{pq} = R_{ijkl} \ph_{ijp} \ph_{klq}$ in terms of the torsion and its covariant derivative:
\begin{equation} \label{eq:Rc-F-in-terms-of-T}
\begin{aligned}
\tRc & = - (\KK{2})_{\symm} - \tfrac{1}{2} \cL_{\Vop T} g + (\tr T) T_{\symm} - T^2_{\symm}, \\
F & = 4 (\KK{3})_{\symm} - 2 (T \oct T)_{\symm}.
\end{aligned}
\end{equation}
If we expand the $\cL_{\Vop T} g$ term in the expression for $\tRc$ above, we obtain
\begin{equation} \label{eq:Ricci-from-T}
\begin{aligned}
R_{ij} & = - \tfrac{1}{2} (\nab{p} T_{iq} \ph_{pjq} + \nab{p} T_{jq} \ph_{piq}) - \tfrac{1}{2} (\nab{i} T_{pq} \ph_{jpq} + \nab{j} T_{pq} \ph_{ipq}) \\
& \qquad {} - \tfrac{1}{2} (T_{im} T_{pq} \ps_{pqmj} + T_{jm} T_{pq} \ps_{pqmi}) + \tfrac{1}{2} (\tr T) (T_{ij} + T_{ji}) - \tfrac{1}{2} (T_{im} T_{mj} + T_{jm} T_{mi}),
\end{aligned}
\end{equation}
which precisely agrees with~\cite[Equation (4.19), symmetrized]{K-flows}. In terms of a local orthonormal frame, the expression for $F$ above is
\begin{equation} \label{eq:F-from-T}
F_{ij} = 2 \nab{p} T_{qi} \ph_{pqj} + 2 \nab{p} T_{qj} \ph_{pqi} - 2 T_{pa} T_{qb} \ph_{pqi} \ph_{abj}.
\end{equation}
This can also be obtained by computing $F_{ij} = R_{pqab} \ph_{pqi} \ph_{abj}$ from the $\G$-Bianchi identity~\eqref{eq:g2bianchi} and symmetrizing. In particular, if $\ph$ is \emph{torsion-free}, then both $R_{ij}$ and $F_{ij}$ vanish. The fact that $\tRc$ and $F$ are expressible in terms of torsion is well-known. These formulas for example can be found, albeit in a very different form, in Cleyton--Ivanov~\cite[Lemma 4.4]{CI}. We also explain how to express two of the three independent components of the Weyl curvature in terms of torsion in Section~\ref{sec:curvature-decomp}.
\end{rmk}

\begin{rmk} \label{rmk:FandRic-closed}
Suppose $\dd \ph = 0$.  In this case, $T = T_{14}$ is skew-symmetric. Thus, using~\eqref{eq:omega214desc} for $T_{14}$ and~\eqref{eq:nablaph}, equation~\eqref{eq:Ricci-from-T} can be further simplified as
\begin{align*}
R_{ij} & = - \tfrac{1}{2} (\nab{p} T_{iq} \ph_{pjq} + \nab{p} T_{jq} \ph_{piq}) - \tfrac{1}{2} (\nab{i} T_{pq} \ph_{jpq} + \nab{j} T_{pq} \ph_{ipq}) \\
& \qquad {} - \tfrac{1}{2} (T_{im} T_{pq} \ps_{pqmj} + T_{jm} T_{pq} \ps_{pqmi}) + \tfrac{1}{2} (\tr T) (T_{ij} + T_{ji}) - \tfrac{1}{2} (T_{im} T_{mj} + T_{jm} T_{mi}) \\
&= - \tfrac{1}{2} (\nab{p} T_{qi} \ph_{pqj} + \nab{p} T_{qj} \ph_{pqi}) + \tfrac{1}{2} T_{pq} T_{im} \psi_{mjpq} + \tfrac{1}{2}T_{pq} T_{jm} \psi_{mipq} - T_{im} T_{mj} - T_{jm} T_{mi} \\
& \qquad {} - \tfrac{1}{2} (T_{im} T_{mj} + T_{jm} T_{mi}) \\
& = - \tfrac{1}{2} (\nab{p} T_{qi} \ph_{pqj} + \nab{p} T_{qj} \ph_{pqi}) - \tfrac{1}{2} (T_{im} T_{mj} + T_{jm} T_{mi}).
\end{align*} 
In particular, from \eqref{eq:F-from-T} we obtain
\begin{align} \label{eq:Ric-F-closed}
R_{ij} & = - \tfrac{1}{4} F_{ij} - \tfrac{1}{2}T_{pa} T_{qb} \ph_{pqi} \ph_{abj} -  T_{im} T_{mj} & & \text{when $\dd \ph = 0$},
\end{align}
which gives a relation between $\tRc$ and $F$ in terms of lower-order terms.
\end{rmk}

\begin{rmk} \label{rmk:FandRic-coclosed}
Suppose $\dd \psi = 0$. In this case, $T = T_1 + T_{27}$ is symmetric. The expression~\eqref{eq:Ricci-from-T} can be further simplified as
\begin{align*}
R_{ij} & = - \tfrac{1}{2} (\nab{p} T_{iq} \ph_{pjq} + \nab{p} T_{jq} \ph_{piq}) - \tfrac{1}{2} (\nab{i} T_{pq} \ph_{jpq} + \nab{j} T_{pq} \ph_{ipq}) \\
& \qquad {} - \tfrac{1}{2} (T_{im} T_{pq} \ps_{pqmj} + T_{jm} T_{pq} \ps_{pqmi}) + \tfrac{1}{2} (\tr T) (T_{ij} + T_{ji}) - \tfrac{1}{2} (T_{im} T_{mj} + T_{jm} T_{mi}) \\
& = \tfrac{1}{2} (\nab{p} T_{qi} \ph_{pqj} + \nab{p} T_{qj} \ph_{pqi})+ (\tr T) T_{ij} - T_{im} T_{mj}. 
\end{align*}
The above expression and~\eqref{eq:F-from-T} yield
\begin{align}
R_{ij} = \tfrac{1}{4} F_{ij} + \tfrac{1}{2} T_{pa} T_{qb} \ph_{pqi} \ph_{abj} + (\tr T) T_{ij} - T_{im} T_{mj} & & \text{when $\dd \ps = 0$},
\end{align}
which gives a relation between $\tRc$ and $F$ in terms of lower-order terms.
\end{rmk}

\begin{rmk} \label{rmk:d-squared}
From the fact that $\dd^2 = 0$, we get two identities for any $\G$-structures $\ph$, namely the $\Omega^6_7$ form $\dd^2 \ps = 0$, and the $\Omega^5_7 \oplus \Omega^5_{14}$ form $\dd^2 \ph = 0$. It is easy to check that the resulting conditions in $\mb{7} \oplus \mb{7} \oplus \mb{14}$ are equivalent to $(\mb{G7_a})$, $(\mb{G7_b})$, and $(\mb{G14})$ from Theorem~\ref{thm:g2bianchi-decomp}. This is of course expected, since $d$ can be written in terms of $\nab{}$.
\end{rmk}

\begin{cor} \label{cor:curlVT-revisited}
The vector field $\curl (\Vop T)$ is related to the vector fields $\Div T$ and $\Div T^t$ by
\begin{equation} \label{eq:curlVT-revisited}
\curl (\Vop T) = \Div T^t - \Div T + T^t (\Vop T) - T (\Vop T).
\end{equation}
Consequently, the $3$-form $\cL_{\Vop T} \ph$ can be expressed as
\begin{equation} \label{eq:LieVTph}
\cL_{\Vop T} \ph = \tfrac{1}{2} (\cL_{\Vop T} g) \diamond \ph + \tfrac{1}{2} ( \Div T - \Div T^t + T^t (\Vop T) + T (\Vop T) ) \hk \ps.
\end{equation}
\end{cor}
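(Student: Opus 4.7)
The plan is to obtain both equations by direct substitution, combining the general expression for $\curl(\Vop T)$ from Lemma~\ref{lemma:curlVT} with the $\mb{7}$-component of the $\G$-Bianchi identity (condition $(\mb{G7_b})$ repackaged in Corollary~\ref{cor:g2bianchi-repackaged}), and then inserting the result into the general Lie derivative formula~\eqref{eq:Lie-derivative-alternate}. There is no genuinely hard step here; the work is bookkeeping, and the point of the corollary is precisely that the $\G$-Bianchi identity forces three of the seven terms in Lemma~\ref{lemma:curlVT} to collapse.

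First, I would recall from Lemma~\ref{lemma:curlVT} that
\[
\curl (\Vop T) = \Div T^t - \Div T + \langle \nab{} T, \ps \rangle + 2 T^t (\Vop T) - T (\Vop T) - (\tr T) \Vop T + \Vop (T^2).
\]
By Corollary~\ref{cor:g2bianchi-repackaged}, the vector field $\langle \nab{} T, \ps \rangle$ equals $(\tr T) \Vop T - \Vop(T^2) - T^t(\Vop T)$. Substituting this into the right-hand side above, the two $(\tr T)\Vop T$ terms cancel, the two $\Vop(T^2)$ terms cancel, and the $T^t(\Vop T)$ contributions combine as $-T^t(\Vop T) + 2T^t(\Vop T) = T^t(\Vop T)$. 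This immediately yields~\eqref{eq:curlVT-revisited}.

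For the second statement, I would apply equation~\eqref{eq:Lie-derivative-alternate} with $W = \Vop T$ to write
\[
\cL_{\Vop T} \ph = \tfrac{1}{2} (\cL_{\Vop T} g) \diamond \ph + \bigl( T^t(\Vop T) - \tfrac{1}{2} \curl (\Vop T) \bigr) \hk \ps.
\]
Then I substitute the formula~\eqref{eq:curlVT-revisited} just obtained into the coefficient of $\ps$ and simplify. The terms $T^t(\Vop T) - \tfrac{1}{2} T^t(\Vop T) = \tfrac{1}{2} T^t(\Vop T)$ combine with $-\tfrac{1}{2}\Div T^t + \tfrac{1}{2}\Div T + \tfrac{1}{2} T(\Vop T)$ to give exactly $\tfrac{1}{2}(\Div T - \Div T^t + T^t(\Vop T) + T(\Vop T))$, yielding~\eqref{eq:LieVTph}.

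The only thing worth double-checking is bookkeeping of signs in the second computation (since the coefficient in~\eqref{eq:Lie-derivative-alternate} carries a $-\tfrac{1}{2}\curl W$, whose minus sign flips the signs of the four terms in~\eqref{eq:curlVT-revisited}), but no new identity or representation-theoretic fact is needed beyond what is already in Lemma~\ref{lemma:curlVT}, Corollary~\ref{cor:g2bianchi-repackaged}, and equation~\eqref{eq:Lie-derivative-alternate}.
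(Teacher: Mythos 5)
Your proposal is correct and follows precisely the same route as the paper: substitute the expression for $\langle \nab{}T, \ps \rangle$ from Corollary~\ref{cor:g2bianchi-repackaged} into Lemma~\ref{lemma:curlVT}, then insert the result into~\eqref{eq:Lie-derivative-alternate} with $W = \Vop T$. The algebra you describe (cancellation of the $(\tr T)\Vop T$ and $\Vop(T^2)$ terms, then the sign bookkeeping on $-\tfrac{1}{2}\curl W$) is accurate and matches the paper's own calculation.
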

\begin{proof}
Equation~\eqref{eq:curlVT-revisited} follows from~\eqref{eq:curlVT} and the expression for $\langle \nab{}T, \ps \rangle$ in Corollary~\ref{cor:g2bianchi-repackaged}. Letting $W = \Vop T$ in~\eqref{eq:Lie-derivative-alternate}, we thus obtain
\begin{equation*}
\cL_{\Vop T} \ph = \tfrac{1}{2} (\cL_{\Vop T} g) \diamond \ph + \big( T^t (\Vop T) - \tfrac{1}{2} ( \Div T^t - \Div T + T^t (\Vop T) - T (\Vop T) ) \big) \hk \ps,
\end{equation*}
which simplifies to~\eqref{eq:LieVTph}.
\end{proof}

\subsection{Evolution of torsion functionals revisited} \label{sec:functionals-revisited}

In this section we revisit the evolution equations of Proposition~\ref{prop:evolution-torsion-quantities-2}, and simplify them using the results of Section~\ref{sec:g2bianchi-revisited}. As a result, this yields the Euler--Lagrange equations for these torsion functionals. Here we assume $M$ is compact, so that all integrals are defined.

Recall that $A = A_{1 + 27} + A_7 \in \cS \oplus \Omega^2_7$. Write $A_{1+27} = h$ and $A_7 = - \frac{1}{3} X \hk \ph$ as in Section~\ref{sec:basic-flow}, so that that flow~\eqref{eq:general-flow-A} is equivalent to~\eqref{eq:general-flow}. In this section it is convenient to use formulation~\eqref{eq:general-flow} of the flow, in terms of the pair $(h, X)$. We first rewrite Propostion~\ref{prop:evolution-torsion-quantities-2} in terms of this formulation of the flow.

If $B = B_{\symm} + B_7 \in \cS^2 \oplus \Omega^2_7$, then from~\eqref{eq:vec-transform} we have $B_7 = \frac{1}{6} (\Vop B) \hk \ph$, and thus using~\eqref{eq:1727metric-b} and the orthogonality of $\cS$ and $\Omega^2_7$, we have
\begin{equation} \label{eq:functionals-simp1}
\langle B, A \rangle = \langle B_{\symm} + \tfrac{1}{6} (\Vop B) \hk \ph, h - \tfrac{1}{3} X \hk \ph \rangle = \langle B_{\symm}, h \rangle - \tfrac{1}{3} \langle \Vop B, X \rangle
\end{equation}
and
\begin{equation} \label{eq:functionals-simp2}
\langle Y \hk \ph, A_7 \rangle = \langle Y \hk \ph, - \tfrac{1}{3} X \hk \ph \rangle = - 2 \langle Y, X \rangle.
\end{equation}

\begin{prop} \label{prop:evolution-torsion-quantities-3}
Let $\ph$ be a time-dependent family of $\G$-structures evolving by the flow~\eqref{eq:general-flow}. We have the following evolution equations for various quadratic integral quantities obtained from the torsion:
\begin{align*}
\delt \Big( \int_M (\tr T)^2 \vol \Big) & = \int_M \langle - 2 \nab{} (\tr T) - 2 (\tr T) \Vop T, X \rangle \vol \\
& \qquad {} + \int_M \langle (\tr T)^2 g - 2 (\tr T) T_{\symm}, h \rangle \vol, \\
\delt \Big( \int_M |T|^2 \vol \Big) & = \int_M \langle - 2 \Div T, X \rangle \vol \\
& \qquad {} + \int_M \langle - 2 (\KK{2})_{\symm} + |T|^2 g - 2 T T^t - 2(T (\Pop T))_{\symm}, h \rangle \vol, \\
\delt \Big( \int_M \langle T, T^t \rangle \vol \Big) & = \int_M \langle - 2 \Div T^t - 2 \Vop (T^2), X \rangle \vol \\
& \qquad {} + \int_M \langle 2 (\KK{3})_{\symm} + \langle T, T^t \rangle g - 2 (T^2)_{\symm} + 2 ((\Pop T) T)_{\symm}, h \rangle \vol, \\
\delt \Big( \int_M \langle T, \Pop T \rangle \vol \Big) & = \int_M \langle - 2 \langle \nab{} T, \ps \rangle + 2 T(\Vop T) - 2 T^t (\Vop T), X \rangle \vol \\
& \qquad {} + \int_M \Big[ \langle -2 (\KK{1})_{\symm} -2 (\KK{2})_{\symm} -2 (\KK{3})_{\symm} + 2 (\tr \KK{a}) g - \langle T, \Pop T \rangle g \\
& \qquad \qquad \qquad {} - 2 (T(\Pop T))_{\symm} - 2 ((\Pop T) T)_{\symm}, h \big\rangle \Big] \vol.
\end{align*}
\end{prop}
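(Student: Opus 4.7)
The strategy is a direct bookkeeping exercise: start from Proposition~\ref{prop:evolution-torsion-quantities-2} and apply the identities~\eqref{eq:functionals-simp1} and~\eqref{eq:functionals-simp2} to rewrite each integrand $\langle B, A \rangle \vol$ (where $B$ is the explicit $2$-tensor appearing in that proposition) as $\langle B_{\symm}, h \rangle - \tfrac{1}{3} \langle \Vop B, X \rangle$, valid for $A = h - \tfrac{1}{3} X \hk \ph \in \cS^2 \oplus \Omega^2_7$. The whole proof therefore reduces to identifying $B_{\symm}$ and $\Vop B$ in each of the four cases.

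For the symmetric parts the bookkeeping is routine. Contributions of the form $Y \hk \ph \in \Omega^2_7$ and $(\tr T) \Pop T \in \Omega^2$ are skew and give nothing. Terms already of the form $(\KK{a})_{\symm}$, $f g$, or $T T^t$ are symmetric. The remaining mixed compositions simplify using the transpose identities $(\Pop T)^t = -\Pop T$, $(T (\Pop T))^t = -(\Pop T) T^t$, $((\Pop T) T)^t = -T^t (\Pop T)$, and $((T^t)^2)^t = T^2$, from which one reads off $-T(\Pop T) + (\Pop T) T^t = -2 (T (\Pop T))_{\symm}$, $(\Pop T) T - T^t (\Pop T) = 2 ((\Pop T) T)_{\symm}$, $((T^t)^2)_{\symm} = (T^2)_{\symm}$, and $2(\Pop T) T^t - 2 (\Pop T) T = -2 (T (\Pop T))_{\symm} - 2 ((\Pop T) T)_{\symm}$.

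For $\Vop B$ I will use: $\Vop(Y \hk \ph) = 6 Y$ from~\eqref{eq:vec-transform}; the fact that $\Vop$ annihilates symmetric tensors; $\Vop(\Pop B) = -4 \Vop B$ from Lemma~\ref{lemma:14-part}; $\Vop(B^t) = -\Vop B$ (hence $\Vop((T^t)^2) = -\Vop(T^2)$); and the explicit formula of Lemma~\ref{lemma:V-of-PAA} for $\Vop((\Pop T) T)$. For the first three functionals each term is disposed of immediately from this list. For the fourth, the three transposed $\KK{a}^t$ contributions combine via $\sum_{a} \Vop(-2 \KK{a}^t) = 2 \sum_{a} \Vop(\KK{a}) = 6 \langle \nab{} T, \ps \rangle$ by Lemma~\ref{lemma:Vop-KK}, the $\Div T$, $\Div T^t$, and $\nab{}(\tr T)$ contributions cancelling telescopically. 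Combining Lemma~\ref{lemma:V-of-PAA} applied to $\Vop((\Pop T) T)$ with the transpose identity $\Vop((\Pop T) T^t) = \Vop(T (\Pop T))$ and the analogous direct formula $\Vop(T(\Pop T)) = \Vop(T^2) - (\tr T) \Vop T - T(\Vop T) + 2 T^t (\Vop T)$ obtained from the same $\ph\ps$-contraction, the term $2(\Pop T) T^t - 2 (\Pop T) T$ is found to contribute $-6 T(\Vop T) + 6 T^t(\Vop T)$ to $\Vop B$. Summing then reproduces the $X$-pairing stated in the proposition.

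The main bookkeeping danger is in the fourth equation: signs must be tracked consistently using $(\Pop T)^t = -\Pop T$, since otherwise the transposes of compositions such as $T(\Pop T)$ and $(\Pop T) T$ are easy to mishandle, and several candidate cancellations depend sensitively on these signs. No further machinery beyond Proposition~\ref{prop:evolution-torsion-quantities-2} and Lemmas~\ref{lemma:14-part},~\ref{lemma:V-of-PAA}, and~\ref{lemma:Vop-KK} is required; in particular the $\G$-Bianchi identities from Corollary~\ref{cor:g2bianchi-repackaged} are \emph{not} invoked here, since the aim is only to reformulate Proposition~\ref{prop:evolution-torsion-quantities-2} in the $(h, X)$ variables rather than to simplify it further.
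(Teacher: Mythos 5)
Your proposal is correct and takes essentially the same route as the paper's proof, which likewise rewrites each line of Proposition~\ref{prop:evolution-torsion-quantities-2} in the $(h,X)$ variables via~\eqref{eq:functionals-simp1} and then extracts the symmetric part $B_{\symm}$ and the vector part $\Vop B$ of each integrand using the transpose rules and Lemmas~\ref{lemma:14-part},~\ref{lemma:V-of-PAA}, and~\ref{lemma:Vop-KK}. The only cosmetic difference is in the fourth equation: rather than invoking a separately derived formula for $\Vop(T(\Pop T))$, the paper converts $-\Vop((\Pop T)T^t)$ into $\Vop((\Pop T^t)T^t)$ via $\Pop T^t = -\Pop T$ and then applies Lemma~\ref{lemma:V-of-PAA} directly with $A = T^t$, which is the same computation with the signs shuffled, and your stated formula for $\Vop(T(\Pop T))$ is indeed what that yields.
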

\begin{proof}
We rewrite the four equations from Proposition~\ref{prop:evolution-torsion-quantities-2} using~\eqref{eq:functionals-simp1} and~\eqref{eq:functionals-simp2}. For the first equation, we get
\begin{align*}
\delt \Big( \int_M (\tr T)^2 \vol \Big) & = \int_M \langle - 2 \nab{} (\tr T) + \tfrac{2}{3} (\tr T) \Vop T^t + \tfrac{1}{3} (\tr T) \Vop (\Pop T), X \rangle \vol \\
& \qquad {} + \int_M \langle (\tr T)^2 g - 2 (\tr T) T^t_{\symm}, h \rangle \vol.
\end{align*}
Using $\Vop T^t = - \Vop T$ and~\eqref{eq:14-part} yields the first equation. The second equation is immediate since
$$- T(\Pop T) + (\Pop T)T^t = - 2 (T(\Pop T))_{\symm}. $$
For the third equation, since $(\Pop T) T - T^t (\Pop T) = 2 ( (\Pop T) T)_{\symm}$ and $\Pop(T^2)$ is skew, we get
\begin{align*}
\delt \Big( \int_M \langle T, T^t \rangle \vol \Big) & = \int_M \langle - 2 \Div T^t + \tfrac{2}{3} \Vop ((T^t)^2) + \tfrac{1}{3} \Vop (\Pop (T^2)), X \rangle \\
& \qquad {} + \int_M \langle 2 (\KK{3})_{\symm} + \langle T, T^t \rangle g - 2 (T^t)^2_{\symm} + 2 ((\Pop T))_{\symm}, h \rangle \vol.
\end{align*}
Using $\Vop ((T^t)^2) = \Vop ( (T^2)^t) = - \Vop (T^2)$ and~\eqref{eq:14-part} yields the third equation.

The fourth equation requires more work. First, we get
\begin{align} \nonumber
\delt \Big( \int_M \langle T, \Pop T \rangle \vol \Big) & = \int_M \langle \tfrac{2}{3} \Vop(\KK{1}^t) + \tfrac{2}{3} \Vop(\KK{2}^t) + \tfrac{2}{3} \Vop(\KK{3}^t) - \tfrac{2}{3} \Vop \big( (\Pop T) T^t \big) + \tfrac{2}{3} \Vop \big( (\Pop T) T \big), X \rangle \vol \\ \nonumber
& \qquad {} + \int_M \Big[ \langle -2 (\KK{1}^t)_{\symm} -2 (\KK{2}^t)_{\symm} -2 (\KK{3}^t)_{\symm} + 2 (\tr \KK{a}) g - \langle T, \Pop T \rangle g \\ \label{eq:functional4-simp-temp}
& \qquad \qquad \qquad {} + 2 ((\Pop T) T^t)_{\symm} - 2 ((\Pop T) T)_{\symm}, h \big\rangle \Big] \vol.
\end{align}
Using $\Vop(\KK{a}^t) = - \Vop (\KK{a})$, Lemma~\ref{lemma:Vop-KK}, and the divergence theorem, the first integral in~\eqref{eq:functional4-simp-temp} becomes
$$ \int_M \langle - 2 \langle \nab{} T, \ps \rangle - \tfrac{2}{3} \Vop \big( (\Pop T) T^t \big) + \tfrac{2}{3} \Vop \big( (\Pop T) T \big), X \rangle \vol. $$
Since $\Pop T = - \Pop T^t$, we can apply~\eqref{eq:V-of-PAA} to each of the last two terms, and use $\Vop (B^t) = - \Vop B$, to obtain after some cancellation that
\begin{align*}
\tfrac{2}{3} \Vop \big( (\Pop T^t) T^t \big) + \tfrac{2}{3} \Vop \big( (\Pop T) T \big) & = \tfrac{2}{3} \big[ \Vop ((T^t)^2) - (\tr T^t) \Vop (T^t) + 2 T^t (\Vop (T^t)) - (T^t)^t (\Vop (T^t)) \big] \\
& \qquad {} + \tfrac{2}{3} \big[ \Vop (T^2) - (\tr T) \Vop T + 2 T (\Vop T) - T^t (\Vop T) \big] \\
& = \tfrac{2}{3} \big[ 2 T (\Vop T) - 2 T^t (\Vop T) + T(\Vop T) - T^t (\Vop T) \big] \\
& = 2 T(\Vop T) - 2 T^t (\Vop T).
\end{align*}
Thus the first integral in~\eqref{eq:functional4-simp-temp} finally becomes
$$ \int_M \langle - 2 \langle \nab{} T, \ps \rangle + 2 T(\Vop T) - 2 T^t (\Vop T), X \rangle \vol, $$
which along with $( (\Pop T) T^t)_{\symm} = - (T (\Pop T))_{\symm}$, yields the fourth equation.
\end{proof}

We can now incorporate the relations imposed by the $\G$-Bianchi identity.

\begin{cor} \label{cor:evolution-torsion-quantities-4}
Let $\ph$ be a time-dependent family of $\G$-structures evolving by the flow~\eqref{eq:general-flow}. We have the following evolution equations for various quadratic integral quantities obtained from the torsion:
\begin{align*}
\delt \Big( \int_M (\tr T)^2 \vol \Big) & = \int_M \langle - 2 \Div T^t + 2 T(\Vop T) - 2 (\tr T) \Vop T, X \rangle \vol \\
& \qquad + \int_M \langle (\tr T)^2 g - 2 (\tr T) T_{\symm}, h \rangle \vol,
\end{align*}
\begin{align*}
\delt \Big( \int_M |T|^2 \vol \Big) & = \int_M \langle - 2 \Div T, X \rangle \vol \\
& \qquad + \int_M \langle 2 \tRc + \cL_{\Vop T} g + |T|^2 g - 2 (\tr T) T_{\symm} + 2T^2_{\symm} - 2 T T^t - 2 (T(\Pop T))_{\symm}, h \rangle \vol,
\end{align*}
\begin{align*}
\delt \Big( \int_M \langle T, T^t \rangle \vol \Big) & = \int_M \langle - 2 \Div T^t - 2 \Vop (T^2), X \rangle \vol \\
& \qquad {} + \int_M \langle \tfrac{1}{2} F + \langle T, T^t \rangle g + (T \oct T)_{\symm} - 2 (T^2)_{\symm} + 2 ((\Pop T) T)_{\symm}, h \rangle \vol,
\end{align*}
\begin{align*}
\delt \Big( \int_M \langle T, \Pop T \rangle \vol \Big) & = \int_M \langle - 2 (\tr T) \Vop T + 2 \Vop (T^2) + 2 T(\Vop T), X \rangle \vol \\
& \qquad {} + \int_M \Big[ \langle 2 \tRc - \tfrac{1}{2} F - R g + (\tr T)^2 g - \langle T, T^t \rangle g \\
& \qquad \qquad \qquad {} - (T \oct T)_{\symm} - 2 (\tr T) T_{\symm} + 2 T^2_{\symm} - 2 ((\Pop T) T)_{\symm}, h \big\rangle \Big] \vol.
\end{align*}
\end{cor}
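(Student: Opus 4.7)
My plan is to derive Corollary~\ref{cor:evolution-torsion-quantities-4} from Proposition~\ref{prop:evolution-torsion-quantities-3} by direct substitution, using the relations extracted from the $\G$-Bianchi identity in Corollary~\ref{cor:g2bianchi-repackaged}, together with the identity~\eqref{eq:KK1-simp} for $\KK{1}$ and~\eqref{eq:divVT} for $\tr \KK{a}$. Since both propositions express their integrands in the pairing $\langle \cdot, X \rangle + \langle \cdot, h \rangle$, nothing new has to be integrated by parts; the work is purely algebraic term-matching.

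The first three equations are routine. For $\int (\tr T)^2 \vol$, the only substitution needed is $\nab{}(\tr T) = \Div T^t - T(\Vop T)$ inside the $X$-integrand, which is condition $(\mb{G7_a})$. For $\int |T|^2 \vol$, I substitute the formula $(\KK{2})_{\symm} = -\tfrac{1}{2}\cL_{\Vop T} g + (\tr T) T_{\symm} - T^2_{\symm} - \tRc$ from $(\mb{G27_b})$ into the $h$-integrand; multiplying by $-2$ and combining produces the stated $2\tRc + \cL_{\Vop T} g - 2(\tr T) T_{\symm} + 2 T^2_{\symm}$ plus the untouched lower-order pieces. For $\int \langle T, T^t \rangle \vol$, I substitute $(\KK{3})_{\symm} = \tfrac{1}{2}(T\oct T)_{\symm} + \tfrac{1}{4} F$ from $(\mb{G27_a})$ to convert the $2(\KK{3})_{\symm}$ term into $(T\oct T)_{\symm} + \tfrac{1}{2} F$.

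The fourth equation is the real computation, and it is where I expect the main obstacle: there are five distinct derivative-of-torsion quantities to eliminate, with substantial cancellation. In the $X$-integrand I replace $\langle \nab{}T, \ps \rangle$ by $(\tr T)\Vop T - \Vop(T^2) - T^t(\Vop T)$ from $(\mb{G7_b})$; the $T^t(\Vop T)$ piece then combines with the existing $-2 T^t(\Vop T)$, leaving $-2(\tr T)\Vop T + 2\Vop(T^2) + 2 T(\Vop T)$ as claimed. In the $h$-integrand I substitute $(\KK{1})_{\symm} = \tfrac{1}{2}\cL_{\Vop T} g - (T(\Pop T))_{\symm}$ (from~\eqref{eq:KK1-simp}, symmetrized), $(\KK{2})_{\symm}$ from $(\mb{G27_b})$, $(\KK{3})_{\symm}$ from $(\mb{G27_a})$, and $\tr \KK{a} = \Div(\Vop T) + \langle T, \Pop T \rangle$ from~\eqref{eq:divVT}. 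The two $\cL_{\Vop T} g$ terms cancel, the $(T(\Pop T))_{\symm}$ contributions cancel against the pre-existing $-2(T(\Pop T))_{\symm}$, and the remaining scalars combine via $(\mb{G1})$: writing $2(\tr \KK{a}) g = (-R + (\tr T)^2 - \langle T, T^t \rangle - \langle T, \Pop T \rangle) g + 2\langle T, \Pop T \rangle g$ converts the $\Div(\Vop T)$ term to the stated combination $-R g + (\tr T)^2 g - \langle T, T^t \rangle g$ after the $\pm \langle T, \Pop T \rangle g$ terms cancel.

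Beyond tracking these cancellations carefully, no new ideas are required; the corollary is essentially the content of Proposition~\ref{prop:evolution-torsion-quantities-3} rewritten after imposing the six scalar, vector, and symmetric $2$-tensor relations that the $\G$-Bianchi identity places on $\nab{}T$, giving integrands that now involve only $\tRc$, $R$, $F$, $\cL_{\Vop T} g$, $\Div T$, $\Div T^t$, and quadratic expressions in $T$.
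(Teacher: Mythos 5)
Your proposal is correct and matches the paper's own proof precisely: both derive the result by substituting the $\G$-Bianchi relations from Corollary~\ref{cor:g2bianchi-repackaged}, together with~\eqref{eq:KK1-simp} for $(\KK{1})_{\symm}$ and~\eqref{eq:divVT} for $\tr\KK{a}$, into Proposition~\ref{prop:evolution-torsion-quantities-3}. You have also correctly identified the cancellations (the $\cL_{\Vop T}g$, $(T(\Pop T))_{\symm}$, and $\langle T,\Pop T\rangle g$ terms) that make the fourth equation collapse to the stated form.
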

\begin{proof}
We use Corollary~\ref{cor:g2bianchi-repackaged} to replace $\nab{} (\tr T)$, $\langle \nab{}T , \ps \rangle$, $(\KK{2})_{\symm}$, and $(\KK{3})_{\symm}$ by expressions involving curvature, $\cL_{\Vop T} g$, $\Div T^t$, and lower order terms that are quadratic in the torsion. We also use~\eqref{eq:KK1-simp} to write
$$ (\KK{1})_{\symm} = ( \nab{} (\Vop T) )_{\symm} - (T (\Pop T))_{\symm} = \tfrac{1}{2} \cL_{\Vop T} g - (T (\Pop T))_{\symm}, $$
and~\eqref{eq:divVT} and Corollary~\ref{cor:g2bianchi-repackaged} to write
$$ \tr \KK{a} = \Div(\Vop T) + \langle T, \Pop T \rangle = - \tfrac{1}{2} R + \tfrac{1}{2} (\tr T)^2 - \tfrac{1}{2} \langle T, T^t \rangle + \tfrac{1}{2} \langle T, \Pop T \rangle. $$
Some cancellations occur. We omit the computational details.
\end{proof}
 
\begin{cor} \label{cor:evolution-torsion-functionals}
Let $\ph$ be a time-dependent family of $\G$-structures evolving by the flow~\eqref{eq:general-flow}. The expressions $\int_M |T_k|^2 \vol$, which are the squares of the $L^2$ norms of the independent components $T_k$ of the torsion for $k=1,27,7,14$, evolve as follows:
\begin{align*}
\delt \Big( \int_M |T_1|^2 \vol \Big) & = \int_M \langle - \tfrac{2}{7} \Div T^t + \tfrac{2}{7} T(\Vop T) - \tfrac{2}{7} (\tr T) \Vop T, X \rangle \vol \\
& \quad + \int_M \langle \tfrac{1}{7} (\tr T)^2 g - \tfrac{2}{7} (\tr T) T_{\symm}, h \rangle \vol,
\end{align*}
\begin{align*}
\delt \Big( \int_M |T_{27}|^2 \vol \Big) & = \int_M \langle - \Div T - \tfrac{5}{7} \Div T^t - \tfrac{2}{7} T (\Vop T) + \tfrac{2}{7} (\tr T) \Vop T - \Vop (T^2), X \rangle \vol \\
& \quad + \int_M \Big[ \langle \tRc + \tfrac{1}{4} F + \tfrac{1}{2} \cL_{\Vop T} g + \tfrac{1}{2} |T|^2 g + \tfrac{1}{2} \langle T, T^t \rangle g - \tfrac{1}{7} (\tr T)^2 g + \tfrac{1}{2} (T \oct T)_{\symm} \\
& \qquad \quad \quad {} - T T^t - \tfrac{5}{7} (\tr T) T_{\symm} - (T (\Pop T))_{\symm} + ((\Pop T) T)_{\symm}, h \rangle \Big] \vol,
\end{align*}
\begin{align*}
\delt \Big( \int_M |T_7|^2 \vol \Big) & = \int_M \langle - \tfrac{1}{3} \Div T + \tfrac{1}{3} \Div T^t + \tfrac{1}{3} (\tr T) \Vop T - \tfrac{1}{3} T (\Vop T), X \rangle \vol \\
& \quad + \int_M \Big[ \langle \tfrac{1}{6} R g + \tfrac{1}{6} \cL_{\Vop T} g + \tfrac{1}{6} |T|^2 g - \tfrac{1}{6} (\tr T)^2 g \\
& \qquad \quad \quad {} + \tfrac{1}{3} T^2_{\symm} - \tfrac{1}{3} T T^t - \tfrac{1}{3} ( T(\Pop T) )_{\symm}, h \rangle \Big] \vol,
\end{align*}
\begin{align*}
\delt \Big( \int_M |T_{14}|^2 \vol \Big) & = \int_M \langle - \tfrac{2}{3} \Div T + \tfrac{2}{3} \Div T^t - \tfrac{1}{3} (\tr T) \Vop T + \tfrac{1}{3} T (\Vop T) + \Vop (T^2), X \rangle \vol \\
& \quad + \int_M \Big[ \langle \tRc - \tfrac{1}{6} R g - \tfrac{1}{4} F + \tfrac{1}{3} \cL_{\Vop T} g + \tfrac{1}{3} |T|^2 g - \tfrac{1}{2} \langle T, T^t \rangle g + \tfrac{1}{6} (\tr T)^2 g - \tfrac{1}{2} (T \oct T)_{\symm} \\
& \qquad \quad \quad {} + \tfrac{5}{3} T^2_{\symm} - \tfrac{2}{3} T T^t - (\tr T) T_{\symm} - ((\Pop T) T)_{\symm} - \tfrac{2}{3} (T (\Pop T))_{\symm}, h \rangle \Big] \vol.
\end{align*}
\end{cor}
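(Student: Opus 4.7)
The plan is to reduce the corollary to a purely linear-algebraic manipulation of the four evolution formulas already established in Corollary~\ref{cor:evolution-torsion-quantities-4}. The crucial observation is that the relations~\eqref{eq:torsion-formulas-b} express each pointwise squared norm $|T_k|^2$, for $k \in \{1, 27, 7, 14\}$, as an \emph{explicit} linear combination of the four scalar quadratic torsion quantities
\[
(\tr T)^2, \qquad |T|^2, \qquad \langle T, T^t \rangle, \qquad \langle T, \Pop T \rangle.
\]
Integrating over $M$ and exchanging the time derivative with the integral, the evolution of each $\int_M |T_k|^2 \vol$ is therefore the same linear combination of the four evolution formulas in Corollary~\ref{cor:evolution-torsion-quantities-4}.

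Concretely, I would proceed as follows. First, read off the coefficient vectors directly from~\eqref{eq:torsion-formulas-b}:
\begin{align*}
|T_1|^2 & = \tfrac{1}{7} (\tr T)^2, \\
|T_{27}|^2 & = \tfrac{1}{2} |T|^2 + \tfrac{1}{2} \langle T, T^t \rangle - \tfrac{1}{7} (\tr T)^2, \\
|T_7|^2 & = \tfrac{1}{6} |T|^2 - \tfrac{1}{6} \langle T, T^t \rangle - \tfrac{1}{6} \langle T, \Pop T \rangle, \\
|T_{14}|^2 & = \tfrac{1}{3} |T|^2 - \tfrac{1}{3} \langle T, T^t \rangle + \tfrac{1}{6} \langle T, \Pop T \rangle.
\end{align*}
Then substitute the four evolution identities of Corollary~\ref{cor:evolution-torsion-quantities-4} into these four linear combinations. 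In each case, collect the resulting integrand separately into its $\langle \, \cdot \, , X \rangle$ part and its $\langle \, \cdot \, , h \rangle$ part, using linearity of the pointwise inner products.

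The only genuine work is bookkeeping: assembling the coefficients of the various vector fields ($\Div T$, $\Div T^t$, $\Vop T$, $T(\Vop T)$, $\Vop (T^2)$, $(\tr T) \Vop T$) in the $X$-components and of the various symmetric $2$-tensors ($\tRc$, $F$, $Rg$, $\cL_{\Vop T} g$, $|T|^2 g$, $\langle T, T^t \rangle g$, $(\tr T)^2 g$, $T T^t$, $T^2_{\symm}$, $(\tr T) T_{\symm}$, $(T \oct T)_{\symm}$, $(T (\Pop T))_{\symm}$, $((\Pop T) T)_{\symm}$) in the $h$-components. As a sanity check, one should verify a few trace identities (for instance, that adding the four expressions for $|T_k|^2$ recovers $|T|^2$, which forces the sums of the coefficients of every basic tensor appearing in the four evolutions to match the evolution of $\int_M |T|^2 \vol$; this provides an immediate check on the arithmetic).

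There is no conceptual obstacle: no further integration by parts, no use of the $\G$-Bianchi identity beyond what has already been absorbed into Corollary~\ref{cor:evolution-torsion-quantities-4}, and no new representation-theoretic input is required. The main risk is simply arithmetic error in combining fractions; I would therefore carry out the four computations independently and then cross-check via the identity $|T|^2 = |T_1|^2 + |T_{27}|^2 + |T_7|^2 + |T_{14}|^2$ and the identity $\langle T, \Pop T \rangle = -4|T_7|^2 + 2|T_{14}|^2$ from~\eqref{eq:torsion-formulas}.
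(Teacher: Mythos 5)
Your proposal is correct and matches the paper's approach exactly: the paper's proof states that the corollary follows from Corollary~\ref{cor:evolution-torsion-quantities-4} using the relations in~\eqref{eq:torsion-formulas-b}, with the computational details omitted. Your identification of the cross-checks (summing to recover $|T|^2$ and the $\langle T, \Pop T\rangle$ identity) is a sensible addition for verifying the arithmetic.
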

\begin{proof}
These follow from Corollary~\ref{cor:evolution-torsion-quantities-4}, using the relations in~\eqref{eq:torsion-formulas-b}. We omit the details.
\end{proof}

Corollary~\ref{cor:evolution-torsion-functionals} immediately yields the \emph{Euler--Lagrange equations} for the critical points of the torsion functionals $\int_M |T_k|^2 \vol$. Explicitly, suppose the $\G$-structure $\ph$ is a critical point of the functional $\ph \mapsto \int_M |T_k|^2 \vol$ with respect to all possible variations of $\ph$. Then by Corollary~\ref{cor:evolution-torsion-functionals} there exists a vector field $Y$ and a symmetric $2$-tensor $\ell$ such that
$$ 0 = \delt \Big( \int_M |T_k|^2 \vol \Big) = \int_M \big[ \langle Y, X \rangle + \langle \ell, h \rangle \big] \vol $$
for all vector fields $X$ and all symmetric $2$-tensors $h$. Hence $Y = 0$ and $\ell = 0$ are the Euler--Lagrange equations for this functional.

For example, $\ph$ is critical for $\ph \mapsto \int_M |T_1|^2 \vol$ if and only if
$$ \Div T^t - T (\Vop T) + (\tr T) \Vop T = 0 \quad \text{and} \quad (\tr T)^2 g - 2 (\tr T) T_{\symm} = 0. $$
Taking the trace of the second equation gives $7 (\tr T)^2 - 2 (\tr T)^2 = 0$, so $\tr T = 0$, which then automatically implies the second equation. The first equation then becomes $\Div T^t = T(\Vop T)$.

The Euler--Lagrange equations for the functionals $\ph \mapsto \int_M |T_k|^2 \vol$ for $k = 27, 7, 14$ are much more complicated. But Corollary~\ref{cor:evolution-torsion-functionals} shows that the second-order differential invariants of a $\G$-structure which arise in these Euler--Lagrange equations are:
$$ \tRc, \, R g, \, F, \, \cL_{\Vop T} g, \, \Div T, \, \Div T^t. $$
There are exactly four symmetric $2$-tensors, and two vector fields. These are precisely the independent second-order differential invariants of a $\G$-structure which are $3$-forms. (See Theorem~\ref{thm:invariants}.)

\subsection{Decomposition of $\tRm$ into independent components} \label{sec:curvature-decomp}

In this section we investigate the decomposition of the Riemann curvature tensor $\tRm$ into irreducible $\G$-representations. More precisely, we completely determine the structure of the self-adjoint curvature operator $\sR \colon \Omega^2 \to \Omega^2$ induced from the Riemann curvature tensor, in terms of the orthogonal splitting $\Omega^2 = \Omega^2_7 \oplus \Omega^2_{14}$. We also obtain geometric characterizations on the structure of this operator. This is the $\G$-analogue of the classical decomposition of the curvature operator on a $4$-dimensional oriented Riemannian manifold, as described, for example, in Besse~\cite[1.122--1.129]{Besse}.

Recall that the Riemann curvature tensor $\tRm$ is an element of $\K \subset \cS^2 (\Lambda^2)$, and thus can be regarded as a self-adjoint operator on the space $\Omega^2$ of $2$-forms. We use the notation $\sR$ to denote $\tRm$ when we want to think of its as a self-adoint operator on $\Omega^2$. We also have a decomposition $\Omega^2 = \Omega^2_7 \oplus \Omega^2_{14}$, with projection operators $\pi_7$ and $\pi_{14}$, which by~\eqref{eq:omega2-proj} satisfy
\begin{equation} \label{eq:curv-projections}
6 \pi_7 = 2 \sI - \Pop, \qquad 6 \pi_{14} = 4 \sI + \Pop.
\end{equation}
Given the projection operators $\pi_a$, for $a = 7, 14$, we can thus decompose $\sR$ as
$$ \sR = (\pi_7 + \pi_{14}) \sR (\pi_7 + \pi_{14}) = \sR^7_7 + \sR^7_{14} + \sR^{14}_7 + \sR^{14}_{14}, $$
where $\sR^a_b = \pi_b \sR \pi_a \colon \Omega^2_a \to \Omega^2_b$, for $a, b \in \{7, 14\}$. From~\eqref{eq:curv-projections}, we obtain
\begin{equation} \label{eq:curvature-components-0}
\begin{aligned}
36 \, \sR^7_7 & = 4 \sR - 2 \Pop \sR - 2 \sR \Pop + \Pop \sR \Pop, \\
36 \, \sR^7_{14} & = 8 \sR + 2 \Pop \sR - 4 \sR \Pop - \Pop \sR \Pop, \\
36 \, \sR^{14}_7 & = 8 \sR - 4 \Pop \sR + 2 \sR \Pop - \Pop \sR \Pop, \\
36 \, \sR^{14}_{14} & = 16 \sR + 4 \Pop \sR + 4 \sR \Pop + \Pop \sR \Pop.
\end{aligned}
\end{equation}
Observe that the operators $\sR^7_{7}$ and $\sR^{14}_{14}$ are self-dual, since they can be written as linear combinations of self-dual operators:
\begin{equation} \label{eq:curvature-components-1}
\begin{aligned}
36 \, \sR^7_7 & = 4 \sR - 2 (\Pop \sR + \sR \Pop) + \Pop \sR \Pop, \\
36 \, \sR^{14}_{14} & = 16 \sR + 4 (\Pop \sR + \sR \Pop) + \Pop \sR \Pop.
\end{aligned}
\end{equation}
By contrast, the adjoint of $\sR^7_{14}$ is $\sR^{14}_7$, so the operator $\sR^7_{14} + \sR^{14}_7$ is self adjoint. Explicitly, we have:
\begin{equation} \label{eq:curvature-components-2}
36 (\sR^7_{14} + \sR^{14}_7) = 16 \sR - 2 (\Pop \sR + \sR \Pop) - 2 \Pop \sR \Pop.
\end{equation}

In terms of the splitting $\Omega^2 = \Omega^2_7 \oplus \Omega^2_{14}$, the self-adjoint operator $\sR$ corresponds to the block matrix
$$ \sR = \begin{pmatrix} \sR^7_7 & \sR^{14}_7 \\[0.2em] \sR^7_{14} & \sR^{14}_{14} \end{pmatrix} = \begin{pmatrix} \sR^7_7 & 0 \\ 0 & \sR^{14}_{14} \end{pmatrix} + \begin{pmatrix} 0 & \sR^{14}_7 \\ \sR^7_{14} & 0 \end{pmatrix}, $$
which is a sum of two self-adjoint operators, one purely diagonal (and thus preserving the splitting), and one purely off-diagonal (and thus reversing the splitting).

Recall from~\eqref{eq:curvature-7} that we have
\begin{equation} \label{eq:sR-sum}
\sR = \tfrac{1}{84} R \, \iota_g g + \tfrac{1}{5} \iota_g (\tRc^0) + \sW,
\end{equation}
where $\sW$ denotes the Weyl curvature tensor $\tW$, thought of as a self-adjoint operator on $\Omega^2$. The expressions~\eqref{eq:curvature-components-1} and~\eqref{eq:curvature-components-2} for the three self-dual operators $\sR^7_7$, $\sR^{14}_{14}$, and $\sR^7_{14} + \sR^{14}_7$ are linear in $\sR$, so we can compute them for $\iota_g g$, $\iota_g \tRc^0$, and $\sW$ separately.

For simplicity, we temporarily rewrite~\eqref{eq:sR-sum} as
$$ \sR = \sA + \sB + \sW, \qquad \text{where $\sA = \tfrac{1}{84} R \, \iota_g g$ and $\sB = \tfrac{1}{5} \iota_g (\tRc^0)$}. $$
From $\iota_g g = - 4 \sI$ in~\eqref{eq:iota-g-identity}, and from $\Pop^2 = 8 \sI - 2 \Pop$ in~\eqref{eq:Pop-squared-v2}, we have
$$ \Pop (\iota_g g) + (\iota_g g) \Pop = - 8 \Pop \quad \text{and} \quad \Pop (\iota_g g) \Pop = - 4 \Pop^2 = - 32 \sI + 8 \Pop. $$
Thus, replacing $\sR$ by $\sA = \frac{1}{84} R \, \iota_g g$ in~\eqref{eq:curvature-components-1} and~\eqref{eq:curvature-components-2} and using the above, we obtain
\begin{equation*}
\begin{aligned}
36 \, \sA^7_7 & = \tfrac{1}{84} R ( - 48 \sI + 24 \Pop ), \\
36 \, \sA^{14}_{14} & = \tfrac{1}{84} R ( - 96 \sI - 24 \Pop ), \\
36 (\sA^7_{14} + \sA^{14}_7) & = 0,
\end{aligned}
\end{equation*}
which can be rewritten as
\begin{equation} \label{eq:curv-A-decomp}
\begin{aligned}
36 \, \sA^7_7 & = \tfrac{1}{14} R g \diamond \ps + \tfrac{1}{7} R \iota_g g, \\
36 \, \sA^{14}_{14} & = - \tfrac{1}{14} R g \diamond \ps + \tfrac{2}{7} R \iota_g g, \\
36 (\sA^7_{14} + \sA^{14}_7) & = 0,
\end{aligned}
\end{equation}
Equations~\eqref{eq:curv-A-decomp} show that the scalar curvature contribution $\sA = \frac{1}{84} R \, \iota_g g$ of the Riemann curvature operator $\sR$ is purely diagonal. Note that the two operators $\sA^7_7$ and $\sA^{14}_{14}$ are not curvature type operators in $\K$, as they both have $4$-form components which are multiples of $\Pop = \ps$. However, these two $4$-form components cancel each other in the sum as expected.

From~\eqref{eq:iota-g-Pop} with $h = \frac{1}{5} \tRc^0 \in \cS^2_0$, we have
$$ \Pop \sB + \sB \Pop = - \tfrac{2}{5} \tRc^0 \diamond \ps \quad \text{and} \quad \Pop \sB \Pop = - \tfrac{4}{5} \tRc^0 \diamond \ps - \tfrac{4}{5} \iota_g (\tRc^0) + \tfrac{4}{5} \iota_{\ph} (\tRc^0). $$
Thus, replacing $\sR$ by $\sB = \tfrac{1}{5} \iota_g (\tRc^0)$ in~\eqref{eq:curvature-components-1} and~\eqref{eq:curvature-components-2}, we obtain
\begin{equation*}
\begin{aligned}
36 \, \sB^7_7 & = \tfrac{4}{5} \iota_{\ph} (\tRc^0), \\
36 \, \sB^{14}_{14} & = - \tfrac{12}{5} \tRc^0 \diamond \ps + \tfrac{12}{5} \iota_g (\tRc^0) + \tfrac{4}{5} \iota_{\ph} (\tRc^0), \\
36 (\sB^7_{14} + \sB^{14}_7) & = \tfrac{12}{5} \tRc^0 \diamond \ps + \tfrac{24}{5} \iota_g (\tRc^0) - \tfrac{8}{5} \iota_{\ph} (\tRc^0).
\end{aligned}
\end{equation*}
Substituting $\iota_{\ph} (\tRc^0) = \frac{1}{3} \tRc^0 \diamond \ps + \frac{1}{5} \iota_g (\tRc^0) + (\iota_{\ph} \tRc^0)_{\W}$ from Proposition~\ref{prop:iotaph-decomp}, the above become
\begin{equation} \label{eq:curv-B-decomp}
\begin{aligned}
36 \, \sB^7_7 & = \tfrac{4}{15} \tRc^0 \diamond \ps + \tfrac{4}{25} \iota_g (\tRc^0) + \tfrac{4}{5} (\iota_{\ph} (\tRc^0))_{\W}, \\
36 \, \sB^{14}_{14} & = - \tfrac{32}{15} \tRc^0 \diamond \ps + \tfrac{64}{25} \iota_g (\tRc^0) + \tfrac{4}{5} (\iota_{\ph} (\tRc^0))_{\W}, \\
36 (\sB^7_{14} + \sB^{14}_7) & = \tfrac{28}{15} \tRc^0 \diamond \ps + \tfrac{112}{25} \iota_g (\tRc^0) - \tfrac{8}{5} (\iota_{\ph} (\tRc^0))_{\W}.
\end{aligned}
\end{equation}
Equations~\eqref{eq:curv-B-decomp} show that the traceless Ricci curvature contribution $\sB = \frac{1}{5} \iota_g (\tRc^0)$ of the Riemann curvature operator $\sR$ has both diagonal and off-diagonal components. Note that each of the three self-adjoint operators $\sB^7_7$, $\sB^{14}_{14}$, and $\sB^7_{14} + \sB^{14}_7$ are not curvature type operators in $\K$, as they all have $4$-form components which are multiples of $\tRc^0 \diamond \ph$. However, these three $4$-form components cancel each other in the sum as expected. Moreover, these three operators also have a component in the space $\W$ of Weyl tensors, which are multiples of the projection onto $\W$ of $\iota_{\ph} (\tRc^0)$. Again, the three $\W$ components cancel each other in the sum as expected, since $\sB = \tfrac{1}{5} \iota_g (\tRc^0)$ is a curvature tensor orthogonal to $\W$.

Now consider the Weyl curvature operator $\sW$. Then equations~\eqref{eq:curvature-components-1} and~\eqref{eq:curvature-components-2} give
\begin{equation} \label{eq:curv-W-decomp}
\begin{aligned}
36 \, \sW^7_7 & = 4 \sW - 2 (\Pop \sW + \sW \Pop) + \Pop \sW \Pop, \\
36 \, \sW^{14}_{14} & = 16 \sW + 4 (\Pop \sW + \sW \Pop) + \Pop \sW \Pop, \\
36 (\sW^7_{14} + \sW^{14}_7) & = 16 \sW - 2 (\Pop \sW + \sW \Pop) - 2 \Pop \sW \Pop.
\end{aligned}
\end{equation}
From Remark~\ref{rmk:Weyl-decomp} we have $\sW = \sW_{27} + \sW_{64} + \sW_{77}$. From the decompositions
$$ \Sym^2 (\mb{7}) = \mb{1} \oplus \mb{27}, \qquad \mb{7} \otimes \mb{14} = \mb{64} \oplus \mb{7} \oplus \mb{27}, \qquad \Sym^2 (\mb{14}) = \mb{77} \oplus \mb{1} \oplus \mb{27}, $$
we see that
\begin{equation} \label{eq:W64-77}
\text{$\sW_{64}$ only contributes to $\sW^7_{14} + \sW^{14}_7$, \, \, and $\sW_{77}$ only contributes to $\sW^{14}_{14}$.}
\end{equation}
However, the $\sW_{27}$ component could in principle contribute to all three expressions in~\eqref{eq:curv-W-decomp}.

In the notation of~\eqref{eq:iota-rho-W27}, let
\begin{equation} \label{eq:varpi}
\varpi = \ol{\rho}_{\ph} (\sW) = \rho_{\ph} (\sW),
\end{equation}
so that by~\eqref{eq:W-decomp} we have
\begin{equation} \label{eq:W27-varpi}
\sW_{27} = \tfrac{15}{448} \ol{\iota}_{\ph} \varpi = \tfrac{15}{448} (\iota_{\ph} \varpi)_{\W}.
\end{equation}
Thus to obtain the ${}^7_7$, ${}^{14}_{14}$, and ${}^7_{14} + {}^{14}_7$ contributions from $\sW_{27}$, we replace $\sW$ on the right-hand sides of the equations in~\eqref{eq:curv-W-decomp} with $\frac{15}{448} (\iota_{\ph} \varpi)_{\W}$. Using Corollary~\ref{cor:iota-phW-Pop}, some arithmetic gives
\begin{equation*}
\begin{aligned}
36 (\sW_{27})^7_7 & = \iota_{\ph} \varpi, \\
36 (\sW_{27})^{14}_{14} & = - \tfrac{3}{16} \varpi \diamond \ps + \tfrac{3}{16} \iota_g \varpi + \tfrac{1}{16} \iota_{\ph} \varpi, \\
36 ((\sW_{27})^7_{14} + (\sW_{27})^{14}_7) & = - \tfrac{3}{14} \varpi \diamond \ps - \tfrac{3}{7} \iota_g \varpi + \tfrac{1}{7} \iota_{\ph} \varpi.
\end{aligned}
\end{equation*}
Substituting $\iota_{\ph} \varpi = (\iota_{\ph} \varpi)_{\W} + \frac{1}{3} \varpi \diamond \ps + \frac{1}{5} \iota_g \varpi$ from~\eqref{eq:iota-phW-Pop}, some more arithmetic yields
\begin{equation} \label{eq:curv-W27-decomp}
\begin{aligned}
36 (\sW_{27})^7_7 & = \tfrac{1}{3} \varpi \diamond \ps + \tfrac{1}{5} \iota_g \varpi + (\iota_{\ph} \varpi)_{\W}, \\
36 (\sW_{27})^{14}_{14} & = - \tfrac{1}{6} \varpi \diamond \ps + \tfrac{1}{5} \iota_g \varpi + \tfrac{1}{16} (\iota_{\ph} \varpi)_{\W}, \\
36 ((\sW_{27})^7_{14} + (\sW_{27})^{14}_7) & = - \tfrac{1}{6} \varpi \diamond \ps - \tfrac{2}{5} \iota_g \varpi + \tfrac{1}{7} (\iota_{\ph} \varpi)_{\W}.
\end{aligned}
\end{equation}
Equations~\eqref{eq:curv-W27-decomp} show that the $\sW_{27}$ contribution of the Riemann curvature operator $\sR$ has both diagonal and off-diagonal components. Note that each of the three self-adjoint operators $(\sW_{27})^7_7$, $(\sW_{27})^{14}_{14}$, and $(\sW_{27})^7_{14} + (\sW_{27})^{14}_7$ are not curvature type operators in $\W$, as they all have $4$-form components which are multiples of $\varpi \diamond \ph$ and $\iota_g (\cS^2_0)$ components which are multiples of $\iota_g (\varpi)$. However, these all cancel out in the sum as expected, leaving an element of $\W$.

Combining~\eqref{eq:curv-A-decomp},~\eqref{eq:curv-B-decomp},~\eqref{eq:W64-77}, and~\eqref{eq:curv-W27-decomp}, we have finally shown that
\begin{align} \nonumber
36 \, \sR^7_7 & = (\tfrac{1}{14} R g + \tfrac{4}{15} \tRc^0 + \tfrac{1}{3} \varpi) \diamond \ps + \iota_g (\tfrac{1}{7} R g + \tfrac{4}{25} \tRc^0 + \tfrac{1}{5} \varpi) + ( \iota_{\ph} (\tfrac{4}{5} \tRc^0 + \varpi) )_{\W}, \\ \nonumber
36 \, \sR^{14}_{14} & = ( - \tfrac{1}{14} R g - \tfrac{32}{15} \tRc^0 - \tfrac{1}{6} \varpi) \diamond \ps + \iota_g (\tfrac{2}{7} R g + \tfrac{64}{25} \tRc^0 + \tfrac{1}{5} \varpi) + ( \iota_{\ph} (\tfrac{4}{5} \tRc^0 + \tfrac{1}{16} \varpi) )_{\W} + 36 \, \sW_{77}, \\ \label{eq:curv-R-decomp-final}
36 (\sR^7_{14} + \sR^{14}_7) & = ( \tfrac{28}{15} \tRc^0 - \tfrac{1}{6} \varpi) \diamond \ps + \iota_g ( \tfrac{112}{25} \tRc^0 - \tfrac{2}{5} \varpi) + ( \iota_{\ph} ( - \tfrac{8}{5} \tRc^0 + \tfrac{1}{7} \varpi) )_{\W} + 36 \, \sW_{64}.
\end{align}
We can symbolically represent the three equations in~\eqref{eq:curv-R-decomp-final} by
\begin{equation} \label{eq:curv-R-decomp-symbolic}
\sR = \begin{pmatrix} \{ Rg, \, \tRc^0, \, \varpi \} & \{ \tRc^0, \, \varpi, \, \sW_{64} \} \\[0.2em] \{ \tRc^0, \, \varpi, \, \sW_{64} \} & \{ Rg, \, \tRc^0, \, \varpi, \, \sW_{77} \} \end{pmatrix}
\end{equation}
which shows exactly which components of the curvature contribute to each block with respect to the splitting $\Omega^2 = \Omega^2_7 \oplus \Omega^2_{14}$.

Equation~\eqref{eq:curv-R-decomp-symbolic} should be compared to the classical case of the decomposition of the Riemann curvature operator $\sR$ on an oriented Riemannian $4$-manifold, with respect to the splitting $\Omega^2 = \Omega^2_+ \oplus \Omega^2_-$, which symbolically is
$$ \sR = \begin{pmatrix} \{ Rg, \, \sW_+ \} & \{ \tRc^0 \} \\[0.2em] \{ \tRc^0 \} & \{ Rg, \, \sW_- \} \end{pmatrix}. $$
(See Besse~\cite[1.122--1.129]{Besse}.) A consequence of this decomposition is that $\sR$ preserves the splitting (acts diagonally) if and only if $g$ is Einstein, and that $\sR$ reverses the splitting (acts anti-diagonally) if and only if $g$ is scalar-flat and Weyl-flat. We can obtain a similar result for $\G$-structures as follows.

First, a closer inspection of~\eqref{eq:curv-R-decomp-final} reveals that these equations can be rewritten as
\begin{align} \nonumber
36 \, \sR^7_7 & = (\tfrac{1}{14} R g + \tfrac{1}{15} \sigma_{7,7}) \diamond \ps + \iota_g (\tfrac{1}{7} R g + \tfrac{1}{25} \sigma_{7,7}) + ( \iota_{\ph} (\tfrac{1}{5} \sigma_{7,7}) )_{\W}, \\ \nonumber
36 \, \sR^{14}_{14} & = ( - \tfrac{1}{14} R g - \tfrac{1}{30} \sigma_{14,14}) \diamond \ps + \iota_g (\tfrac{2}{7} R g + \tfrac{1}{25} \sigma_{14,14}) + ( \iota_{\ph} (\tfrac{1}{80} \sigma_{14,14} )_{\W} + 36 \, \sW_{77}, \\ \label{eq:curv-R-decomp-better}
36 (\sR^7_{14} + \sR^{14}_7) & = ( \tfrac{1}{30} \sigma_{7,14}) \diamond \ps + \iota_g ( \tfrac{2}{25} \sigma_{7,14}) + ( \iota_{\ph} ( - \tfrac{1}{35} \sigma_{7,14} )_{\W} + 36 \, \sW_{64},
\end{align}
where we have defined the three tensors $\sigma_{7,7}$, $\sigma_{14,14}$, and $\sigma_{7,14}$ in $\cS^2_0$ by
\begin{equation} \label{eq:sigmas}
\sigma_{7,7} = 4 \tRc^0 + 5 \varpi, \qquad \sigma_{14,14} = 64 \tRc^0 + 5 \varpi, \qquad \sigma_{7,14} = 56 \tRc^0 - 5 \varpi.
\end{equation}

\begin{thm} \label{thm:main-curv-decomp}
Let $\sR$ be the Riemann curvature of the metric $g$ induced from a $\G$-structure $\ph$, thought of as a self-adjoint operator on $\Omega^2 = \Omega^2_7 \oplus \Omega^2_{14}$. Let $\sR^a_b = \pi_b \sR \pi_a \colon \Omega^2_a \to \Omega^2_b$, for $a, b \in \{7, 14\}$. Then we have
\begin{enumerate}[(i)]
\item $\sR^7_7 = 0$ \, $\iff$ \, $( \, R = 0$ \, and \, $\sigma_{7,7} = 4 \tRc^0 + 5 \varpi = 0 \,)$,
\item $\sR^{14}_{14} = 0$ \, $\iff$ \, $( \, R = 0$ \, and \, $\sigma_{14,14} = 64 \tRc^0 + 5 \varpi = 0$ \, and \, $\sW_{77} = 0 \, )$,
\item $\sR^7_{14} + \sR^{14}_7 = 0$ \, $\iff$ \, $( \, \sigma_{7,14} = 56 \tRc^0 - 5 \varpi = 0$ \, and \, $\sW_{64} = 0 \, )$.
\end{enumerate}
Consequently:
\begin{itemize} \setlength\itemsep{-1mm}
\item $\sR$ preserves the splitting (acts diagonally) if and only if $\sW_{64} = 0$ and $\varpi = \frac{56}{5} \tRc^0$,
\item $\sR$ reverses the splitting (acts anti-diagonally) if and only if $\tRc = 0$, $\varpi = 0$, and $\sW_{77} = 0$.
\end{itemize}
\end{thm}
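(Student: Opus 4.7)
My plan is to deduce Theorem~\ref{thm:main-curv-decomp} directly from the explicit block formulas~\eqref{eq:curv-R-decomp-better} already derived in the discussion preceding the theorem. The essential point is that each right-hand side of~\eqref{eq:curv-R-decomp-better} is expressed as a sum of terms lying in \emph{pairwise orthogonal} subspaces of the ambient space $\cS^2(\Lambda^2)$, with explicit nonzero coefficients, so that the vanishing of each block is equivalent to the vanishing of each constituent separately.

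The first step is to record the fine orthogonal decomposition
$$
\cS^2(\Lambda^2) \,=\, \Omega^4_1 \operp \Omega^4_7 \operp \Omega^4_{27} \operp \iota_g(\Omega^0 g) \operp \iota_g(\cS^2_0) \operp \W_{27} \operp \W_{64} \operp \W_{77},
$$
obtained by combining~\eqref{eq:cS2Lambda2-decomp}, the splitting $\K = \iota_g(\cS^2) \operp \W$ from~\eqref{eq:curvature-classical-decomp}, the form decomposition~\eqref{eq:omega34decomp}, and the Weyl splitting $\W = \W_{27} \oplus \W_{64} \oplus \W_{77}$ discussed in Remark~\ref{rmk:Weyl-decomp} and realized by~\eqref{eq:W-decomp}. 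The next step is to verify that each ``building block'' appearing in~\eqref{eq:curv-R-decomp-better} has trivial kernel: the maps $f \mapsto (fg) \diamond \ps$ and $f \mapsto \iota_g(fg)$ are clearly nonzero on $\Omega^0$; the maps $h \mapsto h \diamond \ps$ and $h \mapsto \iota_g h$ are injective on $\cS^2_0$ by Corollary~\ref{cor:ABinnerproduct} and~\eqref{eq:iota-g-rho-g-7}; and $h \mapsto (\iota_\ph h)_{\W}$ is injective on $\cS^2_0$ because $\ol{\rho}_\ph \bigl((\iota_\ph h)_\W\bigr) = \tfrac{448}{15} h$, as computed in Section~\ref{sec:identities-Pop} just before~\eqref{eq:W-decomp}.

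With these two ingredients in hand, parts (i)--(iii) follow immediately. Setting $36\,\sR^7_7 = 0$ in the first equation of~\eqref{eq:curv-R-decomp-better} and projecting onto each summand of the fine decomposition forces $R = 0$ (from either the $\Omega^4_1$ or $\iota_g(\Omega^0 g)$ component) and $\sigma_{7,7} = 0$ (from either the $\Omega^4_{27}$, $\iota_g(\cS^2_0)$, or $\W_{27}$ component), with conversely any such vanishing clearly giving $\sR^7_7 = 0$; this yields (i). The same reasoning applied to the second and third equations of~\eqref{eq:curv-R-decomp-better} yields (ii) and (iii), the $\sW_{77}$ and $\sW_{64}$ conditions coming from the lone terms living in those irreducible summands.

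For the two consequences, $\sR$ preserves the splitting $\Omega^2 = \Omega^2_7 \oplus \Omega^2_{14}$ iff $\sR^7_{14} + \sR^{14}_7 = 0$, which by (iii) and the definition~\eqref{eq:sigmas} of $\sigma_{7,14}$ is equivalent to $\sW_{64} = 0$ and $\varpi = \tfrac{56}{5}\tRc^0$. Dually, $\sR$ reverses the splitting iff both $\sR^7_7 = 0$ and $\sR^{14}_{14} = 0$; combining (i) and (ii) gives $R = 0$, $\sW_{77} = 0$, and the system $4\tRc^0 + 5\varpi = 0$, $64\tRc^0 + 5\varpi = 0$, which subtracts to $\tRc^0 = 0$ and hence $\varpi = 0$. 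The upshot, combined with $R = 0$, is $\tRc = 0$, $\varpi = 0$, and $\sW_{77} = 0$. Since the analytic work is already absorbed into~\eqref{eq:curv-R-decomp-better}, I anticipate no serious obstacle; the only care required is the bookkeeping to confirm that no two distinct lines of~\eqref{eq:curv-R-decomp-better} contribute to the same irreducible summand in a way that could produce an unexpected cancellation, which the fine decomposition above rules out.
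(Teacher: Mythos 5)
Your proposal is correct and takes essentially the same approach as the paper: the paper's proof simply observes that the statements follow from~\eqref{eq:curv-R-decomp-better},~\eqref{eq:sigmas}, and the orthogonal splitting of $\cS^2(\Lambda^2)$, and your argument fills in exactly the bookkeeping (the refined orthogonal decomposition and the injectivity of the constituent maps) that the paper dismisses as immediate.
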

\begin{proof}
Statements \emph{(i)}, \emph{(ii)}, and \emph{(iii)} are immediate from~\eqref{eq:curv-R-decomp-better} and~\eqref{eq:sigmas}, and the fact that the space $\cS^2 (\Lambda^2)$ splits as a direct sum $\Omega^4 \oplus \iota_g (\cS^2) \oplus (\W_{27} \oplus \W_{64} \oplus \W_{77})$. The consequences then follow.
\end{proof}

It is convenient to rewrite these formulas by expressing $\varpi$ in terms of $Rg$, $\tRc$, and the symmetric tensor $F$ of~\eqref{eq:F-defn}. From~\eqref{eq:F-rho-ph},~\eqref{eq:sR-sum},~\eqref{eq:varpi}, and Proposition~\ref{prop:ph-rep-maps} we obtain
\begin{align} \nonumber
F & = \rho_{\ph} \big( \tfrac{1}{84} R \, \iota_g g + \tfrac{1}{5} \iota_g (\tRc^0) + \sW \big) \\ \nonumber
& = \tfrac{1}{84} R \rho_{\ph} (\iota_g g) + \tfrac{1}{5} \rho_{\ph} (\iota_g (\tRc^0)) + \varpi \\ \nonumber
& = \tfrac{1}{84} R (- 24 g) + \tfrac{1}{5} (4 \tRc^0) + \varpi \\ \label{eq:F-varpi0}
& = - \tfrac{2}{7} R g + \tfrac{4}{5} \tRc^0 + \varpi, 
\end{align}
which, upon writing $\tRc^0 = \tRc - \frac{1}{7} Rg$, becomes
\begin{equation} \label{eq:F-varpi}
F = - \tfrac{2}{5} R g + \tfrac{4}{5} \tRc + \varpi.
\end{equation}
Using~\eqref{eq:F-varpi0}, the traceless symmetric tensors $\sigma_{a,b}$ of~\eqref{eq:sigmas} become
\begin{equation} \label{eq:sigmas-alt}
\sigma_{7,7} = 5 F^0, \qquad \sigma_{14,14} = 60 \tRc^0 + 5 F^0, \qquad \sigma_{7,14} = 60 \tRc^0 - 5 F^0.
\end{equation}
\begin{cor} \label{cor:main-curv-decomp}
Consider the hypotheses of Theorem~\ref{thm:main-curv-decomp}. Then we have
\begin{enumerate}[(i)]
\item $\sR^7_7 = 0$ \, $\iff$ \, $( \, F = 0 \,)$,
\item $\sR^{14}_{14} = 0$ \, $\iff$ \, $( \, R = 0$ \, and \, $F^0 = - 12 \tRc^0$ \, and \, $\sW_{77} = 0 \, )$,
\item $\sR^7_{14} + \sR^{14}_7 = 0$ \, $\iff$ \, $( \, F^0 = 12 \tRc^0$ \, and \, $\sW_{64} = 0 \, )$.
\end{enumerate}
Consequently:
\begin{itemize} \setlength\itemsep{-1mm}
\item $\sR$ preserves the splitting (acts diagonally) if and only if $\sW_{64} = 0$ and $F^0 = 12 \tRc^0$,
\item $\sR$ reverses the splitting (acts anti-diagonally) if and only if $\tRc = 0$, $F = 0$, and $\sW_{77} = 0$.
\end{itemize}
\end{cor}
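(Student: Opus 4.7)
The plan is to deduce Corollary~\ref{cor:main-curv-decomp} directly from Theorem~\ref{thm:main-curv-decomp} by translating each of the three conditions from the $\{R,\tRc^0,\varpi\}$ language into the $\{R,\tRc,F\}$ language. The only new ingredients needed are the identity $F = -\tfrac{2}{5}Rg + \tfrac{4}{5}\tRc + \varpi$ from~\eqref{eq:F-varpi}, which allows us to eliminate $\varpi$, and Lemma~\ref{lemma:trace-F}, which gives $\tr F = -2R$ and hence $F^0 = F + \tfrac{2}{7}Rg$. These two facts together make the traceless-part formulas in~\eqref{eq:sigmas-alt} immediate, so the bulk of the argument is already done in the excerpt; what remains is to package the equivalences.

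For part \emph{(i)}, Theorem~\ref{thm:main-curv-decomp} combined with~\eqref{eq:sigmas-alt} says $\sR^7_7=0$ iff $R=0$ and $F^0=0$. I would then observe that $F=0$ forces $\tr F = -2R = 0$ and therefore $F^0 = F = 0$, while the converse is immediate from $F = F^0 - \tfrac{2}{7}Rg$. Hence $\sR^7_7 = 0 \iff F = 0$. For part \emph{(ii)}, the conditions from Theorem~\ref{thm:main-curv-decomp} are $R=0$, $\sigma_{14,14}=0$, and $\sW_{77}=0$; under $R=0$ we have $\sigma_{14,14} = 60\tRc^0 + 5F^0$, which vanishes iff $F^0 = -12\tRc^0$. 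For part \emph{(iii)}, the condition $\sigma_{7,14}=60\tRc^0 - 5F^0 = 0$ becomes $F^0 = 12\tRc^0$, and $\sW_{64}=0$ is unchanged.

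Finally, for the two consequences: $\sR$ acts diagonally precisely when the off-diagonal block vanishes, i.e.\ part \emph{(iii)} holds, giving $\sW_{64}=0$ and $F^0=12\tRc^0$. The operator $\sR$ acts anti-diagonally iff both \emph{(i)} and \emph{(ii)} hold. From \emph{(i)} we get $F=0$, which (by Lemma~\ref{lemma:trace-F}) forces $R=0$ and $F^0=0$; combined with $F^0=-12\tRc^0$ from \emph{(ii)} this yields $\tRc^0=0$, and then $R=0$ promotes $\tRc^0=0$ to $\tRc=0$. Together with $\sW_{77}=0$ from \emph{(ii)}, this gives exactly the stated characterization.

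There is no serious obstacle: once~\eqref{eq:F-varpi} is in hand, the proof is a bookkeeping exercise of taking traces and separating trace and traceless parts. The one point to watch is to remember that $F$ is not traceless, so whenever $R\neq 0$ one must distinguish between $F$ and $F^0$; the translation is cleanest when carried out through~\eqref{eq:sigmas-alt} rather than directly through~\eqref{eq:sigmas}.
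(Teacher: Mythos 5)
Your proposal is correct and takes essentially the same approach as the paper: the paper's proof is a one-line citation of Theorem~\ref{thm:main-curv-decomp}, equations~\eqref{eq:sigmas-alt}, and $\tr F = -2R$, and your write-up simply spells out the bookkeeping implicit in that citation.
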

\begin{proof}
These all follow from Theorem~\ref{thm:main-curv-decomp}, equations~\eqref{eq:sigmas-alt}, and $\tr F = - 2 R$ from Lemma~\ref{lemma:trace-F}.
\end{proof}

\begin{rmk} \label{rmk:CI-Einstein}
In Cleyton--Ivanov~\cite[Equation (4.23)]{CI} the authors define a manifold $(M, \ph)$ with $\G$-structure to be \emph{generalized Einstein} if $\lambda_1 \tRc^0 + \lambda_2 F^0 = 0$ for $(\lambda_1, \lambda_2) \in \R^2 \setminus \{ (0,0) \}$. Our Corollary~\ref{cor:main-curv-decomp} gives geometric meaning to generalized Einstein structures with $(\lambda_1, \lambda_2) = (0, 1)$, $(12, 1)$, $(-12, 1)$.
\end{rmk}

The discussion in this section has shown that there are three independent second-order differential invariants of a $\G$-structure coming from the Riemann curvature tensor $\tRm$ \emph{which are $3$-forms}, namely $Rg$, $\tRc^0$, $\varpi$, and these lie in $\mb{1}$, $\mb{27}$, $\mb{27}$, respectively. From equation~\eqref{eq:F-varpi} we see that a more convenient basis for this space is $\{ Rg, \tRc, F \}$, as their definitions are computationally simpler. In Section~\ref{sec:nabla-T-decomp} we find three more independent second-order differential invariants \emph{which are $3$-forms} coming from $\nab{} T$.

\begin{rmk} \label{rmk:Hodge-Lap-revisited}
Using~\eqref{eq:F-varpi0} or~\eqref{eq:F-varpi}, we can rewrite~\eqref{eq:Hodge-Lap-ph} for the Hodge Laplacian $\Delta_d \ph$ as
\begin{align*}
\Delta_{\dd} \ph & = \big( \tfrac{1}{3} (\Div T) \hk \ph + \tfrac{1}{3} |T|^2 g - T^t T + \tfrac{2}{21} R g + \tfrac{1}{5} \tRc^0 + \tfrac{1}{4} \varpi \big) \diamond \ph \\
& = \big( \tfrac{1}{3} (\Div T) \hk \ph + \tfrac{1}{3} |T|^2 g - T^t T + \tfrac{1}{15} R g + \tfrac{1}{5} \tRc + \tfrac{1}{4} \varpi \big) \diamond \ph.
\end{align*}
This shows explicitly that $\Delta_d \ph$ does indeed depend on the Ricci curvature, since $F$ does, even though it was not clear from our original expression~\eqref{eq:Hodge-Lap-ph}.
\end{rmk}

We close this section by explaining how our work allows one to compute an explicit formula for $\sW_{64}$ purely in terms of the torsion. First, from~\eqref{eq:curvature-components-0} we have
$$ 36 \, \sR^7_{14} = 8 \sR + 2 \Pop \sR - 4 \sR \Pop - \Pop \sR \Pop, $$
which says that
$$ 36 (\sR^7_{14})_{ijkl} = 8 R_{ijkl} + 2 \ps_{ijab} R_{abkl} - 4 R_{ijpq} \ps_{pqkl} - \ps_{ijab} R_{abpq} \ps_{pqkl}. $$
Contracting both sides with $\ph_{klm}$ gives
\begin{align*}
36 (\sR^7_{14})_{ijkl} \ph_{klm} & = 8 R_{ijkl} \ph_{klm} + 2 \ps_{ijab} R_{abkl} \ph_{klm} - 4 R_{ijpq} \ps_{pqkl} \ph_{klm} - \ps_{ijab} R_{abpq} \ps_{pqkl} \ph_{klm} \\
& = 8 R_{ijkl} \ph_{klm} + 2 \ps_{ijab} R_{abkl} \ph_{klm} + 16 R_{ijpq} \ph_{pqm} + 4 \ps_{ijab} R_{abpq} \ph_{pqm} \\
& = 24 R_{ijkl} \ph_{klm} + 6 \ps_{ijab} R_{abkl} \ph_{klm}.
\end{align*}
By the $\G$-Bianchi identity~\eqref{eq:g2bianchi}, $R_{ijkl} \ph_{klm}$ can be expressed purely in terms of the torsion. Since $(\sR^7_{14})_{ijkl}$ is of type $\Omega^2_7$ in the $k,l$ indices, from~\eqref{eq:omega27vf} we have
$$ 36 (\sR^7_{14})_{ijkl} \ph_{klm} \ph_{mpq} = 6 \cdot 36 (\sR^7_{14})_{ijpq}. $$
Thus $(\sR^7_{14})_{ijpq}$ can be expressed purely in terms of torsion, and hence so can $(\sR^{14}_7)_{ijpq} = (\sR^7_{14})_{pqij}$. We also know from Remark~\ref{rmk:curvature-torsion-free} that both $\tRc$ and $F$ can be expressed purely in terms of torsion, and thus by~\eqref{eq:F-varpi} and the third equation in~\eqref{eq:curv-R-decomp-final} we conclude that $\sW_{64}$ can be expressed purely in terms of torsion. In particular, if $T = 0$, then $\tRc = 0$, $\varpi = 0$ (so $\sW_{27} = 0$), and $\sW_{64} = 0$. Thus only $\sW_{77}$ can be nonzero for a torsion-free $\G$-structure. (This is a classical result of Alekseevski\u{i}~\cite{Al}.) Note also that the second equation in~\eqref{eq:curv-R-decomp-final} can be used to obtain a general formula for $\sW_{77}$ in terms of Riemann curvature and torsion.

\subsection{Determination of the components of $\nab{} T$ that are $3$-forms} \label{sec:nabla-T-decomp}

In this section we consider the decomposition of $\nab{} T$ into irreducible $\G$-representations and identify all those components which correspond to $3$-forms. (That is, those which lie in $\mb{1} \oplus \mb{27} \oplus \mb{7}$.) We do not give explicit formulas for all the independent components of $\nab{} T$, as we do not require them, but this can be done using the results of Section~\ref{sec:more-rep-theory}. Moreover, as we are only concerned with the leading order behaviour of the various components of $\nab{} T$ which can be $3$-forms, for the purposes of analyzing the short time existence behaviour of flows of $\G$-structures constructed from such flows, we need only determine the leading (second) order terms of the components of $\nab{}T$ which lie in $\mb{1}$, $\mb{27}$, or $\mb{7}$, and those only up to constants.

We begin by recalling that the torsion decomposes as $T = T_1 + T_{27} + T_7 + T_{14}$, so that at every point it lies in the representation $\mb{1} \oplus \mb{27} \oplus \mb{7} \oplus \mb{14}$. Thus, since $\nab{} T \in \Gamma(T^* M \otimes T^* M \otimes T^* M)$, using equations~\eqref{eq:7-7},~\eqref{eq:7-14}, and~\eqref{eq:7-27} we deduce that at every point we have
\begin{equation} \label{eq:nabT-prelim}
\begin{aligned}
\nab{} T & = \nab{} T_1 + \nab{} T_{27} + \nab{} T_7 + \nab{} T_{14} \\
& \in (\mb{7} \otimes \mb{1}) \oplus (\mb{7} \otimes \mb{27}) \oplus (\mb{7} \otimes \mb{7}) \oplus (\mb{7} \otimes \mb{14}) \\
& = \mb{7} \oplus ( \mb{77^*} \oplus \mb{7} \oplus \mb{64} \oplus \mb{27} \oplus \mb{14} ) \oplus (\mb{1} \oplus \mb{27} \oplus \mb{7} \oplus \mb{14}) \oplus (\mb{64} \oplus \mb{27} \oplus \mb{7}).
\end{aligned}
\end{equation}
In particular, we infer from~\eqref{eq:nabT-prelim} that $\nab{} T$ contains the following components in $\mb{1} \oplus \mb{27} \oplus \mb{7}$:
\begin{itemize} \setlength\itemsep{-1mm}
\item One component in $\mb{1}$, coming from $\nab{} T_7$.
\item Three components in $\mb{27}$, coming from $\nab{} T_{27}$, $\nab{} T_7$, and $\nab{} T_{14}$.
\item Four components in $\mb{7}$, coming from $\nab{} T_1$, $\nab{} T_{27}$, $\nab{} T_7$, and $\nab{} T_{14}$.
\end{itemize}
We proceed to identify the above eight components, only up to lower order terms and an overall constant, by analyzing each $\nab{} T_k$ for $k \in \{ 1, 27, 7, 14 \}$. We write $\lot$ to denote ``\emph{$\ell$\!ower order terms}''.

\begin{prop} \label{prop:nabT1}
To leading order and up to an overall constant, the $\mb{7}$ component of $\nab{} T$ determined by $\nab{} T_1$ is $\Div T^t$.
\end{prop}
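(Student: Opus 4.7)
The plan is to simply identify the unique $\mb{7}$ component of $\nab{} T_1$ as a vector field on $M$, and then invoke the $\G$-Bianchi identity (more precisely, one of its contracted consequences from Corollary~\ref{cor:g2bianchi-repackaged}) to rewrite this vector field, modulo terms quadratic in torsion, in terms of $\Div T^t$.

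First I would write $T_1 = \frac{1}{7} (\tr T) g$, so that
\[
\nab{i} (T_1)_{jk} = \tfrac{1}{7} \bigl( \nab{i} (\tr T) \bigr) \, g_{jk}.
\]
As a section of $T^* M \otimes T^* M \otimes T^* M$, this lies in $\mb{7} \otimes \Span\{g\} \cong \mb{7}$, so the entire tensor $\nab{} T_1$ is a single $\mb{7}$ component, and under the isomorphism $\mb{7} \cong \vf$ it corresponds (up to the constant $\frac{1}{7}$) to the vector field $\nab{} (\tr T)$. This is the only contribution in $\mb{7}$ coming from $\nab{} T_1$ in the decomposition~\eqref{eq:nabT-prelim}.

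Next I would apply the second equation in Corollary~\ref{cor:g2bianchi-repackaged}, which is the vector-valued contracted $\G$-Bianchi identity $(\mb{G7_a})$:
\[
\nab{} (\tr T) = \Div T^t - T (\Vop T).
\]
The term $T(\Vop T)$ is a contraction of two copies of the torsion, hence quadratic in $T$ (and free of derivatives of $T$), so by the scaling analysis of Section~\ref{sec:scaling} it is indeed a lower order term $\lot$ in the sense used throughout this discussion.

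Combining these two observations yields
\[
\nab{} T_1 \text{ (as its $\mb{7}$ component)} \; = \; \tfrac{1}{7} \, \Div T^t + \lot,
\]
which is the claim up to the overall constant $\frac{1}{7}$. There is no real obstacle here: once one recognizes that $\nab{} T_1$ is entirely of $\mb{7}$ type and that its corresponding vector field is $\nab{}(\tr T)$, the $\G$-Bianchi identity has already done the work of trading the derivative of the trace of the torsion for a divergence modulo quadratic-in-torsion terms.
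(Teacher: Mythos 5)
Your argument is correct and is essentially identical to the paper's proof: identify $\nab{} T_1 = \tfrac{1}{7} [\nab{}(\tr T)] \otimes g$ as the single $\mb{7}$ component via the isomorphism $f \leftrightarrow f g$, then apply Corollary~\ref{cor:g2bianchi-repackaged} to trade $\nab{}(\tr T)$ for $\Div T^t$ modulo a quadratic torsion term. You have merely spelled out the final step that the paper compresses into one sentence.
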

\begin{proof}
Since $T_1 = \frac{1}{7} (\tr T) g$, we get $\nab{} T_1 = \frac{1}{7} [\nab{} (\tr T)] \otimes g$, which corresponds (up to a constant) to $\nab{} (\tr T)$ under the isomorphism $f \leftrightarrow f g$ for $f \in \Omega^0$. The result now follows from Corollary~\ref{cor:g2bianchi-repackaged}.
\end{proof}

\begin{prop} \label{prop:nabT7}
To leading order and up to overall constants, a basis for the independent $\mb{1} \oplus \mb{27} \oplus \mb{7}$ components of $\nab{} T$ determined by $\nab{} T_7$ is given by $R$, $\cL_{\Vop T} g$, and $\Div T^t - \Div T$.
\end{prop}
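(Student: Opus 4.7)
The plan starts from the identification $T_7 = \tfrac{1}{6}(\Vop T) \hk \ph$ of~\eqref{eq:VT}, which exhibits $T_7$ as the image of the vector field $\Vop T$ under the isomorphism $\Omega^1 \cong \Omega^2_7$. Differentiating and using $\nab{k} \ph_{mij} = T_{kp} \ps_{pmij}$ from~\eqref{eq:nablaph}, I would obtain
\begin{equation*}
\nab{k}(T_7)_{ij} = \tfrac{1}{6} \nab{k}(\Vop T)_m \, \ph_{mij} + \tfrac{1}{6} (\Vop T)_m T_{kp} \ps_{pmij},
\end{equation*}
where the second term is quadratic in $T$ with no derivative of $T$, so it is $\lot$ in the sense used throughout the section. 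Thus to leading order $\nab{} T_7$ is obtained from $\nab{}(\Vop T) \in \cT^2$ by applying $\ph$ to its second index.

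Next I would observe that for each fixed $k$, the map $A_{km} \mapsto A_{km}\ph_{mij}$ is precisely the standard (up to scale) isometric isomorphism $\mb{7} \to \Omega^2_7$ of~\eqref{eq:omega27vf}, so globally it is a $\G$-equivariant isomorphism $\cT^2 \to T^*M \otimes \Omega^2_7 \cong \mb{7} \otimes \mb{7}$ that intertwines the decomposition~\eqref{eq:tens-decomp2} of the former with the decomposition of $\nab{} T_7$ into irreducibles. Consequently, up to nonzero overall constants and $\lot$, the $\mb{1}$, $\mb{27}$, and $\mb{7}$ components of $\nab{}T_7$ correspond respectively to the $\mb{1}$, $\mb{27}$, and $\mb{7}$ components of $\nab{}(\Vop T) \in \cT^2$.

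These three components are then immediate to identify. The $\mb{1}$ component of $\nab{}(\Vop T)$ is its trace $\Div(\Vop T)$, which by $(\mb{G1})$ of Theorem~\ref{thm:g2bianchi-decomp} (equivalently Corollary~\ref{cor:g2bianchi-repackaged}) equals $-\tfrac{1}{2}R + \lot$. The $\mb{27}$ component is the traceless symmetric part of $\nab{}(\Vop T)$, namely $\tfrac{1}{2}(\cL_{\Vop T} g)_0 = \tfrac{1}{2}\cL_{\Vop T} g - \tfrac{1}{7}\Div(\Vop T)\,g$; the trace term here is a multiple of $Rg$ up to $\lot$ and is already accounted for by the $\mb{1}$ component, so modulo this redundancy the $\mb{27}$ component is represented by $\cL_{\Vop T}g$. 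The $\mb{7}$ component is, by Definition~\ref{defn:curl}, $\tfrac{1}{6}\bigl(\curl(\Vop T)\bigr) \hk \ph$, and Corollary~\ref{cor:curlVT-revisited} gives $\curl(\Vop T) = \Div T^t - \Div T + \lot$.

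The only real subtlety---not so much an obstacle as careful bookkeeping---is to verify that the representatives $\{R,\,\cL_{\Vop T}g,\,\Div T^t - \Div T\}$ are genuinely linearly independent as leading-order second-order invariants. This is clear from the fact that they lie in different irreducible summands: $R$ is a scalar contributing only to $\mb{1}$, the traceless part of $\cL_{\Vop T}g$ is a nonvanishing $\mb{27}$-valued expression that cannot be expressed via $R$ and quadratic-in-$T$ terms alone, and $\Div T^t - \Div T$ is a vector field whose leading-order contribution to $\mb{7}$ is nontrivial and independent of the other two.
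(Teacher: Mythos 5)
Your proposal is correct and follows essentially the same strategy as the paper: identify $\nab{}T_7$ with $\nab{}(\Vop T)$ modulo lower-order terms, then read off the $\mb{1}$, $\mb{27}$, $\mb{7}$ components as $\Div(\Vop T)$, the symmetric part of $\nab{}(\Vop T)$, and $\curl(\Vop T)$, and substitute the $\G$-Bianchi relations. The only difference is that you spell out, via the product rule and the $\G$-equivariance of $V \mapsto V \hk \ph$, why the correspondence holds to leading order, which the paper leaves implicit; otherwise the arguments coincide.
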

\begin{proof}
Recall from~\eqref{eq:VT} that $T_7$ is equivalent to $(\Vop T)_q = T_{ij} \ph_{ijq}$, so $\nab{} T_7$ is equivalent to $\nab{} (\Vop T)$. Thus the $\mb{1}$ component is given by $\nab{k} (\Vop T)_k = \Div (\Vop T)$, which by Corollary~\ref{cor:g2bianchi-repackaged} is equivalent to the scalar curvature $R$ up to lower order terms. Up to a constant, the $\mb{1} \oplus \mb{27}$ component is $\nab{p} (\Vop T)_q + \nab{q} (\Vop T)_p = (\cL_{\Vop T} g)_{pq}$. Finally, the $\mb{7}$ component corresponds to $(\nab{p} (\Vop T)_q \ph_{pqk}) = (\curl (\Vop T))_k$, which by~\eqref{eq:curlVT} and Corollary~\ref{cor:g2bianchi-repackaged} corresponds to $\Div T^t - \Div T$ up to lower order terms.
\end{proof}

\begin{prop} \label{prop:nabT14}
To leading order and up to overall constants, a basis for the independent $\mb{27} \oplus \mb{7}$ components of $\nab{} T$ determined by $\nab{} T_{14}$ is given by $-4 \cL_{\Vop T} g + 2 R g - 12 \tRc + 3 F$ and $\Div T - \Div T^t$.
\end{prop}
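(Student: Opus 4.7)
The plan is to extract the $\mb{27}$ and $\mb{7}$ components of $\nabla T_{14}$ directly, then identify them with combinations of second-order invariants using the $\G$-Bianchi identity. Since $T_{14}$ is a section of $\Omega^2_{14}$, $\nabla T_{14}$ is a section of $\Omega^1\otimes\Omega^2_{14}\cong \mb{7}\otimes\mb{14}=\mb{64}\oplus\mb{27}\oplus\mb{7}$, as in~\eqref{eq:7-14}. The skew-symmetrization map $\rho$ of Section~\ref{sec:7-14} annihilates the $\mb{64}$ summand and identifies the $\mb{27}\oplus\mb{7}$ part with the $3$-form $\dd T_{14}=\rho(\nabla T_{14})\in\Omega^3_7\oplus\Omega^3_{27}$. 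By Corollary~\ref{cor:diamondinverse}, the $\mb{7}$ and $\mb{27}$ components of $\dd T_{14}=A\diamond\ph$ are then determined, up to universal nonzero constants, by the vector and traceless-symmetric parts of the contraction $(\dd T_{14})^{\ph}_{ij}:=(\dd T_{14})_{ipq}\ph_{jpq}$.

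First, I would compute $(\dd T_{14})^{\ph}$. Exploiting the defining property $(T_{14})_{pq}\ph_{jpq}=0$ of $\Omega^2_{14}$ together with $\nabla\ph=T\cdot\ps$, two of the three cyclic terms in $\dd T_{14}$ collapse into lower-order torsion-squared terms, yielding
$(\dd T_{14})^{\ph}_{ij}=2\,\KK{2}[T_{14}]_{ij}+\lot$,
where $\KK{2}[\,\cdot\,]$ denotes the operation from~\eqref{eq:K-defn} applied to a given $2$-tensor. Splitting $\KK{2}[T_{14}]=\KK{2}-\KK{2}[T_1]-\KK{2}[T_{27}]-\KK{2}[T_7]$, the pieces $\KK{2}[T_1]$ and $\KK{2}[T_7]$ are computed directly from $(T_1)_{iq}=\tfrac{1}{7}(\tr T)g_{iq}$ and $(T_7)_{iq}=\tfrac{1}{6}(\Vop T)_m\ph_{miq}$ using the contraction identities, and involve only $\nabla(\tr T)$, $\nabla(\Vop T)$ and their contractions with $\ph$ and $\ps$. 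The remaining piece $\KK{2}[T_{27}]$ represents the independent $\mb{27}\oplus\mb{7}$ content that is carried by $\nabla T_{27}$ and is the subject of the companion proposition.

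For the $\mb{7}$-component I would apply $\Vop$ to $\KK{2}[T_{14}]$. By Lemma~\ref{lemma:Vop-KK}, $\Vop(\KK{2})=\Div T-\nabla(\tr T)+\langle\nabla T,\ps\rangle$, which the relations $(\mb{G7_a})$ and $(\mb{G7_b})$ of Corollary~\ref{cor:g2bianchi-repackaged} collapse to $\Div T-\Div T^{t}+\lot$. The contributions from $\Vop(\KK{2}[T_1])$ and $\Vop(\KK{2}[T_7])$ reduce, using Corollary~\ref{cor:curlVT-revisited}, to further constant multiples of the same expression $\Div T^{t}-\Div T$, so the net $\mb{7}$-component of $\nabla T_{14}$ is a nonzero scalar multiple of $\Div T-\Div T^{t}$, as claimed. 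For the $\mb{27}$-component I would take the traceless symmetric part of $\KK{2}[T_{14}]$. The identities $(\KK{2})_{\symm}=-\tfrac{1}{2}\cL_{\Vop T}g-\tRc+\lot$ and $(\KK{3})_{\symm}=\tfrac{1}{4}F+\lot$ from Corollary~\ref{cor:g2bianchi-repackaged}, together with $\Div(\Vop T)=-\tfrac{1}{2}R+\lot$ from $(\mb{G1})$, allow me to reexpress $\KK{2}[T]_{\symm}$ and the symmetric parts of $\KK{2}[T_1]$, $\KK{2}[T_7]$ entirely in terms of $\tRc$, $F$, $Rg$ and $\cL_{\Vop T}g$ modulo $\lot$. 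Matching coefficients produces the combination $-4\cL_{\Vop T}g+2Rg-12\tRc+3F$, which is manifestly traceless modulo $\lot$ (since $\tr F=-2R$ by Lemma~\ref{lemma:trace-F}, together with $(\mb{G1})$).

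The main obstacle is the bookkeeping in the last step: the four symmetric tensors $\cL_{\Vop T}g$, $Rg$, $\tRc$, $F$ each already arose in Propositions~\ref{prop:nabT1}--\ref{prop:nabT7} as the $\mb{27}$-components of $\nabla T_1,\nabla T_7$, so one must show that the combination extracted from $\KK{2}[T_{14}]_{\symm}$ is linearly independent of the $\nabla T_7$-combination, and determine the precise rational coefficients. An efficient shortcut is to specialize to a $\G$-structure with only $T_{14}$ nonzero (so $\Vop T = 0$, $\tr T = 0$, $T_\symm=0$), in which case $\cL_{\Vop T}g$ vanishes identically and the combination $2Rg-12\tRc+3F$ is expressible purely through $T_{14}$ and $\nabla T_{14}$; matching against the direct expansion pins down the coefficients uniquely.
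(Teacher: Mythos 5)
Your overall strategy — project onto the $\mb{27}\oplus\mb{7}$ part by contracting with $\ph$, then rewrite the result via the $\G$-Bianchi relations — is the same as the paper's, and your identification $(\dd T_{14})^{\ph}=2\,\KK{2}[T_{14}]+\lot$ is correct. However, there is a concrete gap in how you propose to evaluate $\KK{2}[T_{14}]$, and the suggested shortcut does not repair it.

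The gap is the $\KK{2}[T_{27}]$ term. The decomposition $\KK{2}[T_{14}]=\KK{2}-\KK{2}[T_1]-\KK{2}[T_7]-\KK{2}[T_{27}]$ is a tautology, but it only becomes useful once \emph{each} summand is re-expressed through the $\G$-Bianchi relations in Corollary~\ref{cor:g2bianchi-repackaged}. Your plan computes $\KK{2}[T_1]$ and $\KK{2}[T_7]$ directly (which works, and is how the paper handles those pieces) and recalls that $\KK{2}$ is controlled by $(\mb{G27_b})$, but $\KK{2}[T_{27}]$ is dismissed as ``the subject of the companion proposition.'' This leaves the argument circular: the companion proposition (Proposition~\ref{prop:nabT27}) only tells you the $\mb{27}\oplus\mb{7}$ part of $\nab{}T_{27}$ \emph{up to an undetermined overall constant}, which is exactly the unknown you need to fix to extract the claimed coefficients $(-4,2,-12,3)$. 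The same issue afflicts your $\mb{7}$-part computation, where you account for $\Vop(\KK{2}[T_1])$ and $\Vop(\KK{2}[T_7])$ but not $\Vop(\KK{2}[T_{27}])$. One can of course close the gap by computing $\KK{2}[T_{27}]$ explicitly — a short computation shows $\KK{2}[T_{27}]_{ja}=\tfrac{1}{2}\KK{2}_{ja}-\tfrac{1}{2}\KK{3}_{ja}+\tfrac{1}{7}\nab{i}(\tr T)\ph_{ija}$, and then the decomposition does reproduce the stated combination — but you need this in full, not a reference to an as-yet-unproven statement with unknown normalization.

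The proposed ``specialization to $T=T_{14}$'' shortcut is also unsound. That specialization is precisely the closed case $\dd\ph=0$, where the $\G$-Bianchi identity forces $R=\lot$ and (as in Remark~\ref{rmk:FandRic-closed}) $\tRc=-\tfrac{1}{4}F+\lot$, while $\cL_{\Vop T}g\equiv 0$. So the putative combination $\alpha\cL_{\Vop T}g+\beta Rg+\gamma\tRc+\delta F$ collapses to $(\gamma-4\delta)\tRc+\lot$ under this specialization. You learn only the single linear relation $\gamma-4\delta=-24$, not the four coefficients, and no amount of ``matching against the direct expansion'' in this one special case can distinguish $\beta$ or $\alpha$ or $\gamma$ and $\delta$ individually, because the Bianchi constraints have made the candidate second-order invariants linearly dependent. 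The paper avoids all of this by applying the projection formula $6(T_{14})_{jk}=4(T_{\skew})_{jk}+(T_{\skew})_{pq}\ps_{pqjk}$ from~\eqref{eq:omega2-proj} at the outset, so that $\nab{i}(T_{14})_{jk}$ is directly expressed through the full $\nab{}T$ before contracting with $\ph$. This yields an expression built entirely out of $\KK{1}$, $\KK{2}$, $\KK{3}$, $\Div T$, $\Div T^t$, and $\langle\nab{}T,\ph\rangle$, to which Corollary~\ref{cor:g2bianchi-repackaged} can be applied term-by-term without needing to track the individual $\KK{2}[T_k]$ at all.
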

\begin{proof}
Write $T_{\skew} = T_7 + T_{14}$. From~\eqref{eq:omega2-proj}, we have
\begin{align*}
6 \nab{i} (T_{14})_{jk} & = \nab{i} (4 (T_{\skew})_{jk} + 2 \ps_{jkpq} (T_{\skew})_{pq}) \\
& = 2 \nab{i} T_{jk} - 2 \nab{i} T_{kj} + \nab{i} (\ps_{jkpq} T_{pq}) \\
& = 2 \nab{i} T_{jk} - 2 \nab{i} T_{kj} + \nab{i} T_{pq} \ps_{jkpq} + \lot.
\end{align*}
Let $\beta_{ijk} = 6 \nab{i} (T_{14})_{jk} \in \mb{7} \otimes \mb{14}$. From Section~\ref{sec:7-14}, we have $\mb{7} \otimes \mb{14} = \mb{64} \oplus \mb{27} \oplus \mb{7}$, where the $\mb{27} \oplus \mb{7}$ part is contained in the $3$-form $\gamma_{ijk} = \beta_{ijk} + \beta_{jki} + \beta_{kij}$, and explicitly in the symmetric and skew-symmetric parts of the $2$-tensor $\gamma^{\ph}_{ia} = \gamma_{ijk} \ph_{ajk}$. Using the conditions~\eqref{eq:7-14-conditions}, we have
\begin{align*}
\gamma^{\ph}_{ia} & = \gamma_{ijk} \ph_{ajk} = (\beta_{ijk} + \beta_{jki} + \beta_{kij}) \ph_{ajk} \\
& = 0 + (\beta_{jki} - \beta_{kji}) \ph_{ajk} \\
& = 2 \beta_{jki} \ph_{jka}.
\end{align*}
Thus we have
\begin{align*}
\gamma^{\ph}_{ia} = 2 \beta_{jki} \ph_{jka} & = 2 (2 \nab{j} T_{ki} - 2 \nab{j} T_{ik} + \nab{j} T_{pq} \ps_{kipq}) \ph_{jka} + \lot \\
& = 2 \nab{j} T_{pq} (\ph_{jak} \ps_{ipqk}) + 4 \nab{j} T_{ki} \ph_{jka} + 2 \nab{j} T_{ik} \ph_{jak} + \lot \\
& = 2 \nab{j} T_{pq} (g_{ji} \ph_{apq} + g_{jp} \ph_{iaq} + g_{jq} \ph_{ipa} - g_{ai} \ph_{jpq} - g_{ap} \ph_{ijq} - g_{aq} \ph_{ipj} ) \\
& \qquad{} + 4 \, \KK{3}_{ia} + 4 \, \KK{2}_{ia} + \lot.
\end{align*}
The above simplifies further to
\begin{align} \nonumber
\tfrac{1}{2} \gamma^{\ph}_{ia} & = \nab{i} T_{pq} \ph_{apq} + (\Div T)_q \ph_{qia} - (\Div T^t)_p \ph_{pia} - \langle \nab{} T, \ph \rangle g_{ia} + \nab{j} T_{aq} \ph_{jiq} + \nab{j} T_{pa} \ph_{jpi} \\ \nonumber
& \qquad{} + 2 \, \KK{3}_{ia} + 2 \, \KK{2}_{ia} + \lot \\ \label{eq:nabT14temp}
& = \KK{1}_{ia} + ((\Div T - \Div T^t) \hk \ph)_{ia} - \langle \nab{} T, \ph \rangle g_{ia} + \KK{2}_{ai} + \KK{3}_{ai} + 2 \, \KK{3}_{ia} + 2 \, \KK{2}_{ia} + \lot.
\end{align}
Up to an overall constant and lower order terms, the symmetric part of $\gamma^{\ph}_{ia}$ is thus
$$ (\KK{1}_{\symm})_{ia} - \langle \nab{} T, \ph \rangle g_{ia} + 3 (\KK{2}_{\symm})_{ia} + 3 (\KK{3}_{\symm})_{ia}. $$
(One can check directly using the results of Section~\ref{sec:nabT} that, to leading order, the above symmetric $2$-tensor has no $\mb{1}$ component, as expected.) Using Remark~\ref{rmk:KK-curv}, Corollary~\ref{cor:scalar-curvature}, and equations~\eqref{eq:KK2symm} and~\eqref{eq:KK3symm}, up to lower order terms this is
$$ \tfrac{1}{2} \cL_{\Vop T} g + \tfrac{1}{2} R g + 3 (- \tfrac{1}{2} \cL_{\Vop T} g - \tRc) + 3 ( \tfrac{1}{4} F ) = - \cL_{\Vop T} g + \tfrac{1}{2} R g - 3 \tRc + \tfrac{3}{4} F, $$
yielding the claimed result.

To extract the $\mb{7}$ part of $\nab{} T_{14}$, which corresponds to the skew-symmetric part of $\gamma^{\ph}_{ia}$, we applying the $\Vop$ operator to~\eqref{eq:nabT14temp}, obtaining
$$ \tfrac{1}{2} (\Vop \gamma^{\ph}) = \Vop (\KK{1}) + \Vop ( (\Div T - \Div T^t) \hk \ph) + \Vop(\KK{3}) + \Vop(\KK{2}) + \lot. $$
Using Lemma~\ref{lemma:Vop-KK}, Corollary~\ref{cor:g2bianchi-repackaged}, and~\eqref{eq:vec-transform}, up to lower order terms this is $6 (\Div T - \Div T^t)$, yielding the claimed result.
\end{proof}

\begin{prop} \label{prop:nabT27}
To leading order and up to overall constants, a basis for the independent $\mb{27} \oplus \mb{7}$ components of $\nab{} T$ determined by $\nab{} T_{27}$ is given by $4 \cL_{\Vop T} g + 4 \tR + F$ and $\Div T + \frac{5}{7} \Div T^t$.
\end{prop}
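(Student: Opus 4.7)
My plan is to follow the same strategy as the proof of Proposition~\ref{prop:nabT14}. Writing $T_{27} = T_{\symm} - T_1 = \tfrac{1}{2}(T + T^t) - \tfrac{1}{7}(\tr T) g$, the tensor $h_{ijk} = \nab{i} (T_{27})_{jk}$ lies in $\Omega^1_7 \otimes \cS^2_0$. Section~\ref{sec:7-27} splits this space into $\cS^3 = \mb{77^*} \oplus \mb{7}$ (extracted by the full symmetrization $h_{ijk} + h_{jki} + h_{kij}$ of Step Two) and its orthogonal complement $W \cong \mb{64} \oplus \mb{27} \oplus \mb{14}$ (extracted by the contraction $h_{ijk}\ph_{iak}$ of Step Three). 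Only the $\mb{7}$ and $\mb{27}$ summands contribute to $3$-forms, so only these are to be identified.

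For the $\mb{7}$ component, Step One of Section~\ref{sec:7-27} shows that this piece of $\cS^3$ is represented (up to a nonzero constant) by the trace of the symmetrization, i.e.\ $h_{iik} + h_{iki} + h_{kii}$, which by the symmetry of $h$ in its last two indices and the tracelessness of $T_{27}$ collapses to $2 (\Div T_{27})_k$. Expanding $\Div T_{27} = \tfrac{1}{2}(\Div T + \Div T^t) - \tfrac{1}{7}\nab{}(\tr T)$ and substituting $\nab{}(\tr T) = \Div T^t + \lot$ from Corollary~\ref{cor:g2bianchi-repackaged} gives $\Div T_{27} = \tfrac{1}{2}\Div T + \tfrac{5}{14}\Div T^t + \lot$, which is $\Div T + \tfrac{5}{7}\Div T^t$ up to the overall factor of $2$.

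For the $\mb{27}$ component, split $h_{ijk}$ into its three constituent terms and compute
$$ h_{ijk}\,\ph_{iak} = \tfrac{1}{2}\nab{i} T_{jk}\,\ph_{iak} + \tfrac{1}{2}\nab{i} T_{kj}\,\ph_{iak} - \tfrac{1}{7}\nab{i}(\tr T)\,\ph_{iaj}. $$
Comparison with Definition~\ref{defn:K} and the identity $\ph_{iak} = -\ph_{ika}$ identifies the first summand with $\tfrac{1}{2}\KK{2}_{ja}$ and the second with $-\tfrac{1}{2}\KK{3}_{ja}$, while the third is skew in $(j,a)$ and is therefore killed by the symmetric-traceless projection. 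Hence the $\mb{27}$ component is a nonzero scalar multiple of $(\KK{2})_{27} - (\KK{3})_{27}$; substituting the identities $(\mb{G27_a})$ and $(\mb{G27_b})$ of Theorem~\ref{thm:g2bianchi-decomp} and discarding the torsion-quadratic terms, which are $\lot$, reduces this to a linear combination of $\cL_{\Vop T} g$, $\tRc$, and $F$ in $\cS^2_0$; the stated basis element is obtained by choosing the convenient overall normalization.

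Main obstacle: there is no serious conceptual difficulty. The three delicate points to watch are (a) correctly matching $\nab{i}T_{jk}\,\ph_{iak}$ and $\nab{i}T_{kj}\,\ph_{iak}$ with $\KK{2}_{ja}$ and $-\KK{3}_{ja}$ by tracking the antisymmetry of $\ph$; (b) verifying that the $\cS^3$ (i.e.\ $\mb{77^*} \oplus \mb{7}$) subsummand of $\nab{} T_{27}$ produces only skew-symmetric contributions to $h_{ijk}\,\ph_{iak}$, so that the symmetric-traceless projection exactly equals $\tfrac{1}{2}\bigl((\KK{2})_{27} - (\KK{3})_{27}\bigr)$ without any correction term; and (c) keeping careful track of the multiplicative constants dictated by the symmetric-part consequences of the $\G$-Bianchi identity in $(\mb{G27_a})$, $(\mb{G27_b})$, and Corollary~\ref{cor:g2bianchi-repackaged}.
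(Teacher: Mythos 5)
Your proposal is correct and follows essentially the same strategy as the paper: compute the trace of the full symmetrization of $h_{ijk}=\nab{i}(T_{27})_{jk}$ for the $\mb{7}$ part, and the symmetric part of $h_{ijk}\ph_{iak}$ for the $\mb{27}$ part, then substitute the $\G$-Bianchi consequences from Corollary~\ref{cor:g2bianchi-repackaged}. Your identification of the first two summands with $\tfrac{1}{2}\KK{2}_{ja}$ and $-\tfrac{1}{2}\KK{3}_{ja}$ is exactly right, and your expansion $\Div T_{27}=\tfrac{1}{2}\Div T+\tfrac{5}{14}\Div T^t+\lot$ matches the paper's arithmetic. One small clarification on your point (b): a fully symmetric $f_{ijk}\in\cS^3$ gives $f_{ijk}\ph_{iak}=0$ identically (not merely a skew-symmetric contribution), since $f$ is symmetric in $i,k$ while $\ph_{iak}$ is skew there; that is the stronger fact the paper invokes parenthetically, and it is what guarantees that contracting the full $h$ with $\ph$ and contracting $h_{105}$ with $\ph$ give exactly the same $2$-tensor.
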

\begin{proof}
Write $T_{\symm} = T_1 + T_{27}$. Then we have
\begin{align*}
\nab{i} (T_{27})_{jk} & = \nab{i} ( (T_{\symm})_{jk} - \tfrac{1}{7} (\tr T) g_{jk} ) \\
& = \tfrac{1}{2} \nab{i} T_{jk} + \tfrac{1}{2} \nab{i} T_{kj} - \tfrac{1}{7} \nab{i} (\tr T) g_{jk}.
\end{align*}
Let $h_{ijk} = \nab{i} (T_{27})_{jk} \in \mb{7} \otimes \mb{27}$. From Section~\ref{sec:7-27}, we have $\mb{7} \otimes \mb{27} = (\mb{77^*} \oplus \mb{7}) \oplus (\mb{64} \oplus \mb{27} \oplus \mb{14})$, where the $\mb{7}$ part is obtained by taking the trace of the symmetrization of $h_{ijk}$, and the $\mb{27}$ part is the symmetric part of the $2$-tensor $h_{ijk} \ph_{iak}$. (In fact, as described in Section~\ref{sec:7-27} we actually use $(h_{105})_{ijk} \ph_{iak}$, but from~\eqref{eq:7-27-step2-decomp}, the difference between $h$ and $h_{105}$ is fully symmetric, and thus vanishes when contracted with $\ph$ on two indices.)

Therefore, to obtain the $\mb{7}$ part, we compute the trace of $h_{ijk} + h_{jki} + h_{kij}$, which is $h_{ikk} + h_{kki} + h_{kik} = h_{ikk} + 2 h_{kki}$. This is
\begin{align*}
h_{ikk} + 2 h_{kki} & = \tfrac{1}{2} \nab{i} T_{kk} + \tfrac{1}{2} \nab{i} T_{kk} - \tfrac{1}{7} \nab{i} (\tr T) g_{kk} \\
& \qquad {} + 2 (\tfrac{1}{2} \nab{k} T_{ki} + \tfrac{1}{2} \nab{k} T_{ik} - \tfrac{1}{7} \nab{k} (\tr T) g_{ki} ) \\
& = 0 + (\Div T)_i + (\Div T^t)_i - \tfrac{2}{7} \nab{i} (\tr T).
\end{align*}
Using Corollary~\ref{cor:g2bianchi-repackaged}, up to lower order terms this is $\Div T + \frac{5}{7} \Div T^t$, yielding the claimed result.

To obtain the $\mb{27}$ part, we need the symmetric part of $h_{ijk} \ph_{iak}$. We compute
\begin{align*}
h_{ijk} \ph_{iak} & = (\tfrac{1}{2} \nab{i} T_{jk} + \tfrac{1}{2} \nab{i} T_{kj} - \tfrac{1}{7} \nab{i} (\tr T) g_{jk}) \ph_{iak} \\
& = \tfrac{1}{2} \KK{2}_{ja} - \tfrac{1}{2} \KK{3}_{ja} + \tfrac{1}{7} \nab{i} (\tr T) \ph_{ija}.
\end{align*}
Up to an overall constant, the symmetric part of the above is $(\KK{2})_{\symm} - (\KK{3})_{\symm}$. (By Definition~\ref{defn:K}, this has no $\mb{1}$ component, as expected.) Using~\eqref{eq:KK2symm} and~\eqref{eq:KK3symm}, up to lower order terms this is $- \frac{1}{2} \cL_{\Vop T} g - \tRc - \frac{1}{4} F$, yielding the claimed result.
\end{proof}

From the above four propositions, we immediately conclude the following result.
\begin{thm} \label{thm:nabT-possibilities}
The only independent second-order differential invariants of a $\G$-structure coming from $\nab{} T$ which are $3$-forms are the vector fields $\Div T, \, \Div T^t$ and the symmetric $2$-tensors $\cL_{\Vop T} g, \, R g, \, \tRc, \, F$. Observe that this list includes as a proper subset the independent second-order differential invariants of a $\G$-structure coming from $\tRm$ which are $3$-forms, namely the symmetric $2$-tensors $Rg, \, \tRc, \, F$.
\end{thm}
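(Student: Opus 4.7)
The statement is, in effect, a bookkeeping consequence of the four preceding Propositions~\ref{prop:nabT1},~\ref{prop:nabT7},~\ref{prop:nabT14}, and~\ref{prop:nabT27}, which already compute, to leading order and modulo lower order terms quadratic in $T$, every irreducible component of $\nabla T$ lying in one of the representations $\mb{1},\mb{27},\mb{7}$ (the only summands that can correspond to $3$-forms, as $\Omega^3 = \Omega^3_1 \oplus \Omega^3_7 \oplus \Omega^3_{27}$). My plan is to assemble the explicit representatives produced by those four propositions and recognise their span.

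First I will restate the multiplicity count from~\eqref{eq:nabT-prelim}: decomposing $T = T_1 + T_{27} + T_7 + T_{14}$ and applying~\eqref{eq:7-7},~\eqref{eq:7-14}, and~\eqref{eq:7-27} to each tensor product $\mb{7} \otimes \mb{k}$, the summand $\mb{1} \oplus \mb{27} \oplus \mb{7}$ occurs in $\nabla T$ with multiplicities $(1,3,4)$. Any $\mathrm{G}_2$--equivariant way of forming a $3$-form from $\nabla T$ must therefore decompose as a linear combination of these eight independent pieces, with coefficients depending only on $\ph$, modulo $\lot$.

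Next I will collect the $\mb{7}$-components. The four propositions list the representatives $\Div T^t,\ \Div T^t - \Div T,\ \Div T - \Div T^t,\ \Div T + \tfrac{5}{7}\Div T^t$, all modulo $\lot$. These four vector fields span the two-dimensional space $\mathrm{span}\{\Div T,\Div T^t\}$, which is exactly the content the theorem predicts; the two relations among the four $\mb{7}$-copies in $\nabla T$ come precisely from the Bianchi identities $(\mb{G7_a})$ and $(\mb{G7_b})$ of Theorem~\ref{thm:g2bianchi-decomp}, rewritten as the statements that $\nabla(\tr T)$ and $\langle \nabla T,\ps\rangle$ are expressible in the required form.

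Then I will collect the $\mb{1} \oplus \mb{27}$ components. Propositions~\ref{prop:nabT7},~\ref{prop:nabT14}, and~\ref{prop:nabT27} produce, up to constants and $\lot$, the scalar $R$ (one $\mb{1}$) and the three symmetric $2$-tensors $\cL_{\Vop T} g$, $-4\,\cL_{\Vop T} g + 2\,Rg - 12\,\tRc + 3F$, and $4\,\cL_{\Vop T} g + 4\,\tRc + F$ (three $\mb{27}$). The matrix of coefficients of these three symmetric $2$-tensors against $\{\cL_{\Vop T}g,\, Rg,\, \tRc,\, F\}$ has rank three on the traceless parts, so after including the independent $\mb{1}$-invariant $R$ (equivalently $Rg$) an equivalent basis for the $\mb{1} \oplus \mb{27}$ invariants is the four symmetric $2$-tensors $\{Rg,\,\tRc,\, F,\, \cL_{\Vop T}g\}$. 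The single relation among the four traces ($\tr\tRc = R$, $\tr F = -2R$ by Lemma~\ref{lemma:trace-F}, and $\tr\cL_{\Vop T} g = 2\Div(\Vop T)$) is precisely $(\mb{G1})$, which identifies $\Div(\Vop T)$ with $-\tfrac{1}{2}R$ modulo $\lot$, matching the single $\mb{1}$-copy counted in $\nabla T$.

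Concatenating the two lists produces the six claimed invariants $\Div T,\Div T^t,\cL_{\Vop T}g,Rg,\tRc,F$. The final observation—that $Rg,\tRc,F$ are exactly the Riemann curvature invariants identified in Section~\ref{sec:curvature-decomp}—is not an additional step but rather a consistency check, reflecting the expression of $\tRc$ and $F$ in terms of $\nabla T$ given in Remark~\ref{rmk:curvature-torsion-free}. No genuine obstacle arises: the representation-theoretic and computational work is in Propositions~\ref{prop:nabT1}--\ref{prop:nabT27} and Theorem~\ref{thm:g2bianchi-decomp}; the proof of the present theorem is the careful dimension count that matches the $(1,3,4)$ multiplicities of $\mb{1} \oplus \mb{27} \oplus \mb{7}$ in $\nabla T$ against the four curvature/Lie-derivative symmetric $2$-tensors and two divergence vector fields, with all redundancies accounted for by the components $(\mb{G1})$, $(\mb{G7_a})$, $(\mb{G7_b})$ of the $\mathrm{G}_2$-Bianchi identity.
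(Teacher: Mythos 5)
Your proposal is correct and follows exactly the same route as the paper, which gives no separate argument for this theorem beyond the line ``From the above four propositions, we immediately conclude the following result.'' You have supplied the explicit bookkeeping (collecting the $\mb 7$, $\mb 1$, and $\mb{27}$ representatives from Propositions~\ref{prop:nabT1}--\ref{prop:nabT27}, verifying that the three $\mb{27}$ pieces span a three-dimensional space, and tracing the reduction in multiplicities to the Bianchi relations $(\mb{G1})$, $(\mb{G7_a})$, $(\mb{G7_b})$), which matches what the paper's propositions implicitly establish.
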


\section{Symbols and short-time existence of flows of $\G$-structures} \label{sec:symbols-short-time}

In this section we establish short-time existence and uniqueness for a large class of flows of $\G$-structures, using DeTurck's trick and the explicit computation of the symbols of the various independent second-order linear differential operators in $\G$-geometry.

As discussed in~\cite{K-flows}, a general flow of $\G$-structures can be written in the form
\begin{equation} \label{eq:general-flow-2}
\delt \ph = h \diamond \ph + X \hk \ps
\end{equation}
for some time-dependent symmetric $2$-tensor $h$ and vector field $X$. (Note that the $\diamond$ operation defined in~\eqref{eq:diamond-coords} depends on the metric and hence on the $\G$-structure $\ph$.) In order to obtain a parabolic flow, we need $h$ and $X$ to be \emph{second-order differential invariants} of $\ph$, which are linear in the second derivatives. We classified the independent second-order differential invariants of $\ph$ in Section~\ref{sec:curvature-torsion}. Indeed, the following result follows directly from the discussions in Sections~\ref{sec:g2bianchi-revisited},~\ref{sec:curvature-decomp}, and~\ref{sec:nabla-T-decomp}.

\begin{thm} \label{thm:invariants}
There are six \emph{independent} second-order differential invariants of a $\G$-structure that can be used to define a flow~\eqref{eq:general-flow-2} of $\G$-structures.

There are four independent possibilities for the symmetric $2$-tensor $h$, namely:
\begin{itemize} \setlength\itemsep{-1mm}
\item $R g$, where $R$ is the scalar curvature;
\item $\tRc$, the Ricci curvature;
\item $F$, the $\ph$-Ricci curvature;
\item $\cL_{\Vop T} g$, where $(\Vop T)_k = T_{ij} \ph_{ijk}$ is the vector torsion.
\end{itemize}
There are also two independent possibilities for the vector field $X$, namely:
\begin{itemize} \setlength\itemsep{-1mm}
\item $\Div T$, the divergence of $T$;
\item $\Div T^t$, the divergence of the transpose of $T$.
\end{itemize}
\end{thm}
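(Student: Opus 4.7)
The plan is to exploit the isomorphism $\Omega^3 \cong \cS^2 \oplus \Omega^1$ from Corollary~\ref{cor:ABinnerproduct}, which asserts that every $3$-form on $M$ admits a unique decomposition $h \diamond \ph + X \hk \ps$ with $h \in \cS^2$ (capturing the $\Omega^3_1 \oplus \Omega^3_{27}$ component) and $X \in \Omega^1$ (capturing the $\Omega^3_7$ component). This reduces the classification of $3$-form-valued second-order differential invariants of $\ph$ to separately classifying the symmetric-$2$-tensor-valued and the vector-field-valued second-order invariants of $\ph$. Since $g$ and the Levi-Civita connection $\nabla$ are algebraically determined by $\ph$, every natural second-order invariant of $\ph$ is polynomial in $T$, $\nabla T$, and $\tRm$, with the second-order content carried $\R$-linearly by $\tRm$ and $\nabla T$; terms polynomial purely in $T$ are first order, hence $\lot$ in the scaling sense of Section~\ref{sec:scaling}.

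I would then combine the two representation-theoretic inventories already carried out in the paper. From Remark~\ref{rmk:Weyl-decomp}, the decomposition $\K \cong \mb{1} \oplus \mb{27} \oplus \mb{27} \oplus \mb{64} \oplus \mb{77}$ shows that the $\mb{1} \oplus \mb{27}$-valued projections of $\tRm$ produce three independent symmetric-$2$-tensor invariants --- via equation~\eqref{eq:F-varpi} taken to be $\{Rg,\ \tRc,\ F\}$ --- and \emph{no} $\mb{7}$-valued vector field invariants, since $\mb{7}$ does not appear in $\K$. From Theorem~\ref{thm:nabT-possibilities} (assembled from Propositions~\ref{prop:nabT1}--\ref{prop:nabT27}), the analogous projections of $\nabla T$ produce the symmetric tensors $\{Rg,\ \tRc,\ F,\ \cL_{\Vop T} g\}$ and the vectors $\{\Div T,\ \Div T^t\}$, all modulo $\lot$. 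The apparent duplication on $\{Rg,\ \tRc,\ F\}$ is precisely the content of the $\G$-Bianchi identity: Corollary~\ref{cor:g2bianchi-repackaged} expresses $\nabla(\tr T)$, $\Div(\Vop T)$, $(\KK{2})_{\symm}$, and $(\KK{3})_{\symm}$ in terms of these curvatures together with $\Div T^t$, $\cL_{\Vop T} g$, and quadratic torsion terms, so the union of the two inventories is exactly the six listed invariants.

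The main obstacle will be verifying genuine linear independence of the six invariants modulo $\lot$, i.e., ruling out any universal identity $\sum c_i \mathcal{I}_i = \lot$. The vectors $\Div T$ and $\Div T^t$ are independent because the symmetric and skew parts of $T$ can be prescribed independently pointwise, forcing $\Div T$ and $\Div T^t$ to take independent values. For the four symmetric tensors, I would first take traces: by Lemma~\ref{lemma:trace-F} and Corollary~\ref{cor:g2bianchi-repackaged}, the traces of $Rg$, $\tRc$, $F$, $\cL_{\Vop T} g$ equal $7R$, $R$, $-2R$, and $-R$ modulo $\lot$ respectively, yielding a single independent $\mb{1}$-valued invariant, namely $R$. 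What remains is to confirm that the trace-free parts $\tRc^0$, $F^0$ (equivalently $\varpi$, via~\eqref{eq:F-varpi}), and $(\cL_{\Vop T} g)^0$ span a three-dimensional space of $\mb{27}$-valued invariants. The first two correspond to the distinct $\iota_g(\cS^2_0)$ and $\sW_{27}$ summands of $\K$; the third is the $\mb{27}$-piece of $(\KK{1})_{\symm}$ (via~\eqref{eq:KK1-simp}), which is the unique symmetric $(\KK{a})_{\symm}$-component \emph{not} identified with curvature by Bianchi, and so is independent of $\tRc^0$ and $\varpi$. Exhibiting $\G$-structures with $T = T_7$ and specified Weyl components pins down this independence, giving $1 + 3 = 4$ independent symmetric-tensor invariants and $2$ independent vector-field invariants, for a total of $6$.
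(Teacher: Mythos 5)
Your proposal is correct and follows essentially the same path as the paper: Theorem~\ref{thm:invariants} is stated in the paper without a separate proof precisely because it is a direct summary of the inventories carried out in Sections~\ref{sec:g2bianchi-revisited},~\ref{sec:curvature-decomp}, and~\ref{sec:nabla-T-decomp} (culminating in Theorem~\ref{thm:nabT-possibilities}), which is exactly the chain of results you assemble. Your additional trace computation to isolate the single $\mb{1}$-valued invariant $R$ and your appeal to the $\G$-Bianchi identity (Corollary~\ref{cor:g2bianchi-repackaged}) to show that $\KK{2}_{\symm}$ and $\KK{3}_{\symm}$ are redundant while $\cL_{\Vop T} g$ is not are consistent with the paper's reasoning and make the independence claim slightly more explicit, but do not constitute a different approach.
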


\subsection{Differential operators, ellipticity, and parabolicity} \label{sec:diff-ops}

We begin by reviewing the notion of a parabolic PDE and the existence and uniqueness of solutions of such equations. Other sources for the discussion below are~\cite[\textsection 3.2]{Chow-Knopf},~\cite[\textsection 5.1]{AH}, and~\cite[\textsection 4]{Topping}.

Let $(M,g)$ be a Riemannian manifold with Levi-Civita connection $\nabla$. Given two vector bundles $E$, $F$ over $M$, a linear differential operator $L \colon \Gamma(E) \to \Gamma(F)$ of order $m$ is a linear map such that, for every $x \in M$, in terms of local frames for $E$ and $B$, we can write
\begin{equation}
L(\sigma)^b (x) = \sum_{l=0}^m [\hat L_l(x)]_a^{b,i_1,\ldots,i_l} [\nabla^{l}_{i_1,\ldots,i_l} \sigma(x)]^a = \sum_{l=0}^m [\hat L_l(x)]^b (\nabla^l \sigma(x))
\end{equation}
where for each $l =0, 1, \ldots, m$, we write $\nabla^l \sigma \in \Gamma((T^*M)^{\otimes l} \otimes E)$ to denote the $l$-th covariant derivative of $\sigma$, and $\hat L_l \in \Gamma( (TM)^{\otimes l} \otimes \mathrm{Hom}(E,F))$. Here the index $a$ corresponds to a local frame for $E$ and the index $b$ corresponds to a local frame for $F$.

For any such linear differential operator, we define its principal symbol so that for each $x \in M$ and $\xi \in T^*_x M$, the map
$$ \sigma_{\xi} (L) \colon E_x \to F_x $$
is the linear homomorphism
\begin{equation}
\begin{aligned}
[\sigma_{\xi}(L) (\sigma)]^b & = [\hat L_m (x)]^b (\xi, \ldots, \xi, \sigma), \\
& = [\hat L_m(x)]^{b, i_1, \ldots, i_m}_a \xi_{i_1} \cdots \xi_{i_m} \sigma^a.
\end{aligned}
\end{equation}

The principal symbol satisfies the fundamental properties
$$ \sigma_{\xi} (P + Q) = \sigma_{\xi} (P) + \sigma_{\xi} (Q), \qquad \sigma_{\xi} (P \circ Q) = \sigma_{\xi} (P) \circ \sigma_{\xi} (Q), $$
whenever $P$, $Q$ are linear differential operators so that either $P + Q$ or $P \circ Q$ is well defined.

\begin{defn} \label{def:ellipticity}
A linear differential operator $L \colon \Gamma(E) \to \Gamma(F)$ is called \emph{elliptic} if for any $x \in M$, $\xi \in T^*_x M$, $\xi \neq 0$, the principal symbol $\sigma_{\xi} (L) \colon E_x \to F_x$ is a linear isomorphism.

Let $E$ be a vector bundle over $M$ with a fibre metric $\langle \cdot, \cdot \rangle$. Consider a second-order linear differential operator $L \colon \Gamma (E) \to \Gamma(E)$. If there is a constant $c > 0$ such that for any $\xi \in T^*_x M$, $\xi \neq 0$ and $v \in E_x$, we have
$$ \langle \sigma_{\xi}(L) (v), v \rangle \geq c |\xi|^2 |v|^2, $$
then $L$ is called \emph{strongly elliptic}.
\end{defn}

We can extend the definition of ellipticity and strong ellipticity to the setting of nonlinear differential operators as follows.
\begin{defn} \label{defn:linearization}
Let $E$, $F$ be vector bundles over $M$, let $\mathcal U \subseteq \Gamma(E)$ be open, and let $P \colon \mathcal U \to \Gamma(F)$ be a nonlinear differential operator. The operator $P$ is called elliptic at $v \in \mathcal U$ if the linearization 
\begin{align*}
D_v P & \colon \Gamma(E) \to \Gamma(F), \\
(D_v P) (w) & := \rest{\frac{d}{ds}}{s=0} P(v + sw),
\end{align*}
is an elliptic linear differential operator. 

Similarly, if $P \colon \mathcal U \to \Gamma(E)$ is a second-order differential operator and $E$ is endowed with a bundle metric $\langle \cdot, \cdot \rangle$, we say that $P$ is strongly elliptic at $\sigma \in \mathcal U$ if its linearization $D_{\sigma} P \colon \Gamma(E) \to \Gamma(E)$ is a strongly elliptic linear differential operator.

A nonlinear evolution equation of the form $\frac{\partial}{\partial t} \sigma = P(\sigma)$, where $\sigma \in \mathcal U$, is called \emph{parabolic} at $\sigma$ if $P$ is strongly elliptic at $\sigma$.
\end{defn}

The importance of the above definition is due to the following standard result.

\begin{thm}
Let $M$ be a Riemannian manifold, let $E$ be a vector bundle over $M$ endowed with a fibre metric $\langle \cdot, \cdot \rangle$, and let $\mathcal U \subseteq \Gamma(E)$ be open. Let $P \colon \mathcal U \to \Gamma(E)$ be a second-order quasilinear differential operator, which is strongly elliptic at $\sigma_0 \in \mathcal U$. Then there exists $\eps > 0$ and for any $ t\in [0,\eps)$ a unique $\sigma(t) \in \mathcal U$, such that
\begin{equation} \label{eq:paraboli_IVP}
\frac{\partial \sigma(t)}{\partial t} = P(\sigma(t)), \qquad \sigma(0) = \sigma_0.
\end{equation}
That is, a nonlinear evolution equation $\frac{\partial}{\partial t} \sigma = P(\sigma)$ which is parabolic at $\sigma_0$ has a unique short time smooth solution with initial condition $\sigma(0) = \sigma_0$.
\end{thm}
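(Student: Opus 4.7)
The plan is the standard linearization and inverse function theorem argument for quasilinear parabolic equations, which I will sketch in the form appropriate for geometric PDE. Since $P$ is second order quasilinear, locally in a trivialization of $E$ and for $\sigma$ close to $\sigma_0$ we can write
$$ P(\sigma) = a^{ij}(x, \sigma, \nabla \sigma)\, \nabla_i \nabla_j \sigma + b(x, \sigma, \nabla \sigma), $$
with $a^{ij}$ an $\mathrm{End}(E)$-valued smooth symbol and $b$ a smooth lower-order term. The strong ellipticity hypothesis is precisely the coercivity bound $\langle a^{ij}(x, \sigma_0, \nabla \sigma_0) v, v \rangle\, \xi_i \xi_j \geq c |\xi|^2 |v|^2$ for the linearization $L := D_{\sigma_0} P$.

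Next, I would fix $\alpha \in (0,1)$ and introduce the parabolic H\"older spaces $X_T := C^{2+\alpha,\, 1+\alpha/2}(M \times [0,T]; E)$ and $Y_T := C^{\alpha,\, \alpha/2}(M \times [0,T]; E)$, and study the map
$$ \Phi \colon X_T \longrightarrow Y_T \times C^{2+\alpha}(M; E), \qquad \Phi(\sigma) := \bigl( \partial_t \sigma - P(\sigma),\ \sigma(\cdot, 0) \bigr), $$
defined on the open set of those $\sigma$ satisfying $\sigma(t,\cdot) \in \mathcal{U}$ for all $t$. This $\Phi$ is smooth, since $P$ is, and its derivative at the constant-in-time section $\sigma \equiv \sigma_0$ is the linear map $\tau \mapsto (\partial_t \tau - L\tau,\, \tau(\cdot, 0))$. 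Classical Schauder theory for strongly parabolic linear systems then shows that this derivative is a Banach space isomorphism onto the closed subspace of the target determined by the natural compatibility condition at $t=0$, for any sufficiently small $T > 0$. Applying the inverse function theorem produces a unique $\sigma \in X_T$ with $\Phi(\sigma) = (0, \sigma_0)$, which is the desired short-time solution; a standard parabolic bootstrap, differentiating the equation in space and time and iteratively invoking Schauder estimates for the resulting linear systems, upgrades $\sigma$ to be smooth. Uniqueness in the full class follows because any two $C^{2+\alpha, 1+\alpha/2}$ solutions of the same initial value problem agree on a small time interval by the uniqueness clause of the inverse function theorem, and one then propagates forward in time by a standard continuation argument.

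The main obstacle is the underlying linear Schauder step, which requires either compactness of $M$ or uniformly bounded geometry hypotheses in order to supply the uniform H\"older estimates and the isomorphism property on a finite time interval; in the $\G$-structure flow applications of this paper $M$ is compact, so the classical framework applies directly, and the theorem is to be understood in that setting. A secondary technical point is that one must shrink the domain around $\sigma_0$ in $X_T$ so that $\sigma(t, \cdot)$ remains inside the open set $\mathcal{U}$ for all $t \in [0,T]$, which is immediate from the continuous embedding $X_T \hookrightarrow C^0(M \times [0,T]; E)$ together with $\sigma_0 \in \mathcal{U}$.
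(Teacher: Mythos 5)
The paper does not prove this theorem: it is stated as a ``standard result'' and the surrounding discussion cites Chow--Knopf, Andrews--Hopper, and Topping as references. There is therefore no paper proof to compare against, and the question is whether your sketch is a correct rendering of the standard argument.

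Your outline is the right one (parabolic H\"older spaces, linearization, inverse function theorem, Schauder theory, bootstrap, continuation for uniqueness), and you correctly flag that the statement as written must implicitly assume $M$ compact or of bounded geometry, which is how it is used in the paper. One point is stated slightly loosely: the linearized operator $\tau \mapsto (\partial_t \tau - L\tau,\, \tau(\cdot,0))$ is in fact a Banach isomorphism onto the compatibility subspace for \emph{every} finite $T$, not merely for small $T$; what actually forces $T$ to be small is that the inverse function theorem only gives a diffeomorphism onto a neighbourhood of $\Phi(\sigma_0) = (-P(\sigma_0), \sigma_0)$, and one must arrange for the target datum $(0,\sigma_0)$ to lie in that neighbourhood. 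Since $\|P(\sigma_0)\|_{C^{\alpha,\alpha/2}(M\times[0,T])}$ does not shrink with $T$ when $P(\sigma_0)\neq 0$, this requires an extra device, such as subtracting off an approximate solution satisfying the compatibility conditions to sufficiently high order, using weighted norms that degenerate as $T\to 0$, or passing to little-H\"older spaces. This is a routine part of the standard proof you are invoking, but as written the sketch elides it, and a reader trying to fill in the details from your outline alone would get stuck precisely there. Everything else is correct.
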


\subsection{DeTurck's trick for flows of $\G$-structures} \label{sec:DeTurck}

DeTurck's trick was originally used to establish the existence and uniqueness of solutions to the Ricci flow in~\cite{DeTurck}. We now discuss DeTurck's trick for a flow of $\G$-structures. Consider again the flow of $\G$-structures
\begin{equation} \label{gflow}
\frac{\partial \ph}{\partial t} = h \diamond_{\ph} \ph + X \hk \ps 
\end{equation}
where $h$ is a family of time-dependent symmetric $2$-tensors and $X$ is a time-dependent vector field on $M$. If~\eqref{gflow} is not parabolic, and the failure of parabolicity is due solely to the diffeomorphism invariance of the system, then we can use DeTurck's trick.

Suppose that $W$ is a time-dependent vector field such that the \emph{modified} flow
\begin{equation} \label{modifiedgflow}
\frac{\partial \ph}{\partial t} = h \diamond_{\ph} \ph + X \hk \ps + \mathcal{L}_W \ph
\end{equation}
is parabolic, so it has a unique solution $\ol{\ph}(t)$ for short time.

Let $\Theta_t \colon M \to M$ be the $1$-parameter family of diffeomorphisms of $M$ whose flow is $-W$. That is,
$$ \begin{cases} \dfrac{\partial \Theta_t(p)}{\partial t} = - W(\Theta_t (p), t) \\[0.3em]
\qquad \Theta_0 = \Id_M. \end{cases} $$
Since $M$ is compact, the family of diffeomorphisms $\Theta_t$ exists by~\cite[Lemma 3.15]{Chow-Knopf} as long as the solution $\ol{\ph}(t)$ exists. Define
$$ \ph(t) = \Theta^*_t(\ol{\ph}(t)). $$
Then, using the fact that both $h$ and $X$ are taken to be diffeomorphism invariant quantities depending on $\ol{\ph}(t)$, we have
\begin{align*}
\frac{\partial \ph(t)}{\partial t} = \delt(\Theta^*_t(\ol{\ph}(t))) & = \Theta^*_t (\cL_{-W(t)} \ol{\ph}(t) + \partial_t \ol{\ph}(t) ) \\
& = \Theta^*_t ( \cL_{-W(t)} \ol{\ph}(t) + h \diamond_{\ol{\ph}(t)} \ol{\ph}(t) + X \hk \ol{\ps}(t) + \cL_{W(t)} \ol{\ph}(t) ) \\
& = h \diamond_{\Theta^*_t \ol{\ph} (t)} ( \Theta^*_t (\ol{\ph}(t)) ) + X \hk ( \Theta^*_t (\ol{\ps}(t)) ) \\
& = h \diamond_{\ph(t)} \ph(t) + X \hk \ps(t).
\end{align*}
Thus $\ph(t) = \Theta^*_t (\ol{\ph}(t))$ is a solution of~\eqref{gflow} with a given initial condition. Uniqueness follows from the uniqueness of solutions~\eqref{modifiedgflow}, which we are assuming is parabolic.

\subsection{Nonlinear differential operators on $(M, \ph)$ and principal symbols} \label{sec:symbols}

Let $M$ be a $7$-manifold with a $\G$-structure $\ph$ and induced Riemannian metric $g$, and consider a linear differential operator $L \colon \Omega^3 (M) \to \Gamma(F)$ of order $m$, where $F$ is a vector bundle over $M$.

Expressing any $3$-form $\gamma$ as $\gamma = h \diamond \ph + X \hk \ps$ we define
$$ L_{\ph} \colon \Gamma (\sym{2} \oplus T^*M) \to \Gamma(F) $$
to be the linear differential operator
$$L_{\ph} (h,X) = L(h \diamond \ph + X \hk \ps). $$
Since the operator $(h,X) \mapsto h \diamond \ph + X \hk \ps$ is a zero order linear differential operator it follows that
$$ \sigma_{\xi} (L_{\ph}) (h,X) = \sigma_{\xi} (L) \circ \sigma_{\xi} (\cdot \diamond \ph + \cdot \hk \ps) (h,X) = \sigma_{\xi}(L) (h \diamond \ph + X \hk \ps). $$

Moreover, if $\Gamma(F) = \Omega^3 = \Omega^3_1 \oplus \Omega^3_{27} \oplus \Omega^3_7$, and we denote by $\pi_1$, $\pi_{27}$, $\pi_7$ the associated projections with $\pi_{1+27} = \pi_1 \oplus \pi_{27}$, then we denote
$$ L_{\symm} = \pi_{1+27} \circ L_{\ph}, \qquad L_7 = \pi_7 \circ L_{\ph} $$
and write $L_{\ph} (h,X) = L_{\symm} (h,X) \oplus L_7 (h,X)$. It is clear that the principal symbols of $L_{\symm}$ and $L_7$ are
$$ \sigma_{\xi} (L_{\symm}) = \pi_{1+27} \circ \sigma_{\xi} (L_{\ph}), \qquad \sigma_{\xi} (L_7) = \pi_7 \circ \sigma_{\xi} (L_{\ph}). $$
In the following, in order to simplify our notation, we do not distinguish between a linear operator $L \colon \Omega^3 \to \Gamma(F)$ and its description $L_{\ph} \colon \Gamma(\sym{2} \oplus T^*M ) \to \Gamma(F)$ as a linear differential operator acting on pairs $(h,X) \in \Gamma(\sym{2} \oplus T^*M )$, once a particular $\G$-structure $\ph$ is specified. Similarly, it is more convenient to use the isomorphism $(h,X) \mapsto h \diamond_{\ph} \ph + X \hk \ps$ to express a linear differential operator $L \colon \Omega^3 \to \Omega^3$ as an operator $L \colon \Gamma(\sym{2} \oplus T^*M) \to \Gamma(\sym{2} \oplus T^*M)$, with $L = L_{\symm} \oplus L_7$.

Moreover, by~\eqref{eq:ABph}, the bundle metric $\langle (h_1, X_1), (h_2, X_2) \rangle = \langle h_1, h_2 \rangle + \langle X_1, X_2 \rangle$ on $\sym{2} \oplus T^* M$ is uniformly equivalent to the natural inner product on $\Lambda^3( T^*M)$. Hence, $L$ is strongly elliptic if and only if there is a constant $c>0$ such that for any $x \in M$, $\xi \in T^*_x M$, $\xi \neq 0 $, and any $(h,X) \in \mathrm{S}^2(T^*_x M) \oplus T^*_x M$, we have
$$ \langle \sigma_{\xi}(L) (h, X), (h, X) \rangle \geq c | (h, X) |^2 =c ( |h|^2 + |X|^2 ). $$

We now consider various first and second-order nonlinear differential operators acting on a $\G$-structure $\ph$, which are differential invariants of the $\G$-structure. We compute their linearizations and associated principal symbols. We write $\lot$ to denote ``\emph{$\ell$\!ower order terms}'', and only compute the linearizations up to such $\lot$, because only the $m^{\text{th}}$ derivative terms contribute to the principal symbol of an $m^{\text{th}}$ order differential operator. In this section, because we differentiate contractions, we need to be careful about our subscript/superscript abuse of notation.

\begin{prop} \label{prop:linearizations}
Consider a variation $\frac{\partial}{\partial t} \ph = h \diamond \ph + X \hk \ps$ of a $\G$-structure $\ph$. Then the induced variations of the first-order differential invariants $T$, $T_{\symm}$, $T_{\skew}$, and $\Vop T$ are given by
\begin{align*}
[(D_{\ph} T) (h, X)]_{pq} & = \nab{a} h_{bp} \ph_{abq} + \nab{p} X_q + \lot, \\
[(D_{\ph} T_{\symm}) (h,X)]_{pq} & = \tfrac{1}{2} \nab{a} h_{bp} \ph_{abq} + \tfrac{1}{2} \nab{a} h_{bq} \ph_{abp} + \tfrac{1}{2} \nab{p} X_q + \tfrac{1}{2} \nab{q} X_p + \lot, \\
[(D_{\ph} T_{\skew}) (h,X)]_{pq} & = \tfrac{1}{2} \nab{a} h_{bp} \ph_{abq} - \tfrac{1}{2} \nab{a} h_{bq} \ph_{abp} + \tfrac{1}{2} \nab{p} X_q - \tfrac{1}{2} \nab{q} X_p + \lot, \\
[(D_{\ph} \Vop T) (h, X)]_k & = \nab{k} (\tr h) - (\Div h)_k + (\curl X)_k + \lot.
\end{align*}
Moreover, the induced variations of the second-order differential invariants $\cL_{\Vop T} g$, $F$, $\Div T$, and $\Div T^t$ are given by
\begin{align*}
[(D_{\ph} \cL_{\Vop T} g) (h, X)]_{jk} & = \nab{j} (\nab{k} (\tr h) - (\Div h)_k + (\curl X)_k) \\
& \qquad {} + \nab{k} (\nab{j} (\tr h) - (\Div h)_j + (\curl X)_j) + \lot, \\
[(D_{\ph} F) (h, X)]_{jk} & = 2 (\nab{p} \nab{a} h_{bq} + \nab{a} \nab{p} h_{bq}) \ph_{abj} \ph_{pqk} + \lot, \\
[(D_{\ph} \Div T) (h, X)]_k & = \nab{a} (\Div h)_b \ph_{abk} + \Delta X_k + \lot, \\
[(D_{\ph} \Div T^t) (h, X)]_k & = \nab{k} (\Div X) + \lot.
\end{align*}
\end{prop}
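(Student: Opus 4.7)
\textbf{Proof proposal for Proposition~\ref{prop:linearizations}.}

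The plan is to first convert the variation $\delt \ph = h \diamond \ph + X \hk \ps$ into the form $\delt \ph = A \diamond \ph$ in order to invoke Proposition~\ref{prop:evolution-torsion}. By Corollary~\ref{cor:sevenreps}, we have $X \hk \ps = -\tfrac{1}{3}(X \hk \ph) \diamond \ph$, so
\[
A_{ij} = h_{ij} - \tfrac{1}{3} X_m \ph_{mij},
\]
which splits correctly as $A_{1+27} = h$ and $A_{7} = -\tfrac{1}{3}(X \hk \ph)$. Substituting this into Proposition~\ref{prop:evolution-torsion} and expanding, the symmetry of $h$ kills the term $\nab{p} h_{ij} \ph_{ijq}$, the two $\nab{i} X_m \ph_{m\cdot\cdot}$ terms cancel by skew-symmetry of $\ph$, and the surviving $\nab{p} X_m \ph_{mij} \ph_{ijq}$ collapses to $\nab{p} X_q$ via $\ph_{mij}\ph_{ijq} = 6 g_{mq}$. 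This yields the formula for $D_{\ph} T$. The formulas for $D_{\ph} T_{\symm}$ and $D_{\ph} T_{\skew}$ follow by taking the symmetric and skew parts. For $D_{\ph} \Vop T$, I would differentiate $(\Vop T)_k = T_{ij}\ph_{ijk}$ (the variation of $\ph$ contributes only lower order since $T_{ij}$ itself is first order in $\ph$), then reduce $\nab{a} h_{bi}\ph_{abj}\ph_{ijk}$ via contraction identities --- here the $\ps$-term vanishes by symmetry of $h$ versus skewness of $\ps$, leaving $\nab{k}(\tr h) - (\Div h)_k$; the $X$-term gives $\nab{i} X_j \ph_{ijk} = (\curl X)_k$ by Definition~\ref{defn:curl}.

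For the second-order operators I exploit the formulas already derived in Section~\ref{sec:g2bianchi-revisited}. For $D_{\ph}(\cL_{\Vop T} g)$, I note that the Lie derivative of $g$ is $\cL_{\Vop T} g_{jk} = \nab{j}(\Vop T)_k + \nab{k}(\Vop T)_j$; variations of $g$ and the Christoffel symbols contribute only zeroth-order terms (since $\Vop T$ is first order in $\ph$), so the leading term is simply $\nab{j}$ and $\nab{k}$ applied to the formula for $D_{\ph} \Vop T$. For $D_{\ph} F$, rather than linearizing the Riemann tensor directly, I invoke equation~\eqref{eq:Rc-F-in-terms-of-T}, which says $F = 4(\KK{3})_{\symm} - 2(T \oct T)_{\symm}$; the last term is quadratic in $T$ and hence $\lot$, so it suffices to linearize $\nab{a} T_{bp} \ph_{abq}$, yielding
\[
[(D_{\ph} F)(h,X)]_{jk} = 2\nab{a}[D_{\ph} T(h,X)]_{bj}\,\ph_{abk} + 2\nab{a}[D_{\ph} T(h,X)]_{bk}\,\ph_{abj} + \lot.
\]
The $X$-contribution to this has the form $\nab{a}\nab{b} X_j \ph_{abk}$; its symmetric $(a,b)$-part is killed by the skew-symmetry of $\ph$, and its antisymmetric part is a curvature term via the Ricci identity~\eqref{eq:ricci-identity} and thus lower order --- after this cancellation, only the claimed second-derivative-of-$h$ terms remain. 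For $D_{\ph}\Div T$, I differentiate $(\Div T)_k = \nab{i} T_{ik}$; the Christoffel-variation terms are lower order, and commuting $\nab{i}\nab{a}$ via the Ricci identity produces only curvature corrections, yielding $\nab{a}(\Div h)_b \ph_{abk} + \Delta X_k$. For $D_{\ph} \Div T^t$, I use $\Div T^t = \nab{}(\tr T) + T(\Vop T)$ from Corollary~\ref{cor:g2bianchi-repackaged}; the quadratic term is $\lot$, and $\delt (\tr T) = g^{ij}\delt T_{ij} + \lot$ reduces via $\nab{a} h_{bi}\ph_{abi} = 0$ (symmetric $\times$ skew) to just $\nab{i} X_i = \Div X$.

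The main technical obstacle is the cancellation of the $X$-terms in the linearization of $F$: one must recognize that even though $\nab{a} \delt T_{bj}$ a priori contains the second-order term $\nab{a}\nab{b} X_j$, the outer contraction with $\ph_{abk}$ together with the Ricci identity reduces this contribution to curvature times $X$, which is $\lot$. The same kind of cancellation is responsible for the absence of $X$ from the variation of $\tr T$, and these two observations together explain why four of the six second-order invariants depend on $X$ only through first-derivative combinations. All remaining manipulations are routine applications of the contraction identities~\eqref{eq:phph} and~\eqref{eq:phps}.
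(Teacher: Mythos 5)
Your proposal is correct and for the most part parallels the paper's proof. The derivations of $D_\ph T$ (via Proposition~\ref{prop:evolution-torsion}, equivalently the formula recorded in Remark~\ref{rmk:flows1-evolution-torsion}), $D_\ph T_{\symm}$, $D_\ph T_{\skew}$, $D_\ph \Vop T$, $D_\ph \cL_{\Vop T} g$, $D_\ph F$, and $D_\ph \Div T$ all match the paper essentially step for step: your starting point $F = 4(\KK{3})_{\symm} - 2(T\oct T)_{\symm}$ is exactly the formula~\eqref{eq:F-from-T} that the paper linearizes, and both arguments dispose of the $\nab{p}\nab{q} X_j\,\ph_{pqk}$-type contributions to $D_\ph F$ by antisymmetrizing against $\ph$ and invoking the Ricci identity. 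The one genuinely different step is $D_\ph\Div T^t$. You route through the $\G$-Bianchi relation $\Div T^t = \nab{}(\tr T) + T(\Vop T)$ from Corollary~\ref{cor:g2bianchi-repackaged}, reduce to linearizing $\tr T$, and observe that the $h$-contribution $\nab{a} h_{bp}\ph_{abp}$ vanishes outright as a symmetric-against-skew contraction, leaving $\Div X$ and hence $\nab{k}\Div X + \lot$ after applying $\nab{k}$. The paper instead linearizes $(\Div T^t)_k = \nab{i} T_{ki}$ directly, obtaining $\nab{i}\nab{a} h_{bk}\ph_{iab} + \nab{i}\nab{k} X_i + \lot$, and kills the a priori second-order $h$-term by antisymmetrizing $(i,a)$ against $\ph_{iab}$ and applying the Ricci identity so it becomes curvature times $h$, then commutes $\nab{i}\nab{k} X_i$ to $\nab{k}\Div X + \lot$. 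Your shortcut makes the disappearance of $h$ from $D_\ph\Div T^t$ manifest one derivative earlier, at the cost of invoking the Bianchi decomposition; the paper's route is self-contained but slightly longer. Both are valid.
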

\begin{proof}
By Definition~\ref{defn:linearization}, if $Q := Q(\ph)$ is a differential invariant of a $\G$-structure $\ph$, then
$$ (D_{\ph} Q) (h, X) = \frac{\partial}{\partial t} Q(\ph(t)). $$
From Remark~\ref{rmk:flows1-evolution-torsion} we know that given a variation $\frac{\partial \ph}{\partial t} = h \diamond \ph + X \hk \ps$ of a $\G$-structure, the variation of the torsion is given by
$$ \frac{\partial T_{pq}}{\partial t} = \nab{a} h_{bp} \ph_{abq} + \nab{p} X_q + T_{pa} h_{aq} + T_{pa} X_b \ph_{baq}, $$
yielding the expression for $D_{\ph} T$, and the expressions for $D_{\ph} T_{\symm}$ and $D_{\ph} T_{\skew}$ then follow.

Using $\frac{\partial}{\partial t} g^{ij} = - 2 h^{ij}$ which follows from Lemma~\ref{lemma:evolution-1}, we compute the variation of $(\Vop T)_k = g^{pi} g^{qj}T_{pq} \ph_{ijk}$ as
\begin{align*}
\frac{\partial (\Vop T)_k}{\partial t} & = (\nab{a} h_{bp} \ph_{abq} + \nab{p} X_q + T_{pa} h_{aq} + T_{pa} X_b \ph_{baq}) \ph_{pqk} - 2 h_{ai} T_{aj} \ph_{ijk} - 2h_{bj} T_{ib} \ph_{ijk} \\
& \qquad {} + T_{ij} (h \diamond \ph)_{ijk} + T_{ij} (X \hk \ps)_{ijk}.
\end{align*}
Thus, ignoring lower order terms, we get
\begin{align*}
\frac{\partial (\Vop T)_k}{\partial t} & = \nab{a} h_{bp} \ph_{abq} \ph_{kpq} + \nab{p} X_q \ph_{pqk} + \lot \\
& = \nab{a} h_{bp} (g_{ak} g_{bp} - g_{ap} g_{bk} - \ps_{abkp}) + (\curl X)_k + \lot \\
& = \nab{k} (\tr h) - (\Div h)_k - 0 + (\curl X)_k + \lot,
\end{align*}
yielding the expression for $D_{\ph} \Vop T$. 

By~\eqref{eq:evolution-nabla}, the variation $\frac{\partial}{\partial t} \nabla$ of the connection introduces terms which are first derivatives of $h$ or $X$. Thus, in the computations of the variations of the second-order differential invariants, such terms contribute to $\lot$. With this understood, the expression for the variation of $(\cL_{\Vop T} g)_{jk} = \nab{j} (\Vop T)_k + \nab{k} (\Vop T)_j$ then follows immediately from that of $\Vop T$.

Recall that from~\eqref{eq:F-from-T}, we have
$$ F_{jk} = 2 \nab{p} T_{qj} \ph_{pqk} + 2 \nab{p} T_{qk} \ph_{pqj} - 2 T_{pa} T_{qb} \ph_{pqj} \ph_{abk}, $$
from which we compute
\begin{align} \nonumber
\frac{\partial}{\partial t} F_{jk} & = 2 \nab{p} (\nab{a} h_{bq} \ph_{abj} + \nab{q} X_j + \lot) \ph_{pqk} \\ \nonumber
& \qquad {} + 2 \nab{p} (\nab{a} h_{bq} \ph_{abk} + \nab{q} X_k + \lot) \ph_{pqj} + \lot \\ \nonumber 
& = 2 \nab{p} \nab{a} h_{bq} \ph_{abj} \ph_{pqk} + 2 \nab{p} \nab{a} h_{bq} \ph_{abk} \ph_{pqj} \\ \label{eq:F-symbol-temp}
& \qquad{} + 2 \nab{p} \nab{q} X_j \ph_{pqk} + 2 \nab{p} \nab{q} X_k \ph_{pqj} + \lot.
\end{align}
Note that
\begin{align} \nonumber
2 \nab{p} \nab{q} X_j \ph_{pqk} & = \nab{p} \nab{q} X_j \ph_{pqk} + \nab{q} \nab{p} X_j \ph_{qpk} \\ \label{eq:F-symbol-temp2}
& = (\nab{p} \nab{q} X_j - \nab{q} \nab{p} X_j) \ph_{pqk} = - R_{pqjm} X_m \ph_{pqk}.
\end{align}
In the second term of~\eqref{eq:F-symbol-temp}, we swap the roles of $p,a$ and $q,b$ and use the symmetry of $h$. In the third and fourth terms above, we use~\eqref{eq:F-symbol-temp2}. The result is the expression for $D_{\ph} F$.

Finally, we compute the variations of $\Div T$ and $\Div T^t$. Proceeding as before, we have
\begin{align*}
\frac{\partial}{\partial t} \Div T_k & = \frac{\partial}{\partial t} (g^{ij} \nab{i} T_{jk}) = g^{ij} \nab{i} \Big( \frac{\partial T_{jk}}{\partial t} \Big) + \lot \\
& = g^{ij} \nab{i} (\nab{a} h_{bj} \ph_{abk} + \nab{j} X_k + \lot) + \lot \\
& = \nab{i} \nab{a} h_{bi} \ph_{abk} + \nab{i} \nab{i} X_k + \lot.
\end{align*}
The second term above is $\Delta X_k$, and by the Ricci identity the first term is $(\nab{a} \nab{i} h_{bi} + \lot) \ph_{abk} = \nab{a} (\Div h)_b \ph_{abk} + \lot$, yielding the expression for $D_{\ph} \Div T$.

Similarly we have
\begin{align*}
\frac{\partial}{\partial t} \Div T^t_k & = \frac{\partial}{\partial t}(g^{ij} \nab{i} T_{kj}) = g^{ij} \nab{i} \Big( \frac{\partial T_{kj}}{\partial t} \Big) + \lot \\
& = g^{ij} \nab{i} (\nab{a} h_{bk} \ph_{abj} + \nab{k} X_j + \lot) + \lot \\
& = \nab{i} \nab{a} h_{bk} \ph_{iab} + \nab{i} \nab{k} X_i + \lot.
\end{align*}
The first term above is purely lower order, because
\begin{align*}
\nab{i} \nab{a} h_{bk} \ph_{iab} & = \tfrac{1}{2} \nab{i} \nab{a} h_{bk} \ph_{iab} + \tfrac{1}{2} \nab{a} \nab{i} h_{bk} \ph_{aib} \\
& = \tfrac{1}{2} (\nab{i} \nab{a} h_{bk} - \nab{a} \nab{i} h_{bk}) \ph_{iab} \\
& = - \tfrac{1}{2} (R_{iabm} h_{mk} + R_{iakm} h_{bm}) \ph_{iab} = \lot.
\end{align*}
Applying the Ricci identity to the second term gives $\nab{i} \nab{k} X_i = \nab{k} \nab{i} X_i + \lot = \nab{k} (\Div X) + \lot$, yielding the expression for $D_{\ph} \Div T^t$.
\end{proof}

Using Proposition~\ref{prop:linearizations}, we easily compute the principal symbol of a differential operator associated to a $\G$-structure, using the usual procedure of replacing $\nab{i}$ by $\xi_i$ in the linearization.
 
\begin{prop} \label{prop:symbols}
Consider a variation $\frac{\partial}{\partial t} \ph = h \diamond \ph + X \hk \ps$ of a $\G$-structure $\ph$. For any nonzero $\xi \in T^*_x M$, we have the following principal symbols of first-order nonlinear differential operators:
\begin{align}
\sigma_{\xi} (D_{\ph} T) (h,X)_{jk} & = \xi_a h_{bj} \ph_{abk} + \xi_j X_k, \label{eq:psymbT} \\
\sigma_{\xi} (D_{\ph} T_{\symm}) (h,X)_{jk} & = \tfrac{1}{2} \big[ (\xi_a h_{bj} \ph_{abk} + \xi_a h_{bk} \ph_{abj}) + (\xi_j X_k + \xi_k X_j) \big], \label{eq:psymbTsym} \\
\sigma_{\xi} (D_{\ph} T_{\skew}) (h,X)_{jk} & = \tfrac{1}{2} \big[ (\xi_a h_{bj} \ph_{abk} - \xi_a h_{bk} \ph_{abj}) + (\xi_j X_k - \xi_k X_j) \big], \label{eq:psymbTskew} \\
\sigma_{\xi} (D_{\ph} \Vop T) (h,X)_k & = \xi_k \tr h - \xi_a h_{ak} + \xi_a X_b \ph_{abk}. \label{eq:psymbVT}
\end{align}
Moreover, we have the following principal symbols of second-order nonlinear differential operators:
\begin{align}
\sigma_{\xi} (D_{\ph} \cL_{\Vop T} g)_{jk} & = 2 \xi_j \xi_k \tr h - \xi_j \xi_a h_{ak} - \xi_k \xi_a h_{aj} + \xi_j \xi_b X_c \ph_{bck} + \xi_k \xi_b X_c \ph_{bcj}, \label{eq:psymbLieVTg} \\
\sigma_{\xi} (D_{\ph} F) (h,X)_{jk} & = 4 \xi_a \ph_{abj} h_{bq} \xi_p \ph_{pqk}, \label{eq:psymbF} \\
\sigma_{\xi} (D_{\ph} \Div T) (h,X)_k & = \xi_a \xi_m h_{mb} \ph_{abk} +|\xi|^2 X_k, \label{eq:psymbdivT} \\
\sigma_{\xi} (D_{\ph} \Div T^t) (h,X)_k & = \xi_k \langle \xi, X \rangle, \label{eq:psymbdivTt} \\
\sigma_{\xi} (D_{\ph} \tRc) (h,X)_{jk} & = - |\xi|^2 h_{jk} + (\xi_j \xi_a h_{ak} + \xi_k \xi_a h_{aj}) - \xi_j \xi_k \tr h, \label{eq:psymbricci} \\
\sigma_{\xi} (D_{\ph} R) (h,X) & = -2 |\xi|^2 \tr h + 2 h(\xi, \xi). \label{eq:psymbR}
\end{align}
\end{prop}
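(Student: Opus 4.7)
The proof is largely mechanical given Proposition~\ref{prop:linearizations}, since by Definition~\ref{def:ellipticity} the principal symbol of a differential operator is obtained by replacing each occurrence of the top-order covariant derivative $\nab{i}$ by the component $\xi_i$. My plan is to dispatch the first-order and mixed cases directly from Proposition~\ref{prop:linearizations}, then handle the two classical Riemannian operators $\tRc$ and $R$ using the standard Lichnerowicz-type identities combined with the fact that $\delt g = 2h$ established in Lemma~\ref{lemma:evolution-1}.

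For equations~\eqref{eq:psymbT}--\eqref{eq:psymbVT}, I would quote the linearizations of $T$, $T_{\symm}$, $T_{\skew}$, and $\Vop T$ from Proposition~\ref{prop:linearizations} and substitute $\nab{i} \leadsto \xi_i$ in the top-order terms; the symmetric/skew parts follow immediately from that of $T$, while the $\Vop T$ symbol picks up the curl-type piece $\xi_a X_b \ph_{abk}$ coming from the $(\curl X)_k = \nab{a} X_b \ph_{abk}$ term in Definition~\ref{defn:curl}. Equations~\eqref{eq:psymbLieVTg} and~\eqref{eq:psymbdivTt} are then obtained by the same substitution from the corresponding linearizations; for~\eqref{eq:psymbLieVTg} I would just symmetrize the expression for $\sigma_\xi(D_\ph \Vop T)$ multiplied by $\xi_j$ and $\xi_k$. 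Equations~\eqref{eq:psymbdivT} and~\eqref{eq:psymbF} follow by reading off the second-order terms of $D_\ph \Div T$ and $D_\ph F$ given in Proposition~\ref{prop:linearizations}; importantly, there is no $X$-contribution to the symbol of $F$ because the terms $\nab{p}\nab{q} X_j \ph_{pqk}$ and $\nab{p}\nab{q} X_k \ph_{pqj}$ in the linearization are shown (via the Ricci identity) to reduce to curvature times $X$, which is lower order.

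For the two curvature symbols~\eqref{eq:psymbricci} and~\eqref{eq:psymbR}, I would invoke the classical variation formula for the Ricci tensor under $\delta g = v$:
\[
(\delta_v \tRc)_{jk} = -\tfrac{1}{2}\nab{p}\nab{p}v_{jk} + \tfrac{1}{2}\nab{j}\nab{p} v_{pk} + \tfrac{1}{2}\nab{k}\nab{p} v_{pj} - \tfrac{1}{2} \nab{j}\nab{k}(\tr v) + \lot,
\]
substitute $v = 2h$ as dictated by~\eqref{eq:evolution-1}, and pass to the symbol via $\nab{i} \leadsto \xi_i$ to obtain~\eqref{eq:psymbricci}. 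For the scalar curvature, I would use $R = g^{jk} R_{jk}$ and thus $\delta R = (\delta g^{jk})R_{jk} + g^{jk} \delta R_{jk} = -2 \langle h, \tRc\rangle + \tr(\delta \tRc)$, where the first term is $\lot$; tracing the formula for $\delta \tRc$ above yields $\delta R = -2\Delta(\tr h) + 2\,\nab{j}\nab{p} h_{pj} + \lot$, whose symbol is $-2|\xi|^2 \tr h + 2 h(\xi,\xi)$.

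The only place where care is really needed is in handling $D_\ph F$ and verifying that the Weitzenb\"ock-type rearrangement of the second term in the bracket from Proposition~\ref{prop:linearizations} genuinely contributes on the same footing as the first: since the principal symbol is insensitive to the order of $\nab{p}$ and $\nab{a}$ (both get replaced by $\xi_p$ and $\xi_a$), the factor of $2$ in~\eqref{eq:psymbF} emerges from the sum $\nab{p}\nab{a} + \nab{a}\nab{p}$, giving the stated $4 \xi_a\ph_{abj} h_{bq} \xi_p \ph_{pqk}$. The mild subtlety of keeping track of the $X$-terms in $F$ (and ensuring they are absorbed into $\lot$ via the Ricci identity) is the main point where a careless reader might lose a symbol term; beyond that, the proof is a straightforward bookkeeping exercise applying Proposition~\ref{prop:linearizations} and the classical Riemannian linearization formulas.
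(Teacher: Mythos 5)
The proposal is correct and follows the same route as the paper: equations~\eqref{eq:psymbT}--\eqref{eq:psymbdivTt} are read off from the linearizations in Proposition~\ref{prop:linearizations} by the substitution $\nab{i} \leadsto \xi_i$, and the two curvature symbols~\eqref{eq:psymbricci} and~\eqref{eq:psymbR} come from the classical Riemannian linearization formulas (cited to Chow--Knopf in the paper) with the factor of $2$ from $\delt g = 2h$. Your expansion of the Ricci and scalar curvature computations and your note on why the $X$-contributions to $\sigma_\xi(D_\ph F)$ are absorbed into lower order terms are both accurate and simply spell out what the paper leaves as a citation.
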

\begin{proof}
The first eight symbols are immediate from Proposition~\ref{prop:linearizations}. The symbols for the Ricci curvature and scalar curvature are standard, and can be found, for example, in Chow--Knopf~\cite[Section 2.1]{Chow-Knopf}. Note that we have an extra factor of $2$ in these because $\frac{\partial}{\partial t} g_{ij} = 2 h_{ij}$ by~\eqref{eq:evolution-1}.
\end{proof}

We also need the following related result. Define the map
$$ \delta^* \colon \vf \to \Omega^3, \qquad \delta^* W = \cL_W \ph. $$
From~\eqref{eq:Lie-derivative-alternate} we have
\begin{equation} \label{eq:delta-star}
\delta^* W = \cL_W \ph = \tfrac{1}{2} (\cL_W g) \diamond \ph + (- \tfrac{1}{2} \curl W + T^t W) \hk \ps.
\end{equation}

\begin{prop} \label{prop:symbol-Lie-derivative}
Let $\delta^* \colon \vf \to \Omega^3$ be as in~\eqref{eq:delta-star}. For any nonzero $\xi \in T^*_x M$, we have
\begin{equation} \label{eq:LieSymbol}
\begin{aligned}
[\pi_{1+27} \circ \sigma_{\xi} (\delta^*) (W)]_{jk} & = \tfrac{1}{2} (\xi_j W_k + \xi_k W_j), \\
[\pi_7 \circ \sigma_{\xi} (\delta^*) (W)]_k& = - \tfrac{1}{2} \xi_p W_q \ph_{pqk}, 
\end{aligned}
\end{equation}
and $\sigma_{\xi}(\delta^*) \colon T^*_x M \to \Lambda^3 (T^*_x M)$ is injective.
\end{prop}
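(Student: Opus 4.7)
The proof proceeds directly from the formula~\eqref{eq:delta-star}, which already separates $\delta^* W$ into its $\pi_{1+27}$ and $\pi_7$ components (the latter via Corollary~\ref{cor:sevenreps}, which identifies $X \hk \ps$ with its vector-field parameter). The plan is to read off the symbol term-by-term and then verify injectivity by restricting attention to the symmetric part alone.

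First I would identify which terms in~\eqref{eq:delta-star} are first order in $W$ and which are zero order. The $(\cL_W g) \diamond \ph$ piece is first order in $W$, with $(\cL_W g)_{jk} = \nab{j} W_k + \nab{k} W_j$, so its symbol is obtained by the standard substitution $\nab{i} \mapsto \xi_i$. The $\curl W$ piece is first order by~\eqref{eq:defn-curl}, so its symbol is $\xi_i W_j \ph_{ijk}$. The term $T^t W$ depends on $W$ algebraically (no derivatives), so it contributes only lower order terms and is killed by the symbol. Combining these observations immediately yields
\begin{align*}
[\pi_{1+27} \circ \sigma_{\xi}(\delta^*)(W)]_{jk} & = \tfrac{1}{2}(\xi_j W_k + \xi_k W_j), \\
[\pi_7 \circ \sigma_{\xi}(\delta^*)(W)]_k & = -\tfrac{1}{2} \xi_p W_q \ph_{pqk},
\end{align*}
as claimed.

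For injectivity, suppose $\sigma_{\xi}(\delta^*)(W) = 0$ for some $\xi \neq 0$ and $W \in T^*_x M$. It suffices to use only the vanishing of the $\pi_{1+27}$ component, which gives $\xi_j W_k + \xi_k W_j = 0$ for all $j, k$. Contracting this equation with $\xi_j$ (that is, with $g^{ja} \xi_a$) yields $|\xi|^2 W_k + \xi_k \langle \xi, W \rangle = 0$. Taking the inner product with $W$ and rearranging gives $|\xi|^2 |W|^2 + \langle \xi, W \rangle^2 = 0$, and since both terms are nonnegative and $|\xi|^2 > 0$, we conclude $W = 0$. Thus $\sigma_{\xi}(\delta^*)$ is injective. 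This is just the well-known ellipticity of the Killing operator on a Riemannian manifold; no serious obstacle arises, and the $\pi_7$ part is not even needed for injectivity (though it does contain geometric information used later in the deTurck-style arguments of Section~\ref{sec:symbols-short-time}).
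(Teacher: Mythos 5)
Your proof is correct and takes essentially the same approach as the paper: read the symbol off from~\eqref{eq:delta-star} (noting that $T^t W$ is zero order in $W$ and hence doesn't contribute), then use only the vanishing of the symmetric part $\xi_j W_k + \xi_k W_j = 0$ to derive $|\xi|^2 |W|^2 + \langle \xi, W \rangle^2 = 0$ and conclude $W = 0$. The paper obtains the same identity in one step by multiplying $\xi_j W_k + \xi_k W_j = 0$ by $\xi_j W_k$ and summing, whereas you contract with $\xi_j$ first and then pair with $W$; this is a cosmetic difference only.
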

\begin{proof}
The expressions in~\eqref{eq:LieSymbol} follow from~\eqref{eq:delta-star}, because $(\cL_W g)_{jk} = \nab{j} W_k + \nab{k} W_j$ and $(\curl W)_k = \nab{p} W_q \ph_{pqk}$. Suppose $W \in \ker \ker \sigma_{\xi}(\delta^*)$. In particular we get $\xi_j W_k + x_k W_j = 0$. Multiplying by $\xi_j W_k$ and summing, we obtain
$$ 0 = |\xi|^2 |W|^2 + \langle W, \xi \rangle^2, $$
which implies that $W = 0$, so $\sigma_{\xi}(\delta^*)$ is injective.
\end{proof}

\subsection{Ellipticity modulo diffeomorphisms} \label{sec:elliptic-mod-diffeos}

Recall from Theorem~\ref{thm:invariants} the classification of independent second-order differential invariants of a $\G$-structure which are $3$-forms. In this section we consider differential operators on $\G$-structures of the general form
\begin{equation} \label{eq:general-operator}
P (\ph) = (a_1 \tRc +a_2 \cL_{\Vop T} g + a_3 F + a_4 R g) \diamond \ph + ( b_1 \Div T + b_2 \Div T^t) \hk \ps,
\end{equation}
for \emph{constants} $a_1, a_2, a_3, a_4, b_1, b_2$, and analyze their principal symbols. Note that $P (\ph)$ is invariant under diffeomorphisms. That is, 
\begin{equation*}
P (\Theta^* \ph) = \Theta^* (P(\ph)),
\end{equation*}
for any diffeomorphism $\Theta \colon M \to M$. It follows that for any vector field $W \in \vf$, we have
\begin{equation} \label{eq:diffeo_invariance}
\cL_W (P(\ph)) = D_{\ph} P(\cL_W \ph).
\end{equation}
Since $W \mapsto \cL_W (P(\ph))$ is a first-order linear differential operator on $W$, whereas 
$$ W \mapsto D_{\ph} P(\cL_W \ph) = (D_{\ph} P \circ \delta^*) (W)$$ is \emph{a priori} a third-order differential operator, it follows that
\begin{equation*}
\sigma_{\xi} (D_{\ph} P \circ \delta^*) = \sigma_{\xi} (D_{\ph} P) \circ \sigma_{\xi} (\delta^*) = 0.
\end{equation*}
We therefore deduce that
\begin{equation} \label{eq:im-delta-star}
\im ( \sigma_{\xi} (\delta^*)) = \{ (\tfrac{1}{2} (\xi \otimes V + V \otimes \xi), - \tfrac{1}{2} \xi \times V) : V \in T^*_x M \} \subseteq \ker \sigma_{\xi} (D_{\ph} P).
\end{equation}

Hence, by the injectivity of $\sigma_{\xi} (\delta^*)$, the principal symbol of $D_{\ph} P$ always has a kernel of dimension \emph{at least} $7$ that is due to diffeomorphism invariance, so $P$ is never an elliptic differential operator. This is a quite typical phenomenon when one considers nonlinear differential operators of a geometric nature.

In this section, we distinguish several cases in which the failure of ellipticity is \emph{only} due to diffeomorphism invariance, in the sense that the kernel of the principal symbol of $D_{\ph} P$ is precisely equal to $\im (\sigma_{\xi} (\delta^*))$. In a similar spirit, we also study the principal symbol of the linearization of $F$ and show that 
$$ \ker \sigma_{\xi} (D_{\ph} F) = \im (\sigma_{\xi} (\delta^*)) + \{(0,X) : X \in T^*_x M \}. $$
That is, the kernel of $\sigma_{\xi} (D_{\ph} F)$ is \emph{due only to diffeomorphism invariance and isometric variations}.

In order to study this failure of ellipticity, we define the following two linear maps:
\begin{equation} \label{eq:B-maps}
\begin{aligned}
B_1 & \colon S^2 (T^*_x M) \to T^*_x M, & B_1 (h)_k & = \xi_a h_{ak} - \tfrac{1}{2} \xi_k \tr h, \\
B_2 & \colon T^*_x M \to T^*_x M, & B_2 (X)_k & = \xi_a X_b \ph_{abk}.
\end{aligned}
\end{equation}
The map $B_1$ is the symbol of the \emph{Bianchi map} $\cS^2 \to \Omega^1$ given by $h \mapsto \nab{i} h_{ik} - \frac{1}{2} \nab{k} (\tr h)$. The twice contracted Riemannian second Bianchi identity~\eqref{eq:riem2B3} implies that the Ricci curvature tensor $\tRc$ is in the kernel of the Bianchi map. The map $B_2$ is the symbol of the curl operator from Definition~\ref{defn:curl}. The reason we need the map $B_2$ is because, as explained in Section~\ref{sec:DeTurck}, when we apply the DeTurck trick in Section~\ref{sec:short-time} we need to add a term of the form $\cL_W \ph$ to the right-hand side of our flow, and the $\curl$ operator shows up in the $\Omega^3_7$ part of $\cL_W \ph$, by equation~\eqref{eq:Lie-derivative-framed}.

Consider an operator of the form~\eqref{eq:general-operator}. We say that $P$ is \emph{Ricci-like} if
\begin{equation} \label{eq:ricci_like}
P(\ph) = (- \tRc + a \cL_{\Vop T} g) \diamond \ph + (b_1 \Div T +b_2 \Div T^t) \hk \ps.
\end{equation}
That is, if $a_3 = a_4 = 0$, $a_1 = -1$, and $a_2 = a \in \R$.

In the case of a Ricci-like operator, it is convenient to define the operator $\tilde B \colon \cS^2 \oplus \Omega^1 \to \Omega^1$ by 
\begin{equation} \label{eq:tildeB_def}
\tilde B(h,X)_k = B_1(h)_k - a \sigma_{\xi} (D_{\ph} \Vop T) (h,X)_k.
\end{equation}

\begin{prop} \label{prop:ricci_like}
Let $P$ be a Ricci-like operator as in~\eqref{eq:ricci_like}. In terms of the maps $B_1, B_2$ of~\eqref{eq:B-maps} and $\tilde B$ of~\eqref{eq:tildeB_def}, the $1+27$ and $7$ parts of the principal symbol of linearization $D_{\ph} P$ can be expressed as
\begin{equation} \label{eq:psymbP2}
\begin{aligned}
\pi_{1+27} \circ \sigma_{\xi} (DP) (h, X)_{jk} & = |\xi|^2 h_{jk} - \xi_j (\tilde B(h,X))_k - \xi_k (\tilde B(h,X))_j, \\
\pi_7 \circ \sigma_{\xi} (DP) (h, X)_l & = (b_1 + b_2) \big[ \, |\xi|^2 X_l + B_2 ( \tilde B(h,X))_l \, \big] \\
& \quad + (ab_1 + (1+a)b_2) (- \xi_a \xi_i h_{ib} \ph_{abl} + \xi_l \langle\xi,X\rangle - |\xi|^2 X_l ).
\end{aligned}
\end{equation}
\end{prop}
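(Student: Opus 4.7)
The plan is to compute $\sigma_{\xi}(D_{\ph} P)$ by linearity from the principal symbols already listed in Proposition~\ref{prop:symbols}, then identify the $\Omega^3_{1+27}$ and $\Omega^3_7$ components. Since $P(\ph) = A(\ph) \diamond \ph + Y(\ph) \hk \ps$ for $A = -\tRc + a \cL_{\Vop T} g \in \cS^2$ and $Y = b_1 \Div T + b_2 \Div T^t \in \vf$, and the variations of $\ph,\ps,g$ appearing in the outer operations $\diamond$ and $\hk$ only produce zeroth order terms, at the symbol level
\begin{equation*}
\sigma_{\xi}(D_{\ph} P)(h,X) = [\sigma_{\xi}(D_{\ph} A)(h,X)] \diamond \ph + [\sigma_{\xi}(D_{\ph} Y)(h,X)] \hk \ps.
\end{equation*}
The first summand lies in $\Omega^3_{1+27}$ since $\sigma_{\xi}(D_{\ph} A)$ is symmetric, and the second lies in $\Omega^3_7$ by construction. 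So the two projections decouple completely.

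For the $1+27$-part, I would add~\eqref{eq:psymbricci} (with a minus) and $a$ times~\eqref{eq:psymbLieVTg} to obtain
\begin{equation*}
\pi_{1+27} \circ \sigma_{\xi}(DP)(h,X)_{jk} = |\xi|^2 h_{jk} - (1+a)(\xi_j \xi_a h_{ak} + \xi_k \xi_a h_{aj}) + (1+2a) \xi_j \xi_k \tr h + a(\xi_j \xi_a X_b \ph_{abk} + \xi_k \xi_a X_b \ph_{abj}).
\end{equation*}
On the other hand, by the definitions~\eqref{eq:B-maps} and~\eqref{eq:tildeB_def} together with the symbol~\eqref{eq:psymbVT} of $\Vop T$, one finds
\begin{equation*}
\tilde B(h,X)_k = (1+a) \xi_a h_{ak} - (\tfrac{1}{2} + a) \xi_k \tr h - a \xi_a X_b \ph_{abk}.
\end{equation*}
Symmetrizing $\xi_j \tilde B(h,X)_k$ in $j,k$ and negating reproduces precisely the three correction terms above, establishing the first equation.

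For the $7$-part, the central computational fact is the contraction identity
\begin{equation*}
\xi_a \xi_c X_d \ph_{cdb} \ph_{abl} = \xi_l \langle \xi, X \rangle - |\xi|^2 X_l,
\end{equation*}
which I would derive by applying the first identity of~\eqref{eq:phph} in the form $\ph_{cdb} \ph_{lab} = g_{cl} g_{da} - g_{ca} g_{dl} - \ps_{cdla}$; the $\ps$-term vanishes because $\xi_a \xi_c$ is symmetric in $a,c$ while $\ps_{cdla}$ is skew. Using this together with the skew-symmetry of $\ph$ to kill the $\xi_a \xi_b \ph_{abl} \tr h$ term, one evaluates
\begin{equation*}
B_2(\tilde B(h,X))_l = (1+a) \xi_a \xi_c h_{cb} \ph_{abl} + a |\xi|^2 X_l - a \xi_l \langle \xi, X \rangle.
\end{equation*}
Substituting this into the claimed right-hand side of the second equation and collecting the coefficients of the three basic tensors $|\xi|^2 X_l$, $\xi_a \xi_m h_{mb} \ph_{abl}$, and $\xi_l \langle \xi, X \rangle$, one checks that the coefficient of each reduces respectively to $b_1$, $b_1$, $b_2$. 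This matches $b_1 \sigma_{\xi}(D\Div T) + b_2 \sigma_{\xi}(D\Div T^t)$ as given by~\eqref{eq:psymbdivT} and~\eqref{eq:psymbdivTt}.

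Conceptually nothing is hard here; the only obstacle is the bookkeeping. What makes the identity nontrivial and reveals the deTurck structure is the way the combination $B_2 \circ \tilde B$ \emph{forces} the coefficients to line up so that the ``symmetric-tensor correction'' $\tilde B(h,X)$ to the Bianchi map reappears cross-contracted with $\ph$ in the $\Omega^3_7$-part. This is exactly the feature that makes the flow amenable to a deTurck trick in Section~\ref{sec:short-time}.
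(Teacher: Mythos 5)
Your proof is correct and follows essentially the same route as the paper: both compute the symbols termwise from Proposition~\ref{prop:symbols}, express the $1+27$ part by regrouping into $\tilde{B}$, and handle the $7$ part via the same pair of contraction identities for $B_2(B_1(h))$ and $B_2(\sigma_\xi(D_\ph\Vop T))$ (your key identity $\xi_a\xi_c X_d\ph_{cdb}\ph_{abl}=\xi_l\langle\xi,X\rangle-|\xi|^2 X_l$ is exactly the cross-term in the paper's~\eqref{eq:B2VT}). The only cosmetic difference is that you verify the claimed $\mb{7}$-formula by expanding $B_2(\tilde B)$ and matching coefficients against $b_1\sigma_\xi(D\Div T)+b_2\sigma_\xi(D\Div T^t)$, whereas the paper derives it constructively; both are valid and the bookkeeping is identical.
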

\begin{proof} 
Using~\eqref{eq:psymbLieVTg},~\eqref{eq:psymbdivT},~\eqref{eq:psymbdivTt}, and~\eqref{eq:psymbricci}, we get that the principal symbol of $D_{\ph} P$ for a Ricci-like operator $P$ satisfies 
\begin{equation*}
\begin{aligned}
\pi_{1+27} \circ \sigma_{\xi} (D_{\ph} P) (h,X) & = |\xi|^2 h_{jk} - (\xi_j \xi_a h_{ak} + \xi_k \xi_a h_{aj}) + \xi_j \xi_k \tr h + 2 a \xi_j \xi_k \tr h \\
& \qquad {} - a \xi_j \xi_a h_{ak} - a \xi_k \xi_a h_{aj} + a \xi_j\xi_b X_c \ph_{bck} + a \xi_k \xi_b X_c \ph_{bcj}, \\
\pi_{7} \circ \sigma_{\xi} (D_{\ph} P) (h,X)_l & = b_1 \xi_a \xi_i h_{iq} \ph_{aql} + b_1 |\xi|^2 X_l + b_2 \xi_l \langle \xi, X \rangle.
\end{aligned}
\end{equation*}
We can use $B_1$ and~\eqref{eq:psymbVT} to express the $1+27$ part of $\sigma_{\xi} (D_{\ph} P)$ as follows:
\begin{align*}
\pi_{1+27} \circ \sigma_{\xi} (D_{\ph} P) (h,X)_{jk} & = |\xi|^2 h_{jk} - (\xi_j \xi_a h_{ak} + \xi_k \xi_a h_{aj}) + \xi_j \xi_k \tr h +2 a \xi_j \xi_k \tr h \\
& \qquad {} - a \xi_j \xi_a h_{ak} - a \xi_k \xi_a h_{aj} + a\xi_j \xi_b X_c \ph_{bck} + a\xi_k \xi_b X_c \ph_{bcj} \\
& = |\xi|^2 h_{jk} - \xi_j B_1(h)_k - \xi_k B_1(h)_j + a \xi_j (\xi_k \tr h - \xi_a h_{ak} + \xi_b X_c \ph_{bck}) \\
& \qquad {} + a \xi_k (\xi_j \tr h - \xi_a h_{aj} + \xi_b X_c \ph_{bcj}) \\
& = |\xi|^2 h_{jk} - \xi_j \big( B_1(h)_k - a \sigma_{\xi} (D_{\ph} \Vop T)_k \big) - \xi_k \big( B_1(h)_j - a \sigma_{\xi} (D_{\ph} \Vop T)_j \big).
\end{align*}
Using the definition of $\tilde B$, we obtain the expression for $\pi_{1+27} \circ \sigma_{\xi} (D_{\ph} P) (h,X)_{jk}$.

From~\eqref{eq:psymbVT} we obtain
\begin{equation} \label{eq:B2VT}
\begin{aligned}
B_2 (\sigma_{\xi} (D_{\ph} \Vop T))_k & = \xi_a (\xi_b \tr h - \xi_i h_{ib} + \xi_i X_j \ph_{ijb}) \ph_{kab} \\
& = - \xi_a \xi_i h_{ib} \ph_{abk} + \xi_a \xi_i X_j (g_{ik} g_{ja} - g_{ia} g_{jk} - \ps_{ijka}) \\
& = - \xi_a \xi_i h_{ib} \ph_{abk} + \xi_k \langle \xi, X \rangle - |\xi|^2 X_k.
\end{aligned}
\end{equation}
We also have
\begin{equation} \label{eq:B2B1}
B_2 (B_1(h))_k = \xi_a (\xi_i h_{ib} - \tfrac{1}{2} \xi_b \tr h) \ph_{abk} = \xi_a \xi_i h_{ib} \ph_{abk}.
\end{equation}

Using~\eqref{eq:B2VT} and~\eqref{eq:B2B1}, the $7$ part of the symbol of $D_{\ph} P$ becomes
\begin{align*}
\pi_7 \circ \sigma_{\xi} (D_{\ph} P) (h,X)_l & = b_1 \xi_a \xi_i h_{iq} \ph_{aql} + b_1 |\xi|^2 X_l + b_2 \xi_l \langle \xi, X \rangle \\
& = b_1 \xi_a \xi_i h_{iq} \ph_{aql} + b_1 |\xi|^2 X_l + b_2 (B_2 (\sigma_{\xi} (D_{\ph} \Vop T))_l + \xi_a \xi_i h_{ib} \ph_{abl} + |\xi|^2 X_l ) \\
& = (b_1 + b_2) |\xi|^2 X_l + (b_1 + b_2) B_2 ( B_1(h) )_l + b_2 B_2 (\sigma_{\xi} (D_{\ph} \Vop T))_l.
\end{align*}
Substituting~\eqref{eq:tildeB_def} and~\eqref{eq:B2VT}, we obtain the expression for $\pi_7 \circ \sigma_{\xi} (D_{\ph} P) (h,X)_{jk}$.
\end{proof}

\begin{rmk} \label{rmk:ricci_like-interesting}
The most interesting case of Proposition~\ref{prop:ricci_like} occurs when $a b_1 + (1+a) b_2 = 0$, because in this case, the principal symbol of the operator $D_{\ph} P$ takes the simple form
\begin{equation}
\begin{aligned}
\pi_{1+27} \circ \sigma_{\xi} (D_{\ph} P) (h,X)_{jk} & = |\xi|^2 h_{jk} - \xi_j \tilde B(h,X)_k - \xi_k \tilde B(h,X)_j, \\
\pi_7 \circ \sigma_{\xi} (D_{\ph} P) (h,X)_l & = (b_1 + b_2) \big[ \, |\xi|^2 X_l +B_2 ( \tilde B(h,X) )_l \, \big],
\end{aligned}
\end{equation}
where the operator $\tilde B$ plays a role similar to the role of the Bianchi operator $B_1$ in the analysis of the principal symbol of the Ricci tensor, for instance in the Ricci flow.
\end{rmk}

We now consider the operator $\tilde B$ and its adjoint in detail. From the definition~\eqref{eq:tildeB_def} of the map $\tilde B$, using equations~\eqref{eq:B-maps} and~\eqref{eq:psymbVT} we obtain
\begin{align*}
\tilde B(h,X)_k & = B_1(h)_k - a \sigma_{\xi} (D_{\ph} \Vop T) (h,X)_k \\
& = \xi_a h_{ak} - \tfrac{1}{2} \xi_k \tr h -a(\xi_k \tr h - \xi_i h_{ik} + \xi_i X_j \ph_{ijk}) \\
& = (1+a) \xi_a h_{ak} - (a+ \tfrac{1}{2}) \xi_k \tr h -a \xi_a X_b \ph_{abk}.
\end{align*}
To determine the adjoint $\tilde B^*$ of $\tilde B$, we use the above to compute
\begin{align*}
\langle \tilde B(h,X), Y \rangle & = [(1+a) \xi_a h_{ak} - (a + \tfrac{1}{2}) \xi_k \tr h -a \xi_a X_b \ph_{abk}] Y_k \\
& = (1+a) h_{ak} \tfrac{1}{2} (\xi_a Y_k + \xi_k Y_a) - (a+ \tfrac{1}{2}) \langle h,g\rangle \langle \xi,Y\rangle +a \xi_a Y_k \ph_{akb} X_b \\
& =h_{ak} \big( \tfrac{1}{2} (1+a) (\xi_a Y_k + \xi_k Y_a) - (a + \tfrac{1}{2}) \langle \xi,Y\rangle g_{ak} \big) + a\xi_a Y_k \ph_{akb} X_b \\
& = \langle (h,X), \tilde B^*(Y) \rangle.
\end{align*}
Thus $\tilde B^*(Y) = (\tilde B^*_1(Y),\tilde B^*_2(Y))$ where
\begin{align*}
\tilde B^*_1 (Y)_{ak} & = \tfrac{1}{2} (1+a) (\xi_a Y_k + \xi_k Y_a) - (a + \tfrac{1}{2}) \langle \xi, Y \rangle g_{ak}, \\
\tilde B^*_2 (Y)_b & = a \xi_a Y_k \ph_{akb}.
\end{align*}

\begin{lemma} \label{lemma:adjoint-tildeB-injective}
The map $\tilde B^* \colon \Omega^1 \to \cS^2 \oplus \Omega^1$ is injective. Consequently, $\dim (\ker \tilde B) = 28$.
\end{lemma}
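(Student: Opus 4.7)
The plan is to prove injectivity of $\tilde B^*$ by analyzing the two components $\tilde B^*_1(Y)$ and $\tilde B^*_2(Y)$ separately, with a mild case split at $a = 0$. Once injectivity is established, the dimension count $\dim(\ker\tilde B) = 28$ will follow immediately from rank-nullity: since $\tilde B^* \colon \Omega^1 \to \cS^2 \oplus \Omega^1$ would then be injective, $\operatorname{rank}(\tilde B^*) = 7$, hence $\operatorname{rank}(\tilde B) = 7$, so $\dim\ker\tilde B = (28 + 7) - 7 = 28$.

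For the injectivity itself, suppose $Y \in T^*_x M$ satisfies $\tilde B^*(Y) = 0$. First I would treat the generic case $a \neq 0$. Here
\[
\tilde B^*_2(Y)_b = a\,\xi_a Y_k\,\ph_{akb} = 0
\]
asserts that the $\ph$-cross product $\xi \times Y$ vanishes. Using the fundamental identity~\eqref{eq:phph}, and the fact that the totally skew tensor $\ps$ contracts trivially against the symmetric factor $\xi_i Y_j \xi_a Y_b$, one obtains $|\xi\times Y|^2 = |\xi|^2|Y|^2 - \langle\xi,Y\rangle^2$. Equality in Cauchy-Schwarz then forces $Y = \mu\xi$ for some $\mu \in \R$. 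Substituting $Y = \mu\xi$ into $\tilde B^*_1(Y) = 0$ yields
\[
(1+a)\mu\,\xi_a\xi_k - (a+\tfrac{1}{2})\mu\,|\xi|^2 g_{ak} = 0,
\]
and, choosing an orthonormal frame with $\xi = |\xi|e_1$ and evaluating at $(a,k) = (1,1)$, the $a$-dependence cancels completely, leaving $\tfrac{1}{2}\mu|\xi|^2 = 0$, whence $\mu = 0$ and $Y = 0$.

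The degenerate case $a = 0$ requires a separate treatment because $\tilde B^*_2$ is then identically zero and contributes no information. All information must be extracted from $\tilde B^*_1(Y)_{ak} = \tfrac{1}{2}(\xi_a Y_k + \xi_k Y_a) - \tfrac{1}{2}\langle\xi,Y\rangle g_{ak} = 0$, and pairing this tensor equation with $\xi_a Y_k$ and summing reduces, after cancellation of the $\langle\xi,Y\rangle^2$ terms, to $\tfrac{1}{2}|\xi|^2|Y|^2 = 0$, which forces $Y = 0$ since $\xi \neq 0$.

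The main obstacle is that no single natural contraction of $\tilde B^*$ kills $Y$ uniformly in $a$: pairing $\tilde B^*_1(Y)$ with $\xi_a Y_k$ directly leads to $(1+a)|\xi|^2|Y|^2 = a\langle\xi,Y\rangle^2$, which is consistent with $Y \neq 0$ once $a < -1$. The route above --- using $\tilde B^*_2$ first to force $Y = \mu\xi$, then extracting the $a$-independent diagonal entry of $\tilde B^*_1$ --- sidesteps this obstruction and handles all $a \neq 0$ uniformly, with the boundary value $a = 0$ settled by the contraction trick above.
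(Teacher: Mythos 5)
Your proof is correct, and in one respect it is actually \emph{more} careful than the paper's own argument. Both proofs share the same skeleton: use $\tilde B^*_2(Y)=0$ to conclude $\xi\times Y=0$, deduce $Y=\mu\xi$ by the Cauchy--Schwarz equality case, substitute into $\tilde B^*_1(Y)=0$, and extract a contradiction unless $\mu=0$. For the last step you extract a single diagonal entry $\tilde B^*_1(Y)_{11}=\tfrac12\mu|\xi|^2$ in a $\xi$-adapted frame, which is a bit cleaner than the paper's computation, which instead expands the full norm $|\tilde B^*_1(Y)|^2 = \bigl(\tfrac14 + 6(a+\tfrac12)^2\bigr)\lambda^2|\xi|^4$ and observes it is manifestly positive; both work.

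The substantive difference is your explicit treatment of $a=0$. The paper's proof begins with ``Let $Y\in\ker\tilde B^*$ so that $\tilde B^*_2(Y)=0$. This says $\xi\times Y=0$'' --- but $\tilde B^*_2(Y)_b = a\,\xi_p Y_q\ph_{pqb}$, so when $a=0$ the hypothesis $\tilde B^*_2(Y)=0$ is vacuous and one cannot conclude $\xi\times Y=0$. Since $a=0$ is genuinely within scope (it is used in Proposition~\ref{prop:ricci_like-weak-ellipticity}, Theorem~\ref{thm:main-short-time}, and explicitly in Remark~\ref{rmk:Ricci-isometric-coupled}), this is a real gap in the paper's argument. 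Your direct contraction of $\tilde B^*_1(Y)_{ak}$ against $\xi_a Y_k$ in the $a=0$ case, which gives $\tfrac12|\xi|^2|Y|^2 = 0$ after the $\langle\xi,Y\rangle^2$ terms cancel, closes it correctly. Your remark that this same contraction at general $a$ gives $(1+a)|\xi|^2|Y|^2 = a\langle\xi,Y\rangle^2$ --- which is consistent with $Y\neq 0$ for $a<-1$, a regime allowed by the hypotheses of Theorem~\ref{thm:main-short-time} --- is also right and correctly motivates why the $\tilde B^*_2$ information is needed first when $a\neq 0$.

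One small stylistic note: the equation $|\xi\times Y|^2 = |\xi|^2|Y|^2 - \langle\xi,Y\rangle^2$ follows because the totally antisymmetric $\ps$ kills the rank-$\le 2$ tensor $\xi_i Y_j\xi_a Y_b$ under full contraction; calling that factor ``symmetric'' is loose language (it is not symmetric), but the vanishing you invoke is correct. Your dimension count via rank--nullity is the same as the paper's.
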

\begin{proof}
Let $Y\in \ker \tilde B^*$ so that $\tilde B^*_2 (Y) =0$. This says $\xi \times Y = 0$, so $Y = \lambda \xi$ for some $\lambda \in \R$. Substituting this into $B^*_1 (Y) = 0$ yields
\begin{align*}
0 = \tilde B_1^*(Y)_{ak} = \lambda(1+a) \xi_a \xi_k - (a+ \tfrac{1}{2}) \lambda |\xi|^2 g_{ak}.
\end{align*}
Taking the norm of both sides above, we get
\begin{align*}
0 & = | \tilde B_1^*(Y) |^2 \\
& = ( \lambda (1+a) \xi_a \xi_k - (a + \tfrac{1}{2}) \lambda |\xi|^2 g_{ak}) (\lambda (1+a) \xi_a \xi_k - (a + \tfrac{1}{2}) \lambda |\xi|^2 g_{ak} ) \\
& = \lambda^2 (1+a)^2 |\xi|^4 +7 (a + \tfrac{1}{2})^2 \lambda^2 |\xi|^4 - 2 (a + \tfrac{1}{2}) (1+a) \lambda^2 |\xi|^4 \\
& = ( (1+a)^2 + 7 (a + \tfrac{1}{2})^2 - 2 (a + \tfrac{1}{2}) (1+a)) \lambda^2 |\xi|^4 \\
& = ( ( 1+a - a - \tfrac{1}{2})^2 + 6 (a + \tfrac{1}{2})^2 ) \lambda^2 |\xi|^4 \\
& = ( \tfrac{1}{4} + 6 (a + \tfrac{1}{2})^2 ) \lambda^2 |\xi|^4.
\end{align*}
Since $\xi \neq 0$, we get $\lambda = 0$ and thus $Y = 0$.

The injectivity of $\tilde B^*$ implies that $\dim (\im \tilde B^*) =7$, and from the decomposition
$$ \Lambda^3(T^*_x M) \cong S^2 (T^*_x M) \oplus T^*_x M = \ker \tilde B \oplus \im \tilde B^* $$
we deduce that
\begin{equation} \label{eq:dim-ker-tildeB}
\dim (\ker \tilde B) = 35 - 7 = 28
\end{equation}
as claimed.
\end{proof}

The following is our main result on Ricci-like operators.
\begin{prop} \label{prop:ricci_like-weak-ellipticity}
Consider a Ricci-like differential operator $P$, as in~\eqref{eq:ricci_like}, which satisfies
\begin{equation} \label{eq:riccilike_weak_ell}
b_1 + b_2 \neq 0, \qquad b_1 \neq a (b_1 + b_2).
\end{equation}
Then, for any $\G$-structure $\ph$, we have $\ker (\sigma_{\xi} (D_{\ph} P)) = \im (\sigma_{\xi} (\delta^*))$.

Moreover, if $b_1 + b_2=1$ and $a= -b_2$, then for every $(h,X) \in \ker (\tilde B)$, we have 
$$ \sigma_{\xi} (D_{\ph} P) (h,X) = |\xi|^2 (h,X). $$
In particular, in this special case, $\sigma_{\xi} (D_{\ph} P)$ preserves $\ker (\tilde B)$. This choice for $a, b_1, b_2$ corresponds to differential operators of the form
$$ P (\ph) = (- \tRc + a \cL_{\Vop T} g) \diamond \ph + ( (1+a) \Div T - a \Div T^t ) \hk \ps. $$
\end{prop}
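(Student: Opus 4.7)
The forward inclusion $\im(\sigma_\xi(\delta^*)) \subseteq \ker(\sigma_\xi(D_\ph P))$ is immediate from diffeomorphism invariance of $P$ and~\eqref{eq:im-delta-star}, so the main task is to show the reverse inclusion. The plan is to exploit Proposition~\ref{prop:ricci_like}, which presents $\sigma_\xi(D_\ph P)$ as a manifestly symbol-of-Ricci-like quantity built from the operator $\tilde B$. Suppose $(h,X) \in \ker(\sigma_\xi(D_\ph P))$. From the $\pi_{1+27}$ part of~\eqref{eq:psymbP2} we obtain
\begin{equation} \label{eq:proof-step1}
|\xi|^2 h_{jk} = \xi_j \tilde B(h,X)_k + \xi_k \tilde B(h,X)_j.
\end{equation}
Since $\xi \neq 0$, setting $V := \tfrac{2}{|\xi|^2}\tilde B(h,X)$ gives $h = \tfrac{1}{2}(\xi \otimes V + V \otimes \xi)$, so the symmetric part of our pair has exactly the form produced by $\sigma_\xi(\delta^*)(V)$ (cf.\ Proposition~\ref{prop:symbol-Lie-derivative}). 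It remains to show that $X$ matches the $\Omega^2_7$-part of $\sigma_\xi(\delta^*)(V)$, namely $X = -\tfrac{1}{2}\xi \times V = -\tfrac{1}{|\xi|^2}B_2(\tilde B(h,X))$.

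For this I would substitute~\eqref{eq:proof-step1} into the $\pi_7$ expression of~\eqref{eq:psymbP2}. A short calculation using the skew-symmetry $\xi_a\xi_b \ph_{abl} = 0$ shows that with $h$ given by~\eqref{eq:proof-step1},
\begin{equation*}
- \xi_a \xi_i h_{ib} \ph_{abl} = - B_2(\tilde B(h,X))_l.
\end{equation*}
Collecting terms, writing $c := ab_1 + (1+a)b_2$ and $d := b_1 - a(b_1+b_2)$ (so that $c + d = b_1+b_2$), the $\pi_7$ equation reduces to
\begin{equation} \label{eq:proof-step2}
d \bigl[ |\xi|^2 X_l + B_2(\tilde B(h,X))_l \bigr] + c \, \xi_l \langle \xi, X \rangle = 0.
\end{equation}
Taking the inner product of~\eqref{eq:proof-step2} with $\xi$ and using again $\xi_l \xi_a \ph_{abl} = 0$, which forces $\langle \xi, B_2(\tilde B(h,X))\rangle = 0$, yields $(c+d)|\xi|^2 \langle \xi, X\rangle = 0$. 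The hypothesis $b_1+b_2 \neq 0$ gives $c + d \neq 0$, so $\langle \xi, X\rangle = 0$; plugging back into~\eqref{eq:proof-step2} and invoking the hypothesis $b_1 \neq a(b_1+b_2)$, i.e.\ $d \neq 0$, yields $|\xi|^2 X = -B_2(\tilde B(h,X))$, which is precisely $X = -\tfrac{1}{2}\xi \times V$. Hence $(h,X) = \sigma_\xi(\delta^*)(V)$, proving the reverse inclusion.

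For the final assertion, under $b_1+b_2 = 1$ and $a = -b_2$ one checks directly that $c = b_2 + a(b_1+b_2) = 0$ and $d = b_1 + b_2 = 1$. Hence the $\pi_7$ part of~\eqref{eq:psymbP2} becomes simply $|\xi|^2 X_l + B_2(\tilde B(h,X))_l$, while the $\pi_{1+27}$ part remains $|\xi|^2 h_{jk} - \xi_j \tilde B(h,X)_k - \xi_k \tilde B(h,X)_j$. If $(h,X) \in \ker \tilde B$, both expressions collapse to $|\xi|^2 h_{jk}$ and $|\xi|^2 X_l$ respectively, giving $\sigma_\xi(D_\ph P)(h,X) = |\xi|^2 (h,X)$. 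No single step here is especially subtle; the only potential pitfall is keeping the bookkeeping between the hypothesis $b_1 \neq a(b_1+b_2)$ and the derived coefficient $d$ straight, and carefully exploiting the two skew-symmetry cancellations $\xi_a\xi_b \ph_{abl} = 0$ at the right moments.
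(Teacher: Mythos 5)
Your proof is correct, but it takes a genuinely different route from the paper's. The paper proceeds by dimension counting: it shows $\ker\tilde B \cap \ker\sigma_\xi(D_\ph P) = \{0\}$ using the simplified restriction of~\eqref{eq:psymbP2} to $\ker\tilde B$, then invokes the earlier result $\dim\ker\tilde B = 28$ (from Lemma~\ref{lemma:adjoint-tildeB-injective}), so that $\dim\ker\sigma_\xi(D_\ph P) \leq 35 - 28 = 7$; equality with $\im(\sigma_\xi(\delta^*))$ then follows from the inclusion~\eqref{eq:im-delta-star} and $\dim\im(\sigma_\xi(\delta^*)) = 7$. Your argument is constructive: from the $\pi_{1+27}$ equation you read off $h = \tfrac{1}{2}(\xi\otimes V + V\otimes\xi)$ with $V = \tfrac{2}{|\xi|^2}\tilde B(h,X)$, then after the algebraic simplification $-\xi_a\xi_i h_{ib}\ph_{abl} = -B_2(\tilde B(h,X))_l$ (which I checked: contracting, $\xi_i h_{ib}$ picks up a multiple of $\xi_b$ that is annihilated by $\xi_a\ph_{abl}$) the $\pi_7$ equation rearranges with your coefficients $c$, $d$ (satisfying $c+d=b_1+b_2$) to force $\langle\xi,X\rangle = 0$ and then $X = -\tfrac{1}{2}\xi\times V$, exhibiting $(h,X)$ as $\sigma_\xi(\delta^*)(V)$ directly. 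The constructive approach is slightly longer but makes the identification with the Lie-derivative symbol explicit and does not rely on the dimension computation; the paper's is a cleaner two-line finish given that $\dim\ker\tilde B$ was already computed for other purposes. The final assertion (the special case $c=0$, $d=1$) you verify directly from~\eqref{eq:psymbP2}, matching the paper exactly.
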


\begin{proof}
To prove these assertions we first observe that~\eqref{eq:psymbP2} implies that
\begin{equation} \label{eq:symbDP_ker_1}
\begin{aligned}
& \qquad \rest{\sigma_{\xi} (DP)}{\ker\tilde B} (h,X)
\\ & = (|\xi|^2 h_{jk}, (b_1 + b_2) |\xi|^2 X_l + (a b_1 + (1+a) b_2) ( - \xi_a \xi_i h_{ib} \ph_{abl} + \xi_l \langle \xi, X \rangle - |\xi|^2 X_l) ).
\end{aligned}
\end{equation}
We claim that under the assumptions~\eqref{eq:riccilike_weak_ell}, we have
\begin{equation} \label{eq:kerBkersigma}
\ker \tilde B\cap \ker \sigma_{\xi} (D_{\ph} P) = \{0\}.
\end{equation}
To see this, note that by~\eqref{eq:symbDP_ker_1}, if $(h,X) \in \ker \tilde B \cap \ker \sigma_{\xi} (D_{\ph} P)$, then $h=0$ and
\begin{equation}
\begin{aligned}
0 & = (b_1 + b_2) |\xi|^2 X_l + ( ab_1 + (1+a) b_2) (\xi_l \langle \xi, X \rangle - |\xi|^2 X_l) \\
& = (b_1 + b_2 - a b_1 - (1+a) b_2) |\xi|^2 X_l + ( ab_1 + (1+a) b_2) \xi_l \langle \xi, X \rangle \\
& = ((1-a) b_1 - a b_2 ) |\xi|^2 X_l + (a b_1 + (1+a) b_2) \xi_l \langle \xi, X \rangle.
\end{aligned}
\end{equation}
Therefore, decomposing $X = X^{\perp} + \frac{\langle \xi, X\rangle}{|\xi|^2} \xi$, we have
\begin{align*}
0& = ((1-a)b_1 - a b_2 ) |\xi|^2 X_l^{\perp} + ((1-a)b_1 - a b_2+ab_1 + (1+a)b_2 ) \xi_l \langle\xi, X\rangle\\
& = (b_1 - a (b_1 + b_2 )) |\xi|^2 X_l^{\perp} + (b_1 + b_2) \xi_l \langle\xi,X\rangle,
\end{align*}
which implies that $X=0$, provided that~\eqref{eq:riccilike_weak_ell} holds. 

Since $\dim (\ker \tilde B)= 2 8$ by~\eqref{eq:dim-ker-tildeB}, and $\dim\Lambda^3(T^*_x M) =35$, equation~\eqref{eq:kerBkersigma} implies that 
$$ \dim (\ker \sigma_{\xi} (D_{\ph} P) ) \leq 35 - 28 =7. $$

On the other hand, by~\eqref{eq:im-delta-star}, we already know that $\ker (\sigma_{\xi} (D_{\ph} P))$ contains the $7$-dimensional subspace
$$\im (\sigma_{\xi} (\delta^*)) = \{( \xi \otimes V + V \otimes \xi, - \xi \times V) : V\in T^*_x M \}, $$
hence $\dim (\ker \sigma_{\xi} (D_{\ph} P)) \geq 7$, which proves that
$$ \ker (\sigma_{\xi} (D_{\ph} P)) = \im (\sigma_{\xi} (\delta^*)). $$

Finally, if $b_1 + b_2 = 1$ and $a = -b_2$, then $b_1 = 1 + a$ and $a b_1 + (1+a) b_2 = 0$. In particular,~\eqref{eq:riccilike_weak_ell} holds, and from~\eqref{eq:symbDP_ker_1} we find that for every $(h,X) \in\ker\tilde B$, we have
$$ \sigma_{\xi} (D_{\ph} P) (h, X) = (|\xi|^2 h, |\xi|^2 X). $$
Therefore, in this case, $\sigma_{\xi} (D_{\ph} P)$ preserves the subspace $\ker\tilde B$.
\end{proof}

We now analyze the principal symbol of the linearization $D_{\ph} F$ of the symmetric $2$-tensor $F$ of~\eqref{eq:F-defn}.

\begin{prop} \label{prop:kernel_symb_F}
For any $x \in M$ and nonzero $\xi \in T^*_x M$, we have
\begin{equation} \label{eq:kernel_symb_F}
\ker (\sigma_{\xi} (D_{\ph} F)) = \{( \xi \otimes V + V \otimes \xi, X) : V, X \in T^*_xM \}.
\end{equation}
Moreover, the operator $\sigma_{\xi} (D_{\ph} F \oplus 0_{T^*_x M})$ preserves the subspace $\{ (h,0) : h(\xi) = 0 \}$, and we have an orthogonal decomposition
\begin{equation} \label{eq:symbDF_decomp}
S^2 (T^*_x M) \oplus T^*_x M = \ker (\sigma_{\xi} (D_{\ph} F)) \oplus E^+ \oplus E^-
\end{equation}
where $E_{\pm} \subset \{(h,0) : h(\xi) =0\}$ and
\begin{align*}
E^+ & = \ker( 4| \xi|^2 - \sigma_{\xi} (D_{\ph} F \oplus 0_{T^*_x M}) ), \\
E^- & = \ker ( 4 |\xi|^2 + \sigma_{\xi} (D_{\ph} F \oplus 0_{T^*_x M}) ),
\end{align*}
so that
$$ \rest{\sigma_{\xi} (D_{\ph} F)}{E_{\pm}} (h,0) = \pm 4 |\xi|^2 h. $$
In particular, for any $(h,X) \in S^ 2(T^*_x M) \oplus T^*_x M$, we have
\begin{equation} \label{eq:symbDFineq}
| \langle \sigma_{\xi} (D_{\ph} F \oplus 0_{T^*_x M}) (h,X), (h,X) \rangle | \leq 4 |\xi|^2 | (h,X) |^2.
\end{equation}
\end{prop}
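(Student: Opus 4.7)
The plan is to rewrite the symbol~\eqref{eq:psymbF} in a more transparent operator form, then reduce the entire analysis to a single algebraic identity. Define the skew-symmetric matrix $\alpha \in \Lambda^2 (T_x^*M)$ by $\alpha_{ij} = \xi_a \ph_{aij}$, so $\alpha = \xi \hk \ph \in \Omega^2_7$. Regarding $h \in S^2(T^*_xM)$ as a self-adjoint matrix and using matrix multiplication, equation~\eqref{eq:psymbF} becomes
\begin{equation*}
\sigma_{\xi}(D_{\ph} F)(h,X) = -4\, \alpha\, h\, \alpha,
\end{equation*}
a formula which is manifestly independent of $X$. Hence $\{0\} \oplus T^*_xM$ is trivially contained in the kernel of $\sigma_\xi(D_\ph F \oplus 0_{T^*_xM})$. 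The key computation is to apply the contraction identity~\eqref{eq:phph} (after reducing $\ph_{aij}\ph_{bjk}$ to it by a cyclic permutation) to obtain
\begin{equation*}
(\alpha^2)_{ik} = \xi_a \xi_b \ph_{aij}\ph_{bjk} = -|\xi|^2 g_{ik} + \xi_i \xi_k,
\end{equation*}
where the $\ps$-term drops out because $\xi_a\xi_b$ is symmetric and $\ps_{aibk}$ is skew in $a,b$. In particular $\alpha\xi = 0$.

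Next, I would introduce the orthogonal decomposition
\begin{equation*}
S^2(T^*_xM) = K \operp K^{\perp}, \qquad K := \{\xi \otimes w + w \otimes \xi : w \in T^*_xM\}, \qquad K^{\perp} = \{h : h\xi = 0\},
\end{equation*}
with $\dim K = 7$ and $\dim K^\perp = 21$. Orthogonality is immediate from $\langle \xi \otimes w + w \otimes \xi, h\rangle = 2\langle h\xi, w\rangle$. Since $\alpha\xi = 0$ (and also $\xi^t\alpha = 0$), elements of $K$ are annihilated on either side by $\alpha$, so $K \subseteq \ker \Phi$, where $\Phi(h) := -4\alpha h \alpha$. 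Moreover $\Phi$ preserves $K^{\perp}$ because $\Phi(h)\xi = -4\alpha h (\alpha\xi) = 0$. On $K^{\perp}$, the identity for $\alpha^2$ simplifies to $\alpha^2 h = -|\xi|^2 h$ (the rank-one term kills any $h$ with $h\xi = 0$), and similarly $h\alpha^2 = -|\xi|^2 h$, hence
\begin{equation*}
\Phi^2(h) = 16\, \alpha^2 h \alpha^2 = 16\,|\xi|^4\, h \qquad \text{for } h \in K^{\perp}.
\end{equation*}

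The remainder is elementary linear algebra. From $\Phi^2 = 16|\xi|^4 \mathrm{Id}$ on $K^{\perp}$ we deduce that $\Phi|_{K^{\perp}}$ has no kernel, so $\ker \Phi = K$, which combined with the first paragraph gives~\eqref{eq:kernel_symb_F}. Self-adjointness of $\Phi$ (immediate from skew-symmetry of $\alpha$ and cyclicity of trace) then yields an $\Phi$-orthogonal eigenspace decomposition $K^{\perp} = E^+ \oplus E^-$ with eigenvalues $\pm 4|\xi|^2$, which are precisely the subspaces defined in the proposition. Combining with the trivial kernel contribution from the $T^*_xM$ factor gives~\eqref{eq:symbDF_decomp}. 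Finally,~\eqref{eq:symbDFineq} follows from spectral decomposition: if $(h,X) = v_0 + v_+ + v_-$ is the orthogonal decomposition, then
\begin{equation*}
|\langle \sigma_\xi(D_\ph F \oplus 0)(h,X),(h,X)\rangle| = 4|\xi|^2\bigl| |v_+|^2 - |v_-|^2 \bigr| \leq 4|\xi|^2 (|v_+|^2 + |v_-|^2) \leq 4|\xi|^2 |(h,X)|^2.
\end{equation*}

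There is no real obstacle: the entire proof is driven by the algebraic identity $\alpha^2 = -|\xi|^2 I + \xi\xi^t$, a reflection of the classical fact that for unit $\xi$ the $\G$ cross product $\xi \times (\cdot)$ restricts to a compatible almost complex structure on $\xi^{\perp} \cong \mathbb{R}^6$. Once this identity is in hand, the kernel description, the $\pm 4|\xi|^2$ eigenspaces, and the norm bound all drop out by inspection.
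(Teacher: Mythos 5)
Your proof is correct, and it takes a genuinely different (and arguably cleaner) route than the paper's. The paper works entirely in indices: it computes the norm $|\sigma_{\xi}(D_{\ph} F)(h,X)|^2 = 16|\xi|^4 |h|^2 - 32 |\xi|^2 |h(\xi)|^2 + 16 (h(\xi,\xi))^2$ directly from the contraction identities, and uses that formula first to show $\ker(\sigma_\xi(D_\ph F)) \cap \ker B = \{0\}$ (giving the kernel description by a dimension count) and then to deduce $\lambda^2 = 16|\xi|^4$ from the eigenvalue equation. You instead package the symbol as the conjugation operator $h \mapsto -4\,\alpha h\alpha$ with $\alpha = \xi \hk \ph$, and derive everything from the single identity $\alpha^2 = -|\xi|^2 \sI + \xi \otimes \xi$, which forces $\Phi^2 = 16|\xi|^4\,\mathrm{Id}$ on $\{h : h\xi = 0\}$. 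That is essentially the observation the paper defers to the geometric remark \emph{after} its proof (the $J$-operator on $\xi^\perp$ satisfying $J^2 = \hat\xi\otimes\hat\xi - \sI$, and the identification $\sigma_\xi(D_\ph F)(h,\cdot) = -4|\xi|^2\,JhJ$ in~\eqref{eq:JhJ-F}); you have promoted that structural fact to the engine of the argument. What your approach buys is brevity and transparency: the kernel computation, the $\pm 4|\xi|^2$ eigenvalues, and the norm bound all fall out at once from $\alpha^2$, with no need for the explicit norm expansion. What the paper's approach buys is an explicit closed-form norm formula (its equation~\eqref{eq:norm_symbol_DF}) which might be of independent use, and a presentation that never leaves the index-contraction calculus used throughout the rest of the section. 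Both establish the same orthogonal decomposition and the same inequality~\eqref{eq:symbDFineq}.
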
 
\begin{proof}
Recall from~\eqref{eq:psymbF} that the symbol of $F$ is given by 
\begin{equation*}
\sigma_{\xi} (D_{\ph} F) (h,X)_{jk}=4\xi_p\xi_q h_{ab} \ph_{paj} \ph_{qbk}.
\end{equation*}
From the computation
\begin{align*}
\langle \sigma_{\xi} (D_{\ph} F \oplus 0_{T^*_x M}) (h,X), (f, Y) \rangle & = 4\xi_p \xi_q h_{ab} \ph_{paj} \ph_{qbk} f_{jk} \\
& = 4\xi_p\xi_q f_{jk} \ph_{pja} \ph_{qkb} h_{ab} \\
& = \langle (h, X), \sigma_{\xi} (D_{\ph} F \oplus 0_{T^*_x M}) (f, Y) \rangle
\end{align*}
we see that $\sigma_{\xi} (D_{\ph} F \oplus 0_{T^*_x M})$ is a self-adjoint linear operator on $S^ 2(T^*_x M) \oplus T^*_x M$. Hence this map is diagonalizable with real eigenvalues, and $S^2 (T^*_x M) \oplus T^*_x M$ decomposes into the orthogonal eigenspaces of $\sigma_{\xi} (D_{\ph} F \oplus 0_{T^*_x M})$. 

Define the operator $B^* \colon T_x^* M \oplus T_x^* M \to S^2 (T^*_x M) \oplus T^*_x M$ by
$$ B^* (V,X) = (\xi \otimes V + V \otimes \xi, X), \quad \text{for $V, X \in T_x^* M$}. $$
The first component of $B^*$ is the principal symbol of the operator $V \mapsto \cL_V g$, so as is expected by the diffeomorphism invariance of the curvature tensor, we should have
\begin{equation} \label{eq:imBkerDF}
\im B^* \subseteq \ker (\sigma_{\xi} (D_{\ph} F) ) = \ker (\sigma_{\xi} (D_{\ph} F \oplus 0_{T^*_x M}) ).
\end{equation}
Indeed, this holds since
$$ 4 \xi_p \xi_q (V_a \xi_b + \xi_a V_b) \ph_{paj} \ph_{qbk} = 0. $$
Observe that the kernel here is \emph{larger} than the space $\im (\sigma_{\xi} (\delta^*))$ of~\eqref{eq:im-delta-star}, as it also contains all the isometric variations of the $\G$-structure. This is expected, because to highest order
$$ F_{jk} = R_{abcd} \ph_{abj} \ph_{cdk} + \lot $$
depends only on the induced Riemannian metric.

From the computation
$$ \langle B^* (V,X), (h,Y) \rangle = (\xi_i V_j + \xi_j V_i) h_{ij} + X_k Y_k = 2 \langle h(\xi), V \rangle + \langle Y, X \rangle $$
we see that the adjoint $B$ of $B^*$ is the map
\begin{equation} \label{eq:adjoint-Bstar}
B (h,Y) = (2 h(\xi), Y).
\end{equation}
Since we expect to show that $\ker (\sigma_{\xi} (D_{\ph} F \oplus 0_{T^*_x M}) ) = \im B^*$, the remaining eigenspaces should span the orthogonal complement of $\im B^*$, namely the kernel of its adjoint $B$, which by~\eqref{eq:adjoint-Bstar} would be
\begin{equation} \label{eq:kerB}
\ker B = \{ (h,0), h(\xi) =0 \}.
\end{equation}
To see that indeed $\ker (\sigma_{\xi} (D_{\ph} F \oplus 0_{T^*_x M})) \cap \ker B = \{ 0 \}$, observe that
\begin{align} \nonumber
|\sigma_{\xi} (D_{\ph} F) (h,X) |^2 & = \langle \sigma_{\xi} (D_{\ph} F) (h, X), \sigma_{\xi} (D_{\ph} F) (h, X) \rangle \\ \nonumber
& =16 \xi_p\xi_q h_{ab} \ph_{paj} \ph_{qbk} \xi_m \xi_n h_{st} \ph_{msj} \ph_{ntk} \\ \nonumber
& =16 \xi_p \xi_q \xi_m \xi_n h_{ab} h_{st} \ph_{paj} \ph_{msj} \ph_{qbk} \ph_{ntk} \\ \nonumber
& =16 \xi_p \xi_q \xi_m \xi_n h_{ab} h_{st} (g_{pm} g_{as} - g_{ps} g_{am} - \ps_{pams}) \ph_{qbk} \ph_{ntk} \\ \nonumber
& =16 (|\xi|^2 \xi_q \xi_n h_{ab} h_{at} - \xi_p h_{pt} \xi_m h_{mb} \xi_q \xi_n) (g_{qn} g_{bt} - g_{qt} g_{bn} - \ps_{qbnt}) \\ \label{eq:norm_symbol_DF}
& =16 |\xi|^4 |h|^2 - 16 |\xi|^2 |h(\xi) |^2 - 16 |\xi|^2 |h(\xi) |^2 + 16 (h(\xi,\xi))^2.
\end{align}
Hence, if $(h,X) \in \ker (\sigma_{\xi} (D_{\ph} F \oplus 0_{T^*_x M})) \cap \ker B$, we have $h(\xi) =0$ and thus $|\xi|^4 |h|^2 =0$. Since $\xi \neq 0$, we get $h = 0$. This, together with~\eqref{eq:imBkerDF}, proves that 
\begin{equation} \label{eq:kersymb_imB}
\ker \sigma_{\xi} (D_{\ph} F \oplus 0_{T^*_x M}) = \im B^*,
\end{equation}
and thus~\eqref{eq:kernel_symb_F}.

On the other hand, let $(h, X)$ be an eigenvector of $\sigma_{\xi} (D_{\ph} F \oplus 0_{T^*_x M})$ with eigenvalue $\lambda \neq 0$. Then $(h,X) \perp \ker (\sigma_{\xi} (D_{\ph} F \oplus 0_{T^*_x M}))$, and thus, by~\eqref{eq:kersymb_imB}, we have $(h, X) \in \ker B$. Then~\eqref{eq:kerB} implies that $X=0$, $h(\xi) = 0,$ and hence the eigenvalue equation becomes
\begin{equation} \label{eq:F_eigen_equation}
\sigma_{\xi} (D_{\ph} F\oplus 0_{T^*_x M}) (h,0) = \lambda (h,0).
\end{equation}
Using~\eqref{eq:norm_symbol_DF}, we get
\begin{equation*}
| \sigma_{\xi} (D_{\ph} F \oplus 0_{T^*_x M}) (h,0) |^2 = \lambda^2 |h|^2 \quad \iff \quad 16 |\xi|^4 |h|^2 = \lambda^2 |h|^2,
\end{equation*}
which implies that $\lambda = \pm 4 |\xi|^2$. Denoting the corresponding eigenspaces as
\begin{align*}
E^+ & = \ker (4 |\xi|^2 - \sigma_{\xi} (D_{\ph} F \oplus 0_{T^*_x M}) ) \\
E^- & = \ker (4 |\xi|^2 + \sigma_{\xi} (D_{\ph} F \oplus 0_{T^*_x M}) ),
\end{align*}
we obtain the orthogonal decomposition~\eqref{eq:symbDF_decomp}.
\end{proof}

We can understand the eigenspaces $E^+$, $E^-$ more geometrically as follows. Given a nonzero $\xi \in T^*_x M$, we have an orthogonal decomposition
\begin{equation} \label{eq:temp-Fdecomp}
S^2 (T^*_x M) = \Span \{ \xi \otimes V + V \otimes \xi : V \in T_x^* M \} \oplus S^2 (\xi^{\perp}),
\end{equation}
where $\xi^{\perp}$ is the orthogonal complement of $\Span \{ \xi \}$ in $T^*_x M$. It is easy to see that $h \in S^2 (\xi^{\perp})$ if and only if $h(\xi) = 0$. (Note that by~\eqref{eq:kerB} we have $S^2(\xi^{\perp})$ is precisely $\ker B$.)

Let $\hat{\xi} = \frac{1}{|\xi|} \xi$, so $|\hat{\xi}| = 1$. Consider the operator $J \colon T^*_x M \to T^*_x M$ given by
$$ J_{ij} = -\hat{\xi}_p \ph_{pij}. $$
This is the skew-adjoint operator $J(Y) = \hat{\xi} \times Y$, corresponding to the skew-symmetric bilinear form $J = - \hat{\xi} \hk \ph \in \Lambda^2_7 (T^*_x M)$. Note that $J$ preserves $\xi^{\perp}$. Moreover, from
\begin{align*}
J^2_{ij} & = J_{ik} J_{kj} = (-\hat{\xi}_p \ph_{pik})(-\hat{\xi}_q \ph_{qkj}) \\
& = \hat{\xi}_p \hat{\xi}_q (g_{pj} g_{iq} - g_{pq} g_{ji} - \ps_{pijq}) = \hat{\xi}_i \hat{\xi}_j - g_{ij},
\end{align*}
we have
\begin{equation} \label{eq:J-sq}
J^2 = \hat{\xi} \otimes \hat{\xi} - \sI.
\end{equation}
That is, $J$ is a complex structure on the $6$-dimensional space $\xi^{\perp}$.

Given $h \in S^2 (\xi^{\perp})$, define
$$ h^{\pm} = \tfrac{1}{2} (h \pm J h J). $$
Thus we can write
$$ h = h^+ + h^- = \tfrac{1}{2} (h + J h J) + \tfrac{1}{2} (h - J h J). $$
Observe using~\eqref{eq:psymbF} that
\begin{equation} \label{eq:JhJ-F}
\begin{aligned}
(J h J)_{ab} & = J_{ai} h_{ij} J_{jb} = (- \hat{\xi}_p \ph_{pai}) h_{ij} (- \hat{\xi}_q \ph_{qjb}) \\
& = - |\xi|^{-2} \xi_p \ph_{pia} h_{ij} \xi_q \ph_{qjb} = - \tfrac{1}{4 |\xi|^2} \sigma_{\xi} (D_{\ph} F) (h, X). 
\end{aligned}
\end{equation}
Using~\eqref{eq:JhJ-F}, we conclude that if $h \in S^2(\xi^{\perp})$, then
$$ h^{\pm} = 0 \iff h = \mp JhJ \iff \sigma_{\xi}(D_{\ph} F \oplus 0_{T^*_x M}) = \pm 4 |\xi|^2 h \iff h \in E^{\pm}. $$
In particular, if $h \in E^-$, then $J h J = h$, so $\tr h = \tr(JhJ) = \tr(h J^2) = - \tr h$, and thus $\tr h = 0$. It is also easy to see that the element $k = g - \hat{\xi} \otimes \hat{\xi}$ of $S^2 (\xi^{\perp})$ lies in $E^+$.

Observe also that since $h \in S^2(\xi^{\perp})$, we have $h (\hat{\xi} \otimes \hat{\xi}) = (\hat{\xi} \otimes \hat{\xi}) h = 0$. Using this and~\eqref{eq:J-sq}, we have
\begin{align*}
h = \mp J h J & \implies J h = \mp J^2 h J = \mp (-\sI + \hat{\xi} \otimes \hat{\xi}) h J = \pm h J, \\
Jh = \pm hJ & \implies JhJ = \pm hJ^2 = \pm h(-\sI + \hat{\xi} \otimes \hat{\xi}) = \mp h.
\end{align*}
Thus we can equivalently describe the eigenspaces $E^{\pm}$ by
$$ E^{\pm} = \{ h \in S^2 (\xi^{\perp}) : h J = \pm J h \}. $$

\subsection{Breaking the diffeomorphism invariance} \label{sec:break_diffeo_invariance}

In this section we prove that, given a background $\G$-structure $\tilde \ph$, it is always possible to modify a Ricci-like operator $P$ to an operator which is strongly elliptic at $\tilde \ph$, that is an operator $Q$ whose symbol satisfies
$$ \langle [\sigma_{\xi} (D_{\tilde \ph} Q)] (h, X), (h, X) \rangle \geq c |\xi|^2 | (h,X) |^2 = c |\xi|^2(|h|^2+|X|^2) $$
for some constant $c > 0$. (In fact we consider a slightly more general situation than just Ricci-like operators, as we also generalize it to allow an $F \diamond \ph$ term.)

As in~\eqref{eq:ricci_like}, consider a Ricci-like operator
\begin{equation} \label{eq:Pdef}
P(\ph) = (- \tRc + a \cL_{\Vop T} g) \diamond \ph + (b_1 \Div T + b_2 \Div T^t) \hk \ps.
\end{equation} 
Denoting by $\tilde g$ the Riemannian metric induced by $\tilde \ph$ and by $\tilde \Gamma$ its Christoffel symbols, define the vector field $W(\ph, \tilde \ph)$ on $M$ by
\begin{equation} \label{eq:Wdef}
W^k =g^{ij} (\Gamma_{ij}^k - \tilde\Gamma_{ij}^k) - 2 a (\Vop T)^k = \tilde W^k - 2 a (\Vop T)^k,
\end{equation}
and the operator $Q$ by
\begin{equation} \label{eq:Qdef}
Q(\ph) = P(\ph) + \cL_{W(\ph,\tilde \ph)} 
\ph.
\end{equation}

We begin with the following lemma.
\begin{lemma} \label{lemma:Q}
Let $M$ be a $7$-manifold admitting a $\G$-structure $\tilde \ph$, and define the differential operators $P$ and $Q$ acting on $\G$-structures, as in~\eqref{eq:Pdef} and~\eqref{eq:Qdef}. Then we have
$$ Q(\ph) = (- \tRc + \tfrac{1}{2} \cL_{\tilde W} g) \diamond \ph + (- \tfrac{1}{2} \curl \tilde W + (b_1 - a) \Div T + (b_2 + a) \Div T^t - a q_7 (\ph) + W \hk T) \hk \ps, $$
where 
$$ q_7 (\ph) =T (\Vop T) -T^t (\Vop T). $$
\end{lemma}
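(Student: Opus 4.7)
The idea is simply to expand $\cL_{W} \ph$ using the general formula \eqref{eq:Lie-derivative-alternate} for the Lie derivative of a $\G$-structure, exploit the linearity of $\cL_{(\cdot)} \ph$ in the vector field argument, and then use Corollary \ref{cor:curlVT-revisited} to eliminate $\curl (\Vop T)$ in favour of $\Div T$, $\Div T^t$ and quadratic torsion terms. The two sums $-\tRc + a \cL_{\Vop T} g$ and $\tfrac{1}{2}\cL_{\tilde W} g - a \cL_{\Vop T} g$ (the latter coming from $\tfrac12 \cL_W g$ after splitting $W = \tilde W - 2a \,\Vop T$) will cancel the $\cL_{\Vop T} g$ contribution in the $\diamond \ph$ part, leaving exactly $-\tRc + \tfrac12 \cL_{\tilde W} g$ as claimed.

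The concrete steps are as follows. First I apply \eqref{eq:Lie-derivative-alternate} to $W = \tilde W - 2a \,\Vop T$, getting
\begin{equation*}
\cL_W \ph = \tfrac12(\cL_W g)\diamond \ph + \bigl(T^t W - \tfrac12 \curl W\bigr) \hk \ps,
\end{equation*}
and then use $\cL_W g = \cL_{\tilde W} g - 2a \cL_{\Vop T} g$, $\curl W = \curl \tilde W - 2a \curl(\Vop T)$, and $T^t W = T^t \tilde W - 2a T^t(\Vop T)$. Adding $P(\ph)$ from \eqref{eq:Pdef} immediately kills the $\cL_{\Vop T} g$ terms in $\cS^2$ and produces the desired $\diamond \ph$ part. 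For the $\hk \ps$ part I am left with
\begin{equation*}
b_1 \Div T + b_2 \Div T^t + T^t \tilde W - 2a\, T^t(\Vop T) - \tfrac12 \curl \tilde W + a\, \curl(\Vop T).
\end{equation*}
Substituting Corollary \ref{cor:curlVT-revisited}, namely $\curl(\Vop T) = \Div T^t - \Div T + T^t(\Vop T) - T(\Vop T)$, groups the divergence terms into $(b_1 - a)\Div T + (b_2 + a)\Div T^t$ and turns the remaining torsion quadratics into $-a\bigl(T^t(\Vop T) + T(\Vop T)\bigr)$.

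Finally I reinterpret this residual combination using the identifications $W \hk T = T^t W = T^t \tilde W - 2a\,T^t(\Vop T)$ and $q_7(\ph) = T(\Vop T) - T^t(\Vop T)$, which give
\begin{equation*}
-a\bigl(T^t(\Vop T) + T(\Vop T)\bigr) + T^t \tilde W = -a\, q_7(\ph) + W \hk T.
\end{equation*}
Putting everything together yields the stated expression. The only nontrivial bookkeeping is matching the sign conventions in $q_7(\ph)$ and identifying $W \hk T$ with $T^t W$; this is where one must be careful, but there is no real obstacle beyond clean accounting of the torsion-quadratic terms.
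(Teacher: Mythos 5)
Your proof is correct and follows essentially the same route as the paper: expand $\cL_W \ph$ via equation~\eqref{eq:Lie-derivative-alternate}, split $W = \tilde W - 2a\,\Vop T$, cancel the $\cL_{\Vop T} g$ terms in the $\diamond\ph$ part, and use Corollary~\ref{cor:curlVT-revisited} to rewrite $\curl(\Vop T)$. The only cosmetic difference is that you expand $T^t W$ into $T^t\tilde W - 2a\,T^t(\Vop T)$ and then recombine, whereas the paper simply leaves $W \hk T$ intact throughout; the algebra is the same.
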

\begin{proof}
Using~\eqref{eq:Lie-derivative-alternate} and the definition of $W$ we have
\begin{equation} \label{eq:Q-temp}
\begin{aligned}
Q(\ph) & = (- \tRc + a \cL_{\Vop T} g + \tfrac{1}{2} \cL_W g) \diamond \ph + ( b_1 \Div T + b_2 \Div T^t - \tfrac{1}{2} \curl W + W \hk T) \hk \ps \\
& = ( - \tRc + \tfrac{1}{2} \cL_{\tilde W} g) \diamond \ph + ( b_1 \Div T + b_2 \Div T^t - \tfrac{1}{2} \curl W + W \hk T) \hk \ps.
\end{aligned}
\end{equation}
Now, using Corollary~\ref{cor:curlVT-revisited}, we compute
\begin{align*}
\curl W & = \curl \tilde W - 2 a \curl (\Vop T) \\
& = \curl \tilde W + 2 a ( \Div T - \Div T^t ) - 2 a T^t (\Vop T) + 2 a T (\Vop T).
\end{align*}
Substituting the above into~\eqref{eq:Q-temp} yields the result.
\end{proof}

The main result of this section is Proposition~\ref{prop:strong_ellipticity_Q_withF}, which demonstrates that several modifications of Ricci-like operators are strongly elliptic in the sense of Definition~\ref{def:ellipticity}.

We first need to understand the linearization of the operator $Q$ defined in~\eqref{eq:Qdef}. Observe that, ignoring lower order terms, we have
\begin{equation} \label{eq:DQ-temp}
\begin{aligned}
D_{\tilde \ph} Q & = (D_{\tilde \ph} ( - \tRc + \tfrac{1}{2} \cL_{\tilde W} g) + \lot) \diamond \tilde \ph \\
& \qquad {} + (D_{\tilde \ph} (- \tfrac{1}{2} \curl \tilde W + (b_1- a) \Div T + (b_2 + a) \Div T^t) + \lot) \hk \tilde \ps.
\end{aligned}
\end{equation}
From equations~\eqref{eq:psymbricci},~\eqref{eq:psymbdivT}, and~\eqref{eq:psymbdivTt}, we have
\begin{align*}
\sigma_{\xi} ( - D_{\tilde \ph} \tRc) (h,X)_{jk} & = |\xi|^2 h_{jk} - \xi_j( \xi_a h_{ak} - \tfrac{1}{2} \xi_k \tr h ) - \xi_k ( \xi_a h_{aj} - \tfrac{1}{2} \xi_j \tr h), \\
\sigma_{\xi} (D_{\tilde \ph} \Div T) (h,X)_k & = \xi_a\xi_p h_{pb} \ph_{abk} + |\xi|^2 X_k, \\
\sigma_{\xi} (D_{\tilde \ph} \Div T^t) (h,X)_k & = \xi_k \langle \xi,X\rangle.
\end{align*}
On the other hand, it is well known (see~\cite[Chapter 3, \textsection 3.2]{Chow-Knopf} for example) that the linearization of $\tilde W$ is, up to lower order terms, given by the 
Bianchi operator, namely 
$$ (D_{\tilde \ph} \tilde W) (h,X) = 2 (\Div_{\tilde g} h - \tfrac{1}{2} \nabla \tr_{\tilde g} h) + \lot. $$
Note that the factor of $2$ here is because for any variation $h \diamond \ph + X \hk \ps$ of $\G$-structures, the corresponding variation of the metric is $2h$.

From this we obtain
\begin{align*}
\sigma_{\xi} (\tfrac{1}{2}D_{\tilde \ph} \cL_{\tilde W}g ) (h,X)_{jk} & = \xi_j (\xi_p h_{pk} - \tfrac{1}{2} \xi_k \tr h) + \xi_k(\xi_p h_{pj} - \tfrac{1}{2} \xi_j \tr h ), \\
\sigma_{\xi} (- \tfrac{1}{2}D_{\tilde \ph} \curl \tilde W) (h,X)_l& = - \xi_a (\xi_p h_{pb} - \tfrac{1}{2} \xi_b \tr h ) \ph_{abl}.
\end{align*}
Combining the above computations, we deduce that
$$ \sigma_{\xi} [ D_{\tilde \ph} ( - \tRc + \tfrac{1}{2} \cL_{\tilde W} g ) ] (h,X)_{jk} = |\xi|^2 h_{jk}, $$
and also that
\begin{align*}
& \qquad \sigma_{\xi} (D_{\tilde \ph} (- \tfrac{1}{2} \curl \tilde W + (b_1 - a) \Div T + (b_2 + a) \Div T^t )) (h,X)_l \\
& = - \xi_a (\xi_p h_{pb} - \tfrac{1}{2} \xi_b \tr h ) \ph_{abl} + (b_1 - a) (\xi_a \xi_p h_{pb} \ph_{abl} + |\xi|^2 X_l) + (b_2 + a) \xi_l \langle \xi, X \rangle \\
& = (b_1 - a -1) \xi_a \xi_p h_{pb} \ph_{abl} + (b_1 - a) |\xi|^2 X_l + (b_2+a) \xi_l \langle \xi, X \rangle.
\end{align*}
From the above expressions and~\eqref{eq:DQ-temp}, we finally conclude that for any $(h, X) \in S^2 (T^*_x M) \oplus T^*_x M$, we have
\begin{equation} \label{eq:symbDQ}
\begin{aligned}
\langle \sigma_{\xi} (D_{\tilde \ph} Q) (h, X), (h, X) \rangle & = |\xi|^2 |h|^2 + (b_1 - a -1) \xi_a \xi_p h_{pb} \ph_{abl} X_l \\
& \qquad {} + (b_1 - a) |\xi|^2 |X|^2 + (b_2 + a) \langle \xi, X \rangle^2.
\end{aligned}
\end{equation}
Moreover, noting from~\eqref{eq:B-maps} that $|B_2 (X) |^2 = \xi_a X_b \ph_{abk} \xi_i X_j \ph_{ijk} = |\xi|^2 |X|^2 - \langle \xi, X \rangle^2$, by completing the square and using $|h(\xi)|^2 \leq |h|^2 |\xi|^2$, we obtain
\begin{align*}
\xi_a \xi_p h_{pb} \ph_{abl} X_l & = - \langle h(\xi), B_2 (X) \rangle \\
& = \tfrac{1}{4} |h(\xi) |^2 - 2 \left \langle \tfrac{1}{2} h(\xi), B_2 (X) \right \rangle + | B_2 (X) |^2 - \tfrac{1}{4} | h(\xi) |^2 - | B_2 (X) |^2 \\
& = \left| \tfrac{1}{2} h(\xi) - B_2 (X) \right|^2 - \tfrac{1}{4} | h(\xi) |^2 - | B_2 (X) |^2 \\
& \geq - \tfrac{1}{4} |\xi|^2 |h|^2 - |\xi|^2 |X|^2 + \langle \xi, X \rangle^2,
\end{align*}
and similarly that
$$ \xi_a \xi_p h_{pb} \ph_{abl} X_l \leq \tfrac{1}{4} |\xi|^2 |h|^2 + |\xi|^2 |X|^2 - \langle \xi, X \rangle^2, $$
which combine to give
$$ |\xi_a \xi_p h_{pb} \ph_{abl} X_l| \leq \tfrac{1}{4} |\xi|^2 |h|^2 + |\xi|^2 |X|^2 - \langle \xi, X \rangle^2. $$
It then follows from~\eqref{eq:symbDQ} that
\begin{align} \nonumber
\langle \sigma_{\xi} (D_{\tilde \ph} Q) (h, X), (h, X) \rangle & \geq |\xi|^2 |h|^2 - |b_1 - a - 1| (\tfrac{1}{4} |\xi|^2 |h|^2 + |\xi|^2 |X|^2 - \langle \xi, X \rangle^2) \\ \nonumber
& \qquad {} + (b_1 - a) |\xi|^2 |X|^2 + (b_2 + a) \langle \xi, X \rangle^2 \\ \nonumber
& = (1- \tfrac{1}{4} |b_1 - a - 1|) |\xi|^2 |h|^2 + (b_1 - a - |b_1 - a - 1|) |\xi|^2 |X|^2 \\ \label{eq:symbDQ2}
& \qquad {} + (|b_1 - a - 1| + b_2 + a) \langle \xi, X \rangle^2.
\end{align}
In Proposition~\ref{prop:strong_ellipticity_Q_withF} below we prove our most general strong ellipticity result. However, for the sake of clarity, we first demonstrate that $D_{\tilde \ph} Q$ is strongly elliptic in the following two special cases.

\textbf{Special case I:} $b_1 = 1 + a$, $b_2 = - a$. By~\eqref{eq:Pdef} this choice corresponds to operators $P$ of the form
$$ P (\ph) = ( - \tRc + a \cL_{\Vop T} g) \diamond \ph + ( (1+a) \Div T - a \Div T^t) \hk \ps. $$
For this type of operator,~\eqref{eq:symbDQ} becomes
\begin{equation} \label{eq:riccilike_good}
\langle\sigma_{\xi} (D_{\tilde \ph}Q) (h, X), (h, X) \rangle = |\xi|^2 (|h|^2+|X|^2) = |\xi|^2 | (h,X) |^2,
\end{equation}
thus $D_{\tilde \ph}Q$ is strongly elliptic, according to Definition~\ref{def:ellipticity}.

More generally, one can easily check that if $b_1 = 1 + a$ and $b_2 \geq -a$ we still obtain that
\begin{equation}
\langle \sigma_{\xi} (D_{\tilde \ph} Q) (h, X), (h, X) \rangle \geq | (h,X) |^2,
\end{equation}
showing that $D_{\tilde \ph} Q$ is strongly elliptic. $\blacktriangle$

\textbf{Special case II:}  $a = - \frac{1}{2}$, $b_1 = 1$, $b_2 = 0$. By~\eqref{eq:Pdef} this choice corresponds to operators $P$ of the form
$$ P (\ph) = (- \tRc - \tfrac{1}{2} \cL_{\Vop T} g) \diamond \ph + \Div T \hk \ps. $$
Note from Corollary~\ref{cor:evolution-torsion-quantities-4} that the higher order terms in the negative gradient flow of the functional 
$$ \ph \mapsto \tfrac{1}{2} \int_M |T|^2 \vol $$
form exactly this operator $P$. In this case~\eqref{eq:symbDQ2} implies that
\begin{equation} \label{eq:symbDQ3}
\langle \sigma_{\xi} (DQ) (h, X), (h,X ) \rangle \geq \tfrac{7}{8} (|\xi|^2 |h|^2 + |\xi|^2 |X|^2) = \tfrac{7}{8} | (h,X) |^2.
\end{equation}
Thus $D_{\tilde \ph}Q$ is strongly elliptic in this case as well.

More generally, one can check using~\eqref{eq:symbDQ2} that if $0 \leq b_1 - a - 1 < 4$ and $b_1 + b_2 \geq 1$, then
\begin{equation} \label{eq:Q_strong_ellipticity_general}
\begin{aligned}
\langle \sigma_{\xi} (D_{\tilde \ph} Q) (h, X), (h, X) \rangle & = c |\xi|^2 |h|^2 + |\xi|^2 |X|^2 + (b_1 + b_2 - 1) \langle \xi, X \rangle^2 \\
& \geq c |\xi|^2 | (h,X) |^2,
\end{aligned}
\end{equation}
where $c =1- \frac{1}{4} (b_1 - a - 1) > 0$, showing that $D_{\tilde \ph} Q$ is strongly elliptic. $\blacktriangle$

We can now state and prove our main result for strong ellipticity of second-order quasilinear differential operators on $\G$-structures. 
\begin{prop} \label{prop:strong_ellipticity_Q_withF}
Let $M$ be a $7$-manifold with a $\G$-structure $\tilde \ph$, and let $\hat P \colon \Omega^3_+ (M) \to \Omega^3 (M)$ be the quasilinear differential operator
\begin{equation}
\hat P (\ph) = (- \tRc + a \cL_{\Vop T} g + \lambda F) \diamond \ph + (b_1 \Div T + b_2 \Div T^t) \hk \ps.
\end{equation}
Let $W(\ph, \tilde \ph)^k = g^{ij} (\Gamma_{ij}^k - \tilde \Gamma_{ij}^k) -2 a \Vop T^k$ as in~\eqref{eq:Wdef}, where $\tilde \Gamma$ denotes the Christoffel symbols of the Riemannian metric $\tilde g$ induced by $\tilde \ph$, and suppose that
\begin{align} \label{eq:strong_ellipt_conditions}
& \quad b_1 + b_2 \geq 1, & 0 & \leq b_1 - a-1 < 4, & |\lambda| & < \tfrac{1}{4} (1 - \tfrac{1}{4} (b_1 - a - 1)).
\end{align}
Then the differential operator $\hat Q (\ph) = \hat P(\ph) + \cL_{W(\ph,\tilde \ph)} g$ is strongly elliptic at $\tilde \ph$.
\end{prop}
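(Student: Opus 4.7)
The strategy is to split $\hat Q$ as $\hat Q = Q + \lambda F \diamond \ph$, where $Q$ is the Ricci-like operator $(-\tRc + a \cL_{\Vop T} g) \diamond \ph + (b_1 \Div T + b_2 \Div T^t) \hk \ps$ modified by $\cL_{W(\ph,\tilde\ph)} \ph$. By linearity of the symbol,
$$\sigma_{\xi}(D_{\tilde \ph} \hat Q) = \sigma_{\xi}(D_{\tilde \ph} Q) + \lambda \, \sigma_{\xi}(D_{\tilde \ph}(F \diamond \ph)),$$
so the associated quadratic form decomposes into a Ricci-like piece plus an $F$-perturbation. The two hypotheses $0 \leq b_1 - a - 1 < 4$ and $b_1 + b_2 \geq 1$ are exactly those of the general computation leading to \eqref{eq:Q_strong_ellipticity_general}, which already gives
$$\langle \sigma_{\xi}(D_{\tilde \ph} Q)(h,X), (h,X) \rangle \geq c\,|\xi|^2\,|(h,X)|^2, \qquad c = 1 - \tfrac{1}{4}(b_1 - a - 1) > 0,$$
for all $(h,X) \in \sym{2} \oplus T^*_x M$ and all nonzero $\xi \in T^*_x M$. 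So nothing new is needed for the $Q$ piece.

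For the perturbation, observe that $F$ is a symmetric $2$-tensor, so $F \diamond \ph$ lies purely in the $\sym{2}$-summand under the identification $\Omega^3 \cong \sym{2} \oplus T^*M$ via $(h,X) \mapsto h \diamond \ph + X \hk \ps$. Consequently, regarded as an operator on $\sym{2} \oplus T^*M$, its linearization's symbol is precisely $\lambda\,\sigma_{\xi}(D_{\tilde\ph} F \oplus 0_{T^*_x M})$ (the $0_{T^*_xM}$ being automatic from the structure of $F$). The key estimate \eqref{eq:symbDFineq} from Proposition~\ref{prop:kernel_symb_F} then yields
$$\bigl|\langle \lambda\,\sigma_{\xi}(D_{\tilde\ph} F \oplus 0_{T^*_x M})(h,X), (h,X)\rangle\bigr| \leq 4|\lambda|\,|\xi|^2\,|(h,X)|^2.$$
Combining the two bounds gives
$$\langle \sigma_{\xi}(D_{\tilde \ph} \hat Q)(h,X), (h,X) \rangle \geq (c - 4|\lambda|)\,|\xi|^2\,|(h,X)|^2,$$
and the hypothesis $|\lambda| < c/4$ makes the coefficient strictly positive, establishing strong ellipticity at $\tilde \ph$ in the sense of Definition~\ref{def:ellipticity}.

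The only subtlety worth pausing on — and in some sense the main technical hurdle, though it has already been handled in Section~\ref{sec:symbols} — is the passage between the natural inner product on $\Lambda^3$ and the chosen bundle metric $\langle (h_1, X_1), (h_2, X_2) \rangle = \langle h_1, h_2 \rangle + \langle X_1, X_2 \rangle$ on $\sym{2} \oplus T^* M$. By \eqref{eq:ABph} and the discussion following it, these two metrics are uniformly equivalent, and the estimate \eqref{eq:symbDFineq} of Proposition~\ref{prop:kernel_symb_F} is already stated with respect to the product metric, which is the one used throughout the argument above. No further representation-theoretic work or new curvature identities are required: all the hard input has been packaged into the special cases treated before this proposition, the general inequality \eqref{eq:Q_strong_ellipticity_general} for the Ricci-like part, and the sharp symbol bound \eqref{eq:symbDFineq} for the $F$-part.
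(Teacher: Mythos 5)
Your proposal is correct and takes essentially the same approach as the paper: decompose $\hat Q = Q + \lambda F$, apply the inequality~\eqref{eq:Q_strong_ellipticity_general} to the Ricci-like part and the symbol bound~\eqref{eq:symbDFineq} to the $F$-part, then combine. The paragraph about the equivalence of inner products is a reasonable sanity check but is not something the paper spells out again in this proof, since it was already fixed as the working convention in Section~\ref{sec:symbols}.
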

\begin{proof}
Let $Q$ be the operator defined in~\eqref{eq:Qdef} and let $\tilde Q = Q + \lambda F$. Combining~\eqref{eq:symbDFineq} with~\eqref{eq:Q_strong_ellipticity_general}, we obtain
\begin{equation}
\begin{aligned}
\langle \sigma_{\xi} (D_{\tilde \ph} \hat Q) (h, X), (h, X) \rangle & = \langle \sigma_{\xi} (D_{\tilde \ph} Q) (h, X), (h, X) \rangle + \lambda \langle \sigma_{\xi} (D_{\tilde \ph} F) (h, X), (h,X ) \rangle \\
& \geq c |\xi|^2 | (h,X) |^2 - 4 |\lambda| \, |\xi|^2 \, | (h,X) |^2 \\
& = (c - 4 |\lambda|) |\xi|^2 | (h,X) |^2,
\end{aligned}
\end{equation}
where $c = 1- \frac{1}{4} (b_1 - a - 1) >0$. Thus $D_{\tilde \ph} \hat Q$ is strongly elliptic since $ |\lambda| < \frac{1}{4} c$.
\end{proof}

\subsection{Short-time existence and uniqueness of flows of $\G$-structures} \label{sec:short-time}

In this section we prove our main short-time existence and uniqueness theorem for flows of $\G$-structures. The argument is a slight modification of the DeTurck argument for Ricci flow, as described for example in Chow--Knopf~\cite[Chapter 3]{Chow-Knopf}. See also Remark~\ref{rmk:STE}.

\begin{thm} \label{thm:main-short-time}
Let $(M,\ph_0)$ be a compact $7$-manifold with a $\G$-structure $\ph_0$. Consider the flow
\begin{equation} \label{eq:g2flow_general}
\begin{aligned}
\frac{\partial}{\partial t} \ph(t) & = (- \tRc + a \cL_{\Vop T} g + \lambda F) \diamond \ph + (b_1 \Div T+ b_2 \Div T^t) \hk \ps, \\
\ph(0) & = \ph_0,
\end{aligned}
\end{equation}
and suppose that $0 \leq b_1 - a-1 < 4$, $b_1 + b_2 \geq 1$ and $|\lambda| < \frac{1}{4} c$, where $c = 1 - \frac{1}{4} (b_1 - a-1) > 0$.

Then there exists $\eps > 0$ and a unique smooth one-parameter family of $\G$-structures $\ph(t)$ for $t \in [0,\eps)$, solving~\eqref{eq:g2flow_general}.
\end{thm}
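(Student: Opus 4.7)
The plan is a standard DeTurck-type argument, using the strong ellipticity results assembled in Section~\ref{sec:break_diffeo_invariance}. Let $\hat P$ denote the right-hand side of~\eqref{eq:g2flow_general}, set $\tilde\ph = \ph_0$, and define the vector field $W(\ph,\tilde\ph)$ by~\eqref{eq:Wdef} and the modified operator
\begin{equation*}
\hat Q(\ph) = \hat P(\ph) + \cL_{W(\ph,\tilde\ph)}\ph.
\end{equation*}
Since the hypotheses on $(a,b_1,b_2,\lambda)$ are exactly conditions~\eqref{eq:strong_ellipt_conditions}, Proposition~\ref{prop:strong_ellipticity_Q_withF} applies and $D_{\tilde\ph}\hat Q$ is strongly elliptic at $\ph_0$. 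Because $\Omega^3_+$ is open in $\Omega^3$, the Cauchy problem $\partial_t\bar\ph = \hat Q(\bar\ph)$, $\bar\ph(0)=\ph_0$ is a (smoothly) strongly parabolic quasilinear system in a neighborhood of $\ph_0$, and so admits a unique smooth short-time solution $\bar\ph(t)$ for $t\in[0,\eps)$ by standard parabolic theory.

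Next, since $M$ is compact, the time-dependent vector field $-W(\bar\ph(t),\tilde\ph)$ integrates to a smooth one-parameter family of diffeomorphisms $\Theta_t\colon M\to M$ with $\Theta_0=\Id_M$, defined on the same time interval. Setting $\ph(t) = \Theta_t^*\bar\ph(t)$, the computation carried out in Section~\ref{sec:DeTurck} (which uses only diffeomorphism invariance of $\hat P$ and the naturality of $\cL$) shows immediately that $\ph(t)$ solves~\eqref{eq:g2flow_general} with $\ph(0)=\ph_0$. This establishes existence.

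The step I expect to be the main obstacle is uniqueness, since we need to argue that \emph{every} solution of~\eqref{eq:g2flow_general} arises in this way from a solution of the DeTurck-modified flow. The standard argument, which should go through verbatim here (compare Chow--Knopf~\cite[Ch.~3]{Chow-Knopf}), is as follows. Let $\ph_1(t)$, $\ph_2(t)$ be two smooth solutions on $[0,\eps')$ with $\ph_1(0)=\ph_2(0)=\ph_0$. For $i=1,2$, we solve the harmonic-map-type ODE for a family of diffeomorphisms $\Psi^i_t$ of $M$ with $\Psi^i_0=\Id_M$ satisfying the identity
\begin{equation*}
\partial_t \Psi^i_t = W\bigl((\Psi^i_t)_*\ph_i(t),\tilde\ph\bigr)\circ\Psi^i_t,
\end{equation*}
and set $\bar\ph_i(t) = (\Psi^i_t)_*\ph_i(t)$. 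The diffeomorphism invariance of $\hat P$ and the formula for $\partial_t\bar\ph_i$ combine to show that each $\bar\ph_i$ solves the strictly parabolic modified flow $\partial_t\bar\ph_i=\hat Q(\bar\ph_i)$ with the same initial condition $\ph_0$. Uniqueness for that strictly parabolic system forces $\bar\ph_1 = \bar\ph_2$, and a separate uniqueness argument for the ODE defining the $\Psi^i$ (which is itself a strictly parabolic--hyperbolic system obtained by differentiating the harmonic map heat equation, and hence has unique solutions with prescribed initial data) then forces $\Psi^1_t = \Psi^2_t$ and therefore $\ph_1=\ph_2$. The delicate point is setting up the ODE/PDE for $\Psi^i_t$ so that its solvability is guaranteed by the analysis already in place; but the choice of $W$ in~\eqref{eq:Wdef}, which combines the Bianchi vector field $\tilde W$ (giving harmonic-map heat flow behavior, exactly as for Ricci flow) with the $\Omega^2_7$-correction $-2a\,\Vop T$, is precisely tuned so that this works out, and no new analytical input beyond what is used for Ricci flow is required. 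This completes the proof.
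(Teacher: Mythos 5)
The existence half of your proposal is the paper's argument: the choice of $W(\ph,\tilde\ph)$ from~\eqref{eq:Wdef}, the invocation of Proposition~\ref{prop:strong_ellipticity_Q_withF} for strong ellipticity, the solvability of the DeTurck-modified flow by standard parabolic theory, and the transport by $\Theta_t$ all match.

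The gap is in your uniqueness argument. You define $\Psi^i_t$ by the system
\[
\partial_t \Psi^i_t = W\bigl((\Psi^i_t)_*\ph_i(t),\tilde\ph\bigr)\circ\Psi^i_t, \qquad \Psi^i_0 = \Id_M,
\]
and assert that this has solutions because $W$ is ``precisely tuned so that this works out, and no new analytical input beyond what is used for Ricci flow is required.'' That assertion is exactly what needs proof. Since $W = \tilde W - 2a\,\Vop T$, and both pieces involve first derivatives of the transported $\G$-structure $(\Psi^i_t)_*\ph_i(t)$ (the Christoffel-symbol difference for $\tilde W$, and $\nabla\ph$ for $\Vop T$), this is a second-order PDE for $\Psi^i_t$, not an ODE. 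When $W=\tilde W$ alone, it reduces to the harmonic map heat flow and one can quote its well-developed local theory; but the extra term $-2a\,\Vop T$ produces a \emph{different} map flow for which no off-the-shelf existence theorem is available, and it is not obvious (and you do not argue) that it is even parabolic. Remark~\ref{rmk:STE} in the paper explicitly flags this as the reason the naive one-step Ricci-flow-style uniqueness argument does not go through. The paper's actual uniqueness proof avoids the problem by splitting the passage from $\ph_i$ to $\hat\ph_i$ into two stages: first conjugating by the time-dependent diffeomorphisms $(G_i)_t$ generated by $-2a(\Vop T)_{\ph_i(t)}$ (this is a genuine ODE, hence solvable), and only then applying the harmonic map heat flow to the resulting $\bar\ph_i$. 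Each step has a uniqueness statement, and chaining them back yields $\ph_1=\ph_2$. To fix your proposal, you should either adopt this two-stage decomposition or actually prove local well-posedness of the modified map flow you wrote down; as written, the existence of the $\Psi^i_t$ is unjustified.
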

\begin{proof}
Let $\tilde \ph= \ph_0$ and define $W(\ph, \tilde \ph)$ as in~\eqref{eq:Wdef}. From Proposition~\ref{prop:strong_ellipticity_Q_withF} we know that the linearization of the quasilinear operator
$$ \hat Q(\ph) = (- \tRc + a \cL_{\Vop T} g + \lambda F) \diamond \ph + (b_1 \Div T + b_2 \Div T^t) \hk \ps + \cL_W \ph $$
is a strongly elliptic, under the assumptions of the theorem. Thus, from standard parabolic theory, there is a unique smooth solution $\hat \ph(t)$ for $t \in [0,\eps)$ of the evolution equation
\begin{equation*}
\begin{aligned}
\frac{\partial}{\partial t} \hat \ph(t) & = \hat Q (\hat \ph(t)), \\
\hat \ph(0) & = \ph_0.
\end{aligned}
\end{equation*}
Now, let $\Theta_t \colon M \to M$, $t\in [0, \eps)$ be the one-parameter family of diffeomorphisms defined by
\begin{equation*}
\begin{aligned}
\frac{\partial}{\partial t} \Theta_t & = - W(\hat\ph(t), \tilde \ph) \circ \Theta_t, \\
\Theta_0 & = \Id_M.
\end{aligned}
\end{equation*}
It is then easy to see that $\ph(t) = \Theta_t^* \hat \ph(t)$ satisfies~\eqref{eq:g2flow_general}.

To prove uniqueness, suppose that $\ph_i(t)$, for $i=1,2$, both satisfy~\eqref{eq:g2flow_general} and let $(G_i)_t \colon M \to M$ be a one-parameter family of diffeomorphisms given by the flow of $-2 a (\Vop T)_{\ph_i(t)}$, so that
\begin{equation} \label{eq:Git-flow}
\begin{aligned}
\frac{\partial}{\partial t} (G_i)_t & = -2 a (\Vop T)_{\ph_i(t)} \circ (G_i)_t, \\
(G_i)_0 & = \Id_M.
\end{aligned}
\end{equation}
Using~\eqref{eq:Lie-derivative-alternate} and~\eqref{eq:curlVT}, one computes that $\bar\ph_i(t) = (G_i)_t^*\ph_i(t)$ solves an evolution equation of the form
\begin{equation} \label{eq:barph-flow}
\begin{aligned}
\frac{\partial}{\partial t} \bar \ph_i (t) & = (- \tRc_{\bar g_i (t)} + \lambda F_{\bar \ph_i (t)} + \lot) \diamond \bar \ph_i (t) \\
& \qquad {} + ( (b_1 - a) \Div T_{\bar \ph_i (t)} + (b_2 + a) \Div T_{\bar \ph_i (t)}^t + \lot) \hk \bar \ps_i (t), \\
\bar \ph(0) & = \ph_0.
\end{aligned}
\end{equation}

Define $(\Theta_i)_t \colon M \to M$ for $t \in [0, \eps_i')$ to be the solution to
\begin{align*}
\frac{\partial}{\partial t} (\Theta_i)_t & = \Delta_{g_i(t), g_0} (\Theta_i)_t, \\
(\Theta_i)_0 & = \Id_M,
\end{align*}
which is the harmonic map heat flow from $(M, g_0)$ to $(M, g_i (t))$ with initial value the identity map.

Setting $\hat \ph_i(t) = ((\Theta_i)_t^{-1})^* \bar \ph_i (t)$, and using the fact that $\Delta_{g_i(t), g_0} (\Theta_i)_t = - \tilde W (\hat g_i (t), g_0)$, we have
\begin{equation} \label{eq:Fidiffeos}
\frac{\partial}{\partial t}  (\Theta_i)_t = - \tilde W (\hat g_i (t),\tilde g).
\end{equation}
Using~\eqref{eq:Lie-derivative-alternate} again and~\eqref{eq:barph-flow}, it follows that $\hat \ph_i(t)$, for $i=1,2$ satisfy
\begin{equation} \label{eq:evolbarphi}
\begin{aligned}
\frac{\partial}{\partial t} \hat \ph_i (t) & = ( - \tRc_{\hat g_i (t)} + \lambda F_{\hat \ph_i(t)} + \tfrac{1}{2} \cL_{\tilde W (\hat g_i (t), \tilde g)} \hat g + \lot) \diamond \hat \ph_i (t), \\
& \qquad {} + ((b_1 - a) \Div T_{\hat \ph_i (t)} + (b_2 + a) \Div T_{\hat \ph_i (t)}^t - \tfrac{1}{2} \curl \tilde W + \lot ) \hk \hat \ps_i (t), \\
\hat \ph_i (0) & = \ph_0,
\end{aligned}
\end{equation}
for any $t \in [0,\eps_i')$. Under the assumptions of the theorem, the operator on the right-hand side of~\eqref{eq:evolbarphi} is strongly elliptic, by Lemma~\ref{lemma:Q} and Proposition~\ref{prop:strong_ellipticity_Q_withF}. Hence the uniqueness of standard parabolic theory gives $\hat \ph_1(t) = \hat \ph_2 (t)$ for all $t \in [0, \eps')$, where $\eps' = \min \{ \eps'_1, \eps'_2 \}$. Therefore, $\hat g_1 (t) = \hat g_2 (t)$ and thus by~\eqref{eq:Fidiffeos}, we have $(\Theta_1)_t = (\Theta_2)_t$ and consequently
$$ \bar \ph_1(t) = (\Theta_1)_t^* \hat \ph_1 (t) = (\Theta_2)_t^* \hat \ph_2 (t) = \bar \ph_2 (t), $$
for all $t \in [0,\eps')$.

Since $(G_i)_t \circ (G_i)_t^{-1} = \Id_M$, using~\eqref{eq:Git-flow} we have
\begin{align*}
0 & = \frac{\partial}{\partial t} ( (G_i)_t \circ (G_i)_t^{-1}) \\
& = \frac{\partial (G_i)_t}{\partial t} \circ (G_i)_t^{-1} + ((G_i)_t)_* \Big( \frac{\partial (G_i)_t^{-1}}{\partial t} \Big) \\
& = -2 a (\Vop T)_{\ph_i(t)} + ((G_i)_t)_* \Big( \frac{\partial (G_i)_t^{-1}}{\partial t} \Big),
\end{align*}
and thus
$$ \frac{\partial (G_i)_t^{-1}}{\partial t} = 2 a ((G_i)_t^{-1})_* (\Vop T)_{\ph_i(t)} = 2 a (\Vop T)_{(G_i)_t^* \ph_i (t)} \circ (G_i)_t^{-1} = 2 a (\Vop T)_{\bar \ph_i(t)} \circ (G_i)_t^{-1}. $$
Hence, since $\bar \ph_1 (t) = \bar \ph_2 (t)$, we have $(G_1^{-1})_t = (\G^{-1})_t$ and thus $\ph_1 (t) = \ph_2 (t)$ for all $t \in [0,\eps')$.

Since we can do the above on an open neighbourhood of any $t \in [0, \eps)$, it follows that the subset of $[0, \eps)$ on which $\ph_1 (t) = \ph_ 2(t)$ is both open and closed, and thus $\ph_1 (t) = \ph_2 (t)$ for all $t \in [0,\eps)$.
\end{proof}

\begin{rmk} \label{rmk:STE}
The argument for uniqueness in the proof of Theorem~\ref{thm:main-short-time} is slightly more involved than the usual argument for Ricci flow, in the sense that we need two steps to pass from $\ph$ to $\bar \ph$ and then to $\hat \ph$, rather than just one step, because we still want to use the harmonic map heat flow, which has good local existence. Since $\tilde W = W - 2 a \Vop T$, if we try the usual approach that works in Ricci flow, we would need to establish a good local existence theory for another flow of maps. Our approach avoids this by introducing the additional step mentioned above.
\end{rmk}

\begin{rmk} \label{rmk:Ricci-isometric-coupled}
One particularly interesting case of Theorem~\ref{thm:main-short-time} occurs when we take $a=\lambda=b_2=0$ and $b_1=1$, which satisfies the needed inequalities. This corresponds to the flow
$$ \frac{\partial}{\partial t} \ph(t) = - \tRc \diamond \ph + (\Div T) \hk \ps. $$
This flow induces \emph{precisely} the Ricci flow $\frac{\partial}{\partial t} g = - 2 \tRc$ on the metric, and the only other thing it does to the $\G$-structure $\ph$ is to deform it by the isometric flow~\eqref{eq:isometric-flow}. This ``coupling'' of Ricci flow with the isometric flow thus has good short-time existence and uniqueness. Note that if we did not add the isometric flow to the Ricci flow (that is, if we also took $b_1 = 0$), then the ``pure Ricci flow'' for $\G$-structures does not satisfy the hypotheses of Theorem~\ref{thm:main-short-time}.
\end{rmk}

\begin{rmk} \label{rmk:Weiss-Witt-special-case}
Consider the negative gradient flow of the torsion energy functional $\ph \mapsto \frac{1}{2} \int_M |T|^2 \vol$. By the second equation in Corollary~\ref{cor:evolution-torsion-quantities-4}, this flow is
$$ \frac{\partial}{\partial t} \ph(t) = ( -\tRc - \tfrac{1}{2} \cL_{\Vop T} g ) \diamond \ph + (\Div T) \hk \ps + \lot. $$
That is, we have $a = -\frac{1}{2}$, $b_1 = 1$, and $\lambda = b_2 = 0$, which satisfies the needed inequalities of Theorem~\ref{thm:main-short-time}. We thus recover as a special case the result of Weiss--Witt~\cite{WW1} that this flow has short-time existence and uniqueness.
\end{rmk}

\subsection{Future questions} \label{sec:future}

The analysis in this section raises many interesting questions for future exploration. In Theorem~\ref{thm:main-short-time} we have determined a large class of geometric flows of $\G$-structures which admit a DeTurck trick to establish short-time existence and uniqueness, with no condition on the initial torsion. These are properties of the flow which depend \emph{only} on the second-order terms. Other properties of the flow (such as the characterization of fixed points) are \emph{very sensitive} to the lower order terms.

For example, suppose we take our flow (up to lower order terms) to be the ``coupling'' of Ricci flow with isometric flow described in Remark~\ref{rmk:Ricci-isometric-coupled}. Then can we choose the lower order terms so that the fixed points of the flow are precisely the torsion-free $\G$-structures? That is, are there specific combinations of terms $Q(T) = Q_7 (T) + Q_{1+27} (T)$ which are homogeneous quadratic in the torsion $T$ such that
$$ \big[ - \tRc + Q_{1+27} (T) = 0 \quad \text{and} \quad \Div T + Q_7 (T) = 0 \big] \, \iff \, T = 0. $$
Such a result would be analogous to the result in Bryant~\cite{Bryant} and Cleyton--Ivanov~\cite{CI-closed} which shows that a closed $\G$-structure on a compact manifold inducing an Einstein metric must necessarily be torsion-free and thus Ricci-flat. If such a $Q(T)$ exists, that would give a \emph{preferred coupling} of the Ricci flow to the isometric flow.

Another nice property to ask for a geometric flow of $\G$-structures is that the induced flow of $|T|^2$ is of the form
$$ \frac{\partial}{\partial t} |T|^2 = \Delta |T|^2 + \lot, $$
where the lower order terms are such that the above equation is amenable to a maximum principle.

Of course, once a particular geometric flow of $\G$-structures is chosen, all of the usual questions arise: characterization of the singular time; derivative estimates; long-time existence and convergence; stability; singularity types; solitons; monotonicity of certain quantities; and so on.

\begin{rmk} \label{rmk:Gao-Chen}
Chen~\cite{Chen} considers a class of flows of $\G$-structures which he calls ``reasonable''. His definition of a \emph{reasonable flow} is that it admits short-time existence and uniqueness, and that (up to lower order terms), it has $h = - \tRc$ and $X$ is any vector field depending linearly on $\tRm$ and $\nab{} T$. We have shown that there are no $\mb{7}$ components arising from $\tRm$, and the only independent $\mb{7}$ components arising from $\nab{} T$ are $\Div T$ and $\Div T^t$. Thus, a reasonable flow for Chen \emph{assumes short-time existence and uniqueness}, and is of the form $\frac{\partial}{\partial t} \ph = P(\ph)$, where $P(\ph)$ is of the form~\eqref{eq:general-operator} with $a_1 = -1$, $a_2=a_3=a_4=0$, and $b_1, b_2$ are arbitrary. Chen proves general Shi-type derivative estimates for this class of flows. It would be interesting to see if the flows of Theorem~\ref{thm:main-short-time}, which \emph{all do have} short-time existence and uniqueness, admit Shi-type estimates as in~\cite{Chen}.
\end{rmk}

Another natural direction is to study the role of the optimal $\ph$-connection $\hnab{}$ described in Definition~\ref{defn:optimal-connection} for flows of $\G$-structures. Since $\hnab{}$ has torsion, its ``Ricci tensor'' is not symmetric. The skew part of the Ricci tensor of $\hnab{}$ depends on the torsion $\sT$ of the connection, which is essentially the torsion $T$ of the $\G$-structure, in a repackaged form. Of course, the decomposition into irreducible $\G$-representations of the Ricci tensor of $\hnab{}$ is expressible in terms of the independent second-order differential invariants of a $\G$-structure, and thus we do not get any new flows this way. But given the naturality of $\hnab{}$, its Ricci tensor may be a natural direction in which to flow. Another possibility is to consider the negative gradient flow of the Yang-Mills energy of $\hnab{}$. These questions are being investigated by the authors.

The first author has obtained similar results~\cite{D-Spin7} for geometric flows of $\Spin{7}$-structures. More precisely, he considers the negative gradient flow of the functional which is the $L^2$-norm of the torsion, but over all $\Spin{7}$-structures (not necessarily isometric) and proves short-time existence and uniqueness. It turns out that in the $\Spin{7}$ case the terms appearing in the negative gradient flow are all the terms one could get that are second-order differential invariants which are $4$-forms.

\addcontentsline{toc}{section}{References}


\begin{thebibliography}{99}

\bibitem{Agricola} I. Agricola, ``The Srn\'i lectures on non-integrable geometries with torsion'', {\em Arch. Math. (Brno)} {\bf 42} (2006), 5--84. MR2322400

\bibitem{Al} D.V. Alekseevski\u{i}, ``Riemannian spaces with unusual holonomy groups'', {\em Funkcional. Anal. i Prilo\v{z}en} {\bf 2} (1968), 1--10. MR0231313

\bibitem{AWW} B. Ammann, H. Weiss, and\ F. Witt, ``A spinorial energy functional: critical points and gradient flow'', {\em Math. Ann.} {\bf 365} (2016), 1559--1602. MR3521099

\bibitem{AH} B. Andrews\ and\ C. Hopper, {\it The Ricci flow in Riemannian geometry}, Lecture Notes in Mathematics, 2011, Springer, Heidelberg, 2011. MR2760593

\bibitem{Besse} A.L. Besse, {\it Einstein manifolds} (reprint of the 1987 edition), Classics Math., Springer-Verlag, Berlin, 2008. MR2371700

\bibitem{Bryant} R.L. Bryant, ``Some remarks on $\G$-structures'', in {\it Proceedings of G\"{o}kova Geometry-Topology Conference 2005}, 75--109, G\"{o}kova Geometry/Topology Conference (GGT), G\"{o}kova. MR2282011

\bibitem{Bryant-Xu} R.L. Bryant\ and\ F. Xu, ``Laplacian flow for closed $\G$-structures: Short time behavior'', {\em preprint}. arXiv:1101.2004

\bibitem{Cao} H.D. Cao, ``Deformation of K\"ahler metrics to K\"ahler-Einstein metrics on compact K\"ahler manifolds'', {\em Invent. Math.} {\bf 81} (1985), 359–372. MR0799272

\bibitem{CCDMM} G. Catino, L. Cremaschi, Z. Djadli, C. Mantegazza, and L. Mazzieri, ``The Ricci-Bourguignon flow'', {\em Pacific J. Math.} {\bf 287} (2017), 337--370. MR3632891

\bibitem{Chen} G. Chen, ``Shi-type estimates and finite-time singularities of flows of $\G$ structures'', {\em Q. J. Math.} {\bf 69} (2018), 779--797. MR3859207

\bibitem{Chow-Knopf} B. Chow\ and\ D. Knopf, {\it The Ricci flow: an introduction}, Mathematical Surveys and Monographs, 110, American Mathematical Society, Providence, RI, 2004. MR2061425

\bibitem{CI-closed} R. Cleyton\ and\ S. Ivanov, ``On the geometry of closed $\G$-structures'', {\em Comm. Math. Phys.} {\bf 270} (2007), 53--67. MR2276440

\bibitem{CI} R. Cleyton\ and\ S. Ivanov, ``Curvature decomposition of $\G$-manifolds'', {\em J. Geom. Phys.} {\bf 58} (2008), 1429--1449. MR2453675

\bibitem{CN} D. Crowley\ and\ J. Nordstr\"om, ``New invariants of $\G$-structures'', {\em Geom. Topol.} {\bf 19} (2015), 2949--2992. MR3416118

\bibitem{DeTurck} D.M. DeTurck, ``Deforming metrics in the direction of their Ricci tensors'', {\em J. Differential Geom.} {\bf 18} (1983), 157--162. MR0697987

\bibitem{D-Spin7} S. Dwivedi, ``A gradient flow of $\Spin{7}$-structures'', {\em Q. J. Math.}, to appear. arXiv:2404.00870

\bibitem{DLS} S. Dwivedi, E. Loubeau, and H.N. S\'a Earp, ``Harmonic flow of $\Spin{7}$-structures'', {\em Ann. Sc. Norm. Super. Pisa Cl. Sci}, {\bf 25} (2024), 151--215. MR4732637

\bibitem{DGK} S. Dwivedi, P. Gianniotis\ and\ S. Karigiannis, ``A gradient flow of isometric $\G$-structures'', {\em J. Geom. Anal.} {\bf 31} (2021), 1855--1933. MR4215279

\bibitem{DS} S. Dwivedi\ and\ R. Singhal, ``Deformation theory of nearly $\G$ manifolds'', {\em Comm. Anal. Geom.} {\bf 31} (2023), 677--729. MR4685034

\bibitem{FLMS} D. Fadel, E. Loubeau, A.J. Moreno, and H.N. S\'a Earp, ``Flows of geometric structures'', {\em J. Reine Angew. Math.} {\bf 817} (2024), 67--152. MR4834882

\bibitem{FG} M. Fern\'andez\ and\ A. Gray, ``Riemannian manifolds with structure group $\G$'', {\em Ann. Mat. Pura Appl. (4)} {\bf 132} (1982), 19--45. MR0696037

\bibitem{FY} J. Fine\ and\ C. Yao, ``Hypersymplectic 4-manifolds, the $\G$-Laplacian flow, and extension assuming bounded scalar curvature'', {\em Duke Math. J.} {\bf 167} (2018), 3533--3589. MR3881202

\bibitem{FI} T. Friedrich\ and\ S. Ivanov, ``Parallel spinors and connections with skew-symmetric torsion in string theory'', {\em Asian J. Math.} {\bf 6} (2002), 303--335. MR1928632

\bibitem{FH} W. Fulton\ and\ J. Harris, {\it Representation theory}, Graduate Texts in Mathematics, 129, Springer-Verlag, New York, 1991. MR1153249

\bibitem{Grigorian-MCF} S. Grigorian, ``Short-time behaviour of a modified Laplacian coflow of $\G$-structures'', {\em Adv. Math.} {\bf 248} (2013), 378--415. MR3107516

\bibitem{Grigorian-Oct} S. Grigorian, ``$\G$-structures and octonion bundles'', {\em Adv. Math.} {\bf 308} (2017), 142--207. MR3600058

\bibitem{Grigorian-IF} S. Grigorian, ``Estimates and monotonicity for a heat flow of isometric $\G$-structures'', {\em Calc. Var. Partial Differential Equations} {\bf 58} (2019), Paper No.\ 175, 37 pp. MR4018307

\bibitem{Grigorian-IF-survey} S. Grigorian, ``Isometric flows of $\G$-structures'', {\it Current trends in analysis, its applications and computation}, 545--553. Trends Math. Res. Perspect. Birkhäuser/Springer, Cham, 2022. MR4559550

\bibitem{HW} F. He\ and\ C. Wang, ``Regularity estimates for the gradient flow of a spinorial energy functional'', {\em Math. Res. Lett.} {\bf 28} (2021), 1125--1173. MR4344699

\bibitem{Hitchin} N. Hitchin, ``The geometry of three-forms in six and seven dimensions'', {\em preprint}. arXiv:math/0010054

\bibitem{Joyce} D.D. Joyce, {\it Compact manifolds with special holonomy}, Oxford Mathematical Monographs, Oxford University Press, Oxford, 2000. MR1787733

\bibitem{K1} S. Karigiannis, ``Deformations of $\G$ and $\Spin{7}$ structures on manifolds'', {\em Canad. J. Math.} {\bf 57} (2005), 1012--1055. MR2164593

\bibitem{K-flows} S. Karigiannis, ``Flows of $\G$-structures. I'', {\em Q. J. Math.} {\bf 60} (2009), 487--522. MR2559631

\bibitem{K-notes} S. Karigiannis, ``Some notes on $\G$~and $\Spin{7}$~geometry'', {\em Recent Advances in Geometric Analysis}; Advanced Lectures in Mathematics, Vol. 11; International Press, (2010), 129--146.

\bibitem{K-flows-SP} S. Karigiannis, ``Flows of $\Spin{7}$-structures'', in {\it Differential geometry and its applications}, 263--277, World Sci. Publ., Hackensack, NJ. MR2462799

\bibitem{K-intro} S. Karigiannis, ``Introduction to $\G$ geometry'', in {\it Lectures and surveys on $\G$-manifolds and related topics}, 3--50, {\em Fields Inst. Commun.}, {\bf 84}, Springer, New York. MR4295852

\bibitem{KLL} S. Karigiannis\ and\ J. Loftin, ``Octonionic-algebraic structure and curvature of the Teichm\"uller space of $\G$~manifolds'', {\em in preparation}.

\bibitem{KL} S. Karigiannis\ and\ J.D. Lotay, ``Deformation theory of $\G$ conifolds'', {\em Comm. Anal. Geom.} {\bf 28} (2020), 1057--1210. MR4165315

\bibitem{KMT} S. Karigiannis, B. McKay, and M.-P. Tsui, ``Soliton solutions for the Laplacian co-flow of some $\G$-structures with symmetry'', {\em Differential Geom. Appl.} {\bf 30} (2012), 318--333. MR2926272

\bibitem{LL} B. Lambert\ and\ J.D. Lotay, ``Spacelike mean curvature flow'', {\em J. Geom. Anal.} {\bf 31} (2021), 1291--1359. MR4215263

\bibitem{Lie} \emph{LiE package}, online demonstration available at \url{http://www-math.univ-poitiers.fr/~maavl/LiE/form.html}

\bibitem{Minischool-Lotay} J.D. Lotay, ``Geometric flows of $\G$ structures'', in {\it Lectures and surveys on $\G$-manifolds and related topics}, 113--140, {\em Fields Inst. Commun.}, {\bf 84}, Springer, New York. MR4295856

\bibitem{LW1} J.D. Lotay\ and\ Y. Wei, ``Laplacian flow for closed $\G$ structures: Shi-type estimates, uniqueness and compactness'', {\em Geom. Funct. Anal.} {\bf 27} (2017), 165--233. MR3613456

\bibitem{LW2} J.D. Lotay\ and\ Y. Wei, ``Stability of torsion-free $\G$ structures along the Laplacian flow'', {\em J. Differential Geom.} {\bf 111} (2019), 495--526. MR3934598

\bibitem{LW3} J.D. Lotay\ and\ Y. Wei, ``Laplacian flow for closed $\G$ structures: real analyticity'', {\em Comm. Anal. Geom.} {\bf 27} (2019), 73--109. MR3951021

\bibitem{LS} E. Loubeau\ and\ H.N. S\'a Earp, ``Harmonic flow of geometric structures'', {\em Ann. Global Anal. Geom.} {\bf 64} (2023), 23. MR4656808

\bibitem{PS} S. Picard\ and\ C. Suan, ``Flows of $\G$-structures associated to Calabi-Yau manifolds'', {\em Math. Res. Lett.} {\bf 31} (2025), 1837--1877. MR4862362

\bibitem{Shi} W.-X. Shi, ``Ricci deformation of the metric on complete noncompact Kahler manifolds'', Thesis (Ph.D.) -- Harvard University, ProQuest LLC, Ann Arbor, MI, 1990. 203 pp. MR2638857

\bibitem{Smoczyk} K. Smoczyk, ``A canonical way to deform a Lagrangian submanifold'', {\em preprint}. arXiv:dg-ga/9605005

\bibitem{Topping} P. Topping, {\it Lectures on the Ricci flow} London Math. Soc. Lecture Note Ser., 325, Cambridge University Press, Cambridge, 2006. MR2265040

\bibitem{WW1} H. Weiss\ and\ F. Witt, ``A heat flow for special metrics'', {\em Adv. Math.} {\bf 231} (2012), 3288--3322. MR2980500

\bibitem{WW2} H. Weiss\ and\ F. Witt, ``Energy functionals and soliton equations for $\G$-forms'', {\em Ann. Global Anal. Geom.} {\bf 42} (2012), 585–610. MR2995206

\bibitem{Yau} S.-T. Yau, ``On the Ricci curvature of a compact K\"ahler manifold and the complex Monge-Amp\`ere equation. I.'', {\em Comm. Pure Appl. Math.} {\bf 31} (1978), 339–411. MR0480350

\end{thebibliography}
\end{document}